\def\operator@font{\sf}
\newcommand{\Q}{\mathbb{Q}}
\newcommand{\R}{\mathbb{R}}
\newcommand{\J}{\operatorname{J}}
\newcommand{\jcomp}{\J_{\operatorname{comp}}}
\newcommand{\topt}{\mathbin{\to_{\star}}}
\newcommand{\Type}{\mathsf{Type}}
\newcommand{\coe}{\operatorname{coe}}
\newcommand{\coeidp}{\operatorname{coeidp}}
\newcommand{\ua}{\operatorname{ua}}
\newcommand{\isequiv}{\operatorname{isequiv}}
\newcommand{\fib}{\operatorname{fib}}
\newcommand{\transport}{\operatorname{transport}}
\newcommand{\toeq}{\stackrel{\sim}{\to}}
\newcommand{\fst}{\operatorname{fst}}
\newcommand{\snd}{\operatorname{snd}}
\newcommand{\fsteq}{\fst^=}
\newcommand{\sndeq}{\snd^=}
\newcommand{\happly}{\operatorname{happly}}
\newcommand{\Id}{\operatorname{Id}}
\newcommand{\idpS}{\mathsf{idp}}
\newcommand{\idp}[1]{\idpS_{#1}}
\newcommand{\concat}{\mathbin{\cdot}}
\newcommand{\inv}{^{-1}}
\newcommand{\vcomp}{\concat}
\newcommand{\hcomp}{\mathbin{\star}}
\newcommand{\ids}{\mathsf{ids}}
\newcommand{\Unit}{\mathbf{1}}
\newcommand{\ttt}{\star_\Unit}
\newcommand{\Zero}{\mathbf{0}}
\newcommand{\N}{\mathbb{N}}
\newcommand{\succS}{\operatorname{S}}
\newcommand{\add}{\operatorname{add}}
\newcommand{\mul}{\operatorname{mul}}
\newcommand{\Z}{\mathbb{Z}}
\newcommand{\succZ}{\operatorname{succ}_{\Z}}
\newcommand{\posZ}{\operatorname{pos}}
\newcommand{\negZ}{\operatorname{neg}}
\newcommand{\Bool}{\mathbf{2}}
\newcommand{\true}{\mathsf{true}}
\newcommand{\false}{\mathsf{false}}
\newcommand{\trunc}[2]{\left\lVert{#2}\right\rVert_{#1}}
\newcommand{\apS}{\operatorname{ap}}
\newcommand{\ap}[1]{\apS_{#1}}
\newcommand{\apiiS}{\operatorname{ap}^2}
\newcommand{\apii}[1]{\apiiS_{#1}}
\newcommand{\defeq}{\mathrel{:=}}
\newcommand{\jueq}{\equiv}
\newcommand{\id}{\mathsf{id}}
\newcommand{\funext}{\operatorname{funext}}
\newcommand{\Sn}[1]{\mathbb{S}^{#1}}
\newcommand{\north}{\mathsf{north}}
\newcommand{\south}{\mathsf{south}}
\newcommand{\merid}{\operatorname{merid}}
\newcommand{\CP}[1]{\mathbb{C}P^{#1}}
\newcommand{\Susp}{\Sigma}
\newcommand{\proj}{\operatorname{proj}}
\newcommand{\projl}{\operatorname{proj_l}}
\newcommand{\projr}{\operatorname{proj_r}}
\newcommand{\projlr}{\operatorname{proj_{rl}}}
\newcommand{\inn}{\operatorname{in}}
\newcommand{\push}{\operatorname{push}}
\newcommand{\inl}{\operatorname{inl}}
\newcommand{\inr}{\operatorname{inr}}
\newcommand{\pt}{\bullet}
\newcommand{\sq}{\scalerel*{\boldsymbol{\square}}{\bullet}}
\newcommand{\base}{\mathsf{base}}
\newcommand{\lloop}{\mathsf{loop}}
\newcommand{\invol}{\operatorname{invol}}
\newcommand{\fold}{\nabla}
\newcommand{\contreq}{\theta}
\newcommand{\iwedge}{i^{\vee}}
\newcommand{\ft}{\widetilde{f}}
\newcommand{\Sm}[1]{\mathbf{S}^{#1}}
\newcommand{\JiA}{J_\infty A}
\newcommand{\epsiloni}{\varepsilon_\infty}
\newcommand{\alphai}{\alpha_\infty}
\newcommand{\deltai}{\delta_\infty}
\newcommand{\gammai}{\gamma_\infty}
\newcommand{\etai}{\eta_\infty}
\newcommand{\innJ}{\operatorname{in}^J}
\newcommand{\pushJ}{\push^J}
\newcommand{\tof}{\operatorname{to}}
\newcommand{\from}{\operatorname{from}}
\newcommand{\pp}{\mathop{\hat{\times}}}
\newcommand{\Ht}{\widetilde{H}}
\newcommand{\cupp}{\operatorname{\smallsmile}}
\newcommand{\Ker}{\operatorname{Ker}}
\renewcommand{\Im}{\operatorname{Im}}
\newcommand{\cc}{\mathbf{c}}
\newcommand{\xx}{\mathbf{x}}
\newcommand{\yy}{\mathbf{y}}
\newcommand{\zz}{\mathbf{z}}
\newcommand{\Hopf}{\operatorname{Hopf}}
\newcommand{\tildeE}{\widetilde{E}}
\newcommand{\tildeF}{\widetilde{F}}
\newcommand{\Phit}{\widetilde{\Phi}}
\newcommand{\lcr}{\llbracket}
\newcommand{\rcr}{\rrbracket}
\newcommand{\typ}{\ \mathsf{type}}
\newcommand{\depth}{\operatorname{depth}}
\newcommand{\cohf}{\mathsf{coh}}
\newcommand{\coh}[3]{\cohf_{#1.#2}(#3)}
\newcommand{\cohhf}[3]{\mathbf{coh}^{#1}_{#2.#3}}
\newcommand{\cohh}[4]{\cohhf{#1}{#2}{#3}(#4)}
\newcommand{\contr}{\ \mathsf{contr}}
\newcommand{\ctx}{\ \mathsf{ctx}}
\newcommand{\Glob}{\operatorname{Glob}}
\newcommand{\Ob}{\operatorname{Ob}}
\newcommand{\Hom}{\operatorname{Hom}}
\newcommand{\IID}{\mathsf{I}}
\newcommand{\iid}{\mathsf{i}}
\newcommand{\vdashi}{\vdash_{\infty}}
\newcommand{\vdashML}{\vdash_{\mathsf{ML}}}
\newcommand{\llp}{\llparenthesis}
\newcommand{\rrp}{\rrparenthesis}
\newcommand{\tinfgpd}{\mathcal{T}_\infty}
\newcommand{\tml}{\mathcal{T}_{\mathsf{ML}}}
\newcommand{\glob}{^{\mathsf{G}}}
\newcommand{\hub}{\operatorname{hub}}
\newcommand{\spoke}{\operatorname{spoke}}
\newcommand{\lift}{\operatorname{lift}}
\newcommand{\lemm}{\operatorname{lemma}}
\newcommand{\tHopf}{\operatorname{tHopf}_3}
\newcommand{\funextd}{\operatorname{funext^{\mathsf{dep}}}}
\newcommand{\apd}[1]{\operatorname{apd}_{#1}}
\newcommand{\inne}{\operatorname{in}^=}
\newcommand{\oute}{\operatorname{out}^=}
\newcommand{\innt}{\operatorname{in}^{\transport}}
\newcommand{\outt}{\operatorname{out}^{\transport}}
\numberwithin{equation}{section}
\theoremstyle{plain}
\newtheorem{maintheorem}{Theorem}
\newtheorem{maintheoremfr}{Théorème}
\newtheorem{cor}[equation]{Corollary}
\newtheorem{lemma}[equation]{Lemma}
\newtheorem{proposition}[equation]{Proposition}
\newtheorem{axiom}[equation]{Axiom}
\theoremstyle{definition}
\newtheorem{definition}[equation]{Definition}
\newtheorem{example}[equation]{Example}
\newcommand{\chaptertoc}[1]{\chapter*{#1}
\addcontentsline{toc}{chapter}{#1}
\markboth{\slshape\MakeUppercase{#1}}{\slshape\MakeUppercase{#1}}}
\tikzset{sdiag/.style={decorate,decoration={snake,amplitude=.3mm,segment length=1.5mm,post length=1mm}}}
\begin{document}
\frontmatter

\newgeometry{hcentering}
\pdfbookmark{Front matter}{frontmatter}
\subpdfbookmark{Page de titre}{title}
\phantomsection

\begin{titlepage}
 
\centering
\vspace*{-1cm}
{\Large UNIVERSITÉ DE NICE SOPHIA ANTIPOLIS   --   UFR Sciences}\\
\vspace*{0.5cm}
École Doctorale Sciences Fondamentales et Appliquées\\
\vspace*{1.5cm}
{\Large\bf THÈSE}\\
\vspace*{0.3cm}
pour obtenir le titre de\\
\vspace*{0.1cm}
{\Large\bf Docteur en Sciences}\\
\vspace*{0.1cm}
Spécialité
{\Large {\sc Mathématiques}}\\
\vspace*{0.8cm}
présentée et soutenue par\\
\vspace*{0.1cm}
{\large\bf Guillaume BRUNERIE}\\
\vspace*{1.0cm}
{\LARGE\bf Sur les groupes d’homotopie des sphères\\en théorie des types homotopiques}\\
\vspace*{0.5cm}
{\LARGE\bf On the homotopy groups of spheres\\in homotopy type theory}\\
\vspace*{1.0cm}
Thèse dirigée par {\bf Carlos SIMPSON}\\
soutenue le 15 juin 2016\\

\vfill
Membres du jury :\\
\vspace*{0.5cm}
\begin{tabular}{lll}
M. Denis-Charles CISINSKI & Professeur des universités & \textit{Examinateur}\\
M. Thierry COQUAND & Professor & \textit{Rapporteur}\\
M. André HIRSCHOWITZ & Professeur émérite & \textit{Examinateur}\\
M. André JOYAL & Professeur émérite & \textit{Examinateur}\\
M. Paul-André MELLIÈS & Chargé de recherche CNRS & \textit{Examinateur}\\
M. Michael SHULMAN & Assistant Professor & \textit{Rapporteur}\\
M. Carlos SIMPSON & Directeur de recherche CNRS & \textit{Directeur de thèse}\\
\end{tabular}

\vfill
Laboratoire Jean-Alexandre Dieudonné, Université de Nice, Parc Valrose, 06108 NICE
\end{titlepage}


\restoregeometry

\clearpage
\thispagestyle{empty}
\vspace*{\fill}
\begin{center}
  \includegraphics{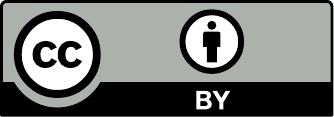}

  This work is licensed under the Creative Commons Attribution 4.0 International License. To view a copy of this license, visit \url{http://creativecommons.org/licenses/by/4.0/}.
  \vspace{1cm}
  
  © 2016 Guillaume Brunerie
\end{center}

\currentpdfbookmark{Abstract/Résumé}{abstracten}

\section*{Abstract}

The goal of this thesis is to prove that $\pi_4(\Sn3)\simeq\Z/2\Z$ in homotopy type theory. In
particular it is a constructive and purely homotopy-theoretic proof. We first recall the basic
concepts of homotopy type theory, and we prove some well-known results about the homotopy groups of
spheres: the computation of the homotopy groups of the circle, the triviality of those of the form
$\pi_k(\Sn n)$ with $k<n$, and the construction of the Hopf fibration. We then move to more advanced
tools. In particular, we define the James construction which allows us to prove the Freudenthal
suspension theorem and the fact that there exists a natural number $n$ such that
$\pi_4(\Sn3)\simeq\Z/n\Z$. Then we study the smash product of spheres, we construct the cohomology
ring of a space, and we introduce the Hopf invariant, allowing us to narrow down the $n$ to either
$1$ or $2$. The Hopf invariant also allows us to prove that all the groups of the form
$\pi_{4n-1}(\Sn{2n})$ are infinite. Finally we construct the Gysin exact sequence, allowing us to
compute the cohomology of $\CP2$ and to prove that $\pi_4(\Sn3)\simeq\Z/2\Z$ and that more generally
$\pi_{n+1}(\Sn n)\simeq\Z/2\Z$ for every $n\ge 3$.

\bigskip
\noindent
\textbf{Keywords:} homotopy type theory, homotopy theory, algebraic topology, cohomology, type
theory, logic, constructive mathematics

\vfill
\begin{otherlanguage}{french}
  \section*{Résumé}

  L’objectif de cette thèse est de démontrer que $\pi_4(\Sn3)\simeq\Z/2\Z$ en théorie des types
  homotopiques. En particulier, c’est une démonstration constructive et purement homotopique. On
  commence par rappeler les concepts de base de la théorie des types homotopiques et on démontre
  quelques résultats bien connus sur les groupes d’homotopie des sphères~: le calcul des groupes
  d’homotopie du cercle, le fait que ceux de la forme $\pi_k(\Sn n)$ avec $k<n$ sont triviaux et la
  construction de la fibration de Hopf. On passe ensuite à des outils plus avancés. En particulier,
  on définit la construction de James, ce qui nous permet de démontrer le théorème de suspension de
  Freudenthal et le fait qu’il existe un entier naturel $n$ tel que $\pi_4(\Sn3)\simeq\Z/n\Z$. On
  étudie ensuite le produit smash des sphères, on construit l’anneau de cohomologie des espaces et
  on introduit l’invariant de Hopf, ce qui nous permet de montrer que $n$ est égal soit à $1$, soit
  à $2$. L’invariant de Hopf nous permet également de montrer que tous les groupes de la forme
  $\pi_{4n-1}(\Sn{2n})$ sont infinis. Finalement, on construit la suite exacte de Gysin, ce qui nous
  permet de calculer la cohomologie de $\CP2$ et de démontrer que $\pi_4(\Sn3)\simeq\Z/2\Z$, et que
  plus généralement on a $\pi_{n+1}(\Sn n)\simeq\Z/2\Z$ pour tout $n\ge 3$.

  \bigskip
  \noindent
  \textbf{Mots-clés:} théorie des types homotopiques, théorie de l’homotopie, topologie algébrique,
  cohomologie, théorie des types, logique, mathématiques constructives
\end{otherlanguage}


\cleardoublepage
\currentpdfbookmark{Acknowledgments}{ack}

\chapter*{Acknowledgments}

Five years ago, towards the end of my first year of master studies, I wasn’t sure in which domain
of mathematics or computer science to continue. I was interested in many subjects, in particular
homotopy theory and type theory, so one day I decided to search for “homotopy type theory” on the
Internet, thinking that if something with such a name exists it is probably something for
me. Apparently I was right.
  
\smallskip
I would like to thank all the people who helped me and supported me, in particular
\begin{itemize}
\item my advisor, Carlos Simpson, for always trusting me, supporting me, and encouraging me,
\item the “rapporteurs” of my thesis, Thierry Coquand and Mike Shulman, and also Ulrik Buchholtz
  for their careful reading and comments on my thesis,
\item the university of Nice Sophia Antipolis and the LJAD for letting me work in such a great
  environment,
\item Paul-André Melliès who arranged for me to go to my first conference on homotopy type theory,
\item Thierry Coquand, Steve Awodey, and Vladimir Voevodsky for letting me take part in the
  special year on Univalent Foundations at the Institute for Advanced Study in Princeton,
\item all the people I’ve worked with, in particular Dan Licata and Thierry Coquand, but also
  Carlo Angiuli, Favonia, Eric Finster, Bob Harper, Simon Huber, André Joyal, Peter Lumsdaine,
  Egbert Rijke and many others,
\item all other PhD students in Nice for making my stay enjoyable, in particular my officemates
  Arthur, Laurence, and Byron,
\item my family for their constant support, in particular my late grandfather Alain who started
  fueling my mathematical curiosity when I was very young, teaching me how to multiply a number by
  $10$ or $100$,
\item the European forró community for the countless festivals and dances,
\item and finally my girlfriend, Monika, for her presence, her support, and for reading in detail
  early versions of parts of this text.
\end{itemize}


\cleardoublepage
\currentpdfbookmark{\contentsname}{Contents}
\tableofcontents


\mainmatter

\cleardoublepage
\chaptertoc{Introduction}

The aim of this PhD thesis is to prove the following theorem, whose statement and proof will be
explained in due time.

\begin{maintheorem}\label{theorem}
  We have a group isomorphism \[\pi_4(\Sn3)\simeq\Z/2\Z.\]
\end{maintheorem}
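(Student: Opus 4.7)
The plan is to follow the route sketched in the abstract, building up the machinery in stages so that the final identification of $\pi_4(\Sn3)$ reduces to a cohomological computation on $\CP2$. First I would establish the basic homotopy-theoretic infrastructure in HoTT: the definition of $\pi_n$ as truncated loop spaces, the computation $\pi_1(\Sn1)\simeq\Z$, the triviality of $\pi_k(\Sn n)$ for $k<n$, and the construction of the Hopf fibration $\Sn1\to\Sn3\to\Sn2$, which gives the long exact sequence entry $\pi_4(\Sn3)\simeq\pi_4(\Sn2)$. This reduces the problem to understanding $\pi_4(\Sn2)$, or equivalently, by a stabilization argument, $\pi_{n+1}(\Sn n)$ for large $n$.

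The next stage is the James construction $\JiA$, whose main payoff is a homotopy equivalence $\Omega\Susp A\simeq \JiA$ (for $A$ pointed connected) and, as a consequence, the Freudenthal suspension theorem. Applied to $\Sn n$, this shows that for $n\ge 2$ the suspension map $\pi_{n+1}(\Sn n)\to\pi_{n+2}(\Sn{n+1})$ is eventually an isomorphism, so all these groups are a single ``stable'' group. Combined with finite generation arguments coming from the James filtration (one uses that $\Omega\Susp\Sn2$ is built from cells in a controlled way, producing a surjection $\Z\twoheadrightarrow\pi_4(\Sn3)$), this delivers the intermediate conclusion that $\pi_4(\Sn3)\simeq\Z/n\Z$ for some natural number $n$, to be determined.

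To pin down $n$, I would develop the smash product, construct reduced cohomology $\Ht^*(X;\Z)$ via Eilenberg--MacLane spaces, equip it with a cup product, and define the Hopf invariant $H\colon\pi_{4k-1}(\Sn{2k})\to\Z$ via the cup-product structure on the mapping cone. A direct computation using the ring structure on $\Ht^*$ shows that $H$ factors through $\Z/n\Z$ in our case and that the class of the Hopf map $\eta$ hits $\pm 1$ up to the factor coming from the quotient; this narrows $n$ down to either $1$ or $2$. To eliminate $n=1$, I would construct $\CP2$ as the mapping cone of the Hopf fibration and use the Gysin exact sequence for the Hopf bundle over $\Sn2$ (or, via pushout, the cofiber sequence $\Sn3\to\Sn2\to\CP2$) to compute $\Ht^*(\CP2;\Z)$ and check that its generator in degree $2$ squares to the generator in degree $4$, showing that the Hopf invariant of the Hopf map is exactly $1$ rather than $2$, forcing $n=2$.

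The hard part, I expect, will not be any single one of these ingredients in isolation but rather the careful coherent construction of the cohomology ring and the Gysin sequence inside HoTT. In classical algebraic topology one takes associativity of cup product and the naturality of Gysin's sequence for granted; here each of these is a genuinely nontrivial coherence problem, because the cup product is built from smash-product maps between Eilenberg--MacLane spaces and its associativity requires chasing higher homotopies through a long chain of pushout and truncation constructions. A secondary obstacle is the passage from the James construction to an \emph{effective} bound on the order of $\pi_4(\Sn3)$: one needs to extract from $\JiA$ not merely that the group is finite cyclic but a generator and an explicit relation, which is where most of the genuinely new type-theoretic work lies.
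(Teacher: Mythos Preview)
Your proposal follows essentially the same route as the paper: basic infrastructure and the Hopf fibration, then the James construction to obtain $\pi_4(\Sn3)\simeq\Z/n\Z$ for some $n$, then cohomology and the Hopf invariant to narrow $n$ down to $1$ or $2$, and finally the Gysin sequence together with $\CP2$ to force $n=2$.

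There is one genuine imprecision in your plan, at the final step. To show that the degree-$2$ generator of $\Ht^*(\CP2)$ squares to the degree-$4$ generator, neither the Gysin sequence of the Hopf bundle over $\Sn2$ nor the cofiber sequence $\Sn3\to\Sn2\to\CP2$ is enough: the cofiber sequence (i.e.\ Mayer--Vietoris) only yields the additive groups, and the Gysin sequence for $\Sn1\to\Sn3\to\Sn2$ only tells you about cup products in $H^*(\Sn2)$, where everything above degree $2$ vanishes. The paper's actual move is to construct a \emph{new} circle fibration $\Sn1\to\Sn5\to\CP2$ via an ``iterated Hopf construction'' (this requires the H-space structure on $\Sn1$ to be associative, which is an additional lemma) and apply the Gysin sequence to \emph{that} fibration. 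One then reads off from the exact sequence that the Euler class $e\in H^2(\CP2)$ is a generator and that $\cupp e:H^2(\CP2)\to H^4(\CP2)$ is an isomorphism, which is precisely the statement $e^2=\pm\beta$. Without this fibration over $\CP2$ itself, there is no obvious way in HoTT to access the ring structure of $\CP2$.
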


This is actually a well-known theorem in classical homotopy theory, originally proved by Freudenthal
in \cite{freud37} (see also \cite[corollary 4J.4]{hatcher}). The main difference is that in this
thesis we work in \emph{homotopy type theory} (also known as \emph{univalent foundations}), which is
a new framework for doing mathematics introduced by Vladimir Voevodsky in 2009 and which is
particularly well-suited for homotopy theory. From the point of view of a homotopy theorist, the
most striking difference between classical homotopy theory and homotopy type theory is that in
homotopy type theory \emph{all} constructions are invariant under homotopy equivalences. One of the
advantages is that all the constructions and proofs done in this framework are completely
independent of the definition of “spaces”. In particular, nothing depends on point-set topology or
on combinatorics of simplicial sets. Moreover, as we hope the reader will be convinced after reading
this thesis, the constructions and proofs have often a more “homotopy-theoretic feel” and are closer
to intuition.

However, this also poses a number of challenges as it is not \textit{a priori} obvious which
concepts can or cannot be defined in a purely homotopy-invariant way.  For instance, even though
singular cohomology is homotopy-invariant, the classical definition uses the set of singular
cochains which is not homotopy-invariant.  Therefore the classical definition cannot be reproduced
verbatim in homotopy type theory. An even simpler example is the universal cover of the circle which
is classically defined using the exponential function $\R\to\Sn1$, but that function is actually
homotopic to a constant function.
Homotopy type theory gives us a number of tools to work in a completely homotopy-invariant way and
in this thesis we show how to prove theorem \ref{theorem} in homotopy type theory, starting
essentially from scratch.

Another advantage of homotopy type theory over classical homotopy theory is that proofs written in
homotopy type theory are much more amenable to being formally checked by a computer. While the
present work hasn’t been formalized yet, many intermediate results (in particular from the first two
chapters) have already been formalized by various people, see for instance the libraries
\cite{HoTTCoq} and \cite{Unimath} for Coq, \cite{HoTTAgda} for Agda and \cite{HoTTLean} for Lean.

\paragraph{Content of the thesis}

The first two chapters of this thesis review basic homotopy type theory. An alternative reference is
the book \cite{hottbook}, but we tried here to be more concise and to keep in mind our end
goal. Nevertheless there might be some overlap in the style of presentation between \cite{hottbook}
and the introduction and the first two chapters of this thesis. Most of the content of the last four
chapters is new in homotopy type theory even though the concepts are well-known in classical
homotopy theory. The definition of weak $\infty$-groupoid presented in the first appendix is new as
well.

In chapter \ref{ch:hott} we introduce all the basic concepts of homotopy type theory, namely all
type constructors and in particular the univalence axiom and higher inductive types. We also state
the $3\times3$-lemma and the flattening lemma in sections \ref{threebythree} and \ref{flattening},
which are two results that we use in various places. Finally we talk about $n$-truncatedness and
truncations. The notion of $n$-truncated type corresponds to the classical notion of homotopy
$n$-types, i.e.\ spaces with no homotopical information above dimension $n$, and truncation is an
operation turning any space into an $n$-truncated space in a universal way. All this is again
standard in homotopy type theory.

In chapter \ref{ch:hopf} we define the homotopy groups of spheres. The group $\pi_k(\Sn n)$ is
defined as the $0$-truncation (i.e.\ the set of connected components) of the space of
$k$-dimensional loops in $\Sn n$. Then we show how to prove that $\pi_1(\Sn1)\simeq\Z$, which is a
result originally proven by Michael Shulman in 2011 and which appears in \cite[section
8.1]{hottbook}, cf also \cite{mikeblog:pi1s1} and \cite{mikelicata:pi1s1}. The idea is that, in
homotopy type theory, in order to define a fibration we do not give a map from the total space to
the base space. Instead we give directly the fibers over every point of the base space. In the case
of a fibration over the circle, it is enough to give the fiber over the basepoint of $\Sn1$ and the
action on the fiber of the loop going around $\Sn1$. Here the fiber is the space of integers $\Z$
and the loop of the circle acts on it by the function which adds one. This gives a fibration over
$\Sn1$ and one can show that its total space is contractible, from which the isomorphism
$\pi_1(\Sn1)\simeq\Z$ follows. We then define the notion of connectedness and prove various
properties about connected spaces and maps which allow us to prove that $\pi_k(\Sn n)$ is trivial
for all $k<n$. This result already appears in \cite[section 8.3]{hottbook} with a more complicated
proof, also due to the author. Finally we define the Hopf fibration, which is a fibration over
$\Sn2$ with fiber $\Sn1$ and total space $\Sn3$. The idea of the definition of the Hopf fibration is
as follows. In order to define a fibration over $\Sn2$ it is enough to give the fiber $N$ over the
north pole, the fiber $S$ over the south pole and, for every element $x$ of $\Sn1$, an equivalence
between $N$ and $S$ which describe what happens when we move in the fibration over the meridian
corresponding to $x$. In the case of the Hopf fibration, we take $N, S\defeq \Sn1$, and the
equivalence between $N$ and $S$ corresponding to $x$ is the operation of multiplication by $x$. The
Hopf fibration was first defined by Peter Lumsdaine, in a slightly different way, but without a
proof that its total space is equivalent to $\Sn3$. The construction presented here was first
written as \cite[section 8.5]{hottbook}.

In chapter \ref{ch:james} we define the James construction following an initial idea of André
Joyal. For every type $A$ we define a family of spaces $(J_nA)$ and we prove that their colimit is
equivalent to the loop space of the suspension of $A$. This is done by defining another space $JA$
and proving that $JA$ is equivalent to both the colimit of $(J_nA)$ and to the loop space of the
suspension of $A$. The James construction gives a sequence of approximations of the loop space of
the suspension of $A$ which, in conjunction with the Blakers--Massey theorem, allows us to prove
that there is a natural number $n$ such that $\pi_4(\Sn3)\simeq\Z/n\Z$. This number $n$ is defined
using Whitehead products, more precisely it is the image of the Whitehead product
$[\id_{\Sn2},\id_{\Sn2}]$, which is an element of $\pi_3(\Sn2)$, by the equivalence
$\pi_3(\Sn2)\simeq\Z$ constructed using the Hopf fibration.

In chapter \ref{ch:smash} we study the smash product and its symmetric monoidal structure. In
particular we construct a family of equivalences $\Sn n\wedge\Sn m\simeq\Sn{n+m}$ which is
compatible, in some sense, with associativity and commutativity of the smash product. The
construction of the symmetric monoidal structure will be essentially admitted, but we give some
intuition on how to construct it.

In chapter \ref{ch:cohomology} we first define, for every natural number $n$, the Eilenberg--MacLane
space $K(\Z,n)$ as the $n$-truncation of the sphere $\Sn n$ and the $n$-th cohomology group of a
space $X$ as the $0$-truncation of the function space $X\to K(\Z,n)$. We then define the cup product
as a map $K(\Z,n)\wedge K(\Z,m)\to K(\Z,n+m)$ by taking the smash product of the two maps
$\Sn n\to K(\Z,n)$ and $\Sn m\to K(\Z,m)$, composing with the equivalence
$\Sn n\wedge\Sn m\simeq\Sn{n+m}$, and using some properties of connectivity of maps to show that we
can essentially invert it. The properties of the smash product from chapter \ref{ch:smash} are then
used to prove that the cup product is associative and graded-commutative. We finally define the Hopf
invariant of a map $f:\Sn{2n-1}\to\Sn n$ using the cup product structure on the pushout
$\Unit\sqcup^{\Sn{2n-1}}\Sn n$, and we prove that for every even $n$, some particular map
$\Sn{2n-1}\to\Sn n$ coming from the James construction has Hopf invariant $2$. This shows that the
number $n$ defined in chapter \ref{ch:james} is equal to either $1$ or $2$ and that the group
$\pi_{4n-1}(\Sn{2n})$ is infinite for every natural number $n$.

Finally in chapter \ref{ch:gysin} we construct the Gysin exact sequence which is a long exact
sequence of cohomology groups associated to every fibration where the base space is $1$-connected
and the fibers are spheres. This exact sequence describes some part of the multiplicative structure
of the cohomology of the base space. We then define $\CP2$ as the pushout $\Unit\sqcup^{\Sn3}\Sn2$
for the Hopf map $\Sn3\to\Sn2$, we construct a fibration of circles above it in a way similar to the
construction of the Hopf fibration, and we compute its cohomology ring using the Gysin exact
sequence. This proves that the Hopf invariant of the Hopf map is equal to $\pm1$ and that
$\pi_4(\Sn3)\simeq\Z/2\Z$.

In appendix \ref{ch:infgpd} we present an elementary definition of weak $\infty$-groupoids, based on
ideas coming from homotopy type theory, together with a proof that every type in homotopy type
theory has the structure of a weak $\infty$-groupoid.

In appendix \ref{ch:defn} we give a self-contained definition of the natural number $n$ defined at
the end of chapter \ref{ch:james} which satisfies $\pi_4(\Sn3)\simeq\Z/n\Z$. The reason is that, as
we will see later, computing this number from its definition is an important open problem in
homotopy type theory, hence, for the benefit of people trying to solve it, it is convenient to have
the complete definition all in one place.

\paragraph{Analytic versus synthetic}

The main difference between classical homotopy theory and homotopy type theory is that the first one
is \emph{analytic} whereas the second one is \emph{synthetic}. To understand the difference between
analytic and synthetic homotopy theory, it is helpful to go back to elementary geometry.

Analytic geometry is geometry in the sense of Descartes. The set $\R^2$ is our object of study,
points are defined as pairs $(x,y)$ of real numbers and lines are defined as sets of points
satisfying an equation of the form $ax+by=c$. Then in order to prove something we use the properties
of $\R^2$. For instance we can determine whether two lines intersect by solving a particular system
of equations.

In contrast, synthetic geometry is geometry in the sense of Euclid. Points and lines are not defined
in terms of other notions, they are just primitive notions, and a collection of axioms stipulating
how they are supposed to behave is given. Then in order to prove something we have to use the
axioms. For instance we cannot use the equation of a line or the coordinates of a point because
lines do not have equations and points do not have coordinates.

Analytic geometry can be used to justify synthetic geometry. Indeed, analytic geometry gives a
meaning to the notions of point and line and all the axioms of synthetic geometry can be proved to
hold in analytic geometry. Therefore the axioms are consistent and everything which is true is
synthetic geometry is also true in analytic geometry. The converse doesn’t hold, so one could think
that synthetic geometry is less powerful than analytic geometry as less theorems are provable. But
from a different point of view, one can also argue that synthetic geometry is actually \emph{more}
powerful than analytic geometry because the theorems that can be proved are more general. They are
true for any interpretation of the primitive notions for which the axioms are validated, whereas a
proof in analytic geometry is by nature only valid in $\R^2$. Another disadvantage of analytic
geometry is that because it reduces geometry to the resolution of equations, it is easy to lose
track of the geometrical intuition. To sum up, in analytic geometry we give an explicit definition
to the concepts we are interested in, and we can prove a lot of things about them, but we are
restricted to this particular model, whereas in synthetic geometry we only axiomatize the basic
properties of the concepts we are interested in, less theorems are provable, but they have a wider
range of applicability and they are closer to the geometrical intuition.

The situation of homotopy theory is very similar. In analytic homotopy theory (or classical homotopy
theory), the sphere $\Sn n$ is defined as the set
$\{(x_0,\dots,x_n)\in\R^{n+1},x_0^2+\dots+x_n^2=1\}$ equipped with the appropriate topology,
continuous maps are defined as functions preserving the topology in the appropriate way, and
$\pi_4(\Sn3)$ is defined as the quotient of the set of continuous pointed maps $\Sn4\to\Sn3$ by the
relation of homotopy. We can then use various techniques to prove that $\pi_4(\Sn3)\simeq\Z/2\Z$,
i.e.\ that $\pi_4(\Sn3)$ contains exactly two elements.

In synthetic homotopy theory, which is what this thesis is about, the notion of space does
\emph{not} come from topology. Instead it is axiomatized as a primitive notion (under the name
\emph{type}) together with primitive notions of \emph{point} of a type and of \emph{path} between
two points. In particular, a path is not seen anymore as a continuous function from the interval, it
is a primitive notion. We also introduce a primitive notion of \emph{continuous function}. Note that
in classical homotopy theory, we need to define first what is a \mbox{possibly-non-continuous}
function before being able to define what a continuous function is, but here we directly take the
concept of continuous function as primitive. For us a continuous function is \emph{not} a
possibly-non-continuous function which has the additional property of being continuous, indeed there
is not any notion of possibly-non-continuous function. Therefore, the adjective “continuous” is
superfluous, and we will simply use the word “function” or “map” for what would be called
“continuous function” in classical homotopy theory.

Various basic spaces are also axiomatized, for instance the space $\N$ of natural numbers is
axiomatized together with an element $0$, a function $S:\N\to\N$ and the principle of
induction/recursion. The circle is axiomatized together with a point called $\base$, a path called
$\lloop$ from $\base$ to $\base$ and a similar principle of induction/recursion stating intuitively
that the circle is freely generated by $\base$ and $\lloop$. Similarly, we describe the
higher-dimensional spheres $\Sn n$ and the set of connected components of a space. Combining all of
that with the notion of (continuous) functions mentioned above, we can define $\pi_4(\Sn3)$ and we
will see that we can still prove that it is isomorphic to the group $\Z/2\Z$.

\paragraph{Type theory}

Homotopy type theory is a variant of type theory and more precisely of Per Martin--Löf’s
intuitionistic theory of types (called simply \emph{dependent type theory} here), which was
introduced in the 1970s as a foundation for constructive mathematics (cf \cite{ML75}). Constructive
mathematics is a philosophy of mathematics based on the idea that in order to prove that a
particular object exists, we have to give a method to construct it. It works by restricting the
logical principles we are allowed to use and only allows those which are constructive.  A proof in
constructive mathematics isn’t necessarily presented as an algorithm but an algorithm can always be
extracted from it. Therefore constructive mathematics rejects principles like the axiom of choice,
which asserts the existence of a function without giving a way to compute it, and reasoning by
contradiction, which allows us to prove that something exists simply by proving that it cannot not
exist. In particular, a proof that there exists a natural number having a specific property has to
give (at least implicitly) a method to compute this number. This isn’t true in classical
mathematics. For instance, let’s define $n\in\N$ as the smallest odd perfect number or $0$ if no odd
perfect number exists. In classical mathematics, this is a correct and complete definition of $n$,
but it doesn’t give any way to compute $n$. Indeed, at the time of writing it isn’t known whether
$n$ is equal to $0$ or not. On the other hand, this would not be considered a valid definition in
constructive mathematics because we used the principle of excluded middle (either there exists an
odd perfect number or there doesn’t exist any) which isn’t constructive. There are various flavors
of constructive mathematics and note that the one we are using here, homotopy type theory, is not
incompatible with classical logic. It would be perfectly possible to add the axiom of choice or
excluded middle, but the drawback is that constructivity, which is one of the main advantages of
type theory, would be lost.

In dependent type theory the primitive notions are \emph{types} and \emph{elements of types} (or
\emph{terms}). We write $u:A$ for the statement that $u$ is an element of type $A$. Intuitively, one
can think of a type as being something like a set, but there are several important differences with
traditional set theory. Elements of types do not exist in isolation, they are always elements
\emph{of a given type} which is an intrisic part of the nature of the element. The type of an
element is always known and it doesn’t make sense to “prove” that an element $u$ has type $A$. It is
similar to the fact that it doesn’t make sense to “prove” that $x^2+y^2=0$ is an equation. Just
looking at it we see that it is an equation and not a matrix. Moreover the type of an element is
always unique (modulo computation rules as we will see later). For instance, we cannot say that the
number $2$ has both type $\N$ and type $\Q$. Instead there are two different elements, one of which
is $2_\N$ of type $\N$ and the other is $2_\Q$ of type $\Q$ (which may both be written as $2$ in a
mathematical text if there is no risk of confusion) and they satisfy $i(2_\N)=2_\Q$ for $i:\N\to\Q$
the canonical inclusion. Similarly, if we are given a rational number $q:\Q$, we cannot ask whether
$q$ has type $\N$. By nature $q$ has type $\Q$ which is different from $\N$. What we can ask,
however, is whether there exists a natural number $k:\N$ such that $i(k)=q$. This is what proving
that $q$ is a natural number would mean.

Mathematics is traditionally based on a two-layer system: the logical layer where propositions and
proofs live and the mathematical layer where mathematical objects live. The logical layer is used to
reason about the mathematical layer. For instance, constructing a specific mathematical object is an
activity carried out in the mathematical layer, while proving a theorem happens in the logical
layer. In dependent type theory those two layers are merged into one unique layer where types and
their elements live. Apart from representing mathematical objects, types also play the role of
(logical) propositions, and their elements play the role of “proofs” or witnesses of those
propositions. Proving a given proposition is done by constructing an element of the corresponding
type. For example, proving an implication $A\implies B$ corresponds to constructing an element in
the function type $A\to B$, i.e.\ a function taking proofs of $A$ to proofs of $B$. Proving a
conjunction $A\wedge B$ corresponds to constructing an element in the product type $A\times B$,
i.e.\ a pair composed of a proof of $A$ and a proof of $B$. This correspondence between types and
propositions and between elements of types and proofs is known as the \emph{Curry--Howard
  correspondence}. We will sometimes distinguish between types “seen as propositions” and “seen as
types” in order to explain the intuition between various constructions, but the difference between
the two is often blurry. For instance, the type $A\simeq B$ can be seen both as the proposition “$A$
and $B$ are isomorphic” and as the type of all isomorphisms between them. Indeed, in constructive
mathematics proving that $A$ and $B$ are isomorphic is the same thing as constructing an isomorphism
between them.

The word “dependent” in “dependent type theory” refers to the fact that types can depend on elements
of other types. Such types are called \emph{dependent types} or \emph{families of types}. Given a
type $A$, having a dependent type $B$ over $A$ means that for every element $a:A$ there is a type
$B(a)$. Dependent types are essential for the representation of quantified propositions as we see in
chapter \ref{ch:hott}. For instance, a proposition depending on a natural number $n:\N$ is
represented by a type depending on the variable $n$. A dependent type $B$ over $A$ where all the
types $B(a)$ are seen as propositions is called a \emph{predicate on $A$}.

The constructivity property of dependent type theory enables one to see it as a programming
language. In dependent type theory all primitive constructions have \emph{computation rules} (or
\emph{reduction rules}), which essentially explain how to execute the programs of the language. All
elements of types can then be seen as programs and can be executed, simply by repeatedly applying
the computation rules. Note that in dependent type theory there are no infinite loops. All programs
terminate and therefore a result is always obtained when executing a program. From the point of view
of mathematics, the computation rules are the defining equations of the primitive constructions, and
applying a computation rule corresponds to replacing something by its definition. Two elements $u$
and $v$ of a given type $A$ are said to be \emph{definitionally equal} (or \emph{judgmentally
  equal}) if they become syntactically equal after replacing everything by their definition, i.e.\
after executing $u$ and $v$. An important rule of type theory, known as the \emph{conversion rule},
states that if $u$ is of type $A$ and $A$ is definitionally equal to $A'$, then $u$ has also type
$A'$. In particular, types are unique only up to definitional equality, but definitional equality is
decidable because it is simply a matter of repeatedly unfolding the definitions. In the same way as
it doesn’t make sense to prove that a term $u$ is of type $A$, it also doesn’t make sense to prove
that two terms or two types are definitionally equal. This is something that can simply be checked
algorithmically.

Given the correspondence between proofs and elements of types it follows that proofs themselves can
be executed, which is what gives dependent type theory its constructive nature. For instance, given
a proof that there exists a natural number having a certain property, one can execute the proof and
the final result will be a pair of the form $(n,p)$ where $n$ is a natural number of the form either
$0$, $1$, $2$, … (i.e.\ we know its value) and $p$ is a proof that $n$ does satisfy the property.
This close relation between type theory and computer science led to the development of \emph{proof
  assistants} like Coq, Agda or Lean (see \cite{coq}, \cite{agda}, \cite{lean}). They are
essentially type-checkers for dependent type theory together with various features making them
easier to use. In a proof assistant, one can state a theorem by defining the corresponding type and
then prove it by constructing a term (i.e.\ writing a program) having this type. If the proof
assistant accepts it, it means that the program representing the proof is well-typed and that,
therefore, the proof is correct.

\paragraph{Homotopy type theory}

Dependent type theory is very successful but suffers from a few problems, in particular when it
comes to the treatment of equality. Given a type $A$ and two elements $u,v:A$, the proposition “$u$
is equal to $v$” is reified as a type $u=_Av$ called the \emph{identity type} (whose elements are
proofs that $u$ is equal to $v$). Martin--Löf gave several versions of dependent type theory with
different rules for the identity types. In one of them, called \emph{extensional type theory}, the
identity types are behaving in a nice way but typing is not decidable, i.e.\ there is no algorithm
checking whether a term has a given type. This is usually an undesirable feature for a type theory.
In another one, called \emph{intensional type theory}, the rules of the identity types are different
and typing is decidable. However, the treatment of equality in intensional type theory is sometimes
unsatisfactory. For instance, two functions $f,g:A\to B$ can satisfy $f(x)=g(x)$ for every $x:A$
without being equal themselves as functions.  Defining the quotient of a set by an equivalence
relation is also quite problematic. A different issue is that the principle of \emph{uniqueness of
  identity proofs}, which states that for any $u,v:A$, any two proofs of $u=_Av$ are equal, isn’t
provable anymore, which is contrary to the intuition which was behind the identity types. Indeed,
the idea of the identity types in Martin--Löf’s type theory is that every type represents a set and
that $u=_Av$ represents the set having exactly one element if $u$ and $v$ are equal, and the empty
set if $u$ and $v$ are different.

Homotopy type theory is based on intensional type theory and resolves this last problem by changing
the intuition behind types and the identity types. In homotopy type theory, types are not seen as
sets anymore but as \emph{spaces}, dependent types are seen as \emph{fibrations}, and the identity
type $u=_Av$ is seen as the space of all \emph{continuous paths} from $u$ to $v$ in the space
$A$. Rather surprisingly, it can be shown that under this interpretation, all rules of intensional
type theory are still satisfied. Moreover, in this interpretation, uniqueness of identity proofs
isn’t a desirable property anymore. Given two points $u$ and $v$ in a space $A$ there can be many
non-homotopic paths from $u$ to $v$ and many non-homotopic homotopies between two paths, and so on.

This connection between type theory and homotopy theory was discovered around 2006 independently by
Vladimir Voevodsky and by Steve Awodey and Michael Warren in \cite{awodeywarren}. Then in 2009
Vladimir Voevodsky stated the \emph{univalence axiom}, proved its consistency in the simplicial set
model, and started the project of formalizing mathematics in this system, intensional type theory
with the univalence axiom, named \emph{univalent foundations}. Given a universe $\Type$, i.e.\ a
type whose elements are themselves types, and two elements $A$ and $B$ of $\Type$, the univalence
axiom identifies the identity type $A=_{\Type}B$ with the type of equivalences $A\simeq B$. This
axiom makes precise the idea that “isomorphic structures have the same properties”, which is often
used implicitly in mathematics. Note that it is not compatible with the principle of uniqueness of
identity proofs because, for instance, it implies that there are two different equalities
$\Bool=_{\Type}\Bool$ corresponding to the two bijections $\Bool\simeq\Bool$ (where $\Bool$ is the
type with two elements). Voevodsky also noticed that the univalence axiom implies function
extensionality, i.e.\ that if $f(x)=_Bg(x)$ for all $x:A$, then $f=_{A\to B}g$, and that it makes
the definition of quotients possible and well-behaved.

In 2011, the notion of \emph{higher inductive types} started to emerge. Ordinary inductive types are
types defined by giving some generators (the \emph{constructors}) and an induction principle making
precise the idea that the type is freely generated by the constructors. Higher inductive types are a
generalization of ordinary inductive types where we can give not only point-constructors but also
path-constructors. For instance, the circle has one point-constructor $\base$ and one
path-constructor $\lloop$ which is a path from $\base$ to $\base$.  In combination with univalence,
fibrations can be defined by induction on the base space, which is a very powerful way of defining
fibrations. For instance in order to define a fibration over the circle it is enough to give the
fiber over $\base$ and the action of $\lloop$ on this fiber (this action must be an equivalence).

One of the drawbacks of homotopy type theory is that by adding the univalence axiom or higher
inductive types, we lose the constructivity property which, as we mentioned previously, is an
essential feature of type theory. However, unlike the axiom of choice or excluded middle it is
widely believed that the univalence axiom and higher inductive types are constructive in some way,
and several people are trying to give an alternative description of homotopy type theory in which
univalence and higher inductive types compute, see in particular \cite{cubicaltt}. A related
conjecture is Voevodsky’s homotopy canonicity conjecture: for every closed term $n:\N$ constructed
using the univalence axiom, there exists a closed term $k:\N$ constructed \emph{without} using the
univalence axiom and a proof of $k=_{\N}n$.

\paragraph{Constructivity of $\pi_4(\Sn3)$}

The first major result of this thesis is corollary \ref{firstpi4s3} which states that there exists a
natural number $n$ such that $\pi_4(\Sn3)\simeq\Z/n\Z$. This statement is quite curious, because it
is a statement of the form “there exists a natural number $n$ satisfying a given property” hence
according to the constructivity conjecture it should be possible to extract from its proof the value
of $n$. However, nobody has managed to do it so far, mainly because the proof is relatively
complicated and that constructivity of the univalence axiom and of higher inductive types isn’t very
well understood yet. In chapters \ref{ch:smash}, \ref{ch:cohomology} and \ref{ch:gysin} we present a
proof that this number is equal to $2$, but note that this is a mathematical proof, as opposed to a
computation extracted from the definition of $n$, so it doesn’t address the constructivity
conjecture. However, it shows that we can define and work with cohomology and the Gysin sequence in
homotopy type theory, which is interesting in its own right.

\paragraph{Models of homotopy type theory}

We do not talk much about the relationship between homotopy type theory (synthetic homotopy theory)
and classical homotopy theory (analytic homotopy theory) in this thesis, apart from the fact that
many definitions and proofs look quite similar to their classical counterpart. A construction of a
model of homotopy type theory (minus higher inductive types) in classical homotopy theory is
presented in \cite{simplicialmodel} and a proof that they also model higher inductive types is in
preparation in \cite{ls:hit}. As we mentioned previously, one of the consequence of working
synthetically is that all the work done in this thesis is also valid in any other model of homotopy
type theory, not only the classical one. Michael Shulman gave in \cite{mikemodelsunivalence} various
other models of homotopy type theory and it is widely believed that any $\infty$-topos in the sense
of Lurie (cf \cite{htt}) gives a model of homotopy type theory.

Another very important model is the model of Thierry Coquand et al. described \cite{cubical}, which
is a constructive model of homotopy type theory in cubical sets. Note that, in theory, this model
should allow us to compute the number $n$ of chapter \ref{ch:james}, but this hasn’t been done at
the time of writing.  This model also suggests a different version of homotopy type theory, called
\emph{cubical type theory} (cf. \cite{cubicaltt}), but in this work we decided to stay with the type
theory used in \cite{hottbook}. Various squares and cubes are nevertheless used whenever convenient.


\chapter{Homotopy type theory}\label{ch:hott}

In this first chapter we give an introduction to homotopy type theory and to a few basic results
that are used throughout this work. The reader is encouraged to read \cite{hottbook} for a more
comprehensive presentation of homotopy type theory.  Unlike in \cite{hottbook}, we do not notate
definitional equalities differently from propositional equalities, we simply use the terminology
“$u=v$ by definition” when we want to insist on the fact that the equality is definitional. We also
use the standard notation $u\defeq v$ when \emph{introducing} new definitions. We use the word
“proposition” in its standard mathematical meaning. A proposition is either a statement which might
be true or false, for instance “the negation of the proposition $1+1=2$ is the proposition
$1+1\neq2$”, or a statement for which we do provide a proof. In particular when we say that a type
is “seen as a proposition” or when we state a proposition, it doesn’t mean that the type in
consideration is assumed to be $(-1)$-truncated in the sense of section
\ref{sec:truncatednesstruncations}. We reserve the expression “mere proposition” for such types.

All types are seen as elements of a particular type called $\Type$. For consistency reasons, $\Type$
cannot be an element of itself so we have an infinite sequence of universes $\Type_0$, $\Type_1$,
$\Type_2$, …, with $\Type_n:\Type_{n+1}$ for every $n$. In practice, though, we rarely need to worry
about which universe we are in, so from now on we simply write $\Type$ for any of the $\Type_n$, as
is often done in type theory.

\section{Function types}

We first present \emph{function types}. Given two types $A$ and $B$, there is a type written
\[A\to B\]
representing the type of functions from $A$ to $B$. A function can be defined by an explicit formula
as follows:
\begin{align*}
  f &: A \to B,\\
  f(x) &\defeq \Phi[x],
\end{align*}
where $\Phi[x]$ is a syntactical expression which may use the variable $x$ (and usually do, unless
the function is constant) and which is of type $B$ when we assume that $x$ is of type $A$. We can
also use the notation $\lambda x.\Phi[x]$ which is the same thing as the function $f$ above except
that it avoids the need to give it a name. Given a function $f:A\to B$ and an element $a:A$, we can
apply $f$ to $a$ and we obtain an element of type $B$,
\[f(a) : B.\]
Moreover, if $f$ is defined as above then $f(a)$ is equal to $\Phi[a/x]$ by definition, where
$\Phi[a/x]$ is the expression $\Phi[x]$ where all instances of the variable $x$ have been replaced
by $a$.

When we see $A$ and $B$ as spaces, an element of $A\to B$ should be thought of as a
\emph{continuous} function from $A$ to $B$.
When we see $A$ and $B$ as propositions, an element of $A\to B$ is a function turning a proof of $A$
into a proof of $B$. In other words, it corresponds to a proof of “$A$ implies $B$”. In particular,
this means that logical implications are translated into function types in type theory.

An element $P:A\to\Type$ is called a \emph{dependent type} over $A$ and it represents a family of
types indexed by $A$, or a fibration over $A$ if $A$ and all types $P(a)$ are seen as spaces, or a
predicate on $A$ if all types $P(a)$ are seen as propositions.

\begin{definition}
  Given a type $A$, the \emph{identity function} of $A$ is the function
  \begin{align*}
    \id_A &: A \to A,\\
    \id_A(x) &\defeq x.
  \end{align*}
\end{definition}

\begin{definition}
  Given three types $A$, $B$ and $C$ and two functions $f:A\to B$ and $g:B\to C$, the
  \emph{composition} of $f$ and $g$ is the function
  \begin{align*}
    g\circ f &: A \to C,\\
    (g\circ f)(x) &\defeq g(f(x)).
  \end{align*}
\end{definition}

\subsection{Dependent functions}

A function $f:A\to B$ always returns an element of type $B$ no matter what its argument is. It is
possible to generalize function types in order to allow the output type to depend on the value of
the input. More precisely, given a type $A$ and a dependent type $B:A\to\Type$, there is a type
written
\[(x:A)\to B(x)\quad\text{ or }\quad\prod_{x:A}B(x)\]
representing \emph{dependent functions} from $A$ to $B$, i.e.\ functions sending an element $x$ of
$A$ to an element of the corresponding type $B(x)$. Just as with regular functions, a dependent
function can be defined by an explicit formula as follows:
\begin{align*}
  f &: (x:A) \to B(x),\\
  f(x) &\defeq \Phi[x],
\end{align*}
where $\Phi[x]$ is an expression of type $B(x)$, and we can also write it $\lambda x.\Phi[x]$. When
we apply a dependent function $f:(x:A)\to B(x)$ to an element $a:A$, we get an element of type
$B(a)$ (which depends on $a$ in general) \[f(a):B(a).\]

When we see $A$ as a space and $B$ as a fibration over $A$, a dependent function $f:(x:A)\to B(x)$
should be seen as a continuous section of $B$.  When we see $A$ as a space and $B$ as a predicate on
$A$, the dependent function type $(x:A)\to B(x)$ corresponds to the universally quantified
proposition $\forall x:A, B(x)$. Indeed, proving the proposition $\forall x:A, B(x)$ corresponds to
proving $B(x)$ for every $x$ in $A$, which is exactly what a dependent function of type
$(x:A)\to B(x)$ does.

For instance, let us assume we have a type $A$ seen as a space and a dependent type $B$ over $A$
seen as a fibration over $A$. Then a theorem of the form “For every section $f$ of $B$, if $P(f)$
holds then $Q(f)$ holds” should be interpreted as the type
\[(f:(x:A) \to B(x))\to(P(f)\to Q(f)).\]
The arrow on the left represents the type of sections of $B$, the arrow on the right represents the
logical implication between $P(f)$ and $Q(f)$ and the arrow in the middle represents universal
quantification.  A proof of such a theorem is a function taking a function $f$ of type
$(x:A)\to B(x)$ (i.e.\ $f$ is a function taking an argument $x$ of type $A$ and returning a result of
type $B(x)$) and returning a function of type $P(f)\to Q(f)$, i.e.\ which takes an element of type
$P(f)$ (a proof that $f$ satisfies $P$) and returns an element of type $Q(f)$ (a proof that $f$
satisfies $Q$).

\subsection{Functions with several arguments}

There are several ways to talk about functions with several arguments. Let’s say for instance that
we are interested in a function $f$ taking two arguments, of types $A$ and $B$, and returning a
result of type $C$. One way to state it is to say that $f$ has type $(A\times B)\to C$, i.e.\ $f$
takes one argument of the product type $A\times B$ (that we define in the next section) and returns
an element of $C$. Another way to state it is to say that $f$ has type $A\to(B\to C)$, i.e.\ $f$
takes one argument of type $A$ and returns another function taking the second argument of type $B$
and returning the result of type $C$.
The two versions turn out to be equivalent, and in general we use the second version (called the
\emph{curried} form), as is common in type theory, with the syntax
\begin{align*}
  f &: A\to B\to C,\\
  f(a,b) &\defeq \Phi[a,b].
\end{align*}
Of course, the type $B$ could be a dependent type over $A$ and the type $C$ could be a dependent
type over both $A$ and $B$, and it can be generalized to functions with more than two arguments.

\section{Pair types}\label{sec:pairtypes}

We now present \emph{pair types}. Given two types $A$ and $B$, there is a type written
\[A\times B\]
representing the type of pairs consisting in one element of $A$ and one element of $B$. One can
construct an element of $A\times B$ by pairing one element $a$ of $A$ and one element $b$ of $B$:
\[(a,b):A\times B.\]
One can deconstruct an element of $A\times B$ as follows. If $P:A\times B\to\Type$ is a dependent
type over $A\times B$, then a section of it can be defined by
\begin{align*}
  f &: (z : A \times B) \to P(z),\\
  f((x,y)) &\defeq f_\times(x,y),
\end{align*}
where
\begin{align*}
  f_\times &: (x : A) (y : B) \to P((x,y)).
\end{align*}
For instance, we can define the first and the second projection by
\begin{align*}
  \fst &: A \times B \to A, & \snd &: A\times B \to B,\\
  \fst((a,b)) &\defeq a, & \snd((a,b)) &\defeq b.
\end{align*}

When we see $A$ and $B$ as propositions, the type $A\times B$ represents the \emph{conjunction} of
$A$ and $B$. Indeed proving that “$A$ and $B$” holds is equivalent to proving that both $A$ and $B$
hold, therefore a proof of “$A$ and $B$” can be seen as a pair $(a,b)$ where $a$ is a proof of $A$
and $b$ is a proof of $B$.

\subsection{Dependent pairs}

The second component of an element of $A\times B$ always has type $B$. It is possible to generalize
pair types in order to allow the type of the second component to depend on the value of the first
component. More precisely, given a type $A$ and a dependent type $B$ over $A$, there is a type
written
\[\sum_{x:A}B(x)\]
representing \emph{dependent pairs}. Such types are often called \emph{$\Sigma$-types}. Given $a:A$
and $b:B(a)$, we can construct the dependent pair
\[(a,b):\sum_{x:A}B(x).\]
One can define a function out of it in the same way as for non-dependent pair types. For instance
the first and second projections are defined by
\begin{align*}
  \fst &: \sum_{x:A}B(x) \to A, & \snd &: \left(z:\sum_{x:A}B(x)\right) \to B(\fst(z)),\\
  \fst((x,y)) &\defeq x, & \snd((x,y)) &\defeq y.
\end{align*}
Note that, this time, the second projection is a dependent function because the type of the second
component of a dependent pair depends on the first component.

It is possible to nest $\Sigma$-types in order to obtain types of arbitrary $n$-tuples. For instance
the type of semigroups can be defined as the type
\begin{align*}
  \operatorname{SemiGroup} &: \Type,\\
  \operatorname{SemiGroup} &\defeq \sum_{G:\Type}\sum_{m:G\to G\to G}((x,y,z:G)\to m(m(x,y),z) =_G m(x,m(y,z))).
\end{align*}
In other words, a semigroup is a triple $(G,(m,a))$ where $G$ is a type, $m$ is a function of type
$G\to G\to G$ (the multiplication operation), and $a$ is a proof of associativity of $m$, i.e., $a$
is a function taking three arguments $x$, $y$ and $z$ of type $G$ and returning an equality between
$m(m(x,y),z)$ and $m(x,m(y,z))$. Note that the type of $m$ depends on $G$, and that in turn the type
of $a$ depends on $m$.

When we see $B$ as a fibration over $A$, the type $\sum_{x:A}B(x)$ corresponds to the total space of
$B$. This will be used in particular in the flattening lemma in section \ref{flattening}. When we
see $B$ as a predicate on $A$, the type $\sum_{x:A}B(x)$ corresponds to the type of elements of $A$
which satisfy $B$. Note that there is a subtlety here because if for some $a:A$ there are several
distinct elements in $B(a)$, then $a$ is counted several times which isn’t what we want in
general. We can also see $\sum_{x:A}B(x)$ as corresponding to the proposition “there exists an $x:A$
satisfying $B(x)$”. Indeed, one can prove this proposition by exhibiting an $x:A$ and a proof that
$B(x)$ holds, i.e.\ an element of $\sum_{x:A}B(x)$. However this would be more accurately called
\emph{explicit existence} because it requires us to choose an explicit $x$ satisfying $B$, which
might be a too strong requirement in some cases. We will come back to both problems in section
\ref{sec:merepropsandlogic}.

\section{Inductive types}

We now present inductive types, which give a wide variety of type formers, including base types. The
general idea is that an \emph{inductive type} $T$ is presented by a list of \emph{constructors}
which describe all the different ways of constructing elements of $T$ and, in some sense which is
made precise by an \emph{induction principle} (or \emph{elimination rule}), the only elements of $T$
are those given by the constructors. In this section, all equalities (introduced by the symbol
$\defeq$) are equalities by definition. We now give various examples of inductive types.

\paragraph{Natural numbers}

The canonical example of an inductive type is the type of \emph{natural numbers} $\N$. The two
constructors are
\begin{align*}
  0 &: \N,\\
  \succS &: \N\to\N.
\end{align*}
In other words there are two ways to construct a natural number: either we take $0$ or we take the
successor of an already constructed natural number. We use the usual notation $1\defeq \succS(0)$,
$2\defeq \succS(\succS(0))$, and so on, and we write $n+1$ for $\succS(n)$.

The induction principle states that given a dependent type $P$ over $\N$, we can define a section of
it by giving $f_0$ and $f_{\succS}$ as follows:
\begin{align*}
  f &: (n : \N) \to P(n),\\
  f(0) &\defeq f_0,\\
  f(n+1) &\defeq f_{\succS}(n,f(n)),
\end{align*}
where we have
\begin{align*}
  f_0 &: P(0),\\
  f_{\succS} &: (n : \N) \to P(n) \to P(n+1).
\end{align*}
For instance, one can define addition and multiplication on natural numbers by
\begin{align*}
  \add &: \N \to \N \to \N, & \mul &: \N \to \N \to \N,\\
  \add(0, n) &\defeq n, & \mul(0, n) &\defeq 0,\\
  \add(m + 1, n) &\defeq \add(m,n) + 1, & \mul(m + 1, n) &\defeq \add(\mul(m,n),n).
\end{align*}

\paragraph{The unit type}

The \emph{unit type} is the inductive type $\Unit$ with one constructor
\[\ttt:\Unit.\]
Its induction principle states that if $P(x)$ is a dependent type over $x:\Unit$, then we can
construct a section of $P(x)$ by
\begin{align*}
  f &: (x : \Unit) \to P(x),\\
  f(\ttt) &\defeq f_{\ttt},
\end{align*}
where $f_{\ttt}:P(\ttt)$.

\paragraph{The type of booleans}

The \emph{type of booleans} or \emph{$2$-element type} is the inductive type $\Bool$ with two
constructors
\[\true,\false:\Bool.\]
Its induction principle states that if $P(x)$ is a dependent type over $x:\Bool$, then we can
construct a section of $P(x)$ by
\begin{align*}
  f &: (x : \Bool) \to P(x),\\
  f(\true) &\defeq f_{\true},\\
  f(\false) &\defeq f_{\false},
\end{align*}
where $f_{\true}:P(\true)$ and $f_{\false}:P(\false)$.

\paragraph{Disjoint sum}

Given two types $A$ and $B$, their \emph{disjoint sum} is the inductive type $A+B$ with the two
constructors
\begin{align*}
  \inl &: A \to A+B,\\
  \inr &: B \to A+B.
\end{align*}
Its induction principle states that if $P(x)$ is a dependent type over $x:A+B$, then we can
construct a section of $P(x)$ by
\begin{align*}
  f &: (x : A+B) \to P(x),\\
  f(\inl(a)) &\defeq f_{\inl}(a),\\
  f(\inr(b)) &\defeq f_{\inr}(b),
\end{align*}
where
\begin{align*}
  f_{\inl} &: (a : A) \to P(\inl(a)),\\
  f_{\inr} &: (b : B) \to P(\inr(b)).
\end{align*}

This type can also be used to represent \emph{explicit disjunction}. Indeed, when $A$ and $B$ are
seen as propositions, an element of $A+B$ is either a proof of $A$ or a proof of $B$, hence it’s a
proof of “$A$ or $B$”. As for explicit existence, the drawback is that it requires us to choose
whether we proved the left-hand side or the right-hand side which might be a too strong
requirement. We will come back to that in section \ref{sec:merepropsandlogic}.

\paragraph{Integers}

The type of \emph{integers} is the inductive type $\Z$ with the three constructors
\begin{align*}
  \negZ &: \N \to \Z,\\
  0_{\Z} &: \Z,\\
  \posZ &: \N \to \Z.
\end{align*}
An integer is either $0_{\Z}$, $\posZ(n)$ for $n:\N$ (which represents $n+1$) or $\negZ(n)$ for
$n:\N$ (which represents $-(n+1)$). There is again an induction principle derived from the
constructors. For instance one can define the operation of adding one by
\begin{align*}
  \succZ &: \Z \to \Z,\\
  \succZ(\negZ(n+1)) &\defeq \negZ(n),\\
  \succZ(\negZ(0)) &\defeq 0_{\Z},\\
  \succZ(0_{\Z}) &\defeq \posZ(0),\\
  \succZ(\posZ(n)) &\defeq \posZ(n+1).
\end{align*}
Note that here we have used both the induction principle for $\Z$ and the one for $\N$ (in the case
$\negZ$).

\paragraph{The empty type}

The \emph{empty type} is the inductive type $\bot$ (also called $\Zero$) without any constructor. In
particular there is no way to construct an element of it. Its induction principle states that any
dependent type over $\bot$ has a section. For instance there is a canonical map $\bot\to A$ for any
type $A$.

The empty type is used to represent the proposition False. Indeed, False is the proposition that
doesn’t have any proof. It is also used for defining negation. When $A$ is a type seen as a
proposition, the type representing its negation is $\neg A\defeq(A\to\bot)$. For instance, we can
prove the principle of reasoning by contraposition by
\begin{align*}
  \operatorname{contraposition} &: (A\to B) \to (\neg B \to \neg A),\\
  \operatorname{contraposition}(f,b',a) &\defeq b'(f(a)).
\end{align*}
Here $f$ is a function from $A$ to $B$, $b'$ is a function from $B$ to $\bot$ and $a$ is a element
of $A$, therefore by applying $f$ and then $b'$ to $a$ we get an element of type $\bot$ which is
what we wanted.

\paragraph{General inductive types}

More generally one can introduce new inductive types whenever needed, but there are a few
constraints that have to be satisfied in order for them to make sense and be consistent. In
particular:
\begin{itemize}
\item Every constructor must end with the type being defined, i.e.\ a constructor cannot give
  elements in a different type (we will slightly relax this condition in section \ref{sec:hit} on
  higher inductive types).
\item The recursive occurrences of the type being defined have to be in strictly positive positions,
  i.e.\ they can only appear as the codomain of an argument of the constructors.
\end{itemize}
We refer to \cite[chapter 5]{hottbook} for more details on inductive types.

\section{Identity types}\label{idtypes}

Given a type $A$ and two elements $u$ and $v$ of type $A$, there is a type $u=_Av$ (or simply $u=v$
when $A$ is clear from the context) called the \emph{identity type} or \emph{equality type}. Its
elements are called \emph{paths}, \emph{equalities} or \emph{identifications}. The idea is that when
we see $A$ as a space, the type $u=_Av$ corresponds to the space of (continuous) paths from $u$ to
$v$. To understand the notation, note that if $A$ is just a set seen as a discrete space, then there
is a path between $u$ and $v$ if and only if $u$ and $v$ are equal. Therefore, in that case the type
$u=_Av$ does correspond to the logical proposition “$u$ is equal to $v$”.

For every point $a:A$, there is a path
\[\idp a:a=_Aa\]
which we call the \emph{constant path at $a$}. Moreover, we have an “induction principle”. Given an
point $a:A$, a dependent type $P:(x:A)(p:a=_Ax)\to\Type$ and an element $d:P(a,\idp a)$, there is a
function
\[\J_P(d) : (x : A) (p : a=_Ax) \to P(x,p)\]
together with an equality (called \emph{the computation rule of $\J$})
\begin{equation}
  \tag{$\jcomp$}
  \J_P(d)(a,\idp a) = d.
\end{equation}
We refer to the use of this operator $\J$ as \emph{path induction}. The idea is that when we want to
prove/construct something depending on a path $p$ where one endpoint of $p$ is free (the $x$ above),
then it is enough to prove/construct it in the case where $p$ is the constant path. It is important
that one of the endpoints be free, for instance if we only have $P:(a=_Aa)\to\Type$ and
$d:P(\idp a)$, we \emph{cannot} deduce that $P(p)$ hold for every $p:a=_Aa$. This last statement is
known as the “$\mathsf{K}$ rule” and is equivalent to the principle of uniqueness of identity proofs
which we do not want.

There is some debate on whether the equality $(\jcomp)$ should be taken as a definitional equality
or not. On the one hand it was assumed definitional in the original definition of the identity type
by Per Martin--Löf in \cite{ML75} and most of the work done in homotopy type theory so far has been
done assuming it is definitional, in particular the reference book \cite{hottbook}. On the other
hand the original motivation for having a definitional equality was that the family of identity
types was supposed to be inhabited only by elements of the form $\idp a$, but homotopy type theory
challenges this intuition by adding new elements to the identity types via the univalence axiom and
higher inductive types. In this thesis we assume that it holds definitionally, but the only place
where we really use it is in the definition of the structure of weak $\infty$-groupoid, and we
conjecture that it is not actually required.

\subsection{The weak $\infty$-groupoid structure of types}\label{sec:infgpd}

The path induction principle allows us to equip every type $A$ with a very rich structure making $A$
into what is known as an \emph{weak $\infty$-groupoid}. In this chapter we give an intuitive
presentation by giving many examples of operations part of this structure, and we give a precise
definition of weak $\infty$-groupoids in appendix \ref{ch:infgpd}.

The first thing to notice is that the identity type operation can be iterated. If we have
$p,q:u=_Av$ we can consider the type $p=_{u=_Av}q$, and if $\alpha,\beta:p=_{u=_Av}q$ we can
consider the type $\alpha=_{p=_{u=_Av}q}\beta$, and so on. If we see $p$ and $q$ as proofs of
equality between $u$ and $v$, it seems rather odd to talk about equalities between them, let alone
equalities between equalities between them. However, when seeing $u$ and $v$ as two points in a
space $A$ and $p$ and $q$ as two paths between $u$ and $v$, it makes sense to think of $p=_{u=_Av}q$
as the type of homotopies between $p$ and $q$ and then $\alpha=_{p=_{u=_Av}q}\beta$ as the type of
homotopies between the homotopies $\alpha$ and $\beta$, and so on. Let’s now look at several
examples of operations (called \emph{coherence operations}) that we can do on those paths and higher
paths.

\paragraph{Inverse}

If $p:a=b$ is a path in $A$, then there is a path $p\inv:b=a$ called the \emph{inverse path} of
$p$. We write this operation as follows:
\[(a,b:A)(p:a=b) \mapsto (p\inv:b=a).\]
To define it, the idea is to do a path induction on $p$, where the dependent type $P$ is
$P(b,p)\defeq(b=a)$. We then need to give an element of $P(a,\idp a)$ (i.e.\ $a= a$) and we use
$\idp a$. In particular, we get the equality by definition \[\idp a\inv\defeq\idp a.\]
If we do not take $(\jcomp)$ as an equality by definition, then the equality still holds but only in
the sense that there is a path
\[\operatorname{inv-def}:\idp a\inv=\idp a.\]

\paragraph{Composition}

If $p:a=b$ and $q:b=c$ are two paths in $A$, then there is a path $p\concat q:a=c$ called the
\emph{composition} of $p$ and $q$. This operation is written as
\[(a,b:A)(p:a=b)(c:A)(q:b=c) \mapsto (p\concat q:a=c).\]
It is obtained by applying the $\J$ rule successively to $q$ and $p$ and we have the equality
\[\idp a\concat\idp a\defeq\idp a\]
by definition.
If $(\jcomp)$ is not an equality by definition, then we have instead a term
\[\operatorname{comp-def}:\idp a\concat\idp a=\idp a.\]

\paragraph{Associativity}

If we take three composable paths $p:a=b$, $q:b=c$ and $r:c=d$, then there are two ways to compose
them and we define an operation as follows:
\[(a,b:A)(p:a=b)(c:A)(q:b=c)(d:A)(r:c=d) \mapsto (\alpha_{p,q,r} : (p \concat q) \concat r = p
\concat (q \concat r)).\]
By path induction on $r$, $q$ and $p$, it suffices to give $\alpha_{\idp a,\idp a,\idp a}$ of type
$(\idp a\concat\idp a)\concat\idp a=\idp a\concat(\idp a\concat\idp a)$. But we have
$\idp a\concat\idp a=\idp a$ by definition, therefore the equality holds by definition and
$\idp{\idp a}$ fits:
\[\alpha_{\idp a,\idp a,\idp a}\defeq\idp{\idp a}.\]
Note that if we do not take $(\jcomp)$ as an equality by definition, then it is still possible to
find a term of type $(\idp a\concat\idp a)\concat\idp a=\idp a\concat(\idp a\concat\idp a)$ but it
is more complicated to define as we need to explicitly combine several uses of
$\operatorname{comp-def}$. The complexity would increase even more for more complicated coherence
operations.

\paragraph{Inverse and identity laws}

Similarly we have the inverse and identity laws:
\begin{align*}
  (a,b:A)(p:a=b) &\mapsto (\zeta_p:p\concat p\inv=_{a=a}\idp a),\\
  (a,b:A)(p:a=b) &\mapsto (\eta_p:p\inv\concat p=_{b=b}\idp b),\\
  (a,b:A)(p:a=b) &\mapsto (\rho_p:p\concat \idp b=_{a=b}p),\\
  (a,b:A)(p:a=b) &\mapsto (\lambda_p:\idp a\concat p=_{a=b}p).
\end{align*}
These four operations are defined by path induction on $p$, using the fact that $\idp a\inv=\idp a$
and $\idp a\concat\idp a=\idp a$ by definition, and we have
\begin{align*}
  \zeta_{\idp a} &\defeq \idp{\idp a}, & \eta_{\idp a} &\defeq \idp{\idp a},\\
  \rho_{\idp a} &\defeq \idp{\idp a}, & \lambda_{\idp a} &\defeq \idp{\idp a}.
\end{align*}

\paragraph{Vertical and horizontal composition}

\newcommand{\phreplace}[2]{\makebox[0pt][l]{#1}\phantom{#2}}

Higher dimensional paths can be composed in a variety of ways. For $2$-dimensional paths, we first
consider \emph{vertical composition}, which is the operation
\[(a,b:A)(p,q:a=b)(\alpha : p=_{a=b}q)(r:a=b)(\beta : q=_{a=b}r) \mapsto \alpha\vcomp\beta :
p=_{a=b}r\]
and which corresponds to the diagram
\[
\begin{tikzcd}[sdiag,column sep=5em]
  \phreplace{$a$}{b} \arrow[r,bend left=75,"p",""{anchor=base,pos=0.5,name=p}]
    \arrow[r,"q" near start,""{anchor=base,pos=0.5,name=q}]
    \arrow[r,bend right=75,"r"',""{anchor=base,pos=0.5,name=rp}]
  & b
  \arrow[Rightarrow,from=p,to=q,"\alpha"]
  \arrow[Rightarrow,from=q,to=rp,"\beta"]
\end{tikzcd}
\]
Note that vertical composition is the same thing as regular composition in the type $a=b$, therefore
we use the same notation.
We also have \emph{horizontal composition}, which is the operation
\begin{align*}
  (a,b:A)(p,q:a=b)(\alpha :p=_{a=b}q)\qquad&\\(c:A)(p',q':b=c)(\beta : p'=_{b=c}q') &\mapsto \alpha\hcomp\beta :
                                                                          (p\concat p')=_{a=c}(q\concat q')
\end{align*}
and which corresponds to the diagram
\[
\begin{tikzcd}[sdiag,column sep=5em]
  \phreplace{$a$}{b} \arrow[r,bend left=45,"p",""{anchor=base,pos=0.5,name=p1}]
    \arrow[r,bend right=45,"q"',""{anchor=base,pos=0.5,name=q1}]
  & b \arrow[r,bend left=45,"p'",""{anchor=base,pos=0.5,name=p2}]
  \arrow[r,bend right=45,"q'"',""{anchor=base,pos=0.5,name=q2}]
  & \phreplace{$c$}{b}
  \arrow[Rightarrow,from=p1,to=q1,"\alpha"]
  \arrow[Rightarrow,from=p2,to=q2,"\beta"]
\end{tikzcd}
\]
These two operations are defined by successive path inductions and we have
\begin{align*}
  \idp{\idp a}\concat\idp{\idp a} &\defeq \idp{\idp a},\\
  \idp{\idp a}\hcomp\idp{\idp a} &\defeq \idp{\idp a}.
\end{align*}

\paragraph{Exchange law}
Given four $2$-dimensional paths as follows:
\[
\begin{tikzcd}[sdiag,column sep=5em]
  \phreplace{$a$}{b} \arrow[r,bend left=75,"p",""{anchor=base,pos=0.5,name=p1}]
    \arrow[r,"q" near start,""{anchor=base,pos=0.5,name=q1}]
    \arrow[r,bend right=75,"r"',""{anchor=base,pos=0.5,name=r1}]
  & b \arrow[r,bend left=75,"p'",""{anchor=base,pos=0.5,name=p2}]
    \arrow[r,"q'" near start,""{anchor=base,pos=0.5,name=q2}]
    \arrow[r,bend right=75,"r'"',""{anchor=base,pos=0.5,name=r2}]
  & \phreplace{$c$}{b}
  \arrow[Rightarrow,from=p1,to=q1,"\alpha"]
  \arrow[Rightarrow,from=q1,to=r1,"\beta"]
  \arrow[Rightarrow,from=p2,to=q2,"\alpha'"]
  \arrow[Rightarrow,from=q2,to=r2,"\beta'"]
\end{tikzcd}
\]
we can consider the two compositions
\[(\alpha\vcomp\beta)\hcomp(\alpha'\vcomp\beta'):p\concat p'=r\concat r'\]
and
\[(\alpha\hcomp\alpha')\vcomp(\beta\hcomp\beta'):p\concat p'=r\concat r'.\]
The \emph{exchange law} states that there is a $3$-dimensional path equating them and is defined by
successive path inductions, using the fact that
\begin{align*}
  (\idp{\idp a}\vcomp\idp{\idp a})\hcomp(\idp{\idp a}\vcomp\idp{\idp a}) &= \idp{\idp a},\\
  (\idp{\idp a}\hcomp\idp{\idp a})\vcomp(\idp{\idp a}\hcomp\idp{\idp a}) &= \idp{\idp a}
\end{align*}
by definition. We use it in the Eckmann--Hilton argument in proposition \ref{eckmannhilton}.

\paragraph{Pentagon of associativities}

Given four composable paths $p$, $q$, $r$ and $s$, there are two different ways to go from
$((p\concat q)\concat r)\concat s$ to $p\concat(q\concat (r\concat s))$, and the following pentagon
states that there is a $3$-dimensional path between them:
\[
\begin{tikzpicture}[commutative diagrams/every diagram]
  \node[label=below:{\small $p\concat(q\concat(r\concat s))$}] (P0) at (-54:8em) {$\bullet$};
  \node[label=right:{\small $(p\concat q)\concat(r\concat s)$}] (P1) at (18:8em) {$\bullet$};
  \node[label=above:{\small $((p\concat q)\concat r)\concat s$}] (P2) at (90:8em) {$\bullet$};
  \node[label=left:{\small $(p\concat(q\concat r))\concat s$}] (P3) at (162:8em) {$\bullet$};
  \node[label=below:{\small $p\concat((q\concat r)\concat s)$}] (P4) at (234:8em) {$\bullet$};

  \path[commutative diagrams/.cd, every arrow, every label, labels={font=\small}]
  (P2) edge[sdiag] node[swap] {\Large $\alpha_{p\concat q,r,s}$} (P1)
       edge[sdiag] node[near end] {\Large $\alpha_{p,q,r}\,\hcomp\,\idp s$} (P3)
  (P1) edge[sdiag] node[swap] {\Large $\alpha_{p,q,r\concat s}$} (P0)
  (P3) edge[sdiag] node {\Large $\alpha_{p,q\concat r,s}$} (P4)
  (P4) edge[sdiag] node {\Large $\idp p\,\hcomp\,\alpha_{q,r,s}$} (P0);
\end{tikzpicture}
\]
It corresponds to the coherence operation
\begin{align*}
&(a,b:A)(p:a=b)(c:A)(q:b=c)(d:A)(r:c=d)(e:A)(s:d=e) \mapsto\\ &\qquad\qquad\qquad (\pi_p:(\alpha_{p\concat q,r,s}\concat\alpha_{p,q,r\concat s})=(\alpha_{p,q,r}\hcomp\idp s)\concat\alpha_{p,q\concat r,s}\concat(\idp p\hcomp\alpha_{q,r,s})).
\end{align*}
and it is defined by successive path inductions and using the fact that
\begin{align*}
  \alpha_{\idp a\concat \idp a,\idp a,\idp a} \concat \alpha_{\idp a,\idp a,\idp a\concat \idp a} &= \idp{\idp a}
\end{align*}
and
\begin{align*}
  (\alpha_{\idp a,\idp a,\idp a}\hcomp\idp{\idp a})\concat\alpha_{\idp a,\idp a\concat \idp a,\idp
  a}\concat(\idp{\idp a}\hcomp\alpha_{\idp a,\idp a,\idp a}) &= \idp{\idp a}
\end{align*}
by definition.
\paragraph{General coherence operations}

The operations we just presented are only examples of coherence operations. There are many others
and in all dimensions. The structure of a general coherence operation is as follows: the list of
arguments needs to form a “contractible” shape in the sense that one can apply $\J$ to it until
there is only one point $a:A$ left, and the return type can be any iterated identity type between
two terms built using the variables and other coherence operations. All coherence operations have
the property that they are equal by definition to the iterated constant path when applied only to
constant paths. This property allows us to define coherence operations by successive path inductions
as we did above in the examples. A precise definition of weak $\infty$-groupoids together with a
proof that path induction gives such a structure on types is presented in appendix \ref{ch:infgpd}.

\subsection{Continuity of maps}\label{sec:cont}

Given a map $f:A\to B$ and a path $p:a=_Ab$ in $A$, we can \emph{apply} $f$ to $p$, and obtain a
path in $B$ between $f(a)$ and $f(b)$:
\[\ap f(p) : f(a)=_Bf(b).\]
This path is defined by path induction on $p$ and we have the equality
\[
\ap f(\idp a) = \idp{f(a)}.
\]
Here are some useful properties of $\ap{}$, which can all be proved by path induction.
\begin{itemize}
\item If $f=(\lambda x.a)$ is a constant function, then $\ap f(p)=\idp a$ for every $p$.
\item If $f=(\lambda x.x)$ is the identity function, then $\ap f(p)=p$ for every $p$.
\item If $f=g\circ h$, then $\ap {g\circ h}(p)=\ap g(\ap h(p))$.
\end{itemize}
Note that we have defined $\ap{}$ only for non-dependent functions, we will see later how to define
it for dependent functions as well.

If we see $a=_Ab$ as the proposition “$a$ and $b$ are equal”, then $\ap{}$ simply states that
application of functions respects equality, which is something very reasonable to ask. However, if
we see $a=_Ab$ as the space of paths from $a$ to $b$, then $\ap{}$ states that we can apply
functions not only to points but also to paths. One can also apply functions to $2$-dimensional
paths by
\begin{align*}
  \ap f^2 &: (p=_{a=_Ab}q) \to (\ap f(p)=_{f(a)=_Bf(b)}\ap f(q)),\\
  \ap f^2(p) &\defeq \ap{\ap f}(p).
\end{align*}
The fact that we can apply functions to paths and to higher dimensional paths is compatible with the
intuition that, in homotopy type theory, all functions are continuous.

When we see types as weak $\infty$-groupoid as sketched previously, then any map $f:A\to B$ should be
seen as an \emph{$\infty$-functor}. Indeed , it turns out that (the iterated versions of) $\ap f$
commute with all coherence operations. For instance given any two composable paths $p:a=_Ab$ and
$q:b=_Ac$ in $A$, we have an equality
\[\ap f(p\concat q) = \ap f(p)\concat\ap f(q),\]
which can be proved by path induction on $p$ and $q$. It also holds for higher coherences but it is
more complicated to write. For instance for associativity, given $p:a=_Ab$, $q:b=_Ac$ and $r:c=_Ad$
we have
\begin{align*}
  \ap f^2(\alpha_{p,q,r}) &: \ap f((p\concat q)\concat r) = \ap f(p\concat (q\concat r)),\\
  \alpha_{\ap f(p),\ap f(q),\ap f(r)} &: ((\ap f(p)\concat\ap f(q))\concat\ap f(r)) = (\ap
                                        f(p)\concat(\ap f(q)\concat\ap f(r)))
\end{align*}
and the commutation of $\ap f^2$ with $\alpha$ states that they are equal after composition with the
two paths
\begin{align*}
  \ap f((p\concat q)\concat r) &= (\ap f(p)\concat\ap f(q))\concat\ap f(r),\\
  \ap f(p\concat (q\concat r)) &= \ap f(p)\concat(\ap f(q)\concat\ap f(r))
\end{align*}
constructed using the fact that $\ap f$ commutes with composition of paths.

\section{The univalence axiom}\label{sec:ua}

Given two types $A$ and $B$, we can consider the identity type $A=_{\Type} B$ of paths between $A$
and $B$ in $\Type$. An equality between two types allows us to transport elements from one type to
the other, we have the two functions
\begin{align*}
  \coe &: (A=_{\Type} B) \to (A \to B),\\
  \coe\inv &: (A=_{\Type} B) \to (B \to A),
\end{align*}
which are defined by path induction, and which satisfy
\begin{align*}
  \coe_{\idp A}(a) &= a,\\
  \coe\inv_{\idp A}(a) &= a.
\end{align*}
We do not take these equalities as definitional equalities because we never need it and we wish to
limit the use of the definitional $(\jcomp)$.  We can also easily check that those two functions are
inverse to each other, again by path induction:
\begin{align*}
  \coeidp &: (p : A=_{\Type}B) (a : A) \to \coe\inv_p(\coe_p(a))=_Aa,\\
  \coeidp' &: (p : A=_{\Type}B) (b : B) \to \coe_p(\coe\inv_p(b))=_Bb.
\end{align*}

Another function related to $\coe$ is the transport function. Given a type $A$ and a dependent type
$P:A\to\Type$, we have the function
\begin{align*}
  \transport^P &: (a=_Ab) \to (P(a) \to P(b)),\\
  \transport^P_p(x) &\defeq \coe_{\ap P(p)}(x)
\end{align*}
defined using the fact that $\ap P(p)$ is a path in the universe from $P(a)$ to $P(b)$. We can also
transport in the other direction and it gives an equivalence between $P(a)$ and $P(b)$.

The \emph{univalence axiom} states that conversely, any “equivalence” between two types can be
turned into a path in the universe. We first define equivalences.
\begin{proposition}\label{equivalences}
  There is a predicate $\isequiv:(A,B:\Type)(f:A\to B)\to\Type$ where $\isequiv(f)$ is read as “$f$
  is an equivalence”, which has the following properties.
  \begin{itemize}
  \item A function $f$ is an equivalence if and only if there is a function $g:B\to A$ such that
    $g(f(x))=x$ for all $x:A$ and $f(g(y))=y$ for all $y:B$.
  \item Given a function $f:A\to B$, any two elements of $\isequiv(f)$ are equal.
  \end{itemize}
\end{proposition}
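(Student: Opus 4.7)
The plan is to define $\isequiv(f)$ not as the raw type of quasi-inverses (which is \emph{not} a mere proposition in general), but as a refined version with just enough extra coherence to be propositional. The cleanest choice is the type of \emph{half-adjoint equivalences}:
\[
\isequiv(f) \defeq \sum_{g:B\to A}\sum_{\eta:(x:A)\to g(f(x))=x}\sum_{\epsilon:(y:B)\to f(g(y))=y}(x:A)\to \ap f(\eta(x))=\epsilon(f(x)).
\]
An alternative (and in many ways more conceptual) choice would be to take $\isequiv(f)$ to be the assertion that every fiber of $f$ is contractible; either definition works and one would eventually prove them equivalent.

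For the first bullet, one direction is trivial: from a half-adjoint equivalence I just forget the coherence $\tau$ and return the pair $(g,\eta,\epsilon)$ to obtain the quasi-inverse data. The non-trivial direction is that from any quasi-inverse $(g,\eta,\epsilon)$ one can construct a half-adjoint equivalence by keeping $g$ and $\eta$ but \emph{adjusting} $\epsilon$ so that the triangle identity holds. The standard trick is to replace $\epsilon$ by
\[
\epsilon'(y) \defeq \epsilon(f(g(y)))\inv \concat \ap f(\eta(g(y))) \concat \epsilon(y),
\]
and then prove $\ap f(\eta(x))=\epsilon'(f(x))$ by path induction after a little groupoid manipulation using the naturality squares for $\eta$ and $\epsilon$ deduced from the $\infty$-groupoid structure of section \ref{sec:infgpd}.

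For the second bullet, the strategy is standard but technical: I first show that if $f$ is an equivalence then the type $\sum_{g,\eta,\epsilon,\tau}$ is equivalent to the type of proofs that each fiber $\fib_f(y)\defeq\sum_{x:A}f(x)=y$ is contractible. The latter is a dependent product of contractibility assertions, and being contractible is itself a mere proposition (which in turn follows by path induction on a path from the center of contraction to an arbitrary element). Hence $\isequiv(f)$ is equivalent to a product of mere propositions, so any two of its elements are equal. The main obstacle is really this step: it requires the characterization of paths in $\Sigma$-types (to reduce equalities of $(g,\eta,\epsilon,\tau)$-tuples to pointwise data) and a careful use of path induction to absorb the coherence $\tau$ into the fiber-based reformulation. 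Once the packaging is set up, the verification that the half-adjoint and contractible-fibers formulations match is a chain of routine equivalences between $\Sigma$-types, and propositionality falls out.
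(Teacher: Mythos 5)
Your proof is correct in outline, but takes a genuinely different route from the paper's. The paper defines $\isequiv(f)$ as the type of \emph{bi-invertible maps}: a left inverse $g$ with $g\circ f\sim\id_A$, paired (independently) with a right inverse $h$ with $f\circ h\sim\id_B$. You instead choose \emph{half-adjoint equivalences}: a single two-sided quasi-inverse $(g,\eta,\epsilon)$ together with the triangle coherence $\tau:(x:A)\to\ap f(\eta(x))=\epsilon(f(x))$. Both definitions are standard, both are mere propositions, and both are logically equivalent to having a quasi-inverse, so the choice is a matter of taste and of which pieces of the argument one prefers to work hard on. The paper's bi-invertible definition is shorter to write, and its propositionality proof is arguably the cleanest: once $f$ is known to be an equivalence, the type of left inverses and the type of right inverses are each contractible, so their product is. Your half-adjoint version has the compensating virtue that the first bullet's hard direction (quasi-inverse $\Rightarrow$ half-adjoint, via the adjusted $\epsilon'$) is an explicit, self-contained piece of path algebra, whereas the bi-invertible formulation produces two separate inverses and must then argue they agree. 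On the other hand, your second bullet is doing more work than you let on: proving $\isequiv(f)$ equivalent to the contractible-fibers formulation \emph{before} knowing either side is a proposition is not a chain of routine $\Sigma$-rearrangements — it requires computing the fibers of the forgetful maps and absorbing the coherence cell $\tau$ into that computation (this is the content of the HoTT book's Lemmas 4.2.11–4.2.12 and Theorem 4.2.13). Your sketch is right about where the weight falls, but the word ``routine'' undersells it; the bi-invertible route sidesteps that particular thicket.
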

\begin{proof}
  The first point makes it seem like we could define $\isequiv$ by
  \[\isequiv(f)\overset{?}\defeq\sum_{g:B\to A}\left(((x:A)\to g(f(x))=_Ax)\times((y:B)\to
    f(g(y))=_By)\right),\]
  but it turns out that this definition does not satisfy the second point of the proposition. One
  possibility to fix it is to consider separately one left-inverse and one right-inverse, instead of
  one two-sided inverse:
  \begin{align*}
    \isequiv(f)&\defeq\Bigg(\sum_{g:B\to A}((x:A)\to g(f(x))=_Ax)\Bigg)\\
               &\times\Bigg(\sum_{h:B\to A}((y:B)\to f(h(y))=_By)\Bigg).
  \end{align*}
  The two inverses turn out to be equal and that definition now satisfies the second point of the
  proposition.  We refer to \cite[section 4.3]{hottbook} for a proof that this definition of
  equivalences is suitable and to \cite[chapter 4]{hottbook} for many other equivalent definitions
  of equivalences.
\end{proof}
We can now state the univalence axiom. We first define the type of equivalences between two types by
\[A\simeq B \defeq \sum_{f:A\to B}\isequiv(f).\]
There is a map $(A=_{\Type}B)\to(A\simeq B)$ sending a path $p$ to the equivalence given by
$(\coe_p,\coe\inv_p,\coeidp_p,\coeidp'_p)$.
\begin{axiom}[Univalence axiom]
  The map \[(A=_{\Type}B)\to(A\simeq B)\] is an equivalence.
\end{axiom}
In particular, it means that there is a map
\begin{align*}
  \ua&:(A\simeq B)\to (A=_{\Type} B),
\end{align*}
which allows us to construct an equality between two types given an equivalence between them, and if
$e:A\simeq B$ is an equivalence then $\coe_{\ua(e)}$ is equal to the underlying function of $e$.

\section{Dependent paths and squares}

The notion of path we described in section \ref{idtypes} is \emph{homogeneous} in that it can only
be applied between two elements of the same type $A$. It does not make sense, at least in homotopy
type theory, to talk about a path between a point $a:A$ and a point $b:B$ for $A$ and $B$ two
unrelated types. However, if $A$ and $B$ are themselves connected by a path in $\Type$, then we can
make sense of it.
\begin{proposition}
  Given a path $p:A=_{\Type} B$ in $\Type$ between two types $A$ and $B$ and two terms $a:A$ and
  $b:B$, there is a type of \emph{heterogeneous paths} between $a$ and $b$ over $p$ written
  \[a=_pb\] and satisfying the two equivalences
  \begin{align*}
    (a=_pb) &\simeq (\coe_p(a) =_Bb),\\
    (a=_pb) &\simeq (a=_A\coe\inv_p(b)).
  \end{align*}
\end{proposition}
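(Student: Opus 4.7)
The plan is to define $(a=_pb)$ by path induction on $p$. Fix $A:\Type$ and $a:A$, and consider the family
\[F:(B:\Type)(p:A=_\Type B)(b:B)\to\Type.\]
By path induction on $p$, it suffices to give $F(A,\idp A,b)$ for $b:A$; we set this to be $(a=_Ab)$. One then defines $(a=_pb)\defeq F(B,p,b)$, which by construction reduces to $(a=_Ab)$ in the case $p=\idp A$.

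To prove the first equivalence, proceed again by path induction on $p$. It suffices to exhibit an equivalence $(a=_{\idp A}b)\simeq(\coe_{\idp A}(a)=_Ab)$. By construction the left-hand side is $(a=_Ab)$, and the propositional equality $\coe_{\idp A}(a)=_Aa$ yields an equivalence $(a=_Ab)\simeq(\coe_{\idp A}(a)=_Ab)$ by transporting in the first argument of the identity type. Path induction then extends this to an equivalence $(a=_pb)\simeq(\coe_p(a)=_Bb)$ for every $p$.

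The second equivalence follows by the same argument, now using the propositional equality $\coe\inv_{\idp A}(b)=_Ab$ to construct the equivalence $(a=_Ab)\simeq(a=_A\coe\inv_{\idp A}(b))$ in the base case, and then extending by path induction on $p$.

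There is no substantial obstacle here; the whole proposition is a routine chain of path inductions. The one delicate point is that, as the text announces, we have not taken $\coe_{\idp A}(a)=a$ (or its inverse counterpart) as definitional. Hence in the base case we cannot silently identify $(\coe_{\idp A}(a)=_Ab)$ with $(a=_Ab)$; instead we must build an honest equivalence between them out of the propositional witnesses supplied above. This is however immediate, since any equality between two elements of $A$ induces by transport an equivalence between the corresponding identity types.
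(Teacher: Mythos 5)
Your proof is correct, and it uses one of the routes the paper itself sketches, though not the paper's default one. The paper's remark after the proposition says that either $(\coe_p(a)=_Bb)$ or $(a=_A\coe\inv_p(b))$ can simply be \emph{taken as the definition} of $a=_pb$; with that choice, one of the two stated equivalences is the identity, and only the other requires an argument (by path induction, exactly as in your proof of the equivalences). The paper then adds, as alternatives, that one could define $a=_pb$ by path induction on $p$ (your choice) or as an inductive family. Your approach has the small advantage that $(a=_{\idp A}b)$ reduces directly to the homogeneous type $(a=_Ab)$ under the computation rule for $\J$, which matches the paper's later remark that homogeneous paths embed into heterogeneous ones; the direct definition would instead give $(\coe_{\idp A}(a)=_Ab)$, which is only propositionally equal to $(a=_Ab)$. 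The price is that both stated equivalences require a (short) path-induction argument, whereas with the direct definition one of them is free. You were also right to flag that $\coe_{\idp A}(a)=a$ is only propositional here, and your use of transport along that witness to produce the equivalence of identity types is the correct way to handle it.
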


Either $(\coe_p(a) =_Bb)$ or $(a=_A\coe\inv_p(b))$ can be used as the definition of $a=_pb$. We
could also define it by path induction on $p$, or as an inductive family of types.
As a special case, if $a$ and $b$ have the same type $A$ and $p$ is the constant path, then the type
$a=_{\idp A}b$ is equivalent to the type $a=_Ab$. Therefore homogeneous paths can be seen as a
special case of heterogeneous paths via this equivalence.

Heterogeneous paths allow us to define dependent paths, which are useful in many different
situations as we will see.
\begin{proposition}
  Given a dependent type $B:A\to\Type$ over a type $A$ and a path $p:x=_Ay$ in $A$, for every
  $u:B(x)$ and $v:B(y)$ there is a type of \emph{dependent paths} in $B$ over $p$ from $u$ to $v$
  written
  \[u=^B_pv\]
  and satisfying the two equivalences
  \begin{align*}
    (u=^B_pv) &\simeq (\transport^B(p,u) =_{B(y)}v),\\
    (u=^B_pv) &\simeq (u=\transport^B(p\inv,v)).\\
  \end{align*}
\end{proposition}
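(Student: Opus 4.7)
The plan is to reduce the statement to the previous proposition about heterogeneous paths by using the operation $\apS B$ to transform a path in $A$ into a path in $\Type$.

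First I would define, given $p:x=_Ay$, the path $\ap B(p) : B(x)=_{\Type}B(y)$ by applying the function $B:A\to\Type$ to $p$. Then I would set
\[u=^B_pv \;\defeq\; u =_{\ap B(p)} v,\]
i.e.\ a dependent path over $p$ is by definition a heterogeneous path over $\ap B(p)$.

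Next I would derive the two equivalences from those of the preceding proposition. The first equivalence is immediate: the previous proposition gives $(u=_{\ap B(p)}v)\simeq(\coe_{\ap B(p)}(u)=_{B(y)}v)$, and by the definition of $\transport$ recalled in section \ref{sec:ua} we have $\transport^B(p,u)\defeq\coe_{\ap B(p)}(u)$, so this is exactly the desired $(u=^B_pv)\simeq(\transport^B(p,u)=_{B(y)}v)$. For the second equivalence, the previous proposition yields $(u=_{\ap B(p)}v)\simeq(u=_{B(x)}\coe\inv_{\ap B(p)}(v))$, so I only need to identify $\coe\inv_{\ap B(p)}(v)$ with $\transport^B(p\inv,v)$. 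Unfolding the definition, $\transport^B(p\inv,v)=\coe_{\ap B(p\inv)}(v)$, and two small path-induction lemmas finish the job: $\ap B(p\inv)=(\ap B(p))\inv$ (the coherence of $\apS B$ with inversion mentioned in section \ref{sec:cont}) and $\coe_{q\inv}=\coe\inv_q$ for any path $q$ in the universe (by induction on $q$, using that both sides are the identity when $q$ is constant).

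An alternative I considered is to define $u=^B_pv$ directly by path induction on $p$, taking it to be $u=_{B(x)}v$ when $p=\idp x$, and then prove both equivalences by path induction; this works equally well and has the mild advantage of not routing through the universe, but the chosen route via heterogeneous paths reuses the previous proposition verbatim and so is shorter.

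The main obstacle, such as it is, is purely bookkeeping around $(\jcomp)$: the two auxiliary facts $\ap B(p\inv)=(\ap B(p))\inv$ and $\coe_{q\inv}=\coe\inv_q$ were not stated explicitly above, so I would insert one-line proofs by $\J$ relying on the definitional computation of $\coe$ and $\apS{}$ on the constant path. Nothing deeper is required, and once these are in hand both equivalences follow mechanically.
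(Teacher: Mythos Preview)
Your proposal is correct and follows exactly the paper's approach: define $(u=^B_pv)\defeq(u=_{\ap B(p)}v)$ and inherit the equivalences from the preceding proposition on heterogeneous paths. The paper is in fact terser than you are, simply giving this definition and remarking that there are other possible definitions, without spelling out the transport bookkeeping you (correctly) detail.
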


We can define the type of dependent paths by
\[(u=^B_pv) \defeq (u=_{\ap B(p)}v).\]
This makes sense because $\ap B(p)$ is a path in the universe between $B(x)$ and $B(y)$.  As with
the type of heterogeneous paths, there are many other ways to define it.

Dependent paths are used in particular to define $\ap{}$ for dependent functions.
\begin{definition}\label{def:apd}
  Given a dependent function $f:(x:A)\to B(x)$ and a path $p:a=_Ab$ in $A$, there is a dependent
  path
  \[\ap f(p) : f(a) =^B_p f(b)\]
  defined by path induction and satisfying
  \[\ap f(\idp a)=\idp{f(a)}.\]
  Note that this last equation uses implicitly the equivalence between $u=^B_{\idp a}v$ and
  $u=_{B(a)}v$.
\end{definition}

If $B$ does not depend on $x$, then the types $f(a)=^B_pf(b)$ and $f(a)=_Bf(b)$ are equivalent and
the element $\ap f(p)$ defined here and the one defined in section \ref{sec:cont} are identified by
this equivalence. Therefore we keep the same notation for both and there will be no ambiguity.

In many cases, we can give a different characterization of $u=^B_pv$ depending on $B$. A very useful
instance of that is when $B$ is an identity type.
\begin{proposition}\label{depeqid}
  Given $A,B:\Type$, $f,g:A\to B$, $p:a=_Ab$, $u:f(a)=_Bg(a)$ and
  $v:f(b)=_Bg(b)$, the type
  \[u =^{\lambda z. f(z)=_Bg(z)}_{p}v\]
  is equivalent to the type
  \[(u\cdot \ap g(p)) = (\ap f(p)\cdot v).\]
\end{proposition}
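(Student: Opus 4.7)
The plan is to proceed by path induction on $p$. Since in the statement the endpoint $b$ is free (as is $v:f(b)=_Bg(b)$, whose type depends on $b$), we may apply $\J$ with $a$ fixed and take as motive
\[Q(b,p) \;\defeq\; (v : f(b)=_Bg(b)) \to \bigl((u =^{\lambda z. f(z)=_Bg(z)}_{p}v) \simeq ((u\concat \ap g(p)) = (\ap f(p)\concat v))\bigr).\]
Once we have reduced to the case $p = \idp a$, the two endpoints coincide, $v$ becomes a second element of $f(a)=_Bg(a)$, and we must exhibit an equivalence between the two resulting types.

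First, the dependent path type on the left collapses: as recalled just after the statement of the previous proposition, when $p=\idp a$ the type $u =^B_{\idp a} v$ is equivalent (via the comparison with heterogeneous paths over $\ap{(\lambda z.f(z)=_Bg(z))}(\idp a)$, which is itself $\idp{f(a)=_Bg(a)}$) to the homogeneous type $u =_{f(a)=_Bg(a)} v$. Second, on the right-hand side we use the definitional equalities $\ap f(\idp a) = \idp{f(a)}$ and $\ap g(\idp a) = \idp{g(a)}$ (from section~\ref{sec:cont}), so the right-hand type becomes $(u \concat \idp{g(a)}) = (\idp{f(a)} \concat v)$. Composing with the identity-law equalities $\rho_u : u\concat\idp{g(a)} = u$ and $\lambda_v : \idp{f(a)}\concat v = v$ — more precisely, transporting along them, which is an equivalence — we obtain an equivalence with $u = v$. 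Chaining the two equivalences gives the desired one in the base case.

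The argument is therefore essentially two applications of the definitional equalities for $\ap{}$ on constant paths together with the identity laws; the only subtle step is the first equivalence, which relies on the general recipe $(u=^B_pv)\simeq(u=_{\ap B(p)}v)$ from the earlier proposition. I do not expect any real obstacle: the path induction collapses everything to a trivial identification. Note that one could equivalently have used the other equivalent characterization $(u=^B_pv)\simeq(\transport^B(p,u)=v)$ and then computed $\transport^{\lambda z.f(z)=_Bg(z)}(p,u)$ directly; the two presentations are interchangeable, and whichever one finds more convenient, the proof boils down to a single path induction followed by a rewriting with the identity laws.
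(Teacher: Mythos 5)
The paper does not actually give a proof of this proposition — it is stated and then explained via a picture and a discussion of square-fillers, with the proof left implicit. Your argument supplies exactly the standard proof: generalize over $v$ (since its type depends on $b$), do path induction on $p$, reduce both sides to $u = v$ in $f(a)=_B g(a)$ using $\ap f(\idp a)=\idp{f(a)}$, $\ap g(\idp a)=\idp{g(a)}$, the identity laws, and the collapse of $u=^B_{\idp a}v$ to a homogeneous path; chain the resulting equivalences. This is correct, and it is the proof the paper takes for granted. One small caveat: you call $\ap f(\idp a)=\idp{f(a)}$ a ``definitional equality,'' which is only true if $(\jcomp)$ is taken to be definitional (the paper assumes this but tries to avoid relying on it); however, this is harmless since a propositional equality here suffices for the transport you perform.
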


We have the picture
\begin{center}
  \begin{tikzpicture}[scale=0.5,looseness=0.5]
    \draw (0,5.5) ellipse (4 and 3.5);

    \draw (0,0) ellipse (4 and 1);
    \draw (-2,0) node {$\bullet$} to[bend left] (0,0) to[bend right] (2,0) node {$\bullet$};

    \draw (-2,4) node {$\bullet$} to[bend left] (0,4) to[bend right] (2,4) node {$\bullet$};
    \draw (-2,7) node {$\bullet$} to[bend left] (0,7) to[bend right] (2,7) node {$\bullet$};
    \draw (-2,4) to[looseness=0] (-2,7);
    \draw (2,4)  to[looseness=0] (2,7);

    \draw (-2,5.5) ellipse (0.3 and 3.5);
    \draw (2,5.5) ellipse (0.3 and 3.5);

    \draw (-2.5,9.5) node {$f(a)=g(a)$};
    \draw (2.5,9.5) node {$f(b)=g(b)$};
    \draw (-0.2,-0.5) node {$p$};
    \draw (-2, -0.5) node {$a$};
    \draw (2, -0.5) node {$b$};
    \draw (4.5, -0.5) node {$A$};
    \draw (4.5, 5) node {$B$};
    \draw (-0.2, 3.3) node {$\ap g(p)$};
    \draw (0.2, 7.7) node {$\ap f(p)$};
    \draw (-2.7, 5.5) node {$u$};
    \draw (2.7, 5.5) node {$v$};
  \end{tikzpicture}
\end{center}
The type $(u\cdot \ap g(p)) = (\ap f(p)\cdot v)$ can be seen as the type of \emph{fillers} of the
square
\[
\begin{tikzcd}[sdiag]
  \bullet \arrow[r,"\ap f(p)"] \arrow[d,"u"'] & \bullet \arrow[d,"v"]\\
  \bullet \arrow[r,"\ap g(p)"'] & \bullet
\end{tikzcd}
\]

There are many other equivalent ways to define the type of fillers of such a square. We could use
for instance any of the following four types (which are all equivalent):
\begin{align*}
u &= (\ap f(p)\concat v\concat\ap g(p)\inv),\\
v &= (\ap f(p)\inv\concat u\concat\ap g(p)),\\
\ap f(p) &= (u\concat \ap g(p)\concat v\inv),\\
\ap g(p) &= (u\inv\concat \ap f(p)\concat v).
\end{align*}
There is also a direct inductive definition, similar to the definition of the identity type, which
states that in order to construct a section of a dependent type over all squares whose upper-left
corner is a fixed point $a:A$, it is enough to define it over the identity square $\ids_a$ (which
has $\idp a$ on all four sides).

We can generalize the notion of coherence operation to include the case of squares or other
geometrical shapes. For instance in the diagram
\[
\begin{tikzcd}[sdiag]\label{trianglesquare}
  a \arrow[r,"p"] \arrow[d,"q"'] \arrow[rd,phantom,description,"\beta" pos=0.8]
  \arrow[rd,phantom,description,"\alpha" pos=0.2] & b \arrow[d,"r"]\\
  c \arrow[r,"s"'] \arrow[ru,"t" description] & d
\end{tikzcd}
\]
one can compose the two triangles $\alpha$ and $\beta$ in order to obtain a filler of the
square. The coherence operation is the one given by
\begin{align}
  &(a,c:A)(q:a=c)(b:A)(p:a=b)(t:c=b)(\alpha:q\inv\concat p=t)\\
  &\qquad(d:A)(s:c=d)(r:b=d)(\beta:t\inv\concat s=r) \mapsto \alpha\mathbin{\boxslash}\beta:(p\concat
    r)=(q\concat s).
\end{align}

A type of squares which is often used is the naturality squares of homotopies.
\begin{definition}
  Given two functions $f,g:A\to B$, a pointwise equality $h:(x:A)\to f(x)=_Bg(x)$ between $f$ and
  $g$ (also called a \emph{homotopy}) and a path $p:a=_Aa'$ in $A$, the \emph{naturality square} of
  $h$ on $p$ is the filler of the square
  \[
  \begin{tikzcd}[sdiag]
    \bullet \arrow[r,"\ap f(p)"] \arrow[d,"h(a)"'] & \bullet \arrow[d,"h(a')"]\\
    \bullet \arrow[r,"\ap g(p)"'] & \bullet
  \end{tikzcd}
  \]
  obtained by applying proposition \ref{depeqid} to $\ap h(p)$. Note that $h$ is a dependent
  function, so this $\ap{}$ is the dependent one defined in definition \ref{def:apd}.
\end{definition}

The next proposition shows that paths in $\Sigma$-types can be seen as pairs of paths.  There is an
operation of pairing of paths and one can take the first and second component of a path in a pair
type. Note also that the second path is a dependent path over the first one.

\begin{proposition}\label{pathspairtypes}
  Given a type $A$, a dependent type $B:A\to\Type$ over $A$ and two elements $(a,b)$ and $(a',b')$
  of $\sum_{x:A}B(x)$, there is an equivalence of types
  \[\left((a,b)=_{\sum_{x:A}B(x)}(a',b')\right) \simeq \left(\sum_{p:a=_Aa'}b=^B_pb'\right).\]
\end{proposition}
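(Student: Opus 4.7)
The plan is to construct maps in each direction and then check that both composites are homotopic to the identity, using path induction throughout.

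For the forward direction, given $q : (a,b) =_{\sum_{x:A}B(x)} (a',b')$, I send it to the pair
\[\bigl(\ap{\fst}(q),\,\ap{\snd}(q)\bigr).\]
The first component is immediate. For the second, recall that $\snd$ is a \emph{dependent} function of type $(z : \sum_{x:A}B(x)) \to B(\fst(z))$, so applying the dependent version of $\ap{}$ from Definition~\ref{def:apd} to $\snd$ and $q$ yields a dependent path of type $b =^{B\circ\fst}_{q} b'$, which corresponds under the evident equivalence to an inhabitant of $b =^B_{\ap{\fst}(q)} b'$.

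For the backward direction, given $p : a =_A a'$ and $r : b =^B_p b'$, I use path induction on $p$. Once $p$ is the constant path $\idp a$, the type $b =^B_{\idp a} b'$ is equivalent to the homogeneous type $b =_{B(a)} b'$, so a second path induction on (the image of) $r$ reduces the situation to $b = b'$ strictly, whence I can return $\idp{(a,b)}$ as the desired path in the $\Sigma$-type.

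To verify that these two maps are mutually inverse I again argue by path induction. For forward-then-backward, given $q : (a,b) = (a',b')$, I induct on $q$, reducing to the case $q \jueq \idp{(a,b)}$; then the forward map produces $(\idp a, \idp b)$, and the backward map unfolds on this doubly-constant input to precisely $\idp{(a,b)}$, so the composite is $\idp{q}$. For backward-then-forward, given $(p,r)$, I induct first on $p$ and then on (the underlying homogeneous path of) $r$; in the constant-constant case, the backward map produces $\idp{(a,b)}$, whose images under $\ap{\fst}$ and $\ap{\snd}$ are the constant paths, giving back $(\idp a, \idp b)$ as required.

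The main obstacle is not difficulty but bookkeeping: one must keep track of the equivalence between dependent paths $b =^B_{\idp a} b'$ and homogeneous paths $b =_{B(a)} b'$ when doing the induction in the backward map, and similarly use that $\ap{\fst}$ and $\ap{\snd}$ send $\idp{(a,b)}$ to $\idp a$ and $\idp b$ respectively (which follows from the computation rule for $\apS$ on constant paths). The fact that we have $\J_{\mathrm{comp}}$ as a definitional equality (as stipulated in Section~\ref{idtypes}) makes each base case go through on the nose; otherwise one would need to insert explicit coherences in a handful of places, but the structure of the argument is the same.
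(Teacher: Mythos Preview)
Your proof is correct and follows essentially the same approach as the paper: the forward map is $\ap{\fst}$ paired with the dependent $\ap{\snd}$, the backward map is by double path induction, and both round-trips are checked by path induction. You simply spell out in more detail what the paper leaves as ``immediate by path induction.''
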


\begin{proof}
  The map from the left-hand side to the right-hand side is given by the two maps
  \begin{align*}
    \fsteq &: (r:(a,b)=(a',b')) \to a =_Aa',\\
    \fsteq(r) &\defeq \ap{\fst}(r),
  \end{align*}
  \begin{align*}
    \sndeq &: (r:(a,b)=(a',b')) \to b=^B_{\fsteq(r)}b',\\
    \sndeq(r) &\defeq \ap{\snd}(r)
  \end{align*}
  and the map from the right-hand side to the left-hand side is defined by path induction twice. The
  fact that these two maps are inverse to each other is immediate by path induction.
\end{proof}

For function types we have that paths between functions correspond to homotopies (pointwise equalities).

\begin{proposition}[Function extensionality]
  Given a type $A$, a dependent type $B:A\to\Type$ and two functions $f,g:(x:A)\to B(x)$, we have an
  equivalence
  \[(f=_{(x:A)\to B(x)}g) \simeq ((x:A)\to f(x)=_{B(x)}g(x)).\]
\end{proposition}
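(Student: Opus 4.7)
The plan is to define the map $\happly$ from the left-hand side to the right-hand side by path induction, and then to prove it is an equivalence using the univalence axiom.

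First, I would define
\begin{align*}
\happly &: (f =_{(x:A)\to B(x)} g) \to ((x:A)\to f(x) =_{B(x)} g(x)),\\
\happly(\idp f) &\defeq \lambda x.\, \idp{f(x)},
\end{align*}
by path induction on the equality $f = g$. The real work is to prove that $\happly$ is an equivalence, and I would follow Voevodsky's two-step strategy, isolating an intermediate ``weak function extensionality'' principle: for any family $P : A \to \Type$ such that each $P(x)$ is contractible, the type $(x:A) \to P(x)$ is itself contractible.

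To prove weak funext, I would first establish a direct consequence of univalence: if $e : X \simeq Y$ is an equivalence, then post-composition induces an equivalence $(Z \to X) \simeq (Z \to Y)$; this is obtained by applying $\ua$ to $e$ and transporting. Now suppose each $P(x)$ is contractible. Then the first projection $\pi : \sum_{x:A} P(x) \to A$ is an equivalence, since its fibers are precisely the $P(x)$. Applying the post-composition lemma with $Z \defeq A$ yields an equivalence $(A \to \sum_{x:A} P(x)) \simeq (A \to A)$, and the fiber over $\id_A$ on the right is therefore contractible. Computing the corresponding fiber on the left gives, up to equivalence, the type $(x:A) \to P(x)$, which is thus contractible.

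Having weak funext, full funext would follow by a standard total-space argument. Fix $f$ and consider the two total spaces
\[
E_1 \defeq \sum_{g : (x:A)\to B(x)} (f = g), \qquad E_2 \defeq \sum_{g : (x:A)\to B(x)} (x:A) \to f(x) = g(x).
\]
The space $E_1$ is a singleton type with center $(f, \idp f)$, hence contractible. For $E_2$, one reshuffles the iterated $\Sigma$ (swapping the dependent product with the outer $\Sigma$) to obtain an equivalence with $(x:A) \to \sum_{y:B(x)} (f(x) = y)$; each factor is a contractible singleton type, so by weak funext $E_2$ is contractible. The fiberwise map over $g$ induced by $\happly$ gives a map $E_1 \to E_2$ between contractible types, hence an equivalence, and by the standard fiberwise-equivalence lemma $\happly$ itself is an equivalence at every $g$.

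The main obstacle will be the proof of weak funext. The naive approach (``each $P(x)$ is $\Unit$ by univalence, hence $P = \lambda x.\Unit$'') is circular since comparing type families as functions already requires function extensionality. The detour through post-composition and the fibers of $\pi$ sidesteps this by applying univalence only once, to the \emph{single} equivalence $\pi$ between total spaces, thereby converting the problem of equating a family of types into one about a single type.
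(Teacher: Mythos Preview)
Your outline is precisely the Voevodsky argument the paper defers to (it defines $\happly$ and then cites section~4.9 of \cite{hottbook} for the rest). One point worth tightening, given your own concern about circularity: the ``reshuffling'' you invoke to identify $E_2$ with $(x:A)\to\sum_{y:B(x)}(f(x)=y)$ is only a \emph{retraction} in the direction you need---the other round-trip is a pointwise equality of functions, and promoting it to an equality of functions is exactly function extensionality. The same caveat applies to identifying $(x:A)\to P(x)$ with the fiber over $\id_A$ in the weak-funext step. Since retracts of contractible types are contractible, the argument goes through unchanged once you say ``retract'' instead of ``equivalence'' at those two spots.
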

\begin{proof}
  The map from the left-hand side to the right-hand side is defined by
  \begin{align*}
    \happly &: (f=_{(x:A)\to B(x)}g) \to ((x:A)\to f(x)=_{B(x)}g(x)),\\
    \happly(p)(x) &\defeq \ap{\lambda h.h(x)}(p).
  \end{align*}
  The map in the other direction, which turns a homotopy between $f$ and $g$ into a path between $f$
  and $g$, cannot be defined by path induction as in the previous proposition. But it turns out that
  we can define it using the univalence axiom, and we can also prove that it is an inverse to
  $\happly$, see \cite[section 4.9]{hottbook}.
\end{proof}

\section{Higher inductive types}\label{sec:hit}

In ordinary inductive types, the constructors only generate elements of the type $T$ we are
defining. But in homotopy type theory, we are seeing paths and higher paths in $T$ as being somehow
still part of $T$. This leads to the notion of \emph{higher inductive types}, which are similar to
inductive types with the difference that there might be \emph{path-constructors} which give new
paths or new equalities in the type being defined. Here are some of the most important examples of
higher inductive types.

\paragraph{Circle}

The \emph{circle} $\Sn1$ is the simplest non-trivial higher inductive type. It is generated by the
two constructors

\begin{minipage}{0.47\linewidth}
  \begin{align*}
    \base &: \Sn1, \\
    \lloop &: \base=_{\Sn1}\base.\\
  \end{align*}
\end{minipage}
\begin{minipage}{0.47\linewidth}
  \begin{center}
    \begin{tikzpicture}
      \draw (0, 0) circle (.7cm); \draw (0.7, 0.05) -- (0.65, -0.05); \draw (0.7, 0.05) -- (0.75,
      -0.05); \draw (-0.7, 0) node {$\bullet$}; \draw (-1.3, 0) node {$\base$}; \draw (1.2, 0) node
      {$\lloop$}; \draw (1.4, -0.6) node {$\Sn1$};
    \end{tikzpicture}
  \end{center}
\end{minipage}

\noindent
Just as with inductive types, there is an induction principle which states that given a dependent
type $P:\Sn1\to\Type$, a function $f:(x:\Sn1)\to P(x)$ can be defined by
\begin{align*}
  f &: (x : \Sn1) \to P(x),\\
  f(\base) &\defeq f_{\base},\\
  \ap f(\lloop) &\defeq f_{\lloop},
\end{align*}
where the terms $f_{\base}$ and $f_{\lloop}$ satisfy
\begin{align*}
  f_{\base} &: P(\base),\\
  f_{\lloop} &: f_{\base} =^P_{\lloop} f_{\base}.
\end{align*}
Note that we need to give the value of $\ap f(\lloop)$ and not of $f(\lloop)$ (which would not make
sense given that $\lloop$ is not an element of $\Sn1$) and that $f_{\lloop}$ is a dependent path
in $P$ over $\lloop$.

There is a subtlety here in that, in the standard presentation of homotopy type theory, the second
equation $\ap f(\lloop) \defeq f_{\lloop}$ is \emph{not} taken as a definitional equality but only
as an equality in the sense of section \ref{idtypes}. There are two main reasons for that. The first
one is that most of the known models of homotopy type theory do not model it as a definitional
equality, and the second one is that most proof assistants do not allow it to be a definitional
equality either. These difficulties are being resolved. For instance the cubical model described in
\cite{cubical} models them as definitional equalities and one can now add custom new definitional
equalities in Agda as well (see \cite{rewritingagda}), but this is all very much work in
progress. There are a few places (for instance in the $3\times3$-lemma in section
\ref{threebythree}) where it would be very useful to have it as a definitional equality, together
with various other equalities, but for proofs written on paper (as opposed to proofs written in a
proof assistant) one can usually gloss over these kinds of issues. We use the same symbol $\defeq$
for aesthetic reasons.

Here are two examples of usage of the induction principle. We first define a function
$i:\Sn1\to\Sn1$ by
\begin{align*}
  i &: \Sn1\to\Sn1,\\
  i(\base) &\defeq \base,\\
  \ap i(\lloop) &\defeq \lloop\inv.
\end{align*}
Note that $i$ is a non-dependent function, therefore in the last line we just need to give a path in
$\Sn1$ from $i(\base)$ to itself, and we choose $\lloop\inv$.
We now prove that $i$ is involutive by
\begin{align*}
  \invol &: (x : \Sn1) \to i(i(x))=_{\Sn1}x,\\
  \invol(\base) &\defeq \idp\base,\\
  \ap{\invol}(\lloop) &\defeq \invol_{\lloop},
\end{align*}
with $\invol_{\lloop}$ to be defined. This time, the dependent type is
$P(x)\defeq(i(i(x)) =_{\Sn1}x)$.  In the case for $\base$ we need to prove that $i(i(\base))=\base$,
but using the fact that $i(\base)$ is equal to $\base$ by definition we have
$\idp\base:i(i(\base))=\base$. In the case for $\lloop$, we need to give a dependent path
$\invol_{\lloop}$ in $P$ over $\lloop$ from $\idp\base$ to $\idp\base$. Using proposition
\ref{depeqid} we see that $\invol_{\lloop}$ has to be a filler of the square
\[
\begin{tikzcd}[sdiag]
  \bullet \arrow[r,"\idp\base"] \arrow[d,"\ap{i\circ i}(\lloop)"'] & \bullet \arrow[d,"\ap{\lambda x.x}(\lloop)"]\\
  \bullet \arrow[r,"\idp\base"'] & \bullet
\end{tikzcd}
\]
Using now the fact that $\ap{\lambda x.x}(p)$ is equal to $p$, that $\ap{i\circ i}(p)$ is equal to
$\ap i(\ap i(p))$ and that $\ap i(\lloop)$ is equal to $\lloop\inv$, we have to fill the square
\[
\begin{tikzcd}[sdiag]
  \bullet \arrow[r,"\idp\base"] \arrow[d,"(\lloop\inv)\inv"'] & \bullet \arrow[d,"\lloop"]\\
  \bullet \arrow[r,"\idp\base"'] & \bullet
\end{tikzcd}
\]
and this follows from a coherence operation.

\paragraph{Pushouts}

Let’s consider three types $A$, $B$, $C$ and two functions $f:C\to A$, $g:C\to B$,
\[
\begin{tikzcd}
  A & C \arrow[l,"f"'] \arrow[r,"g"] & B.
\end{tikzcd}\]
Such a diagram is called a \emph{span}. The \emph{pushout} of this span is the higher inductive type
$A\sqcup^CB$ generated by the constructors
\begin{align*}
\inl&:A\to A\sqcup^CB,\\
\inr&:B\to A\sqcup^CB,\\
\push&:(c:C)\to\inl(f(c))=_{A\sqcup^CB}\inr(g(c)).
\end{align*}
In particular, we have the commutative square
\[
\begin{tikzcd}
  C \arrow[r,"g"] \arrow[d,"f"'] \arrow[rd, phantom, "\scriptstyle\push"] & B \arrow[d,dashed,"\inr"]\\
  A \arrow[r,dashed,"\inl"'] & A\sqcup^CB
\end{tikzcd}
\]
The idea is that we start with the disjoint sum $A+B$ and for every element $c$ of $C$, we add a new
path from $\inl(f(c))$ to $\inr(g(c))$.  What we obtain would rather be called a “homotopy pushout”
in classical homotopy theory, but in homotopy type theory this is the only sort of pushout which is
possible to define therefore we call them simply “pushouts”.

The induction principle states that, given a dependent type $P:A\sqcup^CB\to\Type$, we can define a
function $h:(x:A\sqcup^CB)\to P(x)$ by
\begin{align*}
  h &: (x:A\sqcup^CB)\to P(x),\\
  h(\inl(a)) &\defeq h_{\inl}(a),\\
  h(\inr(b)) &\defeq h_{\inr}(b),\\
  \ap h(\push(c)) &\defeq h_{\push}(c),
\end{align*}
where we have
\begin{align*}
  h_{\inl} &: (a : A) \to P(\inl(a)),\\
  h_{\inr} &: (b : B) \to P(\inr(b)),\\
  h_{\push} &: (c : C) \to h_{\inl}(f(c)) =^P_{\push(c)} h_{\inr}(g(c)).
\end{align*}
This induction principle looks similar to the usual universal property of pushouts, but there are
two differences. On the one hand this is only an existence result, we say nothing about uniqueness a
priori. On the other hand we state it for every dependent type whereas the universal property talks
only about non-dependent types. It turns out that one can prove the universal property from the
induction principle using the fact that uniqueness-up-to-homotopy can be seen as existence of a
particular equality which is constructed using the induction principle (see \cite[section
6.8]{hottbook} and \cite{kristina}). However the universal property only implies a weak version of
the induction principle, where the equalities of the form $h(\inl(a))=h_{\inl}(a)$ are only
propositional equalities. Moreover, the statement of the induction principle is very natural from
the point of view of type theory, while the statement of the universal property looks more
contrived.

Many interesting constructions are defined as pushouts, in particular we have the following
definitions.
\begin{itemize}
\item The \emph{suspension} $\Susp A$ of a type $A$ is the pushout of the span
\[
\begin{tikzcd}
  \Unit & A \arrow[l] \arrow[r] & \Unit.
\end{tikzcd}
\]
We write $\north$, $\south$ and $\merid(a)$ for the terms $\inl(\ttt)$, $\inr(\ttt)$ and
$\push(a)$. If $A$ is a pointed type (i.e.\ a type $A$ equipped with a basepoint $\star_A$), we
write $\Omega A$ for the type $\star_A=\star_A$ and we define the map
\begin{align*}
  \varphi_A &: A \to \Omega\Susp A,\\
  \varphi_A(a) &\defeq \merid(a)\concat\merid(\star_A)\inv,
\end{align*}
(where $\Sigma A$ is pointed by $\north$) which satisfies
\[\varphi_A(\star_A)=\idp{\star_A}.\]
\item The spheres $\Sn n$ for $n:\N$ are defined by induction on $n$. For $n=0$ we define
  $\Sn0\defeq\Bool$ and for $n+1$ we define
  \[\Sn{n+1}\defeq\Susp\Sn n.\]
  Note that $\Susp\Bool$ is equivalent to the type $\Sn1$ defined above, so we could alternatively
  define the spheres by iterated suspensions of $\Sn1$. In practice, we often use the direct
  inductive definition of $\Sn1$ using $\base$ and $\lloop$ instead of $\Susp\Bool$.
\item The \emph{join} $A*B$ of two types $A$ and $B$ is the pushout of the span
\[
\begin{tikzcd}
  A & A\times B \arrow[l,"\fst"'] \arrow[r,"\snd"] & B.
\end{tikzcd}
\]
\item The \emph{wedge sum} $A\vee B$ of two pointed types $A$ and $B$ is the pushout of the span
\[
\begin{tikzcd}
  A & \Unit \arrow[l] \arrow[r] & B,
\end{tikzcd}
\]
where the two maps pick the basepoints of $A$ and $B$
\item The \emph{smash product} $A\wedge B$ of two pointed types $A$ and $B$ is the pushout of the
  span
\[
\begin{tikzcd}
  \Unit & A\vee B \arrow[l] \arrow[r,"\iwedge_{A,B}"] & A\times B,
\end{tikzcd}
\]
where the map on the right is defined by
\begin{align*}
  \iwedge_{A,B} &: A\vee B \to A\times B,\\
  \iwedge_{A,B}(\inl(a)) &\defeq (a,\star_B),\\
  \iwedge_{A,B}(\inr(b)) &\defeq (\star_A,b),\\
  \ap{\iwedge_{A,B}}(\push(\ttt)) &\defeq \idp{(\star_A,\star_B)}.
\end{align*}
The smash product is studied in more detail in chapter \ref{ch:smash}.
\end{itemize}

\section{The \texorpdfstring{$3\times3$}{3×3}-lemma}\label{threebythree}

The $3\times3$-lemma is a technical lemma that we will use a few times to construct equivalences
between various nested pushouts. Its usefulness has been suggested to me by Eric Finster.

Let’s consider the diagram
\begin{equation}\label{diag:3x3}
\begin{tikzcd}[sep=large]
  A_{00} & A_{02} \arrow[l,"f_{01}"'] \arrow[r,"f_{03}"] \arrow[dr,Rightarrow,"H_{13}",shorten <= 1.5ex, shorten >= 1.5ex] \arrow[dl,Rightarrow,"H_{11}"',shorten <= 1.5ex, shorten >= 1.5ex] & A_{04}\\
  A_{20} \arrow[d,"f_{30}"'] \arrow[u,"f_{10}"] & A_{22} \arrow[l,near start,"f_{21}"] \arrow[r,near
  start,"f_{23}"]
  \arrow[d,near start,"f_{32}"] \arrow[u,near start,"f_{12}"] & A_{24} \arrow[d,"f_{34}"] \arrow[u,"f_{14}"']\\
  A_{40} & A_{42} \arrow[l,"f_{41}"] \arrow[r,"f_{43}"'] \arrow[ur,Rightarrow,"H_{33}"',shorten <= 1.5ex, shorten >= 1.5ex]
  \arrow[ul,Rightarrow,"H_{31}",shorten <= 1.5ex, shorten >= 1.5ex] & A_{44}
\end{tikzcd}
\end{equation}
where the $A_{ij}$ are types, the $f_{ij}$ are maps and the $H_{ij}$ are \emph{fillers} of the
squares, i.e.\ homotopies between compositions of maps (for instance $H_{11}$ has type
$(x:A_{22})\to f_{01}(f_{12}(x))=f_{10}(f_{21}(x))$).
The pushouts of the three columns fit in the diagram
\begin{equation}\label{diag:3x3fst}
\begin{tikzcd}[sep=large]
  A_{\pt0} & A_{\pt2} \arrow[l,"f_{\pt1}"'] \arrow[r,"f_{\pt3}"] & A_{\pt4},
\end{tikzcd}
\end{equation}
where the map $f_{\pt1}$ is defined by
\begin{align*}
  f_{\pt1} &: A_{\pt2} \to A_{\pt0},\\
  f_{\pt1}(\inl(x_{02})) &\defeq \inl(f_{01}(x_{02})),\\
  f_{\pt1}(\inr(x_{42})) &\defeq \inr(f_{41}(x_{42})),\\
  \ap{f_{\pt1}}(\push(x_{22})) &\defeq \ap{\inl}(H_{11}(x_{22})) \concat \push(f_{21}(x_{22}))
                                  \concat \ap{\inr}(H_{31}(x_{22}))\inv
\end{align*}
and the map $f_{\pt3}$ is defined in a similar way. We denote by $A_{\pt\sq}$ the pushout of diagram
\ref{diag:3x3fst}. We can also first consider the pushout of all three rows of diagram
\ref{diag:3x3} and then the pushout $A_{\sq\pt}$ of the resulting column.

\begin{lemma}[$3\times3$-lemma]
  There is an equivalence
  \[A_{\pt\sq}\simeq A_{\sq\pt}.\]
\end{lemma}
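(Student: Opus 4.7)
The plan is to construct explicit maps $\tof:A_{\pt\sq}\to A_{\sq\pt}$ and $\from:A_{\sq\pt}\to A_{\pt\sq}$ by nested applications of the pushout induction principle, and then to prove that both composites are the identity by further nested pushout induction. Because the whole $3\times3$ diagram is symmetric under the exchange of rows and columns, $\from$ will be defined exactly as $\tof$ with the two directions swapped, so it suffices to concentrate on $\tof$ and on one of the two composites.

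To define $\tof$, I would first do induction on the outer pushout $A_{\pt\sq}$ of the span $A_{\pt0}\leftarrow A_{\pt2}\to A_{\pt4}$. This requires three pieces of data: a map $A_{\pt0}\to A_{\sq\pt}$, a map $A_{\pt4}\to A_{\sq\pt}$, and, for each $x:A_{\pt2}$, a path in $A_{\sq\pt}$ between their values on the endpoints of $\push(x)$. Each of the first two maps is itself defined by pushout induction: on $A_{\pt0}$, send $\inl(x_{00})$ to $\inl(\inl(x_{00}))$, send $\inr(x_{40})$ to $\inr(\inl(x_{40}))$, and map $\push(x_{20})$ to $\ap{\inl}$ of the horizontal $\push$ of the top row applied to $x_{20}$; analogously for $A_{\pt4}$. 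For the path-constructor data on $A_{\pt2}$, a further induction on $A_{\pt2}$ sends $\inl(x_{02})$ and $\inr(x_{42})$ to paths coming from the $\push$ of the middle row pushout composed with $\ap{\inl}$ and $\ap{\inr}$, and on $\push(x_{22})$ produces a $2$-dimensional filler built from the squares $H_{11}$, $H_{13}$, $H_{31}$, $H_{33}$ and the naturality of $\apS$ applied to $\inl, \inr$. The definition of the horizontal maps in diagram~\eqref{diag:3x3fst} already packages these $H_{ij}$ into the precise path required for the compatibility condition, so the filler is essentially forced.

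The map $\from$ is defined by the mirror-image construction. To verify $\from\circ\tof=\id$, I would again do nested pushout induction on $A_{\pt\sq}$. On the four corner generators the composite reduces (up to propositional equalities coming from the pushout computation rules) to the identity. On each of the $\push$ constructors of the column pushouts and on the horizontal $\push$, one must exhibit a square filler comparing the image of $\push$ under $\from\circ\tof$ with $\push$ itself; these fillers are assembled from the $H_{ij}$ and their inverses, with the required identities holding essentially because each $H_{ij}$ appears once in $\tof$ and is cancelled by the same $H_{ij}$ in $\from$. On the doubly nested path data coming from $\push$ of $A_{\pt2}$ in the outer induction, one obtains a $3$-dimensional cell which can be filled by a coherence operation, of the kind described in section~\ref{sec:infgpd}, once the lower-dimensional data have been chosen consistently. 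The other composite is treated symmetrically.

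The main obstacle is the sheer amount of coherence bookkeeping. There are four homotopies $H_{ij}$, each contributing a $2$-cell that must be tracked through two levels of pushout induction, and the path-computation rule $\ap f(\push(c))=f_{\push}(c)$ is only propositional, so at every step one must insert the corresponding meta-equalities and check that the resulting higher-dimensional diagrams commute. In practice this is the reason the lemma is usually stated and used without a complete written-out proof, and it is also the reason the proof is easiest to organize by defining the intermediate maps (for instance $f_{\pt1}$ and its row analogue) in a symmetric way that makes the required cells visibly built from the same ingredients on both sides.
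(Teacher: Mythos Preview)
Your strategy is exactly the paper's: construct the map by nested pushout induction, swapping the two levels of constructors, with the $\push(\push(x_{22}))$ case handled by a coherence operation relating the two arrangements of the eight boundary paths built from the $H_{ij}$, and then verify the round-trip identities by a further nested induction. One small slip: on $A_{\pt0}$ the image of $\push(x_{20})$ should be the \emph{outer} $\push$ applied to $\inl(x_{20}):A_{2\pt}$, i.e.\ $\push(\inl(x_{20}))$, not $\ap{\inl}$ of a row-pushout $\push$ (which would not typecheck since $x_{20}:A_{20}$, not $A_{02}$); with that correction your sketch and the paper's agree.
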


\begin{proof}[Sketch of proof]
  The map $f:A_{\pt\sq}\to A_{\sq\pt}$ basically takes an element of $A_{\pt\sq}$ and swaps the two
  constructors. For instance, it sends $\inl(\inr(x_{40}))$ to $\inr(\inl(x_{40}))$, and so on. It
  is defined by
  \begin{align*}
    f &: A_{\pt\sq}\to A_{\sq\pt},\\
    f(\inl(x_0)) &\defeq f_{\inl}(x_0),\\
    f(\inr(x_4)) &\defeq f_{\inr}(x_4),\\
    \ap f(\push(x_2)) &\defeq f_{\push}(x_2),
  \end{align*}
  where
  \begin{align*}
    f_{\inl} &: A_{\pt0} \to A_{\sq\pt}, & f_{\inr} &: A_{\pt4} \to A_{\sq\pt},\\
    f_{\inl}(\inl(x_{00})) &\defeq \inl(\inl(x_{00})), & f_{\inr}(\inl(x_{04})) &\defeq \inl(\inr(x_{04})),\\
    f_{\inl}(\inr(x_{40})) &\defeq \inr(\inl(x_{40})), & f_{\inr}(\inr(x_{44})) &\defeq \inr(\inr(x_{44})),\\
    \ap{f_{\inl}}(\push(x_{20})) &\defeq \push(\inl(x_{20})), & \ap{f_{\inr}}(\push(x_{24})) &\defeq \push(\inr(x_{24})),
  \end{align*}
  and
  \begin{align*}
    f_{\push} &: (x_2 : A_{\pt2}) \to f_{\inl}(f_{\pt1}(x_2)) = f_{\inr}(f_{\pt3}(x_2)),\\
    f_{\push}(\inl(x_{02})) &\defeq \ap{\inl}(\push(x_{02})),\\
    f_{\push}(\inr(x_{42})) &\defeq \ap{\inr}(\push(x_{42})),\\
    \ap{f_{\push}}(\push(x_{22})) &\defeq f_{\push,\push}.
  \end{align*}
  The last term $f_{\push,\push}$ is trickier. We want something filling the middle square of the
  diagram
  \[
  \begin{tikzcd}[sdiag]
    \bullet \arrow[r,"\push(\inl(f_{21}(x_{22})))"] & \bullet \\
    \bullet \arrow[r,dotted,decoration={name=none}] \arrow[u,"\ap{\inl\circ\inl}(H_{11}(x_{22}))"]
    \arrow[d,"\ap\inl(\push(f_{12}(x_{22})))"'] & \bullet
    \arrow[u,"\ap{\inr\circ\inl}(H_{31}(x_{22}))"']
    \arrow[d,"\ap\inr(\push(f_{32}(x_{22})))"]\\
    \bullet \arrow[r,dotted,decoration={name=none}] \arrow[d,"\ap{\inl\circ\inr}(H_{13}(x_{22}))"'] &
    \bullet \arrow[d,"\ap{\inr\circ\inr}(H_{33}(x_{22}))"]\\
    \bullet \arrow[r,"\push(\inr(f_{23}(x_{22})))"'] & \bullet
  \end{tikzcd}
  \]
  where the dotted paths are the composites of the upper and lower squares. What we have is
  $\ap{\push}(\push(x_{22}))$ which fills the middle square of the diagram
  \[
  \begin{tikzcd}[sdiag,column sep=huge]
    \bullet \arrow[r,"\ap{\inl\circ\inl}(H_{11}(x_{22}))",at start]
    \arrow[d,"\ap\inl(\push(f_{12}(x_{22})))"'] &
    \bullet \arrow[r,"\push(\inl(f_{21}(x_{22})))"] \arrow[d,dotted,decoration={name=none}] & \bullet \arrow[d,dotted,decoration={name=none}] &
    \bullet \arrow[l,"\ap{\inr\circ\inl}(H_{31}(x_{22}))"',at start]
    \arrow[d,"\ap\inr(\push(f_{32}(x_{22})))"] \\
    \bullet \arrow[r,"\ap{\inl\circ\inr}(H_{13}(x_{22}))"',at start] &
    \bullet \arrow[r,"\push(\inr(f_{23}(x_{22})))"'] & \bullet &
    \bullet \arrow[l,"\ap{\inr\circ\inr}(H_{33}(x_{22}))",at start]
  \end{tikzcd}
  \]
  where the dotted paths are the composites of the left and right square. We notice that the eight
  paths around are the same in both diagrams, therefore there is a coherence operation going from
  one middle square to the other. The inverse map $g$ is then defined in a similar way.

  In order to prove that they are inverse to each other, we use again the induction principle twice
  in order to construct two functions
  \begin{align*}
    (x:A_{\pt\sq})&\to g(f(x))=x,\\
    (y:A_{\sq\pt})&\to f(g(y))=y.
  \end{align*}
  For elements of the form $\inl(\inl(x_{00}))$, $\inl(\inr(x_{40}))$, $\inr(\inl(x_{04}))$ and
  $\inr(\inr(x_{44}))$ the equality is true by definition, so we use $\idpS$. For paths of the form
  $\push(\inl(x_{02}))$, $\push(\inr(x_{42}))$, $\ap\inl(\push(x_{20}))$ and
  $\ap\inr(\push(x_{24}))$ it is not true definitionally but not difficult to prove. Finally, for
  squares of the form $\ap\push(\push(x_{22}))$ it is more tricky because we have to prove an
  equality along the equalities proved in the previous step, but it can be done. A proof checked in
  Agda is available at \cite{threebythree}.
\end{proof}

Here are two propositions which are very useful when using the $3\times3$-lemma.
\begin{proposition}\label{pushoutid}
  Given a map $f:A\to B$, the pushout of the diagram
  \[
  \begin{tikzcd}
    A & A \arrow[l,"\ \id_A"'] \arrow[r,"f"] & B
  \end{tikzcd}
  \]
  is equivalent to $B$.
\end{proposition}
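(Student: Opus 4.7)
The plan is to define maps in both directions and verify they are mutually inverse using the induction principle of the pushout. Let $P \defeq A \sqcup^A B$ denote the pushout of the span with left leg $\id_A$ and right leg $f$.

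First I would define the map $\varphi : P \to B$ by the induction principle, setting $\varphi(\inl(a)) \defeq f(a)$, $\varphi(\inr(b)) \defeq b$, and $\ap\varphi(\push(a)) \defeq \idp{f(a)}$; this typechecks since $\varphi(\inl(\id_A(a))) = f(a) = \varphi(\inr(f(a)))$ by definition. In the other direction, the map $\psi : B \to P$ is simply $\inr$.

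Next I would verify the two compositions. The composite $\varphi \circ \psi$ sends $b$ to $\varphi(\inr(b)) = b$, so it equals $\id_B$ by definition (no induction needed). For $\psi \circ \varphi$, I would use the induction principle of the pushout to construct a section of the family $x \mapsto (\psi(\varphi(x)) = x)$: on $\inl(a)$, we need $\inr(f(a)) = \inl(a)$, for which we take $\push(a)\inv$; on $\inr(b)$, we need $\inr(b) = \inr(b)$, for which we take $\idp{\inr(b)}$; and on $\push(a)$ we need a dependent path between $\push(a)\inv$ and $\idp{\inr(f(a))}$ over $\push(a)$, which by proposition \ref{depeqid} amounts to a filler of the square
\[
\begin{tikzcd}[sdiag]
  \bullet \arrow[r,"\ap{\psi\circ\varphi}(\push(a))"] \arrow[d,"\push(a)\inv"'] & \bullet \arrow[d,"\idp{\inr(f(a))}"]\\
  \bullet \arrow[r,"\push(a)"'] & \bullet
\end{tikzcd}
\]
Since $\ap{\psi\circ\varphi}(\push(a)) = \ap\psi(\ap\varphi(\push(a))) = \ap\psi(\idp{f(a)}) = \idp{\inr(f(a))}$, the square degenerates and is filled by a standard coherence operation in the weak $\infty$-groupoid structure on $P$.

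The only mildly delicate step is the square filler for the $\push$ case; all other cases are either definitional or immediate. This is a completely routine application of the pushout induction principle and should not present any real obstacle.
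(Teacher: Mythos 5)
Your proof is correct and uses exactly the same pair of maps as the paper's (your $\varphi$ is the paper's $h$ and your $\psi$ is the paper's $g$), just spelling out the verification of mutual invertibility that the paper dismisses as "straightforward to show." The square-filler analysis for the $\push$ case is a correct application of proposition \ref{depeqid}.
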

\begin{proof}
  We define $g:B\to A\sqcup^AB$ by $g(b)\defeq\inr(b)$ and $h:A\sqcup^AB\to B$ by
  \begin{align*}
    h &: A\sqcup^AB \to B,\\
    h(\inl(a)) &\defeq f(a),\\
    h(\inr(b)) &\defeq b,\\
    \ap h(\push(a)) &\defeq \idp{f(a)}.
  \end{align*}
  It is straightforward to show that these functions are inverse to each other.
\end{proof}

\begin{proposition}\label{pushoutprod}
  Given two maps $f:C\to A$ and $g:C\to B$ and a type $X$, the pushout of the diagram
  \[
  \begin{tikzcd}[column sep=huge]
    A\times X & C\times X \arrow[l,"{(c,x)\mapsto(f(c),x)}"'] \arrow[r,"{(c,x)\mapsto(g(c),x)}"] &
    B\times X
  \end{tikzcd}
  \]
  is equivalent to $(A\sqcup^CB)\times X$.
\end{proposition}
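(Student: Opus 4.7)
The plan is to construct explicit inverse maps between the two types and verify they are mutually inverse by induction on the pushout. This is cleaner than trying to apply the $3\times3$-lemma directly, because here we are not dealing with nested pushouts but rather with a pushout compared against a product.

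First I would define the forward map $\phi : (A\sqcup^C B)\times X \to (A\times X)\sqcup^{C\times X}(B\times X)$ by doing pushout induction on the first coordinate. Concretely, I first define an auxiliary $\phi' : A\sqcup^C B \to (X \to (A\times X)\sqcup^{C\times X}(B\times X))$ by
\begin{align*}
  \phi'(\inl(a)) &\defeq \lambda x.\inl(a,x),\\
  \phi'(\inr(b)) &\defeq \lambda x.\inr(b,x),\\
  \ap{\phi'}(\push(c)) &\defeq \funext(\lambda x.\push(c,x)),
\end{align*}
and then set $\phi(p,x)\defeq\phi'(p)(x)$. In the other direction I define $\psi$ directly by induction on the pushout in the source:
\begin{align*}
  \psi(\inl(a,x)) &\defeq (\inl(a),x),\\
  \psi(\inr(b,x)) &\defeq (\inr(b),x),\\
  \ap\psi(\push(c,x)) &\defeq \ap{\lambda p.(p,x)}(\push(c)).
\end{align*}

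Next I would show $\psi\circ\phi=\id$ and $\phi\circ\psi=\id$. For $\phi\circ\psi=\id$, I do pushout induction on the input: the $\inl$ and $\inr$ cases hold by definition, so only the $\push$ case remains. There I need a filler of a square whose sides are $\ap{\phi\circ\psi}(\push(c,x))$ and $\push(c,x)$ connected by the identity homotopies on endpoints; this reduces, using the computation rules for $\ap{}$ and $\psi$, to showing that $\happly(\funext(h),x)=h(x)$ which is part of function extensionality. For $\psi\circ\phi=\id$, I do pushout induction in the first coordinate (leaving $x$ as a parameter or using function extensionality as above). Again the point-constructor cases are definitional, and the $\push$ case requires a square whose two sides trace back, via the computation rules, to $\ap{\lambda p.(p,x)}(\push(c))$.

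The main obstacle is the path-constructor case of $\psi\circ\phi=\id$. The naive calculation produces a square built out of nested applications of $\happly\circ\funext$ and $\ap{}$ of the pairing function; it is true but requires assembling several coherence operations together with the $\beta$-rule for function extensionality. I expect this to be mostly mechanical: once the square is set up correctly (using proposition \ref{depeqid} to reinterpret the dependent path over $\push(c)$ as a square filler), it follows by a direct coherence argument, analogous to the ones used in the sketch of the $3\times3$-lemma. All other cases reduce to definitional equalities or trivial coherences, so no deeper ingredient is needed.
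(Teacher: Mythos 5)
Your proposal is correct and takes essentially the same approach as the paper: both construct explicit mutually inverse maps, one direction by pushout induction on the source pushout and the other by pushout induction on the first coordinate of the product. The only difference is that you make the currying and the use of $\funext$ (and its $\beta$-rule) explicit where the paper's notation $\ap{h_1(-,x)}(\push(c))\defeq\push(c,x)$ elides it; the underlying construction is the same.
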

\begin{proof}
  We define the two maps
  \begin{align*}
    h_1 &: (A\sqcup^CB)\times X \to (A\times X)\sqcup^{C\times X}(B\times X),\\
    h_1(\inl(a),x) &\defeq \inl(a,x),\\
    h_1(\inr(b),x) &\defeq \inr(b,x),\\
    \ap{h_1(-,x)}(\push(c)) &\defeq \push(c,x),
  \end{align*}
  \begin{align*}
    h_2 &: (A\times X)\sqcup^{C\times X}(B\times X) \to (A\sqcup^CB)\times X,\\
    h_2(\inl(a,x)) &\defeq (\inl(a),x),\\
    h_2(\inr(b,x)) &\defeq (\inr(b),x),\\
    \ap{h_2}(\push(c,x)) &\defeq \ap{(-,x)}(\push(c)).
  \end{align*}
  It is again easy to prove that those two functions are inverse to each other.
\end{proof}

We now present a proposition that we can prove using the $3\times3$-lemma and which will be used in
the next chapter. It has also been formally proved in Agda by Evan Cavallo in \cite{cavallo3x3},
directly, without using the $3\times3$-lemma.
\begin{proposition}\label{joinassoc}
  Given three types $A$, $B$ and $C$, there is an equivalence
  \[(A*B)*C \simeq A*(B*C).\]
\end{proposition}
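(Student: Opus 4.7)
The plan is to apply the $3\times 3$-lemma to a carefully chosen $3\times 3$ diagram whose iterated row- and column-pushouts produce the two spans defining $A*(B*C)$ and $(A*B)*C$ respectively.

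Specifically, I would take the central entry to be $A\times B\times C$ and the three rows to be
\begin{align*}
A &\xleftarrow{\fst} A\times C \xrightarrow{\id} A\times C,\\
A\times B &\xleftarrow{(a,b,c)\mapsto(a,b)} A\times B\times C \xrightarrow{(a,b,c)\mapsto(a,c)} A\times C,\\
B &\xleftarrow{\fst} B\times C \xrightarrow{\snd} C,
\end{align*}
with the vertical arrows being the evident projections (the rightmost column being the identity from middle to top and $\snd$ from middle to bottom, the leftmost being $\fst$ and $\snd$, and the middle column being the three projections $A\times B\times C\to A\times C$ and $A\times B\times C\to B\times C$). Every square commutes strictly, since both ways around simply project $(a,b,c)$ to a tuple of its coordinates, so all four fillers can be taken to be reflexivities.

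The next step is to identify the row- and column-pushouts. By proposition \ref{pushoutid}, the top row collapses to $A$ and the right column collapses to $C$, because in each case one of the legs is the identity. By proposition \ref{pushoutprod}, the middle row is equivalent to $A\times(B*C)$ (factoring out $A$ from the span $B\leftarrow B\times C\rightarrow C$) and the middle column is equivalent to $(A*B)\times C$ (factoring out $C$ from the span $A\leftarrow A\times B\rightarrow B$). The remaining bottom row and left column are by definition the spans computing $B*C$ and $A*B$. Tracing the induced maps, the column of row-pushouts is $A\xleftarrow{\fst} A\times(B*C)\xrightarrow{\snd} B*C$, whose pushout is $A*(B*C)$, and the row of column-pushouts is $A*B\xleftarrow{\fst}(A*B)\times C\xrightarrow{\snd} C$, whose pushout is $(A*B)*C$. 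The $3\times 3$-lemma then yields $A*(B*C)\simeq (A*B)*C$.

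The main obstacle will be checking that the maps induced between the row- and column-pushouts are genuinely the natural projections $\fst$ and $\snd$, rather than abstract arrows produced by the universal property of pushouts. Concretely, one must unfold the equivalences supplied by propositions \ref{pushoutid} and \ref{pushoutprod} and verify that the induced maps behave correctly on the images of $\inl$, $\inr$, and $\push$, together with the appropriate $2$-dimensional coherences. This bookkeeping is routine but tedious, and is where the bulk of the actual work of the proof lies.
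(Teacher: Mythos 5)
Your proof is correct and takes essentially the same approach as the paper: your $3\times3$ diagram is exactly the transpose of the one in the paper (the paper has $A*B$ along the top row, $(A*B)\times C$ in the middle row and $C$ along the bottom row, with the columns giving $A$, $A\times(B*C)$ and $B*C$), and since the $3\times3$-lemma is symmetric between rows and columns, the two arguments are interchangeable. You also correctly identify the residual work — verifying that the induced maps on the row/column pushouts are the intended projections — which the paper likewise handles by a brief appeal to induction.
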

\begin{proof}
  Let’s consider the diagram
  \[
  \begin{tikzcd}
    A & A\times B \arrow[l] \arrow[r] & B \\
    A\times C \arrow[u] \arrow[d] & A\times B\times C \arrow[l] \arrow[r] \arrow[u] \arrow[d] &
    B\times C \arrow[u] \arrow[d]\\
    A\times C & A\times C \arrow[l] \arrow[r] & C
  \end{tikzcd}
  \]
  where the arrows are the obvious projections and the four squares are filled by $\idpS$. The
  pushout of the first row is $A*B$, the pushout of the second row is $(A*B)\times C$ according to
  proposition \ref{pushoutprod} and the pushout of the third row is $C$ according to proposition
  \ref{pushoutid}. Moreover, the two maps $(A*B)\times C\to A*B$ and $(A*B)\times C\to C$ are equal
  to the two projections, as can be checked by hand by induction. Therefore the pushout of the
  pushouts of the rows is equivalent to $(A*B)*C$.

  Similarly, the pushout of the first column is $A$, the pushout of the second column is
  $A\times(B*C)$, the pushout of the third column is $B*C$, and the two maps are the two
  projections, hence the pushout of the pushouts of the columns is equivalent to
  $A*(B*C)$. Therefore, we have
  \[(A*B)*C \simeq A*(B*C).\qedhere\]
\end{proof}

Here is another simple application, which could also be proved directly.
\begin{proposition}\label{booljoinsusp}
  For any type $A$ there is an equivalence $\Bool*A\simeq\Susp A$.
\end{proposition}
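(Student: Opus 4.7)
The plan is to construct maps $\varphi : \Bool * A \to \Susp A$ and $\psi : \Susp A \to \Bool * A$ directly, using the induction principles of the two pushouts, and then check by induction that they are mutually inverse. For $\varphi$, I send $\inl(\true) \mapsto \north$, $\inl(\false) \mapsto \south$, and $\inr(a) \mapsto \south$ for every $a:A$; on the path constructors I set $\ap\varphi(\push(\true, a)) \defeq \merid(a)$ and $\ap\varphi(\push(\false, a)) \defeq \idp{\south}$. Symmetrically, for $\psi$ I send $\north \mapsto \inl(\true)$ and $\south \mapsto \inl(\false)$, with $\ap\psi(\merid(a)) \defeq \push(\true, a) \concat \push(\false, a)\inv$.

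To verify $\varphi \circ \psi = \id_{\Susp A}$, I apply the suspension induction principle. The two point cases are equalities by definition, and on a meridian the condition given by proposition \ref{depeqid} reduces to $\ap\varphi(\push(\true, a) \concat \push(\false, a)\inv) = \merid(a) \concat \idp{\south}\inv = \merid(a)$, which matches $\ap{\id}(\merid(a))$.

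For the other composite $\psi \circ \varphi = \id_{\Bool * A}$, I apply the join induction principle with the pointwise homotopy $H$ defined on points by $H(\inl(\true)) \defeq \idp{\inl(\true)}$, $H(\inl(\false)) \defeq \idp{\inl(\false)}$, and crucially $H(\inr(a)) \defeq \push(\false, a)$ (note that this case is not a reflexivity, because $\psi(\varphi(\inr(a))) = \inl(\false)$ is only connected to $\inr(a)$ by the path $\push(\false, a)$). It remains to provide a filler for each of the two $\push$ path-constructors. On $\push(\false, a)$, $\ap{\psi \circ \varphi}(\push(\false, a))$ reduces to $\idp{\inl(\false)}$ and the filler condition becomes the trivial identity $\idp{\inl(\false)} \concat \push(\false, a) = \idp{\inl(\false)} \concat \push(\false, a)$. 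On $\push(\true, a)$, $\ap{\psi \circ \varphi}(\push(\true, a))$ reduces to $\push(\true, a) \concat \push(\false, a)\inv$, and the required filler is the equality $\idp{\inl(\true)} \concat \push(\true, a) = (\push(\true, a) \concat \push(\false, a)\inv) \concat \push(\false, a)$, which follows from the groupoid laws of section \ref{sec:infgpd}.

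The only mildly nontrivial point is the square-filler for $\push(\true, a)$; everything else is either definitional or a routine use of proposition \ref{depeqid}. A more symmetric alternative would use the $3\times3$-lemma after decomposing $\Bool \simeq \Unit \sqcup^{\Zero} \Unit$ and $\Bool \times A \simeq A \sqcup^{\Zero} A$, but the direct construction above is shorter and fits naturally right after the definition of $\Susp$ and the join.
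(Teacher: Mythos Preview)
Your proof is correct, but the paper takes a different route at this particular spot: it applies the $3\times3$-lemma to a diagram whose rows have pushouts $\Unit$, $A$, $\Unit$ (giving $\Susp A$) and whose columns have pushouts $\Bool$, $\Bool\times A$, $A$ (giving $\Bool * A$), using $\Unit+\Unit\simeq\Bool$ and $A+A\simeq\Bool\times A$. The paper even remarks that the result ``could also be proved directly,'' and in fact your maps $\varphi$ and $\psi$ appear verbatim later in appendix~\ref{ch:defn} under the names $\psi_A^{-1}$ and $\psi_A$. So your approach is not only valid but is the one the paper itself falls back on when an explicit description is needed; the $3\times3$ proof in the main text is there chiefly to illustrate the lemma and to slot into the inductive argument of proposition~\ref{joinspheres}, where working uniformly with the $3\times3$-lemma (via proposition~\ref{joinassoc}) avoids repeating ad hoc point/path checks at each step.
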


\begin{proof}
  We apply the $3\times3$-lemma to the diagram
  \[
  \begin{tikzcd}
    \Unit & A \arrow[l] \arrow[r] & A\\
    \Zero \arrow[u] \arrow[d] & \Zero \arrow[l] \arrow[r] \arrow[u] \arrow[d] & A \arrow[u]
    \arrow[d]\\
    \Unit & A \arrow[l] \arrow[r] & A
  \end{tikzcd}
  \]
  and we conclude by noticing that $\Unit+\Unit\simeq\Bool$ and that $A+A\simeq \Bool\times A$.
\end{proof}

Combining the two previous results, we obtain
\begin{proposition}\label{joinspheres}
  For every $m,n:\N$, there is an equivalence
  \[\Sn m*\Sn n\simeq\Sn{m+n+1}.\]
\end{proposition}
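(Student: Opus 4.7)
The plan is to proceed by induction on $m$, using propositions \ref{joinassoc} and \ref{booljoinsusp} as the two main ingredients, together with the inductive definition $\Sn{k+1}\defeq\Susp\Sn k$.

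For the base case $m=0$, we have $\Sn 0\defeq\Bool$, so proposition \ref{booljoinsusp} gives us directly
\[\Sn0*\Sn n = \Bool*\Sn n \simeq \Susp\Sn n = \Sn{n+1},\]
which matches $\Sn{0+n+1}$.

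For the inductive step, assume $\Sn m*\Sn n\simeq\Sn{m+n+1}$. The idea is to rewrite $\Sn{m+1}=\Susp\Sn m$ as a join $\Bool*\Sn m$ using proposition \ref{booljoinsusp} in the reverse direction, then reassociate using proposition \ref{joinassoc}, then apply the induction hypothesis, and finally use proposition \ref{booljoinsusp} one more time:
\[\Sn{m+1}*\Sn n \simeq (\Bool*\Sn m)*\Sn n \simeq \Bool*(\Sn m*\Sn n) \simeq \Bool*\Sn{m+n+1} \simeq \Susp\Sn{m+n+1} = \Sn{m+n+2}.\]
This is exactly $\Sn{(m+1)+n+1}$, completing the induction.

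I do not anticipate any real obstacle: the argument is a direct chain of equivalences from the two previous propositions, and all the equivalences used are between types obtained from already constructed pushouts, so no extra coherence data needs to be introduced. The only mild point is that one has to be a little careful with the direction of proposition \ref{booljoinsusp} (using it both as $\Susp A\simeq\Bool*A$ and $\Bool*A\simeq\Susp A$), but since that proposition provides an equivalence, both directions are available.
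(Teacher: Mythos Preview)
Your proof is correct and follows essentially the same approach as the paper: induction on $m$, with the base case given by proposition \ref{booljoinsusp} and the inductive step by the chain $\Sn{m+1}*\Sn n \simeq (\Bool*\Sn m)*\Sn n \simeq \Bool*(\Sn m*\Sn n) \simeq \Bool*\Sn{m+n+1} \simeq \Sn{(m+1)+n+1}$ using propositions \ref{booljoinsusp} and \ref{joinassoc}.
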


\begin{proof}
  We proceed by induction on $m$. For $m=0$ it is proposition \ref{booljoinsusp}, and for $m+1$ we have
  \begin{align*}
    \Sn{m+1}*\Sn n &\simeq (\Susp\Sn{m})*\Sn n\\
                   &\simeq (\Bool*\Sn{m})*(\Sn n)\quad\text{by proposition }\ref{booljoinsusp}\\
                   &\simeq \Bool*(\Sn{m}*\Sn n)\quad\text{by proposition }\ref{joinassoc}\\
                   &\simeq \Bool*\Sn{m+n+1}\quad\text{by induction hypothesis}\\
                   &\simeq \Susp\Sn{m+n+1}\quad\text{by proposition }\ref{booljoinsusp}\\
                   &\simeq \Sn{(m+1)+n+1}.\qedhere
  \end{align*}
\end{proof}

Another simple application is the following, which we will use in chapter \ref{ch:james}.
\begin{proposition}\label{pushoutangle}
  Given four types $A$, $B$, $C$ and $D$ and three functions $f:B\to A$, $g:C\to B$ and $h:C\to D$,
  there is an equivalence
  \[A\sqcup^B(B\sqcup^C D)\simeq A\sqcup^CD\]
  where the map $B\to B\sqcup^C D$ on the left is $\inl$ and the map $C\to A$ on the right is
  $f\circ g$.
\end{proposition}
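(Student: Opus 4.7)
The plan is to apply the $3\times3$-lemma to a carefully chosen $3\times3$-diagram, in such a way that one direction of iterated pushouts computes $A\sqcup^B(B\sqcup^CD)$ and the other direction computes $A\sqcup^CD$, after simplification via proposition \ref{pushoutid}.

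Concretely, I would consider the diagram
\[
\begin{tikzcd}[sep=large]
  A & A \arrow[l,"\id"'] \arrow[r,"\id"] & A\\
  B \arrow[u, "f"] \arrow[d, "\id"'] & C \arrow[l,"g"] \arrow[r,"\id"] \arrow[u,"f \circ g"] \arrow[d,"\id"'] & C \arrow[u, "f \circ g"'] \arrow[d, "h"]\\
  B & C \arrow[l,"g"] \arrow[r,"h"'] & D
\end{tikzcd}
\]
where each of the four small squares commutes on the nose (all fillers are reflexivities, modulo the definitional equalities involving $\id$ and function composition). The outer corners, edges, and middle entry are chosen so that all four fillers hold by definition, which is easy to check.

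First I would compute the row pushouts: the top row gives $A$ trivially, the middle row $B\xleftarrow{g}C\xrightarrow{\id}C$ gives $B$ by proposition \ref{pushoutid}, and the bottom row is the defining span of $B\sqcup^CD$. The vertical maps descend to the column span $A\xleftarrow{f}B\xrightarrow{\inl}B\sqcup^CD$ (one checks directly that the induced map from the row-2 pushout $B$ to the row-1 pushout $A$ is $f$, and to the row-3 pushout $B\sqcup^CD$ is $\inl$). So the iterated pushout along rows is exactly $A\sqcup^B(B\sqcup^CD)$.

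Then I would compute the column pushouts: the left column $A\xleftarrow{f}B\xrightarrow{\id}B$ gives $A$ by proposition \ref{pushoutid}, the middle column $A\xleftarrow{f\circ g}C\xrightarrow{\id}C$ also gives $A$ by proposition \ref{pushoutid}, and the right column is the defining span of $A\sqcup^CD$. The horizontal maps descend to the row span $A\xleftarrow{\id}A\xrightarrow{\inl}A\sqcup^CD$, whose pushout is $A\sqcup^CD$ again by proposition \ref{pushoutid}. Invoking the $3\times3$-lemma then yields the desired equivalence $A\sqcup^B(B\sqcup^CD)\simeq A\sqcup^CD$.

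The only potentially tricky point is verifying that the maps induced on the row-pushouts are really $f$ and $\inl$ (and similarly on the column side): this uses the description of the induced map in the statement of the $3\times3$-lemma together with the fact that, when one leg of a pushout span is the identity, the pushout is computed via the other leg. I expect this bookkeeping of induced maps to be the main, though still routine, obstacle, and it is simplified greatly by the choice of so many identity maps in the diagram.
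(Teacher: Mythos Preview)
Your proof is correct and uses the same core tool as the paper, the $3\times3$-lemma, but with a different choice of $3\times3$-diagram. The paper instead places the empty type $\Zero$ in the four upper-right entries:
\[
\begin{tikzcd}
  A & \Zero \arrow[l] \arrow[r] & \Zero\\
  B \arrow[u,"f"] \arrow[d,"\id_B"'] & \Zero \arrow[l] \arrow[r] \arrow[u] \arrow[d] & \Zero \arrow[u] \arrow[d]\\
  B & C \arrow[l,"g"] \arrow[r,"h"'] & D
\end{tikzcd}
\]
With this choice most of the row and column pushouts collapse immediately (a pushout over $\Zero$ is a disjoint sum, and $X+\Zero\simeq X$), so proposition~\ref{pushoutid} is needed only once, for the leftmost column, and the induced maps $f\circ g$ and $h$ are read off directly. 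Your diagram replaces those $\Zero$'s by copies of $A$ and $C$ linked via identity maps; this works equally well but calls on proposition~\ref{pushoutid} four times and requires the extra bookkeeping you flagged to check that the induced maps really are $f$, $\inl$, $\id_A$, and $\inl$ under those equivalences. Both arguments are entirely valid; the paper's is a little leaner, while yours avoids introducing the empty type.
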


\begin{proof}
  We apply the $3\times3$-lemma to the diagram
  \[
  \begin{gathered}[b]
    \begin{tikzcd}
      A & \Zero \arrow[l] \arrow[r] & \Zero\\
      B \arrow[u,"f"] \arrow[d,"\id_B"'] & \Zero \arrow[l] \arrow[r] \arrow[u] \arrow[d] & \Zero
      \arrow[u] \arrow[d]\\
      B & C \arrow[l,"g"] \arrow[r,"h"'] & D
    \end{tikzcd}\\[-8pt]
  \end{gathered}\qedhere
  \]
\end{proof}

All the examples of use of the $3\times3$-lemma presented so far are simpler to prove than the full
version either because it is degenerate or because $H_{11}(x)$, $H_{13}(x)$, $H_{31}(x)$ and
$H_{33}(x)$ are all equal to $\idpS$. However in proposition \ref{whiteheadab} we will use one
instance of the $3\times3$-lemma where $H_{11}(x)$ is not a constant path.

\section{The flattening lemma}\label{flattening}

In classical homotopy theory, the usual way to construct a fibration is to define a continuous map
$f:E\to B$ between two spaces $E$ and $B$ and prove that it has the property of being a
fibration. In homotopy type theory the story is quite different because there is no predicate “being
a fibration”. Indeed, being a fibration is not a property invariant under homotopy of maps. Instead,
fibrations are replaced by dependent types, which encode directly the fact that every point of the
base space has a fiber over it. One way to understand why we can see dependent types as fibrations
is with the function $\transport$ of section \ref{sec:ua} which shows that one can transport
elements between fibers of a dependent type along a path in the base type, and we could similarly
lift any homotopy in the base space. Constructing a fibration in homotopy type theory consists
simply in defining a map $P:B\to\Type$. There is no need to “prove” that it is a fibration, but what
is non-trivial is to understand its total space. In general, the total space of a fibration
$P:B\to\Type$ is the type
\[\sum_{x:B}P(x),\]
but this is not a description which is easy to work with in general.

A special case of fibrations in homotopy type theory are fibrations over higher inductive types
which are defined by induction using the univalence axiom. For instance, one can define a fibration
$P$ over $\Sn1$ by
\begin{align*}
  P &: \Sn1\to\Type,\\
  P(\base) &\defeq P_{\base},\\
  \ap P(\lloop) &\defeq \ua(P_{\lloop}),
\end{align*}
where $P_{\base}$ is a type and $P_{\lloop}$ is an equivalence between $P_{\base}$ and
itself. Similarly, one can define a fibration over a pushout $A\sqcup^CB$ by
\begin{align*}
  P &: A\sqcup^C B\to\Type,\\
  P(\inl(a)) &\defeq P_{\inl}(a),\\
  P(\inr(b)) &\defeq P_{\inr}(b),\\
  \ap P(\push(c)) &\defeq \ua(P_{\push}(c)),
\end{align*}
where
\begin{align*}
  P_{\inl} &: A\to\Type,\\
  P_{\inr} &: B\to\Type,\\
  P_{\push} &: (c:C)\to P_{\inl}(f(c)) \simeq P_{\inr}(g(c)).
\end{align*}
The \emph{flattening lemma} gives us a nice description of the total space of such fibrations.  We
refer to \cite[section 6.12]{hottbook} for the general statement and proof, and we state here only
these two special cases.

\begin{proposition}\label{flatteningS1}
  The total space of a fibration $P:\Sn1\to\Type$ defined by $P_{\base}$ and $P_{\lloop}$ as above
  is equivalent to the higher inductive type $T$ generated by
  \begin{align*}
    \widetilde{\base} &: P_{\base} \to T,\\
    \widetilde{\lloop} &: (x : P_{\base}) \to \widetilde{\base}(x) =_T \widetilde{\base}(P_{\lloop}(x)).
  \end{align*}
\end{proposition}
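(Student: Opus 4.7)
The plan is to construct a pair of mutually inverse maps between $T$ and $\sum_{x:\Sn1}P(x)$ by using the induction principles of the two higher inductive types and the univalence axiom to translate between dependent paths over $\lloop$ and values of the equivalence $P_{\lloop}$.

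First I would define $f : T \to \sum_{x:\Sn1}P(x)$ by the induction principle of $T$. On the point-constructor I set $f(\widetilde{\base}(x)) \defeq (\base, x)$. For the path-constructor $\widetilde{\lloop}(x)$, I need to supply a path in $\sum_{x:\Sn1}P(x)$ from $(\base,x)$ to $(\base, P_{\lloop}(x))$. By proposition \ref{pathspairtypes} this is the same data as a pair $(p, q)$ with $p : \base = \base$ and $q : x =^P_p P_{\lloop}(x)$. I take $p \defeq \lloop$; then since $\ap P(\lloop) = \ua(P_{\lloop})$ by construction, transport along $\lloop$ agrees with $P_{\lloop}$, and the required dependent path is obtained from $\idp{P_{\lloop}(x)}$ through the equivalence $(u =^P_{\lloop} v) \simeq (\transport^P(\lloop, u) =_{P(\base)} v)$.

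Next I would define $g : \sum_{x:\Sn1} P(x) \to T$ by currying: construct $g' : (x:\Sn1) \to P(x) \to T$ by induction on the circle. At $\base$, set $g'(\base) \defeq \widetilde{\base} : P_{\base} \to T$. To define $\ap{g'}(\lloop)$ I must give a dependent path $\widetilde{\base} =^{\lambda x.\,P(x)\to T}_{\lloop} \widetilde{\base}$. Since $T$ does not depend on $x : \Sn1$, this dependent path reduces (via transport along $\ua(P_{\lloop})$ in the domain and function extensionality) to a homotopy of type $(x : P_{\base}) \to \widetilde{\base}(P_{\lloop}^{-1}(x)) = \widetilde{\base}(x)$, equivalently $(x : P_{\base}) \to \widetilde{\base}(x) = \widetilde{\base}(P_{\lloop}(x))$, which is exactly $\widetilde{\lloop}$. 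Uncurrying yields $g$.

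Then I would verify $g \circ f = \id_T$ and $f \circ g = \id_{\sum P}$, again by induction. For $g \circ f = \id_T$: on $\widetilde{\base}(x)$ the equality holds definitionally (or via $\idpS$), and on $\widetilde{\lloop}(x)$ we must check a square of paths in $T$ whose sides are $\widetilde{\lloop}(x)$ on one boundary and the path produced by $\ap{g}$ on the chosen pair $(\lloop, q)$ on the other; this reduces to the computation rule identifying transport along $\ua(P_{\lloop})$ with $P_{\lloop}$, together with the computation rule defining $\ap{g'}(\lloop)$. The converse composition is handled by a double induction, first on the $\Sn1$-coordinate and then on the fiber, and it uses proposition \ref{pathspairtypes} to decompose the target equality in $\sum_{x:\Sn1}P(x)$ into a base-path and a dependent-fiber-path.

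The main obstacle will be the coherence step for the path-constructors, where we have to manipulate a dependent path over $\lloop$ in the type $\lambda x.\,P(x)\to T$ (respectively, a square of paths in $T$ over $\lloop$) and check that it agrees with what $f$ and $g$ actually produce. These are essentially naturality squares involving $\ua$, $\transport$, and function extensionality, and writing them down cleanly requires using the equivalences between dependent paths and ordinary paths discussed earlier in the chapter (in particular proposition \ref{depeqid} and the equivalence $(u =^P_p v) \simeq (\transport^P(p,u) = v)$), together with the fact that $\coe_{\ua(e)}$ agrees with $e$.
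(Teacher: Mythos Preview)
Your approach is correct and is essentially the standard proof of the flattening lemma as carried out in \cite[section~6.12]{hottbook}. The paper itself does not give a proof of this proposition: it explicitly refers to that section of \cite{hottbook} for the general statement and proof, and only records this special case (together with proposition~\ref{flatteningpushout}) without argument. So there is nothing to compare against except the HoTT book proof, which proceeds exactly as you describe: build the two maps using the induction principles of $T$ and of $\Sn1$, use univalence to identify transport along $\lloop$ with $P_{\lloop}$, and verify the round-trip homotopies by induction with the path-constructor cases reducing to naturality/coherence squares.
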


\begin{proposition}\label{flatteningpushout}
  Given the pushout $D$ of the diagram
  \[
  \begin{tikzcd}
    A & C \arrow[l,"f"'] \arrow[r,"g"] & B
  \end{tikzcd}
  \]
  and $P:D\to\Type$ defined by $P_{\inl}$, $P_{\inr}$ and $P_{\push}$ as above, the total space of
  $P$ is equivalent to the pushout of the diagram
  \[
  \begin{tikzcd}\displaystyle
    \sum_{x:A}P_{\inl}(x) & \displaystyle\sum_{z:C}P_{\inl}(f(z)) \arrow[l] \arrow[r] &\displaystyle
    \sum_{y:B}P_{\inr}(y),
  \end{tikzcd}
  \]
  where the left map sends the pair $(z,u)$ to $(f(z),u)$ and the right map sends the pair $(z,v)$
  to $(g(z),P_{\push}(z)(v))$.
\end{proposition}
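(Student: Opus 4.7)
Write $E \defeq \sum_{d:D} P(d)$ for the total space of $P$ and $T$ for the pushout appearing on the right-hand side. The plan is to construct maps $\Phi : T \to E$ and $\Psi : E \to T$ by the respective induction principles, and then to verify by two further inductions that they are mutually inverse. Both directions essentially amount to saying that a fibre point sitting over $\inl(x)$ in $D$ is the same as a point of $\sum_{x:A} P_{\inl}(x)$, and similarly for $\inr$; the nontrivial content is to check that this identification is compatible with $\push$.

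The map $\Phi$ is defined by induction on $T$ by setting $\Phi(\inl(x,u)) \defeq (\inl(x), u)$ and $\Phi(\inr(y,v)) \defeq (\inr(y), v)$; these type-check because $P(\inl(x)) \defeq P_{\inl}(x)$ and $P(\inr(y)) \defeq P_{\inr}(y)$ by definition of $P$. For the path constructor we must supply, for each $z:C$ and $u:P_{\inl}(f(z))$, an equality $(\inl(f(z)), u) = (\inr(g(z)), P_{\push}(z)(u))$ in $E$. By proposition~\ref{pathspairtypes} such an equality is a pair consisting of $\push(z) : \inl(f(z)) = \inr(g(z))$ together with a dependent path $u =^P_{\push(z)} P_{\push}(z)(u)$; unfolding the definition $\ap P(\push(z)) \defeq \ua(P_{\push}(z))$ and applying the computation rule for univalence, this reduces to the trivial equality $\coe_{\ua(P_{\push}(z))}(u) = P_{\push}(z)(u)$.

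To construct $\Psi$ we first build the curried version $\Psi' : (d:D)\to P(d) \to T$ by induction on $D$, and then uncurry. On point constructors, $\Psi'(\inl(x))(w) \defeq \inl(x,w)$ and $\Psi'(\inr(y))(w) \defeq \inr(y,w)$. On the path $\push(z)$ we must give a dependent path $\Psi'(\inl(f(z))) =^{\lambda d.\, P(d) \to T}_{\push(z)} \Psi'(\inr(g(z)))$. Using the characterisation of dependent paths into function types together with the univalence computation rule, this amounts to producing, for every $u : P_{\inl}(f(z))$, a path $\inl(f(z),u) = \inr(g(z), P_{\push}(z)(u))$ in $T$, and this is literally $\push(z,u)$.

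Finally one checks $\Phi \circ \Psi \sim \id_E$ and $\Psi \circ \Phi \sim \id_T$. The first is proved by induction on $d:D$ and then on the $P(d)$-argument, using the three $\inl$/$\inr$/$\push$ cases; the second by induction on the element of $T$. On point constructors everything reduces to $\idpS$ by definition, and on path constructors one obtains a square to fill whose sides are built out of $\ap{\push}$, transport along $\ua$, and the pairing of paths given by proposition~\ref{pathspairtypes}. The main obstacle is precisely this last verification: keeping track of the dependent paths lying over $\push(z)$ through the interaction of $\transport$, $\coe$, $\ua$, and the pair-path equivalence is bookkeeping-intensive, and this is the step where one would most want the computation rule of univalence (and the $\ap{}$ rule for higher inductive type constructors) to hold definitionally, as the general statement in \cite[section 6.12]{hottbook} discusses.
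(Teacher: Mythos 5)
The paper does not prove this proposition itself, citing \cite[section 6.12]{hottbook} instead, and your proposal is the standard direct argument that underlies that reference: build the two maps by the pushout and $\Sigma$ induction principles, using proposition~\ref{pathspairtypes} on the $E$-side and the characterisation of dependent paths into function types on the $T$-side, with the computation rule of $\ua$ converting transport in $P$ along $\push(z)$ into the equivalence $P_{\push}(z)$, and then verify the two roundtrips by further inductions. Your identification of the $\push$ cases of the roundtrip checks as the genuinely delicate bookkeeping step is accurate, and the rest of the sketch is correct.
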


\section{Truncatedness and truncations}\label{sec:truncatednesstruncations}

Finally we introduce $n$-truncated types and truncations, which are an essential part of this work
and of homotopy type theory in general. The notion of $n$-truncated type was introduced in 2009 by
Vladimir Voevodsky under the name “type of h-level $n+2$” and proved invaluable in the understanding
of homotopy type theory. Intuitively, a type is $n$-truncated if it doesn’t contain any interesting
information in its $k$-iterated identity types for $k>n$. From the point of view of homotopy theory
they correspond to homotopy $n$-types. For instance, $0$-truncated types corresponds to \emph{sets}
(i.e.\ discrete spaces). We can also make sense of $(-1)$-truncated types which are types whose
elements are all equal, and of $(-2)$-truncated types which are the contractible types.

\subsection{Truncatedness of types}

\begin{definition}
  A type $A$ is \emph{contractible} or \emph{$(-2)$-truncated} if there is a point $a:A$ (sometimes
  called the \emph{center of contraction}) and an equality from $a$ to $x$ for every $x:A$.  In
  other words, being contractible is defined by the predicate
  \[\operatorname{is-contr}(A) \defeq \sum_{a:A}((x:A)\to a=_Ax).\]
\end{definition}

Note that if we read the definition of contractibility as “there exists $a:A$ such that for every
$x:A$ there exists a path from $a$ to $x$”, we might think that it is only a definition of
connectedness. The error is that the existential quantifier is a \emph{explicit} existential
quantifier, which means that the path from $a$ to $x$ should not only exist but it should depend
continuously on $x$, which is much stronger than connectedness.

An important example of contractible type is the space of all paths in a type with one endpoint
fixed.
\begin{proposition}\label{pathfibcontr}
  Given a type $A$ and a point $a:A$, the type
  \[\sum_{x:A}(a=_Ax)\]
  is contractible.
\end{proposition}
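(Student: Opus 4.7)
The plan is to exhibit $(a,\idp a)$ as the center of contraction and then to construct, for every $(x,p):\sum_{x:A}(a=_Ax)$, an equality $(a,\idp a)=_{\sum_{x:A}(a=_Ax)}(x,p)$. Since I may not use $\J$ with both endpoints free, the key move is to have the endpoint $x$ vary together with the path $p$, which is exactly what the $\Sigma$-type packages.

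First I would reduce the goal via proposition \ref{pathspairtypes}: an element of $(a,\idp a)=(x,p)$ is the same as a dependent pair $(q,r)$ with $q:a=_Ax$ and $r:\idp a =^{\lambda y.(a=_Ay)}_q p$. Taking $q\defeq p$, it remains to construct $r:\idp a =^{\lambda y.(a=_Ay)}_p p$ for every $x:A$ and every $p:a=_Ax$. Now both endpoints of the induction are free in the right sense: $x$ is free and $p$ runs over all paths starting at $a$, so path induction on $p$ applies.

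Performing the path induction, the remaining obligation is to produce an element of $\idp a =^{\lambda y.(a=_Ay)}_{\idp a} \idp a$. This reduces, via the equivalence $(u=^B_{\idp a}v)\simeq(u=_{B(a)}v)$ recalled in section \ref{idtypes}, to giving an element of $\idp a =_{a=_Aa} \idp a$, for which we simply take $\idp{\idp a}$. Packaging these choices, the function
\[(x,p)\mapsto\bigl(p,\;\J\text{-constructed dependent path}\bigr):(a,\idp a)=(x,p)\]
together with the center $(a,\idp a)$ witnesses contractibility.

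The only subtle point is the bookkeeping with the dependent path $r$: one has to verify that the dependent-path formulation really is the correct substitute for having $p$ vary with its endpoint, and to invoke proposition \ref{pathspairtypes} and proposition \ref{depeqid} (or the special case for constant $p$) at the right moments. There are no hard computations; once the reduction via $\Sigma$-paths is in place, a single application of $\J$ finishes the argument.
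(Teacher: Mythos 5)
Your proof is correct and takes essentially the same route as the paper: the center of contraction is $(a,\idp a)$, proposition \ref{pathspairtypes} reduces the obligation to a pair $(q,r)$, and the key move $q\defeq p$ is the same. The only (inessential) difference is the last step: where you run a second path induction on $p$ to reduce the dependent path $r$ to the reflexive case, the paper instead applies proposition \ref{depeqid} to rephrase $r$ as a square filler of type $(\idp a\concat p)=(\idp a\concat p)$ and supplies $\idp{\idp a\concat p}$ directly, saving the extra induction.
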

\begin{proof}
  We first notice that the pair $(a,\idp a)$ is an element of this type. We take it as the center of
  contraction and we now have to prove that for every $x:A$ and $p:a=_Ax$, there is a path
  $(a,\idp a)=(x,p)$ in $\sum_{x:A}(a=_Ax)$. Using propositions \ref{pathspairtypes} and
  \ref{depeqid}, we see that we need to find a path $q:a=_Ax$ together with a filler of the square
  \[
  \begin{tikzcd}[sdiag]
    \bullet \arrow[r,"\idp a"] \arrow[d,"\idp a"'] & \bullet \arrow[d,"p"] \\
    \bullet \arrow[r,"q"'] & \bullet
  \end{tikzcd}
  \]
  We take $q\defeq p$ and the filler coming from the equality
  $\idp{\idp a\concat p}:\idp a\concat p=\idp a\concat q$.
\end{proof}

\begin{definition}
  For $n\ge -2$, a type $A$ is \emph{$(n+1)$-truncated} if for every $x,y:A$, the type $x=_Ay$ is $n$-truncated.
\end{definition}

As a special case, a type which is $(-1)$-truncated is called a \emph{mere proposition} and a type
which is $0$-truncated is called a \emph{set}.

\begin{proposition}\label{ntruncprop}
  We have the following properties.
  \begin{itemize}
  \item Being $n$-truncated is a mere proposition.
  \item A type $A$ satisfies $x=_Ay$ for every $x,y:A$ if and only if $A$ is a mere proposition.
  \item If $B:A\to\Type$ is a family of $n$-truncated types, then $(x:A)\to B(x)$ is itself
    $n$-truncated.
  \item If $A$ is $n$-truncated and $B:A\to\Type$ is a family of $n$-truncated types, then
    $\sum_{x:A}B(x)$ is itself $n$-truncated.
  \item If $A$ is $n$-truncated and $m\ge n$, then $A$ is $m$-truncated as well.
  \item If $A$ is $n$-truncated and $x,y:A$, then $x=_Ay$ is $n$-truncated.
  \end{itemize}
\end{proposition}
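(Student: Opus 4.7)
The plan is to prove these properties in an order respecting their mutual dependencies. I would first record as a background lemma that $n$-truncatedness is equivalence-invariant, by induction on $n$ starting from the easy observation that an equivalence $A \simeq B$ transports contractibility data from one side to the other.

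Next I would tackle (2), (5) and (6), which are essentially path-algebraic base cases. For (2), the reverse direction is immediate; for the forward direction, given $f : (x,y:A) \to x=_A y$, I claim each $x=_A y$ is contractible with centre $f(x,y)$. Applying $\ap{\lambda z. f(x,z)}$ to an arbitrary $q : x=_A y$ and unpacking via Proposition \ref{depeqid} yields $f(x,x) \concat q = f(x,y)$, whence $q = f(x,x)\inv \concat f(x,y)$; since the right-hand side is independent of $q$, all paths in $x=_A y$ agree and in particular equal $f(x,y)$. For (5), I would induct on $n$; the base case reduces to showing that contractibility implies $(-1)$-truncatedness, where, given contractibility data $(c,p)$, the path $p(x)\inv \concat p(y)$ is the centre of $x=_A y$, and path induction on an arbitrary $q:x=y$ reduces the required equality to the inverse law. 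Statement (6) is then immediate from (5) together with the definition of $(n+1)$-truncatedness.

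For (3) and (4) I would again induct on $n$. In the base case of (3), the section $\lambda x. c_x$ picking the centre of each $B(x)$ is the centre of contraction, and function extensionality combined with the contracting homotopies of each fibre gives the required path to any other $f$; in the inductive step, function extensionality rewrites $(f=g)$ as $(x:A)\to f(x)=g(x)$, which is a dependent product of $n$-truncated types and hence $n$-truncated by the inductive hypothesis, so $(f=g)$ is too by equivalence invariance. For (4), the base case uses $(a_0, c_{a_0})$ as centre and uses Proposition \ref{pathspairtypes} together with both contracting homotopies to build the path to any element; the inductive step rewrites the identity type of $\sum_{x:A} B(x)$, via Proposition \ref{pathspairtypes}, as a $\Sigma$-type of an $n$-truncated base and an $n$-truncated family of identity types, to which the inductive hypothesis applies.

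Finally for (1), I would induct on $n$. The inductive step is straightforward: $(n+1)$-truncatedness of $A$ is a dependent product of $n$-truncatedness statements on the identity types, each of which is a mere proposition by the inductive hypothesis, so the product is a mere proposition by (3). The base case, that $\operatorname{is-contr}(A)$ is a mere proposition, is the main technical obstacle: given two witnesses $(a,p)$ and $(a',p')$, I would take $q \defeq p(a')$ as the path between the first components, and then reduce the required dependent equality $p =^{...}_q p'$ via function extensionality and Proposition \ref{depeqid} to showing $p(x) = p(a') \concat p'(x)$ pointwise in $x:A$, which holds because $A$ is contractible and hence, by (5), $(-1)$-truncated, so any two paths between the same endpoints agree. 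The delicate part is the bookkeeping for the transports and fillers, but no new idea is needed once the order of proof is fixed.
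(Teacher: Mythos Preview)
Your proof is correct and follows the standard arguments from \cite[section 7.1]{hottbook}. Note, however, that the paper does not actually supply its own proof of this proposition: it simply cites that section of the HoTT book. So there is nothing to compare against beyond the fact that your argument is precisely the one the paper defers to.
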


We refer to \cite[section 7.1]{hottbook} for a proof of these properties.

\begin{proposition}
  The unit type is contractible and the empty type is a proposition.
\end{proposition}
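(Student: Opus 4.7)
The plan is to handle each of the two claims separately by using the induction principle for the corresponding inductive type, both of which were set up earlier in the chapter.

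For contractibility of $\Unit$, I would exhibit the center of contraction as $\ttt:\Unit$ and then construct a dependent function $(x:\Unit)\to \ttt =_{\Unit} x$ by the induction principle for $\Unit$. This reduces the construction to producing an element of $\ttt =_{\Unit} \ttt$, for which $\idp{\ttt}$ works. Packaging these two pieces into a pair gives an element of $\operatorname{is-contr}(\Unit)$.

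For the empty type being a mere proposition, I would use the characterization given in proposition~\ref{ntruncprop}: it suffices to show that $x =_{\Zero} y$ for all $x,y:\Zero$. This is immediate from the induction principle for $\Zero$, which says that every dependent type over $\Zero$ has a section; applying this to the dependent type $(x:\Zero)\mapsto ((y:\Zero)\to x=_{\Zero}y)$ gives the desired function without any further work.

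I do not expect any genuine obstacle here: both claims are essentially direct applications of the induction principles for $\Unit$ and $\Zero$, together with the fact (already recorded in proposition~\ref{ntruncprop}) that mere propositionhood is equivalent to the pointwise equality statement. The only very minor subtlety is to recall that the characterization of mere propositions as types whose elements are all equal is what lets us bypass having to analyse identity types of $\Zero$ at all.
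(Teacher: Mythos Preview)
Your proposal is correct and follows essentially the same approach as the paper: induction on $\Unit$ with center $\ttt$ and $\idp{\ttt}$ in the base case, and induction on $\Zero$ (after reducing via proposition~\ref{ntruncprop} to the pointwise-equality characterization of mere propositions) to discharge the empty case vacuously. The paper is slightly terser but the argument is identical.
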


\begin{proof}
  For the unit type, the center of contraction is $\ttt$ and for every $x:\Unit$ we have $x=\ttt$
  because we only need to check it for $x\defeq\ttt$ and in that case we use $\idp{\ttt}$. Hence,
  the type $\Unit$ is contractible.

  For the empty type, we have to prove that given $x,y:\bot$ we have $x=_\bot y$, but the induction
  principle of $\bot$ says that given $x:\bot$ everything is true. Hence, the type $\bot$ is a
  proposition.
\end{proof}

A large class of sets ($0$-truncated types) is given by types with decidable equality.
\begin{proposition}[Hedberg’s theorem]
  If a type $A$ is such that for every $x,y:A$, either $x=_Ay$ or $\neg(x=_Ay)$ (explicit
  disjunction), then $A$ is a set.
\end{proposition}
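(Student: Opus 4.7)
The plan is to show that under the decidability hypothesis, every identity type $x =_A y$ is a mere proposition, which by definition makes $A$ a set. The classical Hedberg trick is to factor each identity type through a weakly constant endofunction and then use path induction to recover that any two paths are equal.

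First I would construct, for every $x,y:A$, a \emph{weakly constant} endofunction $g_{x,y}:(x=_Ay)\to(x=_Ay)$ (``weakly constant'' meaning that any two outputs are equal). The decidability hypothesis gives us an element of $(x=_Ay)+\neg(x=_Ay)$, and we case-split on it. In the left case we have a fixed $p_0:x=_Ay$ and define $g_{x,y}(p)\defeq p_0$, which is manifestly constant. In the right case we have $np:\neg(x=_Ay)$; then for any input $p:x=_Ay$ we obtain $np(p):\bot$, so the induction principle of $\bot$ lets us produce anything, in particular a proof that any two outputs of $g_{x,y}$ are equal (vacuously).

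Next I would prove the key identity: for every $x,y:A$ and every $p:x=_Ay$,
\[p = g_{x,x}(\idp x)\inv \concat g_{x,y}(p).\]
This is done by path induction on $p$ (keeping $x$ fixed and letting $y$ vary): it suffices to exhibit the equality when $y\defeq x$ and $p\defeq \idp x$, and there the right-hand side becomes $g_{x,x}(\idp x)\inv\concat g_{x,x}(\idp x)$, which equals $\idp x$ by the inverse law $\eta$ from section~\ref{sec:infgpd}.

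Finally, given any two paths $p,q:x=_Ay$, weak constancy of $g_{x,y}$ yields $g_{x,y}(p)=g_{x,y}(q)$, and combining this with the key identity gives
\[p = g_{x,x}(\idp x)\inv\concat g_{x,y}(p) = g_{x,x}(\idp x)\inv\concat g_{x,y}(q) = q.\]
Thus $x=_Ay$ satisfies the condition of proposition \ref{ntruncprop} (second item) and so is a mere proposition, which by definition means $A$ is $0$-truncated. The only slightly delicate step is the construction of $g_{x,y}$ in the negative case: one must be careful that producing a weakly constant function when the domain is empty really follows from the induction principle of $\bot$, but this is immediate since the claim has the form $(p,q:x=_Ay)\to g_{x,y}(p)=g_{x,y}(q)$ and we can apply $np$ to either $p$ or $q$.
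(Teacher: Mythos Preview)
Your proof is correct and is the standard ``weakly constant endofunction'' argument (as in Kraus--Escard\'o--Coquand--Altenkirch). The paper takes a slightly different route: rather than explicitly packaging the decidability witness into an endofunction $g_{x,y}$ and then invoking path induction, it observes directly that $d(x,x)$ must be of the form $\inl(p_x)$ for some loop $p_x$, and then applies $\ap{d(x,-)}$ to the loop $p\concat q\inv$. This yields a dependent path from $\inl(p_x)$ to itself over $p\concat q\inv$, which via proposition~\ref{depeqid} is a square with $\idp x$ on top, $p\concat q\inv$ on the bottom, and $p_x$ on both sides; reading off the square gives $p_x = p_x\concat(p\concat q\inv)$, hence $p=q$.

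The two arguments are close in spirit---your $g_{x,x}(\idp x)$ is precisely the paper's $p_x$, and the naturality square the paper uses is essentially what makes your path-induction step go through---but the emphasis differs. Your formulation isolates the reusable lemma ``weakly constant endofunction on identity types implies set'', which generalizes beyond decidable equality (e.g.\ to separated types or types with a reflexive mere relation implying identity). The paper's version is more direct for this single instance and fits its running use of dependent paths and squares, avoiding the auxiliary notion of weak constancy altogether.
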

We say that a type $A$ \emph{has decidable equality} if it satisfies the hypothesis of the proposition.
\begin{proof}
  Given $x,y:A$ and $p,q:x=_Ay$, we want to prove that $p=_{x=_Ay}q$. Let’s call $d$ the witness of
  decidable equality on $A$. We have
  \[d : (x,y:A)\to((x=_Ay)+\neg(x=_Ay)).\]
  Decidable equality at $(x,x)$ gives us either a loop at $x$ or an element of $\neg(x=_Ax)$, but
  this last case cannot hold because there exists at least one loop at $x$, namely $\idp x$.
  Therefore, for every $x:A$ we have a loop $p_x:x=_Ax$, with $d(x,x)=\inl(p_x)$. Let’s now consider
  $\ap{d(x,-)}(p\concat q\inv)$ which is a dependent path from $\inl(p_x)$ to itself over
  $p\concat q\inv$. Using proposition \ref{depeqid}, we obtain a filler of the square
  \[
  \begin{tikzcd}[sdiag]
    \bullet \arrow[r,"\idp x"] \arrow[d,"p_x"'] & \bullet \arrow[d,"p_x"]\\
    \bullet \arrow[r,"p\concat q\inv"'] & \bullet
  \end{tikzcd}
  \]
  Therefore, we have
  \[p_x=p_x\concat(p\concat q\inv),\] which shows that $p=q$.
\end{proof}

\begin{cor}\label{Zisset}
  The type $\Bool$, the type $\N$ of natural numbers and the type $\Z$ of integers are sets.
\end{cor}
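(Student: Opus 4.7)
The plan is to invoke Hedberg's theorem directly: since we already have it at hand, it suffices to prove that each of $\Bool$, $\N$, and $\Z$ has decidable equality, i.e.\ that for any two elements $x,y$ of these types we can produce an element of $(x=y) + \neg(x=y)$.

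For $\Bool$, I would do a case analysis on both arguments. When the two arguments are the same constructor, we take $\inl(\idpS)$. When they differ, say $\true$ and $\false$, we need an element of $\neg(\true =_\Bool \false)$. The standard trick is to define a discriminator $d : \Bool \to \Type$ by $d(\true) \defeq \Unit$ and $d(\false) \defeq \Zero$; then given $p : \true = \false$, the term $\transport^d(p, \ttt)$ lives in $\Zero$, producing the required element of $\Zero$ (and hence, by the induction principle of $\Zero$, of any desired type).

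For $\N$, I would proceed by double induction on $m$ and $n$. The diagonal base case $(0,0)$ is handled by reflexivity; the off-diagonal cases $(0, \succS(n))$ and $(\succS(m), 0)$ use a discriminator $d : \N \to \Type$ with $d(0) \defeq \Unit$ and $d(\succS(n)) \defeq \Zero$, exactly as for $\Bool$. In the case $(\succS(m), \succS(n))$, the inductive hypothesis gives decidability of $m =_\N n$: if $m = n$ then $\ap{\succS}$ yields $\succS(m) = \succS(n)$; otherwise, given a hypothetical $q : \succS(m) = \succS(n)$, applying a predecessor function $\operatorname{pred} : \N \to \N$ defined by $\operatorname{pred}(0) \defeq 0$ and $\operatorname{pred}(\succS(n)) \defeq n$ yields $m = n$, contradicting the hypothesis.

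For $\Z$, I would combine the two previous techniques. A discriminator $\Z \to \Type$ distinguishing the three constructors rules out all cross-constructor equalities. Within a single constructor, an equality $\posZ(m) = \posZ(n)$ (respectively $\negZ(m) = \negZ(n)$) reduces via a suitable retraction $\Z \to \N$ to $m =_\N n$, which is decidable by the previous case; the $(0_\Z, 0_\Z)$ case is just $\inl(\idp{0_\Z})$. No step is really hard; the only minor annoyance is setting up the discriminator families and the predecessor/retraction functions cleanly. Once those are in place, the cases fit together mechanically and Hedberg's theorem delivers the conclusion for each of the three types.
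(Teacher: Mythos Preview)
Your proposal is correct and follows exactly the approach of the paper: the paper's proof simply states that one can ``easily define by induction on $x$ and $y$'' a decidability witness and then apply Hedberg's theorem, and you have spelled out precisely those inductions using the standard discriminator and predecessor/retraction tricks. There is nothing to add; your version is a fully detailed expansion of the paper's one-line sketch.
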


\begin{proof}
  In all cases, we can easily define by induction on $x$ and $y$ a function of type
  \[(x,y:A)\to((x=_Ay)+\neg(x=_Ay)),\] and then we apply Hedberg’s theorem.
\end{proof}

Note that the circle $\Sn1$ does not have decidable equality because we cannot give a path from $x$
to $y$, for every $x,y:\Sn1$, which depends continuously on $x$ and $y$.

\subsection{Truncations}

Not every type is an $n$-type for some $n$, but we can turn any type $A$ into an $n$-type
$\trunc nA$ in a universal way. This operation is called the \emph{truncation}. There is a map
$|-|:A\to\trunc nA$ and the type $\trunc nA$ satisfies the following induction principle: given a
dependent type $P:\trunc nA\to\Type$ such that $P(x)$ is $n$-truncated for every $x:\trunc nA$, we
can define a section of $P$ by defining it only on elements of the form $|a|$:
\begin{align*}
  g&:(x:\trunc nA)\to P(x),\\
  g(|a|) &\defeq g_{\operatorname{|-|}}(a),
\end{align*}
where
\[g_{\operatorname{|-|}} : (a : A) \to P(|a|).\]

Note that we can only apply this induction principle if all fibers of $P$ are $n$-truncated. In
particular, it is not possible to use it directly to define a function from $\trunc nA$ to a type
$B$ which is not $n$-truncated. When we need to go around this limitation, the usual technique is to
construct an $n$-truncated type $\widetilde B$ together with a map $\widetilde{B}\to B$ and to use
the induction principle for $\widetilde{B}$ instead of $B$.

We can implement truncations using higher inductive types as described in \cite[section
7.3]{hottbook} and to some extent in appendix \ref{ch:defn} of this thesis, but in practice we
always use the induction principle given above.

Note that truncating a type which is already truncated gives back an equivalent type.
\begin{proposition}
  If $A$ is $n$-truncated, then the map $|-|:A\to\trunc nA$ is an equivalence.
\end{proposition}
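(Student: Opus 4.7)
The plan is to construct an explicit inverse $f : \trunc nA \to A$ using the induction principle of the truncation, and then verify that $f$ and $|-|$ are mutually inverse so that proposition \ref{equivalences} gives us the equivalence.

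First I would define $f$ by truncation induction on the constant family $P(x) \defeq A$. This family is pointwise $n$-truncated (by hypothesis on $A$), so the induction principle applies and lets us set $f(|a|) \defeq a$. This gives the candidate inverse.

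Next I would verify the two coherences. One direction is immediate: for every $a:A$, we have $f(|a|) = a$ by definition, so $f \circ |-| = \id_A$ pointwise. For the other direction, I need a term of type $(x:\trunc nA) \to |f(x)| =_{\trunc nA} x$. Here I again apply truncation induction, this time to the family $Q(x) \defeq (|f(x)| = x)$. The family $Q$ is pointwise $n$-truncated: by proposition \ref{ntruncprop}, the type $\trunc nA$ is $n$-truncated (so all of its identity types are $n$-truncated as well). Hence it suffices to provide the case $x \jueq |a|$, where the goal reduces to $|f(|a|)| = |a|$, i.e.\ $|a| = |a|$, which is witnessed by $\idp{|a|}$.

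Combining these two homotopies with proposition \ref{equivalences}, the map $|-|:A\to\trunc nA$ has a two-sided inverse and is therefore an equivalence. I do not anticipate any real obstacle here; the only subtle point is remembering that in order to apply the induction principle to define $f:\trunc nA \to A$ we must use the hypothesis that $A$ is $n$-truncated, and that for the second coherence we rely on identity types of $\trunc nA$ being automatically $n$-truncated.
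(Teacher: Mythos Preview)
Your proof is correct and follows essentially the same approach as the paper: define the inverse $f$ by truncation induction using that $A$ is $n$-truncated, observe $f\circ|-|=\id_A$ definitionally, and prove $|-|\circ f=\id_{\trunc nA}$ by a second truncation induction using that identity types in $\trunc nA$ are $n$-truncated. The only minor remark is that the $n$-truncatedness of $\trunc nA$ is a basic property of the truncation itself rather than part of proposition~\ref{ntruncprop}, but this does not affect the argument.
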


\begin{proof}
  We define $f:\trunc nA\to A$ using the induction principle of $\trunc nA$ by
  \begin{align*}
    f &: \trunc nA \to A,\\
    f(|a|) &\defeq a.
  \end{align*}
  Note that this is allowed because $A$ is $n$-truncated. We then have $f\circ|-|=\id_A$ by
  definition and we prove that $|-|\circ f=\id_{\trunc nA}$ by
  \begin{align*}
    h &: (x : \trunc nA) \to |f(x)|=_{\trunc nA}x,\\
    h(|a|) &\defeq \idp{|a|}.
  \end{align*}
  This is allowed because for every $x:\trunc nA$ the type $|f(x)|=_{\trunc nA}x$ is $n$-truncated,
  and we use the fact that $|f(|a|)|=|a|$ by definition.
  Therefore the map $|-|:A\to\trunc nA$ is an equivalence of inverse $f$.
\end{proof}

We can truncate maps as well.

\begin{proposition}\label{truncmap}
  Given an integer $n$ and a map $f:A\to B$, there is a map $\trunc nf:\trunc nA\to\trunc nB$ and a
  filler of the diagram
  \[
  \begin{tikzcd}
    A \arrow[r,"|-|"] \arrow[d,"f"'] & \trunc nA \arrow[d,"\trunc nf"]\\
    B \arrow[r,"|-|"'] & \trunc nB
  \end{tikzcd}
  \]
\end{proposition}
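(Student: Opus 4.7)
The plan is to use the induction principle for $\trunc nA$ stated just before the proposition. The target $\trunc nB$ is $n$-truncated by construction, so the induction principle applies to the constant family $P(x) \defeq \trunc nB$, and it suffices to specify the value of $\trunc nf$ on points of the form $|a|$ for $a:A$.

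Concretely, I would define
\begin{align*}
  \trunc nf &: \trunc nA \to \trunc nB,\\
  \trunc nf(|a|) &\defeq |f(a)|.
\end{align*}
This is allowed precisely because the codomain $\trunc nB$ is $n$-truncated, so the induction principle of $\trunc nA$ produces the required (non-dependent) function out of it.

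For the filler of the square, note that by the computation rule associated to the induction principle we have $\trunc nf(|a|) = |f(a)|$ for every $a:A$, which is exactly the relation $\trunc nf \circ |-|_A = |-|_B \circ f$ pointwise. The filler is then obtained by function extensionality from the pointwise family $\lambda a.\,\idp{|f(a)|}$, or more directly as the homotopy $a\mapsto \idp{|f(a)|}$ itself, depending on whether one prefers an equality of functions or a homotopy for the filler.

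There is essentially no obstacle here: the only subtle point is the standard observation that even though $f$ itself may not factor through an $n$-truncated type, the composite $|-|_B \circ f : A \to \trunc nB$ does land in an $n$-truncated type, which is what makes the induction principle applicable. Should one worry about whether the computation rule $\trunc nf(|a|) \defeq |f(a)|$ holds definitionally or only propositionally, the proof goes through either way, using the propositional version of the computation rule to construct the filler explicitly if needed.
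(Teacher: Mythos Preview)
Your proof is correct and essentially identical to the paper's: define $\trunc nf(|a|)\defeq|f(a)|$ using the induction principle of $\trunc nA$ with target the $n$-truncated type $\trunc nB$. The paper's proof is even terser and does not spell out the filler, which as you note is immediate from the computation rule.
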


\begin{proof}
  The map $\trunc nf$ is defined by
  \begin{align*}
    \trunc nf &: \trunc nA \to \trunc nB,\\
    \trunc nf(|a|) &\defeq |f(a)|,
  \end{align*}
  which is allowed because $\trunc nB$ is $n$-truncated.
\end{proof}

An important property of truncations is the fact that loop spaces “go under” truncations (see
\cite[theorem 7.3.12]{hottbook} for a proof)
\begin{proposition}\label{looptrunc}
  Given a type $A$, two elements $x,y:A$, and $n\ge-2$, the map
  \begin{align*}
    f &: \trunc n{x=_Ay}\to|x|=_{\trunc {n+1}A}|y|,\\
    f(|p|) &\defeq \ap{|-|}(p)
  \end{align*}
  is an equivalence.
\end{proposition}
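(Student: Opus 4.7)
The plan is to use the standard encode-decode technique. Fix $x:A$. Define a family
\[\operatorname{code}_x : \trunc{n+1}A \to \Type\]
by induction on $\trunc{n+1}A$, setting $\operatorname{code}_x(|y|) \defeq \trunc n{x=_Ay}$. This induction is legitimate because the target lands in the universe of $n$-truncated types, which is itself $(n+1)$-truncated, so a map $\trunc{n+1}A \to \Type_n$ is the same thing as a map $A \to \Type_n$. (This fact about the universe of $n$-truncated types being $(n+1)$-truncated should be cited or sketched as an auxiliary lemma, and it relies on univalence together with proposition \ref{ntruncprop}.)

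Next, define the two maps
\begin{align*}
  \operatorname{encode}_x &: (y:\trunc{n+1}A)(p:|x|=y)\to\operatorname{code}_x(y),\\
  \operatorname{encode}_x(y,p) &\defeq \transport^{\operatorname{code}_x}(p,|\idp x|),
\end{align*}
and
\[\operatorname{decode}_x : (y:\trunc{n+1}A)\to\operatorname{code}_x(y)\to(|x|=y).\]
The map $\operatorname{decode}_x$ is defined by induction on $y$: since $|x|=_{\trunc{n+1}A}y$ is $n$-truncated (because $\trunc{n+1}A$ is $(n+1)$-truncated by construction, using proposition \ref{ntruncprop}), it suffices to define it for $y\jueq|y'|$, where $\operatorname{code}_x(|y'|) = \trunc n{x=_Ay'}$, and a second induction on the truncation lets us define $\operatorname{decode}_x(|y'|,|p|) \defeq \ap{|-|}(p)$. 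This second induction is again legitimate because $|x|=|y'|$ is $n$-truncated.

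It then remains to show that $\operatorname{encode}_x$ and $\operatorname{decode}_x$ are mutually inverse at $y\defeq|y|$, which specialises (after a path induction on $p$ in one direction and a truncation induction on the element of $\trunc n{x=_Ay}$ in the other) to the computation $\operatorname{encode}_x(|x|,\idp{|x|}) = |\idp x|$ and $\operatorname{decode}_x(|x|,|\idp x|) = \idp{|x|}$, both of which hold definitionally. Unpacking, we see that $\operatorname{decode}_x$ at $y\jueq|y|$ is exactly the map $f$ from the statement, so $f$ is an equivalence.

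The main obstacle is the construction of $\operatorname{code}_x$: one must be comfortable with the fact that the universe $\Type_n$ of $n$-truncated types is $(n+1)$-truncated, so that the recursion principle of $\trunc{n+1}A$ applies to land in $\Type_n$. Once this is available, the rest of the encode-decode pattern is routine path inductions and truncation inductions, each of which is permitted precisely because of the truncation levels of the various path spaces involved.
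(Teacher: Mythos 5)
Your proof is correct and follows the standard encode–decode argument. The paper does not actually prove this proposition itself: it simply cites Theorem 7.3.12 of the HoTT Book, whose proof is exactly the encode–decode construction you describe. Your identification of the crux — that the universe $\Type_n$ of $n$-truncated types is itself $(n+1)$-truncated, which is what licenses defining $\operatorname{code}_x$ by truncation elimination — is precisely the key lemma the cited proof rests on, so the two approaches coincide.
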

Note that the right-hand side is an identity type in $\trunc{n+1}A$, hence $|x|=_{\trunc {n+1}A}|y|$
is $n$-truncated so the induction principle applies.

\subsection{Mere propositions and logic}
\label{sec:merepropsandlogic}

We saw in section \ref{sec:pairtypes} that given a type $A$ and a predicate $B:A\to\Type$, the type
\[\sum_{x:A}B(x)\]
can be seen as the type of elements of $A$ satisfying $B$. However, if $B$ is arbitrary, there could
be some $a:A$ and $b_0,b_1:B(a)$ distinct, and $(a,b_0)$ and $(a,b_1)$ would be two different
elements of $\sum_{x:A}B(x)$ which means that $a$ would be counted twice. In order for
$\sum_{x:A}B(x)$ to accurately represent the type of elements of $A$ satisfying $B$, we need to
assume additionally that $B(x)$ is a mere proposition for every $x:A$. This ensures that each
element of $A$ is counted at most once.

The type $\sum_{x:A}B(x)$ can also be seen as the proposition “there exists $x:A$ such that $B(x)$
holds”. Now the problem is that even if $B$ is a family of mere propositions, then $\sum_{x:A}B(x)$
may not be a mere proposition anymore because there could be several elements of $A$ satisfying
$B$. For instance a natural number $k$ is \emph{composite} if there exists $n,m\ge2$ such that
$nm=_{\N}k$, but defining the \emph{type} of composite numbers by
\[\sum_{k:\N}\sum_{n,m:\N}(n\ge2\text{ and }m\ge2\text{ and }nm=_{\N}k)\]
is incorrect because, for instance, $6$ is counted twice as it is equal to both $2\times3$ and
$3\times2$ (note that the $\sum_{k:\N}$ is interpreted in the sense of the previous paragraph while
the $\sum_{n,m:\N}$ is interpreted in the sense of this paragraph). One way to solve this issue is
to use the $(-1)$-truncation and to consider the type
\[\sum_{k:\N}\trunc{-1}{\sum_{n,m:\N}(n\ge2\text{ and }m\ge2\text{ and }nm=_{\N}k)},\]
which forces the inner $\Sigma$-type to be a mere-proposition. Now the number $6$ is counted only
once because the two proofs that it is composite have been identified in the truncation. Therefore,
a statement of the form “there exists $x:A$ such that $B(x)$ holds” can be interpreted in two
different ways: either as the type $\sum_{x:A}B(x)$ if we care about the witness $x$ (\emph{explicit
  existence}) or as the type $\trunc{-1}{\sum_{x:A}B(x)}$ if we want a mere proposition (\emph{mere
  existence}).

The same phenomenon happens for disjunctions. Given two types $A$ and $B$, the disjunction “$A$ or
$B$” can be interpreted as either $A+B$ if we care about which of $A$ or $B$ holds (\emph{explicit
  disjunction}) or as $\trunc{-1}{A+B}$ if we want a mere proposition (\emph{mere disjunction}). For
the universal quantifier and conjunction, however, there is no need to truncate given that a product
of mere propositions is a mere proposition.

A related issue arises with equalities. When defining a type of algebraic structures, for instance
groups, we usually want to assume that some equalities hold between some elements. But in homotopy
type theory, that means that we have to choose \emph{specific} paths, and there might be several
such paths. For instance a type together with a binary operation could be seen as group in several
different ways if there are several paths witnessing associativity. Instead of using truncations as
above, we instead require the carrier type to be a set so that there cannot be several different
equalities between its elements. Therefore we take the following definition of group.
\begin{definition}
  A \emph{group} is a set $G$ (i.e.\ a $0$-truncated type) together with a multiplication operation
  $m:G\to G\to G$, an inverse operation $i:G\to G$, and a neutral element $e:G$,
  such that the following equalities hold:
  \begin{align*}
    m(m(x,y),z) &=_G m(x,m(y,z))\text{ for all }x,y,z:G,\\
    m(x,i(x)) &=_G e\text{ for all }x:G,\\
    m(i(x),x) &=_G e\text{ for all }x:G,\\
    m(x,e) &=_G x\text{ for all }x:G,\\
    m(e,x) &=_G x\text{ for all }x:G.
  \end{align*}
\end{definition}

It is easy to see that given a set $G$ and a function $m:G\to G\to G$, the type corresponding to
“$(G,m)$ is a group” is a mere proposition. In other words, there is at most one proof that $(G,m)$
is a group. This wouldn’t be the case if we hadn’t assumed $G$ to be a set.


\chapter{First results on homotopy groups of spheres}\label{ch:hopf}

For every pointed type $A$ and every integer $k\ge1$, we define a group $\pi_k(A)$ called the $k$-th
homotopy group of $A$. In some sense, it describes the structure of $k$-dimensional loops in
$A$. The definition of $\pi_k(A)$ alone isn’t very informative, so we usually want to \emph{compute}
it, i.e.\ to prove that it is equivalent to some well-known group, for instance by giving an
explicit presentation. Computing homotopy groups is notoriously difficult even for very simple types
like the spheres $\Sn n$. In classical homotopy theory, one can show that the first few homotopy
groups of spheres are those given in table \ref{hgs} on page \pageref{hgs}, and in this thesis we
show that we can compute some of them in homotopy type theory.

\newcommand{\Zx}{\Z_{12}\times\Z_2}
\newcommand{\Zy}{\Z\times\Z_{12}}
\newcommand{\Zz}{\Z_{24}\times\Z_3}
\newcommand{\Zt}{\Z_{120}\times\Z_{12}\times\Z_2}
\newcommand{\Zu}{\Z_{84}\times\Z_2^2}
\newcommand{\Zv}{\Z_{24}\times\Z_2}
\begin{figure}
  \centering
  \begin{tikzpicture}
    \matrix(m)[ampersand replacement=\&,matrix of math nodes,nodes in empty cells, minimum
    width=3em, text height=1em] {
      \& \Sn1 \& \Sn2 \& \Sn3 \& \Sn4 \& \Sn5 \& \Sn6 \& \Sn7 \& \Sn8 \& \Sn9 \& \Sn{10}\\
      \pi_1    \& \Z \& 0       \& 0       \& 0       \& 0       \& 0       \& 0       \& 0       \& 0       \& 0\\
      \pi_2    \& 0  \& \Z      \& 0       \& 0       \& 0       \& 0       \& 0       \& 0       \& 0       \& 0\\
      \pi_3    \& 0  \& \Z      \& \Z      \& 0       \& 0       \& 0       \& 0       \& 0       \& 0       \& 0\\
      \pi_4    \& 0  \& \Z_2    \& \Z_2    \& \Z      \& 0       \& 0       \& 0       \& 0       \& 0       \& 0\\
      \pi_5    \& 0  \& \Z_2    \& \Z_2    \& \Z_2    \& \Z      \& 0       \& 0       \& 0       \& 0       \& 0\\
      \pi_6    \& 0  \& \Z_{12} \& \Z_{12} \& \Z_2    \& \Z_2    \& \Z      \& 0       \& 0       \& 0       \& 0\\
      \pi_7    \& 0  \& \Z_2    \& \Z_2    \& \Zy     \& \Z_2    \& \Z_2    \& \Z      \& 0       \& 0       \& 0\\
      \pi_8    \& 0  \& \Z_2    \& \Z_2    \& \Z_2^2  \& \Z_{24} \& \Z_2    \& \Z_2    \& \Z      \& 0       \& 0\\
      \pi_9    \& 0  \& \Z_3    \& \Z_3    \& \Z_2^2  \& \Z_2    \& \Z_{24} \& \Z_2    \& \Z_2    \& \Z      \& 0\\
      \pi_{10} \& 0  \& \Z_{15} \& \Z_{15} \& \Zz     \& \Z_2    \& 0       \& \Z_{24} \& \Z_2    \& \Z_2    \& \Z\\
      \pi_{11} \& 0  \& \Z_2    \& \Z_2    \& \Z_{15} \& \Z_2    \& \Z      \& 0       \& \Z_{24} \& \Z_2    \& \Z_2\\
      \pi_{12} \& 0  \& \Z_2^2  \& \Z_2^2  \& \Z_2    \& \Z_{30} \& \Z_2    \& 0       \& 0       \& \Z_{24} \& \Z_2\\
      \pi_{13} \& 0  \& \Zx     \& \Zx     \& \Z_2^3  \& \Z_2    \& \Z_{60} \& \Z_2    \& 0       \& 0       \& \Z_{24}\\
    };
  
    \draw[thick](m-1-1.south west)--(m-1-11.south east); \draw[thick](m-1-1.north
    east)--(m-14-1.south east);

    \draw[line width = 2pt] (m-2-2.north east) --(m-4-2.north east) --(m-4-3.north east)
    --(m-6-3.north east) --(m-6-4.north east) --(m-8-4.north east) --(m-8-6.north west)
    --(m-10-6.north west) --(m-10-6.north east) --(m-12-6.north east) --(m-12-7.north east)
    --(m-14-7.north east) --(m-14-8.north east) --(m-14-8.south east);
  \end{tikzpicture}
  \caption{Homotopy groups of spheres (where $\Z_n$ denotes the group $\Z/n\Z$)}
  \label{hgs}
\end{figure}

In section \ref{hgss1} we compute those in the first column (the homotopy groups of $\Sn1$), in
section \ref{hgs0} we compute all the $0$s in the upper-right part ($\pi_k(\Sn n)$ for $k<n$), and
in section \ref{hgshopf} we compute $\pi_2(\Sn2)$ and show that the second and third column are
identical apart for $\pi_2$. In the next chapter we prove that the diagonals are constant above the
zigzag line (Freudenthal suspension theorem) and that there exists an $n:\N$ such that
$\pi_4(\Sn3)\simeq\Z/n\Z$, and in the rest of this thesis we prove that this $n$ is equal to $2$.
 
\section{Homotopy groups}

\begin{definition}
  A \emph{pointed type} is a type $A$ equipped with a point $\star_A$.
  Given two pointed types $A$ and $B$, a \emph{pointed function} between $A$ and $B$ is a function
  $f:A\to B$ equipped with a path $\star_f:f(\star_A)=\star_B$.  We write $A\topt B$ for the type of
  pointed functions between $A$ and $B$. The type $\Sn1$ is pointed by $\base$, and the type
  $\Susp A$ is pointed by $\north$ for every type $A$.
\end{definition}

\begin{definition}
  Given a pointed type $A$, its \emph{loop space} is the type
  \[\Omega A\defeq(\star_A=\star_A)\] of paths
  from $\star_A$ to $\star_A$, pointed by the constant path. We can iterate this construction by
  defining the $n$-fold iterated loop space of $A$, for $n:\N$, by
  \begin{align*}
    \Omega^0A &\defeq A,\\
    \Omega^{n+1}A &\defeq \Omega(\Omega^n A).
  \end{align*}
\end{definition}

\begin{definition}
  Given a pointed function $f:A\topt B$ between pointed types, its \emph{looping} is
  \begin{align*}
    \Omega f &: \Omega A \topt \Omega B,\\
    (\Omega f)(p) &\defeq \star_f\inv\concat\ap f(p)\concat\star_f.
  \end{align*}
  Note that $\ap f(p)$ has type $f(\star_A)=f(\star_A)$, therefore we need to compose it with
  $\star_f$ on both sides in order to obtain a loop at $\star_B$.  The function $\Omega f$ is
  pointed by the proof of \[\star_f\inv\concat\ap f(\idp{\star_A})\concat\star_f=\idp{\star_B}\]
  obtained by canceling $\ap f(\idp{\star_A})$ (which is equal to $\idp{f(\star_A)}$) and then
  canceling both $\star_f$ together.
\end{definition}

We can now define the homotopy groups of a pointed type.  The idea is that $\pi_n(A)$ is the set of
connected components of the type of $n$-dimensional loops in $A$. The first homotopy group
$\pi_1(A)$ is called the \emph{fundamental group} of $A$ and the $\pi_n(A)$ are called the
\emph{higher homotopy groups} of $A$.
\begin{definition}
  Given a pointed type $A$ and $n\ge1$, the \emph{$n$-th homotopy group of $A$} is the set
  \[\pi_n(A) \defeq \trunc0{\Omega^nA}.\]
  It is equipped with the group structure induced by composition of paths, inversion of paths and
  the constant path.
\end{definition}

The homotopy groups of the loop space of a type $A$ are the homotopy groups of $A$ shifted by one.
\begin{proposition}\label{piomega}
  Given a pointed type $A$ and $n\ge1$, we have an isomorphism of groups
  \[\pi_n(\Omega A)\simeq\pi_{n+1}(A).\]
\end{proposition}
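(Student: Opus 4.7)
The plan is to observe that the underlying types of the two sides agree by an easy induction on $n$, and that the group structures, which come from path composition at the outermost iterated loop space, are preserved by this identification.

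First, I would prove by induction on $n \ge 0$ that there is a pointed equivalence
\[
\varphi_n : \Omega^n(\Omega A) \simeq \Omega^{n+1} A.
\]
For $n = 0$ both sides unfold to $\Omega A$, so $\varphi_0 \defeq \id_{\Omega A}$. For the inductive step, $\Omega^{n+1}(\Omega A) = \Omega(\Omega^n(\Omega A))$ and $\Omega^{n+2} A = \Omega(\Omega^{n+1} A)$ by the definition of iterated loop spaces, so I can set $\varphi_{n+1} \defeq \Omega \varphi_n$, pointed as in the definition of looping of a pointed map. In fact each of these identifications is already definitional, but packaging them as a pointed equivalence makes the next step cleaner.

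Next, I would apply the $0$-truncation. By functoriality of truncation (proposition \ref{truncmap}) the pointed equivalence $\varphi_n$ induces a bijection $\trunc{0}{\varphi_n} : \pi_n(\Omega A) \toeq \pi_{n+1}(A)$ of underlying sets.

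Finally, I would check that for $n \ge 1$ this bijection is a group homomorphism. The group operation on both sides is composition of paths in the outermost $\Omega$, and $\varphi_n$ (for $n \ge 1$) has the form $\Omega \psi$ with $\psi \defeq \varphi_{n-1}$. Hence it suffices to check that for any pointed map $\psi : X \topt Y$ the map
\[
\Omega\psi : \Omega X \to \Omega Y, \qquad p \mapsto \star_\psi\inv \concat \ap\psi(p) \concat \star_\psi
\]
satisfies $\Omega\psi(p \concat q) = \Omega\psi(p) \concat \Omega\psi(q)$. This is an explicit identity obtained by inserting $\star_\psi \concat \star_\psi\inv$ in the middle and using that $\ap\psi(p \concat q) = \ap\psi(p) \concat \ap\psi(q)$, i.e., pure path algebra in the weak $\infty$-groupoid structure. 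Once this propositional identity is in hand, it is preserved by $\trunc{0}{-}$ and gives the group homomorphism law.

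The only real obstacle is bookkeeping: juggling the pointedness data of $\varphi_n$ across iterated loop spaces and verifying the composition-preservation identity for $\Omega\psi$. There is no deep content, but enough coherence manipulation that stating the equivalence at the level of pointed maps (so that $\Omega$ can be applied functorially) is noticeably more convenient than unfolding everything to raw iterated identity types.
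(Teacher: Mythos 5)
Your proof is correct and follows essentially the same route as the paper: build the pointed equivalence $\Omega^n(\Omega A)\simeq\Omega^{n+1}A$ by induction (your $\varphi_{n+1}\defeq\Omega\varphi_n$ is the same map the paper builds, since $\Omega^{n+1}(\Omega A)$ and $\Omega(\Omega^n(\Omega A))$ agree definitionally), then apply $\trunc0{-}$ and note the map preserves composition. The only difference is that you spell out the group-homomorphism step (via the explicit identity $\Omega\psi(p\concat q)=\Omega\psi(p)\concat\Omega\psi(q)$) where the paper simply asserts that the equivalence preserves composition of paths.
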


\begin{proof}
  We first construct an equivalence $\Omega^n(\Omega A)\simeq\Omega(\Omega^nA)$ by induction on
  $n$. For $n=0$ we take the identity equivalence and for $n+1$ we take the composition
  \begin{align*}
    \Omega^{n+1}(\Omega A) &\simeq \Omega(\Omega^n(\Omega A))\quad\text{ by definition}\\
                           &\simeq \Omega(\Omega(\Omega^n A))\quad\text{ by induction hypothesis}\\
                           &\simeq \Omega(\Omega^{n+1}A).
  \end{align*}
  We then have
  \begin{align*}
    \pi_n(\Omega A) &\simeq\trunc0{\Omega^n(\Omega A)}\\
                    &\simeq\trunc0{\Omega(\Omega^n A)}\\
                    &\simeq\trunc0{\Omega^{n+1}A}\\
                    &\simeq\pi_{n+1}(A).
  \end{align*}
  It is a group homomorphism because the equivalence $\Omega^n(\Omega A)\simeq\Omega(\Omega^nA)$
  preserves composition of paths.
\end{proof}

It turns out that the higher homotopy groups are always abelian, as we prove now.

\begin{proposition}[Eckmann--Hilton argument]\label{eckmannhilton}
  For any pointed type $A$ and $n\ge2$, the group $\pi_n(A)$ is abelian.
\end{proposition}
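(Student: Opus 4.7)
My plan is to reduce to the case $n=2$ and then apply the classical Eckmann--Hilton argument, using the two composition operations $\vcomp$ and $\hcomp$ on $\Omega^2 B$. First, the group isomorphism $\pi_{n+1}(A) \simeq \pi_n(\Omega A)$ of Proposition \ref{piomega}, iterated $n-2$ times, gives $\pi_n(A) \simeq \pi_2(\Omega^{n-2}A)$ as groups. So it suffices to prove that $\pi_2(B)$ is abelian for every pointed type $B$.

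Next, recall that $\pi_2(B) = \trunc{0}{\Omega^2 B}$ with group law induced by vertical composition $\vcomp$ of $2$-paths $\idp{\star_B} =_{\Omega B} \idp{\star_B}$. Since the truncation map $|{-}|$ preserves composition, it suffices to produce, for arbitrary $\alpha, \beta : \Omega^2 B$, a path $\alpha \vcomp \beta = \beta \vcomp \alpha$ in $\Omega B$, because taking $|{-}|$ on both sides then yields commutativity in $\pi_2(B)$.

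The core of the proof is the Eckmann--Hilton calculation. Because $\alpha$ and $\beta$ are both of type $\idp{\star_B} = \idp{\star_B}$ and $\idp{\star_B} \concat \idp{\star_B}$ is $\idp{\star_B}$ by definition, the horizontal composition $\alpha \hcomp \beta$ is well-typed as another element of $\Omega^2 B$. The plan is to chain
\[
\alpha \vcomp \beta \;=\; (\alpha \hcomp \idp{\idp{\star_B}}) \vcomp (\idp{\idp{\star_B}} \hcomp \beta) \;=\; (\alpha \vcomp \idp{\idp{\star_B}}) \hcomp (\idp{\idp{\star_B}} \vcomp \beta) \;=\; \alpha \hcomp \beta,
\]
where the middle equality is the exchange law recalled in the excerpt, and the outer equalities use the unit laws $\alpha \hcomp \idp{\idp{\star_B}} = \alpha$, $\idp{\idp{\star_B}} \hcomp \beta = \beta$, $\alpha \vcomp \idp{\idp{\star_B}} = \alpha$, $\idp{\idp{\star_B}} \vcomp \beta = \beta$. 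By the symmetric chain
\[
\beta \vcomp \alpha \;=\; (\idp{\idp{\star_B}} \hcomp \beta) \vcomp (\alpha \hcomp \idp{\idp{\star_B}}) \;=\; (\idp{\idp{\star_B}} \vcomp \alpha) \hcomp (\beta \vcomp \idp{\idp{\star_B}}) \;=\; \alpha \hcomp \beta,
\]
one concludes $\alpha \vcomp \beta = \beta \vcomp \alpha$, as desired.

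The main obstacle will be justifying the unit laws for $\hcomp$ in a usable form. In general $\alpha \hcomp \idp{\idp{\star_B}}$ is not definitionally $\alpha$, since unpacking the definition it takes the shape of a right-whiskering $\ap{-\concat\idp{\star_B}}(\alpha)$ composed with trivial pieces; one must produce an explicit path witnessing that this whiskering is homotopic to $\alpha$, using that $\idp{\star_B} \concat \idp{\star_B} = \idp{\star_B}$ holds definitionally (so that $\ap{-\concat\idp{\star_B}}$ is morally the identity). Once these unit-law paths are in hand, assembling them with the associator-free exchange law to produce the chain above is routine coherence manipulation. Finally, applying $|{-}|$ and using functoriality of the $0$-truncation gives the equality in $\pi_2(B)$, and the first reduction transports abelianness back to all $\pi_n(A)$ with $n \ge 2$.
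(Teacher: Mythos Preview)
Your proposal is correct and follows essentially the same approach as the paper: reduce to $n=2$ via the isomorphism $\pi_{n+1}(A)\simeq\pi_n(\Omega A)$, then run Eckmann--Hilton using the exchange law between $\vcomp$ and $\hcomp$ together with the unit laws for horizontal composition. The only cosmetic difference is that the paper keeps the unit-law witnesses $\ccc=\lambda_{\idpS}=\rho_{\idpS}$ explicit throughout the chain (so the argument does not rely on $(\jcomp)$ being definitional), whereas you absorb them into informal ``unit law'' steps; and the paper chains $\alpha\vcomp\beta$ directly to $\beta\vcomp\alpha$, whereas you show both equal $\alpha\hcomp\beta$---but these are equivalent presentations of the same argument.
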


\newcommand{\ccc}{\mathsf{c}}
\begin{proof}
  We prove by induction on $n\ge2$ that for every $A$ the group $\pi_n(A)$ is abelian. For $n=2$ we
  have to prove that $\alpha\concat\beta=\beta\concat\alpha$ for
  $\alpha,\beta:\idp{\star_A}=_{\Omega A}\idp{\star_A}$ and the idea is to use horizontal
  composition $\hcomp$ and the exchange law. Note that for an arbitrary $2$-dimensional path
  $\gamma:p=_{a=_Ab}q$, we do not have $\idp{\idp a}\hcomp\gamma=\gamma$. Indeed this is not
  well-typed as the left-hand side has type $(\idp b\concat p)=(\idp b\concat q)$. In order to get
  something having the same type as $\gamma$ we need to compose on both sides with the paths
  $\lambda_p:\idp b\concat p=p$ and $\lambda_q:\idp b\concat q=q$. We get the following two unit
  laws for horizontal composition, which are proved like other coherence operations:
  \begin{align*}
  (a,b:A)(p,q:a=b)(\gamma:p=q) &\mapsto \gamma =
  \lambda_p\inv\concat(\idp{\idp{a}}\hcomp\gamma)\concat\lambda_q,\\
  (a,b:A)(p,q:a=b)(\gamma:p=q) &\mapsto \gamma =
  \rho_p\inv\concat(\gamma\hcomp\idp{\idp{b}})\concat\rho_q.
  \end{align*}
  In the case of $\alpha$ and $\beta$, both $p$ and $q$ are equal to $\idpS$ and $\lambda_{\idpS}$
  and $\rho_{\idpS}$ are both equal. We write $\ccc$ for short for both of them. Note that $\ccc$ is
  equal to $\idp{\idp{\star_A}}$ by definition if we take $(\jcomp)$ definitional, but the
  computation below does not depend on this fact.
  Using these two unit laws and the exchange law twice we have
  \begin{align*}
    \alpha\vcomp \beta &= \ccc\inv\vcomp(\idpS\hcomp \alpha)\vcomp\ccc\vcomp\ccc\inv\vcomp(\beta\hcomp\idpS)\vcomp\ccc\\
                       &= \ccc\inv\vcomp(\idpS\hcomp \alpha)\vcomp(\beta\hcomp\idpS)\vcomp\ccc\\
                       &= \ccc\inv\vcomp((\idpS\vcomp \beta)\hcomp(\alpha\vcomp\idpS))\vcomp\ccc\\
                       &= \ccc\inv\vcomp((\beta\vcomp\idpS)\hcomp(\idpS\vcomp \alpha))\vcomp\ccc\\
                       &= \ccc\inv\vcomp(\beta\hcomp\idpS)\vcomp(\idpS\hcomp \alpha)\vcomp\ccc\\
                       &= \ccc\inv\vcomp(\beta\hcomp\idpS)\vcomp\ccc\vcomp\ccc\inv\vcomp(\idpS\hcomp\alpha)\vcomp\ccc\\
                       &= \beta\vcomp \alpha.
  \end{align*}
  Therefore $\pi_2(A)$ is abelian.

  For $n>2$, we know from proposition \ref{piomega} that $\pi_{n+1}(A)\simeq\pi_n(\Omega A)$ and
  $\pi_n(\Omega A)$ is abelian by induction hypothesis, hence $\pi_{n+1}(A)$ is abelian as well.
\end{proof}

We can also define homotopy groups using pointed maps from spheres. We first show the following proposition.
\begin{proposition}\label{adjsuspomega}
  Given two pointed types $A$ and $B$, there is an equivalence
  \[(\Susp A \topt B) \simeq (A \topt \Omega B).\]
\end{proposition}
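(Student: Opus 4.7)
The plan is to construct explicit maps in both directions and verify they are mutually inverse. In the forward direction, given $(f, \star_f) : \Susp A \topt B$, define $\Phi(f, \star_f) \defeq \Omega f \circ \varphi_A$, using the map $\varphi_A : A \to \Omega\Susp A$ introduced at the end of the list of pushout examples. This is pointed because $\varphi_A(\star_A) = \idp{\star_A}$, so $\Phi(f, \star_f)(\star_A)$ simplifies to the basepoint of $\Omega B$ via the pointedness of $\Omega f$. In the backward direction, given $(g, \star_g) : A \topt \Omega B$, apply the recursion principle of $\Susp A = \Unit \sqcup^A \Unit$: send $\north \mapsto \star_B$, $\south \mapsto \star_B$, and $\ap{\Psi(g,\star_g)}(\merid(a)) \defeq g(a)$, with pointedness witness $\idp{\star_B}$.

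For $\Phi \circ \Psi = \id$, one unfolds $\Phi(\Psi(g, \star_g))(a)$ and simplifies using $\ap{\Psi(g, \star_g)}(\merid(a)) = g(a)$ together with the trivial pointedness of $\Psi(g, \star_g)$; this gives $g(a) \concat g(\star_A)\inv$, which reduces to $g(a)$ via $\star_g$ and the identity laws, yielding a pointed homotopy. The harder direction is $\Psi \circ \Phi = \id$: starting from $(f, \star_f)$, the reconstructed map does not agree definitionally with $f$ because $f(\north)$ and $f(\south)$ need not be $\star_B$ on the nose. Using function extensionality and suspension induction, one takes the equality $f(\north) = \star_B$ to be $\star_f$ and the equality $f(\south) = \star_B$ to be $\star_f\inv \concat \ap f(\merid(\star_A))$; for each $a : A$ one then fills a square expressing the compatibility of $\ap f(\merid(a))$ with the reconstructed loop, which reduces to cancellation of $\star_f$, $\ap f(\merid(\star_A))$ and their inverses via coherence operations.

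The main obstacle is precisely this bookkeeping of pointedness data in the second direction. A cleaner conceptual alternative, which avoids the path-algebra entirely, is to apply the pushout universal property of $\Susp A = \Unit \sqcup^A \Unit$ to obtain
\[(\Susp A \topt B) \simeq \sum_{n, s : B}\, \sum_{m : A \to (n = s)}\, (n = \star_B),\]
then contract the singleton $\sum_{n : B}(n = \star_B)$ to set $n \defeq \star_B$, reparametrize $m : A \to (\star_B = s)$ as the pair $(q \defeq m(\star_A),\; m' \defeq \lambda a.\, m(a) \concat q\inv)$ with $m'(\star_A) = \idp{\star_B}$, and then contract the remaining singleton $\sum_{s : B}(\star_B = s)$, landing on exactly $\sum_{m' : A \to \Omega B}(m'(\star_A) = \idp{\star_B}) = (A \topt \Omega B)$.
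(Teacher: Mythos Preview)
Your explicit construction --- the maps $\Phi$ and $\Psi$ and the two round-trip homotopies --- is exactly the paper's proof; the paper chooses the same maps and, for the harder direction $\Psi\circ\Phi$, the same correcting paths at $\north$ and $\south$ (written there as $\star_f\inv$ and $\ap f(\merid(\star_A))\inv\concat\star_f$, i.e.\ your paths read in the opposite direction).

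The alternative you sketch at the end, via the pushout universal property of $\Susp A$ followed by two singleton contractions, does not appear in the paper. It is a genuinely different and slightly slicker route: it replaces the square-filling over $\merid(a)$ by a reparametrization $m \mapsto (m(\star_A),\ \lambda a.\,m(a)\concat m(\star_A)\inv)$ and two contractions, so no explicit path algebra in $B$ is needed. The trade-off is that the paper's direct argument makes the underlying maps of the equivalence visible (in particular that the forward map is $\Omega f\circ\varphi_A$), which is used immediately afterwards to identify $\Omega^nA$ with $\Sn n\topt A$; your alternative yields the same maps after unwinding, but that has to be checked separately.
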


\begin{proof}
  We remind that we defined in section \ref{sec:hit} a pointed map $\varphi_A:A\to\Omega\Sigma A$
  for every type $A$. We have to define two maps between $\Susp A\topt B$ and $A\topt\Omega B$ and
  prove that they are inverse to each other.
  
  The map from left to right sends $f:\Susp A \topt B$ to $\lambda a.(\Omega f)(\varphi_A(a))$. That
  map is pointed because $\varphi_A$ is.  The map from right to left sends $g:A\topt \Omega B$ to
  the map $\Susp A\topt B$ sending both $\north$ and $\south$ to $\star_B$ and the path $\merid(a)$
  to $g(a)$. That map is pointed by $\idp{\star_B}$.

  Starting from $f:\Susp A\topt B$, its image by the composite is the map $\widetilde{f}$ sending
  both $\north$ and $\south$ to $\star_B$ and the path $\merid(a)$ to
  $\star_f\inv\concat\ap{f}(\varphi_A(a))\concat \star_f$. That function is equal to $f$ via
  $\star_f\inv$ for $\north$ and $\ap f(\merid(\star_A))\inv\concat\star_f$ for $\south$.

  In the other direction, if we start from $g:A\topt\Omega B$, its image is the map
  \[\lambda a.(\idpS\inv\concat g(a)\concat g(\star_A)\inv\concat\idpS),\]
  which is pointwise equal to $g$ because $g$ is pointed.
\end{proof}

It is easy to see that $\Bool\topt A$ is equivalent to $A$ and that $\Susp\Bool$ is equivalent to
$\Sn1$, therefore given that $\Sn{n+1}$ is defined to be $\Susp\Sn n$, we have the following by
induction on $n$.

\begin{proposition}
  For every pointed type $A$ and $n:\N$, we have
  \[\Omega^n A\simeq(\Sn{n}\topt A)\]
  and in particular
  \[\pi_n(A) \simeq \trunc0{\Sn{n}\topt A}.\]
\end{proposition}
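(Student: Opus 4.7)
The plan is to proceed by induction on $n$, exactly as the author hints just before the statement. The two key ingredients will be the base case computation $\Sn{0}\topt A\simeq A$ and the suspension--loop adjunction of proposition \ref{adjsuspomega}, which together let us peel off one suspension and add one loop at each step.

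For the base case $n=0$, I would observe that $\Omega^0 A \defeq A$ by definition, while $\Sn 0 \defeq \Bool$. To identify $\Bool\topt A$ with $A$, I would use the induction principle for $\Bool$: a pointed map $f:\Bool\topt A$ is a pair consisting of a function $\Bool\to A$ and a path $f(\true)=\star_A$ (assuming $\Bool$ is pointed by $\true$). Giving such a pair is equivalent, by the universal property of $\Bool$, to giving the single value $f(\false):A$, since the value at $\true$ and the pointedness path together form a contractible piece of data (by proposition \ref{pathfibcontr} applied to $\star_A$). This produces the equivalence $(\Bool\topt A)\simeq A$.

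For the inductive step, assume $\Omega^n B \simeq (\Sn n \topt B)$ for every pointed type $B$. Then using $\Sn{n+1}\defeq\Susp\Sn n$ and the adjunction of proposition \ref{adjsuspomega} I compute
\[
(\Sn{n+1}\topt A) \;=\; (\Susp\Sn n \topt A) \;\simeq\; (\Sn n \topt \Omega A) \;\simeq\; \Omega^n(\Omega A) \;=\; \Omega^{n+1} A,
\]
where the second equivalence is the induction hypothesis applied to the pointed type $\Omega A$ (pointed by the constant path at $\star_A$), and the final equality holds by definition of iterated loop spaces. This closes the induction.

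The ``in particular'' clause then follows by applying the $0$-truncation functor to both sides: $\pi_n(A)\defeq\trunc 0{\Omega^n A}\simeq\trunc 0{\Sn n\topt A}$, using that any equivalence of types induces an equivalence between their truncations (the induced map is defined as in proposition \ref{truncmap}, and being an equivalence is a straightforward consequence of the induction principle of $\trunc 0{-}$). I do not expect any real obstacle here: every step is either a definitional unfolding, a direct appeal to proposition \ref{adjsuspomega}, or an elementary manipulation. The only mildly subtle point is keeping track of basepoints in the adjunction, but this is already handled in the proof of proposition \ref{adjsuspomega}.
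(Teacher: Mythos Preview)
Your proposal is correct and follows exactly the approach the paper indicates in the sentence preceding the proposition: induction on $n$, base case $\Bool\topt A\simeq A$, and inductive step via proposition \ref{adjsuspomega}. One small imprecision: the final step $\Omega^n(\Omega A)=\Omega^{n+1}A$ is not a definitional equality, since the paper defines $\Omega^{n+1}A\defeq\Omega(\Omega^nA)$; you need the equivalence $\Omega^n(\Omega A)\simeq\Omega(\Omega^nA)$ already constructed in the proof of proposition \ref{piomega}, but this changes nothing substantive.
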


Moreover, one can define the group operation directly on $\trunc0{\Sn{n}\topt A}$ as follows.
\begin{definition}
  For every type $A$, we define the map
  \begin{align*}
    \contreq_A &: \Susp A \to \Susp A \vee \Susp A,\\
    \contreq_A(\north) &\defeq \inl(\north),\\
    \contreq_A(\south) &\defeq \inr(\north),\\
    \ap{\contreq_A}(\merid(a)) &\defeq \ap\inl(\varphi_A(a)) \concat \push(\ttt) \concat \ap\inr(\varphi_A(a)).
  \end{align*}
  Given two pointed maps $f:X\topt A$ and $g:Y\topt A$, we define the map
  \begin{align*}
    \langle f,g\rangle &: X\vee Y \topt A,\\
    \langle f,g\rangle(\inl(x)) &\defeq f(x),\\
    \langle f,g\rangle(\inr(y)) &\defeq g(y),\\
    \ap{\langle f,g\rangle}(\push(\ttt)) &\defeq \star_f\concat\star_g\inv.
  \end{align*}
\end{definition}

\begin{proposition}
  Given two pointed maps $f,g:\Sn{n}\topt A$, their multiplication when seen as elements of
  $\pi_n(A)$ corresponds to the composition $\langle f,g\rangle\circ\contreq_{\Sn{n-1}}$.
\end{proposition}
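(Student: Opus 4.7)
The plan is to proceed by induction on $n \ge 1$, reducing the general case to $n = 1$ via the adjunction of proposition \ref{adjsuspomega}. Throughout, I use the identification $\pi_n(A) \simeq \trunc{0}{\Sn n \topt A}$ and the fact that the multiplication on the right-hand side is the one transported from path concatenation on $\Omega^n A$ through the chain of equivalences $\Sn n \topt A \simeq \Sn{n-1} \topt \Omega A \simeq \cdots \simeq \Omega^n A$.

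For the base case $n = 1$, using $\Sn 1 = \Susp \Bool$ with $\Bool$ pointed at $\true$, the equivalence $\Sn 1 \topt A \simeq \Omega A$ from proposition \ref{adjsuspomega} sends a pointed map $f$ to the loop $(\Omega f)(\varphi_{\Bool}(\false)) = \star_f\inv \concat \ap f(\merid(\false) \concat \merid(\true)\inv) \concat \star_f$. Given $f, g : \Sn 1 \topt A$ corresponding to loops $\alpha, \beta : \Omega A$, I would compute the image of $\langle f, g\rangle \circ \contreq_{\Bool}$ under this equivalence. Unfolding the definition of $\contreq_{\Bool}$ on $\merid(\false)$ and using that $\varphi_{\Bool}(\true) = \idp{\star_{\Bool}}$, the action of $\langle f, g\rangle$ on $\ap{\inl}(\varphi_{\Bool}(\false)) \concat \push(\ttt) \concat \ap{\inr}(\varphi_{\Bool}(\false))$ simplifies to $\ap f(\varphi_{\Bool}(\false)) \concat (\star_f \concat \star_g\inv) \concat \ap g(\varphi_{\Bool}(\false))$. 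Conjugating by $\star_{\langle f,g\rangle\circ\contreq_{\Bool}}$ (which is essentially trivial, as $\contreq_{\Bool}$ preserves the basepoint definitionally and $\langle f,g\rangle$ does so via $\star_f$) converts this to $\alpha \concat \beta$, the group operation on $\pi_1(A)$.

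For the inductive step, assume the statement for $n - 1$ applied to any pointed type, in particular to $\Omega A$. By proposition \ref{adjsuspomega} the equivalence $\Sn n \topt A \simeq \Sn{n-1} \topt \Omega A$ sends $f, g$ to pointed maps $\hat f, \hat g : \Sn{n-1} \topt \Omega A$, and on $0$-truncations it agrees with the isomorphism $\pi_n(A) \simeq \pi_{n-1}(\Omega A)$ of proposition \ref{piomega}, so the group structures match. The key intermediate claim is that this adjunction sends $\langle f, g\rangle \circ \contreq_{\Sn{n-1}}$ to (something homotopic to) $\langle \hat f, \hat g\rangle \circ \contreq_{\Sn{n-2}}$ — this amounts to a naturality property of the pinch map with respect to the suspension–loop adjunction, which one verifies by the same bookkeeping as in the base case, evaluating the composite on $\varphi_{\Sn{n-1}}(x)$ and using $\varphi_{\Sn{n-1}}(\star) = \idpS$ to cancel extraneous meridians. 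Invoking the inductive hypothesis on $\Omega A$, the multiplication corresponding to this composite on $\Sn{n-1} \topt \Omega A$ is the group operation on $\pi_{n-1}(\Omega A)$, which is what we wanted.

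The main obstacle is bookkeeping of the pointing paths $\star_f$, $\star_g$ and the equality $\varphi_X(\star_X) = \idpS$, which is only propositional. In the computations above, the composite does not literally equal $\alpha \concat \beta$ but rather a conjugate of it, and one must check that after passing through the looping operation (and ultimately the $0$-truncation) these conjugations are absorbed. Packaging this cleanly requires several two-dimensional coherence squares of the form handled by proposition \ref{depeqid}, and it is chiefly for this reason that the proposition is stated and used as a direct dictionary rather than proved in full detail.
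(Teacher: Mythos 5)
Your base case is essentially the paper's argument specialized to $n=1$: apply the suspension--loop adjunction $e$ once and compute the image of $\langle f,g\rangle\circ\contreq$ pointwise. But the paper does not induct on $n$ at all. After one application of $e$, the group structure on $\trunc0{\Sn{n-1}\topt\Omega A}$ is pointwise concatenation of loops in $\Omega A$ (the paper invokes proposition \ref{piomega} for this identification), and then a single pointwise computation --- valid uniformly for every $x:\Sn{n-1}$, not just for $x$ a boolean --- shows $e(\langle f,g\rangle\circ\contreq_{\Sn{n-1}})(x)=e(f)(x)\concat e(g)(x)$, which is exactly the group operation. No further reduction in dimension is needed.

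Your inductive step has a genuine gap. You assert that $e$ carries $\langle f,g\rangle\circ\contreq_{\Sn{n-1}}$ to something pointed-homotopic to $\langle\hat f,\hat g\rangle\circ\contreq_{\Sn{n-2}}$, calling this a naturality of the pinch map to be checked by ``the same bookkeeping.'' But the bookkeeping you describe (unwinding $e$ on $\varphi_{\Sn{n-1}}(x)$) yields $\lambda x.\,\hat f(x)\concat\hat g(x)$ --- the \emph{pointwise} concatenation --- and that map is not the pinch-and-fold composite $\langle\hat f,\hat g\rangle\circ\contreq_{\Sn{n-2}}$: for instance at $\north$ the first gives $\hat f(\north)\concat\hat g(\north)$ while the second gives $\hat f(\north)$. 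The assertion that these two maps agree in $\trunc0{\Sn{n-1}\topt\Omega A}$ is precisely the content of the proposition at stage $n-1$, combined with the identification of the group law on $\pi_{n-1}(\Omega A)$ with pointwise concatenation. So you cannot discharge the naturality claim by bookkeeping alone without first making that latter identification --- and once you have it, the inductive hypothesis is superfluous, because the computation already lands you at the pointwise product. In short: the induction doesn't buy you anything, and as written it hides the one nontrivial step rather than proving it.
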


\begin{proof}
  Using proposition \ref{adjsuspomega} we have the equivalence
  \begin{align*}
    e &: (\Sn n\topt A)\simeq(\Sn{n-1}\topt\Omega A),\\
    e(h) &\defeq \lambda x.(\star_h\inv\concat\ap h(\varphi_{\Sn{n-1}}(x))\concat\star_h).
  \end{align*}
  There is a group structure on the right-hand side given by composition of paths in $\Omega A$, and
  by proposition \ref{piomega} it is the same as the group structure on $\pi_n(A)$.
  We then have to check that $e(\langle f,g\rangle\circ\contreq_{\Sn{n-1}})$ is the composition of
  $e(f)$ and $e(g)$. For every $x:\Sn{n-1}$ we have
  \begin{align*}
    e(\langle f,g\rangle\circ\contreq_{\Sn{n-1}})(x) &= \star_f\inv\concat\ap{\langle f,g\rangle\circ\contreq_{\Sn{n-1}}}(\varphi_{\Sn{n-1}}(x))\concat\star_f\\
    &= \star_f\inv\concat\ap{\langle f,g\rangle}(\ap{\contreq_{\Sn{n-1}}}(\varphi_{\Sn{n-1}}(x)))\concat\star_f\\
    &= \star_f\inv\concat\ap{\langle
      f,g\rangle}(\ap\inl(\varphi_{\Sn{n-1}}(x))\concat\push(\ttt)\concat\phantom{a}\\
    &\qquad\qquad\qquad\quad\ap\inr(\varphi_{\Sn{n-1}}(x))\concat\push(\ttt)\inv)\concat\star_f\\
    &= \star_f\inv\concat\ap{\langle f,g\rangle}(\ap\inl(\varphi_{\Sn{n-1}}(x)))\concat\ap{\langle
      f,g\rangle}(\push(\ttt))\\
    &\qquad\concat\ap{\langle f,g\rangle}(\ap\inr(\varphi_{\Sn{n-1}}(x)))\concat\ap{\langle
      f,g\rangle}(\push(\ttt))\inv\concat\star_f\\
    &= \star_f\inv\concat\ap f(\varphi_{\Sn{n-1}}(x))\concat\star_f\concat\star_g\inv\concat\ap
      g(\varphi_{\Sn{n-1}}(x))\concat\star_g\\
    &= e(f)(x) \concat e(g)(x).
  \end{align*}
  which concludes the proof.
\end{proof}

\section{Homotopy groups of the circle}\label{hgss1}

In order to compute the homotopy groups of the circle we define a map $U:\Sn1\to\Type$, which
corresponds classically to the universal cover of $\Sn1$, by
\begin{align*}
  U &: \Sn1\to\Type,\\
  U(\base) &\defeq \Z,\\
  \ap U(\lloop) &\defeq \ua(\succZ).
\end{align*}
This definition is valid because the function $\succZ:\Z\to\Z$, which adds one to its argument, is
an equivalence as is easy to check. Intuitively it means that $U$ is a fibration over $\Sn1$ whose
fiber over $\base$ is the type of integers $\Z$ and such that transporting an integer along $\lloop$
in that fibration corresponds to adding $1$ to it.
We have the picture
\begin{center}
  \begin{tikzpicture}[scale=0.5,antidashed/.style={dashed,dash phase=3pt}]
    \draw (0,-0.5) circle (2 and 1);
    \draw (-2,-0.5) node {$\bullet$};
    \draw (-3,-1) node {$\base$};
    \draw (3,-1) node {$\lloop$};

    \draw[antidashed] (-1,2.5) arc (90:0:3 and 1);
    \draw[dashed] (-1,2.5) arc (90:180:1 and 0.25);
    \draw (-2,2.25) arc (180:270:1 and 0.25);
    \draw[color=white,line width=5pt] (-1,2) arc (-90:90:3 and 1);
    \draw (-1,2) arc (-90:90:3 and 1);
    \draw (-1,4) arc (90:270:1 and 0.25);
    \draw[color=white,line width=5pt] (-1,3.5) arc (-90:90:3 and 1);
    \draw (-1,3.5) arc (-90:90:3 and 1);
    \draw (-1,5.5) arc (90:270:1 and 0.25);
    \draw[color=white,line width=5pt] (-1,5) arc (-90:90:3 and 1);
    \draw (-1,5) arc (-90:90:3 and 1);
    \draw (-1,7) arc (90:270:1 and 0.25);
    \draw[color=white,line width=5pt] (-1,6.5) arc (-90:90:3 and 1);
    \draw (-1,6.5) arc (-90:90:3 and 1);
    \draw (-1,8.5) arc (90:180:1 and 0.25);
    \draw[color=white,line width=5pt] (-1,8) arc (-90:90:3 and 1);
    \draw[dashed] (-2,8.25) arc (180:270:1 and 0.25);
    \draw[antidashed] (-1,8) arc (-90:0:3 and 1);

    \draw (-2,2.25) node {$\bullet$};
    \draw (-2,3.75) node {$\bullet$};
    \draw (-2,5.25) node {$\bullet$};
    \draw (-2,6.75) node {$\bullet$};
    \draw (-2,8.25) node {$\bullet$};
    \draw[anchor=east] (-2.2,2.25) node {$-2$};
    \draw[anchor=east] (-2.2,3.75) node {$-1$};
    \draw[anchor=east] (-2.2,5.25) node {$0$};
    \draw[anchor=east] (-2.2,6.75) node {$1$};
    \draw[anchor=east] (-2.2,8.25) node {$2$};
  \end{tikzpicture}
\end{center}

According to the flattening lemma (proposition \ref{flatteningS1}), the total space of this
fibration is equivalent to the higher inductive type $E$ generated by $\Z$-many points $c_n:E$, and
$\Z$-many paths $p_n:c_n=_Ec_{n+1}$ relating each point with the next one.
\[
\begin{tikzcd}[sdiag]
  \dots \arrow[r,"p_{-2}"] & \bullet \arrow[r,"p_{-1}"] \arrow[r,phantom,"c_{-1}"' at start, yshift=-8pt] & \bullet \arrow[r,"p_0"] \arrow[r,phantom,"c_0"' at start, yshift=-8pt] & \bullet \arrow[r,"p_1"] \arrow[r,phantom,"c_1"' at start, yshift=-8pt] & \bullet \arrow[r,"p_2"] \arrow[r,phantom,"c_2"' at start, yshift=-8pt] & \dots
\end{tikzcd}
\]
It is easy to check that this type is contractible. We first construct a path $\ell_n:c_0=c_n$ by
induction on $n:\Z$ as a composition of the paths $p_i$ or their inverses, and then for every $n:\Z$
we prove that $\ell_n\concat p_n=\ell_{n+1}$, which follows from associativity of composition of
paths and from the fact that $p\inv\concat p=\idpS$ for any path $p$. Therefore $U$ is a fibration
over $\Sn1$ with fiber $\Z$ and a contractible total space and we have the following proposition
(which is a consequence of \cite[theorem 4.7.7]{hottbook} and of proposition \ref{pathfibcontr})
\begin{proposition}\label{fibtotcontr}
  Given a pointed type $A$ and a dependent type $P:A\to\Type$ whose total space is contractible, for
  every $a:A$ and $x:P(\star_A)$ the map
  \[
  \transport^P(-,x) : (\star_A=a) \to P(a)
  \]
  is an equivalence
\end{proposition}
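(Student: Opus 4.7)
The plan is to reduce this to the standard fact (theorem 4.7.7 of \cite{hottbook}, explicitly cited in the statement) that a fiberwise map of dependent types induces an equivalence on total spaces if and only if it is a fiberwise equivalence. So, fixing $x:P(\star_A)$, I would consider the fiberwise map over $A$
\[
\widetilde{f} : (a:A) \to (\star_A=a) \to P(a), \qquad \widetilde{f}(a,p) \defeq \transport^P(p,x),
\]
and show that the induced map on total spaces
\[
\Sigma\widetilde{f} : \sum_{a:A}(\star_A=a) \to \sum_{a:A}P(a), \qquad (a,p) \mapsto (a, \transport^P(p,x))
\]
is an equivalence; by the cited theorem this forces $\widetilde{f}(a,-)$ to be an equivalence for every $a:A$, which is the statement.

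To see that $\Sigma\widetilde{f}$ is an equivalence, I would observe that its domain is contractible by proposition \ref{pathfibcontr} (with center $(\star_A,\idp{\star_A})$), while its codomain is contractible by hypothesis. Any function between two contractible types is automatically an equivalence: one just takes as its inverse the constant function at the center of contraction of the domain, and the two homotopies come from the contractibility witnesses on either side. This finishes the argument.

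The main thing to be a little careful about is invoking the fiberwise-equivalence theorem from \cite{hottbook}, which is not re-proved in the excerpt. One could instead give a direct argument by constructing an explicit inverse: from the contractibility of $\sum_{a:A}P(a)$, given any $y:P(a)$ one gets an equality $(\star_A, x) = (a,y)$ in the total space, and taking its first projection yields a path $\star_A = a$; checking that this is inverse to $\transport^P(-,x)$ is then a path induction. Both approaches are routine once one notices that the contractibility of the two total spaces does all the work.
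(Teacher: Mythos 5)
Your proof is correct and is exactly the argument the paper has in mind: the paper gives no explicit proof but remarks that the proposition "is a consequence of \cite[theorem 4.7.7]{hottbook} and of proposition \ref{pathfibcontr}," and your write-up is precisely how those two facts combine, observing that the induced map on total spaces goes between two contractible types. The alternative direct construction you sketch at the end is also a sound way to avoid the citation.
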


Therefore we get
\begin{proposition}\label{prop:pi1s1}
  We have an equivalence $\Omega\Sn1 \simeq \Z$ and
  \[\pi_n(\Sn1)\simeq
  \begin{cases}
    \Z &\text{for $n=1$},\\
    0  &\text{for $n>1$}.
  \end{cases}
\]
\end{proposition}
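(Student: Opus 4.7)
The plan is to package the preceding discussion as an application of proposition \ref{fibtotcontr} and then propagate the result through $\pi_n$ via the loop/truncation machinery already set up.

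First I would apply proposition \ref{fibtotcontr} to $A = \Sn1$, $P = U$, with basepoint $\base$ and $x \defeq 0 : U(\base) = \Z$. Since the total space of $U$ has just been shown to be equivalent (by the flattening lemma, proposition \ref{flatteningS1}) to the higher inductive type $E$ which is contractible, the hypothesis of proposition \ref{fibtotcontr} is satisfied. Taking $a \defeq \base$ yields immediately the equivalence
\[
\transport^U(-,0) : \Omega\Sn1 = (\base =_{\Sn1} \base) \toeq U(\base) = \Z,
\]
which is the first claim. Concretely, under this equivalence $\lloop^n$ on the left corresponds to $n$ on the right, because $\transport^U(\lloop, k) = \coe_{\ap U(\lloop)}(k) = \coe_{\ua(\succZ)}(k) = k+1$ by the computation rule for $\ua$.

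Next I would deduce $\pi_1(\Sn1) \simeq \Z$. Corollary \ref{Zisset} says $\Z$ is a set, so the equivalence $\Omega\Sn1\simeq\Z$ implies $\Omega\Sn1$ is itself $0$-truncated, whence the truncation unit $|-|:\Omega\Sn1\to\pi_1(\Sn1)$ is an equivalence, giving $\pi_1(\Sn1)\simeq\Z$ as types. To promote this to an isomorphism of groups I check that the equivalence carries concatenation of loops to addition of integers; by path induction it suffices to verify this on the generator $\lloop$, and the point computation above already shows $\transport^U$ sends $\lloop$-concatenation to applying $\succZ$, i.e.\ to addition.

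For $n \geq 2$ the key observation is that $\Omega\Z$ is contractible, since $\Z$ is a set makes $0 =_\Z 0$ a mere proposition inhabited by $\idp 0$, hence contractible. Therefore all iterated loop spaces $\Omega^k \Z$ with $k\geq 1$ are contractible. Using proposition \ref{piomega} (iterated $n-1$ times) and the equivalence $\Omega\Sn1\simeq\Z$ (which is pointed, hence commutes with $\Omega$),
\[
\pi_n(\Sn1) \simeq \pi_{n-1}(\Omega\Sn1) \simeq \pi_{n-1}(\Z) = \trunc0{\Omega^{n-1}\Z},
\]
and $\Omega^{n-1}\Z$ being contractible forces this $0$-truncation to be trivial.

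The only real obstacle is the contractibility of the total space $E$, but that was already sketched in the paragraph preceding the proposition via the explicit paths $\ell_n$ and the associativity coherence $\ell_n \concat p_n = \ell_{n+1}$; everything else is bookkeeping with the general machinery (flattening, $\ua$-computation, and $0$-truncation of sets).
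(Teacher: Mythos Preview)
Your proof is correct and follows essentially the same approach as the paper: apply proposition~\ref{fibtotcontr} to the fibration $U$ to get $\Omega\Sn1\simeq\Z$, use that $\Z$ is a set to handle both $\pi_1$ and the higher $\pi_n$, and check the group structure via the computation $\transport^U(\lloop,k)=k+1$. One small remark: the phrase ``by path induction it suffices to verify this on the generator $\lloop$'' is imprecise since both endpoints of $\lloop$ are fixed; what you actually need (and what the paper uses) is the general fact that $\transport$ commutes with composition of paths, proved by path induction on a path with a free endpoint, which then yields $\lloop^n\mapsto n$.
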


\begin{proof}
  Applying proposition \ref{fibtotcontr} to $U$, $a\defeq\base$ and $x\defeq 0$ gives us an
  equivalence between $\Omega\Sn1$ and $U(\base)$, and $U(\base)$ is equal to $\Z$ by
  definition. Therefore $\Omega\Sn1\simeq\Z$. Given that $\Z$ is a set by proposition \ref{Zisset},
  we have $\trunc 0\Z\simeq\Z$ and $\Omega^k\Z$ is contractible for every $k\ge1$, therefore we get
  the equivalences of types
  \[\pi_n(\Sn1)\simeq
  \begin{cases}
    \Z &\text{for $n=1$},\\
    0  &\text{for $n>1$}.
  \end{cases}
  \]
  Moreover $\transport$ commutes with composition of paths, therefore for every $n:\Z$ the map
  $\Omega\Sn1\to\Z$ sends $\lloop^n$ to $n$, where $\lloop^n$ is defined by
  \begin{align*}
    \lloop^0 &\defeq \idp{\base},\\
    \lloop^{n+1} &\defeq \lloop\concat\lloop^n,\\
    \lloop^{-n-1} &\defeq \lloop\inv\concat\lloop^{-n}.
  \end{align*}
  This shows that the equivalence $\pi_1(\Sn1)\simeq\Z$ is a group isomorphism for the usual group
  structure on $\Z$.
\end{proof}

\begin{proposition}
  The circle $\Sn1$ is $1$-truncated.
\end{proposition}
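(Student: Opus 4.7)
The plan is to reduce the claim, by iterated induction on the two free endpoints, to the already-computed statement that $\Omega\Sn1 \simeq \Z$. By definition, to show $\Sn1$ is $1$-truncated we must prove that $x =_{\Sn1} y$ is a set for every $x,y:\Sn1$. I would first fix $y$ and do $\Sn1$-induction on $x$, and then in the base case do $\Sn1$-induction on $y$.

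The key observation that makes this work cleanly is that being $0$-truncated is a mere proposition (proposition \ref{ntruncprop}). Therefore, when I use the induction principle of $\Sn1$ on $x$ to prove that for every $x$ the type $P(x) \defeq ((y:\Sn1) \to \text{``}x=_{\Sn1}y\text{ is a set''})$ holds, the path-constructor case for $\lloop$ is automatic: I need to produce a dependent path in $P$ over $\lloop$ between two elements of a mere proposition, and any two such elements are equal. The same remark applies to the inner induction on $y$. So both inductive steps collapse and only the base case $x = y = \base$ survives.

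In the base case, I must show that $\base =_{\Sn1} \base$, i.e.\ $\Omega\Sn1$, is a set. This follows immediately from proposition \ref{prop:pi1s1}, which gives an equivalence $\Omega\Sn1 \simeq \Z$, combined with corollary \ref{Zisset}, which says that $\Z$ is a set, and the fact (part of proposition \ref{ntruncprop}) that being $n$-truncated is preserved by equivalences (since it transfers along any equality, and equivalences give equalities of types by univalence).

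There is no real obstacle here; the only mild subtlety is remembering to phrase the induction so that the target type is manifestly a mere proposition at each stage, which is why I would formulate the statement as ``$(x,y:\Sn1) \to \text{is-set}(x=_{\Sn1}y)$'' and peel off the two quantifiers in turn, rather than trying to construct explicit dependent paths over $\lloop$.
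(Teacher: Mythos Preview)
Your proof is correct and follows essentially the same approach as the paper: induct on $x$ and then on $y$, observe that the $\lloop$ cases are trivial because ``is a set'' is a mere proposition, and conclude in the base case from $\Omega\Sn1\simeq\Z$ and the fact that $\Z$ is a set.
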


\begin{proof}
  We have to prove that for every $x,y:\Sn1$, the type $x=_{\Sn1}y$ is a set
  (i.e.\ $0$-truncated). We proceed by induction on $x$. Note that we only have to do the case
  $x=\base$ because for $\lloop$ it follows immediately from the fact that being a set is a mere
  proposition. We proceed now by induction on $y$. For the same reason we only have to do the case
  $y=\base$. Therefore we now have to prove that $\base=_{\Sn1}\base$ is a set, but this follows
  immediately from the fact that $\Omega\Sn1\simeq\Z$ and that $\Z$ is a set.
\end{proof}

\section{Connectedness}

We introduce now the notion of $n$-connectedness of types and maps, which can be thought of as dual
to the notion of $n$-truncatedness. A type is $n$-truncated if it has no homotopical information
above dimension $n$ while a type in $n$-connected if it has no homotopical information \emph{below}
dimension $n$. We take the same convention as in \cite{hottbook}, which differs from the one in
classical homotopy theory. Connectivity of spaces coincides with the classical notion while an
$n$-connected map for us corresponds to an $(n+1)$-connected map for the classical definition.

\begin{definition}
  Given $f:A\to B$ and $b:B$, the (homotopy) \emph{fiber} of $f$ at $b$ is the type
  \[\fib_f(b)\defeq\left(\sum_{a:A}(f(a)=b)\right).\]
\end{definition}
For instance the fiber of the unique map $A\to\Unit$ is equivalent to $A$ (because $\Unit$ is
contractible), and the fiber at $a$ of a map $f:\Unit\to A$ is equivalent to the type $f(\ttt)=_Aa$.

\begin{definition}
  A type $A$ is said \emph{$n$-connected} if its $n$-truncation $\trunc nA$ is contractible.  A map
  $f:A\to B$ is said \emph{$n$-connected} if all of its fibers are $n$-connected. For short we
  simply say \emph{connected} for $0$-connected.
\end{definition}

A type $A$ is $n$-connected if and only if the canonical map $A\to\Unit$ is $n$-connected, given
that the only fiber of that map is equivalent to $A$. On the other hand if $A$ is pointed and
$n$-connected, then the canonical map $\Unit\to A$ is only $(n-1)$-connected as we show below.

\begin{proposition}
  Every type and every map is $(-2)$-connected and every pointed type is $(-1)$-connected.
\end{proposition}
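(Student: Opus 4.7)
The plan is to unfold the definitions of $n$-connectedness and $n$-truncatedness and observe that the three claims follow essentially directly from the fact that $\|A\|_n$ is always $n$-truncated by construction.

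For the first claim, a type $A$ is $(-2)$-connected iff $\|A\|_{-2}$ is contractible. But $\|A\|_{-2}$ is by construction a $(-2)$-truncated type, and being $(-2)$-truncated is the definition of being contractible. So there is nothing to prove beyond unfolding the definitions.

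For the statement about maps, I would note that a map $f : A \to B$ is $(-2)$-connected iff every fiber $\fib_f(b)$ is $(-2)$-connected as a type. Since every type is $(-2)$-connected by the previous paragraph, this applies in particular to each $\fib_f(b)$, and hence $f$ is $(-2)$-connected.

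For pointed types, I would use proposition~\ref{ntruncprop}: a type is a mere proposition iff all its elements are equal, and the $(-1)$-truncation $\|A\|_{-1}$ is $(-1)$-truncated, hence a mere proposition. To upgrade this to contractibility, I would use the basepoint $\star_A : A$ to produce $|\star_A| : \|A\|_{-1}$, so $\|A\|_{-1}$ is an inhabited mere proposition. Taking $|\star_A|$ as center of contraction and invoking propositionality to produce the equality $|\star_A| = x$ for every $x : \|A\|_{-1}$ exhibits $\|A\|_{-1}$ as contractible, i.e.\ $A$ is $(-1)$-connected. There is no real obstacle here; the only subtle point is remembering that an inhabited mere proposition is contractible, which is an easy consequence of the second bullet of proposition~\ref{ntruncprop}.
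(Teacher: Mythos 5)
Your proof is correct and follows essentially the same route as the paper: unfold the definition of $(-2)$-connectedness, note that $\trunc{-2}A$ is contractible by construction, pass to fibers for maps, and for pointed types observe that $\trunc{-1}A$ is an inhabited mere proposition and hence contractible.
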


\begin{proof}
  The $(-2)$-truncation of a type is contractible by definition, hence every type is
  $(-2)$-connected and then every map is $(-2)$-connected as well.

  In order to prove that a type $A$ is $(-1)$-connected, we have to prove that $\trunc{-1}A$ is
  contractible. Given that $\trunc{-1}A$ is a mere proposition, it is enough to prove that it is
  pointed, and it is the case if $A$ is pointed.
\end{proof}

\begin{proposition}\label{connectedpointedmap}
  A pointed type $A$ is $n$-connected if and only if the canonical map $\Unit\to A$ is
  $(n-1)$-connected.
\end{proposition}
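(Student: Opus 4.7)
The plan is to unfold what $(n-1)$-connectedness of $\Unit \to A$ means in terms of fibers, and then translate this into a statement about $\trunc n A$ via proposition \ref{looptrunc}.

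First I would compute the fibers of the canonical map $f : \Unit \to A$, which sends $\ttt$ to $\star_A$. For any $b : A$ we have
\[\fib_f(b) = \sum_{x:\Unit}(f(x) =_A b) \simeq (\star_A =_A b),\]
since $\Unit$ is contractible. Therefore $f$ is $(n-1)$-connected iff for every $b : A$, the type $\trunc{n-1}{\star_A =_A b}$ is contractible. By proposition \ref{looptrunc}, this type is equivalent to $|\star_A| =_{\trunc n A} |b|$. So the condition becomes: for every $b : A$, the type $|\star_A| =_{\trunc n A} |b|$ is contractible.

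Next I would show that this condition is equivalent to contractibility of $\trunc n A$. For the forward direction, if $\trunc n A$ is contractible then identity types between any two of its points are contractible, so $|\star_A| = |b|$ is contractible for all $b$. For the converse, I would take $|\star_A|$ as the center of contraction for $\trunc n A$ and construct a dependent function $(x : \trunc n A) \to |\star_A| = x$. Since $\trunc n A$ is $n$-truncated, its identity types are $(n-1)$-truncated and in particular $n$-truncated, so the induction principle of $\trunc n A$ lets us reduce to the case $x = |b|$ for $b : A$, which is handled by the hypothesis.

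There is no real obstacle here; the only subtle point is making sure the induction principle of $\trunc n A$ applies in the converse direction, which is why I want the goal to be an identity type in an $n$-truncated type, and using proposition \ref{looptrunc} in exactly the right truncation level ($(n-1)$ under the truncation corresponds to $n$-level identity types in the truncation). The argument requires $n \geq -1$ in order for proposition \ref{looptrunc} to apply, which matches the statement since $(n-1)$-connectedness of a map is only defined for $n-1 \geq -2$.
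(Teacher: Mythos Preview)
Your proof is correct and follows essentially the same route as the paper: identify the fibers of $\Unit\to A$ with path types $\star_A=_Aa$, apply proposition \ref{looptrunc} to rewrite their $(n-1)$-truncations as identity types in $\trunc nA$, and then use the induction principle of $\trunc nA$ for the converse direction. Your remark that the induction applies because identity types in $\trunc nA$ are $(n-1)$-truncated (hence $n$-truncated) is exactly the point the paper uses, and your observation about the range $n\ge-1$ is a welcome clarification that the paper leaves implicit.
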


\begin{proof}
  For every $a:A$, the fiber of $\Unit\to A$ over $a$ is the type $(a=_A\star_A)$ and according to
  proposition \ref{looptrunc} we have
  \[\trunc{n-1}{a=_A\star_A}\simeq|a|=_{\trunc nA}|\star_A|.\]
  Therefore if $A$ is $n$-connected, then the right-hand side is contractible which shows that the
  fiber $(a=_A\star_A)$ is $(n-1)$-connected and that the map $\Unit\to A$ is
  $(n-1)$-connected. Conversely, if the map $\Unit\to A$ is $(n-1)$-connected, i.e.\
  $|a|=_{\trunc nA}|\star_A|$ is contractible for every $a:A$, let’s prove that $\trunc nA$ is
  contractible. We need to prove that for every $x:\trunc nA$, we have $x=_{\trunc
    nA}|\star_A|$.
  This type is $n$-connected, hence it’s enough to prove it for $x$ of the form $|a|$, but it’s true
  for such $x$ by assumption.
\end{proof}

\begin{proposition}
  Given a pointed type $A$, if $A$ is $(n+1)$-connected then $\Omega A$ is $n$-connected.
\end{proposition}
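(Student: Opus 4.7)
The plan is to deduce this directly from proposition \ref{connectedpointedmap} by computing the fiber of the canonical pointed map $\Unit\to A$ at the basepoint.

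First I would observe that, by proposition \ref{connectedpointedmap} applied with $n+1$ in place of $n$, the hypothesis that $A$ is $(n+1)$-connected is equivalent to the canonical map $f:\Unit\to A$ (sending $\ttt$ to $\star_A$) being $n$-connected. By definition, this means that every fiber $\fib_f(a)$ is $n$-connected.

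Next I would unfold $\fib_f(\star_A)$. It is the type $\sum_{x:\Unit}(f(x)=_A\star_A)$, which is equivalent to $f(\ttt)=_A\star_A$, i.e.\ to $\star_A=_A\star_A=\Omega A$. Since being $n$-connected is preserved by equivalence (the $n$-truncation of equivalent types are equivalent, so one is contractible iff the other is), we conclude that $\Omega A$ is $n$-connected.

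There is essentially no obstacle here: the entire content is the identification of $\Omega A$ with the fiber over $\star_A$ of $\Unit\to A$, combined with proposition \ref{connectedpointedmap}. The only mild care needed is that proposition \ref{connectedpointedmap} asks for \emph{all} fibers of $\Unit\to A$ to be $n$-connected, and we only directly use the fiber at $\star_A$; but this is exactly what the $n$-connectedness of $f$ gives us.
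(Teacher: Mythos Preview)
Your argument is correct. The route, however, differs from the paper's, which is a one-liner using proposition~\ref{looptrunc}: one has $\trunc n{\Omega A}\simeq\Omega\trunc{n+1}A$, and the right-hand side is contractible because $\trunc{n+1}A$ is. Your approach instead passes through proposition~\ref{connectedpointedmap} and identifies $\Omega A$ with a fiber of $\Unit\to A$. This is perfectly valid but somewhat indirect, since the proof of proposition~\ref{connectedpointedmap} in the paper already relies on proposition~\ref{looptrunc}; in effect you are invoking the same key equivalence through an extra layer. The paper's proof is shorter and goes straight to the point, while yours has the virtue of highlighting the geometric picture (loop space as a fiber).
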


\begin{proof}
  We have $\trunc n{\Omega A} \simeq \Omega\trunc{n+1}{A}$ which is contractible.
\end{proof}

\begin{proposition}
  Given a type $A$ and $m\le n$, we have
  \[\trunc m{\trunc nA}\simeq\trunc mA.\]
\end{proposition}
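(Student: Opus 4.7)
The plan is to build inverse maps between $\trunc m{\trunc nA}$ and $\trunc mA$ using the induction principle of truncations, relying crucially on the last item of proposition \ref{ntruncprop}, which says that an $m$-truncated type (with $m\le n$) is automatically $n$-truncated.

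First I would define the map $\eta:\trunc mA\to\trunc m{\trunc nA}$ by induction on $\trunc m{-}$, which is permitted because the target is $m$-truncated, setting $\eta(|a|_m)\defeq ||a|_n|_m$. In the other direction, I would first construct an auxiliary map $\tilde\mu:\trunc nA\to\trunc mA$: since $m\le n$, the type $\trunc mA$ is $m$-truncated, hence also $n$-truncated, so the induction principle for $\trunc n{-}$ applies and lets me set $\tilde\mu(|a|_n)\defeq|a|_m$. Then I apply the induction principle for $\trunc m{-}$ again (the target $\trunc mA$ is $m$-truncated) to define $\mu:\trunc m{\trunc nA}\to\trunc mA$ by $\mu(|x|_m)\defeq\tilde\mu(x)$.

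It remains to prove that $\mu$ and $\eta$ are mutually inverse. The composite $\mu\circ\eta$ sends $|a|_m$ to $\mu(||a|_n|_m)=\tilde\mu(|a|_n)=|a|_m$, so the homotopy $\mu\circ\eta=\id_{\trunc mA}$ is defined by $\trunc m{-}$-induction with value $\idpS$ on $|a|_m$ (the goal type is an identity type in the $m$-truncated type $\trunc mA$, hence $m$-truncated). For the other composite, I would prove $(\eta\circ\mu)(y)=y$ for every $y:\trunc m{\trunc nA}$ by $\trunc m{-}$-induction, reducing to proving $\eta(\tilde\mu(x))=|x|_m$ for every $x:\trunc nA$; the goal here is an identity type in $\trunc m{\trunc nA}$, which is $m$-truncated and therefore $n$-truncated since $m\le n$, so a second induction on $\trunc n{-}$ reduces the goal to elements $x=|a|_n$, where both sides are $||a|_n|_m$ by definition of $\tilde\mu$ and $\eta$, so $\idpS$ suffices.

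The only mildly delicate point is the very last step: one must remember to use $m\le n$ to view the goal of the second homotopy as $n$-truncated before applying the induction principle of $\trunc n{-}$. Once this is observed, the whole proof is a direct cascade of universal properties and no coherence computation is needed.
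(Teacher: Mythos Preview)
Your proof is correct and is essentially identical to the paper's: the paper also defines the two maps by truncation induction (using that $\trunc mA$ is $n$-truncated when $m\le n$) and proves both round-trip identities by a double induction reducing to $\idpS$. You have merely spelled out explicitly the intermediate map $\tilde\mu$ and the observation that the goal of the second homotopy is $n$-truncated, points which the paper leaves implicit.
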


\begin{proof}
  We define two maps $f$ and $g$ by
  \begin{align*}
    f &: \trunc m{\trunc nA} \to \trunc mA, & g &: \trunc mA \to \trunc m{\trunc nA},\\
    f(||a||) &\defeq |a|, & g(|a|) &\defeq ||a||.
  \end{align*}
  Note that in the definition of $f$ we use the induction principle for truncations twice and the
  fact that $\trunc mA$ is $n$-truncated (which follows from $m\le n$ and proposition
  \ref{ntruncprop}). Then we define two homotopies $h$ and $k$ by
  \begin{align*}
    h &: (x:\trunc m{\trunc nA}) \to g(f(x))=x, & k &: (y : \trunc mA)\to f(g(y))=y, \\
    h(||a||) &\defeq \idp{||a||}, & k(|a|) &\defeq \idp{|a|},
  \end{align*}
  which proves that $f$ and $g$ are inverse to each other.
\end{proof}

\begin{proposition}
  Given a type $A$ and $m\le n$, if $A$ is $n$-connected then $A$ is $m$-connected as well.
\end{proposition}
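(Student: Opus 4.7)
The plan is to exploit the previous proposition, which gives $\trunc m{\trunc n A}\simeq\trunc m A$ whenever $m\le n$, so that we can transfer contractibility from $\trunc n A$ to $\trunc m A$.

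First I would unfold the definition: assuming $A$ is $n$-connected amounts to assuming $\trunc n A$ is contractible, and the goal is to show that $\trunc m A$ is contractible. Since $m\le n$, the previous proposition provides an equivalence $\trunc m A\simeq\trunc m{\trunc n A}$, so it is enough to show that $\trunc m{\trunc n A}$ is contractible.

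Now $\trunc n A$ is contractible by hypothesis, and a contractible type is $k$-truncated for every $k\ge -2$ (this is one of the closure properties listed in proposition \ref{ntruncprop}, together with the fact that being contractible means being $(-2)$-truncated). Consequently $\trunc n A$ is itself $m$-truncated, and by the proposition stating that the unit map $|-|\colon B\to\trunc m B$ is an equivalence whenever $B$ is already $m$-truncated, we get $\trunc n A\simeq\trunc m{\trunc n A}$. Chaining the two equivalences yields $\trunc m A\simeq\trunc n A$, and since the right-hand side is contractible, so is the left-hand side, which is exactly the statement that $A$ is $m$-connected.

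There is no real obstacle here: the argument is essentially bookkeeping with the two facts just recalled (the compatibility equivalence $\trunc m{\trunc n A}\simeq\trunc m A$ and the stability of $n$-truncatedness under lowering $n$). The only point to be careful about is invoking the correct instances of proposition \ref{ntruncprop} to conclude that a contractible type is in particular $m$-truncated.
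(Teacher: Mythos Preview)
Your proof is correct and follows essentially the same route as the paper: use the equivalence $\trunc m A \simeq \trunc m{\trunc n A}$ from the previous proposition, then observe that the $m$-truncation of the contractible type $\trunc n A$ is contractible. The paper states this last step more tersely, but your elaboration (contractible $\Rightarrow$ $m$-truncated $\Rightarrow$ $|-|$ is an equivalence) is a fine way to justify it.
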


\begin{proof}
  We assume that $A$ is $n$-connected and we want to prove that $\trunc mA$ is contractible. From
  the previous proposition we know that $\trunc mA$ is equivalent to $\trunc m{\trunc nA}$, and
  $\trunc nA$ is contractible by assumption, therefore $\trunc mA$ is contractible as well.
\end{proof}

The following very important property can be thought of as an “induction principle” for
$n$-connected maps. It is proved in \cite[lemma 7.5.7]{hottbook}. For readability we use the
notation $\prod_{b:B}P(b)$ for the function type $(b:B)\to P(b)$.
\begin{proposition}\label{inductionconnected}
  For $f:A\to B$ and $P:B\to \Type$, consider the map
  \[\lambda s.s\circ f:\prod_{b:B}P(b) \to \prod_{a:A}P(f(a)).\]

  Then the following are equivalent:
  \begin{itemize}
  \item $f$ is $n$-connected,
  \item for every family of $n$-types $P$, the map $(\lambda s.s\circ f)$ is an equivalence,
  \item for every family of $n$-types $P$, the map $(\lambda s.s\circ f)$ has a section.
  \end{itemize}
\end{proposition}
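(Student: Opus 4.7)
I plan to prove the three-way equivalence by a cycle (1)~$\Rightarrow$~(2)~$\Rightarrow$~(3)~$\Rightarrow$~(1). The direction (2)~$\Rightarrow$~(3) is immediate since every equivalence has a section, so the real content lies in (1)~$\Rightarrow$~(2) and (3)~$\Rightarrow$~(1). The key technical ingredient I will set up first, and use for both nontrivial directions, is an explicit computation of the fibres of $(\lambda s.\,s\circ f)$.

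The core computation is as follows. Given $t:(a:A)\to P(f(a))$, the fibre $\fib_{\lambda s.\,s\circ f}(t)$ is, by function extensionality, equivalent to $\sum_{s:(b:B)\to P(b)}(a:A)\to s(f(a))=t(a)$. Since $A\simeq\sum_{b:B}\fib_f(b)$, I can reindex the inner product, and for $(a,q):\fib_f(b)$ rewrite $s(f(a))=t(a)$ as $s(b)=\transport^P(q,t(a))$ using that $\transport^P(q,-)$ is an equivalence. Pulling the $\sum$ inside the outer $\prod$ (the type-theoretic axiom of choice) yields
\[
\fib_{\lambda s.\,s\circ f}(t)\;\simeq\;\prod_{b:B}E_b(t),\qquad E_b(t)\defeq\sum_{p:P(b)}\prod_{(a,q):\fib_f(b)}\bigl(p=\transport^P(q,t(a))\bigr).
\]
For each $b$, define $u_b:\fib_f(b)\to P(b)$ by $u_b(a,q)\defeq\transport^P(q,t(a))$; then $E_b(t)$ is exactly the fibre at $u_b$ of the ``constant function'' map $c_b:P(b)\to(\fib_f(b)\to P(b))$, $c_b(p)\defeq\lambda x.p$.

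For (1)~$\Rightarrow$~(2), assume $f$ is $n$-connected, so each $\trunc{n}{\fib_f(b)}$ is contractible. Since $P(b)$ is an $n$-type, every map $\fib_f(b)\to P(b)$ factors uniquely through $\trunc{n}{\fib_f(b)}\simeq\Unit$, which precisely says $c_b$ is an equivalence. Hence $E_b(t)$ is contractible for every $b$, so $\prod_{b:B}E_b(t)$ is contractible, so $\lambda s.\,s\circ f$ has contractible fibres and is an equivalence.

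For (3)~$\Rightarrow$~(1), I will apply (3) to the specific family $P(b)\defeq\trunc{n}{\fib_f(b)}$ (which is an $n$-type) together with $t(a)\defeq|(a,\idp{f(a)})|$. Under this choice, $u_b(a,q)=\transport^P(q,|(a,\idp{f(a)})|)=|(a,q)|$ is the truncation map $\fib_f(b)\to\trunc{n}{\fib_f(b)}$. Having a section of $\lambda s.\,s\circ f$ means the fibre $\prod_{b:B}E_b(t)$ is inhabited, so for each $b$ I obtain $p_b:\trunc{n}{\fib_f(b)}$ together with $h_b:(a,q:\fib_f(b))\to|(a,q)|=p_b$. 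Since $x=p_b$ is $n$-truncated, I can apply the induction principle of $\trunc{n}{\fib_f(b)}$ to conclude that every $x:\trunc{n}{\fib_f(b)}$ equals $p_b$, so $\trunc{n}{\fib_f(b)}$ is contractible. The main obstacle I anticipate is the bookkeeping in the fibre computation---especially keeping track of the $\transport$ terms when reindexing along $A\simeq\sum_b\fib_f(b)$---but once that rearrangement is clean, both (1)~$\Rightarrow$~(2) and (3)~$\Rightarrow$~(1) fall out of it by choosing the right $n$-type family.
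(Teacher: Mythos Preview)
Your proof is correct and is essentially the standard argument from \cite[Lemma~7.5.7]{hottbook}, which is exactly what the paper cites in lieu of a proof. The fibre computation you set up, reducing $\fib_{\lambda s.\,s\circ f}(t)$ to $\prod_{b:B}E_b(t)$ with $E_b(t)$ a fibre of the constant-map function $c_b$, is precisely the approach taken there, and both nontrivial implications fall out as you describe.
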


One way to use it is as follows. In order to define a map $g:(b:B)\to P(b)$, where $P$ is a family
of $n$-types, it is enough to define $g$ on elements of the form $f(a)$ for $f:A\to B$ an
$n$-connected map. For instance if $B$ is an $(n+1)$-connected pointed type and $P$ is a family of
$n$-types over $B$, then in order to construct a section of $P$ it is enough to give an element of
$P(\star_B)$.

\begin{proposition}
  Given two $n$-connected maps $f:A\to B$ and $g:B\to C$, the composition $g\circ f:A\to C$ is also
  $n$-connected.
\end{proposition}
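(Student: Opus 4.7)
The plan is to use the characterization of $n$-connected maps given by Proposition \ref{inductionconnected}, which reduces $n$-connectedness to a lifting property against families of $n$-types. This avoids having to analyze fibers directly.

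First I would fix an arbitrary family $P:C\to\Type$ of $n$-types and consider the precomposition map
\[\lambda s.\,s\circ(g\circ f):\prod_{c:C}P(c)\to\prod_{a:A}P(g(f(a))).\]
The key observation is that this map factors as the composition of two precomposition maps, namely
\[\prod_{c:C}P(c)\xrightarrow{\lambda s.\,s\circ g}\prod_{b:B}P(g(b))\xrightarrow{\lambda s.\,s\circ f}\prod_{a:A}P(g(f(a))).\]

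Next I would argue that each of the two factors is an equivalence. Since $g$ is $n$-connected and $P$ is a family of $n$-types over $C$, Proposition \ref{inductionconnected} gives that the first factor is an equivalence. For the second factor, note that $P\circ g:B\to\Type$ is again a family of $n$-types (the property of being $n$-truncated is pointwise), and then since $f$ is $n$-connected, Proposition \ref{inductionconnected} applied to $f$ with the family $P\circ g$ shows that the second factor is also an equivalence.

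A composition of equivalences being an equivalence, the original map $\lambda s.\,s\circ(g\circ f)$ is an equivalence. Since $P$ was an arbitrary family of $n$-types on $C$, we can conclude by the other direction of Proposition \ref{inductionconnected} that $g\circ f$ is $n$-connected. No step looks like a serious obstacle: the proof is essentially a one-line observation once the right characterization of connectedness is used, and the only thing to be careful about is that the domain type of the second precomposition map matches the codomain of the first, which is precisely why choosing the intermediate family to be $P\circ g$ works.
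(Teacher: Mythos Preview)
Your proof is correct and follows essentially the same approach as the paper: both reduce the statement to Proposition \ref{inductionconnected} and use that $P\circ g$ is again a family of $n$-types. The only cosmetic difference is that the paper invokes the ``has a section'' clause of that proposition and explicitly builds the section $d\mapsto d'\mapsto d''$ in two steps, whereas you invoke the ``is an equivalence'' clause and compose two equivalences; these are the same argument phrased at slightly different levels of abstraction.
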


\begin{proof}
  We consider $P:C\to\Type$ a family of $n$-truncated types over $C$ together with
  $d:(a:A)\to P(g(f(a)))$. The composition $P\circ g$ is a family of $n$-truncated types over $B$
  and we have $d:(a:A)\to (P\circ g)(f(a))$ therefore, using the fact that $f$ is $n$-connected,
  there is a function $d':(b:B)\to P(g(b))$ such that $d'(f(a))=d(a)$ for every $a:A$. Using now the
  fact that $g$ is $n$-connected, there is a function $d'':(c:C)\to P(c)$ such that
  $d''(g(b))=d'(b)$. We have in particular $d''(g(f(a)))=d'(f(a))=d(a)$, therefore $g\circ f$ is
  $n$-connected.
\end{proof}

\begin{proposition}\label{connpushout}
  Given two maps $f:C\to A$ and $g:C\to B$ such that $f$ is $n$-connected, the map $\inr:B\to D$ is
  $n$-connected, where $D\defeq A\sqcup^CB$.
  \[
  \begin{tikzcd}
    C \arrow[r,"g"] \arrow[d,"f"'] & B \arrow[d,"\inr"]\\
    A \arrow[r,"\inl"'] & D \arrow[lu,phantom,"\ulcorner",at start]
  \end{tikzcd}
  \]
\end{proposition}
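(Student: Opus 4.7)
The plan is to apply proposition \ref{inductionconnected} in both directions: we use the characterization of $n$-connectedness via extension of sections of $n$-truncated families, once to reformulate the goal and once to exploit the $n$-connectedness of $f$. Concretely, to show that $\inr:B\to D$ is $n$-connected it is enough, by proposition \ref{inductionconnected}, to show that for every family $P:D\to\Type$ of $n$-types, the restriction map
\[(\lambda s.\, s\circ\inr):\prod_{d:D}P(d)\to\prod_{b:B}P(\inr(b))\]
has a section. So fix such a $P$ and a term $t:\prod_{b:B}P(\inr(b))$; we must produce $s:\prod_{d:D}P(d)$ with $s(\inr(b))=t(b)$.

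I would construct $s$ using the induction principle of the pushout $D=A\sqcup^CB$. This requires three data: $s_{\inl}:\prod_{a:A}P(\inl(a))$, $s_{\inr}:\prod_{b:B}P(\inr(b))$, and for each $c:C$ a dependent path $s_{\push}(c):s_{\inl}(f(c))=^P_{\push(c)}s_{\inr}(g(c))$. Take $s_{\inr}\defeq t$; then $s(\inr(b))$ will equal $t(b)$ by definition, giving the desired section property. The real work is producing $s_{\inl}$ together with $s_{\push}$.

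To produce them, consider the family $P\circ\inl:A\to\Type$, which is again a family of $n$-types. Define
\[u:\prod_{c:C}P(\inl(f(c))),\qquad u(c)\defeq\transport^P(\push(c)\inv,\,t(g(c))),\]
so that by the equivalence in the definition of dependent paths, $u(c)$ comes equipped with a canonical dependent path $\widetilde u(c):u(c)=^P_{\push(c)}t(g(c))$. Now apply proposition \ref{inductionconnected} to the $n$-connected map $f:C\to A$ and the family $P\circ\inl$ of $n$-types: the restriction map $\prod_{a:A}P(\inl(a))\to\prod_{c:C}P(\inl(f(c)))$ has a section, so there exists $s_{\inl}:\prod_{a:A}P(\inl(a))$ together with a homotopy $h(c):s_{\inl}(f(c))=u(c)$ for every $c:C$. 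We then set $s_{\push}(c)$ to be the dependent path obtained by transporting $\widetilde u(c)$ along $h(c)$, i.e.\ by pre-composing with the equality $h(c)$. This furnishes the three data needed by the pushout induction and hence yields $s:\prod_{d:D}P(d)$ with $s\circ\inr=t$, establishing that $\inr$ is $n$-connected.

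The main subtlety, and the step I expect to need the most care, is the bookkeeping of dependent paths: one must use proposition \ref{depeqid} (or rather its analogue for general dependent paths) to convert between $s_{\inl}(f(c))=^P_{\push(c)}t(g(c))$ and $\transport^P(\push(c),s_{\inl}(f(c)))=t(g(c))$, so that the homotopy $h$ from the induction principle applied to $f$ can actually be combined with $\widetilde u$ to yield $s_{\push}$. Everything else is a straightforward unfolding of definitions and of the pushout induction principle.
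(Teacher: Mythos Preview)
Your proof is correct and follows essentially the same approach as the paper: use proposition \ref{inductionconnected} to reduce to extending a section along $\inr$, then use pushout induction with $s_{\inr}\defeq t$, and obtain $s_{\inl}$ together with the required dependent path by applying proposition \ref{inductionconnected} to the $n$-connected map $f$ and the family $P\circ\inl$, starting from the backward transport of $t\circ g$ along $\push$. The paper's proof is slightly more terse (it directly asserts the existence of the dependent equality $e(c)$ rather than separating it into the homogeneous homotopy $h(c)$ and the canonical $\widetilde u(c)$), but the content is the same; your explicit treatment of this bookkeeping is exactly the ``subtlety'' you flagged and it goes through without issue.
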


\begin{proof}
  Let’s consider $P:D\to\Type$ a family of $n$-truncated types and $h:(b:B)\to P(\inr(b))$. We want
  to construct a map $k:(d:D)\to P(d)$. We define $Q:A\to\Type$ by $Q(a)\defeq P(\inl(a))$. It’s a
  fibration of $n$-types and there is an element of $(c:C)\to Q(f(c))$ given by transporting
  $(\lambda c.h(g(c)))$ in $P$ backwards along the equality
  $\push(c):\inl(f(c))=\inr(g(c))$. Therefore, given that $f$ is $n$-connected, there is a section
  $\ell:(a:A)\to Q(a)$ of $Q$ together with a dependent equality $e(c)$ between $\ell(f(c))$ and
  $h(g(c))$ in $P$ over $\push(c)$.

  We can now define $k$ by
  \begin{align*}
    k &: (d:D)\to P(d),\\
    k(\inl(a)) &\defeq \ell(a),\\
    k(\inr(b)) &\defeq h(b),\\
    \ap k(\push(c)) &\defeq e(c),
  \end{align*}
  and it agrees with $h$ on element of the form $\inr(b)$.
  Therefore, $\inr$ is $n$-connected.
\end{proof}

\begin{proposition}\label{truncconn}
  For any $X$, the map $|-|:X\to\trunc nX$ is $n$-connected.
\end{proposition}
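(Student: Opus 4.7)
The plan is to unfold what $n$-connectedness of $|-|$ means and then perform a sequence of inductions. Fix $X$ and, for any $y : \trunc n X$, write $F(y) \defeq \fib_{|-|}(y) = \sum_{x:X}(|x|=_{\trunc nX}y)$; the goal is to show that $\trunc n{F(y)}$ is contractible for every $y$. Since contractibility is a mere proposition (and hence $n$-truncated for all $n \geq -1$; the case $n = -2$ is vacuous), the induction principle for $\trunc nX$ reduces the problem to showing that $\trunc n{F(|x_0|)}$ is contractible for an arbitrary $x_0 : X$.

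The candidate center of contraction is $c \defeq |(x_0, \idp{|x_0|})| : \trunc n{F(|x_0|)}$. What remains is to construct a term of $\prod_{z : \trunc n{F(|x_0|)}}(z = c)$. The codomain is an equality in an $n$-truncated type, hence itself $n$-truncated, so a second application of truncation induction reduces this to producing, for every pair $(x,p)$ with $x:X$ and $p : |x| =_{\trunc nX} |x_0|$, an equality $|(x,p)| = |(x_0,\idp{|x_0|})|$ in $\trunc n{F(|x_0|)}$.

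Now the key step: by Proposition \ref{looptrunc} applied in the form $\trunc{n-1}(x=_Xx_0) \simeq (|x|=_{\trunc nX}|x_0|)$, the path $p$ is equivalent data to an element $\tilde p : \trunc{n-1}(x=_Xx_0)$ satisfying $p = \ap{|-|}(\tilde p)$ (using $f(|q|) \defeq \ap{|-|}(q)$ from that proposition). The goal $|(x,p)| = |(x_0,\idp{|x_0|})|$ is an equality in an $n$-truncated type, so it is $(n-1)$-truncated, and the induction principle for $\trunc{n-1}$ lets us reduce to the case $\tilde p = |q|$ with $q : x=_Xx_0$ a genuine path in $X$, so that $p = \ap{|-|}(q)$. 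A final path induction on $q$ reduces to $x = x_0$ and $q = \idp{x_0}$, in which case $p = \ap{|-|}(\idp{x_0}) = \idp{|x_0|}$ and the desired equality is simply $\idp c$.

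The only genuine obstacle is the bookkeeping around truncation levels in the third paragraph: one must check that at each application of an induction principle the target is truncated enough. Once one verifies that contractibility is a mere proposition, that equalities in $n$-types are $(n-1)$-truncated, and that Proposition \ref{looptrunc} indeed shifts the truncation level by one in the way needed to reduce a path in $\trunc nX$ to a path in $X$, the rest is the standard double truncation–induction followed by path induction sketched above.
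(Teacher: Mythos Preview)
Your proof is correct, but the paper takes a much shorter route. It simply invokes Proposition~\ref{inductionconnected}, which characterizes $n$-connected maps as those for which precomposition into any family of $n$-types admits a section; this is exactly the induction principle for $\trunc nX$, so the argument is one sentence.

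Your approach instead unfolds the original definition of $n$-connectedness (contractibility of the $n$-truncation of each fiber) and verifies it directly via nested truncation inductions, using Proposition~\ref{looptrunc} to strip a truncation level from the path $p$ before a final path induction. This works and has the merit of not relying on the nontrivial equivalence of Proposition~\ref{inductionconnected}, at the cost of needing Proposition~\ref{looptrunc} and several layers of bookkeeping. Since Proposition~\ref{inductionconnected} is already established and used pervasively in the paper, there is no real economy in bypassing it here, and the one-line proof is preferable; still, your direct argument is a legitimate alternative. One small notational slip: writing ``$p = \ap{|-|}(\tilde p)$'' does not typecheck since $\tilde p$ is a truncation class, but your parenthetical makes clear you mean the image of $\tilde p$ under the equivalence of Proposition~\ref{looptrunc}, so this is harmless.
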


\begin{proof}
  It’s an immediate consequence of proposition \ref{inductionconnected} and of the induction
  principle for truncations.
\end{proof}

\section{Lower homotopy groups of spheres}\label{hgs0}

We can now prove that all homotopy groups of spheres of the form $\pi_k(\Sn n)$ for $k<n$ are
trivial.
\begin{proposition}\label{lowerhgs}
  For every $k,n:\N$ such that $k<n$, we have
  \[\pi_k(\Sn n)\simeq \Unit.\]
\end{proposition}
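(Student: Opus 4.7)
The plan is to combine the connectedness machinery just developed: spheres get more connected as their dimension grows, and loop spaces reduce connectedness by one, so iterated loop spaces of $\Sn n$ stay connected as long as we haven't iterated too many times.

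First I would prove by induction on $n$ that $\Sn n$ is $(n-1)$-connected. The base case $n=0$ says that $\Sn0=\Bool$ is $(-1)$-connected, which holds because $\Bool$ is pointed (by $\true$). For the inductive step, suppose $\Sn n$ is $(n-1)$-connected. The suspension $\Sn{n+1}=\Susp\Sn n$ is the pushout
\[
\begin{tikzcd}
  \Sn n \arrow[r] \arrow[d] & \Unit \arrow[d] \\
  \Unit \arrow[r] & \Susp\Sn n.
\end{tikzcd}
\]
Since the fiber of $\Sn n\to\Unit$ over the unique point is equivalent to $\Sn n$, the map $\Sn n\to\Unit$ is $(n-1)$-connected. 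By proposition \ref{connpushout}, the right-hand map $\Unit\to\Susp\Sn n$ is also $(n-1)$-connected. Then proposition \ref{connectedpointedmap} gives that $\Susp\Sn n=\Sn{n+1}$ is $n$-connected, completing the induction.

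Next I would show by induction on $k$ that $\Omega^k\Sn n$ is $(n-k-1)$-connected (for $k\le n$). The case $k=0$ is exactly the previous claim. For the inductive step, assuming $\Omega^k\Sn n$ is $(n-k-1)$-connected, write $n-k-1 = (n-k-2)+1$ and apply the proposition stating that if a pointed type $A$ is $(m+1)$-connected then $\Omega A$ is $m$-connected. This gives that $\Omega^{k+1}\Sn n = \Omega(\Omega^k\Sn n)$ is $(n-(k+1)-1)$-connected.

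Finally, since $k<n$ we have $n-k-1\ge 0$, so $\Omega^k\Sn n$ is $0$-connected, meaning that $\trunc0{\Omega^k\Sn n}$ is contractible. By definition $\pi_k(\Sn n)=\trunc0{\Omega^k\Sn n}$, and any contractible type is equivalent to $\Unit$, yielding $\pi_k(\Sn n)\simeq\Unit$. The only delicate point is bookkeeping the indexing conventions and the off-by-one between the connectedness of a pointed type and of the map out of $\Unit$; everything else reduces to the already-proved stability properties of connectedness under suspensions and loop spaces.
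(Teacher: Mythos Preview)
Your proof is correct and follows essentially the same route as the paper: first establish that $\Sn n$ is $(n-1)$-connected by induction on $n$ via the suspension pushout and propositions \ref{connpushout} and \ref{connectedpointedmap}, then use that looping lowers connectivity by one to conclude that $\Omega^k\Sn n$ is $(n-1-k)$-connected and hence $0$-connected when $k<n$. The paper packages the first step as a separate proposition (proposition \ref{sphereconn}) and states the loop-space step more tersely, but the argument is the same.
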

It follows from the following proposition, as shown below.
\begin{proposition}\label{sphereconn}
  For every $n:\N$, the $n$-sphere $\Sn n$ is $(n-1)$-connected.
\end{proposition}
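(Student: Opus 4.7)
The plan is to proceed by induction on $n$ and to use propositions \ref{connectedpointedmap} and \ref{connpushout} to reduce the inductive step to the induction hypothesis without any calculation.

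For the base case $n=0$, we have $\Sn 0 \defeq \Bool$, which is a pointed type (pick either element as basepoint), and therefore $(-1)$-connected by the earlier proposition stating that every pointed type is $(-1)$-connected.

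For the inductive step, suppose $\Sn n$ is $(n-1)$-connected; I want to show $\Sn{n+1} \defeq \Susp \Sn n$ is $n$-connected. Since $\Sn{n+1}$ is pointed (by $\north$), proposition \ref{connectedpointedmap} tells me it suffices to show that the canonical map $\Unit \to \Sn{n+1}$ picking out the basepoint is $(n-1)$-connected. Now $\Susp \Sn n$ is by definition the pushout of the span $\Unit \leftarrow \Sn n \to \Unit$, and the map $\inl : \Unit \to \Susp \Sn n$ in this pushout is exactly the map sending $\ttt$ to $\north$, i.e.\ the canonical pointing map. The map $\Sn n \to \Unit$ on the left of the span is $(n-1)$-connected: its unique fiber is equivalent to $\Sn n$, which is $(n-1)$-connected by the induction hypothesis. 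Applying proposition \ref{connpushout} (with the roles of the two sides of the span swapped, which is symmetric), I conclude that $\inl : \Unit \to \Susp \Sn n$ is $(n-1)$-connected, which is what was needed.

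There is no real obstacle in the argument: the work was done in setting up propositions \ref{connectedpointedmap} and \ref{connpushout}, and the only mild subtlety is checking that the map $\inl : \Unit \to \Susp \Sn n$ literally coincides with the basepoint inclusion, which holds by definition of the pointing of $\Susp A$. I would keep the write-up very short, essentially just citing the two propositions and the induction hypothesis.
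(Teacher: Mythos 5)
Your proof is correct and is essentially identical to the paper's: both handle the base case $\Sn0\defeq\Bool$ via $(-1)$-connectedness of pointed types, and both do the inductive step by viewing $\Sn{n+1}$ as the pushout of $\Unit\leftarrow\Sn n\rightarrow\Unit$, applying proposition~\ref{connpushout} to conclude that the basepoint inclusion $\Unit\to\Sn{n+1}$ is $(n-1)$-connected, and then invoking proposition~\ref{connectedpointedmap}. Your write-up is, if anything, a touch more careful than the paper's in that it explicitly notes that one must swap the two legs of the span to match the exact statement of proposition~\ref{connpushout} and that the conclusion really does concern the basepoint inclusion $\inl$.
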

\begin{proof}
  For $n=0$ this is true because $\Sn 0\defeq\Bool$ is pointed hence $(-1)$-connected.  If we assume
  now that $\Sn n$ is $(n-1)$-connected, we have the following pushout diagram where the top map is
  $(n-1)$-connected, hence the bottom map is $(n-1)$-connected as well by proposition
  \ref{connpushout}.
  \[
  \begin{tikzcd}
    \Sn n \arrow[r] \arrow[d] & \Unit \arrow[d] \\
    \Unit \arrow[r] & \Sn{n+1} \arrow[lu,phantom,"\ulcorner",at start]
  \end{tikzcd}
  \]
  Therefore by proposition \ref{connectedpointedmap} the type $\Sn{n+1}$ is $n$-connected, which
  concludes the proof.
\end{proof}
\begin{proof}[Proof of proposition \ref{lowerhgs}]
  Given $k,n:\N$ such that $k<n$, the type $\Sn n$ is $(n-1)$-connected and taking $\Omega^k$
  decreases the connectivity by $k$ hence $\Omega^k(\Sn n)$ is $(n-1-k)$-connected and we have
  $(n-1-k)\ge0$ because $k<n$. Therefore $\Omega^k(\Sn n)$ is $0$-connected, which shows that
  $\pi_k(\Sn n)$ is contractible.
\end{proof}

\section{The Hopf fibration}\label{hgshopf}

Given a fibration $P:B\to\Type$ we often use the notation
\[
\begin{tikzcd}
  F\arrow[r,"i"] & E \arrow[r,"p"] & B,
\end{tikzcd}
\]
where $F\defeq P(\star_B)$ is the \emph{fiber}, $E\defeq\sum_{x:B}P(x)$ is the \emph{total space},
$i$ sends $y$ to $(\star_B,y)$ and $p$ is the first projection.

In this section we introduce H-spaces and the Hopf construction which is a fibration
\[
\begin{tikzcd}
  A \arrow[r] & A*A \arrow[r] & \Susp A
\end{tikzcd}
\]
for any connected H-space $A$. Applying this construction to the circle gives us the Hopf fibration
\[
\begin{tikzcd}
  \Sn1 \arrow[r] & \Sn3 \arrow[r] & \Sn2.
\end{tikzcd}
\]
Using the long exact sequence of homotopy groups, it will imply that $\pi_2(\Sn2)\simeq\Z$ and that
$\pi_n(\Sn2)\simeq\pi_n(\Sn3)$ for all $n\ge3$.

\begin{definition}
  An \emph{H-space} is a pointed type $A$ together with a binary operation $\mu:A\to A\to A$ and
  equalities
  \begin{align*}
    \mu_l &:(a : A) \to \mu(\star_A,a)=_Aa,\\
    \mu_r &:(a : A) \to \mu(a,\star_A)=_Aa,\\
    \mu_{lr} &:\mu_l(\star_A)=\mu_r(\star_A).
  \end{align*}
\end{definition}

For instance every group has an obvious structure of H-space, where $\mu$ is given by the
multiplication of the group, $\mu_l$ and $\mu_r$ follow from the unit laws, and $\mu_{lr}$ follows
from the fact that a group is a set. In the classical definition of H-spaces, the homotopies $\mu_l$
and $\mu_r$ are required to fix the basepoint (which cannot be expressed in homotopy type theory),
and as a consequence the two-dimensional path $\mu_{lr}$ is not required anymore.

\begin{proposition}\label{hspacemapsequiv}
  If $A$ is a connected H-space, then for every $a:A$ the functions $\mu(a,-):A\to A$ and
  $\mu(-,a):A\to A$ are equivalences.
\end{proposition}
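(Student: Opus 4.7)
The plan is to use the fact that being an equivalence is a mere proposition together with the connectedness hypothesis on $A$, reducing the problem to the single point $\star_A$. More precisely, consider the predicate
\[P:A\to\Type,\qquad P(a)\defeq\isequiv(\mu(a,-)).\]
By the second clause of proposition \ref{equivalences}, each $P(a)$ is a mere proposition, so $P$ is a family of $(-1)$-types over $A$. Now $A$ is connected by hypothesis, so by proposition \ref{connectedpointedmap} the canonical map $\Unit\to A$ picking out $\star_A$ is $(-1)$-connected.

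Applying proposition \ref{inductionconnected} to this $(-1)$-connected map with the family $P$, a section of $P$ exists as soon as we exhibit an element of $P(\star_A)$, i.e.\ a proof that $\mu(\star_A,-):A\to A$ is an equivalence. The homotopy $\mu_l:(a:A)\to \mu(\star_A,a)=_Aa$ together with function extensionality provides an equality $\mu(\star_A,-)=_{A\to A}\id_A$, and since $\id_A$ is an equivalence so is $\mu(\star_A,-)$. This yields the required section of $P$, proving that $\mu(a,-)$ is an equivalence for every $a:A$.

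The argument for $\mu(-,a)$ is completely symmetric: replace $P$ by $Q(a)\defeq\isequiv(\mu(-,a))$ and use $\mu_r$ in place of $\mu_l$ to handle the basepoint case. Note that the third datum $\mu_{lr}$ of the H-space structure plays no role here; it will only become relevant later when one needs coherence between the two families of equivalences (for instance in the Hopf construction).

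The only mildly subtle point is the connectivity bookkeeping, namely keeping track that connectedness of $A$ in the sense of definition of section \ref{sec:truncatednesstruncations} (i.e.\ $\trunc0A$ contractible) is exactly what allows the induction principle of proposition \ref{inductionconnected} to apply to the $(-1)$-truncated predicate $\isequiv$. Beyond that, the proof is entirely routine and should not present any real obstacle.
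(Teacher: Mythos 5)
Your argument is exactly the paper's: reduce to the basepoint using that $\isequiv$ is a mere proposition and $A$ is connected, then invoke $\mu_l$ and $\mu_r$ to identify $\mu(\star_A,-)$ and $\mu(-,\star_A)$ with the identity. You simply spell out the citations (propositions \ref{equivalences}, \ref{connectedpointedmap}, \ref{inductionconnected}) that the paper leaves implicit; the proof is correct and the remark that $\mu_{lr}$ is unused here is also accurate.
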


\begin{proof}
  What we want to prove is a mere proposition depending on $a:A$. Given that $A$ is pointed and
  $0$-connected, it is enough to prove it for the basepoint $\star_A$. But the functions
  $\mu(\star_A,-)$ and $\mu(-,\star_A)$ are both equal to the identity function (using $\mu_l$ and
  $\mu_r$), hence they are equivalences.
\end{proof}

\begin{proposition}
  Given a connected H-space $A$, there is a fibration over $\Susp A$ with fiber $A$ and whose total
  space is equivalent to $A*A$.
\end{proposition}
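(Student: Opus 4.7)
The plan is to construct the fibration by higher-inductive recursion on $\Susp A$ using left multiplication, and then identify its total space via the flattening lemma for pushouts.

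First, since $A$ is a connected H-space, proposition \ref{hspacemapsequiv} ensures that $\mu(a,-):A\to A$ is an equivalence for every $a:A$. This justifies the definition
\begin{align*}
  P &: \Susp A \to \Type,\\
  P(\north) &\defeq A,\\
  P(\south) &\defeq A,\\
  \ap{P}(\merid(a)) &\defeq \ua(\mu(a,-)),
\end{align*}
which yields a fibration whose fiber at the basepoint $\north$ is $A$ by definition.

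To identify the total space $\sum_{x:\Susp A} P(x)$ with $A*A$, I apply the flattening lemma (proposition \ref{flatteningpushout}) to the pushout presentation $\Susp A = \Unit\sqcup^A\Unit$. After the evident simplifications $\sum_{u:\Unit}A\simeq A$ and $\sum_{a:A}A\simeq A\times A$, the total space becomes the pushout of
\[
\begin{tikzcd}
  A & A\times A \arrow[l,"\snd"'] \arrow[r,"\mu"] & A,
\end{tikzcd}
\]
where $\snd$ arises because $P$ is the constant family $A$ on the $\inl$-side, and $\mu$ arises because the action $P_{\push}(a)$ of a meridian on the fiber is precisely $\mu(a,-)$.

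It remains to recognize this as $A*A$, which by definition is the pushout of $A\xleftarrow{\fst}A\times A\xrightarrow{\snd}A$. Consider the map
\[
  \sigma(a,b) \defeq (b,\mu(a,b)).
\]
It is an equivalence $A\times A\to A\times A$, because solving $\sigma(a,b)=(x,y)$ forces $b=x$ and then $a=\mu(-,x)\inv(y)$, which is well-defined by proposition \ref{hspacemapsequiv}. The identities $\fst\circ\sigma=\snd$ and $\snd\circ\sigma=\mu$ show that our span is obtained from the defining span of $A*A$ by precomposition with $\sigma$; a routine application of the $3\times3$-lemma (or a direct induction on the pushout) shows that precomposing a span by an equivalence on its apex yields an equivalent pushout, so the two total spaces are equivalent to $A*A$.

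The main obstacle will be the careful bookkeeping in the flattening step, ensuring that after the $\Sigma$-type simplifications and the unfolding of $P_\push$ via univalence, the right leg really is $\mu$ (rather than, say, $\mu$ composed with a swap or with the inverse equivalence). Once those identifications are pinned down, everything else reduces to proposition \ref{hspacemapsequiv} and standard manipulations of pushouts.
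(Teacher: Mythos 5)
Your proof is correct and follows essentially the same route as the paper's: define the fibration by recursion on $\Susp A$ using multiplication and univalence, apply the flattening lemma to get a pushout of a span involving $\mu$, and then pass to the defining span of $A*A$ via an explicit equivalence of spans. The only difference is cosmetic — the paper uses right multiplication $\mu(-,a)$ and the span-equivalence $\alpha(x,y)=(x,\mu(x,y))$ with the span $A\xleftarrow{\fst}A\times A\xrightarrow{\mu}A$, while you use left multiplication $\mu(a,-)$ and $\sigma(a,b)=(b,\mu(a,b))$ with $A\xleftarrow{\snd}A\times A\xrightarrow{\mu}A$; both are justified by proposition \ref{hspacemapsequiv}. (One small note: invoking the $3\times3$-lemma to show that an equivalence of spans induces an equivalence of pushouts is overkill; the paper simply asserts this, and a direct pushout induction suffices.)
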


\begin{proof}
  We define $H:\Susp A\to\Type$ by
  \begin{align*}
    H(\north) &\defeq A,\\
    H(\south) &\defeq A,\\
    \ap H(\merid(a)) &\defeq \ua(\mu(-,a)).
  \end{align*}
  The definition is valid because $\mu(-,a)$ is an equivalence by proposition \ref{hspacemapsequiv}.

  By the flattening lemma (proposition \ref{flatteningpushout}), the total space of this fibration
  is equivalent to the pushout of the span
  \[
  \begin{tikzcd}
    A & A\times A \arrow[l,"\fst"'] \arrow[r,"\mu"] & A.
  \end{tikzcd}
  \]

  The map $\alpha:A\times A\to A\times A$ defined by $\alpha(x,y)=(x,\mu(x,y))$ is an
  equivalence, its inverse is the map $\alpha^{-1}(u,v)=(u,\mu(u,-)^{-1}(v))$, hence we have the
  equivalence of spans
  \[
  \begin{tikzcd}
    A \arrow[d, equals] & A\times A \arrow[l,"\fst"'] \arrow[r,"\mu"] \arrow[d,"\alpha"] & A
    \arrow[d,equals]\\
    A & A\times A \arrow[l,"\fst"'] \arrow[r,"\snd"] & A
  \end{tikzcd}
  \]
  This shows that the total space of the fibration we constructed is equivalent to the join $A * A$.
\end{proof}

\begin{proposition}\label{hspacecircle}
  There is a structure of H-space on the circle.
\end{proposition}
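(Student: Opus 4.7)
The plan is to define $\mu:\Sn1\to\Sn1\to\Sn1$ by circle induction on the first argument, setting $\mu(\base,-)\defeq\id_{\Sn1}$, which gives the left unit law $\mu_l$ for free by definition. The action $\ap{\mu}(\lloop)$ must be a path from $\id_{\Sn1}$ to $\id_{\Sn1}$ in $\Sn1\to\Sn1$, and by function extensionality this is the same data as a pointwise homotopy $h:(y:\Sn1)\to y=_{\Sn1}y$. So the crux is to construct such a homotopy $h$ with $h(\base)=\lloop$, and then to check the right unit law.

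To construct $h$, I would again do circle induction on $y$. The base case is $h(\base)\defeq\lloop$. For the path case, I need a dependent path $h(\base)=^{\lambda y.\,y=_{\Sn1}y}_\lloop h(\base)$, which by proposition \ref{depeqid} (applied to $f=g=\id_{\Sn1}$) amounts to a filler of the square
\[
\begin{tikzcd}[sdiag]
\bullet \arrow[r,"\ap{\id}(\lloop)"] \arrow[d,"\lloop"'] & \bullet \arrow[d,"\lloop"]\\
\bullet \arrow[r,"\ap{\id}(\lloop)"'] & \bullet
\end{tikzcd}
\]
i.e.\ of a square all of whose sides are $\lloop$. Such a filler exists by a coherence operation, since after reducing $\ap{\id}(\lloop)$ to $\lloop$ we just need $\lloop\concat\lloop=\lloop\concat\lloop$.

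Having defined $h$, I set $\mu(\base,y)\defeq y$ and $\ap{\lambda x.\,\mu(x,-)}(\lloop)\defeq\funext(h)$. Then $\mu_l(y)\defeq\idp{y}$ holds definitionally. For the right unit law $\mu_r:(x:\Sn1)\to\mu(x,\base)=_{\Sn1}x$, I do circle induction on $x$: at $\base$ set $\mu_r(\base)\defeq\idp{\base}$, and the path case requires a filler of a square whose top and bottom come from $\ap{\lambda x.\,\mu(x,\base)}(\lloop)=h(\base)=\lloop$ and $\ap{\id_{\Sn1}}(\lloop)=\lloop$, with $\idp{\base}$ on the two sides; this is again a coherence filler of a square with two copies of $\lloop$ and two constant paths. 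Finally $\mu_{lr}:\mu_l(\base)=\mu_r(\base)$ reduces to $\idp{\idp{\base}}$ since both unit laws at the basepoint are $\idp{\base}$ by definition.

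The main obstacle I expect is the careful bookkeeping when computing $\ap{\lambda x.\,\mu(x,-)}(\lloop)$ evaluated at $\base$: one must trace through the function extensionality equivalence and the $\happly$ of $\funext(h)$ at $\base$ to see that it really reduces to $h(\base)=\lloop$, and similarly track naturality squares for the $\mu_r$ induction step. Once this is done, all remaining obligations reduce to squares whose sides are either $\lloop$ or constant paths, each of which is filled by a coherence operation in the weak $\infty$-groupoid structure of $\Sn1$.
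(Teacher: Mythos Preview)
Your proposal is correct and follows essentially the same approach as the paper: define $\mu$ by circle induction on the first argument with $\mu(\base,-)=\id_{\Sn1}$, construct the loop case via function extensionality from a homotopy $h:(y:\Sn1)\to y=y$ with $h(\base)=\lloop$ (the paper calls this $\lloop_-$), and verify $\mu_l$, $\mu_r$, $\mu_{lr}$ exactly as you describe. The bookkeeping concern you flag about $\happly(\funext(h))(\base)=h(\base)$ is handled implicitly in the paper and is indeed the only subtlety.
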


\begin{proof}
  We define the map $\mu:\Sn1\to\Sn1\to\Sn1$ by induction on the first argument.
  \begin{itemize}
  \item For $\base$, we take $\mu(\base,x)\defeq x$.
  \item For $\lloop$, we have to construct an equality between the identity function and itself in
    the type $\Sn1\to\Sn1$. By function extensionality, it is equivalent to construct a function
    $\lloop_-:(x:\Sn1)\to x=_{\Sn1}x$, i.e.\ a path $\lloop_x$ from $x$ to itself for every point
    $x:\Sn1$. We proceed again by induction on $x$.
    \begin{itemize}
    \item For $\base$, we take $\lloop_{\base}\defeq\lloop$.
    \item For $\lloop$, we have to fill the square
      \[
      \begin{tikzcd}[sdiag]
        \bullet \arrow[r,"\ap{\lambda x.x}(\lloop)"] \arrow[d,"\lloop_{\base}"'] & \bullet
        \arrow[d,"\lloop_{\base}"] \\
        \bullet \arrow[r,"\ap{\lambda x.x}(\lloop)"'] & \bullet
      \end{tikzcd}
      \]
      which is equal to the square
      \[
      \begin{tikzcd}[sdiag]
        \bullet \arrow[r,"\lloop"] \arrow[d,"\lloop"'] & \bullet
        \arrow[d,"\lloop"] \\
        \bullet \arrow[r,"\lloop"'] & \bullet
      \end{tikzcd}
      \]
      which is filled by $\idp{\lloop\concat\lloop}$.
    \end{itemize}
  \end{itemize}

  We construct $\mu_l$, $\mu_r$ and $\mu_{lr}$:

  \begin{itemize}
  \item For $\mu_l$ we have $\mu(\base,x)=_{\Sn1}x$ by definition, so we take $\mu_l(x)\defeq\idp{\base}$ for
    every $x$.
  \item For $\mu_r:(x:\Sn1)\to\mu(x,\base)=_{\Sn1}x$, we proceed by induction on $x$.
    \begin{itemize}
    \item For $\base$, it’s true by definition, so we take $\mu_r(\base)\defeq\idp{\base}$.
    \item For $\lloop$, we have to fill the square
      \[
      \begin{tikzcd}[sdiag]
        \bullet \arrow[r,"\ap{\lambda x.\mu(x,\base)}(\lloop)"] \arrow[d,"\idp{\base}"'] & \bullet
        \arrow[d,"\idp{\base}"] \\
        \bullet \arrow[r,"\ap{\lambda x.x}(\lloop)"'] & \bullet
      \end{tikzcd}
      \]
      which is equal to the square
      \[
      \begin{tikzcd}[sdiag]
        \bullet \arrow[r,"\lloop"] \arrow[d,"\idp{\base}"'] & \bullet
        \arrow[d,"\idp{\base}"] \\
        \bullet \arrow[r,"\lloop"'] & \bullet
      \end{tikzcd}
      \]
      which is again filled by some coherence operation.
    \end{itemize}
  \item Finally, both $\mu_l(\base)$ and $\mu_r(\base)$ are equal to $\idp\base$ by definition,
    therefore we take $\mu_{lr}\defeq\idp{\idp\base}$. \qedhere
  \end{itemize}
\end{proof}

\begin{proposition}
  There is a fibration over $\Sn2$ with fiber $\Sn1$ and whose total space is equivalent to $\Sn3$.
\end{proposition}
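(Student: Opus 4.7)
The plan is to combine three ingredients already established in the excerpt: the Hopf construction for connected H-spaces, the H-space structure on $\Sn1$, and the computation of joins of spheres.

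First, I would note that $\Sn1$ is a connected H-space: the H-space structure is given by proposition \ref{hspacecircle}, and $\Sn1$ is $0$-connected by proposition \ref{sphereconn} (which gives that $\Sn1$ is $(1-1) = 0$-connected). Once this observation is in place, applying the Hopf construction (the proposition immediately preceding \ref{hspacecircle}) to $A \defeq \Sn1$ produces a fibration $H : \Susp\Sn1 \to \Type$ with $H(\star) \defeq \Sn1$, whose total space is equivalent to $\Sn1 * \Sn1$.

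Next I would identify the base and total space with the desired spheres. Since $\Sn2$ is defined as $\Susp\Sn1$, the base is already $\Sn2$ on the nose. For the total space, proposition \ref{joinspheres} (with $m = n = 1$) yields the equivalence
\[
\Sn1 * \Sn1 \simeq \Sn{1+1+1} = \Sn3.
\]
Chaining this with the equivalence coming from the flattening lemma inside the Hopf construction gives a fibration over $\Sn2$ with fiber $\Sn1$ whose total space is equivalent to $\Sn3$, which is exactly the statement.

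There is essentially no hard step here: the real work has already been done in proving that $\Sn1$ carries an H-space structure (proposition \ref{hspacecircle}), in the Hopf construction itself (which relied on proposition \ref{hspacemapsequiv} and the flattening lemma \ref{flatteningpushout}), and in the join-of-spheres computation \ref{joinspheres} (which used the associativity of joins via the $3\times3$-lemma, together with $\Bool * A \simeq \Susp A$). The only mild subtlety to be careful about is that the equivalence $\Sn1 * \Sn1 \simeq \Sn3$ should be composed with the total-space equivalence provided by the flattening lemma so that the composite is itself an equivalence of types sitting over $\Sn2$; but since we are only claiming the existence of a fibration with such a fiber and such a total space (not a particular compatibility with the first projection), this composition is immediate.
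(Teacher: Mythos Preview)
Your proposal is correct and follows essentially the same route as the paper: apply the Hopf construction to the connected H-space $\Sn1$ to get a fibration over $\Susp\Sn1=\Sn2$ with fiber $\Sn1$ and total space $\Sn1*\Sn1$, then invoke proposition~\ref{joinspheres} to identify $\Sn1*\Sn1\simeq\Sn3$.
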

\begin{proof}
  We apply the Hopf construction to the circle, which is a connected H-space according to the
  previous proposition. We get a fibration over $\Susp\Sn1$ with fiber $\Sn1$ and total space
  $\Sn1*\Sn1$, and we have constructed in proposition \ref{joinspheres} an equivalence
  $\Sn1*\Sn1\simeq\Sn3$ so we get the Hopf fibration.
\end{proof}

This fibration is called the \emph{Hopf fibration} and the induced map $\Sn3\to\Sn2$ is called the
\emph{Hopf map}.

\section{The long exact sequence of a fibration}

The \emph{long exact sequence of homotopy groups} of a fibration is a long exact sequence relating
the homotopy groups of the base space, the fiber space and the total space of a fibration. A
construction of it in homotopy type theory is also presented in \cite[section 8.4]{hottbook}. We
present here a different construction.

\begin{definition}
  A sequence of pointed sets ($0$-truncated types) and pointed maps
  \[
  \begin{tikzcd}
    \dots \arrow[r] & A_{n-1} \arrow[r,"f_{n-1}"] & A_n \arrow[r,"f_n"] & A_{n+1} \arrow[r] & \dots
  \end{tikzcd}
  \]
  is called a \emph{long exact sequence (of sets)} if for every $n$ and for every $x:A_n$, then $x$
  is merely in the image of $f_{n-1}$ if and only if it is in the kernel of $f_n$. In other words,
  it means that we have for every $x:A_n$ an equivalence
  \[\trunc{-1}{\sum_{a:A_{n-1}}f_{n-1}(a)=_{A_n}x} \!\!\!\!\!\simeq \left(f_n(x)=_{A_{n+1}}\star_{A_{n+1}}\right).\]
\end{definition}
Note that the left-hand side is a mere proposition by definition, and the right-hand side as well
because $A_{n+1}$ is assumed to be a set. Therefore having an equivalence is equivalent to having
two functions going back and forth. When all the $A_n$ are groups and all the $f_n$ are group
homomorphisms (as is usually the case), we talk about a \emph{long exact sequence of groups}.

When the $A_n$ are not assumed to be sets, we introduce instead the notion of fiber sequence.
\begin{definition}
  A sequence of pointed types and pointed maps
  \[
  \begin{tikzcd}
    \dots \arrow[r] & A_{n-1} \arrow[r,"f_{n-1}"] & A_n \arrow[r,"f_n"] & A_{n+1} \arrow[r] & \dots
  \end{tikzcd}
  \]
  is called a \emph{fiber sequence} if for every $n$ and for every $x:A_n$, the untruncated types
  “$x$ is in the image of $f_{n-1}$” and “$x$ is the kernel of $f_n$” are equivalent. In other
  words, it means that we have for every $x:A_n$ an equivalence
  \[\left(\sum_{a:A_{n-1}}f_{n-1}(a)=_{A_n}x\right) \simeq \left(f_n(x)=_{A_{n+1}}\star_{A_{n+1}}\right).\]
\end{definition}

The relationship between long exact sequences and fiber sequences is given by the following
proposition.

\begin{proposition}
  Given a fiber sequence
  \[
  \begin{tikzcd}
    \dots \arrow[r] & A_{n-1} \arrow[r,"f_{n-1}"] & A_n \arrow[r,"f_n"] & A_{n+1} \arrow[r] & \dots,
  \end{tikzcd}
  \]
  then its $0$-truncation
  \[
  \begin{tikzcd}
    \dots \arrow[r] & \trunc0{A_{n-1}} \arrow[r,"\trunc0{f_{n-1}}"] & \trunc0{A_n} \arrow[r,"\trunc0{f_n}"] & \trunc0{A_{n+1}} \arrow[r] & \dots
  \end{tikzcd}
  \]
  is a long exact sequence.
\end{proposition}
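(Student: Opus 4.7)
The plan is to fix $n$ and $y : \trunc0{A_n}$, and verify the long exact sequence condition, namely that $\trunc{-1}{\sum_{a : \trunc0{A_{n-1}}} \trunc0{f_{n-1}}(a) = y}$ is equivalent to $(\trunc0{f_n}(y) =_{\trunc0{A_{n+1}}} |\star_{A_{n+1}}|)$. Both sides are mere propositions, so the statement itself is a mere proposition, and the induction principle for $\trunc 0$ lets me reduce to the case $y = |x|$ for some $x : A_n$.

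For the right-hand side, $\trunc0{f_n}(|x|) = |f_n(x)|$ by the defining equation of $\trunc0{f_n}$ from proposition \ref{truncmap}, so it becomes $|f_n(x)| = |\star_{A_{n+1}}|$, which by proposition \ref{looptrunc} (applied with $n = -1$) is equivalent to $\trunc{-1}{f_n(x) =_{A_{n+1}} \star_{A_{n+1}}}$.

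For the left-hand side, the main step is dealing with the $\Sigma$-type indexed over a truncated type. I would use the fact that $|-| : A_{n-1} \to \trunc0{A_{n-1}}$ is $(-1)$-connected, which is a consequence of proposition \ref{truncconn}, to obtain a logical equivalence $\trunc{-1}{\sum_{a' : \trunc0{A_{n-1}}} P(a')} \simeq \trunc{-1}{\sum_{a : A_{n-1}} P(|a|)}$ for any family of mere propositions $P$; concretely, given an element of the left-hand side one merely extracts an $a : A_{n-1}$ with $|a| = a'$ and transports. Specialising to $P(a') \defeq (\trunc0{f_{n-1}}(a') =_{\trunc0{A_n}} |x|)$ and unfolding $\trunc0{f_{n-1}}(|a|) = |f_{n-1}(a)|$ puts the left-hand side in the form $\trunc{-1}{\sum_{a : A_{n-1}} |f_{n-1}(a)| = |x|}$. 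A second use of proposition \ref{looptrunc} rewrites $|f_{n-1}(a)| = |x|$ as $\trunc{-1}{f_{n-1}(a) = x}$, and collapsing the nested $(-1)$-truncations gives $\trunc{-1}{\sum_{a : A_{n-1}} f_{n-1}(a) =_{A_n} x}$.

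To conclude, the fiber sequence hypothesis supplies an equivalence $\sum_{a : A_{n-1}} f_{n-1}(a) = x \simeq (f_n(x) = \star_{A_{n+1}})$; taking $(-1)$-truncation of both sides matches the form of the right-hand side computed above. The hardest part is the bookkeeping with $(-1)$-truncations and the manipulation of the $\Sigma$-type indexed over $\trunc0{A_{n-1}}$, but this rests entirely on propositions \ref{looptrunc} and \ref{truncconn} together with the general principle that $(-1)$-connected maps behave like surjections between mere propositions.
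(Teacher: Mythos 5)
Your proof is correct and follows the same overall structure as the paper's: reduce to $y = |x|$ by truncation induction on a mere-propositional goal, identify the right-hand side via proposition \ref{looptrunc}, and then wrestle the left-hand side into the form $\trunc{-1}{\sum_{a:A_{n-1}} f_{n-1}(a)=_{A_n} x}$ so the fiber-sequence hypothesis can be applied directly. Where you diverge is in how you handle the $\Sigma$-type indexed over $\trunc0{A_{n-1}}$. The paper packages all of that work into a single bespoke lemma
\[\trunc{-1}{\sum_{a:A}B(a)}\simeq\trunc{-1}{\sum_{a':\trunc0A}{B'(a')}}\]
assuming a family of equivalences $e:(a:A)\to \trunc{-1}{B(a)}\simeq B'(|a|)$, and then verifies it by a direct two-sided truncation induction, writing out the back-and-forth maps explicitly. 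You instead factor the same content into three reusable pieces: a commutation of $\trunc{-1}{-}$ past a $\Sigma$ indexed over a $(-1)$-connected map (which is what the $0$-connectedness of $|-|$ from proposition \ref{truncconn} gives you after downward-closure), an application of proposition \ref{looptrunc} to the fiber, and collapsing nested $(-1)$-truncations. This decomposition is a bit cleaner and arguably more informative than the paper's all-in-one lemma, since the first step is a generically useful "surjectivity" fact; the paper's version is shorter to state at the cost of being harder to see through. One small remark: your intermediate lemma about $(-1)$-connected maps and $\Sigma$-types actually holds for an arbitrary family $P:B\to\Type$, not only families of mere propositions, though restricting to mere propositions as you do loses nothing here.
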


\begin{proof}
  Given $x':\trunc0{A_n}$, we want to construct an element of the type
  \[\trunc{-1}{\sum_{a':\trunc0{A_{n-1}}}\trunc0{f_{n-1}}(a')=_{\trunc0{A_n}}x'} \!\!\!\!\!\simeq
  \left(\trunc0{f_n}(x')=_{\trunc0{A_{n+1}}}|\star_{A_{n+1}}|\right).\]
  This type is a set, therefore we can assume that $x'$ is of the form $|x|$ with $x:A_n$. Applying
  proposition \ref{looptrunc}, we see that the right-hand side is equivalent to the type
  $\trunc{-1}{f_n(x)=_{A_{n+1}}\star_{A_{n+1}}}$, and by assumption we have
  \[\left(\sum_{a:A_{n-1}}f_{n-1}(a)=_{A_n}x\right) \simeq
  \left(f_n(x)=_{A_{n+1}}\star_{A_{n+1}}\right).\]
  Therefore it is enough to construct an equivalence
  \[\trunc{-1}{\sum_{a:A}B(a)}\simeq\trunc{-1}{\sum_{a':\trunc0A}{B'(a')}}\]
  given a family of equivalences $e:(a:A)\to \trunc{-1}{B(a)}\simeq B'(|a|)$, where $A$ is $A_{n-1}$,
  $B(a)$ is $(f_{n-1}(a)=_{A_n}x)$, and $B'(a')$ is $(\trunc0{f_{n-1}}(a')=_{\trunc0{A_n}}|x|)$.

  From left to right we do an induction on the outer truncation, and we send $|(a,b)|$ to
  $|(|a|,e_a(|b|))|$. From right to left we do an induction on the outer truncation, an induction
  on $\trunc0A$, and, using the equivalence $e_a$, an induction on $\trunc{-1}{B(a)}$, and we send
  $|(|a|,e_a(|b|))|$ to $|(a,b)|$.
\end{proof}

\begin{definition}
  Given a pointed type $B$ and a dependent type $P:B\to\Type$ together with a pointing of
  $F\defeq P(\star_B)$, we define
  \begin{align*}
    P^{\Omega} &: \Omega B\to\Type,\\
    P^{\Omega}(p) &\defeq (\star_F=^P_p\star_F).
  \end{align*}
  Note that the fiber of $P^{\Omega}$ is $\Omega F$ and that the total space of $P^{\Omega}$ is the
  loop space of the total space of $P$, according to proposition \ref{pathspairtypes}. Moreover
  there is a map
  \begin{align*}
    d &: \Omega B \to F,\\
    d(p) &\defeq \transport^P(p,\star_F).
  \end{align*}
\end{definition}

Let $E$ be the total space of $P$. By iterating the previous construction, we get the diagram
\[
\begin{tikzcd}
  & & \dots \arrow[dll,out=280,in=80,looseness=0.40]\\
  \Omega^3 F \arrow[r]& \Omega^3 E \ar[r]& \Omega^3 B\arrow[dll,out=280,in=80,looseness=0.40]\\
  \Omega^2 F \arrow[r]& \Omega^2 E \ar[r]& \Omega^2 B\arrow[dll,out=280,in=80,looseness=0.40]\\
  \Omega F \arrow[r]& \Omega E \ar[r]& \Omega B,
\end{tikzcd}
\]
and after $0$-truncating, we get
\begin{equation}
\begin{tikzcd}
  & & \dots \arrow[dll,out=280,in=80,looseness=0.40]\\
  \pi_3(F) \arrow[r]& \pi_3(E) \ar[r]& \pi_3(B)\arrow[dll,out=280,in=80,looseness=0.40]\\
  \pi_2(F) \arrow[r]& \pi_2(E) \ar[r]& \pi_2(B)\arrow[dll,out=280,in=80,looseness=0.40]\\
  \pi_1(F) \arrow[r]& \pi_1(E) \ar[r]& \pi_1(B).
\end{tikzcd}\label{leshg}
\end{equation}
It is easy to see that the maps $\pi_n(F)\to\pi_n(E)$ and $\pi_n(E)\to\pi_n(B)$ are group
homomorphisms and for the maps $\pi_{n+1}(B)\to\pi_n(F)$ it follows from the following proposition.
\begin{proposition}
  The map
  \begin{align*}
    d &: \Omega^2 B \to \Omega F,\\
    d(p) &\defeq \transport^{P^\Omega}(p,\idp{\star_F})
  \end{align*}
  satisfies $d(p\concat q)=d(p)\concat d(q)$ for every $p,q:\Omega^2B$.
\end{proposition}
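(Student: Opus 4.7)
The plan is to recognize $d: \Omega^2 B \to \Omega F$ as essentially the looping of the pointed map $d_0: \Omega B \topt F$ from the preceding definition (where $d_0(r) \defeq \transport^P(r, \star_F)$ and the pointedness is given by $\coeidp$), and to deduce the homomorphism property from functoriality of $\ap{}$ combined with the Eckmann--Hilton argument.

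First I would make the identification of $P^\Omega(\idp{\star_B})$ with $\Omega F$ explicit. Since $P^\Omega(r) \simeq (\transport^P(r, \star_F) =_F \star_F) = (d_0(r) =_F \star_F)$, the fiber at $r = \idp{\star_B}$ becomes $(d_0(\idp{\star_B}) =_F \star_F)$, which identifies with $\Omega F$ after composing with the pointedness path $\star_{d_0}: d_0(\idp{\star_B}) = \star_F$. Under this identification, $\idp{\star_F} \in P^\Omega(\idp{\star_B})$ corresponds to the constant loop in $\Omega F$.

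Next, applying proposition \ref{depeqid} to the family $r \mapsto (d_0(r) =_F \star_F)$ (with $f = d_0$ and $g$ the constant map at $\star_F$) yields, after the above identifications, the formula $\transport^{P^\Omega}(q, y) = \ap{d_0}(q)\inv \concat y$ in $\Omega F$. In particular $d(q) = \ap{d_0}(q)\inv$, and by functoriality of transport,
\[d(p \concat q) = \transport^{P^\Omega}(q, \transport^{P^\Omega}(p, \idp{\star_F})) = \transport^{P^\Omega}(q, d(p)) = d(q) \concat d(p).\]

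The final step uses Eckmann--Hilton (proposition \ref{eckmannhilton}): since $\Omega^2 B$ is abelian, we have $p \concat q = q \concat p$, so the formula above applied to $q \concat p$ instead gives $d(q \concat p) = d(p) \concat d(q)$, and therefore $d(p \concat q) = d(q \concat p) = d(p) \concat d(q)$ as required. The main obstacle is the careful bookkeeping of the identifications between dependent paths over $\idp{\star_B}$ and genuine loops in $F$, including the role of $\star_{d_0}$ and the signs arising from proposition \ref{depeqid}; once these are correctly set up, the proof reduces to functoriality of $\ap{}$ together with Eckmann--Hilton.
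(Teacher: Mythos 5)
Your proof is essentially correct, but it takes a noticeably different route from the paper's. The paper isolates a single key observation, proved by one path induction: for every $\alpha:\idp{\star_B}=\ell$ and $r:\Omega F$, one has $\transport^{P^\Omega}(\alpha,r) = r\concat\transport^{P^\Omega}(\alpha,\idp{\star_F})$, where the right-hand side composes a homogeneous loop with a dependent path. Combined with functoriality of transport, this immediately yields $d(p\concat q)=d(p)\concat d(q)$ in the correct order, with no appeal to Eckmann--Hilton. You instead unfold $P^\Omega(r)$ into the family of ordinary identity types $\lambda r.(d_0(r)=\star_F)$, derive the transport formula from proposition \ref{depeqid}, and end up composing on the other side, getting $d(p\concat q)=d(q)\concat d(p)$ and needing Eckmann--Hilton to flip the order. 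Both conventions are internally consistent (the apparent sign discrepancy is just a consequence of choosing $(\transport^P(r,\star_F)=\star_F)$ rather than $(\star_F=\transport^P(r\inv,\star_F))$ as the unfolding of the heterogeneous path type), so your final conclusion is right. But your route buys nothing over the paper's: the "bookkeeping" you flag is real work, since you must also show that the unfolding of $P^\Omega$ into a homogeneous identity-type family commutes with transport (i.e.\ that the family-wise equivalence is natural), and you must pass from \ref{depeqid} (an equivalence of dependent-path and square-filler types) to a statement about transport, which is itself a small path-algebra argument. The paper's one-line path induction on $\alpha$ absorbs all of this at once, and it has the additional advantage of not invoking Eckmann--Hilton, which keeps the dependency graph of this part of the theory a little leaner.
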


\begin{proof}
  The key observation is that for every $\ell:\Omega B$, $\alpha:\idp{\star_B}=\ell$ and
  $r:\Omega F$, we have
  \[\transport^{P^\Omega}(\alpha,r) =_{P^\Omega(\ell)}
  \left(r\concat\transport^{P^\Omega}(\alpha,\idp{\star_F})\right),\]
  where on the right-hand side we compose the homogeneous path $r$ with the dependent path
  $\transport^{P^\Omega}(\alpha,\idp{\star_F})$. This observation is simply proved by path induction
  on $\alpha$.

  Then we have
  \begin{align*}
    d(p\concat q) &= \transport^{P^\Omega}(p\concat q,\idp{\star_F})\\
                  &= \transport^{P^\Omega}(q,\transport^{P^\Omega}(p,\idp{\star_F}))\\
                  &= \transport^{P^\Omega}(q,d(p))\\
                  &= d(p)\concat\transport^{P^\Omega}(q,\idp{\star_F})\\
                  &= d(p)\concat d(q).\qedhere
  \end{align*}
\end{proof}

\begin{proposition}
  The sequence \ref{leshg} is a long exact sequence of groups.
\end{proposition}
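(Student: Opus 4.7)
The plan is to first establish a base fiber sequence
\[F \overset{i}{\to} E \overset{p}{\to} B\]
extended on the left by the connecting map $d:\Omega B\to F$, and then to prove that iterating the construction $(B,P)\mapsto(\Omega B, P^{\Omega})$ — already shown to produce a fibration with fiber $\Omega F$ and total space $\Omega E$ — yields the displayed infinite sequence of pointed types. Once that is a fiber sequence, the preceding proposition gives that its $0$-truncation is a long exact sequence of sets, and we already checked that all the maps in (\ref{leshg}) are group homomorphisms.

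To prove exactness of the base fragment $\Omega B\overset{d}{\to}F\overset{i}{\to}E\overset{p}{\to}B$, I would check it at the three positions. At $E$, given $x=(b,u):E$, proposition \ref{pathspairtypes} gives
\[\sum_{y:F}\bigl(i(y)=_E x\bigr)\simeq\sum_{y:F}\sum_{q:\star_B=b}(y=^P_q u),\]
and the inner $\sum_{y:F}(y=^P_q u)$ is contractible (based path space), so the whole type is equivalent to $(\star_B=b)\simeq(p(x)=\star_B)$. At $F$, again by proposition \ref{pathspairtypes},
\[\bigl(i(y)=_E\star_E\bigr)\simeq\sum_{p:\Omega B}(y=^P_p\star_F),\]
and $(y=^P_p\star_F)$ is equivalent to $(\transport^P(p,\star_F)=y)$, i.e.\ $(d(p)=y)$, so this matches $\sum_{p:\Omega B}(d(p)=y)$. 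At $B$, the sequence ends there so there is nothing to check beyond acknowledging that $p\circ i$ is pointwise $\star_B$.

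To extend to the left, I would apply the same three exactness verifications to the fibration $P^{\Omega}:\Omega B\to\Type$ pointed by $\idp{\star_F}$, noting that its fiber is $\Omega F$, its total space is $\Omega E$, and its connecting map $\Omega^2 B\to\Omega F$ is the one denoted $d$ at the next level. This gives a fiber sequence $\Omega^2 B\to\Omega F\to\Omega E\to\Omega B$, which splices onto the previous one at $\Omega B$ because the composite $\Omega E\to\Omega B\to F$ of the new $p$ with the old $d$ is indeed $d$ applied to $\Omega E$-loops, namely constantly $\star_F$. Iterating gives the full infinite fiber sequence. Applying the truncation proposition and using that $d$ on $\Omega^{n+1}B$ is a group homomorphism (as proved just above) together with the obvious fact that $\Omega$ of a pointed map is a homomorphism, we obtain a long exact sequence of groups.

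The main subtle step is the left-extension: one must check that the iteration is coherent, i.e.\ that the base point of $P^{\Omega}(\idp{\star_B})$ is $\idp{\star_F}$, that the total space $\sum_{p:\Omega B}(\star_F=^P_p\star_F)$ is canonically $\Omega E$ via proposition \ref{pathspairtypes}, and that after this identification the new projection map $\Omega E\to\Omega B$ agrees with $\Omega p$, and the new fiber inclusion $\Omega F\to\Omega E$ agrees with $\Omega i$. All three are straightforward unfoldings, but writing them out carefully is where the bookkeeping lies.
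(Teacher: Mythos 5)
Your overall plan is the same as the paper's — verify the fiber-sequence condition at a generic level, iterate with $P^\Omega$, then apply the preceding proposition and check that the maps are group homomorphisms. But you have a gap at the splice points $\Omega^{n+1}B$. Your checks on the fragment $\Omega B\to F\to E\to B$ are only at $E$ and $F$ (you rightly note there is nothing to check at $B$). When you attach $\Omega^2 B\to\Omega F\to\Omega E\to\Omega B$, the position $\Omega B$ becomes internal, and the fiber-sequence condition there requires, for every $x:\Omega B$, the \emph{equivalence}
\[\Bigl(\sum_{q:\Omega E}(\Omega p)(q)=_{\Omega B}x\Bigr)\simeq\bigl(d(x)=_F\star_F\bigr).\]
You only observe that $d\circ(\Omega p)$ is constantly $\star_F$, which produces the map from left to right (image contained in kernel) but does not establish the equivalence. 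The converse — that a loop $x:\Omega B$ with $d(x)=\star_F$ lifts to $\Omega E$ — is a separate verification, of the same shape as your check at $E$: identify $\Omega E$ with $\sum_{r:\Omega B}(\star_F=^P_r\star_F)$ via proposition \ref{pathspairtypes}, contract the based path space $\sum_{r:\Omega B}(r=x)$, and conclude $\sum_{q:\Omega E}\bigl((\Omega p)(q)=x\bigr)\simeq(\star_F=^P_x\star_F)\simeq(d(x)=\star_F)$. The paper's own proof accordingly checks the fiber-sequence condition at three positions of $\Omega E\to\Omega B\to F\to E\to B$: $E$, $F$, \emph{and} $\Omega B$.

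A small slip in your check at $F$: you assert $(y=^P_p\star_F)\simeq(\transport^P(p,\star_F)=y)$, but the defining equivalence for dependent paths gives $(y=^P_p\star_F)\simeq(\transport^P(p,y)=\star_F)$, equivalently $(y=\transport^P(p\inv,\star_F))=(y=d(p\inv))$. The total $\Sigma$-type over $p:\Omega B$ is unchanged since inversion is a self-equivalence of $\Omega B$, but the unpacked chain as written is off by an inverse.
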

\begin{proof}
  It suffices to prove that the sequence
  \[
  \begin{tikzcd}
    \Omega E \arrow[r]& \Omega B \arrow[r] & F \arrow[r] & E \arrow[r] & B
  \end{tikzcd}
  \]
  is a fiber sequence at $E$, $F$ and $\Omega B$.
  
  An element $(b,f)$ of $E$ is in the kernel of the map $E\to B$ if and only if $b=\star_B$ and in
  the image of the map $F\to E$ if and only if there exists $f':F$ such that
  $(b,f)=(\star_B,f')$. They are equivalent because if $(b,f)=(\star_B,f')$ then $b=\star_B$, and
  conversely if we have $p:b=\star_B$ then we can define $f'$ by transporting $f$ in $P$ along $p$
  and we get an equality $(b,f)=(\star_B,f')$ in $E$.

  An element $f$ of $F$ is in the kernel of the map $F\to E$ if and only if
  $(\star_B,f)=(\star_B,\star_F)$, i.e.\ that there is a path $p:\star_B=\star_B$ such that
  $f=\transport^P(p,\star_F)$, which is exactly the condition that $f$ is in the image of $\Omega
  B\to F$.

  A loop $p:\Omega B$ is in the kernel of the map $\Omega B\to F$ if and only if
  $\transport^P(p,\star_F)=\star_F$, and is in the image of
  $\Omega E\to\Omega B$ if and only if there exists a dependent path of type
  $\star_F=^P_p\star_F$. These two conditions are equivalent.
\end{proof}

\begin{proposition}
  We have $\pi_2(\Sn2)\simeq\Z$ and for every $k\ge3$ we have $\pi_k(\Sn2)\simeq\pi_k(\Sn3)$.
\end{proposition}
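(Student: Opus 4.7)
The plan is to apply the long exact sequence of homotopy groups to the Hopf fibration
\[
\begin{tikzcd}
  \Sn1 \arrow[r] & \Sn3 \arrow[r] & \Sn2
\end{tikzcd}
\]
constructed in the previous section, and then use the known values of $\pi_k(\Sn1)$ (from proposition \ref{prop:pi1s1}) and the vanishing of $\pi_k(\Sn n)$ for $k<n$ (from proposition \ref{lowerhgs}) to extract the two claims.

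For $\pi_2(\Sn2)$, I would extract from the long exact sequence the segment
\[
\begin{tikzcd}
  \pi_2(\Sn1) \arrow[r] & \pi_2(\Sn3) \arrow[r] & \pi_2(\Sn2) \arrow[r] & \pi_1(\Sn1) \arrow[r] & \pi_1(\Sn3).
\end{tikzcd}
\]
By proposition \ref{prop:pi1s1}, $\pi_2(\Sn1)$ is trivial and $\pi_1(\Sn1)\simeq\Z$; by proposition \ref{lowerhgs}, both $\pi_2(\Sn3)$ and $\pi_1(\Sn3)$ are trivial. Exactness at the two ends of the segment then gives an isomorphism $\pi_2(\Sn2)\simeq\Z$.

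For $k\ge3$, I would similarly look at the segment
\[
\begin{tikzcd}
  \pi_k(\Sn1) \arrow[r] & \pi_k(\Sn3) \arrow[r] & \pi_k(\Sn2) \arrow[r] & \pi_{k-1}(\Sn1) \arrow[r] & \pi_{k-1}(\Sn3).
\end{tikzcd}
\]
Since $k\ge 3$, we have $k\ge 2$ and $k-1\ge 2$, so both $\pi_k(\Sn1)$ and $\pi_{k-1}(\Sn1)$ are trivial by proposition \ref{prop:pi1s1}. Exactness then shows that the map $\pi_k(\Sn3)\to\pi_k(\Sn2)$ induced by the Hopf map is both injective and surjective, i.e.\ an isomorphism of groups.

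There is no genuine obstacle here: the argument is essentially the standard diagram chase in a long exact sequence of abelian groups, and everything we need has already been established (the Hopf fibration, the long exact sequence as a long exact sequence of groups, the values of $\pi_k(\Sn1)$, and the vanishing of $\pi_k(\Sn n)$ for $k<n$). The only minor care required is to check that the isomorphism in the second part really comes from the map induced by the projection $\Sn3\to\Sn2$, but this is immediate from the construction of the connecting homomorphisms in the sequence \ref{leshg}.
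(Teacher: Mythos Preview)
Your proposal is correct and is exactly the paper's approach: apply the long exact sequence of homotopy groups to the Hopf fibration, plug in the known values of $\pi_k(\Sn1)$ and the vanishing of $\pi_k(\Sn3)$ for $k<3$, and read off the two isomorphisms by exactness.
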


\begin{proof}
  Using the results already obtained in this chapter, the long exact sequence of homotopy groups of
  the Hopf fibration is
  \[
  \begin{tikzcd}
    & & \dots \arrow[dll,out=280,in=80,looseness=0.40]\\
    0 \arrow[r]& \pi_4(\Sn3) \ar[r]& \pi_4(\Sn2)\arrow[dll,out=280,in=80,looseness=0.40]\\
    0 \arrow[r]& \pi_3(\Sn3) \ar[r]& \pi_3(\Sn2)\arrow[dll,out=280,in=80,looseness=0.40]\\
    0 \arrow[r]& 0 \ar[r]& \pi_2(\Sn2)\arrow[dll,out=280,in=80,looseness=0.40]\\
    \Z \arrow[r]& 0 \ar[r]& 0
  \end{tikzcd}
  \]
  therefore the result follows.
\end{proof}

We can also use the long exact sequence of homotopy groups to describe how $n$-connected maps act on
homotopy groups.
\begin{proposition}
  Given two pointed types $A$ and $B$ and an $n$-connected pointed map $f:A\to B$, the map
  $\pi_k(f):\pi_k(A)\to\pi_k(B)$ is an isomorphism for $k\le n$ and a surjection for $k=n+1$.
\end{proposition}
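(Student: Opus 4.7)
The plan is to reduce this to the long exact sequence of homotopy groups that was just constructed, applied to the fibration of homotopy fibers of $f$. Concretely, I would set $F \defeq \fib_f(\star_B)$ and view $f:A\to B$ as presenting the dependent type $P:B\to\Type$ with $P(b)\defeq\fib_f(b)$, whose fiber over the basepoint is $F$ and whose total space $\sum_{b:B}\fib_f(b)$ is equivalent to $A$ (with the projection corresponding to $f$ under this equivalence).

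Now I would use the hypothesis that $f$ is $n$-connected. By definition this means every fiber of $f$ is $n$-connected, so in particular $F$ is $n$-connected. Since $n$-connectedness descends to lower degrees (i.e.\ $\trunc m{\trunc nA}\simeq\trunc mA$ is contractible when $m\le n$), applying $\Omega^k$ shows that $\pi_k(F)$ is trivial for every $1\le k\le n$, and also that $\pi_0(F)$ is trivial in the relevant sense (the pointed set of connected components is a singleton).

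The heart of the argument is then a diagram chase in the long exact sequence
\[
\begin{tikzcd}[column sep=small]
\dots \arrow[r] & \pi_{k+1}(B) \arrow[r] & \pi_k(F) \arrow[r] & \pi_k(A) \arrow[r,"\pi_k(f)"] & \pi_k(B) \arrow[r] & \pi_{k-1}(F) \arrow[r] & \dots
\end{tikzcd}
\]
obtained from the previous proposition. For $k\le n$, exactness at $\pi_k(A)$ together with $\pi_k(F)=0$ gives injectivity of $\pi_k(f)$, and exactness at $\pi_k(B)$ together with $\pi_{k-1}(F)=0$ (using $k-1\le n$, or the fact that $F$ is $0$-connected when $k=1$) gives surjectivity, so $\pi_k(f)$ is an isomorphism. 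For $k=n+1$, exactness at $\pi_{n+1}(B)$ together with $\pi_n(F)=0$ gives surjectivity of $\pi_{n+1}(f)$.

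I do not expect any serious obstacle: the long exact sequence, the equivalence $A\simeq\sum_{b:B}\fib_f(b)$, and the implication "$f$ $n$-connected $\Rightarrow$ $F$ $n$-connected" are all available. The only minor subtlety is keeping track of the edge cases (e.g.\ $\pi_0(F)$ appearing when $k=1$), which amounts to noting that $n$-connectedness for $n\ge 0$ entails $0$-connectedness by the monotonicity of connectedness proved earlier.
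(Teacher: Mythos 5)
Your argument is essentially identical to the paper's: replace $f$ by the dependent type $P(b)\defeq\fib_f(b)$ (whose total space recovers $A$ and whose fiber over $\star_B$ is the $n$-connected type $F$), apply the long exact sequence of homotopy groups, and read off the result from the vanishing of $\pi_k(F)$ for $k\le n$. The one difference is that you explicitly flag the $\pi_0(F)$ term used for surjectivity at $k=1$, which the paper's "$n$ zeros in the left column" glosses over — a harmless and in fact slightly more careful presentation.
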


\begin{proof}
  The dependent type
  \begin{align*}
    P &: B\to\Type,\\
    P(b) &\defeq \left(\sum_{a:A}f(a)=_Bb\right)
  \end{align*}
  corresponds to a fibration
  \[
  \begin{tikzcd}
    \fib_f(\star_B) \arrow[r] & A \arrow[r,"f"] & B.
  \end{tikzcd}
  \]
  Moreover, $\fib_f(\star_B)$ is $n$-connected, therefore for $k\le n$ we have
  \begin{align*}
    \pi_k(\fib_f(\star_B)) &\simeq \Omega^k\trunc k{\fib_f(\star_B)}\\
                           &\simeq \Omega^k\trunc k{\trunc n{\fib_f(\star_B)}},
  \end{align*}
  which is contractible.
  Therefore, the long exact sequence of homotopy groups for the fibration above has $n$ zeros on the
  left column, which shows that $\pi_{n+1}(f)$ is surjective and that $\pi_k(f)$ is bijective for
  $k\le n$.
\end{proof}


\chapter{The James construction}\label{ch:james}

Given a pointed type $A$, the James construction is a sequence of approximations of $\Omega\Susp A$
which are often easier to study than $\Omega\Susp A$. We define the James construction and use it to
prove the Freudenthal suspension theorem and the fact that there exists a natural number $n$ such
that $\pi_4(\Sn3)\simeq\Z/n\Z$.

\section{Sequential colimits}

In this section we define sequential colimits and we prove a few properties about them.
\begin{definition}
  Given a family of types $(A_n)_{n:\N}$ and a family of functions $(i_n)_{n:\N}$ with
  $i_n:A_n\to A_{n+1}$, as in the diagram
  \[
  \begin{tikzcd}
    A_0 \arrow[r,"i_0"] & A_1 \arrow[r,"i_1"] & A_2 \ar[r,"i_2"] & A_3 \ar[r] &\dots,
  \end{tikzcd}
  \]
  we define the \emph{sequential colimit} of $(A_n)_{n:\N}$ as the higher inductive type $A_\infty$
  generated by the two constructors
  \begin{align*}
    \inn&:(n:\N)\to A_n\to A_\infty,\\
    \push&:(n:\N)(x:A_n)\to\inn_n(x)=_{A_\infty}\inn_{n+1}(i_n(x)).
  \end{align*}
\end{definition}
The induction principle for sequential colimits states that given a dependent type
$P:A_\infty\to\Type$, a function $f:(x:A_\infty)\to P(x)$ can be defined by
\begin{align*}
  f &: (x:A_\infty)\to P(x),\\
  f(\inn_n(x)) &\defeq f_{\inn}(n,x),\\
  \ap{f}(\push_n(x)) &\defeq f_{\push}(n,x),
\end{align*}
where we have
\begin{align*}
  f_{\inn}&:(n:\N)(x:A_n)\to P(\inn_n(x)),\\
  f_{\push}&:(n:\N)(x:A_n)\to f_{\inn}(n,x) =^P_{\push_n(x)}f_{\inn}(n+1,i_n(x)).
\end{align*}
We have the diagram
\[
\begin{tikzcd}[column sep=large]
  A_0 \arrow[r,"i_0"] \arrow[rd,"\inn_0"'] & A_1 \arrow[r,"i_1"] \arrow[d,"\inn_1"] & A_2
  \arrow[r,"i_2"] \arrow[ld,"\inn_2"] & A_3 \arrow[r] \arrow[lld,bend left=15,"\inn_3" pos=0.4] &\dots\\
  &A_\infty
\end{tikzcd}
\]
and the map $\inn_0:A_0\to A_\infty$ is called the \emph{transfinite composition} of the family
$(i_n)_{n:\N}$. The following proposition shows (among other things) that $\inn_k$ is the
transfinite composition of the family $(i_{n+k})_{n:\N}$.
\begin{proposition}\label{cropdiag}
  There is an equivalence between $A_\infty$ and the sequential colimit $A_\infty'$ of the diagram
  \[
  \begin{tikzcd}
    A_1 \arrow[r,"i_1"] & A_2 \arrow[r,"i_2"] & A_3 \ar[r,"i_3"] & A_4 \ar[r] &\dots,
  \end{tikzcd}
  \]
  and the maps $\inn'_n:A_{n+1}\to A_\infty'$ correspond to the maps
  $\inn_{n+1}:A_{n+1}\to A_\infty$.
\end{proposition}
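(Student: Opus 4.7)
The plan is to construct mutually inverse maps $f:A_\infty\to A_\infty'$ and $g:A_\infty'\to A_\infty$ directly by the induction principles of the two sequential colimits, and then verify the compatibility with the inclusion maps. The map $g$ is essentially the identity on generators: set $g(\inn'_n(x))\defeq\inn_{n+1}(x)$ and $\ap g(\push'_n(x))\defeq\push_{n+1}(x)$. The map $f$ is defined by case-analysis on the index $n$: for $n=0$ we set $f(\inn_0(x))\defeq\inn'_0(i_0(x))$, and for $n=m+1$ we set $f(\inn_{m+1}(x))\defeq\inn'_m(x)$. On the path-constructor, $\push_0(x)$ has both endpoints sent to $\inn'_0(i_0(x))$ (by definition), so we take $\ap f(\push_0(x))\defeq\idpS$, and for $n=m+1$ we take $\ap f(\push_{m+1}(x))\defeq\push'_m(x)$.

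To prove $f\circ g=\id_{A_\infty'}$, I would apply the induction principle of $A_\infty'$. On a point $\inn'_n(x)$ we have $f(g(\inn'_n(x)))=f(\inn_{n+1}(x))=\inn'_n(x)$ definitionally, so we use $\idpS$. On a path $\push'_n(x)$, both $\ap{f\circ g}(\push'_n(x))$ and $\push'_n(x)$ reduce to $\push'_n(x)$, so the required square is filled by $\idpS$ up to standard coherences.

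To prove $g\circ f=\id_{A_\infty}$, I would apply the induction principle of $A_\infty$. At a point $\inn_{m+1}(x)$ the composition is definitionally $\inn_{m+1}(x)$, so we use $\idpS$. At a point $\inn_0(x)$ we instead have $g(f(\inn_0(x)))=\inn_1(i_0(x))$, and the witness of equality with $\inn_0(x)$ is $\push_0(x)\inv$. On the path $\push_{m+1}(x)$, the square to fill has $\idpS$ on the vertical sides and $\push_{m+1}(x)$ on the horizontal ones, and is filled by $\idp{\push_{m+1}(x)}$ modulo a trivial coherence. The compatibility with the maps $\inn'_n$ versus $\inn_{n+1}$ asserted in the statement is then immediate from the definition of $g$, since $g(\inn'_n(x))\defeq\inn_{n+1}(x)$.

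The main obstacle is the path-case over $\push_0(x)$ in the homotopy $g\circ f=\id$: the square to fill has top side $\ap{g\circ f}(\push_0(x))=\ap g(\idpS)=\idpS$, bottom side $\push_0(x)$, left side $\push_0(x)\inv$ (the chosen witness on $\inn_0(x)$), and right side $\idpS$ (the chosen witness on $\inn_1(i_0(x))$). This square is filled by the coherence operation corresponding to $\push_0(x)\inv\concat\push_0(x)=\idpS$ discussed in Section~\ref{sec:infgpd}. Everything else proceeds by definition, so this is the only place where a nontrivial coherence is needed.
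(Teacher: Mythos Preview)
Your proposal is correct and follows exactly the same approach as the paper: you define the same two maps $f$ and $g$ on constructors, and the paper simply says ``one can easily check that those two functions are inverse to each other'' where you have spelled out the details, including the one nontrivial coherence at $\push_0(x)$.
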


\begin{proof}
  We write $\inn'$ and $\push'$ for the constructors of $A'_\infty$.
  The map from $A_\infty'$ to $A_\infty$ sends $\inn'_n(x)$ to $\inn_{n+1}(x)$ and $\push'_n(x)$ to
  $\push_{n+1}(x)$, and the map from $A_\infty$ to $A_\infty'$ sends $\inn_0(x)$ to
  $\inn'_0(i_0(x))$, $\inn_{n+1}(x)$ to $\inn'_n(x)$, $\push_0(x)$ to $\idp{i_0(x)}$ and
  $\push_{n+1}(x)$ to $\push'_n(x)$. One can easily check that those two functions are inverse to
  each other.
\end{proof}
The main result of this section is the fact that connectedness is stable under transfinite
composition.

\begin{proposition}
  If all the maps $i_0$, $i_1$, \dots are $k$-connected, then the transfinite composition of
  $(i_n)_{n:\N}$ is also $k$-connected.
\end{proposition}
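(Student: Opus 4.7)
The plan is to apply Proposition \ref{inductionconnected}. Specifically, in order to show that $\inn_0:A_0\to A_\infty$ is $k$-connected, it suffices to show that for every family $P:A_\infty\to\Type$ of $k$-truncated types, the map
\[
\lambda s.\,s\circ\inn_0:\left(\prod_{y:A_\infty}P(y)\right)\to\left(\prod_{x:A_0}P(\inn_0(x))\right)
\]
has a section. So given $P$ as above and $g:\prod_{x:A_0}P(\inn_0(x))$, I need to produce $\widetilde{g}:\prod_{y:A_\infty}P(y)$ with $\widetilde{g}(\inn_0(x))=g(x)$.

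I will build $\widetilde{g}$ from data compatible with the induction principle for the sequential colimit, namely a family $f_n:\prod_{x:A_n}P(\inn_n(x))$ together with, for each $n$, a term $h_n:\prod_{x:A_n}f_{n+1}(i_n(x))=\transport^P(\push_n(x),f_n(x))$. I construct the pair $(f_n,h_{n-1})$ by induction on $n$. In the base case, take $f_0\defeq g$. For the inductive step, assume $f_n$ is given. The family
\[
Q_n:A_{n+1}\to\Type,\qquad Q_n(y)\defeq P(\inn_{n+1}(y))
\]
consists of $k$-truncated types, and the expression $\lambda x.\transport^P(\push_n(x),f_n(x))$ belongs to $\prod_{x:A_n}Q_n(i_n(x))$. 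Since $i_n$ is $k$-connected, Proposition \ref{inductionconnected} supplies both $f_{n+1}:\prod_{y:A_{n+1}}Q_n(y)$ and the required homotopy $h_n$.

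Having produced $(f_n)_{n:\N}$ and $(h_n)_{n:\N}$, I define $\widetilde{g}$ by the induction principle of sequential colimits:
\begin{align*}
\widetilde{g}(\inn_n(x)) &\defeq f_n(x),\\
\ap{\widetilde{g}}(\push_n(x)) &\defeq h_n(x)^{-1}
\end{align*}
(the second clause being interpreted as a dependent path over $\push_n(x)$ via the equivalence $(u=^P_{\push_n(x)}v)\simeq(\transport^P(\push_n(x),u)=v)$). Then $\widetilde{g}(\inn_0(x))=f_0(x)=g(x)$ by definition, giving the desired section.

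The only real subtlety is the inductive construction: at each stage one needs the \emph{actual} section provided by the third clause of Proposition \ref{inductionconnected}, not merely an equivalence up to homotopy, so that the equation $f_{n+1}\circ i_n=\transport^P(\push_n,-)\circ f_n$ holds strictly (up to the chosen path $h_n$) and can be fed back into the next step. Everything else is then routine: transposing $h_n$ into a dependent path and invoking the induction principle of $A_\infty$ glues the pieces together into $\widetilde{g}$.
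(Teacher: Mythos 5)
Your proof is correct and follows essentially the same route as the paper: reduce to constructing a section of $P:A_\infty\to\Type$ via proposition~\ref{inductionconnected}, build compatible sections $f_n$ over each $A_n$ by induction on $n$ using the $k$-connectedness of $i_n$ at each step, then glue them with the induction principle of the sequential colimit. The only cosmetic difference is that you phrase the compatibility data $h_n$ in terms of transports and convert to dependent paths at the end, whereas the paper records the compatibility directly as a dependent path $d_{n+1}(i_n(x)) =^P_{\push_n(x)} d_n(x)$.
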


\begin{proof}
  Let’s consider $P:A_\infty\to\Type$ a family of $k$-truncated types and
  $d_0:(x:A_0)\to P(\inn_0(x))$. Using proposition \ref{inductionconnected}, it is enough to
  construct a section $d$ of $P$ which is equal to $d_0$ on $A_0$ to conclude that $\inn_0$ is
  $k$-connected. We define a family of maps $d_n:(x:A_n)\to P(\inn_n(x))$ by induction on $n$,
  starting with the given $d_0$ for $n=0$, as follows. Let’s consider
  \begin{align*}
    P_{n+1} &: A_{n+1}\to\Type,\\
    P_{n+1}(x) &\defeq P(\inn_{n+1}(x)).
  \end{align*}
  It is a family of $k$-truncated types, the map $i_n$ is $k$-connected, and we have
  \begin{align*}
    \widetilde{d}_n &: (x : A_n) \to P_{n+1}(i_n(x)),\\
    \widetilde{d}_n(x) &\defeq \transport^P(\push_n(x),d_n(x)),
  \end{align*}
  therefore, using proposition \ref{inductionconnected} again, there is a map
  $d_{n+1}:(x:A_{n+1})\to P(\inn_{n+1}(x))$ satisfying
  \[
  d_{n+1}(i_n(x)) =^P_{\push_n(x)} d_n(x).
  \]
  The family $(d_n)_{n:\N}$ together with those equalities gives a section of $P$ which is equal to
  $d_0$ on $A_0$. Therefore, the map $\inn_0$ is $k$-connected, which is what we wanted to prove.
\end{proof}

\section{The James construction}

\begin{proposition}
  Given a $k$-connected pointed type $A$, with $k\ge0$, there is a sequence of types
  $(J_nA)_{n:\N}$ and maps $i_n : J_nA \to J_{n+1}A$, as in the diagram
  \[
  \begin{tikzcd}
    J_0A \arrow[r,"i_0"] & J_1A \arrow[r,"i_1"] & J_2A \arrow[r,"i_2"] & J_3A \arrow[r,"i_3"] &
    \dots,
  \end{tikzcd}
  \]
  such that for every $n:\N$, the map $i_n$ is $(n(k+1)+(k-1))$-connected, and such that the
  sequential colimit of $(J_nA)_{n:\N}$ is equivalent to $\Omega\Susp A$.
  Moreover, for low values of $n$ we have
  \begin{itemize}
  \item $J_0A \simeq \Unit$,
  \item $J_1A \simeq A$, the map $i_0$ is the inclusion of the basepoint, and the induced map from
    $A$ to $\Omega\Susp A$ is equal to $\varphi_A$,
  \item $J_2A \simeq (A\times A) \sqcup^{A\vee A} A$, where the two maps are the folding map
    \begin{align*}
      \fold_A &: A\vee A\to A,\\
      \fold_A(\inl(x)) &\defeq x,\\
      \fold_A(\inr(y)) &\defeq y,\\
      \ap{\fold_A}(\push(\ttt)) &\defeq\idp{\star_A}
    \end{align*}
    and the inclusion of the wedge in the product
    \begin{align*}
      \iwedge_{A,A} &: A\vee A \to A\times A,\\
      \iwedge_{A,A}(\inl(a)) &\defeq (a,\star_A),\\
      \iwedge_{A,A}(\inr(a)) &\defeq (\star_A,a),\\
      \ap{\iwedge_{A,A}}(\push(\ttt)) &\defeq \idp{(\star_A,\star_A)},
    \end{align*}  
    and $i_1:A\to J_2A$ is equal to $\inr$.
  \end{itemize}
\end{proposition}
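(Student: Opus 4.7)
The plan is to define the sequence $(J_nA)$ by induction using pushouts that mimic the classical ``glue on one new copy of $A$'' step, then separately construct a single type $JA$ that we can show is equivalent both to the sequential colimit and to $\Omega\Susp A$.

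First, I set $J_0A \defeq \Unit$ and inductively
\[
  J_{n+1}A \defeq (A\times J_nA) \sqcup^{A\vee J_nA} J_nA,
\]
where the left leg is the inclusion $\iwedge_{A,J_nA}:A\vee J_nA\to A\times J_nA$ and the right leg is the map $\alpha_n:A\vee J_nA\to J_nA$ sending $\inl(a)$ to the image of $a$ under the iterated inclusion $A=J_1A\to J_nA$ and $\inr(x)$ to $x$, with $\ap{\alpha_n}(\push(\ttt))\defeq\idp{\star}$. The map $i_n:J_nA\to J_{n+1}A$ is then defined to be $\inr$. The case $n=0$ collapses (since $J_0A=\Unit$ forces $A\vee J_0A\simeq A$ and $A\times J_0A\simeq A$) and proposition \ref{pushoutid} yields $J_1A\simeq A$ with $i_0$ the basepoint inclusion; unfolding the definition at $n=1$ gives the claimed description of $J_2A$. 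The identification of the induced map $A\to\Omega\Susp A$ with $\varphi_A$ is deferred until after the equivalence with $\Omega\Susp A$ is established.

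Next I prove by induction on $n$ that $i_n$ is $(n(k+1)+(k-1))$-connected. For $n=0$, proposition \ref{connectedpointedmap} gives that $\Unit\to A$ is $(k-1)$-connected. For the inductive step, the key ingredient is that for any $c$-connected type $X$, the map $\iwedge_{A,X}:A\vee X\to A\times X$ is $(k+c)$-connected; this is a consequence of Blakers--Massey applied to the pushout defining the wedge. Applied to $X=J_nA$ and combined with an inductive bound on the connectivity of $J_nA$ itself, proposition \ref{connpushout} transfers this to the right vertical map $\inr=i_n:J_nA\to J_{n+1}A$, giving the required numerical estimate after a small arithmetic check.

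The heart of the proof, and the main obstacle, is showing that the sequential colimit of $(J_nA)$ is equivalent to $\Omega\Susp A$. My plan is to introduce a separate higher inductive type $JA$ whose constructors are engineered so that (i) one can recognise each stage $J_nA$ as a subobject of $JA$ via pushout squares, obtained iteratively through proposition \ref{pushoutangle}, yielding a map $\mathrm{colim}\,J_nA\to JA$ that is an equivalence by a direct induction on colimits; and (ii) one can construct an equivalence $JA\simeq\Omega\Susp A$ by the coding/decoding method of section \ref{hgss1}. Concretely, I would define a dependent type $\mathrm{code}:\Susp A\to\Type$ with fibre $JA$ over $\north$, use the flattening lemma \ref{flatteningpushout} to describe its total space, show the total space is contractible, and conclude via proposition \ref{fibtotcontr} that $\Omega\Susp A\simeq JA$. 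Unwinding these equivalences identifies the induced map $A=J_1A\to\Omega\Susp A$ with $\varphi_A$.

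The hard part will be the design of $JA$ and the verification that the decode/encode maps are mutually inverse: the path-constructors of $JA$ must simultaneously support a suspension-like decoding into $\Omega\Susp A$ and be compatible with the stagewise pushout structure of the $J_nA$. I expect significant bookkeeping with squares and with the $3\times3$-lemma of section \ref{threebythree} in order to match the higher coherences, and the contractibility of the total space of $\mathrm{code}$ will likely require a delicate induction leveraging the connectivity estimates from the previous step.
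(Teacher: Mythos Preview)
Your overall strategy—build the tower $(J_nA)$ by pushouts, introduce an auxiliary higher inductive type $JA$, and show it is equivalent both to the colimit and to $\Omega\Susp A$ via a fibration over $\Susp A$ with contractible total space—is exactly what the paper does. The problem is your specific inductive formula for $J_{n+1}A$, which is wrong and breaks both the identification of the stages and the connectivity estimate.

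Your pushout $J_{n+1}A \defeq (A\times J_nA)\sqcup^{A\vee J_nA}J_nA$ already fails at $n=0$: under $A\vee\Unit\simeq A$ and $A\times\Unit\simeq A$ the span becomes $A\xleftarrow{\id}A\to\Unit$, whose pushout by proposition~\ref{pushoutid} is $\Unit$, not $A$. More seriously, for $n\ge 2$ your cofiber $J_{n+1}A/J_nA$ is $A\wedge J_nA$, whereas it should be $A^{\wedge(n+1)}$; for instance your $J_3A/J_2A\simeq A\wedge J_2A$ carries an extra $A\wedge A$ summand. The paper's recursion is two-step: $J_{n+2}A$ is the pushout of
\[
A\times J_{n+1}A \;\xleftarrow{\;f\;}\; (A\times J_nA)\sqcup^{J_nA}J_{n+1}A \;\xrightarrow{\;g\;}\; J_{n+1}A,
\]
where the left leg $f$ is precisely the pushout-product of $i_n:J_nA\to J_{n+1}A$ with the basepoint inclusion $\Unit\to A$.

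This difference is exactly what makes your connectivity argument collapse. You bound the connectivity of $\iwedge_{A,J_nA}$ by $k+c_n$ where $c_n$ is the connectivity of $J_nA$; but $J_nA$ is only $k$-connected for every $n$ (it contains $A$), so you can never do better than about $2k$, far short of the required linear growth $n(k+1)+(k-1)$. In the paper's setup the attaching map is the pushout-product of $i_n$ with $\Unit\to A$, so proposition~\ref{pushoutproduct} gives that its connectivity exceeds that of $i_n$ by $k+1$; proposition~\ref{connpushout} then transfers this to $i_{n+1}$, and the induction goes through. The key missing idea is that the recursion must feed the connectivity of the \emph{map} $i_n$ back into the next attaching map, not the connectivity of the \emph{space} $J_nA$.
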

Given that connectedness of maps is downward-closed and stable by transfinite composition, the map
$\inn_n$ from $J_nA$ to the colimit is $(n(k+1)+(k-1))$-connected, hence there is an
$(n(k+1)+(k-1))$-connected map from $J_nA$ to $\Omega\Susp A$ which allows us to study
low-dimensional homotopy groups of $\Omega\Susp A$ by studying those of some $J_nA$ instead. We have
in particular the following two corollaries, which are the only two properties of the James
construction that we use here.
\begin{cor}[Freudenthal suspension theorem]\label{freud}
  Given a $k$-connected pointed type $A$, the map
  \[\varphi_A:A\to\Omega\Susp A\]
  is $2k$-connected.
\end{cor}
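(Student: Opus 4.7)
The plan is to read off the Freudenthal statement directly from the James construction together with the stability of connectedness under transfinite composition that was established just before. The starting point is the last bullet of the James proposition, which identifies the map $A \simeq J_1 A \to \Omega\Susp A$ coming from the sequential colimit with $\varphi_A$ itself. So it suffices to show that this canonical map from the first stage of the sequence into its colimit is $2k$-connected.

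I would first apply proposition \ref{cropdiag} to reindex the sequence: the colimit of $(J_n A)_{n:\N}$ is equivalent to the colimit of the shifted diagram $J_1 A \to J_2 A \to \cdots$, in a way that sends the transfinite composition $\inn'_0 : J_1 A \to \Omega\Susp A$ of the shifted diagram to $\inn_1 : J_1 A \to \Omega\Susp A$. Under the identification $J_1 A \simeq A$, this is exactly $\varphi_A$. The task therefore reduces to showing that the transfinite composition of $(i_n)_{n\geq 1}$ is $2k$-connected.

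For this, I would just check the connectivity estimate for each link of the shifted sequence. The James proposition tells us that $i_n$ is $(n(k+1)+(k-1))$-connected. For $n=1$ this is precisely $2k$, and for $n \geq 2$ the value $n(k+1)+(k-1)$ is strictly larger than $2k$ (using $k\geq 0$). Since connectedness of maps is downward closed, every $i_n$ with $n \geq 1$ is $2k$-connected. The proposition asserting that transfinite composition preserves $k$-connectedness of maps, proved in the previous section, then yields that $\inn'_0$ is $2k$-connected, so $\varphi_A$ is $2k$-connected.

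I do not expect any real obstacle here, since the James construction has done all the hard work; the only subtlety is bookkeeping around proposition \ref{cropdiag} to see that $\varphi_A$ really is the transfinite composition of the $i_n$ for $n \geq 1$, rather than the transfinite composition starting from $J_0 A \simeq \Unit$ (which would only be $(k-1)$-connected as the minimum of all the $i_n$'s connectivities). This reindexing is exactly what allows us to drop the degenerate first step and obtain the sharp bound $2k$.
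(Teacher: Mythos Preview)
Your argument is correct and matches the paper's approach: the paper also deduces the corollary immediately from the James construction by noting that connectedness is downward-closed and stable under transfinite composition, so that $\inn_1:J_1A\to\Omega\Susp A$ is $(1\cdot(k+1)+(k-1))=2k$-connected and equals $\varphi_A$ under $J_1A\simeq A$. Your use of proposition~\ref{cropdiag} makes explicit the reindexing that the paper leaves implicit; the only small omission is the degenerate case $k=-1$, which the paper handles separately by observing that every map is $(-2)$-connected.
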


\begin{cor}\label{corjames}
  Given a $k$-connected pointed type $A$, there is a $(3k+1)$-connected map
  \[(A\times A)\sqcup^{A\vee A}A\to\Omega\Susp A.\]
\end{cor}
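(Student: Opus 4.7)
The plan is to apply the preceding proposition directly with $n=2$. The proposition provides the sequence $(J_nA)_{n:\N}$ together with maps $i_n:J_nA\to J_{n+1}A$, each of which is $(n(k+1)+(k-1))$-connected, whose sequential colimit is equivalent to $\Omega\Susp A$. In particular, the canonical map $\inn_2:J_2A\to\Omega\Susp A$ factors through the colimit.

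First I would argue that $\inn_2$ is $(3k+1)$-connected. By proposition \ref{cropdiag}, the map $\inn_2:J_2A\to\Omega\Susp A$ is (up to equivalence) the transfinite composition of the sequence $(i_n)_{n\ge 2}$. For each $n\ge 2$, the map $i_n$ is $(n(k+1)+(k-1))$-connected, and for $n=2$ this value is $2(k+1)+(k-1) = 3k+1$. Since $n(k+1)+(k-1)$ is non-decreasing in $n$, every $i_n$ with $n\ge 2$ is at least $(3k+1)$-connected. Applying the stability of $k$-connectedness under transfinite composition (established earlier in this section) to this tail of the sequence, we conclude that $\inn_2$ itself is $(3k+1)$-connected.

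Next I would transport this along the identification $J_2A\simeq(A\times A)\sqcup^{A\vee A}A$ stated in the proposition. Precomposing $\inn_2$ with the underlying equivalence yields a map $(A\times A)\sqcup^{A\vee A}A\to\Omega\Susp A$, and connectedness is preserved by equivalences (since equivalences are $n$-connected for every $n$, and composing connected maps preserves connectedness, as established just before section \ref{hgs0}).

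There is no real obstacle here: everything we need has been packaged into the statement of the proposition that introduces the James construction, and the corollary is essentially the extraction of its $n=2$ instance. The only slightly delicate step is checking that $\inn_2$ (rather than merely $i_1\circ i_0$ or some initial segment) is highly connected, but this is exactly what the tail-version of proposition \ref{cropdiag} provides.
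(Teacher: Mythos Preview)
Your proof is correct and follows exactly the approach the paper sketches just before stating the corollary: use that $\inn_2$ is the transfinite composition of the tail $(i_n)_{n\ge 2}$ (via proposition \ref{cropdiag}), that each $i_n$ with $n\ge 2$ is at least $(3k+1)$-connected since the connectivity bound $n(k+1)+(k-1)$ is non-decreasing, and that connectedness is stable under transfinite composition; then compose with the equivalence $J_2A\simeq(A\times A)\sqcup^{A\vee A}A$. The paper states this reasoning in one sentence for general $n$ and then reads off the case $n=2$.
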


Note that both corollaries are also true in the case $k=-1$ because every map is $(-2)$-connected.

We will define the types $(J_nA)$ as pushouts, by induction on $n$, and prove that their colimit
$\JiA$ is equivalent to another type $JA$ which has a much simpler definition. Intuitively, $JA$
corresponds to the free monoid on $A$ with unit $\star_A$, while $J_nA$ corresponds to the “subtype” of
$JA$ consisting of products of length at most $n$. We then prove that the space $\Omega\Sigma A$ is
equivalent to $JA$, where we crucially use the fact that $A$ is connected.

\subsection{Definition of the family $(J_nA)$}

We define the types $J_nA$ together with three functions
\begin{align*}
  i_n &: J_nA\to J_{n+1}A,\\
  \alpha_n &: A\times J_nA\to J_{n+1}A,\\
  \beta_n &: (x:J_nA)\to\alpha_n(\star_A,x)=_{J_{n+1}A}i_n(x),
\end{align*}
by induction on $n$ as follows.
\begin{itemize}
\item $J_0A\defeq\Unit$ (whose unique element is called $\epsilon$),
\item $J_1A\defeq A$, $i_0(\epsilon)\defeq\star_A$, $\alpha_0(a,\epsilon)\defeq a$ and
  $\beta_0(\epsilon)\defeq\idp{\star_A}$,
\item $J_{n+2}A$, $i_{n+1}$, $\alpha_{n+1}$ and $\beta_{n+1}\defeq\push\circ\inr$ are
  defined by the pushout diagram
  \begin{equation}
    \begin{tikzcd}
      (A\times J_nA)\sqcup^{J_nA}J_{n+1}A \arrow[r,"g"] \arrow[d,"f"'] & J_{n+1}A
      \ar[d,"i_{n+1}",dashed]\\
      A\times J_{n+1}A \ar[r,"\alpha_{n+1}"',dashed] & J_{n+2}A \arrow[lu,phantom,"\ulcorner",at start]
    \end{tikzcd}\label{eq:jn+2a}
  \end{equation}
  where the pushout at the top-left of the diagram is defined by the maps $x\mapsto(\star_A,x)$ and
  $i_n$, and the maps $f$ and $g$ are defined by
  \begin{align*}
    f(\inl(a,x))&\defeq(a, i_n(x)), & g(\inl(a,x))&\defeq\alpha_n(a,x),\\
    f(\inr(y))&\defeq (\star_A, y),     & g(\inr(y))&\defeq y,\\
    \ap f(\push(x))&\defeq \idpS,  & \ap g(\push(x))&\defeq \beta_n(x).
  \end{align*}
\end{itemize}
The pushout also consists of
\begin{align*}
  \gamma_n &: (a : A) (x : J_nA) \to \alpha_{n+1}(a,i_n(x))=_{J_{n+2}A}i_{n+1}(\alpha_n(a,x)),\\
  \gamma_n(a,x) &\defeq \push(\inl(a,x))
\end{align*}
and
\begin{align*}
  \eta_n &: (x : J_nA) \to (\gamma_n(\star_A,x)\concat\ap{i_{n+1}}(\beta_n(x)))=\beta_{n+1}(i_n(x)),
\end{align*}
which follows from $\ap\push(\push(x))$ and which is an equality in the type
\[\alpha_{n+1}(\star_A,i_n(x))=_{J_{n+2}A}i_{n+1}(i_n(x)).\]

Note that, in order to define a map out of $J_{n+2}A$, it is enough to define it for elements of the
form $i_{n+1}(x)$ and $\alpha_{n+1}(a,x)$, for paths of the form $\beta_{n+1}(x)$ and
$\gamma_n(a,x)$, and for 2-paths of the form $\eta_n(x)$.

\subsection{Colimit of the family $(J_nA)$}

We now prove that the colimit $\JiA$ of the family $(J_nA)$ is equivalent to the higher inductive
type $JA$ with constructors
\begin{align*}
  \varepsilon &: JA, \\
  \alpha &: A\to JA\to JA, \\
  \delta &: (x:JA)\to x=_{JA}\alpha(\star_A, x).
\end{align*}
In $JA$ we just have a unit called $\varepsilon$, an action of $A$ on $JA$ called $\alpha$ and a
proof that multiplying by $\star_A$ is the identity, called $\delta$.
The induction principle for $JA$ states that given a dependent type $P:JA\to\Type$, a function
$f:(x:JA)\to P(x)$ can be defined by
\begin{align*}
  f &: (x:JA)\to P(x),\\
  f(\varepsilon) &\defeq f_{\varepsilon},\\
  f(\alpha(a,x)) &\defeq f_{\alpha}(a,x,f(x)),\\
  \ap{f}(\delta(x)) &\defeq f_{\delta}(x,f(x)),
\end{align*}
where we have
\begin{align*}
  f_{\varepsilon}&:P(\varepsilon),\\
  f_{\alpha}&:(a:A)(x:JA)\to P(x)\to P(\alpha(a,x)),\\
  f_{\delta}&:(x:JA)(y:P(x)) \to y=^P_{\delta(x)}f_{\alpha}(\star_A,x,y).
\end{align*}
Note that $f$ is used recursively in $f(\alpha(a,x))$ and in $\ap{f}(\delta(x))$, because $JA$ is
a recursive higher inductive type.

In order to prove that $JA$ and $\JiA$ are equivalent, we first define equivalents of
$\gamma_n$, $\eta_n$, $\inn_n$ and $\push_n$ in $JA$ and then equivalents of $\varepsilon$,
$\alpha$, $\delta$, $\gamma$ and $\eta$ in $\JiA$.

\paragraph{Structure on $JA$}

We define the map $\gamma$, where we simply apply $\delta$ twice, by
\begin{align*}
  \gamma &: (a : A) (x : JA) \to \alpha(a,\alpha(\star_A,x)) =
                  \alpha(\star_A,\alpha(a, x)),\\
  \gamma(a,x) &\defeq (\ap{\alpha(a,-)}{(\delta(x))})\inv\concat\delta(\alpha(a,x)).
\end{align*}
Then we define the map
\begin{align*}
  \eta &: (x : JA) \to \gamma(\star_A, x) = \idpS
\end{align*}
using naturality of $\delta$ on $\delta(x)$, which is the square
\[
\begin{tikzcd}[sdiag,column sep=huge]
  x \arrow[d,"{\delta(x)}"'] \arrow[r,"\delta(x)"] &
  \alpha(\star_A,x) \arrow[d,"{\delta(\alpha(\star_A,x))}"]\\
  \alpha(\star_A,x) \arrow[r,"\ap{\alpha(\star_A,-)}(\delta(x))"'] & \alpha(\star_A,\alpha(\star_A, x))
\end{tikzcd}
\]
which shows that $\delta(\alpha(\star_A,x))=\ap{\alpha(\star_A,-)}(\delta(x))$, which is what we
wanted to prove.
We define now $(\innJ_n)$ and $(\pushJ_n)$ by
\begin{align*}
  \innJ_n &: J_nA \to JA, &  \pushJ_n &: (x : J_nA) \to \innJ_n(x) = \innJ_{n+1}(i_n(x)),\\
  \innJ_0(\epsilon) &\defeq \varepsilon, &  \pushJ_n(x) &\defeq \delta(\innJ_n(x)).\\
  \innJ_1(a) &\defeq \alpha(a,\varepsilon),&&\\
  \innJ_{n+2}(i_{n+1}(x)) &\defeq \alpha(\star_A,\innJ_{n+1}(x)),&&\\
  \innJ_{n+2}(\alpha_{n+1}(a,x)) &\defeq \alpha(a,\innJ_{n+1}(x)),&&\\
  \ap{\innJ_{n+2}}{(\beta_n(x))} &\defeq \idpS,&&\\
  \ap{\innJ_{n+2}}{(\gamma_n(a,x))} &\defeq \gamma(a,\innJ_n(x)),&&\\
  \apii{\innJ_{n+2}}{(\eta_n(x))} &\defeq \eta(\innJ_n(x)),&&
\end{align*}

\paragraph{Structure on $\JiA$}

The equivalent of $\varepsilon$ is the term $\epsiloni\defeq\inn_0(\epsilon)$ of type $\JiA$. We
then define the action of $A$ on $\JiA$ by
\begin{align*}
  \alphai &: A \to \JiA \to \JiA,\\
  \alphai(a,\inn_n(x)) &\defeq \inn_{n+1}(\alpha_n(a,x)),\\
  \ap{\alphai(a,-)}{(\push_n(x))} &\defeq \push_{n+1}(\alpha_n(a,x))\cdot\ap{\inn_{n+2}}{(\gamma_n(a,x))}\inv.
\end{align*}
The equivalent of $\delta$ is $\deltai$ defined by
\begin{align*}
  \deltai &: (x : \JiA) \to x=\alphai(\star_A,x), \\
  \deltai(\inn_n(x)) &\defeq \push_n(x) \concat \ap{\inn_{n+1}}(\beta_n(x))\inv, \\
  \ap\deltai(\push_n(x)) &\defeq \deltai^{\push_n}(x),
\end{align*}
where $\deltai^{\push_n}(x)$ is the filler of the square
\begin{equation}
\begin{tikzcd}[sdiag,sep=6.5em]
  \bullet \arrow[rr,"\push_n(x)"] \arrow[d,"\push_n(x)"'] & & \bullet
  \arrow[d,"\push_{n+1}(i_n(x))"]\\
  \bullet \arrow[rru,"\idpS"'] & & \bullet\\
  \bullet \arrow[u,"\ap{\inn_{n+1}}(\beta_n(x))"] \arrow[r,"{\push_{n+1}(\alpha_n(\star_A,x))}"']
  & \bullet \arrow[ru,"\ap{\inn_{n+2}}(\ap{i_{n+1}}(\beta_n(x)))"] & \bullet \arrow[l,"{\ap{\inn_{n+2}}(\gamma_n(\star_A,x))}"] \arrow[u,"\ap{\inn_{n+2}}(\beta_{n+1}(i_n(x)))"']
\end{tikzcd}\label{eq:deltaipush}
\end{equation}
where the lower right triangle is filled using $\eta_n(x)$ and the pentagon in the middle is filled
using the naturality square of $\push_{n+1}$ on $\beta_n(x)$.

We finally define 
\begin{align*}
  \gammai &: (a : A) (x : \JiA) \to \alphai(a,\alphai(\star_A,x)) =
                  \alphai(\star_A,\alphai(a, x)),\\
  \etai &: (x : \JiA) \to \gammai(\star_A, x)=\idpS
\end{align*}
in the same way as we defined $\gamma$ and $\eta$, but using
$\deltai$ instead of $\delta$. In the case of $\gammai(a,\inn_n(x))$ we note that
\begin{align*}
  \gammai(a,\inn_n(x))
  &= \ap{\alphai(a,-)}(\deltai(\inn_n(x)))\inv\concat\deltai(\alphai(a,\inn_n(x)))\\
  &= \ap{\alphai(a,-)}(\push_n(x)\concat
    \ap{\inn_{n+1}}(\beta_n(x))\inv)\inv\concat\deltai(\inn_{n+1}(\alpha_n(a,x)))\\
  &= (\push_{n+1}(\alpha_n(a,x))\concat\ap{\inn_{n+2}}(\gamma_n(a,x))\inv\\
  &\qquad\concat\ap{\inn_{n+2}}(\ap{\alpha_{n+1}(a,-)}(\beta_n(x)))\inv)\inv\\
  &\quad\concat(\push_{n+1}(\alpha_n(a,x))\concat\ap{\inn_{n+2}}(\beta_{n+1}(\alpha_n(a,x)))\inv)\\
  &= \ap{\inn_{n+2}}(\ap{\alpha_{n+1}(a,-)}(\beta_n(x)))\concat\ap{\inn_{n+2}}(\gamma_n(a,x))\\
  &\qquad\concat\ap{\inn_{n+2}}(\beta_{n+1}(\alpha_n(a,x)))\inv.
\end{align*}
Therefore $\gammai(a,\inn_n(x))$ fits in the square
\begin{equation}
\begin{tikzcd}[sdiag,sep=huge]
  \bullet \arrow[r,"{\ap{\inn_{n+2}}(\ap{\alpha_{n+1}(a,-)}(\beta_n(x)))}"]
  \arrow[d,"{\gammai(a,\inn_n(x))}"'] & \bullet \arrow[d,"{\ap{\inn_{n+2}}(\gamma_n(a,x))}"]\\
  \bullet \arrow[r,"{\ap{\inn_{n+2}}(\beta_{n+1}(\alpha_n(a,x)))}"'] & \bullet
\end{tikzcd}\label{eq:gammai}
\end{equation}
For $\etai(\inn_n(x))$, it is defined using $\ap{\deltai}(\deltai(\inn_n(x)))$ and we have
\begin{align*}
  \ap{\deltai}(\deltai(\inn_n(x))) &= \ap{\deltai}(\push_n(x)\concat
                                     \ap{\inn_{n+1}}(\beta_n(x)\inv))\\
                                   &= \deltai^{\push_n}(x)\concat\ap{\lambda x.\push_{n+1}(x)\concat\ap{\inn_{n+2}}(\beta_{n+1}(x)\inv)}(\beta_n(x)\inv).
\end{align*}
The $\ap{\push_{n+1}}(\beta_n(x)\inv)$ part cancels with the naturality square of $\push_{n+1}$ on
$\beta_n(x)$ used in $\deltai^{\push_n}(x)$ and the remaining part
$\ap{\ap{\inn_{n+2}}(\beta_{n+1}(-)\inv)}(\beta_n(x)\inv)$ is the naturality square of $\beta_{n+1}$
on $\beta_n(x)$. Therefore $\etai(\inn_n(x))$ fits in the three-dimensional diagram

\begin{equation}
\begin{tikzcd}[sdiag,sep=6em,row sep=normal]
  \bullet \arrow[r,"{\ap{\inn_{n+2}}(\ap{\alpha_{n+1}(\star_A,-)}(\beta_n(x)))}"]
  \arrow[dd,"{\gammai(\star_A,\inn_n(x))}"' description] \arrow[dd,bend right=90,looseness=2,"\idpS"']& \bullet
  \arrow[dd,"{\ap{\inn_{n+2}}(\gamma_n(\star_A,x))}" description] \arrow[rrd,"{\ap{\inn_{n+2}}(\beta_{n+1}(i_n(x)))}"]\\
  & & & \bullet\\
  \bullet \arrow[r,"{\ap{\inn_{n+2}}(\beta_{n+1}(\alpha_n(\star_A,x)))}"'] & \bullet \arrow[rru,"\ap{\inn_{n+2}}(\ap{i_{n+1}}(\beta_n(x)))"']
\end{tikzcd}
\label{diagetai}
\end{equation}
where the half-disc on the left is $\etai(\inn_n(x))$, the square in the middle is square
\ref{eq:gammai}, the triangle on the right is the application of $\inn_{n+2}$ to $\eta_n(x)$ and the
outer diagram is the application of $\inn_{n+2}$ to the naturality square of $\beta_{n+1}$ on
$\beta_n(x)$, which is
\[
\begin{tikzcd}[sdiag,sep=huge]
  \bullet \arrow[r,"{\ap{\alpha_{n+1}(\star_A,-)}(\beta_n(x))}"]
  \arrow[d,"{\beta_{n+1}(\alpha_n(\star_A,x))}"'] & \bullet \arrow[d,"{\beta_{n+1}(i_n(x))}"]\\
  \bullet \arrow[r,"{\ap{i_{n+1}}(\beta_n(x))}"'] & \bullet
\end{tikzcd}
\]

\paragraph{The two maps}

We can now define the maps back and forth by
\begin{align*}
  \tof &: \JiA \to JA, & \from &: JA \to \JiA,\\
  \tof(\inn_n(x)) &\defeq \innJ_n(x), & \from(\varepsilon) &\defeq \epsiloni,\\
  \ap{\tof}{(\push_n(x))} &\defeq \pushJ_n(x), & \from(\alpha(a,x)) &\defeq \alphai(a,\from(x)),\\
  & & \ap{\from}(\delta(x)) &\defeq \deltai(\from(x)).
\end{align*}

\paragraph{First composite}

Let’s prove first that $\from\circ\tof = \id_{\JiA}$.

By induction on the argument, it is enough to show that for every $n:\N$ and $x:J_nA$, we have
$\from(\innJ_n(x))=\inn_n(x)$ and $\ap\from(\pushJ_n(x))=\push_n(x)$ (in the appropriate dependent
path type). We proceed by induction on $n$, and then by induction on $x$.
\begin{itemize}
\item For $0$ and $\epsilon$, we have
  \begin{align*}
    \from(\innJ_0(\epsilon)) &= \from(\varepsilon)\\
                             &= \epsiloni\\
    &= \inn_0(\epsilon).
  \end{align*}
\item For $1$ and $a:A$, we have
  \begin{align*}
    \from(\innJ_1(a)) &= \from(\alpha(a,\varepsilon))\\
                      &= \alphai(a,\from(\varepsilon))\\
                      &= \inn_1(a).
  \end{align*}
\item For $n+2$ and $i_{n+1}(x)$, we have
  \begin{align*}
    \from(\innJ_{n+2}(i_{n+1}(x))) &= \from(\alpha(\star_A,\innJ_{n+1}(x)))\\
                                   &= \alphai(\star_A,\from(\innJ_{n+1}(x)))\\
                                   &= \alphai(\star_A,\inn_{n+1}(x))\quad\text{ by induction hypothesis}\\
                                   &= \inn_{n+2}(\alpha_{n+1}(\star_A,x))\\
                                   &= \inn_{n+2}(i_{n+1}(x))\quad \text{ using }\beta_{n+1}(x).
  \end{align*}
\item For $n+2$ and $\alpha_{n+1}(a,x)$, we have
  \begin{align*}
    \from(\innJ_{n+2}(\alpha_{n+1}(a,x))) &= \from(\alpha(a,\innJ_{n+1}(x)))\\
                                   &= \alphai(a,\from(\innJ_{n+1}(x)))\\
                                   &= \alphai(a,\inn_{n+1}(x))\quad\text{ by induction hypothesis}\\
                                   &= \inn_{n+2}(\alpha_{n+1}(a,x)).
  \end{align*}
\item For $n+2$ and $\beta_{n+1}(x)$, we have
  \begin{align*}
    \ap\from(\ap{\innJ_{n+2}}(\beta_{n+1}(x))) &= \ap\from(\idp{\alpha(\star_A,\innJ_{n+1}(x))})\\
                                               &= \idp{\from(\alpha(\star_A,\innJ_{n+1}(x)))}\\
                                               &= \idp{\alphai(\star_A,\from(\innJ_{n+1}(x)))}\\
                                               &= \idp{\alphai(\star_A,\inn_{n+1}(x))}\quad\text{
                                                 by induction hypothesis}\\
                                               &= \idp{\inn_{n+2}(\alpha_{n+1}(\star_A,x))},
  \end{align*}
  hence it follows from the fact that the square
  \[
  \begin{tikzcd}[sdiag,column sep=huge]
    \bullet \arrow[r,"{\idp{\inn_{n+2}(\alpha_{n+1}(\star_A,x))}}"] \arrow[d,"{\idp{\inn_{n+2}(\alpha_{n+1}(\star_A,x))}}"'] &
    \bullet \arrow[d,"\ap{\inn_{n+2}}(\beta_{n+1}(x))"] \\
    \bullet \arrow[r,"\ap{\inn_{n+2}}(\beta_{n+1}(x))"'] & \bullet
  \end{tikzcd}
  \]
  can be filled. Here the left side is $\ap\from(\ap{\innJ_{n+2}}(\beta_{n+1}(x)))$ as computed
  above, the right side is $\ap{\inn_{n+2}}(\beta_{n+1}(x))$ and the top and bottom side are the
  equalities $\from(\innJ_{n+2}(x))=\inn_{n+2}(x)$ for $x\defeq\alpha_{n+1}(\star_A,x)$ and
  $x\defeq i_{n+1}(x)$ respectively (the endpoints of $\beta_{n+1}(x)$) that we constructed
  above. Note that the paths corresponding to the induction hypothesis do not appear on the diagram
  because they were taken into account in the computation of
  $\ap\from(\ap{\innJ_{n+2}}(\beta_{n+1}(x)))$.
\item For $n+2$ and $\gamma_n(a,x)$, we have
  \begin{align*}
    \ap\from(\ap{\innJ_{n+2}}(\gamma_n(a,x))) &= \ap\from(\gamma(a,\innJ_n(x)))\\
                                              &= \gammai(a,\from(\innJ_n(x)))\\
                                              &= \gammai(a,\inn_n(x))\quad\text{ by induction
                                   hypothesis}
  \end{align*}
  hence it follows from diagram \ref{eq:gammai}.
\item For $n+2$ and $\eta_n(x)$, we have
  \begin{align*}
    \ap\from^2(\ap{\innJ_{n+2}}^2(\eta_n(x))) &= \ap\from^2(\eta(\innJ_n(x))) \\
                                              &= \etai(\from(\innJ_n(x))) \\
                                              &= \etai(\inn_n(x))\quad\text{ by induction hypothesis}
  \end{align*}
  hence there is some three-dimensional diagram to fill and it follows from diagram \ref{diagetai}.
\end{itemize}

We then have to show that for every $n:\N$ and $x:J_nA$, we have an equality between
$\ap\from(\pushJ_n(x))$ and $\push_n(x)$ along the equalities $\from(\innJ_n(x))=\inn_n(x)$ and
$\from(\innJ_{n+1}(i_n(x)))=\inn_{n+1}(i_n(x))$ that we just constructed. We have
\begin{align*}
  \ap\from(\pushJ_n(x)) &= \deltai(\from(\innJ_n(x))) \\
                        &= \deltai(\inn_n(x))\quad\text{ by induction hypothesis}\\
                        &= \push_n(x)\concat\ap{\inn_{n+1}}(\beta_n(x))\inv,
\end{align*}
hence we have a filler of the square
\[
\begin{tikzcd}[sdiag,sep=huge]
  \bullet \arrow[rr,"p"] \arrow[d,"\ap\from(\pushJ_n(x))"'] & & \bullet \arrow[d,"\push_n(x)"]\\
  \bullet \arrow[r,"\ap{\alphai(\star_A,-)}(p)"'] & \bullet & \bullet \arrow[l,"\ap{\inn_{n+1}}(\beta_n(x))"]
\end{tikzcd}
\]
where $p$ is the equality $\from(\innJ_n(x))=\inn_n(x)$, which concludes this part of the proof.

\paragraph{Second composite}

Let’s now prove that $\tof\circ\from = \id_{JA}$.

The idea is very similar, we have to prove that $\tof(\epsiloni)=\varepsilon$ (which is true by
definition), that $\tof(\alphai(a,x))=\alpha(a,\tof(x))$, and that
$\ap\tof(\deltai(x))=\delta(\tof(x))$ along some equalities constructed before. Let’s first do the
case of $\alphai$ by induction on $x$. There are two cases.
\begin{itemize}
\item For an element of the form $\inn_n(x)$, we have
  \begin{align*}
    \tof(\alphai(a,\inn_n(x))) &= \tof(\inn_{n+1}(\alpha_n(a,x)))\\
                               &= \alpha(a,\innJ_n(x))\\
                               &= \alpha(a,\tof(\inn_n(x))),
  \end{align*}
  which is what we wanted.
\item For a path of the form $\push_n(x)$, we have
  \begin{align*}
    \ap\tof(\ap{\alphai(a,-)}(\push_n(x))) &= \ap\tof(\push_{n+1}(\alpha_n(a,x))\concat
                                              \ap{\inn_{n+2}}(\gamma_n(a,x))\inv)\\
                                            &= \delta(\innJ_{n+1}(\alpha_n(a,x)))
                                              \concat\gamma(a,\innJ_n(x))\inv\\
                                            &= \delta(\alpha(a,\innJ_n(x)))
                                              \concat\gamma(a,\innJ_n(x))\inv\\
                                            &= \ap{\alpha(a,-)}(\delta(\innJ_n(x)))\quad\text{ by
                                              definition of $\gamma$}\\
                                            &= \ap{\alpha(a,-)}(\pushJ_n(x))\\
                                            &= \ap{\alpha(a,-)}(\ap\tof(\push_n(x))),
  \end{align*}
  which is again what we wanted.
\end{itemize}

We now prove that the path $\ap\tof(\deltai(x)):\tof(x)=\tof(\alphai(\star_A,x))$ composed with the
path from $\tof(\alphai(\star_A,x))$ to $\alpha(\star_A,\tof(x))$ that we have just constructed is
equal to the path $\delta(\tof(x)):\tof(x)=\alpha(\star_A,\tof(x))$.
\begin{itemize}
\item For an element of the form $\inn_n(x)$, we have
  \begin{align*}
    \ap\tof(\deltai(\inn_n(x))) &= \ap\tof(\push_n(x)\concat\ap{\inn_{n+1}}(\beta_n(x))\inv)\\
                                &= \pushJ_n(x)\\
                                &= \delta(\innJ_n(x))\\
                                &= \delta(\tof(\inn_n(x))),
  \end{align*}
  which proves the result, as the path from $\tof(\alphai(\star_A,\inn_n(x)))$ to
  $\alpha(\star_A,\tof(\inn_n(x)))$ is the constant path.
\item For a path of the form $\push_n(x)$, we have to compare $\ap\tof^2(\ap\deltai(\push_n(x)))$
  and $\ap\delta(\ap\tof(\push_n(x)))$. For the first one, we just apply $\tof$ to diagram
  \ref{eq:deltaipush}. We obtain
  \[
  \begin{tikzcd}[sdiag,sep=5.4em]
    \bullet \arrow[rr,"\pushJ_n(x)"] \arrow[d,"\pushJ_n(x)"'] & & \bullet
    \arrow[d,"{\delta(\alpha(\star_A,\innJ_n(x)))}"]\\
    \bullet \arrow[rru,"\idpS"'] & & \bullet\\
    \bullet \arrow[u,"\idpS"] \arrow[r,"{\delta(\alpha(\star_A,\innJ_n(x)))}"']
    & \bullet \arrow[ru,"\idpS"] & \bullet \arrow[l,"{\gamma(\star_A,\innJ_n(x))}"] \arrow[u,"\idpS"']
  \end{tikzcd}
  \]
  where the bottom right triangle is filled using $\eta(\innJ_n(x))$ and the rest is degenerate.
  On the other hand, we have
  \begin{align*}
    \ap\delta(\ap\tof(\push_n(x))) &= \ap\delta(\pushJ_n(x))\\
    &= \ap\delta(\delta(\innJ_n(x)))
  \end{align*}
  and $\eta(\innJ_n(x))$ is defined from $\ap\delta(\delta(\innJ_n(x)))$ by a coherence
  operation. Therefore some coherence operation proves that $\ap\tof^2(\ap\deltai(\push_n(x)))$ and
  $\ap\delta(\ap\tof(\push_n(x)))$ are equal.
\end{itemize}

This concludes the proof that $\JiA$ is equivalent to $JA$.

\subsection{Equivalence between $JA$ and $\Omega\Susp A$}

We now prove that $JA$ is equivalent to $\Omega\Susp A$. The function $\delta$ shows that the map
$\alpha(\star_A, -)$ is equal to the identity function, hence $\alpha(\star_A,-)$ is an equivalence. Given
that $A$ is $0$-connected it follows that $\alpha(a,-)$ is an equivalence for every $a$. We define
$F:\Susp A\to\Type$ by
\begin{align*}
  F(\north) &\defeq JA,\\
  F(\south) &\defeq JA,\\
  \ap F(\merid(a)) &\defeq \ua(\alpha(a,-)),
\end{align*}
which is a valid definition because $\alpha(a,-)$ is an equivalence.

We now prove that the total space of $F$ is contractible. According to the flattening lemma
(proposition \ref{flatteningpushout}), the total space of $F$ is equivalent to the type
\[T\defeq JA\sqcup^{A\times JA}JA,\]
where the two maps $A\times JA\to JA$ are $\snd$ and $\alpha$ respectively.
We want to construct, for every $x:T$, a path $p(x)$ from $\inl(\varepsilon)$ to $x$.
\begin{itemize}
\item For $\inl(\varepsilon)$, we take the constant path $\idp{\inl(\varepsilon)}$
\item For an element of the form $\inl(\alpha(a,x))$, we take the composition
  \[
  \begin{tikzcd}[sdiag]
    \inl(\alpha(a,x)) \arrow[rrr,"{\push(\star_A,\alpha(a,x))}"] &&& \inr(\alpha(\star_A,\alpha(a,x)))
    \arrow[d,"{\ap\inr(\delta(\alpha(a,x)))}"] \\
    \inl(x) \arrow[d,"p(\inl(x))"'] \arrow[rrr,"{\push(a,x)}"] &&&\inr(\alpha(a,x))\\
    \inl(\varepsilon).
  \end{tikzcd}
  \]
\item For a path of the form $\ap\inl(\delta(x))$, we need to fill the diagram
  \[
  \begin{tikzcd}[sdiag]
    \inl(\alpha(\star_A,x)) \arrow[rrr,"{\push(\star_A,\alpha(\star_A,x))}"]
    \arrow[d,"\ap\inl(\delta(x))"'] &&& \inr(\alpha(\star_A,\alpha(\star_A,x)))
    \arrow[d,"{\ap\inr(\delta(\alpha(\star_A,x)))}"] \\
    \inl(x) \arrow[rrr,"{\push(\star_A,x)}"'] &&& \inr(\alpha(\star_A,x))
  \end{tikzcd}
  \]
  By naturality of $\push(\star_A,-)$ on the path $\delta(x)$, we get a filler of the similar
  diagram which has $\ap\inr(\ap{\alpha(\star_A,-)}(\delta(x)))$ on the right side. Moreover, we
  know that $\ap\inr(\ap{\alpha(\star_A,-)}(\delta(x)))$ and
  $\ap\inr(\delta(\alpha(\star_A,x)))$ are equal via $\eta(x)$, which concludes.
\item For a point of the form $\inr(x)$, we take the composition
  \[
  \begin{tikzcd}[sdiag]
    \inr(x) \arrow[r,"\ap\inr(\delta(x))"]& \inr(\alpha(\star_A,x))&&
    \inl(x) \arrow[rr,"p(\inl(x))"] \arrow[ll,"{\push(\star_A,x)}"'] &&\inl(\varepsilon).
  \end{tikzcd}
  \]
\item Finally for a path of the form $\push(a,x)$, it is enough to notice that the path from
  $\inr(\alpha(a,x))$ to $\inl(\varepsilon)$ constructed as above is equal to the composition
  \[
  \begin{tikzcd}[sdiag]
    \inr(\alpha(a,x)) && \inl(x) \arrow[r,"p(\inl(x))"] \arrow[ll,"{\push(a,x)}"'] &\inl(\varepsilon).
  \end{tikzcd}
  \]
\end{itemize}

This concludes the proof that $T$ is contractible, and by the same argument as for $\Omega\Sn1$ we
obtain that $\Omega\Susp A$ is equivalent to $F(\star_{\Susp A})$, which is equal to $JA$ by definition.

\subsection{Connectivity of the maps $i_n$}

Given two maps $f:X\to A$ and $g:Y\to B$, the \emph{pushout-product} of $f$ and $g$ is the map
\begin{align*}
  f\pp g &: (X\times B)\sqcup^{X\times Y}(A\times Y)\to (A\times B),\\
  (f\pp g)(\inl(x,b)) &\defeq (f(x),b),\\
  (f\pp g)(\inr(a,y)) &\defeq (a,g(y)),\\
  \ap{f\pp g}(\push(x,y)) &\defeq \idp{(f(x),g(y))}.
\end{align*}
We have the commutative square
\[
\begin{tikzcd}
  X\times Y \arrow[rr,"f\times1_Y"] \arrow[dd,"1_X\times g"'] && A\times Y \arrow[dd,"1_A\times g"]
  \arrow[ld,"\inr"'] \\
  & (X\times B)\sqcup^{X\times Y}(A\times Y) \arrow[rd,"f\pp g",dashed]
  \arrow[lu,phantom,"\ulcorner",at start] & \\
  X\times B \arrow[rr,"f\times1_B"] \arrow[ru,"\inl"] && A \times B
\end{tikzcd}
\]

The main result of this subsection is the following.
\begin{proposition}\label{pushoutproduct}
  If $f$ is $m$-connected and $g$ is $n$-connected, then $f\pp g$ is $(m+n+2)$-connected.
\end{proposition}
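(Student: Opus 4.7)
The proof proceeds via Proposition~\ref{inductionconnected}: it suffices to show that for every family $P : A \times B \to \Type$ of $(m+n+2)$-truncated types, every section of $P$ pulled back along $f \pp g$ extends to a section over $A \times B$. Such initial data unfolds to $s_1 : \prod_{x:X,\,b:B} P(f(x), b)$, $s_2 : \prod_{a:A,\,y:Y} P(a, g(y))$, together with a compatibility $s_3$ identifying $s_1(x, g(y))$ with $s_2(f(x), y)$ in $P(f(x), g(y))$ for every $(x, y) : X \times Y$ (using that $\ap{f \pp g}(\push(x,y)) = \idpS$).

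The strategy is to extend first along $g$, for each fixed $a$, and then along $f$. For each $a : A$, define
\[R(a) \defeq \sum_{t : \prod_{b:B} P(a,b)}\;\prod_{y:Y}\bigl(t(g(y)) =_{P(a,\,g(y))} s_2(a, y)\bigr),\]
which is the fiber at $s_2(a, -)$ of the precomposition map along $g$ between $\prod_{b:B} P(a, b)$ and $\prod_{y:Y} P(a, g(y))$. The crucial sub-lemma is that for any $n$-connected map $g : Y \to B$ and any family $Q : B \to \Type$ of $k$-truncated types (with $k \ge n$), the fibers of precomposition along $g$ are $(k - n - 2)$-truncated. Applied to $Q(b) \defeq P(a, b)$, which is $(m+n+2)$-truncated, this shows that each $R(a)$ is $m$-truncated. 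The sub-lemma itself is proved by induction on $k - n$: the base case $k = n$ is exactly the equivalence clause of Proposition~\ref{inductionconnected}, yielding contractible fibers; the inductive step identifies the identity types in a fiber of precomposition with fibers of precomposition for the family of identity types of $Q$, which drops $k$ by one while leaving the connectivity of $g$ unchanged.

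Once $R$ is recognised as an $m$-truncated family over $A$, each $x : X$ determines a canonical element $\bigl(s_1(x, -),\, s_3(x, -)\bigr) \in R(f(x))$. Since $f$ is $m$-connected, Proposition~\ref{inductionconnected} applied to the family $R$ yields a section $t : \prod_{a : A} R(a)$ restricting on $X$ to this prescribed data, and the desired extension is then $H(a, b) \defeq \fst(t(a))(b)$.

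The main obstacle is establishing the precomposition-fiber sub-lemma cleanly by induction on the truncation level; a secondary but delicate point is verifying that the resulting $H$ respects not merely the vertex data $s_1$ and $s_2$ on the two pieces of the pushout, but also the path-level compatibility $s_3$ over $X \times Y$. The latter follows because the element of $R(f(x))$ prescribed on $X$ carries precisely the datum of $s_3$ in its second component, which is then faithfully preserved by the $m$-connectedness extension.
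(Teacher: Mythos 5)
Your proof follows exactly the same strategy as the paper's: split the section data along the pushout into $s_1$, $s_2$, $s_3$, define the family $R(a)$ as the fiber of precomposition along $g$ at $s_2(a,-)$, observe that it is a family of $m$-truncated types, and then conclude by $m$-connectedness of $f$. The only difference is that the paper cites the truncation estimate on precomposition fibers (its Proposition~\ref{inductionconnectedtruncated}, with a reference to \cite[lemma 8.6.1]{hottbook}) rather than re-proving it, whereas you sketch the standard induction on truncation level for that sub-lemma; your sketch is correct, so this is a matter of inlining a reference rather than a different argument.
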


\begin{proof}
  Let’s first recall the following proposition which is a generalization of proposition
  \ref{inductionconnected}. It is proved in \cite[lemma 8.6.1]{hottbook}.
  \begin{proposition}\label{inductionconnectedtruncated}
    If $f:A \to B$ is $n$-connected and $P:B\to\Type$ is a family of $(n+k)$-types, then the map
    \[\lambda s.s\circ f:\prod_{b:B}P(b) \to \prod_{a:A}P(f(a))\] is $(k-2)$-truncated (in the sense that
    all its fibers are $(k-2)$-truncated).
  \end{proposition}

  We now prove that $f\pp g$ is $(m+n+2)$-connected. We consider $P:A\times B\to\Type$ a family of
  $(n+m+2)$-types together with
  \[k:(u:(X\times B)\sqcup^{X\times Y}(A\times Y)) \to P((f\pp g)(u)).\]
  By splitting $k$ in three parts and currying, we have to prove the following proposition.
  
  \begin{proposition}
    Suppose we have $P : A \to B \to \Type$ a family of $(n+m+2)$-types together with
    \begin{align*}
      u &: (x:X)(b:B)\to P(f(x),b),\\
      v &: (a:A)(y:Y)\to P(a, g(y)),\\
      w &: (x:X)(y:Y)\to u(x,g(y))=_{P(f(x),g(y))}v(f(x),y).
    \end{align*}
    Then there exists a map
    \[h : (a:A)(b:B)\to P(a,b)\] together with homotopies
    \begin{align*}
      p &: (x:X)(b:B)\to h(f(x),b) = u(x,b),\\
      q &: (a:A)(y:Y)\to h(a,g(y)) = v(a,y),\\
      r &: (x:X)(y:Y)\to p(x,g(y))\inv\cdot q(f(x),y) = w(x,y).
    \end{align*}
  \end{proposition}
  
  \begin{proof}
    Let’s define $F:A\to\Type$ by
    \[F(a)\defeq \sum_{k:(b:B) \to P(a,b)}((y:Y) \to k(g(y)) = v(a,y)).\]
    For a given $a:A$, the type $F(a)$ is the fiber of the map
    \[\lambda s.s\circ g:\prod_{b:B}P(a,b) \to \prod_{y:Y}P(a,g(y))\]
    at $v(a,-)$.
    Given that $g$ is $n$-connected and $P$ is a family of $(n+m+2)$-truncated types, proposition
    \ref{inductionconnectedtruncated} shows that $F(a)$ is $m$-truncated.
  
    For every $x:X$ we have an element of $F(f(x))$ given by $(u(x, -), w(x, -))$.  Hence, using the
    fact that $f$ is $m$-connected, there is a map $k:(a:A)\to F(a)$ together with a homotopy
    $\varphi$ between $k\circ f$ and $\lambda x.(u(x,-),w(x,-))$.
    We can now define $h$, $p$, $q$, and $r$ by
    \begin{align*}
      h(a,b) &\defeq \fst(k(a))(b),\\
      p(x,b) &\defeq \fst(\varphi(x))(b),\\
      q(a,y) &\defeq \snd(k(a))(y),\\
      r(x,y) &\defeq \snd(\varphi(x))(y).\qedhere
    \end{align*}
  \end{proof}
  This concludes the proof that $f\pp g$ is $(n+m+2)$-connected.
\end{proof}

We can now compute the connectivity of the maps $i_n$.

\begin{proposition}
  For every $n:\N$, the map $i_n$ is $(n(k+1)+(k-1))$-connected.
\end{proposition}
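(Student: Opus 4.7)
The plan is to proceed by induction on $n$, using at each step the fact that $i_{n+1}$ is the map induced by $i_n$ and $i_0$ via a pushout-product construction, together with propositions \ref{pushoutproduct} and \ref{connpushout}.

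For the base case $n=0$, the map $i_0 : J_0 A = \Unit \to J_1 A = A$ is the inclusion of the basepoint of $A$. Since $A$ is $k$-connected by hypothesis, proposition \ref{connectedpointedmap} tells us that $i_0$ is $(k-1)$-connected, which matches the required connectivity $0\cdot(k+1) + (k-1) = k-1$.

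For the inductive step, assume $i_n$ is $m$-connected with $m = n(k+1) + (k-1)$. The key observation is that in the pushout diagram \ref{eq:jn+2a} defining $i_{n+1}$, the top-left vertex $(A\times J_nA)\sqcup^{J_nA}J_{n+1}A$ is precisely (up to rewriting $\Unit\times X \simeq X$) the domain of the pushout-product $i_0 \pp i_n$, and the map $f$ corresponds to $i_0 \pp i_n : (\Unit \times J_{n+1}A) \sqcup^{\Unit\times J_n A}(A\times J_n A) \to A\times J_{n+1}A$ under this identification. Applying proposition \ref{pushoutproduct} with $i_0$ being $(k-1)$-connected and $i_n$ being $m$-connected yields that $i_0 \pp i_n$ is $((k-1) + m + 2) = (m + k + 1)$-connected. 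Then proposition \ref{connpushout} applied to the pushout square defining $J_{n+2}A$ transfers this connectivity from $f$ to the parallel map $i_{n+1}$, yielding connectivity
\[
m + k + 1 = n(k+1) + (k-1) + (k+1) = (n+1)(k+1) + (k-1),
\]
which is exactly what is needed.

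The only genuinely non-trivial step is verifying the identification of $f$ with the pushout-product $i_0 \pp i_n$; once the domains and codomains are matched up using the equivalences $\Unit\times X\simeq X$, this is essentially a matter of unfolding the definitions of both $f$ and of the pushout-product and checking that the maps $x\mapsto (\star_A, x)$ and $(a,x)\mapsto(a,i_n(x))$ and $y\mapsto(\star_A,y)$ agree on both sides, which is routine. The main conceptual content of the proof is thus packaged into propositions \ref{pushoutproduct} and \ref{connpushout}, and the induction itself is essentially arithmetic.
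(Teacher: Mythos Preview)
Your proof is correct and follows essentially the same approach as the paper: induction on $n$, identifying the map $f$ in diagram \ref{eq:jn+2a} as the pushout-product of $i_0$ (the map $\Unit\to A$) and $i_n$, applying proposition \ref{pushoutproduct} to obtain the connectivity of $f$, and then proposition \ref{connpushout} to transfer it to $i_{n+1}$.
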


\begin{proof}
  We proceed by induction on $n$. For $0$, the map $i_0$ is the inclusion of the basepoint of $A$,
  hence $i_0$ is $(k-1)$-connected because $A$ is $k$-connected.

  For $n+1$, the map $f$ in diagram \ref{eq:jn+2a} (the diagram defining $J_{n+2}A$) is the
  pushout-product of $i_n$ and of the map $\Unit\to A$ (which is $(k-1)$-connected). Hence $f$ is
  $((n+1)(k+1)+(k-1))$-connected by proposition \ref{pushoutproduct}. Therefore, by proposition
  \ref{connpushout}, the map $i_{n+1}$ is $((n+1)(k+1)+(k-1))$-connected as well.
\end{proof}

This concludes the James construction.

\section{Whitehead products}

In proposition \ref{whiteheadmap} we give a decomposition of a product of spheres into a pushout of
spheres. This allows us to define Whitehead products, which are used in the next section in the
definition of the natural number $n$ such that $\pi_4(\Sn3)\simeq\Z/n\Z$.
\begin{proposition}\label{whiteheadmap}
  Given $n,m:\N^*$, there is a map $W_{n,m}:\Sn{n+m-1}\to\Sn{n}\vee\Sn{m}$ such that
  \[\Sn n\times\Sn m \simeq 1\sqcup^{\Sn{n+m-1}}(\Sn n\vee\Sn m)\]
  and such that the induced map $\Sn n\vee\Sn m\to\Sn n\times\Sn m$ is $\iwedge_{\Sn n,\Sn m}$.
\end{proposition}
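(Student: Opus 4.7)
The plan is to build the equivalence by applying the $3\times3$-lemma to a suitable $3\times3$-diagram that simultaneously presents $\Sn n\times\Sn m$ as a product (via its suspension decomposition) and as the cofiber of a map from $\Sn{n+m-1}$; the map $W_{n,m}$ will emerge as the induced map between the row-pushouts.

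First, I would define $W_{n,m}$ using the join representation $\Sn{n+m-1}\simeq\Sn{n-1}*\Sn{m-1}$ from proposition \ref{joinspheres}, which expresses the domain as the pushout of $\Sn{n-1}\xleftarrow{\fst}\Sn{n-1}\times\Sn{m-1}\xrightarrow{\snd}\Sn{m-1}$. A map out of this pushout is specified by maps on each leg together with a compatibility homotopy on the middle. I would send both vertex spheres constantly to the wedge basepoint $\star:\Sn n\vee\Sn m$, and for each pair $(x,y):\Sn{n-1}\times\Sn{m-1}$ assign the loop in $\Sn n\vee\Sn m$ obtained from $\ap{\inl}(\varphi_{\Sn{n-1}}(x))$ and $\ap{\inr}(\varphi_{\Sn{m-1}}(y))$ by the appropriate commutator pattern (the classical $[\iota_n,\iota_m]$).

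Second, I would apply the $3\times3$-lemma to a diagram whose middle row realizes the join span $\Sn{n-1}\leftarrow\Sn{n-1}\times\Sn{m-1}\rightarrow\Sn{m-1}$ (pushout $\Sn{n+m-1}$), whose top row realizes the wedge span $\Sn n\leftarrow1\rightarrow\Sn m$ (pushout $\Sn n\vee\Sn m$), and whose bottom row is trivial (pushout $1$). The resulting cospan of row-pushouts is precisely $\Sn n\vee\Sn m\leftarrow\Sn{n+m-1}\rightarrow 1$, so its pushout is the right-hand side of the statement. Computing the column-pushouts, using propositions \ref{pushoutid}, \ref{pushoutprod}, and \ref{pushoutangle}, the idea is to reduce to a span whose pushout is $\Sn n\times\Sn m$ (the natural presentation via the two suspension coordinates). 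The $3\times3$-lemma then delivers the equivalence. To verify that the composite $\Sn n\vee\Sn m\to\Sn n\times\Sn m$ agrees with $\iwedge_{\Sn n,\Sn m}$, I would unfold the equivalence on $\inl(x)$, $\inr(y)$, and $\push(\ttt)$, checking that they map to $(x,\star)$, $(\star,y)$, and $\idp{(\star,\star)}$.

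The main obstacle is designing the $3\times3$-diagram so that the vertical maps on the outer edges (notably the required maps $\Sn{n-1}\to\Sn n$ and $\Sn{m-1}\to\Sn m$, for which no canonical pointed representative exists) produce the correct column-pushouts: making them constant collapses too much, so the suspension structure must be encoded through the corner fillers $H_{ij}$. This is precisely where---as the text foreshadows in the remark before proposition \ref{whiteheadab}---at least one of the fillers must be non-constant, and the full strength of the $3\times3$-lemma is needed. Once the diagram is in place, matching the extracted attaching map with the join-based definition of $W_{n,m}$ is a matter of tracing the definition of $f_{\pt1}$ (and $f_{\push}$) from the proof of the $3\times3$-lemma.
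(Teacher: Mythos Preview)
Your overall strategy---use the $3\times3$-lemma together with the join identification $\Sn{n+m-1}\simeq\Sn{n-1}*\Sn{m-1}$---is exactly the paper's approach (the paper in fact proves the general version for arbitrary $A,B$ first, in proposition~\ref{whiteheadab}, and then specializes). However, the specific diagram you propose does not work, and you have correctly identified but not resolved the obstruction.

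Your proposed diagram has top row $\Sn n\leftarrow 1\rightarrow\Sn m$, middle row $\Sn{n-1}\leftarrow\Sn{n-1}\times\Sn{m-1}\rightarrow\Sn{m-1}$, and a trivial bottom row. The column pushouts are then determined entirely by the vertical maps, \emph{not} by the fillers $H_{ij}$; the fillers only affect the induced maps between the column pushouts, never the pushouts themselves. With constant vertical maps $\Sn{n-1}\to\Sn n$ and $\Sn{m-1}\to\Sn m$, the left column pushout is $\Sn n\sqcup^{\Sn{n-1}}1\simeq\Sn n\vee\Sn n$, which cannot be repaired by a choice of filler. So your hope that ``the suspension structure must be encoded through the corner fillers'' is not realizable in this symmetric layout.

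The paper's resolution is to use an \emph{asymmetric} diagram: the suspension $\Susp A$ (here $\Sn n$) is placed directly in the top-left corner, with the middle-left entry $B$ (here $\Sn{m-1}$) mapping into it via $\south$, and the top-middle entry $B$ mapping in via $\north$. The unique non-constant filler is then $\alpha(x,y)\defeq\merid(x):\north=_{\Susp A}\south$. With this arrangement the row pushouts are still $\Susp A\vee\Susp B$, $A*B$, and $1$, but now the column pushouts become $\Susp A$, $\Susp A\times B$, $\Susp A$ with both induced maps equal to $\fst$, and by proposition~\ref{pushoutprod} their pushout is $\Susp A\times\Susp B$. The asymmetry is what lets one $\merid$ live in a corner object while the other $\merid$ is produced by the column pushout.
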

We first prove the following more general version which isn’t more complicated to prove.
\begin{proposition}\label{whiteheadab}
  Given two types $A$ and $B$, there is a map $W_{A,B}:A*B\to\Susp A\vee\Susp B$ such that
  \[\Susp A\times\Susp B \simeq 1\sqcup^{A*B}(\Susp A\vee\Susp B)\]
  and such that the induced map $\Susp A\vee\Susp B\to\Susp A\times\Susp B$ is $\iwedge_{\Susp
    A,\Susp B}$.
\end{proposition}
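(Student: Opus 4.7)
The plan is to establish the equivalence by a single application of the $3\times 3$-lemma to a diagram whose central entry is $A\times B$ and whose middle row is the span $A\xleftarrow{\fst}A\times B\xrightarrow{\snd}B$, so that the middle-row pushout is $A*B$. The strategy is to arrange the corners and surrounding maps so that the two iterated pushouts guaranteed to agree by the lemma are, respectively, the attaching-cell pushout $\Unit\sqcup^{A*B}(\Susp A\vee\Susp B)$ and (after simplification using propositions \ref{pushoutid} and \ref{pushoutprod}) the product $\Susp A\times\Susp B$. The attaching map $W_{A,B}$ will not be chosen in advance: it should be read off from the explicit formula $f_{\pt1}$ for the induced map between row-pushouts, with its non-triviality coming precisely from the non-constant homotopies $H_{11}$ and $H_{13}$ (as foreshadowed at the end of the previous section).

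Concretely, I would start from a diagram of the shape
\[
\begin{array}{ccc}
\Susp A & \Unit & \Susp B\\
A & A\times B & B\\
\Unit & \Unit & \Unit
\end{array}
\]
with $f_{01}$ and $f_{03}$ selecting the north poles, $f_{10}$ and $f_{14}$ constant at the corresponding south poles, $f_{21}=\fst$, $f_{23}=\snd$, and all remaining maps the unique ones into $\Unit$. The homotopies $H_{11}(a,b)\defeq\merid(a)$ and $H_{13}(a,b)\defeq\merid(b)$ are then non-constant meridians, while $H_{31}$ and $H_{33}$ are forced to be trivial by the corner $\Unit$'s. Row-wise this gives pushouts $\Susp A\vee\Susp B$ (top), $A*B$ (middle), and $\Unit$ (bottom), and the column of row-pushouts yields $\Unit\sqcup^{A*B}(\Susp A\vee\Susp B)$. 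The formula for $f_{\pt1}$ in the proof of the $3\times 3$-lemma then forces $W_{A,B}$ to send $\inl(a)$ and $\inr(b)$ to the wedge-point and $\push(a,b)$ to a composite of the form $\ap\inl(\merid(a))\concat\push(\ttt)\concat\ap\inr(\merid(b))^{-1}$.

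Independently, I would compute the column-wise iterated pushout and simplify it, by repeated applications of proposition \ref{pushoutprod} (for the middle column $A\leftarrow A\times B\to B$) and proposition \ref{pushoutid} where applicable, to the standard pushout presentation of $\Susp A\times\Susp B$. Once both iterated pushouts are identified, the $3\times 3$-lemma gives the equivalence $\Susp A\times\Susp B\simeq \Unit\sqcup^{A*B}(\Susp A\vee\Susp B)$; the identification of the induced map $\Susp A\vee\Susp B\to\Susp A\times\Susp B$ with $\iwedge_{\Susp A,\Susp B}$ is then checked by following the two natural inclusions of $\Susp A\vee\Susp B$ through the construction.

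The hard part will be balancing the two ends of the diagram. With the naive choice above, the column pushouts produce cofibers of constant maps such as $\Susp A\vee\Susp A$ rather than something recognisably $\Susp A\times\Susp B$, so either the corners must be refined (replacing some of the $\Unit$'s by more informative types while keeping $H_{11}$ and $H_{13}$ as the desired meridians), or the column-side must be re-expressed through a secondary pushout manipulation before comparing with the product. Once a working diagram is found, the remaining bookkeeping — verifying the coherences coming from the non-trivial $H_{11}$ and $H_{13}$, and matching the induced wedge-inclusion on the nose with $\iwedge_{\Susp A,\Susp B}$ rather than merely up to a spurious homotopy — is routine but delicate.
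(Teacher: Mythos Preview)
Your overall strategy matches the paper's: both proofs use a single $3\times3$-lemma application with the middle row giving $A*B$, the top row giving $\Susp A\vee\Susp B$, the bottom row contractible, and the non-trivial meridian homotopies living in the upper squares. You also correctly anticipate that the attaching map $W_{A,B}$ should be read off from the formula for $f_{\pt1}$.

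However, there is a genuine gap: you never find a working diagram. You correctly diagnose that your naive grid (with $\Unit$ everywhere except the corners and center) produces column pushouts like $\Susp A\vee\Susp A$ and $\Susp(A\times B)$, which do not assemble into $\Susp A\times\Susp B$, and you leave the repair as an open problem (``either the corners must be refined\dots''). The paper's fix is a specific, non-obvious asymmetric choice: rather than $\Unit$'s, it fills several slots with $B$, namely
\[
\begin{array}{ccc}
\Susp A & B & \Unit\\
B & A\times B & A\\
B & B & \Unit
\end{array}
\]
with $f_{10}:B\to\Susp A$ constant at $\south$, $f_{30}=\id_B$, and $H_{11}(x,y)=\merid(x)$ the only non-constant filler. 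The point of the extra $B$'s is that the left column becomes $\Susp A\leftarrow B\xrightarrow{\id}B$, which collapses to $\Susp A$ by proposition~\ref{pushoutid}; the middle column becomes $B\leftarrow A\times B\rightarrow B$ with both maps $\snd$, which is $\Susp A\times B$ by proposition~\ref{pushoutprod}; and the right column is $\Susp A$. The resulting span $\Susp A\leftarrow\Susp A\times B\rightarrow\Susp A$ has both legs equal to $\fst$ (checking this for the left leg is exactly where $H_{11}=\merid$ is used), so its pushout is $\Susp A\times\Susp B$ by proposition~\ref{pushoutprod} again. This asymmetric padding by $B$ is the missing idea in your proposal.
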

\begin{proof}
  We use the $3\times3$-lemma with the diagram
  \[
  \begin{tikzcd}
    \Susp A & B \arrow[l,"\north"'] \arrow[r] \arrow[ld,"\alpha"',Rightarrow,shorten >= 1.5ex,
    shorten <= 1.5ex] & 1 \\
    B \arrow[u,"\south"] \arrow[d,"\id"'] & A\times B \arrow[l,"\snd"] \arrow[r,"\fst"']
    \arrow[u,"\snd"']
    \arrow[d,"\snd"] & A \arrow[u] \arrow[d] \\
    B & B \arrow[l,"\id"] \arrow[r] & 1
  \end{tikzcd}
  \]
  where $\alpha:A\times B\to\north=_{\Susp A}\south$ is defined by
  $\alpha(x,y)\defeq\merid(x)$.

  The pushout of the top row is equivalent to $\Susp A\vee\Susp B$, the pushout of the middle row
  is equivalent to the join $A*B$ and the pushout of the bottom row is contractible, so the pushout
  of the pushouts of the rows is equivalent to $1\sqcup^{A*B}(\Susp A\vee\Susp B)$ for the map
  $A*B\to\Susp A\vee\Susp B$ defined by
  \begin{align*}
    W_{A,B} &: A*B \to \Susp A\vee\Susp B,\\
    W_{A,B}(\inl(a)) &\defeq \inr(\north),\\
    W_{A,B}(\inr(b)) &\defeq \inl(\north),\\
    \ap{W_{A,B}}(\push(a,b)) &\defeq \ap\inr(\varphi_B(b))\concat\push(\ttt)\concat\ap\inl(\varphi_A(a)).
  \end{align*}

  The pushouts of the left and of the right columns are both equivalent to $\Susp A$, and the
  pushout of the middle column is equivalent to $\Susp A\times B$. Moreover, the horizontal map on
  the left between $\Susp A\times B$ and $\Susp A$ is equal to $\fst$, as can be proved by
  induction using the definition of $\alpha$. The horizontal map on the right is also equal to
  $\fst$. Hence the pushout of the pushout of the columns is equivalent to $\Susp A\times\Susp
  B$. Therefore we have
  \[\Susp A\times\Susp B \simeq 1\sqcup^{A*B}(\Susp A\vee\Susp B)\]
  and it can be checked that the induced map $\Susp A\vee\Susp B\to\Susp A\times\Susp B$ is equal to
  $i^\vee_{\Susp A,\Susp B}$.
\end{proof}

\begin{proof}[Proof of proposition \ref{whiteheadmap}]
  We apply proposition \ref{whiteheadab} to $A\defeq\Sn{n-1}$ and $B\defeq\Sn{m-1}$, and we obtain
  \[\Sn n\times\Sn m \simeq 1\sqcup^{\Sn{n-1}*\Sn{m-1}}(\Sn n\vee\Sn m).\]
  Moreover, we have $\Sn{n-1}*\Sn{m-1}\simeq\Sn{n+m-1}$ by proposition \ref{joinspheres}, which
  concludes.
\end{proof}

This allows us to define the following operation on homotopy groups.
\begin{definition}
  Given a pointed type $X$ and two positive integers $n$ and $m$, the \emph{Whitehead product} is
  the function
  \[[-,-] : \pi_n(X) \times \pi_m(X) \to \pi_{n+m-1}(X)\]
  defined by composition with $W_{n,m}$ when representing elements of homotopy groups as maps from
  the spheres.
\end{definition}

\section{Application to homotopy groups of spheres}

The sphere $\Sn n$ is $(n-1)$-connected, therefore by the Freudenthal suspension theorem
\ref{freud}, the map $\varphi_{\Sn n}:\Sn n\to\Omega\Sn{n+1}$ is $(2n-2)$-connected. On homotopy
groups it gives the following result.

\begin{proposition}
  For $k,n:\N$, the map $\pi_{n+k}(\Sn n)\to\pi_{n+k+1}(\Sn{n+1})$ is an isomorphism if $n\ge k+2$
  and surjective if $n=k+1$.
\end{proposition}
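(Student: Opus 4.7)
The plan is to assemble three ingredients already available in the excerpt: (1) the sphere $\Sn n$ is $(n-1)$-connected (Proposition \ref{sphereconn}), (2) the Freudenthal suspension theorem just stated as Corollary \ref{freud}, which gives that $\varphi_{\Sn n}\colon\Sn n\to\Omega\Sn{n+1}$ is $2(n-1)=(2n-2)$-connected, and (3) the proposition at the end of the previous chapter stating that an $m$-connected pointed map induces an isomorphism on $\pi_j$ for $j\le m$ and a surjection on $\pi_{m+1}$.

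First I would identify the map whose action on homotopy groups is being asserted. The map $\pi_{n+k}(\Sn n)\to\pi_{n+k+1}(\Sn{n+1})$ is the composite
\[
\pi_{n+k}(\Sn n)\xrightarrow{\pi_{n+k}(\varphi_{\Sn n})}\pi_{n+k}(\Omega\Sn{n+1})\xrightarrow{\sim}\pi_{n+k+1}(\Sn{n+1}),
\]
where the second arrow is the loop-shift isomorphism of Proposition \ref{piomega}. So it suffices to analyze $\pi_{n+k}(\varphi_{\Sn n})$.

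Next I would apply Corollary \ref{freud}: since $\Sn n$ is $(n-1)$-connected, $\varphi_{\Sn n}$ is $(2n-2)$-connected. Plugging $m=2n-2$ into the connectivity-vs-homotopy-groups proposition, the induced map on $\pi_{n+k}$ is an isomorphism whenever $n+k\le 2n-2$, i.e.\ $n\ge k+2$, and a surjection whenever $n+k=2n-1$, i.e.\ $n=k+1$. Composing with the isomorphism from Proposition \ref{piomega} preserves both conclusions, giving the claimed statement.

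There is essentially no obstacle; the proof is a one-line bookkeeping exercise combining the three cited results. The only mild subtlety worth flagging is to check that the composite above is indeed the map the reader has in mind when writing "$\pi_{n+k}(\Sn n)\to\pi_{n+k+1}(\Sn{n+1})$" (namely the suspension map on homotopy groups), but this follows immediately from the definition of $\varphi_{\Sn n}$ and of the isomorphism $\pi_n(\Omega A)\simeq\pi_{n+1}(A)$ constructed in Proposition \ref{piomega}.
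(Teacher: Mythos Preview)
Your proposal is correct and matches the paper's approach exactly: the paper simply notes before stating the proposition that $\Sn n$ is $(n-1)$-connected so by Freudenthal $\varphi_{\Sn n}$ is $(2n-2)$-connected, and then implicitly appeals to the same connectivity-to-homotopy-groups proposition you cite. You have just spelled out the bookkeeping that the paper leaves implicit.
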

On the table \ref{hgs} of homotopy groups of spheres this corresponds to the fact that the diagonals
$\pi_{n+k}(\Sn n)$ (for a fixed $k$) are constant above the zigzag line.  In particular, for $k=0$
and $k=1$ we have the following result. I already obtained the result for $\pi_n(\Sn n)$ in
collaboration with Dan Licata in \cite{pinsn} but with a different technique.
\begin{cor}\label{pinsn}
  For $n\ge2$ we have $\pi_n(\Sn n)\simeq\pi_2(\Sn2)\simeq\Z$.
  For $n\ge3$ we have $\pi_{n+1}(\Sn n)\simeq\pi_4(\Sn3)$ and the map $\pi_3(\Sn2)\to\pi_4(\Sn3)$ is
  surjective.
\end{cor}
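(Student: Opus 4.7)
The plan is to obtain both parts as immediate applications of the previous proposition (the Freudenthal-based stability statement) together with the earlier computation $\pi_2(\Sn2)\simeq\Z$ coming from the Hopf fibration.

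First I would treat the case of $\pi_n(\Sn n)$ by taking $k=0$ in the previous proposition: the suspension map $\pi_n(\Sn n)\to\pi_{n+1}(\Sn{n+1})$ is an isomorphism as soon as $n\ge 2$. Iterating this downwards starting from an arbitrary $n\ge 2$ yields a chain of isomorphisms
\[
\pi_n(\Sn n)\simeq\pi_{n-1}(\Sn{n-1})\simeq\cdots\simeq\pi_2(\Sn 2),
\]
and the right-hand side is $\Z$ by the proposition computing $\pi_2(\Sn2)$ from the long exact sequence of the Hopf fibration (combined with $\pi_1(\Sn 1)\simeq\Z$ from proposition \ref{prop:pi1s1} and $\pi_2(\Sn3)\simeq 0$ from proposition \ref{lowerhgs}).

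Second, I would treat $\pi_{n+1}(\Sn n)$ by taking $k=1$: the suspension map $\pi_{n+1}(\Sn n)\to\pi_{n+2}(\Sn{n+1})$ is an isomorphism when $n\ge 3$ and merely surjective when $n=2$. Again iterating downwards from an arbitrary $n\ge 3$ gives
\[
\pi_{n+1}(\Sn n)\simeq\pi_n(\Sn{n-1})\simeq\cdots\simeq\pi_4(\Sn 3),
\]
and the surjectivity of $\pi_3(\Sn 2)\to\pi_4(\Sn 3)$ is exactly the boundary case $n=2$ of the same proposition.

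There is essentially no obstacle here: both assertions are direct specializations of the stability range, and the only ingredient that is not purely formal is the identification of the base case $\pi_2(\Sn 2)\simeq\Z$, which was already established. The only small care needed is to note that each instance of the previous proposition supplies a group homomorphism (the looped suspension map), so the iterated equivalences are automatically group isomorphisms rather than mere bijections of underlying sets.
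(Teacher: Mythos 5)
Your proposal is correct and follows exactly the route the paper intends: the corollary is presented as an immediate specialization of the preceding proposition to $k=0$ and $k=1$, iterated downward to the base cases $\pi_2(\Sn2)\simeq\Z$ and the boundary surjection $\pi_3(\Sn2)\to\pi_4(\Sn3)$. Your remark that the looped suspension maps are group homomorphisms, so the chain of bijections consists of group isomorphisms, is a worthwhile detail the paper leaves implicit.
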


Note that as we are working constructively this does \emph{not} imply that $\pi_4(\Sn3)$ is of the
form $\Z/n\Z$ for some $n:\Z$. Indeed, it cannot be proved constructively that every subgroup of
$\Z$ is of the form $n\Z$ for some $n:\Z$, as there is no way to compute this $n$ in general. In
this case, however, we can use the James construction to give an explicit definition of the kernel of
that map. We will need the Blakers--Massey theorem (\cite{blakersmassey}), which is stated as
\cite[theorem 8.10.2]{hottbook} and which is proved in Agda in \cite{blakersmasseyagda}.

\begin{proposition}[Blakers--Massey theorem]
  Given two maps $f:C\to A$ and $g:C\to B$, we consider $D\defeq A\sqcup^CB$,
  \[E\defeq\sum_{a:A}\sum_{b:B}(\inl(a)=_D\inr(b)),\]
  and $h : C\to E$ defined by $h(c) \defeq (f(c),g(c),\push(c))$.
  \[
  \begin{tikzcd}
    C \arrow[rrd,bend left,"g"] \arrow[rdd,bend right,"f"'] \arrow[rd,dashed,"h"] & & \\
    & E \arrow[r] \arrow[d] \arrow[rd,phantom,"\lrcorner",at start] & B \arrow[d,"\inr"]\\
    & A \arrow[r,"\inl"'] & D
  \end{tikzcd}
  \]
  If $f$ is $n$-connected and $g$ is $m$-connected, then $h$ is $(n+m)$-connected.
\end{proposition}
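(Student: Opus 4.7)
My plan is to prove Blakers--Massey by an encode--decode argument on the pushout $D$, using the wedge connectivity lemma (a strengthening of proposition \ref{inductionconnectedtruncated} to two-variable maps) as the key combinatorial tool. Concretely, what needs to be shown is that for every $(a,b,p):E$ the fiber $\fib_h(a,b,p)$ is $(n+m)$-connected, i.e.\ its $(n+m)$-truncation is contractible. Since a dependent path in the total space $E$ over a loop in $A\sqcup^CB$ encodes both endpoints and the proof of equality, it suffices to construct, for every $a:A$ and $b:B$, a family of $(n+m)$-truncated types
\[
\operatorname{code}(a,b):(\inl(a)=_D\inr(b))\to\Type
\]
whose total space $\sum_{p}\operatorname{code}(a,b,p)$ is equivalent (over $\inl(a)=_D\inr(b)$) to the $(n+m)$-truncation of the fiber of $h$ restricted to $(a,b)$.

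First I would define the codes simultaneously as two families $L:A\to A\to\Type$ and $R:A\to B\to\Type$, thought of as codes for paths $\inl(a)=\inl(a')$ and $\inl(a)=\inr(b)$ in $D$, and then glue them using $\push$. The definitions mirror those used in the proof of $\pi_1(\Sn1)\simeq\Z$ and of Freudenthal, but now indexed over the whole pushout. For $R(a,b)$ the natural candidate is the $(n+m)$-truncation of a suitable union of types of "zigzags" of the form $\sum_{c:C}(f(c)=a)\times(g(c)=b)$, composed with further codes; to make this well-defined as a family on $D$ via univalence, one must produce equivalences
\[
R(a,g(c))\simeq L(a,f(c)),\qquad L(a',f(c))\simeq R(a',g(c))
\]
depending on $c:C$ and compatible with the $\push$-equalities up to higher coherence.

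The key step, and the main obstacle, is verifying the coherence of these equivalences: one must check that the transport of codes along $\push(c)\concat\push(c')\inv$ agrees with the composition of the two equivalences, and similarly for higher dimensional glueings. This is exactly where wedge connectivity enters. The lemma says that given an $n$-connected pointed type $X$ and an $m$-connected pointed type $Y$, any family of $(n+m)$-truncated types over $X\times Y$ together with sections over $X\vee Y$ extends to a section over $X\times Y$; applied to the fibers $\fib_f(a)$ (which are $n$-connected) and $\fib_g(b)$ (which are $m$-connected), it supplies precisely the missing "diagonal" data needed to define the codes consistently on two adjacent applications of $\push$. The truncation level $(n+m)$ in the conclusion of Blakers--Massey is forced by the truncation level required in wedge connectivity, which is why $(n+m)$ is the right bound.

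Finally, once the codes are defined and shown to be $(n+m)$-truncated, one proves by path induction on $p:\inl(a)=_D\inr(b)$ (or equivalently, by showing both sides have the same induction principle) that $\sum_{p}\operatorname{code}(a,b,p)$ is contractible and that the canonical map from the truncated fiber of $h$ into the codes is an equivalence. Combining these two facts gives that $\|\fib_h(a,b,p)\|_{n+m}$ is contractible for every $(a,b,p)$, which is exactly the statement that $h$ is $(n+m)$-connected. Given the technical delicacy of the higher coherences involved, in practice one either follows the formalization of \cite{blakersmasseyagda}, or invokes the more conceptual "modal" reformulation; here I would simply cite the formalization and use the statement as a black box, as the rest of the thesis does.
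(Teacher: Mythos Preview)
The paper does not give a proof of Blakers--Massey at all: it simply states the result and cites \cite{blakersmassey}, \cite[theorem 8.10.2]{hottbook}, and the Agda formalization \cite{blakersmasseyagda}, then uses it as a black box in the proof of proposition \ref{kerneljames}. Your final sentence is therefore exactly what the paper does, and is the correct answer here.

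The preceding sketch you give is broadly in line with the known encode--decode proof (Favonia--Finster--Licata--Lumsdaine), though a couple of points are imprecise: you cannot do ``path induction on $p:\inl(a)=_D\inr(b)$'' since the endpoints are fixed and distinct constructors, so the contractibility of the total space of codes must instead be obtained by the induction principle of the pushout together with the flattening lemma; and the wedge-connectivity lemma you invoke is essentially the pushout-product connectivity result already proved in the thesis as proposition \ref{pushoutproduct}. But none of this matters for the thesis, since the statement is only cited, not proved.
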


We now prove the following proposition.
\begin{proposition}\label{kerneljames}
  For $n\ge2$, the map $\pi_{2n-1}(\Sn n)\to \pi_{2n}(\Sn{n+1})$ induced by $\varphi_{\Sn n}$ is
  surjective and its kernel is generated by the Whitehead product $[i_n,i_n]$, where $i_n$ is the
  generator of $\pi_n(\Sn n)$.
\end{proposition}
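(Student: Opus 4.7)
The plan is to use the second James approximation $J_2\Sn n$ as an intermediate. Since $\Sn n$ is $(n-1)$-connected, Corollary \ref{corjames} gives that the canonical map $J_2\Sn n \to \Omega\Sn{n+1}$ is $(3n-2)$-connected. For $n\ge 2$ we have $3n-2 \ge 2n-1$, so this map induces an isomorphism on $\pi_{2n-1}$. Moreover the composition $\Sn n \xrightarrow{i_1} J_2\Sn n \to \Omega\Sn{n+1}$ is $\varphi_{\Sn n}$, so the kernel and surjectivity of $\pi_{2n-1}(\varphi_{\Sn n})$ can be computed from the map $\pi_{2n-1}(\Sn n) \to \pi_{2n-1}(J_2\Sn n)$ induced by $i_1$. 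Surjectivity itself actually follows directly from Freudenthal (Corollary \ref{freud}) since $\varphi_{\Sn n}$ is $(2n-2)$-connected, so I would dispose of that first.

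Next I would simplify $J_2\Sn n = (\Sn n\times\Sn n)\sqcup^{\Sn n\vee\Sn n}\Sn n$. Using proposition \ref{whiteheadmap}, $\Sn n\times\Sn n \simeq \Unit\sqcup^{\Sn{2n-1}}(\Sn n\vee\Sn n)$ with the map $\Sn n\vee\Sn n \to \Sn n\times\Sn n$ being $\iwedge$. Substituting into the definition of $J_2\Sn n$ and applying proposition \ref{pushoutangle}, the two $\Sn n\vee\Sn n$ pushouts combine to give
\[
J_2\Sn n \;\simeq\; \Unit\sqcup^{\Sn{2n-1}}\Sn n,
\]
where the attaching map $\Sn{2n-1}\to\Sn n$ is the composite $\nabla\circ W_{n,n}$, that is, the Whitehead product $[i_n,i_n]$. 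In this description $i_1$ corresponds to $\inr:\Sn n\to J_2\Sn n$.

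Now I have a pushout square
\[
\begin{tikzcd}
\Sn{2n-1} \arrow[r,"{[i_n,i_n]}"] \arrow[d] & \Sn n \arrow[d,"\inr"] \\
\Unit \arrow[r] & J_2\Sn n
\end{tikzcd}
\]
and I would apply the Blakers--Massey theorem to it, comparing $\Sn{2n-1}$ to the fiber $F\defeq\fib_{\inr}(\star)$. The left map $\Sn{2n-1}\to\Unit$ is $(2n-2)$-connected, and using the long exact sequence for $[i_n,i_n]$ (or directly) the map $[i_n,i_n]:\Sn{2n-1}\to\Sn n$ is at least $(n-2)$-connected since both types are $(n-1)$-connected. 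Blakers--Massey then gives that $\Sn{2n-1}\to F$ is $(3n-4)$-connected; in particular for $n\ge2$ the induced map $\pi_{2n-1}(\Sn{2n-1})\to\pi_{2n-1}(F)$ is surjective, since $2n-1\le 3n-3$. The key point is that the composite $\Sn{2n-1}\to F\to \Sn n$ is the Whitehead product $[i_n,i_n]$ by construction of the Blakers--Massey comparison map.

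Finally I read off the conclusion from the long exact sequence of the fibration $F\to \Sn n\to J_2\Sn n$: the kernel of $\pi_{2n-1}(\Sn n)\to\pi_{2n-1}(J_2\Sn n)$ equals the image of $\pi_{2n-1}(F)\to \pi_{2n-1}(\Sn n)$, which by the surjectivity above is generated by the image of the generator of $\pi_{2n-1}(\Sn{2n-1})$, that is, by $[i_n,i_n]$. The main obstacle I expect is the bookkeeping in step two, namely carefully tracking through the $3\times3$-lemma (or iterated pushout manipulations via propositions \ref{whiteheadab} and \ref{pushoutangle}) to verify that the attaching map really is the Whitehead product and that under this identification $i_1$ corresponds to $\inr$; the Blakers--Massey step, by contrast, is essentially mechanical once the connectivities are checked.
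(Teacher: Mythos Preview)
Your proposal is correct and follows essentially the same route as the paper: reduce to $\pi_{2n-1}(\Sn n)\to\pi_{2n-1}(J_2\Sn n)$ via corollary \ref{corjames}, collapse $J_2\Sn n$ to $\Unit\sqcup^{\Sn{2n-1}}\Sn n$ using propositions \ref{whiteheadmap} and \ref{pushoutangle}, then apply Blakers--Massey with the same connectivity estimates and read off the kernel from the long exact sequence of the fiber of $\inr$. The only cosmetic difference is that the paper obtains surjectivity from $\pi_{2n-2}(P)=0$ (a consequence of the same Blakers--Massey bound) rather than invoking Freudenthal separately; your anticipated bookkeeping about $i_1\leftrightarrow\inr$ is exactly what the paper leaves implicit.
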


\begin{proof}
  Applying corollary \ref{corjames} to $\Sn n$ which is $(n-1)$-connected, we get a
  $(3n-2)$-connected map from $J_2(\Sn n)$ to $\Omega\Sn{n+1}$. In particular, given that
  $2n-1<3n-2$, it means that
  \[\pi_{2n-1}(J_2(\Sn n))\simeq\pi_{2n-1}(\Omega\Sn{n+1})\simeq\pi_{2n}(\Sn{n+1}),\] so we now
  study the map $\pi_{2n-1}(\Sn n)\to \pi_{2n-1}(J_2(\Sn n))$.  We know from the James construction
  that \[J_2(\Sn n)\simeq(\Sn n\times\Sn n)\sqcup^{\Sn n\vee\Sn n}\Sn n,\]
  hence using the decomposition of $\Sn n\times\Sn n$ given in proposition \ref{whiteheadmap}, we
  get
  \[J_2(\Sn n)\simeq(\Unit\sqcup^{\Sn{2n-1}}(\Sn n\vee\Sn n))\sqcup^{\Sn n\vee\Sn n}\Sn n\]
  where the map from $\Sn n\vee\Sn n$ to the pushout on the left is $\inr$ (i.e.\ it’s the identity
  on the second component). Using proposition \ref{pushoutangle}, this reduces to
  \[J_2(\Sn n)\simeq\Unit\sqcup^{\Sn{2n-1}}\Sn n,\]
  where the map $\Sn{2n-1}\to\Sn n$ is the Whitehead map $W_{n,n}:\Sn{2n-1}\to\Sn n\vee\Sn n$
  composed with the folding map $\fold_{\Sn n}:\Sn n\vee\Sn n\to\Sn n$.

  We now take the fiber $P$ of the map $\Sn n\to J_2(\Sn n)$, which is the pullback of the two
  maps from $\Sn n$ and $\Unit$ to $J_2(\Sn n)$
  \[
  \begin{tikzcd}
    \Sn{2n-1} \arrow[rrr,"{\fold_{\Sn n}\circ W_{n,n}}"] \arrow[ddd] \arrow[rd, dashed] &&& \Sn n \arrow[ddd] \\
    & P \arrow[rru] \arrow[ddl] \arrow[rrdd,phantom,"\lrcorner",at start] && \\ \\
    \Unit \arrow[rrr] &&& J_2(\Sn n) \arrow[lluu,phantom,"\ulcorner",at start]
  \end{tikzcd}
  \]
  The relevant part of the long exact sequence of homotopy groups for $P\to \Sn n\to J_2(\Sn n)$ is
  \[
  \begin{tikzcd}
    \pi_{2n-1}(P) \arrow[r]& \pi_{2n-1}(\Sn n) \arrow[r]& \pi_{2n-1}(J_2(\Sn n)) \arrow[r]& \pi_{2n-2}(P).
  \end{tikzcd}
  \]
  The map from $\Sn{2n-1}$ to $\Unit$ is $(2n-2)$-connected and the map from $\Sn{2n-1}$ to $\Sn n$
  is $(n-2)$-connected (indeed, every map between two $(n-1)$-connected types is $(n-2)$-connected),
  hence using the Blakers-Massey theorem, the dashed map from $\Sn{2n-1}$ to $P$ is
  $(3n-4)$-connected. Given that $2n-2\le3n-4$, it follows that
  $\pi_{2n-2}(P)\simeq\pi_{2n-2}(\Sn{2n-1})\simeq0$.

  Hence $\pi_{2n-1}(J_2(\Sn n))$ is the quotient of $\pi_{2n-1}(\Sn n)$ by the image of the map
  $\pi_{2n-1}(P)\to\pi_{2n-1}(\Sn n)$. But the dashed map is surjective on $\pi_{2n-1}$, so it’s the
  same as the image of the map $\pi_{2n-1}(\Sn{2n-1})\to\pi_{2n-1}(\Sn n)$, which is generated by
  $[i_n,i_n]$, by definition of the Whitehead product.

  Therefore, the kernel of the map $\pi_{2n-1}(\Sn n)\to\pi_{2n}(\Sn{n+1})$ is generated by $[i_n,i_n]$.
\end{proof}
In particular, applying this result to $n=2$ and using the fact that $\pi_3(\Sn2)\simeq\Z$, we get the
following corollary.
\begin{cor}\label{firstpi4s3}
  We have
  \[\pi_4(\Sn3)\simeq\Z/n\Z,\]
  where $n$ is the absolute value of the image of $[i_2,i_2]$ by the equivalence
  $\pi_3(\Sn2)\stackrel\sim\longrightarrow\Z$.
\end{cor}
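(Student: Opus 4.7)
The plan is to apply Proposition \ref{kerneljames} in the case $n=2$. That proposition gives us immediately that the map $\pi_3(\Sn 2) \to \pi_4(\Sn 3)$ induced by the suspension unit $\varphi_{\Sn 2}$ is surjective, with kernel generated by the Whitehead product $[i_2,i_2] \in \pi_3(\Sn 2)$. So $\pi_4(\Sn 3)$ is isomorphic to the quotient of $\pi_3(\Sn 2)$ by the subgroup generated by $[i_2,i_2]$.

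Next, I would identify $\pi_3(\Sn 2)$ with $\Z$. This is a direct combination of two results already established: the long exact sequence of the Hopf fibration (at the end of Section~\ref{hgshopf}) gives $\pi_3(\Sn 2) \simeq \pi_3(\Sn 3)$, and Corollary~\ref{pinsn} gives $\pi_3(\Sn 3) \simeq \pi_2(\Sn 2) \simeq \Z$. Composing these isomorphisms fixes a particular identification $\pi_3(\Sn 2) \stackrel\sim\to \Z$, which is the one referred to in the statement.

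Transporting the previous paragraph through this identification, $\pi_4(\Sn 3)$ is isomorphic to $\Z$ modulo the subgroup generated by the image of $[i_2,i_2]$. Writing $n$ for the absolute value of this image (so that the subgroup it generates in $\Z$ is $n\Z$, regardless of sign), we conclude that $\pi_4(\Sn 3) \simeq \Z/n\Z$.

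There is no real obstacle in this corollary itself: the hard work is entirely packaged in Proposition~\ref{kerneljames} and in the two identifications $\pi_3(\Sn 2) \simeq \pi_3(\Sn 3) \simeq \Z$ coming from the Hopf fibration and the Freudenthal-based computation of $\pi_n(\Sn n)$. The only mild subtlety is the choice of sign: different conventions for the equivalence $\pi_3(\Sn 2) \simeq \Z$ would change the sign of the image of $[i_2,i_2]$, which is why the statement takes an absolute value rather than a specific integer. The genuinely difficult problem, deferred to the next chapters, is of course the \emph{computation} of this number $n$.
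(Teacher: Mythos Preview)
Your proposal is correct and follows exactly the same approach as the paper: apply Proposition~\ref{kerneljames} with $n=2$ and use the identification $\pi_3(\Sn2)\simeq\Z$ coming from the Hopf fibration and Corollary~\ref{pinsn}. The paper's proof is essentially a one-line remark preceding the corollary, and you have simply spelled out the details (including the reason for taking the absolute value) more explicitly.
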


This result is quite remarkable in that even though it is a constructive proof, it is not at all
obvious how to actually compute this $n$. At the time of writing, we still haven’t managed to
extract its value from its definition. A complete and concise definition of this number $n$ is
presented in appendix \ref{ch:defn}, for the benefit of someone wanting to implement it in a
prospective proof assistant. In the rest of this thesis, we give a mathematical proof in homotopy
type theory that $n=2$.


\chapter{Smash products of spheres}\label{ch:smash}

In this chapter we prove various properties of the smash product, in particular concerning its
symmetric monoidal structure, the smash product of spheres, and the connectedness of the smash
product of two maps. The results of this chapter are essential for the definition and the basic
properties of the multiplicative structure on cohomology, which we will see in the next chapter.

We recall that the \emph{smash product} of two pointed types $A$ and $B$ is the pushout of the
span
\[
\begin{tikzcd}
  \Unit & A\vee B \arrow[l] \arrow[r,"\iwedge_{A,B}"] & A\times B.
\end{tikzcd}
\]
The smash product $A\wedge B$ is pointed by $\star_{A\wedge B}\defeq \inl(\ttt)$ and there is a map
\begin{align*}
  \proj &: A \to B \to A\wedge B,\\
  \proj(a,b) &\defeq \inr((a,b))
\end{align*}
together with two equalities
\begin{align*}
  \projr &: (a : A) \to \proj(a,\star_B) = \star_{A\wedge B}, & \projl &: (b : B) \to
                                                                        \proj(\star_A,b) =
                                                                        \star_{A\wedge B},\\
  \projr(a) &\defeq \push(\inl(a)), & \projl(b) &\defeq \push(\inr(b))
\end{align*}
and a $2$-dimensional path
\begin{align*}
  \projlr &: \projr(\star_A) = \projl(\star_B),\\
  \projlr &\defeq \ap\push(\push(\ttt)).
\end{align*}
In particular, we can also see $A\wedge B$ as the higher inductive type with constructors
$\star_{A\wedge B}$, $\proj$, $\projr$, $\projl$ and $\projlr$.

\section{The monoidal structure of the smash product}\label{sec:monoidal}

Let’s first define the notion of $1$-coherent symmetric monoidal product on
pointed types.

\begin{definition}\label{defmonoidal}
  A \emph{$1$-coherent symmetric monoidal product on pointed types} is a binary operation $\otimes$
  between pointed types equipped with
  \begin{itemize}
  \item a pointed map $f\otimes g:A\otimes B\to A'\otimes B'$ given any two pointed maps $f:A\to A'$ and
    $g:B\to B'$,
  \item a pointed equality $\id_A\otimes\id_B=\id_{A\otimes B}$ given any two pointed types $A$ and $B$,
  \item a pointed equality $(f'\circ f)\otimes(g'\circ g)=(f'\otimes g')\circ(f\otimes g)$ given any four pointed
    maps $f:A\to A'$, $f':A'\to A''$, $g:B\to B'$ and $g':B'\to B''$,
  \item a pointed equivalence $\alpha_{A,B,C}:(A\otimes B)\otimes C\toeq A\otimes(B\otimes C)$ given
    any three pointed types $A$, $B$ and $C$, and a pointed filler of the square
    \[
    \begin{tikzcd}
      (A\otimes B)\otimes C \arrow[r,"\alpha_{A,B,C}"] \arrow[d,"(f\otimes g)\otimes h"'] &
      A\otimes(B\otimes C) \arrow[d,"f\otimes(g\otimes h)"]\\
      (A'\otimes B')\otimes C'\arrow[r,"\alpha_{A',B',C'}"'] & A'\otimes(B'\otimes C')
    \end{tikzcd}
    \]
    given any three pointed maps $f:A\to A'$, $g:B\to B'$ and $h:C\to C'$,
  \item a pointed filler of the pentagon
    \[
    \begin{tikzpicture}[commutative diagrams/every diagram]
      \node (P0) at (90:8em) {$((A\otimes B)\otimes C)\otimes D$};
      \node (P1) at (162:8em) {$(A\otimes(B\otimes C))\otimes D$};
      \node (P2) at (224:8em) {$A\otimes((B\otimes C)\otimes D)$};
      \node (P3) at (316:8em) {$A\otimes(B\otimes(C\otimes D))$};
      \node (P4) at (18:8em) {$(A\otimes B)\otimes(C\otimes D)$};

      \path
      (P0.210) edge[commutative diagrams/.cd, every arrow, every label] node[swap] {$\alpha_{A,B,C}\otimes\id_D$} (P1)
      (P1) edge[commutative diagrams/.cd, every arrow, every label] node[swap] {$\alpha_{A,B\otimes C,D}$} (P2)
      (P2) edge[commutative diagrams/.cd, every arrow, every label] node[swap,yshift=-1ex] {$\id_A\otimes\alpha_{B,C,D}$} (P3)
      (P0.330) edge[commutative diagrams/.cd, every arrow, every label] node {$\alpha_{A\otimes B,C,D}$} (P4)
      (P4) edge[commutative diagrams/.cd, every arrow, every label] node {$\alpha_{A,B,C\otimes D}$} (P3);
    \end{tikzpicture}
    \]
    given any four pointed types $A$, $B$, $C$ and $D$, 
  \item a pointed map $\sigma_{A,B}:A\otimes B\to B\otimes A$ (which will turn out to be an
    equivalence) given any two pointed types $A$ and $B$, and a pointed filler of the square
    \[
    \begin{tikzcd}
      A\otimes B \arrow[r,"\sigma_{A,B}"] \arrow[d,"f\otimes g"'] & B \otimes A \arrow[d,"g\otimes
      f"] \\
      A'\otimes B' \arrow[r,"\sigma_{A',B'}"] & B'\otimes A'
    \end{tikzcd}
    \]
    given any two pointed maps $f:A\to A'$ and $g:B\to B'$,
  \item a pointed filler of the hexagon
    \[
    \begin{tikzpicture}[commutative diagrams/every diagram]
      \node (P0) at (0:8em) {$(B\otimes C)\otimes A$};
      \node (P1) at (60:8em) {$A\otimes(B\otimes C)$};
      \node (P2) at (120:8em) {$(A\otimes B)\otimes C$};
      \node (P3) at (180:8em) {$(B\otimes A)\otimes C$};
      \node (P4) at (240:8em) {$B\otimes(A\otimes C)$};
      \node (P5) at (300:8em) {$B\otimes(C\otimes A)$};

      \path[commutative diagrams/.cd, every arrow, every label]
      (P2) edge node[yshift=1ex] {$\alpha_{A,B,C}$} (P1)
      (P1) edge node {$\sigma_{A,B\otimes C}$} (P0)
      (P0) edge node {$\alpha_{B,C,A}$} (P5)
      (P2) edge node[swap] {$\sigma_{A,B}\otimes \id_C$} (P3)
      (P3) edge node[swap] {$\alpha_{B,A,C}$} (P4)
      (P4) edge node[swap,yshift=-1ex] {$\id_B\otimes\sigma_{A,C}$} (P5);
    \end{tikzpicture}
    \]
    given any three pointed types $A$, $B$ and $C$,
  \item a pointed filler of the triangle
    \[
    \begin{tikzcd}
      A\otimes B \arrow[rr,"\id_{A\otimes B}"] \arrow[rd,"\sigma_{A,B}"'] && A\otimes B\\
      &B\otimes A\arrow[ru,"\sigma_{B,A}"']&
    \end{tikzcd}
    \]
    given any two pointed types $A$ and $B$,
  \item a pointed type $I:\Type$,
  \item a pointed equivalence $\lambda_A:I\otimes A\toeq A$ given any pointed type $A$ and a pointed
    filler of the square
    \[
    \begin{tikzcd}
      I\otimes A \arrow[r,"\lambda_A"] \arrow[d,"\id_I\otimes f"'] & A \arrow[d,"f"]\\
      I\otimes A' \arrow[r,"\lambda_{A'}"] & A'
    \end{tikzcd}
    \]
    given any pointed map $f:A\to A'$,
  \item a pointed filler of the triangle
    \[
    \begin{tikzcd}
      (I\otimes A)\otimes B \arrow[rr,"\alpha_{I,A,B}"] \arrow[rd,"\lambda_A\otimes\id_B"'] &&
      I\otimes(A\otimes B) \arrow[ld,"\lambda_{A\otimes B}"]\\
      &A\otimes B&
    \end{tikzcd}
    \]
    given any two pointed types $A$ and $B$.
  \end{itemize}
\end{definition}

We call this a \emph{$1$-coherent} symmetric monoidal structure because we do not ask the fillers of
the diagrams to satisfy any further coherence condition. It’s an open question to give a definition
in homotopy type theory of the notion of fully coherent (or even only $n$-coherent) symmetric
monoidal structure, but here we only need the $1$-coherent structure of the smash product. The
following result is the main result of this section even though we essentially admit it.

\begin{proposition}\label{smashismonoidal}
  The smash product is a $1$-coherent symmetric monoidal product on pointed types.
\end{proposition}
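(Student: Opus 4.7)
The plan is to construct each piece of structure in definition \ref{defmonoidal} by exploiting the induction principle of the smash product, viewed as a higher inductive type with constructors $\star_{A\wedge B}$, $\proj$, $\projr$, $\projl$ and $\projlr$. For functoriality, given pointed maps $f:A\to A'$ and $g:B\to B'$, I would define $f\wedge g:A\wedge B\to A'\wedge B'$ by sending $\star_{A\wedge B}$ to $\star_{A'\wedge B'}$, $\proj(a,b)$ to $\proj(f(a),g(b))$, and using $\star_f$ and $\star_g$ to supply paths of the right type along $\projr$, $\projl$ and a filler of the square required for $\projlr$. The identity law $\id_A\wedge\id_B=\id_{A\wedge B}$ and the composition law are then straightforward inductions, with the two-dimensional constructor $\projlr$ reducing to a coherence operation once $\proj$, $\projr$, $\projl$ are handled.

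For the unit I would take $I\defeq\Bool$ pointed by $\true$, and define $\lambda_A:\Bool\wedge A\toeq A$ by sending $\proj(\true,a)$ to $a$, $\proj(\false,a)$ to $\star_A$, $\star_{\Bool\wedge A}$ to $\star_A$, and choosing appropriate paths along $\projr$ and $\projl$; the inverse sends $a$ to $\proj(\true,a)$ and the two homotopies are short inductions. The symmetry $\sigma_{A,B}:A\wedge B\to B\wedge A$ swaps coordinates of $\proj$ and swaps the roles of $\projr$ and $\projl$; its involutivity and the naturality square come from matching inductions.

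The heart of the construction is the associator $\alpha_{A,B,C}:(A\wedge B)\wedge C\toeq A\wedge(B\wedge C)$. On elements of the form $\proj(\proj(a,b),c)$ it goes to $\proj(a,\proj(b,c))$; everything involving the basepoint of the inner or outer smash product goes to $\star$; and the six equalities coming from $\projr$, $\projl$, $\projlr$ at each of the two levels must be sent to appropriate composites built from $\projr$, $\projl$, $\projlr$ on the target, using the computed action of $\proj$ on the basepoint. Once $\alpha$ is in hand, the inverse and the two cancellation homotopies are built by a parallel but mirror-image induction, and the naturality square for $\alpha$ is verified component by component using the definition of $f\wedge g$.

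The main obstacle is verifying the coherences: the pentagon, the hexagon, the triangle, and naturality of $\lambda$ and $\sigma$. Each of these requires comparing two elaborate functions between iterated smash products, and inductions on $((A\wedge B)\wedge C)\wedge D$ (for the pentagon) must treat cases coming from $\projlr$ at each of three nested levels, so one is forced to supply fillers of cubes patched together from coherence operations on the weak $\infty$-groupoid structure of types (appendix \ref{ch:infgpd}). In practice every such filler reduces, after enough inductions, to a statement at the basepoint where all structure maps are by construction definitionally trivial, so it is filled by a canonical coherence operation. This reduction strategy is uniform but the bookkeeping is substantial, which is why the construction is only sketched rather than carried out in full; the content of the proposition is that the smash product admits a choice of all these fillers simultaneously, with the sole non-routine ingredient being the explicit formula for $\alpha$ above.
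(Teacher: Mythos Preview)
Your approach is essentially the paper's: both define the structure maps by induction on the higher inductive constructors of the smash product, give the associator and commutator explicitly on $\proj$-elements, and treat the coherences as routine (if lengthy) bookkeeping. The paper frames the organizing principle slightly differently: it observes that the ``subtype'' of $A\wedge B$ generated by the degenerate elements (those of the form $\proj(a,\star_B)$, $\proj(\star_A,b)$, together with $\projr$, $\projl$, $\projlr$) is informally ``contractible'', so that once a map is specified on $\proj(a,b)$ in such a way that degenerate elements land in the corresponding contractible part of the target, the remaining constructors are handled automatically by that contractibility. This is close in spirit to your ``reduce to the basepoint'' heuristic, but the contractibility formulation gives a cleaner account of why the higher-dimensional cases in the pentagon and hexagon pose no genuine obstruction.

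There is one concrete slip: your unitor does not typecheck. With $\Bool$ pointed by $\true$, the constructor $\projl(a):\proj(\true,a)=\star_{\Bool\wedge A}$ forces $\lambda_A(\proj(\true,a))$ to be connected by a path to $\lambda_A(\star_{\Bool\wedge A})=\star_A$; but you set $\lambda_A(\proj(\true,a))\defeq a$, and there is no path $a=\star_A$ available for an arbitrary $a$. The fix is to swap the two clauses, taking $\lambda_A(\proj(\true,a))\defeq\star_A$ and $\lambda_A(\proj(\false,a))\defeq a$ (with inverse $a\mapsto\proj(\false,a)$), which is exactly what the paper does.
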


\begin{proof}[Sketch of proof]
  Putting the unit aside for a moment, we have to define six functions of the form
  \[(x:A\wedge B)\to P(x),\]
  four of the form
  \[(x:(A\wedge B)\wedge C)\to P(x),\]
  two of the form
  \[(x:A\wedge(B\wedge C))\to P(x),\]
  and one of the form
  \[(x:((A\wedge B)\wedge C)\wedge D)\to P(x),\]
  where each time $P(x)$ is either a smash product like $B\wedge A$ or $A\wedge(B\wedge C)$, or an
  equality in a smash product between combinations of some of those functions.
  
  The idea is that the smash product $A\wedge B$ can be seen as the product $A\times B$ where all
  elements of the form $(a,\star_B)$ and $(\star_A,b)$ have been identified together. Therefore, in
  order to define a map out of $A\wedge B$ it should be enough to define it on elements of the form
  $\proj(a,b)$ in such a way that the image of elements of the form $\proj(a,\star_B)$ and
  $\proj(\star_A,b)$ are identified to the basepoint in the codomain. But it is not enough to simply
  give paths from the images of $\proj(a,\star_B)$ and $\proj(\star_A,b)$ to the basepoint of the
  codomain, because we also need to check that the two induced paths from the image of
  $\proj(\star_A,\star_B)$ to the basepoint are equal, and this is often the most technical
  part. Intuitively, however, the idea (that we do not make precise here) is the following. In the
  type $A\wedge B$, the “subtype” generated (using coherence operations) by $\star_{A\wedge B}$, all
  $\proj(a,\star_B)$, all $\proj(\star_A,b)$, all $\projr(a)$, all $\projl(b)$, and $\projlr$ is
  “contractible” in the sense that there is a path between any two points, a $2$-dimensional path
  between any two parallel paths, and so on. Similarly, the subtype of $(A\wedge B)\wedge C$
  generated by all the $\proj(\proj(a,b),c)$ where either $a$, $b$, or $c$ is the basepoint and by
  all the combinations of $\projr$, $\projl$, and $\projlr$ that one can write is contractible in
  the same sense. Therefore, in order to define (say) a map from $(A\wedge B)\wedge C$ to
  $A\wedge(B\wedge C)$, it is enough to define it on elements of the form $\proj(\proj(a,b),c)$ in
  such a way that when either $a$, $b$, or $c$ is the basepoint, the image is in the contractible
  part of $A\wedge(B\wedge C)$. The contractibility takes care of all the other constructors.

  For instance, given two pointed maps $f:A\to A'$ and $g:B\to B'$, we define their smash product by
  \begin{align*}
    f\wedge g &: A\wedge B\to A'\wedge B',\\
    (f\wedge g)(\proj(x,y)) &\defeq \proj(f(x),g(y)).
  \end{align*}
  The images of the other constructors is as follows. For $\star_{A\wedge B}$ we just take
  $\star_{A'\wedge B'}$. For $\projr(x)$ we need to give a path from $\proj(f(x),g(\star_B))$ to
  $\star_{A'\wedge B'}$ and the most obvious choice is
  \[\ap{\proj(f(x),-)}(\star_g) \concat \projr(f(x))\]
  and similarly for $\projl(y)$. For $\projlr$ we need to fill the square
  \[
  \begin{tikzcd}[sdiag]
    \proj(f(\star_A),g(\star_B)) \arrow[r] \arrow[d] & \proj(f(\star_A),\star_{B'}) \arrow[d] \\
    \proj(\star_{A'},g(\star_B)) \arrow[r] & \star_{A'\wedge B'}
  \end{tikzcd}
  \]
  and this is done by combining the two squares
  \[
  \begin{tikzcd}[sdiag]
    \proj(f(\star_A),g(\star_B)) \arrow[r] \arrow[d] & \proj(f(\star_A),\star_{B'}) \arrow[d] \\
    \proj(\star_{A'},g(\star_B)) \arrow[r] & \proj(\star_{A'},\star_{B'})
  \end{tikzcd}
  \begin{tikzcd}[sdiag]
    \proj(\star_{A'},\star_{B'}) \arrow[r] \arrow[d] \arrow[rd,bend right=10,"\projl(\star_{B'})"' description]
    \arrow[rd,bend left=10,"\projr(\star_{A'})" description] & \proj(f(\star_A),\star_{B'}) \arrow[d] \\
    \proj(\star_{A'},g(\star_B)) \arrow[r] & \star_{A'\wedge B'}
  \end{tikzcd}
  \]
  where the first one and the two triangles of the second one are filled by naturality squares and
  the oval shape in the middle of the second one is filled by $\projlr$.

  Similarly, we define associativity by
  \begin{align*}
    \alpha_{A,B,C} &: (A\wedge B)\wedge C \to A\wedge(B\wedge C),\\
    \alpha_{A,B,C}(\proj(\proj(a,b),c)) &\defeq \proj(a,\proj(b,c)),
  \end{align*}
  and all generating paths and higher-dimensional paths on the left are sent to appropriate paths
  and higher-dimensional paths on the right. It works because the image of $\proj(\proj(a,b),c)$
  where either $a$, $b$ or $c$ is the basepoint is in the contractible part of $A\wedge(B\wedge C)$.
  The commutator $\sigma_{A,B}$ is defined analogously by
  \begin{align*}
    \sigma_{A,B} &: A\wedge B \to B\wedge A,\\
    \sigma_{A,B}(\proj(a,b)) &\defeq \proj(b,a).
  \end{align*}

  For functoriality, naturality of associativity, naturality of commutativity, the pentagon, the
  hexagon, and the triangle, the idea is that we can easily check by hand that it holds by definition
  for elements of the form $\proj(a,b)$ (or $\proj(\proj(a,b),c)$ or
  $\proj(\proj(\proj(a,b),c),d)$), therefore by a similar argument of contractibility, they hold
  also for an arbitrary element of type $A\wedge B$ (or $(A\wedge B)\wedge C$ or
  $((A\wedge B)\wedge C)\wedge D$).

  Finally the unit is $I\defeq\Bool$, the unitor and its inverse are defined by
  \begin{align*}
    \lambda_A &: \Bool\wedge A \to A, & \lambda\inv_A &: A\to \Bool\wedge A,\\
    \lambda_A(\true, a) &\defeq \star_A, & \lambda\inv_A(a) &\defeq (\false,a),\\
    \lambda_A(\false, a) &\defeq a,
  \end{align*}
  and a similar reasoning applies for the remaining diagrams.
\end{proof}

\section{Smash product of spheres}

We now study the structure of the smash product on spheres. We first notice that taking the smash
product with $\Sn1$ is equivalent to taking the suspension.

\begin{proposition}\label{suspsmashs1}
  For every pointed type $A$, there is an equivalence
  \[\Sn1\wedge A \simeq \Susp A.\]
\end{proposition}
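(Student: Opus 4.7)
The plan is to construct mutually inverse pointed maps $f : \Sn1 \wedge A \to \Susp A$ and $g : \Susp A \to \Sn1 \wedge A$ via the induction principles of the two higher inductive types. Intuitively, the loop $\lloop$ of $\Sn1$ plays the role of a meridian traversal, and collapsing the wedge $\Sn1 \vee A$ to a point in the smash product yields exactly the reduced suspension of $A$.

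For $g$, I would send both $\north$ and $\south$ to $\star_{\Sn1 \wedge A}$ and send $\merid(a)$ to the based loop $\projl(a)\inv \concat \ap{\proj(-,a)}(\lloop) \concat \projl(a)$. For $f$, I would first build $F : \Sn1 \to (A \to \Susp A)$ by $\Sn1$-recursion with $F(\base) \defeq \lambda a.\,\north$ and $\ap F(\lloop) \defeq \funext(\varphi_A)$, where $\varphi_A(a) = \merid(a) \concat \merid(\star_A)\inv$. Then I set $f(\star_{\Sn1 \wedge A}) \defeq \north$ and $f(\proj(x,a)) \defeq F(x)(a)$. The constructor $\projl(a)$ is sent to $\idp{\north}$ (since $F(\base)(a) = \north$ by definition), $\projr(x)$ is handled by an auxiliary $\Sn1$-induction on $x$ using that $\varphi_A(\star_A)$ is homotopic to $\idpS$, and the $2$-cell $\projlr$ then becomes degenerate.

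The verification $f \circ g \sim \id_{\Susp A}$ proceeds by $\Susp A$-induction with values $\idp{\north}$ at $\north$ and $\merid(\star_A)$ at $\south$; the meridian case reduces to the identity $\varphi_A(a) \concat \merid(\star_A) = \merid(a)$, which is immediate from the definition of $\varphi_A$. For $g \circ f \sim \id_{\Sn1 \wedge A}$, I would apply the induction principle of the smash product: the central case $\proj(x,a)$ is handled by $\Sn1$-induction on $x$, where at $\base$ the homotopy is $\projl(a)\inv$ and the $\lloop$ case unfolds directly through the definition of $g$ on meridians.

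The main obstacle is the coherence bookkeeping for the path and $2$-path constructors in the $g \circ f$ verification, especially cancelling the spurious $\merid(\star_A)\inv$ term that appears when $\ap g$ is applied to $\varphi_A(a)$; this cancellation ultimately rests on the fact that $\ap{\proj(-,\star_A)}(\lloop)$ is homotopic to $\idpS$ via the $\projr$ trivialization. No genuinely new techniques are needed beyond the weak $\infty$-groupoid calculus of Chapter~\ref{ch:hott}.
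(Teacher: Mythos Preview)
Your proposal is correct and follows essentially the same approach as the paper: the maps $f$ and $g$, the homotopy $f\circ g\sim\id$ via $\idp{\north}$ and $\merid(\star_A)$, and the $g\circ f$ verification by $\Sn1$-induction with the $\projr$-naturality cancellation are all exactly what the paper does. The only cosmetic difference is that you package $F$ via $\funext$ whereas the paper writes $\ap{\ft(-,a)}(\lloop)\defeq\varphi_A(a)$ directly, which amounts to the same thing.
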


\begin{proof}
  We define the map $f:\Sn1\wedge A\to\Susp A$ by
  \begin{align*}
    f &: \Sn1\wedge A \to \Susp A,\\
    f(\proj(x,a)) &\defeq \ft(x,a),
  \end{align*}
  where
  \begin{align*}
    \ft &: \Sn1\to A\to\Susp A,\\
    \ft(\base,a) &\defeq \north,\\
    \ap{\ft(-,a)}(\lloop) &\defeq \varphi_A(a).
  \end{align*}
  We also need to give the images by $f$ of the other constructors of $\Sn1\wedge A$. We send the
  basepoint $\star_{\Sn1\wedge A}$ to $\north$. For $\projl(a)$ we can take $\idp{\north}$ because
  $f(\proj(\base,a))$ is equal to $\north$ by definition. For $\projr(x)$ we need to construct a
  path from $\ft(x,\star_A)$ to $\north$ for every $x:\Sn1$ and we proceed by induction on $x$. For
  $\base$ they are equal by definition so we take $\idp\north$, and for $\lloop$ we use the
  fact that $\varphi_A(\star_A)=\idp{\north}$. Finally, we have to prove that the two paths from
  $\ft(\base,\star_A)$ to $\north$ that we just constructed are equal, but it is true because they
  are both equal to $\idp{\north}$.

  The inverse function is defined by
  \begin{align*}
    g &: \Susp A\to\Sn1\wedge A,\\
    g(\north) &\defeq \star_{\Sn1\wedge A},\\
    g(\south) &\defeq \star_{\Sn1\wedge A},\\
    \ap g(\merid(a)) &\defeq \projl(a)\inv\concat\ap{\proj(-,a)}(\lloop)\concat\projl(a).
  \end{align*}

  The composition $f\circ g:\Susp A\to\Susp A$ sends both $\north$ and $\south$ to $\north$ and
  sends $\merid(a)$ to $\varphi_A(a)$. Therefore we have $f(g(x))=x$ via $\idp\north$ for
  $x=\north$, $\merid(\star_A)$ for $x=\south$, and the filler of the square
  \[
  \begin{tikzcd}[sdiag]
    \bullet \arrow[r,"\varphi_A(a)"] \arrow[d,"\idp\north"'] & \bullet \arrow[d,"\merid(\star_A)"]
  \\
  \bullet \arrow[r,"\merid(a)"'] & \bullet
  \end{tikzcd}
  \]
  given by the definition of $\varphi_A(a)$ for $\merid(a)$.

  For the composition $g\circ f:\Sn1\wedge A\to\Sn1\wedge A$, we first prove that
  $g(\ft(x,a))=\proj(x,a)$ for all $x:\Sn1$ and $a:A$, by induction on $x$. For $\base$ we have
  $g(\ft(\base,a))=g(\north)=\star_{\Sn1\wedge A}$ by definition, which is equal to $\proj(\base,a)$
  along $\projl(a)$, and for $\lloop$ we have
  \begin{align*}
  \ap g(\ap{\ft(-,a)}(\lloop)) &= \ap g(\varphi_A(a)) \\
                               &= \projl(a)\inv\concat\ap{\proj(-,a)}(\lloop)\concat\projl(a)\\
    &\quad\concat\projl(\star_A)\inv\concat\ap{\proj(-,\star_A)}(\lloop)\inv\concat\projl(\star_A).
  \end{align*}
  Moreover the naturality square of $\projr$ on $\lloop$ is
  \[
  \begin{tikzcd}[sdiag]
    \bullet \arrow[r,"{\ap{\proj(-,\star_A)}(\lloop)}"] \arrow[d,"\projr(\base)"'] & \bullet
    \arrow[d,"\projr(\base)"] \\
    \bullet \arrow[r,"\idp{\star_{\Sn1\wedge A}}"'] & \bullet
  \end{tikzcd}
  \]
  which, in conjunction with the equality $\projlr:\projr(\base)=\projl(\star_A)$, shows that
  \[\ap g(\ap{\ft(-,a)}(\lloop)) = \projl(a)\inv\concat\ap{\proj(-,a)}(\lloop)\concat\projl(a),\]
  which is what we wanted. Now we have to prove that $g(f(x))=x$ for the other constructors of
  $\Sn1\wedge A$. For the basepoint it’s true by definition. The path $\projl(a)$ is sent to
  $\idp{\star_{\Sn1\wedge A}}$ which is what we want because the equality
  $g(f(\proj(\base,a)))=\proj(\base,a)$ is already given by $\projl(a)$. For $\projr(x)$, we have to
  prove that $\ap g(\ap f(\projr(x)))$ is equal to the composition of the path
  $g(f(\proj(x,\star_A)))=\proj(x,\star_A)$ constructed above with the path $\projr(x)$, and we
  proceed by induction on $x$. For $\base$, the image of $\projr(\base)$ by $g\circ f$ is the
  constant path, and the equality $g(f(\proj(x,\star_A)))=\proj(x,\star_A)$ is
  $\projl(\star_A)\inv$, therefore the result follows from $\projlr$. For $\lloop$, the image by
  $g\circ f$ is a coherence operation (coming from the equality $\varphi_A(\star_A)=\idp{\north}$)
  and the equality $g(f(\proj(x,\star_A)))=\proj(x,\star_A)$ is essentially $\ap\projl(\lloop)\inv$,
  therefore it follows. Finally, for $\projlr$ it follows automatically as well.
\end{proof}

The previous proposition together with associativity of the smash product shows that $\Sn n\wedge\Sn
m\simeq\Sn{n+m}$ for every $n,m:\N$, but we need to prove that this collection of equivalences is
compatible with associativity of the smash product in the following sense.

\begin{proposition}\label{smashspheres}
  For every $n,m:\N^*$, there is an equivalence \[\wedge_{n,m}:\Sn n\wedge\Sn m\simeq\Sn{n+m}\]
  together with a filler of the diagram
  \[
  \begin{tikzcd}[row sep=small]
    (\Sn n\wedge\Sn m)\wedge\Sn k
    \arrow[rr,"{(\wedge_{n,m})\wedge\id_{\Sn k}}"]
    \arrow[dd,"{\alpha_{\Sn n,\Sn m,\Sn k}}"']&&
    \Sn{n+m}\wedge\Sn{k} \arrow[start anchor=south east, end anchor=north west,rd,"{\wedge_{n+m,k}}"]& \\
    &&&\Sn{n+m+k}\\
    \Sn n\wedge(\Sn m\wedge\Sn k)
    \arrow[rr,"{\id_{\Sn n}\wedge(\wedge_{m,k})}"']&&
    \Sn{n}\wedge\Sn{m+k} \arrow[start anchor=north east,end anchor=south west,ru,"{\wedge_{n,m+k}}"']&
  \end{tikzcd}
  \]
\end{proposition}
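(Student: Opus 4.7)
The plan is to construct the equivalences $\wedge_{n,m}$ by induction on $n$, using Proposition \ref{suspsmashs1} as the base case, and then to prove the coherence diagram, again by induction on $n$, using the pentagon and naturality fillers provided by Proposition \ref{smashismonoidal}.

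More precisely, for the base case $n = 1$ I would set $\wedge_{1,m}$ to be the composite $\Sn 1 \wedge \Sn m \simeq \Susp \Sn m = \Sn{1+m}$ coming from Proposition \ref{suspsmashs1}. For the inductive step, assuming $\wedge_{n,m}$ has been defined for every $m$, I would define $\wedge_{n+1,m}$ as the composite
\[
\Sn{n+1} \wedge \Sn m \xrightarrow{\wedge_{1,n}\inv \,\wedge\, \id} (\Sn 1 \wedge \Sn n) \wedge \Sn m \xrightarrow{\alpha_{\Sn 1,\Sn n, \Sn m}} \Sn 1 \wedge (\Sn n \wedge \Sn m) \xrightarrow{\id \,\wedge\, \wedge_{n,m}} \Sn 1 \wedge \Sn{n+m} \xrightarrow{\wedge_{1,n+m}} \Sn{(n+1)+m},
\]
using associativity and functoriality of $\wedge$ from Proposition \ref{smashismonoidal}.

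For the coherence diagram, I would proceed by induction on $n$. In the base case $n = 1$, the top composite involves $\wedge_{1+m,k}$, which by the definition above unfolds into a chain beginning with $\wedge_{1,m}\inv \wedge \id$; this cancels against the $\wedge_{1,m}\wedge \id$ coming from the top horizontal arrow, and what remains is exactly the bottom composite in the diagram, so the filler is obtained essentially by $\idpS$ together with the naturality of $\alpha$ in its first argument supplied by Proposition \ref{smashismonoidal}. For the inductive step $n+1$, the diagram to fill concerns maps out of $(\Sn{n+1} \wedge \Sn m) \wedge \Sn k$, and I would unfold $\wedge_{n+1,m}\wedge\id_{\Sn k}$ on the top row and $\id_{\Sn{n+1}} \wedge \wedge_{m,k}$ on the bottom row using the definition and the naturality of $\alpha$ in each of its three slots. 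After this unfolding, both composites reduce to something of the form ``pass to $\Sn 1 \wedge (\Sn n \wedge \Sn m \wedge \Sn k)$, then apply $\wedge_{n,m,k}$'' and the remaining commutation is the pentagon filler from Proposition \ref{smashismonoidal} applied to the four arguments $\Sn 1$, $\Sn n$, $\Sn m$, $\Sn k$, followed by the induction hypothesis applied to $n$.

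The main obstacle will be the bookkeeping: the pentagon and the naturality squares from Definition \ref{defmonoidal} need to be tiled together in exactly the right configuration, and each use of $\wedge_{1,k}\inv$ introduced by unfolding must be matched with a corresponding $\wedge_{1,k}$ elsewhere so that they cancel. Because we only have a $1$-coherent monoidal structure (no higher coherences are postulated), we must make sure that the filler we ultimately need is really a $2$-dimensional diagram of $1$-cells and not a higher-dimensional coherence between such diagrams; fortunately the statement asks only for a filler of the hexagonal diagram (a single $2$-cell between two parallel $1$-cells), so the available coherences are exactly what is needed.
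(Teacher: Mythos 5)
Your plan is correct and follows essentially the same inductive shape as the paper's proof: define $\wedge_{n,m}$ by induction on $n$ with Proposition \ref{suspsmashs1} supplying the base, and verify the compatibility diagram by another induction, tiling together the unfoldings of the definition, the naturality squares of $\alpha$, the pentagon, and (the smash with $\Sn1$ of) the induction hypothesis.

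The one place where the paper does something genuinely different — and where you can spare yourself a great deal of the bookkeeping you rightly identify as the main obstacle — is the insertion of $\wedge_{1,n}\inv\wedge\id$ at the start of the inductive definition of $\wedge_{n+1,m}$. Because $\Sn{n+1}$ is not \emph{definitionally} $\Sn1\wedge\Sn n$, you are forced to put this inverse in, and then every stage of the diagram chase involves producing a $\wedge_{1,\cdot}\wedge\id$ and a $\wedge_{1,\cdot}\inv\wedge\id$ and cancelling them via the functoriality coherences $(f'\circ f)\wedge(g'\circ g) = (f'\wedge g')\circ(f\wedge g)$ and $\id_A\wedge\id_B = \id_{A\wedge B}$ from Definition \ref{defmonoidal}. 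The paper sidesteps this by reindexing: it introduces an auxiliary family $(\Sm n)_{n:\N^*}$ with $\Sm1 \defeq \Sn1$ and $\Sm{n+1}\defeq\Sn1\wedge\Sm n$, checks (using Proposition \ref{suspsmashs1}, by induction) that $\Sm n\simeq\Sn n$, and then proves the proposition for $\Sm$ in place of $\Sn{}$. With this family $\Sm1\wedge\Sm m$ \emph{is} $\Sm{m+1}$ by definition, so $\wedge_{1,m}$ can be taken to be the identity equivalence, the inverse disappears from the inductive step, the base case of the compatibility diagram holds by reflexivity, and the inductive step collapses to exactly the pieces you name (three definitional triangles, two naturality squares, one pentagon, one smashed induction hypothesis) with no cancellation overhead. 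Your approach does go through — the only extra ingredient is repeated use of the two functoriality coherences to cancel the paired inverses, which is available from Definition \ref{defmonoidal} — but the reindexing trick is worth knowing, since it turns the "only bookkeeping" part of your plan from the longest part of the proof into essentially nothing.
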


\begin{proof}
  We define the family of types $(\Sm n)_{n:\N^*}$ by
  \begin{align*}
    \Sm1 \defeq \Sn1,\\
    \Sm{n+1} \defeq \Sn1\wedge\Sm n.
  \end{align*}
  Using proposition \ref{suspsmashs1} we can easily construct by induction on $n$ an equivalence between $\Sm n$
  and $\Sn n$, therefore it is enough to prove the result for the family $(\Sm n)$ instead of
  $(\Sn n)$.

  The equivalence $\wedge_{n,m}$ is defined by induction on $n$.
  \begin{itemize}
  \item For $n=1$ we take the identity equivalence, as $\Sm1\wedge\Sm m$ is equal to $\Sm{m+1}$ by
    definition.
  \item For $n+1$ we take the composition
    \[
    \begin{tikzcd}
      \Sm{n+1}\wedge\Sm m\arrow[d,equals] \arrow[rr,dotted] & & \Sm{n+m+1} \arrow[d,equals]\\
      (\Sn 1\wedge\Sm n)\wedge\Sm m \arrow[r,"\sim"]& \Sn1\wedge(\Sm n\wedge\Sm
      m) \arrow[r,"\sim"]& \Sn1\wedge\Sm{n+m}
    \end{tikzcd}
    \]
    where we used associativity of the smash product and then the smash by $\Sn1$ of $\wedge_{n,m}$.
  \end{itemize}

  The compatibility for $n=1$ holds by definition (as both $\wedge_{n,m}$ and $\wedge_{n,m+k}$ are
  the identity), and for $n+1$ let’s consider the following diagram where we omitted the names of
  the arrows for
  readability.
  \[
  \begin{tikzcd}[row sep=scriptsize, column sep=-30]
    &&&\Sn1\wedge((\Sm{n}\wedge \Sm{m})\wedge \Sm{k}) \arrow[rdd] \arrow[llddddddd,rounded corners,to path={-|
      ([xshift=-1em]Z.west) |- (\tikztotarget)}]&\\
    &(\Sn1\wedge(\Sm{n}\wedge \Sm{m}))\wedge \Sm{k}\arrow[rru] \arrow[rdd] &&&\\
    &&&&\Sn1\wedge(\Sm{n+m}\wedge \Sm{k})\arrow[dd]\\
    |[alias=Z]|((\Sn1\wedge \Sm{n})\wedge \Sm{m})\wedge \Sm{k}\arrow[ruu] \arrow[rr] \arrow[dd]&&(\Sn1\wedge \Sm{n+m})\wedge \Sm{k}\arrow[rru]
    \arrow[rrd]&&\\
    &&&&\Sn1\wedge \Sm{n+m+k}\\
    (\Sn1\wedge \Sm{n})\wedge(\Sm{m}\wedge \Sm{k}) \arrow[rr] \arrow[rdd]&&(\Sn1\wedge \Sm{n})\wedge
    \Sm{m+k}\arrow[rru]\arrow[rdd]&&\\
    \\
    &\Sn1\wedge(\Sm{n}\wedge(\Sm{m}\wedge \Sm{k})) \arrow[rr]&&\Sn1\wedge(\Sm{n}\wedge \Sm{m+k})\arrow[ruuu]
  \end{tikzcd}
  \]
  We want to fill the inner pentagon. The three triangles are the definition of the maps of the form
  $(\Sn1\wedge \Sm i)\wedge \Sm j \to \Sn1\wedge\Sm{i+j}$. The two squares are filled by naturality
  of the associativity of the smash product. The left part of the diagram is filled by the pentagon
  of coherences of associativity. Finally the outer diagram is filled by the smash product by $\Sn1$
  of the induction hypothesis.  Therefore, we get a filler of the inner pentagon.
\end{proof}

The definition of the equivalences $\wedge_{n,m}$ is rather indirect, because it goes through the
family of types $(\Sm n)$. A more direct way to see them is as follows.

\begin{proposition}\label{smashspheresexplicit}
  Given $n,m:\N^*$, the map $\wedge_{n+1,m}$ of type $(\Susp\Sn n)\wedge\Sn m\to\Susp(\Sn{n+m})$
  satisfies, for every $x:\Sn n$ and $y:\Sn m$,
  \[\ap{\wedge_{n+1,m}}(\ap{\proj(-,y)}(\varphi_{\Sn n}(x))) = \varphi_{\Sn{n+m}}(\wedge_{n,m}(\proj(x,y))).\]
\end{proposition}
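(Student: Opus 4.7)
The plan is to prove the equation by unfolding the definition of $\wedge_{n+1,m}$ from proposition \ref{smashspheres} and tracking how the path $\ap{\proj(-,y)}(\varphi_{\Sn n}(x))$ transforms at each stage. Identifying $\Sn{n+1}$ with $\Sn1\wedge\Sn n$ via the equivalence $f$ from proposition \ref{suspsmashs1}, the map $\wedge_{n+1,m}$ is the composite
\[
(\Sn1\wedge\Sn n)\wedge\Sn m \xrightarrow{\alpha_{\Sn1,\Sn n,\Sn m}} \Sn1\wedge(\Sn n\wedge\Sn m) \xrightarrow{\id_{\Sn1}\wedge\wedge_{n,m}} \Sn1\wedge\Sn{n+m} \xrightarrow{f} \Susp\Sn{n+m}.
\]

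First I would observe, via the inverse equivalence $g$ constructed in the proof of proposition \ref{suspsmashs1}, that the loop $\varphi_{\Sn n}(x)\in\Omega\Susp\Sn n$ corresponds to $\projl(x)\inv\concat\ap{\proj(-,x)}(\lloop)\concat\projl(x)$ in $\Omega(\Sn1\wedge\Sn n)$ (this is exactly the computation performed in that proof, which used $\projlr$ to cancel the $\star_A$-contribution coming from $\merid(\star_A)\inv$). Consequently, applying $\ap{\proj(-,y)}$, the input path becomes $\ap{\proj(-,y)}$ of the above, in $(\Sn1\wedge\Sn n)\wedge\Sn m$.

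Next I would apply $\alpha$ and $\id_{\Sn1}\wedge\wedge_{n,m}$ in succession. Since by definition $\alpha(\proj(\proj(a,b),c)) = \proj(a,\proj(b,c))$, the ``main part'' $\ap{\proj(-,y)}(\ap{\proj(-,x)}(\lloop))$ is sent to $\ap{\proj(-,\proj(x,y))}(\lloop)$, while the conjugating $\projl(x)$ paths are sent to certain paths in $\Sn1\wedge(\Sn n\wedge\Sn m)$ built out of $\projl$, $\projr$, $\projlr$ and the basepoint-preservation data of $\alpha$. Applying $\id_{\Sn1}\wedge\wedge_{n,m}$ converts the middle path to $\ap{\proj(-,\wedge_{n,m}(\proj(x,y)))}(\lloop)$ in $\Sn1\wedge\Sn{n+m}$. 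Finally applying $f$ from proposition \ref{suspsmashs1}, this middle part becomes exactly $\varphi_{\Sn{n+m}}(\wedge_{n,m}(\proj(x,y)))$ by definition of $\ft$, while the conjugating pieces, by naturality of $f$ with respect to $\projl$, become paths at $\north$ in $\Susp\Sn{n+m}$ that reduce to $\idpS$.

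The main obstacle will be the bookkeeping of these conjugating paths: every invocation of the functorial maps $\alpha$, $\id\wedge\wedge_{n,m}$, and $f$ produces its own contribution from $\projl$, $\projr$, and $\projlr$, and one has to show that in the end all of these cancel to yield precisely the stated equation. I would manage this by keeping the basepoint-preservation data of each map explicit, so that when the paths are assembled the naturality squares for $\projl$ and the coherence $\projlr$ make the contributions telescope to $\idpS$; the ``core'' $\ap{\proj(-,-)}(\lloop)$ part then gives exactly $\varphi_{\Sn{n+m}}(\wedge_{n,m}(\proj(x,y)))$.
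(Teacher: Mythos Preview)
Your proposal is correct and follows exactly the approach the paper takes: the paper's proof is a single sentence noting that $\wedge_{n+1,m}$ is defined by first applying associativity of the smash product (``moving the $\varphi$ to the outside'') and then applying $\wedge_{n,m}$, so the stated equation holds by construction. You have simply unpacked that one-line remark into the concrete path computation, including the bookkeeping of the conjugating $\projl$/$\projlr$ paths, which the paper leaves implicit.
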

\begin{proof}
  It follows from the definition of $\wedge_{n+1,m}$ as we defined it by first using associativity
  of the smash product (which corresponds to moving the $\varphi$ to the outside) and then by
  applying $\wedge_{n,m}$.
\end{proof}

The compatibility with commutativity of the smash product is a bit more subtle as there is a sign.

\begin{proposition}\label{commspheres}
  For $n,m:\N^*$, there is a filler of the diagram
  \[
  \begin{tikzcd}
    \Sn n\wedge\Sn m \arrow[r,"\sigma_{\Sn n,\Sn m}"] \arrow[d,"\wedge_{n,m}"'] & \Sn m\wedge\Sn n \arrow[d,"\wedge_{m,n}"]\\
    \Sn{n+m} \arrow[r,"(-1)^{nm}"'] & \Sn{n+m}
  \end{tikzcd}
  \]
  where $(-1)^{nm}$ is
  \begin{itemize}
  \item the identity map, if $nm$ is even,
  \item the map $(-1)$ sending $\merid(a)$ to $\merid(a)\inv$, if $nm$ is odd.
  \end{itemize}
\end{proposition}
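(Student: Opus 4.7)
The plan is to prove the proposition by induction on $n$ (with a sub-induction on $m$ for the base case $n=1$), reducing to the single case $n=m=1$ via the hexagon coherence from the symmetric monoidal structure of proposition \ref{smashismonoidal}, and using proposition \ref{smashspheres} to make the associators appearing in the hexagon compatible with the equivalences $\wedge_{n,m}$. The fact that the sign maps $(-1)^{nm}$ are their own inverses (since $-1$ is an involution) means it is harmless to work with either $\sigma_{\Sn n,\Sn m}$ or its inverse $\sigma_{\Sn m,\Sn n}$, as convenient.

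For the inductive step from $n$ to $n+1$, I would apply the hexagon axiom to the triple $(\Sn m,\Sn 1,\Sn n)$. Since $\wedge_{1,n}:\Sn 1\wedge\Sn n\simeq\Sn{n+1}$, the hexagon (modulo associators, which are handled by proposition \ref{smashspheres}) decomposes $\sigma_{\Sn m,\Sn{n+1}}$ as a composite of $\sigma_{\Sn m,\Sn 1}$ tensored with identity and $\sigma_{\Sn m,\Sn n}$ tensored with identity. The inductive hypothesis gives these two maps the signs $(-1)^m$ and $(-1)^{nm}$ respectively on the corresponding spheres, so the composite contributes the sign $(-1)^m\cdot(-1)^{nm}=(-1)^{(n+1)m}$ on $\Sn{m+n+1}$, as required. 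The base case $n=1$ is proved by an entirely parallel induction on $m$, applying the hexagon to $(\Sn 1,\Sn 1,\Sn{m-1})$ and reducing to the sub-base-case $n=m=1$.

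For the base-of-base $n=m=1$, I would need to show that $\wedge_{1,1}\circ\sigma_{\Sn 1,\Sn 1}$ is pointedly homotopic to $(-1)\circ\wedge_{1,1}$ as maps $\Sn 1\wedge\Sn 1\to\Sn 2$. I would unfold $\wedge_{1,1}$ using the explicit map from proposition \ref{suspsmashs1} to express both sides as HIT-defined maps on $\Sn 1\wedge\Sn 1$, and then compare them on the characteristic $2$-dimensional structure coming from the pair $(\lloop,\lloop)$. The swap exchanges the two $\Sn 1$ factors, which amounts to exchanging the "horizontal" and "vertical" directions of the square filling $\proj(\lloop,\lloop)$; an Eckmann--Hilton-style coherence computation (in the spirit of proposition \ref{eckmannhilton}) shows that this exchange produces the reversed $2$-path, which is precisely the effect of the map $(-1)$ on the corresponding generating cell of $\Sn 2$.

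The main obstacle will be this base-of-base calculation: the inductive machinery reduces everything to formal bookkeeping of associators, fillers, and signs, but the explicit computation in $\Sn 1\wedge\Sn 1$ requires carefully unfolding the double induction underlying proposition \ref{suspsmashs1} and tracking a $2$-dimensional path through the composition. If the direct approach proves too cumbersome, an alternative is to factor through the Whitehead decomposition $\Sn 1\times\Sn 1\simeq\Unit\sqcup^{\Sn 1}(\Sn 1\vee\Sn 1)$ from proposition \ref{whiteheadmap}, noting that the swap on $\Sn 1\times\Sn 1$ sends the Whitehead attaching map $W_{1,1}$ to its negation via an exchange of the two wedge summands; collapsing $\Sn 1\vee\Sn 1$ then induces $-1$ on the cofiber $\Sn 2$.
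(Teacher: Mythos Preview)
Your overall strategy matches the paper's: reduce by the hexagon and proposition \ref{smashspheres} to the single case $n=m=1$, and handle that case by a direct computation on the generating $2$-cell of $\Sn1\wedge\Sn1$. The induction scheme you describe is essentially the paper's (the paper inducts on $m$ rather than $n$, but as you note the symmetry makes this immaterial).

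There is, however, a genuine gap in your inductive step. The hexagon does not produce $\sigma_{\Sn m,\Sn1}$ and $\sigma_{\Sn m,\Sn n}$ themselves but the smashed maps $\sigma_{\Sn m,\Sn1}\wedge\id_{\Sn n}$ and $\id_{\Sn1}\wedge\sigma_{\Sn m,\Sn n}$. The inductive hypothesis tells you that $\sigma_{\Sn m,\Sn1}$ corresponds to $(-1)^m$ on $\Sn{m+1}$ and $\sigma_{\Sn m,\Sn n}$ to $(-1)^{mn}$ on $\Sn{m+n}$, but to conclude that the \emph{smashed} maps contribute those same signs on $\Sn{m+n+1}$ you need to know that $(-1)\wedge\id_{\Sn k}$ and $\id_{\Sn1}\wedge(-1)$ both become $(-1)$ under the equivalences $\wedge_{-,-}$. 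This is not covered by proposition \ref{smashspheres}, which only handles associators. The paper isolates these as two separate lemmas: the first ($(-1)\wedge\id$) follows by an easy induction from naturality of $\alpha$, but the second ($\id_{\Sn1}\wedge(-1)$) has a base case $\id_{\Sn1}\wedge i = i\wedge\id_{\Sn1}$ on $\Sn1\wedge\Sn1$ requiring its own square-flipping argument. (You can in fact derive that base case from the $(1,1)$ case together with naturality of $\sigma$, since $\sigma\circ(\id\wedge i)=(i\wedge\id)\circ\sigma$ and $\sigma=i\wedge\id$; the paper instead redoes the cube directly.)

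For the base case $(1,1)$ itself, your Eckmann--Hilton intuition is in the right direction, but the paper's actual argument is somewhat different and cleaner: it writes down an explicit cube whose inner and outer faces are the diagonal flip (from $\sigma$) and the horizontal flip (from $i\wedge\id$) of a generic square, with the four lateral faces filled by trivial coherences, and then fills the cube by path-induction on the square. Your alternative via the Whitehead decomposition would work in principle but is more roundabout than this direct cube.
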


\begin{proof}
  As in the proof of the previous proposition, we work with the family $(\Sm n)$ instead of
  $(\Sn n)$, the map $(-1)$ corresponding to the map
  \begin{align*}
    (-1) &: \Sn1\wedge\Sm{n}\to\Sn1\wedge\Sm{n},\\
    (-1) &\defeq i\wedge\id_{\Sm n},
  \end{align*}
  where the map $i:\Sn1\to\Sn1$ is the map sending $\base$ to $\base$ and $\lloop$ to $\lloop\inv$.
  
  The idea is quite simple. The map exchanging $\Sm n$ and $\Sm m$ has to swap every $\Sn1$-factor
  of $\Sm m$ with every $\Sn1$-factor of $\Sm n$. Therefore in total we have $nm$ permutations
  between two $\Sn1$ factors, so when $nm$ is even they all cancel, and otherwise one remains. And it
  turns out that the map swapping the two factors of $\Sn1\wedge\Sn1$ is equal to the map $(-1)$ on
  $\Sm2$.
  
  We first consider the case $n=m=1$.
  \begin{proposition}\label{case11}
    There is a filler of the diagram
    \[
    \begin{tikzcd}
      \Sm1\wedge\Sm1 \arrow[d,equals] \arrow[r,"\sigma_{\Sm1,\Sm1}"] & \Sm1\wedge\Sm1 \arrow[d,equals]\\
      \Sm2 \arrow[r,"(-1)"'] & \Sm2
    \end{tikzcd}
    \]
  \end{proposition}
  \begin{proof}
    We have to construct an equality between the two functions
    \begin{align*}
      f &: \Sm1\wedge\Sm1 \to \Sm1\wedge\Sm1, & g &: \Sm1\wedge\Sm1 \to \Sm1\wedge\Sm1, \\
      f(\proj(x,y)) &\defeq \proj(y,x), & g(\proj(x,y)) &\defeq \proj(-x,y).
    \end{align*}
    We prove that they are equal on points of the form $\proj(x,y)$ by double induction on $x$ and
    $y$. When either $x$ or $y$ is $\base$, both $f(\proj(x,y))$ and $g(\proj(x,y))$ are equal to
    $\star_{\Sm1\wedge\Sm1}$ via either a $\projl$ or a $\projr$. For the case $(\lloop,\lloop)$,
    the idea is that we are given a square and we want to show that flipping it diagonally (using
    $f$) or horizontally (using $g$) gives equal squares along some coherence operations.

    If we start with the square
    \[
    \begin{tikzcd}[sdiag, row sep=huge]
      \bullet \arrow[r,"r"] \arrow[d,"p"'] & \bullet \arrow[d,"q"] \\
      \bullet \arrow[r,"s"'] & \bullet
    \end{tikzcd}
    \]
    then we consider the cube
    \begin{equation}\label{cube}
      \begin{tikzcd}[sdiag, row sep=huge]
        \bullet \arrow[rrr,"r\inv"] \arrow[ddd,"q"'] & & & \bullet \arrow[ddd,"p"] \\
        & \bullet \arrow[lu,"r",near start] \arrow[r,"p"'] \arrow[d,"r"] & \bullet \arrow[d,"s"']
        \arrow[ru,"p\inv",near start]
        & \\
        & \bullet \arrow[ld,"q",near start] \arrow[r,"q"] & \bullet \arrow[rd,"s\inv",near start] & \\
        \bullet \arrow[rrr,"s\inv"'] & & & \bullet
      \end{tikzcd}
    \end{equation}
    The inner and outer faces are the diagonal and horizontal flipping of the square, and the four
    other sides are defined using the equalities
    \begin{align*}
      r\concat r\inv &= p\concat p\inv,\\
      p\inv\concat p &= s\concat s\inv,\\
      q\concat s\inv &= q\concat s\inv,\\
      r\concat q     &= r\concat q.
    \end{align*}
    In order to fill this cube we proceed by induction on the whole square. Another way to see it is
    to do a path induction on $p$, $q$ and $r$, and then a fourth path induction on the square seen
    as the equality $s=p\inv\concat r\concat q$. Then we only have to fill the cube in the case
    where the square is the constant square with $\idpS$ on all four sides. In that case, all of the
    sides of the cube are constant squares so we can take the constant cube. It concludes the proof
    that $f$ and $g$ are equal.
  \end{proof}

  Note that the proof above works because we managed to write the cube \ref{cube} starting from an
  arbitrary square. There is no way to make a similar cube between the original square and its
  diagonal flipping, for instance.
  
  We now prove that the maps $(-1)$ are stable under smash product by $\Sm m$ on the right and by
  $\Sn1$ on the left.
  \begin{proposition}\label{smashm1right}
    For every $n,m:\N$, there is a filler of the square
    \[
    \begin{tikzcd}[sep=large]
      \Sm n\wedge\Sm m \arrow[r,"(-1)\wedge\id_{\Sm m}"] \arrow[d,"\wedge_{n,m}"'] & \Sm n\wedge\Sm m
      \arrow[d,"\wedge_{n,m}"]\\
      \Sm{n+m} \arrow[r,"(-1)"'] &\Sm{n+m}
    \end{tikzcd}
    \]
  \end{proposition}

  \begin{proof}
    We proceed by induction on $n$. For $n=1$ it is the definition of the map $(-1)$ on
    $\Sm{m+1}$. For $n+1$ we take the composition of the two squares
    \[
    \begin{tikzcd}
      (\Sn1\wedge\Sm n)\wedge\Sm m \arrow[d,"\alpha_{\Sn1,\Sm n,\Sm m}"']
      \arrow[r,"\raisebox{0.7em}{$(i\wedge\id_{\Sm n})\wedge\id_{\Sm m}$}"] & (\Sn1\wedge\Sm
      n)\wedge\Sm m \arrow[d,"\alpha_{\Sn1,\Sm n,\Sm m}"]\\
      \Sn1\wedge(\Sm n\wedge\Sm m) \arrow[d,"\id_{\Sn1}\wedge(\wedge_{n,m})"']
      \arrow[r,"\raisebox{-0.8em}{$i\wedge(\id_{\Sm
          n}\wedge\id_{\Sm m})$}"'] & \Sn1\wedge(\Sm n\wedge\Sm m) \arrow[d,"\id_{\Sn1}\wedge(\wedge_{n,m})"]\\
      \Sm{n+m+1} \arrow[r,"i\wedge\id_{\Sm{n+m}}"'] & \Sm{n+m+1}
    \end{tikzcd}
    \]
    where the top square comes from naturality of associativity of the smash product and the bottom
    square comes from the fact that $\id_{\Sm n}\wedge\id_{\Sm m}=\id_{\Sm{n}\wedge\Sm{m}}$.
  \end{proof}
  
  \begin{proposition}
    For every $n:\N$, the map \[\id_{\Sn1}\wedge(-1):\Sn1\wedge\Sm n\to\Sn1\wedge\Sm n\] is equal
    to \[(-1):\Sm{n+1}\to\Sm{n+1}.\]
  \end{proposition}

  \begin{proof}
    We proceed by induction on $n$. For $n=1$ the proof is entirely analogous to the proof of
    proposition \ref{case11}, except that we equate the vertical flipping of a square with its
    horizontal flipping. For $n+1$ we take the composition of squares
    \[
    \begin{tikzcd}
      \Sn1\wedge(\Sn 1\wedge\Sm n) \arrow[d,"\alpha_{\Sn1,\Sn 1,\Sm n}\inv"']
      \arrow[r,"\raisebox{0.7em}{$\id_{\Sn1}\wedge(i\wedge\id_{\Sm n})$}"] & \Sn1\wedge(\Sn1\wedge\Sm n) \arrow[d,"\alpha_{\Sn1,\Sn1,\Sm n}\inv"]\\
      (\Sn1\wedge\Sn1)\wedge\Sm n \arrow[d,"\id_{(\Sn1\wedge\Sn1)\wedge\Sm n}"']
      \arrow[r,"\raisebox{-0.8em}{$(\id_{\Sn1}\wedge i)\wedge\id_{\Sm n}$}"'] & (\Sn1\wedge\Sn1)\wedge\Sm n \arrow[d,"\id_{(\Sn1\wedge\Sn1)\wedge\Sm n}"]\\
      (\Sn1\wedge\Sn1)\wedge\Sm n \arrow[d,"\wedge_{2,n}"']
      \arrow[r,"\raisebox{-0.8em}{$(i\wedge\id_{\Sn1})\wedge\id_{\Sm n}$}"'] & (\Sn1\wedge\Sn1)\wedge\Sm n \arrow[d,"\wedge_{2,n}"]\\
      \Sm{n+2} \arrow[r,"(-1)"'] & \Sm{n+2}
    \end{tikzcd}
    \]
    where the top one comes from naturality of associativity of the smash product, the middle one
    comes from the case $n=1$ and the bottom one comes from proposition \ref{smashm1right}.
  \end{proof}

  We now come back to the proof of proposition \ref{commspheres}. Given $n$ and $m$, we assume by
  induction that we have constructed a filler of the diagram for $(n,1)$ and for $(n,m)$, and we
  construct one for $(n,m+1)$. Let’s consider the diagram \ref{hexagon} where we want to fill the
  upper square.
  The outer hexagon is the coherence hexagon between the commutator and the associator. The three
  curved squares come from the compatibility of the maps $\wedge_{i,j}$ with the associator
  (proposition \ref{smashspheres}). The lower left square is the smash product of the case $(n,1)$
  with $\Sm m$ and the lower right square is the smash product of $\Sn1$ with the case
  $(n,m)$. Finally, the inner diagram has a filler given by the equality
  $(-1)^{nm}\circ(-1)^n=(-1)^{n(m+1)}$ which can easily be verified.
  
  Finally, if the property is true for $(n,m)$, then it is true for $(m,n)$ as well. Combining that
  with the case above and the base case, we obtain that the property is true for every $(n,m)$.
\end{proof}
  
 \begin{figure}
   \centering
    \begin{tikzpicture}[commutative diagrams/every diagram]
      \node (P0) at (0:8em) {$\Sm{n+m+1}$};
      \node (P1) at (60:8em) {$\Sm{n+m+1}$};
      \node (P2) at (120:8em) {$\Sm{n+m+1}$};
      \node (P3) at (180:8em) {$\Sm{n+m+1}$};
      \node (P4) at (240:8em) {$\Sm{n+m+1}$};
      \node (P5) at (300:8em) {$\Sm{n+m+1}$};

      \node (T0) at ($(0:8em)+(330:8em)$) {$\Sn1\wedge(\Sm{m}\wedge\Sm{n})$};
      \node (T1) at ($(60:8em)+(90:8em)$) {$(\Sn1\wedge \Sm{m})\wedge\Sm{n}$};
      \node (T2) at ($(120:8em)+(90:8em)$) {$\Sm{n}\wedge(\Sn1\wedge\Sm{m})$};
      \node (T3) at ($(180:8em)+(210:8em)$) {$(\Sm{n}\wedge \Sn1)\wedge\Sm{m}$};
      \node (T4) at ($(240:8em)+(210:8em)$) {$(\Sn1\wedge \Sm{n})\wedge\Sm{m}$};
      \node (T5) at ($(300:8em)+(330:8em)$) {$\Sn1\wedge(\Sm{n}\wedge \Sm{m})$};

      \path[commutative diagrams/.cd, every arrow, every label]
      (P1) edge node[swap] {$\id$} (P0)
      (P2) edge node[swap] {$(-1)^{n(m+1)}$} (P1)
      (P3) edge node[swap] {$\id$} (P2)
      (P3) edge node {$(-1)^n$} (P4)
      (P4) edge node {$\id$} (P5)
      (P5) edge node {$(-1)^{nm}$} (P0)

      (T1) edge[bend left]  node {$\alpha_{\Sn1,\Sm m,\Sm n}$} (T0)
      (T2) edge             node {\raisebox{0.7em}{$\sigma_{\Sm{n},\Sn1\wedge \Sm{m}}$}} (T1)
      (T3) edge[bend left]  node {$\alpha_{\Sm n,\Sn1,\Sm m}$} (T2)
      (T3) edge             node[swap] {$\sigma_{\Sm{n},\Sn1}\wedge\id_{\Sm m}$} (T4)
      (T4) edge[bend right] node[swap] {$\alpha_{\Sn1,\Sm{n},\Sm m}$} (T5)
      (T5) edge             node[swap] {$\id_{\Sn1}\wedge\sigma_{\Sm n,\Sm m}$} (T0)

      (T1) edge (P1)
      (T2) edge (P2)
      (T3) edge (P3)
      (T4) edge (P4)
      (T5) edge (P5)
      (T0) edge (P0);
    \end{tikzpicture}
    \caption{Case $(n,m+1)$\label{hexagon}}
  \end{figure}
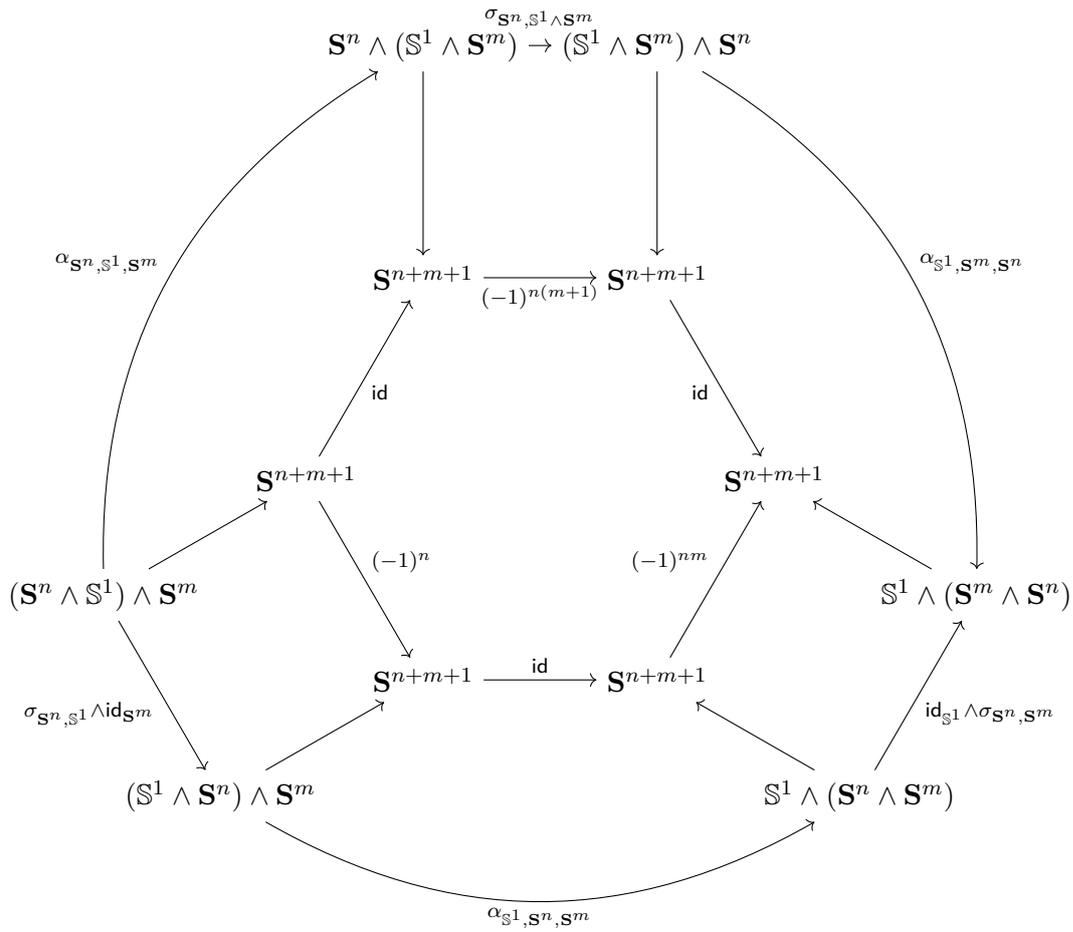

\section{Smash product and connectedness}

The connectedness of a smash product is given by the following proposition.

\begin{proposition}
  Given two pointed types $A$ and $B$, if $A$ is $m$-connected and $B$ is $n$-connected then
  $A\wedge B$ is $(m + n + 1)$-connected.
\end{proposition}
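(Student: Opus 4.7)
The plan is to reduce the statement to the pushout--product lemma (proposition \ref{pushoutproduct}) via the defining pushout of the smash product. Since $A\wedge B$ is the pushout of $\Unit\leftarrow A\vee B\to A\times B$, proposition \ref{connpushout} tells us that if the map $\iwedge_{A,B}:A\vee B\to A\times B$ is $k$-connected, then the map $\Unit\to A\wedge B$ is also $k$-connected. Combined with proposition \ref{connectedpointedmap}, it is therefore enough to show that $\iwedge_{A,B}$ is $(m+n)$-connected, as this will give that $\Unit\to A\wedge B$ is $(m+n)$-connected, hence that $A\wedge B$ is $(m+n+1)$-connected.

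To show $\iwedge_{A,B}$ is $(m+n)$-connected, I would observe that it is (up to equivalence) the pushout--product of the two pointing maps $\star_A:\Unit\to A$ and $\star_B:\Unit\to B$. Indeed, applying $(-)\pp(-)$ with $X\defeq Y\defeq\Unit$ gives a map $(\Unit\times B)\sqcup^{\Unit\times\Unit}(A\times\Unit)\to A\times B$, and the domain is visibly equivalent to $A\vee B$ in such a way that the induced map to $A\times B$ agrees with $\iwedge_{A,B}$ (both are computed on each summand as insertion of the other basepoint).

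Now, $A$ being $m$-connected and pointed means by proposition \ref{connectedpointedmap} that $\star_A:\Unit\to A$ is $(m-1)$-connected, and likewise $\star_B$ is $(n-1)$-connected. By proposition \ref{pushoutproduct}, the pushout--product $\star_A\pp\star_B$ is $((m-1)+(n-1)+2)$-connected, i.e.\ $(m+n)$-connected, which is exactly what we needed.

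The main obstacle is the bookkeeping identification of the pushout--product $\star_A\pp\star_B$ with $\iwedge_{A,B}$; this is just a matter of unfolding the definitions of $A\vee B$, of $\iwedge_{A,B}$, and of the pushout--product, and checking that the $\inl$, $\inr$, and $\push$ cases agree, but it is the only nontrivial piece of combinatorics in the argument. Everything else is a direct application of results already established in this chapter (proposition \ref{connpushout}, proposition \ref{connectedpointedmap}, and proposition \ref{pushoutproduct}).
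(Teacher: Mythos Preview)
Your proof is correct and follows essentially the same approach as the paper: identify $\iwedge_{A,B}$ with the pushout-product $\star_A\pp\star_B$, apply proposition \ref{pushoutproduct} to get that it is $(m+n)$-connected, then use proposition \ref{connpushout} on the defining pushout of $A\wedge B$ and proposition \ref{connectedpointedmap} to conclude. The paper's version is just a terser statement of the same argument.
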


\begin{proof}
  Notice that the map $\iwedge_{A,B}:A\vee B\to A\times B$ is the pushout-product of the two maps
  $\Unit\to A$ and $\Unit\to B$, which are $(m-1)$- and $(n-1)$-connected, therefore proposition
  \ref{pushoutproduct} shows that $\iwedge_{A,B}$ is $(m+n)$-connected. Given that we have a pushout
  square
  \[
  \begin{tikzcd}
    A\vee B \arrow[r,"\iwedge_{A,B}"] \arrow[d] & A\times B \arrow[d] \\
    \Unit \arrow[r] & A\wedge B \arrow[lu,phantom,"\ulcorner",at start]
  \end{tikzcd}
  \]
  and that the top map is $(m+n)$-connected, we conclude that the map $\Unit\to A\wedge B$ is
  $(m+n)$-connected as well and therefore $A\wedge B$ is $(m+n+1)$-connected.
\end{proof}

For the smash product of maps, it’s not as easy because the answer depends not only on the
connectivity of the maps but also on the connectivity of the types. We first prove the following
proposition.
\begin{proposition}
  Given a pointed map $f:A\topt B$ and a pointed type $C$, if $f$ is $n_f$-connected and $C$ is
  $n_C$-connected then the map $f\wedge\id_C:A\wedge C\to B\wedge C$ is $(n_f+n_C+1)$-connected.
\end{proposition}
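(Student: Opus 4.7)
The plan is to combine the pushout-product lemma (proposition \ref{pushoutproduct}) with the $3\times3$-lemma. Since $C$ is $n_C$-connected, proposition \ref{connectedpointedmap} yields that the basepoint inclusion $\star_C:\Unit\to C$ is $(n_C-1)$-connected. Applying the pushout-product lemma to the $n_f$-connected map $f$ and the $(n_C-1)$-connected map $\star_C$, the induced map
\[f\pp\star_C:(A\times C)\sqcup^AB\to B\times C,\]
whose domain is the pushout of $A\times C\xleftarrow{a\mapsto(a,\star_C)}A\xrightarrow{f}B$ and which sends $\inl(a,c)\mapsto(f(a),c)$ and $\inr(b)\mapsto(b,\star_C)$, is $(n_f+n_C+1)$-connected.

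Next, I would fit $f\pp\star_C$ and $f\wedge\id_C$ into a commutative square whose top horizontal is $f\pp\star_C$, bottom horizontal is $f\wedge\id_C$, right vertical is $\proj_B:B\times C\to B\wedge C$, and left vertical sends $\inl(a,c)\mapsto\proj(a,c)$ and $\inr(b)\mapsto\star_{A\wedge C}$. To verify that this square is a pushout, I would apply the $3\times3$-lemma to the diagram whose three rows are the spans
\[\Unit\leftarrow A\vee C\xrightarrow{\iwedge}A\times C,\quad B\xleftarrow{f}A\xrightarrow{a\mapsto(a,\star_C)}A\times C,\quad B\xleftarrow{\id}B\xrightarrow{b\mapsto(b,\star_C)}B\times C,\]
with identity fillers (all four corner squares commute strictly). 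The pushouts of these three rows are respectively $A\wedge C$, $(A\times C)\sqcup^AB$, and $B\times C$, giving precisely the span whose pushout we want to compute. The pushouts of the three columns are respectively $\Unit$, $B\vee C$ (using that the pushout of $A\vee C\xleftarrow{\inl_A}A\xrightarrow{f}B$ is $B\vee C$), and $B\times C$, and a direct check shows that the induced map $B\vee C\to B\times C$ is $\iwedge_{B,C}$. Hence the pushout of the columns is $B\wedge C$, and the $3\times3$-lemma identifies the two pushouts.

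Finally, proposition \ref{connpushout} applied to this pushout square yields that $f\wedge\id_C$ is $(n_f+n_C+1)$-connected, since the parallel map $f\pp\star_C$ is. The main obstacle is the middle step: one needs to carefully identify the induced maps between row and column pushouts as $f\pp\star_C$, $f\wedge\id_C$, $\iwedge_{B,C}$, and the two canonical projection maps, which requires tracing through the definitions of the pushout-product and the smash product. Once this bookkeeping is done, the result follows by pure formalism from the two lemmas.
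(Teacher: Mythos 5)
Your proof is correct, but it takes a genuinely different route from the paper's. The paper verifies the induction principle for connected maps directly: fixing a family $P$ of $(n_f+n_C+1)$-truncated types over $B\wedge C$ and a section along $f\wedge\id_C$, it first extends along $f$ fiberwise over each $c:C$ (using proposition~\ref{inductionconnectedtruncated} to see the fiber of precomposition by $f$ is $(n_C-1)$-truncated), then extends the resulting family of fibers along the $(n_C-1)$-connected map $\Unit\to C$, and finally reassembles the output into a section of $P$ along the five smash-product constructors. Your approach instead packages the whole argument diagrammatically: you exhibit $f\wedge\id_C$ as the cobase change of the pushout-product $f\pp\star_C$, get the connectivity of $f\pp\star_C$ from proposition~\ref{pushoutproduct}, and conclude by proposition~\ref{connpushout}. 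The pushout square is established by the $3\times3$-lemma applied to a grid of products and wedges, and the grid you propose is correct — all four corner squares commute definitionally, the row and column pushouts are as you claim (proposition~\ref{pushoutangle} giving $B\vee C$ in the middle column, proposition~\ref{pushoutid} giving the others), and the induced span on column pushouts is homotopic to $\Unit\leftarrow B\vee C\xrightarrow{\iwedge_{B,C}}B\times C$, the only subtlety being that the identification of $f(\star_A)$ with $\star_B$ is via the path $\star_f$ rather than definitional. What your approach buys is conceptual economy: it reuses the heavy lifting already done in proposition~\ref{pushoutproduct} (whose own proof is the same style of argument the paper uses here) rather than repeating it, at the cost of delegating a nontrivial amount of bookkeeping to the $3\times3$-lemma and to identifying the induced maps between row and column pushouts — which is exactly where, as you note, the real work lies. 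The paper's direct proof is shorter to write down and more self-contained, but arguably less illuminating about why the bound is $n_f+n_C+1$; your proof makes that bound transparently the pushout-product bound.
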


As a sanity check, note that if $C=\Bool$, then $f\wedge\id_C=f$ and $n_C=-1$, and we do have
$n_f+n_C+1=n_f$.

\begin{proof}
  We consider $P:B\wedge C\to\Type$ a family of $(n_f+n_C+1)$-truncated types and
  \[d : (x : A\wedge C) \to P((f\wedge\id_C)(x)).\]
  According to proposition \ref{inductionconnectedtruncated} and given that $f$ is $n_f$-connected,
  if we take an arbitrary $c:C$ then the function
  \[\lambda s.s\circ f:\prod_{b:B}P(\proj(b,c)) \to \prod_{a:A}P(\proj(f(a),c))\]
  is $(n_C-1)$-truncated. Note also that there is an element in the codomain given by
  $d\circ\proj(-,c)$.  Let’s now consider $Q:C\to\Type$ such that $Q(c)$ is the fiber of the
  function $\lambda s.s\circ f$ above over $d\circ\proj(-,c)$. We just proved that $Q$ is a family
  of $(n_C-1)$-truncated types, therefore in order to construct a section of it it is enough to give
  an element of $Q(\star_C)$. An element of $Q(\star_C)$ consists of a function
  $k:(b:B)\to P(\proj(b,\star_C))$ together with an equality between $k(f(a))$ and
  $d(\proj(a,\star_C))$ for every $a:A$. Therefore the map
  \[\lambda b.\transport^P(\projr(b)\inv,d(\star_{A\wedge C}))\]
  together with the equality
  \[\transport^P(\projr(f(a))\inv,d(\star_{A\wedge C})) = d(\proj(a,\star_C))\]
  given by $\ap d(\projr(a))$ is an element of $Q(\star_C)$.

  Putting everything together, what we obtain is a function
  \[g:(b:B)(c:C)\to P(\proj(b,c))\]
  together with paths, for all $a:A$, $b:B$ and $c:C$,
  \begin{align}
    g(f(a),c) &= d(\proj(a,c)),\label{eq1}\\
    g(b,\star_C) &= \transport^P(\projr(b)\inv,d(\star_{A\wedge C}))\label{eq2}
  \end{align}
  which are equal (via a $2$-dimensional path) in the case $g(f(a),\star_C)$.

  Therefore we can now define $h:(x:B\wedge C)\to P(x)$ as follows. For the basepoint
  $\star_{B\wedge C}$ we take $d(\star_{A\wedge C})$. For an element of the form $\proj(b,c)$ we
  take $g(b,c)$. For $\projr(b)$ it is automatic by the equation \ref{eq2}. For $\projl(c)$ we prove
  that $g(\star_B,c)$ is equal to $d(\star_{A\wedge C})$ over $\projl(c)$ by the composition
  \[
  \begin{tikzcd}[sdiag]
    g(\star_B,c) & g(f(\star_A),c) \arrow[l] \arrow[r,"\ref{eq1}"] & d(\proj(\star_A,c)) \arrow[r] &
    d(\star_{A\wedge C})
  \end{tikzcd}
  \]
  (note that the last equality is a dependent equality over $\projl(c)$).
  Finally for $\projlr$ we have to check that in the case $c=\star_C$, the composition we just
  defined is equal to the one obtained by transporting $d(\star_{A\wedge C})$ along
  $\projr(\star_B)$, and this follows from the $2$-dimensional equality for $g(f(a),\star_C)$
  mentioned above.
\end{proof}

We can now compute the connectedness of a smash product of maps.

\begin{proposition}\label{smashmapsconn}
  Given two maps $f:A\to A'$ and $g:B\to B'$, if $f$ is $n_f$-connected, $g$ is $n_g$-connected,
  $A'$ is $n_{A'}$-connected and $B$ is $n_B$-connected, then $f\wedge g$ is $k$-connected, where
  \[k=\min(n_f + n_B + 1, n_g + n_{A'} + 1).\]
\end{proposition}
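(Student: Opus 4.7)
The plan is to reduce this statement to the preceding proposition by factoring $f\wedge g$ as a composite of two smash products with identities. Using functoriality of the smash product (item two of proposition \ref{smashismonoidal}), I would write
\[f\wedge g = (\id_{A'}\wedge g)\circ(f\wedge\id_B) : A\wedge B \to A'\wedge B \to A'\wedge B',\]
and then bound the connectivity of each factor separately.

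For the first factor $f\wedge\id_B$, the previous proposition applies directly and shows that it is $(n_f+n_B+1)$-connected. For the second factor $\id_{A'}\wedge g$, the previous proposition does not apply on the nose since the identity is on the left; to get around this I would use naturality of the commutator $\sigma$ to obtain the commutative square
\[
\begin{tikzcd}
A'\wedge B \arrow[r,"\sigma_{A',B}"] \arrow[d,"\id_{A'}\wedge g"'] & B\wedge A' \arrow[d,"g\wedge\id_{A'}"]\\
A'\wedge B' \arrow[r,"\sigma_{A',B'}"'] & B'\wedge A'
\end{tikzcd}
\]
whose horizontal maps are equivalences (again a consequence of proposition \ref{smashismonoidal}). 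Thus $\id_{A'}\wedge g$ has the same connectivity as $g\wedge\id_{A'}$, which the previous proposition shows to be $(n_g+n_{A'}+1)$-connected.

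Finally, setting $k\defeq\min(n_f+n_B+1,n_g+n_{A'}+1)$, both factors are at least $k$-connected (using the downward-closure of connectedness proved earlier), and the earlier result that the composition of two $k$-connected maps is $k$-connected then gives that $f\wedge g$ is $k$-connected, as required.

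There is essentially no serious obstacle here; the only delicate points are purely formal, namely checking that the factorization $f\wedge g=(\id_{A'}\wedge g)\circ(f\wedge\id_B)$ is provided by the $1$-coherent functoriality of the smash, and that the commutator square needed to reduce $\id_{A'}\wedge g$ to a map covered by the previous proposition is part of the naturality data of $\sigma$. Both of these are supplied by proposition \ref{smashismonoidal}, so the argument goes through cleanly.
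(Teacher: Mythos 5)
Your proposal is correct and takes the same approach as the paper's one-line proof, which just records the factorization of $f\wedge g$ through a partial smash (the paper writes the composition in the reversed order $(f\wedge\id_B)\circ(\id_{A'}\wedge g)$, which does not type-check and is evidently a typo for the order you use). You fill in the two steps the paper leaves implicit — invoking the commutator $\sigma$ and its naturality square to reduce $\id_{A'}\wedge g$ to a case of the preceding proposition, and then composing using downward-closure — both of which are exactly right.
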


\begin{proof}
  We have \[f\wedge g = (f\wedge\id_B)\circ(\id_{A'}\wedge g).\qedhere\]
\end{proof}


\chapter{Cohomology}\label{ch:cohomology}

In this chapter we introduce the cohomology groups with integer coefficients of a space, together
with their additive and multiplicative structure, and we prove that they form a graded ring. This
means that for every space $X$ we define a sequence of abelian groups $(H^n(X))_{n:\N}$ and a
multiplication operation $\cupp:H^i(X)\times H^j(X)\to H^{i+j}(X)$ which is distributive over
addition, associative, and graded-commutative in the sense that for every $x:H^i(X)$ and $y:H^j(X)$
we have
\[x\cupp y=(-1)^{ij}(y\cupp x).\]
We also construct the Mayer--Vietoris sequence, and we compute the cohomology ring of a product of
two spheres. We then define the Hopf invariant, which is an invariant of maps of the form
$\Sn{2n-1}\to\Sn{n}$ constructed using the multiplicative structure of cohomology. Finally we show
that the James construction provides functions of Hopf invariant $2$ for every even $n$, which
proves that all groups of the form $\pi_{4n-1}(\Sn{2n})$ are infinite and that $\pi_4(\Sn3)$ is
equal to either $\Z/2\Z$ or $\Z/1\Z$.

The usual definition of singular cohomology using singular cochains cannot be reproduced in homotopy
type theory because defining the \emph{set} of singular cochains requires taking the underlying set
of a topological space which is not an operation invariant under homotopy equivalence, hence it
cannot exist in homotopy type theory. We define cohomology instead via the Eilenberg--MacLane spaces
$K(\Z,n)$, which can be defined directly very easily by truncating the spheres.  It is also possible
to define the Eilenberg--MacLane spaces $K(G,n)$ for an arbitrary group $G$ (abelian if $n\ge2$), as
was shown by Dan Licata and Eric Finster in \cite{KGn}, and to use them to define cohomology with
coefficients in $G$. However, given that we only use cohomology with integer coefficients in this
work and that the definition of $K(\Z,n)$ is much simpler, we restrict ourselves to this case. For
readability, we only write $K_n$ instead of $K(\Z,n)$.

It is well-known that this definition of cohomology is the right one in homotopy type theory, but
the multiplicative structure and everything which depends on it is new to our knowledge. The
Mayer--Vietoris sequence has already been studied by Evan Cavallo in homotopy type theory in
\cite{cavallo:cohomology}, but with a rather different approach. His construction starts from the
Eilenberg--Steenrod axioms for cohomology and uses extensively the cubical machinery whereas the
construction presented here is more concrete.

\section{The cohomology ring of a space}

\begin{definition}
  For $n:\N$, the type $K_n$ is the $n$-truncated and $(n-1)$-connected pointed type defined by
  \begin{align*}
    K_0 &\defeq \Z,\\
    K_n &\defeq \trunc n{\Sn n}\text{ for }n\ge1.
  \end{align*}
\end{definition}

An important property of the family $(K_n)$ is that $K_n$ is equivalent to $\Omega K_{n+1}$ for
every $n:\N$.

\begin{proposition}
  For every $n:\N$, there is an equivalence $\sigma_n:K_n\simeq\Omega K_{n+1}$ and, when $n\ge1$, a
  filler of the diagram
  \[
  \begin{tikzcd}
    \Sn n \arrow[r,"\varphi_{\Sn n}"] \arrow[d,"|-|"'] & \Omega\Susp\Sn n \arrow[d,"\Omega|-|"]\\
    K_n \arrow[r,"\sigma_n"',"\sim"] & \Omega K_{n+1}
  \end{tikzcd}
  \]
\end{proposition}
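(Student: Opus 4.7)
The plan is to proceed by cases on $n$. For $n=0$ the result is essentially immediate: $\Sn1$ is $1$-truncated (proved right after proposition \ref{prop:pi1s1}), so $|-|:\Sn1\to K_1$ is an equivalence, and composing with the equivalence $\Omega\Sn1\simeq\Z$ from proposition \ref{prop:pi1s1} gives an equivalence $\Omega K_1\simeq K_0$, whose inverse I take as $\sigma_0$. No diagram needs to be filled here.

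For $n\ge 1$, my plan is to define $\sigma_n$ uniformly as a factorization through truncation, then verify it is an equivalence separately for $n\ge 2$ and for $n=1$. Since $K_{n+1}$ is $(n+1)$-truncated by definition, $\Omega K_{n+1}$ is $n$-truncated, so the composite $\Omega|-|\circ\varphi_{\Sn n}:\Sn n\to\Omega K_{n+1}$ factors through the $n$-truncation $|-|:\Sn n\to K_n$ via the universal property of truncation. This factorization is the desired $\sigma_n$, and the filler of the square is obtained by construction.

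For $n\ge 2$, the fact that $\sigma_n$ is an equivalence follows from the Freudenthal suspension theorem (corollary \ref{freud}): $\varphi_{\Sn n}$ is $(2n-2)$-connected, and since $n\le 2n-2$ whenever $n\ge 2$, the map $\trunc n{\varphi_{\Sn n}}:K_n\to\trunc n{\Omega\Sn{n+1}}$ is an equivalence. Composing with the equivalence $\trunc n{\Omega\Sn{n+1}}\simeq\Omega\trunc{n+1}{\Sn{n+1}}=\Omega K_{n+1}$ provided by proposition \ref{looptrunc} yields an equivalence that agrees with $\sigma_n$ by naturality of $|-|$.

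The main obstacle is the case $n=1$, where Freudenthal only provides $0$-connectedness of $\varphi_{\Sn1}$, which is not sufficient to conclude on $\trunc1$. My plan is to exploit the Hopf fibration from section \ref{hgshopf} instead. Looping the fiber sequence $\Sn1\to\Sn3\to\Sn2$ yields a fiber sequence whose last map $d:\Omega\Sn2\to\Sn1$ has fiber $\Omega\Sn3$. Since $\Sn3$ is $2$-connected, $\Omega\Sn3$ is $1$-connected, so $d$ is $1$-connected and $\trunc1 d:\trunc1{\Omega\Sn2}\to\Sn1\simeq K_1$ is an equivalence. Combined with $\Omega K_2\simeq\trunc1{\Omega\Sn2}$ from proposition \ref{looptrunc}, this provides an equivalence $\Omega K_2\simeq K_1$. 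Identifying it with $\sigma_1$ reduces to verifying that the self-map $d\circ\varphi_{\Sn1}$ of $\Sn1$ acts as an equivalence after $1$-truncation, which by inspection of the action on $\pi_1$ amounts to computing the image of $\lloop$ through the connecting map of the Hopf fibration; this compatibility check is the technical heart of the argument.
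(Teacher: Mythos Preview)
Your proposal is correct and follows essentially the same strategy as the paper: the same case split, the same use of Freudenthal for $n\ge2$, and the same use of the Hopf fibration's connecting map $d:\Omega\Sn2\to\Sn1$ (with fiber $\Omega\Sn3$, hence $1$-connected) for $n=1$. The one place where the paper is more explicit is the ``compatibility check'' you flag as the technical heart: rather than arguing via the induced action on $\pi_1$, the paper simply computes $d(\varphi_{\Sn1}(x))=\transport^{\Hopf}(\merid(x)\concat\merid(\base)^{-1},\base)=\mu(\base,x)=x$ directly, showing $d\circ\varphi_{\Sn1}$ is pointwise the identity on $\Sn1$; from this, since $\trunc1 d$ is an equivalence, so is $\trunc1{\varphi_{\Sn1}}$.
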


\begin{proof}
  We define $\sigma_n$ by
  \begin{align*}
    \sigma_0(k) &\defeq (\ap{|-|}(\lloop))^k,\\
    \sigma_n(|x|) &\defeq \ap{|-|}(\varphi_{\Sn n}(x))\text{ for }n\ge1,
  \end{align*}
  and the diagram is filled by $\idpS$. We now have to prove that $\sigma_n$ is an equivalence.
  For $n=0$, we proved it in section \ref{hgss1}. For $n\ge1$, the composition
  \[
  \begin{tikzcd}
    K_n=\trunc n{\Sn n} \arrow[r,"\trunc n{\varphi_{\Sn{n}}}"]& \trunc{n}{\Omega\Sn{n+1}} \arrow[r,"\sim"]&
    \Omega\trunc{n+1}{\Sn{n+1}}=\Omega K_{n+1}
  \end{tikzcd}
  \]
  is equal to $\sigma_n$, hence it is enough to prove that $\trunc n{\varphi_{\Sn{n}}}$ is an
  equivalence. For $n=1$, we know from section \ref{hgshopf} that the map $d:\Omega\Sn2\to\Sn1$
  sending $p$ to $\transport^{\Hopf}(p,\base)$ has fiber $\Omega\Sn3$ and, hence, is
  $1$-connected. Moreover, for every $x:\Sn1$ we have
  \begin{align*}
    \transport^{\Hopf}(\varphi_{\Sn1}(x),\base)
    &= \transport^{\Hopf}(\merid_{\Sn1}(x)\concat\merid_{\Sn1}(\base)\inv,\base)\\
    &= \transport^{\Hopf}(\merid_{\Sn1}(\base)\inv,\\
    &\qquad\transport^{\Hopf}(\merid_{\Sn1}(x),\base))\\
    &= \transport^{\Hopf}(\merid_{\Sn1}(x),\base)\\
    &= \mu(\base,x)\\
    &= x,
  \end{align*}
  which shows that the composition
  \[
  \begin{tikzcd}
    \Sn1 \arrow[r,"\varphi_{\Sn1}"] & \Omega\Sn2 \arrow[r,"d"] & \Sn1
  \end{tikzcd}
  \]
  is equal to the identity function. Given that the map $\trunc1d$ is an equivalence, the map
  $\trunc1{\varphi_{\Sn1}}$ is an equivalence as well, which concludes the case $n=1$.

  For $n\ge2$, the Freudenthal suspension theorem states that the map
  $\varphi_{\Sn{n}}:\Sn{n}\to\Omega\Sn{n+1}$ is $(2n-2)$-connected, and we have $2n-2\ge n$; hence,
  $\trunc n{\varphi_{\Sn{n}}}$ is an equivalence.
\end{proof}

\begin{definition}
  Given a type $X$ and $n:\N$, the \emph{$n$-th cohomology group} of $X$ is the type
  \begin{align*}
    H^n(X) &\defeq \trunc0{X\to K_n},
  \end{align*}
  and, if $X$ is pointed, the \emph{$n$-th reduced cohomology group} of $X$ is the type
  \[\Ht^n(X) \defeq \trunc0{X \topt K_n}.\]
\end{definition}

Note that for every type $X$ we have $H^n(X)\simeq\Ht^n(X\sqcup\Unit)$, where $X\sqcup\Unit$ is pointed
by $\inr(\ttt)$, given that a map from $X$ to $K_n$ can be seen as a pointed map from $X\sqcup\Unit$
to $K_n$. Elements of $\Ht^n(X)$ can also be seen as elements of $H^n(X)$ by forgetting the
pointedness.

We now construct the group structure on cohomology groups. The idea is to construct the whole
structure on $K_n$ and then lift it to $H^n(X)$. We write $0$ for $\star_{K_n}$.

\begin{proposition}
  There are two maps $+:K_n\times K_n\to K_n$ and $-:K_n\to K_n$, and equalities
  \begin{align*}
    x + 0 &= x, \\
    0 + x &= x, \\
    x + (- x) &= 0,\\
    (- x) + x &= 0,\\
    (x + y) + z &= x + (y + z),\\
    x + y &= y + x,
  \end{align*}
  for every $x,y,z:K_n$.
\end{proposition}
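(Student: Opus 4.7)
The plan is to transport the group structure from a suitable loop space to $K_n$ via the equivalence $\sigma_n : K_n \simeq \Omega K_{n+1}$ established in the preceding proposition. Concretely, for every $n:\N$, define
\begin{align*}
  x + y &\defeq \sigma_n^{-1}(\sigma_n(x)\concat\sigma_n(y)),\\
  -x &\defeq \sigma_n^{-1}(\sigma_n(x)\inv),
\end{align*}
with $0\defeq\star_{K_n}$, and note that $\sigma_n(0)=\idp{\star_{K_{n+1}}}$ since $\sigma_n$ is a pointed equivalence. The five equalities concerning only $+$, $-$, and $0$ (the two unit laws, the two inverse laws, and associativity) then follow by applying $\sigma_n^{-1}$ to the corresponding standard facts about composition of paths in $\Omega K_{n+1}$, namely the identity laws $\rho$ and $\lambda$, the inverse laws $\zeta$ and $\eta$, and the associativity coherence $\alpha$ from section \ref{sec:infgpd}.

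For commutativity the group structure inherited from a single loop space is not enough, and the key step is to invoke the Eckmann--Hilton argument. Iterating the equivalence once more we obtain
\[K_n \simeq \Omega K_{n+1} \simeq \Omega^2 K_{n+2},\]
so $K_n$ carries the structure of a double loop space. The proof of proposition \ref{eckmannhilton} was actually a proof that for any pointed type $B$, horizontal and vertical composition on $\Omega^2 B$ agree and both composition operations are commutative; no truncation is used in that argument, so it applies here with $B\defeq K_{n+2}$. One then checks that the operation $+$ defined above, when transported through the composite equivalence $K_n\simeq\Omega^2 K_{n+2}$, agrees with vertical composition of $2$-loops; this is immediate from the definition of the equivalence $\Omega(\Omega K_{n+1})\simeq\Omega(\Omega K_{n+1})$ (the identity) and functoriality of $\sigma_{n+1}$. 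Therefore $x+y=y+x$ follows from Eckmann--Hilton.

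The only place where a small separate verification is needed is the base case $n=0$. Here $K_0\defeq\Z$, and one should check that the addition defined by transporting path composition through $\sigma_0$ agrees with the usual addition on $\Z$ (up to propositional equality, which suffices since $\Z$ is a set by corollary \ref{Zisset}). This follows from the definition $\sigma_0(k)=\ap{|-|}(\lloop)^k$, since composition of the iterated loops $\ap{|-|}(\lloop)^k$ and $\ap{|-|}(\lloop)^\ell$ yields $\ap{|-|}(\lloop)^{k+\ell}$, matched to $k+\ell$ by $\sigma_0^{-1}$. In any case, whether one uses the standard structure on $\Z$ or the transported one, the axioms hold.

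The only mild obstacle is bookkeeping: one must be careful that $\sigma_n$ is used consistently as a \emph{pointed} equivalence so that the neutral element $0=\star_{K_n}$ really matches $\idp{\star_{K_{n+1}}}$, and that the verification of commutativity via Eckmann--Hilton uses the \emph{double}-loop-space structure coming from $\sigma_{n+1}\circ\sigma_n$ rather than any ad hoc alternative. Once the identifications $\sigma_n(0)=\idp{}$, $\sigma_n(x+y)=\sigma_n(x)\concat\sigma_n(y)$, and $\sigma_n(-x)=\sigma_n(x)\inv$ are in place by definition, the six required equations reduce directly to the six corresponding coherences of the weak $\infty$-groupoid structure of $K_{n+1}$ (respectively $K_{n+2}$ for commutativity), which were spelled out in section \ref{sec:infgpd}.
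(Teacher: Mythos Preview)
Your proposal is correct and follows essentially the same approach as the paper: transport path composition and inversion through the equivalence $\sigma_n:K_n\simeq\Omega K_{n+1}$ for the first five laws, and iterate once more to $\Omega^2K_{n+2}$ so that Eckmann--Hilton yields commutativity. The separate discussion of the case $n=0$ is unnecessary here (the proposition only asks for \emph{some} abelian group structure, not agreement with the usual one on $\Z$), but it is not wrong.
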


\begin{proof}
  We use the equivalence $K_n\simeq\Omega K_{n+1}$. We define the addition of two elements $x$ and
  $y$ of $K_n$ as the composition of the corresponding loops in $\Omega K_{n+1}$, and the opposite
  of an element of $K_n$ as the inverse of the corresponding loop. Note that $\star_{K_n}$
  corresponds to the constant path of $\Omega K_{n+1}$. The unit laws, inverse laws, and
  associativity law come from the corresponding facts about paths, and commutativity comes from the
  Eckmann--Hilton argument (proposition \ref{eckmannhilton}), given that $\Omega K_{n+1}$ is
  equivalent to the double loop space $\Omega^2 K_{n+2}$.
\end{proof}

\begin{definition}
  Given a type $X$ and a natural number $n$, we equip $H^n(X)$ with the structure of abelian group
  given by
  \begin{align*}
    + &: H^n(X) \times H^n(X) \to H^n(X),\\
    |\alpha|+|\beta| &\defeq |\lambda x.(\alpha(x)+\beta(x))|,
  \end{align*}
  \begin{align*}
    - &: H^n(X) \to H^n(X),\\
    -|\alpha| &\defeq |\lambda x.(-\alpha(x))|,
  \end{align*}
  \begin{align*}
    0 &: H^n(X),\\
    0 &\defeq |\lambda x.0|.
  \end{align*}
  The group laws are proved in a similar fashion. For instance we prove associativity by
  \begin{align*}
    a &: (\alpha, \beta, \gamma : H^n(X)) \to (\alpha + \beta) + \gamma = \alpha
        + (\beta + \gamma),\\
    a(|\alpha|, |\beta|, |\gamma|) &\defeq \ap{|-|}(\funext(\lambda x.a'(\alpha(x),\beta(x),\gamma(x)))),
  \end{align*}
  where $a'$ is associativity of $+$ on $K_n$.  We also equip $\Ht^n(X)$ with the structure of
  abelian group defined in the exact same way, and the forgetful map from $\Ht^n(X)$ to $H^n(X)$ is
  a group homomorphism.
\end{definition}

For the multiplicative structure, we first define a map $\cupp:K_i\wedge K_j\to K_{i+j}$.  It
induces a map $K_i\times K_j\to K_{i+j}$ by composition with $\proj$ which then induces a product on
cohomology groups called the \emph{cup product}:
\[\cupp:H^i(X)\times H^j(X) \to H^{i+j}(X).\]

\begin{definition}\label{squarecupp}
  For every $i,j:\N^*$, there is a map $\cupp:K_i\wedge K_j\to K_{i+j}$ together with a filler of
  the square
  \[
  \begin{tikzcd}
    \Sn i\wedge\Sn j \arrow[r,"\wedge_{i,j}"] \arrow[d,"|-|_i\wedge|-|_j"'] & \Sn{i+j} \arrow[d,"|-|_{i+j}"]\\
    K_i\wedge K_j \arrow[r,"\cupp"]& K_{i+j}
  \end{tikzcd}
  \]
\end{definition}

\begin{proof}
  For every $k:\N^*$, the type $\Sn k$ is $(k-1)$-connected by proposition \ref{sphereconn} and the
  map $|-|_k:\Sn k\to K_k$ is $k$-connected by proposition \ref{truncconn}. Therefore, by
  proposition \ref{smashmapsconn}, the map $|-|_i\wedge|-|_j$ on the left of the diagram is
  $(i+j)$-connected. Moreover, the type $K_{i+j}$ is $(i+j)$-truncated; hence, by proposition
  \ref{inductionconnected}, we obtain the map $\cupp$ and a filler of the
  diagram.
\end{proof}

This definition doesn’t work when either $i$ or $j$ is equal to $0$, because then we have $K_0=\Z$,
which is not equal to $\trunc{0}{\Sn0}$. In those cases we define the cup product by
\begin{align*}
  \cupp &: K_0\wedge K_j \to K_j,\\
  n\cupp \beta &\defeq \beta + \dots + \beta\quad \text{($n$ terms)},
\end{align*}
\begin{align*}
  \cupp &: K_i\wedge K_0 \to K_i,\\
  \alpha\cupp m &\defeq \alpha + \dots + \alpha\quad \text{($m$ terms)}.
\end{align*}
These two definitions are consistent in the case $i=j=0$ because multiplication of integers is
commutative.

\begin{proposition}
  The collection of maps $\cupp:K_i\wedge K_j\to K_{i+j}$ we just defined is distributive with
  respect to addition, graded-commutative, and associative.
\end{proposition}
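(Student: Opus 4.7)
The plan is to prove each of the three properties by reducing to a corresponding statement about the smash product of spheres, via the induction principle for $n$-connected maps (proposition \ref{inductionconnected}). The common idea is that all three identities are equalities between pointed maps whose codomain is some $K_N$, which is $N$-truncated, while the iterated smash product of truncation maps $|-|_{i}\wedge|-|_{j}$ (and three-fold variants) is $N$-connected by iterated application of proposition \ref{smashmapsconn}. Hence each equality may be checked after precomposition with the truncation maps, i.e.\ it suffices to verify it on elements of the form $\proj(|x|,|y|)$ (or $\proj(\proj(|x|,|y|),|z|)$). Before any of this, I would dispose of the degenerate cases where one index is $0$, where the cup product is iterated addition and the properties reduce to commutativity, associativity, and distributivity of integer multiplication together with those of $+$ on $K_n$.

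For graded-commutativity with $i,j\ge1$, both sides of $a\cupp b=(-1)^{ij}(b\cupp a)$ are pointed maps $K_i\wedge K_j\to K_{i+j}$. Precomposing with $|-|_i\wedge|-|_j$, the left-hand side becomes $|-|_{i+j}\circ\wedge_{i,j}$ by definition \ref{squarecupp}; the right-hand side, using naturality of the smash commutator $\sigma$ and the same square applied to $(j,i)$, becomes $(-1)^{ij}\circ|-|_{i+j}\circ\wedge_{j,i}\circ\sigma_{\Sn i,\Sn j}$. Proposition \ref{commspheres} says precisely that $\wedge_{j,i}\circ\sigma_{\Sn i,\Sn j}$ equals the sphere-level sign $(-1)^{ij}$ composed with $\wedge_{i,j}$, so it only remains to show that this sphere-level sign (the map $\merid(a)\mapsto\merid(a)^{-1}$) descends via $|-|_{i+j}$ to the additive inverse on $K_{i+j}$. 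That follows from the definition of the group structure on $K_n$ through the equivalence $K_n\simeq\Omega K_{n+1}$, since path inversion corresponds under $\sigma_n$ to the sphere flip.

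For associativity, a similar strategy applies: precompose with $|-|_i\wedge|-|_j\wedge|-|_k$ on both sides of the pentagon and apply the square of definition \ref{squarecupp} three times on each side, together with naturality of the smash-product associator. The resulting diagram collapses to the compatibility statement between $\wedge_{-,-}$ and the associator established in proposition \ref{smashspheres}.

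For distributivity, it suffices to prove it on one side, say $a\cupp(b_1+b_2)=(a\cupp b_1)+(a\cupp b_2)$, since the other side then follows via graded-commutativity. Fixing $a:K_i$, the induced pointed map $a\cupp-:K_j\to K_{i+j}$ needs to be shown to be a group homomorphism. Transporting along the equivalences $\sigma_j$ and $\sigma_{i+j}$, the addition becomes loop concatenation, and it suffices to exhibit $a\cupp-$ as $\Omega g$ for some pointed $g:K_{j+1}\to K_{i+j+1}$; a natural candidate is the map produced by currying the cup product in dimensions $(i,j+1)$ and evaluating at $a$. The coherence one must verify is that this candidate $g$, when looped, recovers $a\cupp-$ up to the identifications $\sigma_j$ and $\sigma_{i+j}$; by the connectedness-based induction principle this again reduces to a statement about spheres, namely the compatibility between $\wedge_{i,j+1}\circ(\id\wedge\varphi_{\Sn j})$ and $\varphi_{\Sn{i+j}}\circ\wedge_{i,j}$, which is exactly proposition \ref{smashspheresexplicit}. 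I expect distributivity to be the main obstacle, because the additive structure on $K_n$ is accessed only indirectly through delooping and Eckmann--Hilton, so ensuring that all the pointedness data and naturality squares line up in a coherent way is substantially more delicate than the purely diagrammatic reductions used for graded-commutativity and associativity.
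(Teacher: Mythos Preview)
Your arguments for graded-commutativity and associativity are essentially the paper's: assemble a cube whose back face is the sphere-level compatibility (proposition \ref{commspheres} or \ref{smashspheres}), whose sides are the defining squares of $\cupp$ and naturality of $\sigma$ or $\alpha$, and close the front face by the connectedness of $|-|_i\wedge|-|_j$ against the truncatedness of $K_{i+j}$.  That part is fine.

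For distributivity you take a genuinely different route.  The paper does \emph{not} exhibit $a\cupp-$ as a loop map; instead it observes that the equality $x\cupp(y+z)=(x\cupp y)+(x\cupp z)$ lives in an $(n+m-1)$-type, that the map $K_n\wedge(K_m\vee K_m)\to K_n\wedge(K_m\times K_m)$ induced by $\iwedge_{K_m,K_m}$ is $(n+2m-2)$-connected (since $\iwedge_{K_m,K_m}$ is $(2m-2)$-connected), and hence that it suffices to check the identity when $y=0$ or $z=0$, where it is trivial from the unit laws.  Your approach --- showing $a\cupp-$ commutes with the $\sigma$-equivalences, hence preserves concatenation --- is exactly what proposition \ref{stepcupp} establishes later (for the other variable) in the Gysin chapter, so it does work; note however that \ref{smashspheresexplicit} increments the \emph{first} index, whereas you have fixed $a$ in the first slot, so you would need the symmetric statement or an appeal to commutativity.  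The trade-off is that the paper's wedge trick is cheaper and avoids all the delooping/pointedness bookkeeping you correctly flag as delicate, while your route front-loads a result (\ref{stepcupp}) that the thesis needs anyway for the Thom isomorphism.
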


\begin{proof}  
  \bigskip\noindent
  \textbf{Distributivity} 
  Given $x:K_n$ and $y,z:K_m$ we want to prove that
  \[x\cupp(y+z)=(x\cupp y)+(x\cupp z).\]
  If $m=0$, it’s clear, so we assume now $m>0$. The idea is to show that we can assume that either
  $y$ or $z$ is equal to $0$. Distributivity is equivalent to giving a filler of the diagram
  \[
  \begin{tikzcd}
    K_n\wedge(K_m\times K_m) \arrow[rr,"K_n\wedge+"] \arrow[d] && K_n\wedge K_m \arrow[d,"\cupp"']\\
    (K_n\wedge K_m)\times(K_n\wedge K_m) \arrow[r,"\cupp\times\cupp"'] & K_{n+m}\times
    K_{n+m}\arrow[r,"+"'] & K_{n+m}
  \end{tikzcd}
  \]
  where the map on the left sends $\proj(x,(y,z))$ to $(\proj(x,y),\proj(x,z))$.

  The map $\iwedge_{K_m,K_m}:K_m\vee K_m\to K_m\times K_m$ is $(2m-2)$-connected; therefore, the map
  \begin{align*}
    f &: K_n\wedge(K_m\vee K_m) \to K_n\wedge(K_m\times K_m),\\
    f(\proj(x,\inl(y))) &\defeq \proj(x,(y,0)),\\
    f(\proj(x,\inr(z))) &\defeq \proj(x,(0,z))
  \end{align*}
  is $(n+2m-2)$-connected. Given that what we have to prove is $(n+m-1)$-truncated and that
  $n+2m-2\ge n+m-1$, it is enough to prove commutativity of the diagram for elements given by
  $f$. For an element of the form $\proj(x,\inl(y))$, we have to prove
  \[x\cupp(y+0)=(x\cupp y)+(x\cupp0),\]
  which is true because $y+0=y$ and $x\cupp0=0$. It is true in the same way for elements of the form
  $\proj(x,\inr(z))$.

  \bigskip\noindent
  \textbf{Graded commutativity} Given $x:K_i$ and $y:K_j$, we want to prove that
  \[x\cupp y=(-1)^{ij}y\cupp x.\]
  If either $i$ or $j$ is equal to $0$, it is immediate from the definition of the cup product by an
  element of degree $0$. We assume now that $i$ and $j$ are positive and we consider the diagram
  \[
  \begin{tikzcd}[row sep=scriptsize, column sep=scriptsize]
    & \Sn i\wedge\Sn j \arrow[rr] \arrow[dd,"{\wedge_{i,j}}" near start] \arrow[ld] && \Sn j\wedge \Sn i\arrow[dd,"{\wedge_{j,i}}"] \arrow[ld]\\
    K_i\wedge K_j \arrow[rr,crossing over] \arrow[dd,"\cupp"'] && K_j\wedge K_i\\
    & \Sn{i+j} \arrow[rr,"(-1)^{ij}"',near start] \arrow[ld] && \Sn{i+j} \arrow[ld]\\
    K_{i+j} \arrow[rr,"(-1)^{ij}"'] && K_{i+j} \arrow[from=uu,crossing over,"\cupp" near start]
  \end{tikzcd}
  \]
  where we have to fill the front face.  The left and right faces are filled by definition of the
  cup product. The back face is filled by proposition \ref{commspheres}. The top face is filled by
  naturality of commutativity of the smash product. The bottom face is filled by definition of
  $(-1)$ on $K_{i+j}$. This is not yet enough to fill the front face, it only gives a filler of it
  after composition with the map $\Sn i\wedge\Sn j\to K_i\wedge K_j$. But that map is
  $(i+j)$-connected and we want to construct an equality in $K_{i+j}$ which is
  $(i+j)$-truncated. Therefore, proposition \ref{inductionconnected} gives a filler of the front
  face.

  \bigskip\noindent
  \textbf{Associativity} Given $x:K_i$, $y:K_j$ and $z:K_k$, we want to prove that
  \[(x\cupp y)\cupp z=x\cupp(y\cupp z).\]
  If either $i$, $j$, or $k$ is equal to $0$, it follows from distributivity. For instance we have
  \begin{align*}
    (n\cupp x)\cupp y &= (x+\dots+x)\cupp y\\
                      &= x\cupp y+\dots+x\cupp y\\
                      &= n\cupp(x\cupp y).
  \end{align*}
  We assume now that $i$, $j$, and $k$ are positive and we consider the diagram
  \[
  \begin{tikzcd}[row sep=scriptsize, column sep=small]
    & (\Sn i\wedge\Sn j)\wedge\Sn k \arrow[rr] \arrow[dd] \arrow[ld]&& \Sn{i+j}\wedge\Sn k
    \arrow[rrd] \arrow[ld]&&\\
    (K_i\wedge K_j)\wedge K_k \arrow[rr,"{\cupp}\wedge \id_{K_k}",crossing over, near end] \arrow[dd]&&
    K_{i+j}\wedge K_k &&& \Sn{i+j+k} \arrow[ld]\\
    & \Sn i\wedge(\Sn j\wedge\Sn k) \arrow[rr] \arrow[ld]&& \Sn{i}\wedge\Sn{j+k}
    \arrow[rru] \arrow[ld]& K_{i+j+k} \arrow[from=llu,"\cupp",crossing over]\\
    K_i\wedge(K_j\wedge K_k) \arrow[rr,"\id_{K_i}\wedge{\cupp}"']&& K_i\wedge K_{j+k}
    \arrow[rru,"\cupp"']&&
  \end{tikzcd}
  \]
  We want to fill the front face, and proposition \ref{smashspheres} gives a filler of the back
  face.  The left square is filled by naturality of associativity of the smash product, the
  top-right and bottom-right squares are filled by definition of the cup product, and the top-left
  and bottom-left squares are filled by definition of the cup product and naturality of the smash
  product. The map $(\Sn i\wedge\Sn j)\wedge\Sn k\to(K_i\wedge K_j)\wedge K_k$ is
  $(i+j+k)$-connected, and we want to construct an equality in $K_{i+j+k}$, which is
  $(i+j+k)$-truncated. Therefore, we get a filler of the front face of the diagram.
\end{proof}

This concludes the construction of the structure of graded ring on cohomology. Moreover, as we see
now, the operation associating the cohomology ring to a space is a contravariant functor.

\begin{definition}
  Given $n:\N$ and a map $f:X\to Y$, we define $f^*:H^n(Y)\to H^n(X)$ by
  \begin{align*}
    f^* &: H^n(Y)\to H^n(X),\\
    f^*(|\beta|) &\defeq |\beta\circ f|.
  \end{align*}
  Similarly, if $X$ and $Y$ are pointed and $f:X\topt Y$ is a pointed map, we have
  $f^*:\Ht^n(Y)\to\Ht^n(X)$ defined in the same way.
\end{definition}

The following proposition is straightforward given that all operations are defined first on $K_n$
and then transferred to $H^n(X)$ by composition.
\begin{proposition}
  For every $n,m:\N$, $f:X\to Y$, $g:Y\to Z$, $\beta:H^n(Y)$, $\beta':H^n(Y)$ and $\gamma:H^m(Y)$,
  we have
  \begin{itemize}
  \item $(\id_X)^*=\id_{H^n(X)}$ and $(g\circ f)^*=f^*\circ g^*$,
  \item $f^*(0_{H^n(Y)})=0_{H^n(X)}$, $f^*(-\beta)=-f^*(\beta)$ and $f^*(\beta+\beta')=f^*(\beta)+f^*(\beta')$,
  \item $f^*(\beta\cupp\gamma)=f^*(\beta)\cupp f^*(\gamma)$.
  \end{itemize}
\end{proposition}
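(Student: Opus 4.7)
The plan is to reduce each equation to a corresponding pointwise equation in the target space $K_n$ (or $K_{n+m}$), then appeal to function extensionality together with the induction principle for the $0$-truncation. The crucial observation is that the map $f^*$ is defined by precomposition, and every operation in sight — addition, negation, zero, and cup product — is defined on $H^n(X)$ by applying the corresponding operation on $K_n$ pointwise. Since precomposition trivially commutes with pointwise operations, the equations hold up to function equality and hence, after applying $|-|$, up to equality in $H^n(X)$.

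First I would handle the functoriality clauses. Given $\beta : H^n(Y)$, I would use $0$-truncation induction to reduce to $\beta = |\tilde\beta|$ for some $\tilde\beta : Y \to K_n$, and then observe that $\tilde\beta \circ \id_X = \tilde\beta$ holds by definition of $\id_X$, and that $\tilde\beta \circ (g \circ f) = (\tilde\beta \circ g) \circ f$ holds by the definitional associativity of function composition; applying $|-|$ to each side gives the result in $H^n(X)$.

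Next I would handle additivity. Representing $\beta = |\tilde\beta|$ and $\beta' = |\tilde\beta'|$, the definition of $+$ on $H^n(Y)$ unfolds to $\beta + \beta' = |\lambda y.\tilde\beta(y) + \tilde\beta'(y)|$, and precomposing by $f$ yields $|\lambda x.\tilde\beta(f(x)) + \tilde\beta'(f(x))|$. Both sides of $f^*(\beta + \beta') = f^*(\beta) + f^*(\beta')$ then unfold to the same element after a use of function extensionality, and similarly for the two equations involving $-$ and $0$. For the cup product, the argument is essentially identical: $\beta \cupp \gamma$ is by definition $|\lambda y.\tilde\beta(y) \cupp \tilde\gamma(y)|$, where the inner $\cupp$ is the map $K_n \times K_m \to K_{n+m}$; precomposition by $f$ commutes with the $\lambda$ and produces $|\lambda x.\tilde\beta(f(x)) \cupp \tilde\gamma(f(x))| = f^*(\beta) \cupp f^*(\gamma)$.

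The only very mild obstacle is bookkeeping around the $0$-truncation: one must apply the induction principle to replace each $\beta$, $\beta'$, or $\gamma$ by a representative, which is justified because the goals are equalities in sets and are therefore $(-1)$-truncated. Once this is in place, every step is purely definitional except for one invocation of function extensionality per equation. No deeper structural input (such as naturality of $\wedge$, or the coherence data produced in definition \ref{squarecupp}) is needed here, because $f^*$ only acts on the $X$-side, leaving the codomain $K_\bullet$ structure untouched.
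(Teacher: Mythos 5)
Your proof is correct and takes essentially the same approach as the paper, which gives no detailed proof at all but merely notes that the result is "straightforward given that all operations are defined first on $K_n$ and then transferred to $H^n(X)$ by composition." One small point: once you've reduced to representatives via $0$-truncation induction, the two sides of each equation are already definitionally equal (after $\beta$/$\eta$-reduction), so the uses of function extensionality you invoke are not strictly necessary — but that is a harmless over-caution, not a gap.
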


\section{The Mayer--Vietoris sequence}

The \emph{Mayer--Vietoris sequence} is a long exact sequence relating the cohomology groups of a pushout
$A\sqcup^CB$ with those of $A$, $B$, and $C$. We prove a “half-reduced” version of the
Mayer--Vietoris sequence which allows us to deduce easily both the unreduced version of the
Mayer--Vietoris sequence and the long exact sequence of a cofiber.

We consider three types $A$, $B$, and $C$, with $A$ pointed, and two functions $f:C\to A$ and
$g:C\to B$. We define $D \defeq A\sqcup^CB$ pointed by $\star_D\defeq\inl(\star_A)$. We define the
maps
\begin{align*}
  i&: \Ht^n(D)\to \Ht^n(A)\times H^n(B), & \Delta &:\Ht^n(A)\times H^n(B)\to H^n(C),\\
  i(\delta) &\defeq (\inl^*(\delta),\inr^*(\delta)),
                                    & \Delta(\alpha,\beta) &\defeq f^*(\alpha)-g^*(\beta),
\end{align*}
and
\begin{align*}
  d &: H^n(C)\to \Ht^{n+1}(D),\\
  d(|\gamma|) &\defeq \widetilde{d}(\gamma),
\end{align*}
where
\begin{align*}
  \widetilde{d} &: (C\to K_n)\to (D \topt K_{n+1}),\\
  \widetilde{d}(\gamma)(\inl(a)) &\defeq 0,\\
  \widetilde{d}(\gamma)(\inr(b)) &\defeq 0,\\
  \ap{\widetilde{d}(\gamma)}(\push(c)) &\defeq \sigma_n(\gamma(c)),
\end{align*}
where in the last line we see $\gamma(c)$ as an element of $\Omega K_{n+1}$ via the equivalence
$\sigma_n:K_n\simeq\Omega K_{n+1}$.  The first thing to note is that $i$, $\Delta$ and $d$ are group
homomorphisms. It’s obvious for $i$ and $\Delta$, and for $d$ it follows from the fact that
\begin{align*}
  \ap{d(|\gamma|+|\gamma'|)}(\push(c)) &= \sigma_n(\gamma(c)+\gamma'(c))\\
                                        &= \sigma_n(\gamma(c))\concat\sigma_n(\gamma'(c))\\
                                        &=\ap{d(|\gamma|)}(\push(c))\concat\ap{d(|\gamma'|)}(\push(c))\\
                                        &=\ap{d(|\gamma|)+d(|\gamma'|)}(\push(c)).
\end{align*}
We then have the following sequence of groups and homomorphisms of groups, which starts at
$\Ht^0(D)$ and then extends infinitely.

\begin{equation}
\begin{tikzcd}
  \Ht^{n+1}(D) \arrow[r,"i"]& \Ht^{n+1}(A)\times H^{n+1}(B) \ar[r,"\Delta"]& |[alias=X]| H^{n+1}(C)\\
  \Ht^n(D) \arrow[r,"i"]& \Ht^n(A)\times H^n(B) \ar[r,"\Delta"]& |[alias=X]| H^n(C)
  \arrow[ull,out=80,in=280,looseness=0.3,"d"']\\
  \Ht^{n-1}(D) \arrow[r,"i"]& \Ht^{n-1}(A)\times H^{n-1}(B) \ar[r,"\Delta"]& |[alias=X]| H^{n-1}(C)
  \arrow[ull,out=80,in=280,looseness=0.3,"d"']
\end{tikzcd}\label{eq:mayervietoris}
\end{equation}

\newcommand{\ploup}[1]{\bigskip\noindent{#1}\smallskip}

\begin{proposition}
  The sequence \ref{eq:mayervietoris} is a long exact sequence of groups.
\end{proposition}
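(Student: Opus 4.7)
The plan is to verify the three exactness conditions that repeat periodically in the sequence, namely at $\Ht^n(A) \times H^n(B)$, at $H^n(C)$, and at $\Ht^{n+1}(D)$. Since $i$, $\Delta$, and $d$ are already known to be group homomorphisms, it suffices to show, at each position, that the image of the previous map coincides (as a mere subset) with the kernel of the next. The common strategy is that the vanishing of each composite is a direct construction on representatives using the induction principle of $D$, while for the converse direction we use that ``$x$ is merely in the image'' is itself a mere proposition, so equality hypotheses in cohomology groups --- which only give merely-existing homotopies --- may be stripped of their truncation before we pick explicit representatives.

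For exactness at $\Ht^n(A) \times H^n(B)$: for any $\delta : D \topt K_n$, the path $\ap{\delta}(\push(c))$ is a homotopy witnessing $\delta(\inl(f(c))) = \delta(\inr(g(c)))$, so $\Delta \circ i = 0$. Conversely, from $\Delta(|\alpha|,|\beta|) = 0$ we obtain (merely, hence effectively) a homotopy $h : (c:C) \to \alpha(f(c)) = \beta(g(c))$; the induction principle of the pushout $D$ then assembles $(\alpha, \beta, h)$ into a pointed map $\delta : D \topt K_n$ with $i(|\delta|) = (|\alpha|,|\beta|)$.

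For exactness at $H^n(C)$: given $\gamma = f^* \alpha - g^* \beta$, I build an explicit pointed nullhomotopy $H$ of $\widetilde{d}(\gamma)$ by setting $H(\inl(a)) \defeq \sigma_n(\alpha(a))$ and $H(\inr(b)) \defeq \sigma_n(\beta(b))$ (both loops at $0$, pointed because $\alpha$ is pointed and $\sigma_n(0) = \idp{0}$). The dependent-path condition over $\push(c)$ unfolds to the equation $\sigma_n(\gamma(c)) = \sigma_n(\alpha(f(c))) \concat \sigma_n(\beta(g(c)))\inv$, which holds because $\sigma_n$ turns the group structure on $K_n$ into path concatenation on $\Omega K_{n+1}$. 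Conversely, a pointed nullhomotopy $H$ of $\widetilde{d}(\gamma)$ produces $\alpha(a) \defeq \sigma_n\inv(H(\inl(a)))$ (pointed because $H$ is) and $\beta(b) \defeq \sigma_n\inv(H(\inr(b)))$, and the same $\push$-condition now reads $\gamma = f^* \alpha - g^* \beta$.

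For exactness at $\Ht^{n+1}(D)$: $i \circ d = 0$ holds by construction, since $\widetilde{d}(\gamma)$ sends $\inl(a)$ and $\inr(b)$ to $0$. Conversely, from $i(|\delta|) = 0$ I get a pointed nullhomotopy $H_A : \delta \circ \inl \sim 0$ and an unpointed nullhomotopy $H_B : \delta \circ \inr \sim 0$. Replacing $\delta$ by the pointed-homotopic $\delta'$ which sends $\inl(a)$ and $\inr(b)$ to $0$ by definition and sends $\push(c)$ to $H_A(f(c))\inv \concat \ap{\delta}(\push(c)) \concat H_B(g(c))$, we get $|\delta'| = |\delta|$ in $\Ht^{n+1}(D)$, and then $\gamma(c) \defeq \sigma_n\inv(\ap{\delta'}(\push(c)))$ satisfies $d(|\gamma|) = |\delta|$. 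The main obstacle throughout is essentially bookkeeping: one must track pointedness carefully (only $A$ and $D$ are pointed, not $B$ or $C$) and unfold several dependent-path and transport formulas correctly, but no individual step is conceptually deep.
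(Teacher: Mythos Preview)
Your proof is correct and follows essentially the same approach as the paper: you verify the same six inclusions by the same constructions (building sections of the pushout from $\alpha$, $\beta$, and a homotopy; reading off $\alpha$ and $\beta$ from a nullhomotopy by precomposing with $\inl$, $\inr$; etc.), with only a cosmetically different organization for $\Im(\Delta) \subset \Ker(d)$, where the paper first splits $d(f^*\alpha - g^*\beta) = d(f^*\alpha) - d(g^*\beta)$ and kills each term separately, while you build the nullhomotopy of $\widetilde d(\gamma)$ in one go. Your more explicit tracking of pointedness and of truncation-stripping is a welcome addition over the paper's somewhat terser treatment.
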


\begin{proof}
\noindent{$\Im(d)\subset\Ker(i)$}
\smallskip

\noindent
The composition $i\circ d$ is equal to $0$ because it only applies $\widetilde{d}$ to elements of
the form $\inl(a)$ or $\inr(b)$.

\ploup{$\Ker(i)\subset\Im(d)$}

\noindent
A map $f$ from $D$ to $K_{n+1}$ is in the kernel of $i$ if and only if both $f\circ\inl$ and
$f\circ\inr$ are equal to $0$. Given such a map we construct a map from $C$ to $\Omega K_{n+1}$ by
sending $c$ to $\ap f(\push(c))$ composed with the equalities from the hypothesis. This gives a map
from $C$ to $K_n$ after composition with $\sigma_n\inv$ and we can easily check that its image by
$f$ is equal to $d$.

\ploup{$\Im(i)\subset\Ker(\Delta)$}

\noindent
Given $\delta:D\to K_n$, we have
$\Delta(i(|\delta|))=f^*(\inl^*(|\delta|))-g^*(\inr^*(|\delta|))$. Moreover we have
$f^*(\inl^*(|\delta|))=\lambda c.\delta(\inl(f(c)))$ and
$g^*(\inr^*(|\delta|))=\lambda c.\delta(\inr(g(c)))$ which are equal via the homotopy
$\lambda c.\ap \delta(\push(c))$. Therefore $\Delta(i(|\delta|))=0$.

\ploup{$\Ker(\Delta)\subset\Im(i)$}

\noindent
Given $\alpha:A\to K_n$, $\beta:B\to K_n$, and $\gamma:f^*(|\alpha|)=g^*(|\beta|)$, i.e.\
$\gamma:(c:C)\to\alpha(f(c))=\beta(g(c))$, we construct $\delta:D\to K_n$ by
\begin{align*}
  \delta(\inl(a)) &\defeq \alpha(a),\\
  \delta(\inr(b)) &\defeq \beta(b),\\
  \ap\delta(\push(c)) &\defeq \gamma(c).
\end{align*}
The image of $|\delta|$ by $i$ is $(|\alpha|,|\beta|)$, which is what we wanted.

\ploup{$\Im(\Delta)\subset\Ker(d)$}

\noindent
Given $\alpha:A\to K_n$ and $\beta:B\to K_n$, we have
\begin{align*}
  d(\Delta(|\alpha|,|\beta|)) &= d(f^*(|\alpha|)-g^*(|\beta|)) \\
                              &= d(f^*(|\alpha|)) - d(g^*(|\beta|))\\
                              &= d(|\alpha\circ f|) - d(|\beta\circ g|)\\
                              &= \widetilde{d}(\alpha\circ f) - \widetilde{d}(\beta\circ g).
\end{align*}
We show that $\widetilde{d}(\alpha\circ f)$ is equal to $0$, and a similar proof apply to
$\widetilde{d}(\beta\circ g)$. We proceed by induction on the argument, which is of type $D$.
\begin{itemize}
\item For an element of the form $\inl(a)$, we use the path $\sigma_n(\alpha(a)):\Omega K_{n+1}$
  between $\widetilde{d}(\alpha\circ f)(\inl(a))$ and $0$ and the fact that
  $\widetilde{d}(\alpha\circ f)(\inl(a))$ is equal to $0$ by definition. It might be tempting to
  take the constant path instead, but then the induction wouldn’t work for the $\push(c)$ case.
\item For an element of the form $\inr(b)$, we use the constant path.
\item For a path of the form $\push(c)$, we have to prove that $\sigma_n(\alpha(f(c)))$ is equal to
  the constant path, along $\ap{\widetilde{d}(\alpha\circ f)}(\push(c))$, which is also equal to
  $\sigma_n(\alpha(f(c)))$, hence it works.
\end{itemize}

\ploup{$\Ker(d)\subset\Im(\Delta)$}

\noindent
Given a map $\gamma:C\to K_n$ the hypothesis $\widetilde{d}(\gamma)=0$ is an equality between two
functions from $D$ to $K_{n+1}$. If we look at what that means for each constructor of $D$, we get
\begin{align*}
  \alpha &: A\to\Omega K_{n+1},\\
  \beta &: B\to\Omega K_{n+1},\\
  c &: (c:C)\to \alpha(f(c))\inv\concat\sigma_n(\gamma(c))\concat \beta(g(c))=\idpS.
\end{align*}
This gives us $\alpha$ and $\beta$, and, moreover, we have $f^*(|\alpha|) - g^*(|\beta|)=|\gamma|$ because
$\alpha(f(c))\concat\beta(g(c))\inv=\sigma_n(\gamma(c))$.
\end{proof}

In the special case where we have a pushout of the form $C_f=\Unit\sqcup^AB$ for some map
$f:A\to B$, we obtain the following long exact sequence relating the cohomology of $A$ and $B$ with
the reduced cohomology of $C_f$, where $i$ is the inclusion $B\to C_f$.

\[
\begin{tikzcd}
  \Ht^{n+1}(C_f) \arrow[r,"i^*"]& H^{n+1}(B) \ar[r,"f^*"]& |[alias=X]| H^{n+1}(A)\\
  \Ht^n(C_f) \arrow[r,"i^*"]& H^n(B) \ar[r,"f^*"]& |[alias=X]| H^n(A)
  \arrow[ull,out=80,in=280,looseness=0.3,"d"']\\
  \Ht^{n-1}(C_f) \arrow[r,"i^*"]& H^{n-1}(B) \ar[r,"f^*"]& |[alias=X]| H^{n-1}(A)
  \arrow[ull,out=80,in=280,looseness=0.3,"d"']
\end{tikzcd}
\]

We also obtain the unreduced Mayer--Vietoris sequence, which is formally identical to the
Mayer--Vietoris sequence \ref{eq:mayervietoris} with the exception that all reduced cohomology
groups are replaced by regular cohomology groups. The idea is to apply the Mayer--Vietoris sequence
\ref{eq:mayervietoris} to $A'\defeq \Unit+A$ and to notice that $\Ht^n(A')\simeq H^n(A)$ and that
$\Ht^n(D')\simeq H^n(D)$ because
\begin{align*}
  D' &\defeq (\Unit+A)\sqcup^CB\\
     &\simeq \Unit+(A\sqcup^CB)\\
     &\simeq \Unit+D.
\end{align*}

\section{Cohomology of products of spheres}

\begin{proposition}
  The cohomology groups of the point are
  \[H^k(\Unit)\simeq
  \begin{cases}
    \Z &\text{if $k=0$},\\
    0 &\text{otherwise}.
  \end{cases}
  \]
\end{proposition}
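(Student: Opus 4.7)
The plan is straightforward: unfold the definition $H^k(\Unit) = \trunc0{\Unit \to K_k}$, observe that the function space out of $\Unit$ collapses, and then handle the two cases $k = 0$ and $k \ge 1$ separately.

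First I would establish the equivalence $(\Unit \to K_k) \simeq K_k$ given by evaluation at $\ttt$ (with inverse $\lambda x.\lambda \ttt.x$, using induction on $\Unit$ to check it is a section). Applying the $0$-truncation functor gives $H^k(\Unit) \simeq \trunc 0{K_k}$. For $k = 0$ we have $K_0 \defeq \Z$, which is a set by corollary \ref{Zisset}, so the truncation is trivial and $\trunc 0{K_0} \simeq \Z$. For $k \ge 1$ we have $K_k \defeq \trunc k{\Sn k}$; the iterated-truncation identity $\trunc 0{\trunc k A} \simeq \trunc 0 A$ (valid since $0 \le k$) yields $\trunc 0{K_k} \simeq \trunc 0{\Sn k}$. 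By proposition \ref{sphereconn}, $\Sn k$ is $(k-1)$-connected, hence $0$-connected for $k \ge 1$, so $\trunc 0{\Sn k}$ is contractible and $H^k(\Unit) \simeq \Unit$.

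Finally I would check that the equivalence $H^0(\Unit) \simeq \Z$ is a group isomorphism. The addition on $H^0(\Unit)$ is defined pointwise from the addition on $K_0 = \Z$, and evaluation at $\ttt$ commutes with pointwise operations by definition, so the isomorphism transports the group structure on the nose. For $k \ge 1$ both sides are trivial groups, so there is nothing more to verify.

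The only mildly delicate step is making the case distinction between $k = 0$ and $k \ge 1$ precise, since the definition of $K_n$ has two clauses; but each clause gives its result directly, so there is no real obstacle. The argument is essentially a bookkeeping exercise combining proposition \ref{sphereconn}, the iterated-truncation lemma, and the fact that the truncation of a set is itself.
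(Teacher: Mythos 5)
Your proof is correct and takes essentially the same route as the paper: both pass through the equivalence $(\Unit \to K_k) \simeq K_k$ and then split on whether $k = 0$ (where $K_0 = \Z$ is already a set) or $k \ge 1$ (where $K_k$ is connected, so its $0$-truncation is trivial). You simply unpack the connectedness step in more detail than the paper does, and add the group-structure check, but there is no real difference in approach.
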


\begin{proof}
  We have $(\Unit\to K_k)\simeq K_k$, which is equal to $\Z$ for $k=0$ and is connected otherwise.
\end{proof}

The cohomology of spheres is easy to compute using the Mayer--Vietoris sequence.
\begin{proposition}
  For $n>0$ we have
  \[H^k(\Sn n)\simeq
  \begin{cases}
    \Z &\text{if $k=n$ or $k=0$},\\
    0 &\text{otherwise}.
  \end{cases}\]
  We write $\cc_n$ for the generator of $H^n(\Sn n)$.
\end{proposition}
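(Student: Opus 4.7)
The plan is to apply the unreduced Mayer--Vietoris sequence to the pushout decomposition $\Sn n \simeq \Unit \sqcup^{\Sn{n-1}} \Unit$ coming from $\Sn n = \Susp \Sn{n-1}$, and to induct on $n \geq 1$.

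The case $k = 0$ is handled directly: for $n \geq 1$, proposition~\ref{sphereconn} gives that $\Sn n$ is $0$-connected, and since $\Z = K_0$ is a set by corollary~\ref{Zisset}, any function $\Sn n \to \Z$ is constant, so $H^0(\Sn n) \simeq \Z$. As a preliminary, I would also compute $H^k(\Bool) \simeq H^k(\Unit)^2$ for all $k$, using $\Bool \simeq \Unit + \Unit$ so that $\Bool \to K_k$ is equivalent to a pair of elements of $K_k$; in particular $H^0(\Bool) \simeq \Z^2$ and $H^k(\Bool) = 0$ for $k \geq 1$.

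The relevant portion of the unreduced Mayer--Vietoris sequence with $A = B \defeq \Unit$, $C \defeq \Sn{n-1}$, $D \defeq \Sn n$ reads
\[H^{k-1}(\Unit)^2 \to H^{k-1}(\Sn{n-1}) \to H^k(\Sn n) \to H^k(\Unit)^2 \to H^k(\Sn{n-1}).\]
For $k \geq 2$ the two outer groups vanish, so we get an isomorphism $H^k(\Sn n) \simeq H^{k-1}(\Sn{n-1})$; induction on $n$, starting from $H^k(\Sn 1) \simeq H^{k-1}(\Bool) = 0$ for $k \geq 2$, yields $H^n(\Sn n) \simeq \Z$ and $H^k(\Sn n) = 0$ whenever $k \neq n$ and $k \geq 2$.

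For $k = 1$ the sequence becomes $\Z^2 \to H^0(\Sn{n-1}) \to H^1(\Sn n) \to 0$. Unfolding the definition of the connecting map $\Delta(\alpha,\beta) = f^*(\alpha) - g^*(\beta)$, where $f, g : \Sn{n-1} \to \Unit$ are the two projections, shows it sends a pair of constants to the difference of their pullbacks along $f$ and $g$. For $n \geq 2$ we have $H^0(\Sn{n-1}) \simeq \Z$ and the map becomes $(\alpha,\beta)\mapsto\alpha-\beta$, which is surjective, giving $H^1(\Sn n) = 0$. For $n = 1$ we have $H^0(\Bool) \simeq \Z^2$ and the map sends $(\alpha,\beta)$ to the diagonal element $(\alpha-\beta, \alpha-\beta)$, whose cokernel is $\Z$, giving $H^1(\Sn 1) \simeq \Z$. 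The main obstacle I anticipate is verifying that these connecting maps act as described on constants and on the identification $H^0(\Bool) \simeq \Z^2$; this reduces to unfolding $f^*$ and the coproduct decomposition, but must be done carefully to get the right formula before taking cokernels.
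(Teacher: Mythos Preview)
Your proposal is correct and follows exactly the approach the paper sketches: apply the Mayer--Vietoris sequence to the pushout $\Sn n\simeq\Unit\sqcup^{\Sn{n-1}}\Unit$ and induct on $n$. The paper's proof is a single sentence to this effect, so your version simply supplies the details (the $k=0$ case via connectedness, the base case $\Sn0=\Bool$, and the analysis of $\Delta$ when $k=1$) that the paper leaves to the reader.
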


\begin{proof}
  It follows easily, by induction, from the application of the Mayer--Vietoris sequence to the
  pushout $\Sn n\defeq\Unit\sqcup^{\Sn{n-1}}\Unit$.
\end{proof}

Note that the cup product of $\cc_n$ with itself in $H^n(\Sn n)$ vanishes because it is an element
of $H^{2n}(\Sn n)$ which is the trivial group.  We now look at the cohomology of a product of two
spheres.

\begin{proposition}\label{cohomologyproductspheres}
  Given $n,k:\N$, the cohomology groups of $\Sn n\times\Sn k$ are generated (additively) by
  \begin{align*}
    1 &: H^0(\Sn n\times\Sn k),\\
    \xx &: H^n(\Sn n\times\Sn k),\\
    \yy &: H^k(\Sn n\times\Sn k),\\
    \zz &: H^{n+k}(\Sn n\times\Sn k).
  \end{align*}
  Note that $n$ and $k$ may be equal, which is why we state it in this way.
  Moreover in the diagram
  \[
  \begin{tikzcd}[column sep=huge]
    \Sn n \arrow[r,"{i_n:x\mapsto(x,\star_{\Sn k})}"]& \Sn n\times\Sn k \arrow[r,"{p_n:(x,y)\mapsto x}"]& \Sn n,
  \end{tikzcd}
  \]
  the map $p_n^*$ sends $\cc_n$ to $\xx$ and the map $i_n^*$ sends $\xx$ (resp. $\yy$) to $\cc_n$
  (resp. to $0$).
  Similarly, in the diagram
  \[
  \begin{tikzcd}[column sep=huge]
    \Sn k \arrow[r,"{i'_k:y\mapsto(\star_{\Sn n},y)}"]& \Sn n\times\Sn k \arrow[r,"{p'_k:(x,y)\mapsto
      y}"]& \Sn k,
  \end{tikzcd}
  \]
  the map ${p'_k}^*$ sends $\cc_k$ to $\yy$ and the map ${i'_k}^*$ sends $\yy$ (resp. $\xx$) to $\cc_k$
  (resp. to $0$).
  Finally, we have
  \begin{align*}
    \xx\cupp\xx&=0,\\
    \yy\cupp\yy&=0,\\
    \xx\cupp\yy&=\zz.
  \end{align*}
\end{proposition}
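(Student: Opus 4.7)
The plan is to split this into four pieces: the additive structure, the action on the projection/inclusion maps, the trivial products, and the non-trivial product $\xx\cupp\yy=\zz$.

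First I would compute the additive structure of both $H^*(\Sn n\vee\Sn k)$ and $H^*(\Sn n\times\Sn k)$ using Mayer--Vietoris. For the wedge, the pushout defining $\Sn n\vee\Sn k$ in terms of $\Sn n$, $\Unit$ and $\Sn k$ splits cleanly because the map $\Unit\to\Sn n$ is pointed, giving $\Ht^i(\Sn n\vee\Sn k)\simeq\Ht^i(\Sn n)\oplus\Ht^i(\Sn k)$. For the product I would apply Mayer--Vietoris to the pushout square from proposition \ref{whiteheadmap}, namely $\Sn n\times\Sn k\simeq\Unit\sqcup^{\Sn{n+k-1}}(\Sn n\vee\Sn k)$. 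Using that the wedge has cohomology only in degrees $n$ and $k$ and the sphere $\Sn{n+k-1}$ has cohomology only in degree $n+k-1$, the long exact sequence produces the four $\Z$-summands in degrees $0$, $n$, $k$ and $n+k$ (with the generator in degree $n+k$ arising from the connecting homomorphism out of $H^{n+k-1}(\Sn{n+k-1})$, or equivalently from pullback along the quotient map $q\colon\Sn n\times\Sn k\to\Sn n\wedge\Sn k\simeq\Sn{n+k}$ via the cofiber long exact sequence). Define $\zz\defeq q^*|-|_{n+k}$ applied to the canonical generator, that is, $\zz$ is represented by the composite $\Sn n\times\Sn k\xrightarrow{\proj}\Sn n\wedge\Sn k\xrightarrow{\wedge_{n,k}}\Sn{n+k}\xrightarrow{|-|}K_{n+k}$.

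Next, the action of the projections and inclusions follows from naturality together with the fact that $p_n\circ i_n=\id_{\Sn n}$, $p'_k\circ i'_k=\id_{\Sn k}$, $p_n\circ i'_k$ and $p'_k\circ i_n$ are constant, and the generator $\cc_n$ of $H^n(\Sn n)$ is fixed by the identity. This immediately gives $p_n^*\cc_n=\xx$ (taken as a definition of $\xx$), $i_n^*\xx=\cc_n$, $i_n^*\yy=0$, and the symmetric statements for $\yy$.

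The vanishing products are easy consequences of naturality: $\xx\cupp\xx=p_n^*(\cc_n)\cupp p_n^*(\cc_n)=p_n^*(\cc_n\cupp\cc_n)$, and $\cc_n\cupp\cc_n$ lies in $H^{2n}(\Sn n)=0$ for $n\ge 1$; the same works for $\yy\cupp\yy$. For $\xx\cupp\yy=\zz$, I would represent $\xx\cupp\yy$ via the definition of the cup product as the composite
\[\Sn n\times\Sn k\xrightarrow{(|p_n|,|p'_k|)}K_n\times K_k\xrightarrow{\proj}K_n\wedge K_k\xrightarrow{\cupp}K_{n+k}.\]
This factors as $\Sn n\times\Sn k\xrightarrow{\proj}\Sn n\wedge\Sn k\xrightarrow{|-|\wedge|-|}K_n\wedge K_k\xrightarrow{\cupp}K_{n+k}$, and then the defining square of $\cupp$ in definition \ref{squarecupp} identifies the last two maps with $|-|_{n+k}\circ\wedge_{n,k}$. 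This composite is exactly the representative of $\zz$.

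The main obstacle is the degree $n+k$ part of the Mayer--Vietoris computation that identifies $q^*(\cc_{n+k})$ as a generator: one must check that in the cofiber long exact sequence of $\iwedge_{\Sn n,\Sn k}\colon\Sn n\vee\Sn k\to\Sn n\times\Sn k$, the pullback $q^*\colon H^{n+k}(\Sn{n+k})\to H^{n+k}(\Sn n\times\Sn k)$ is an isomorphism, which requires verifying the vanishing of the relevant neighboring terms $H^{n+k}(\Sn n\vee\Sn k)$ and (after choosing generators) fixing a sign convention so that $\zz$ really equals $\xx\cupp\yy$ rather than merely being a generator. The connectivity estimates and use of $n,k\ge 1$ (or more carefully the boundary cases $n=k=1$ and $n=1$ or $k=1$ where $H^{n+k-1}(\Sn n\vee\Sn k)$ is not automatically zero) deserve care.
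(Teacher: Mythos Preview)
Your approach is essentially the same as the paper's: both use Mayer--Vietoris on the pushout $\Sn n\times\Sn k\simeq\Unit\sqcup^{\Sn{n+k-1}}(\Sn n\vee\Sn k)$ for the additive structure, the identities $p_n\circ i_n=\id$ etc.\ for the projection/inclusion claims, the factorization of $\xx\cupp\xx$ through $H^{2n}(\Sn n)=0$ via $p_n$, and the defining square of $\cupp$ (definition~\ref{squarecupp}) to identify $\xx\cupp\yy$ with the class represented by $|-|_{n+k}\circ\wedge_{n,k}\circ\proj$. The paper is terser and does not spell out the boundary cases you flag; your caution there is appropriate but not a divergence in method.
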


\begin{proof}
  For the additive structure, we use the Mayer--Vietoris sequence and the fact that
  \[\Sn n\times\Sn k\simeq\Unit\sqcup^{\Sn{n+k-1}}(\Sn n\vee\Sn k),\]
  which gives directly the first result together with the fact that $i_n^*$ and ${i'_k}^*$ send
  $\xx$ and $\yy$ to $\cc_n$ and $\cc_k$ and the other one to $0$. The two projections $p_n^*$ and
  ${p'_k}^*$ send $\cc_n$ and $\cc_k$ back to $\xx$ and $\yy$ because we have
  $p_n\circ i_n=\id_{\Sn n}$ and $p'_k\circ i'_k=\id_{\Sn k}$. The Mayer--Vietoris sequence also
  shows that the map $\wedge_{n,k}\circ\proj:\Sn n\times\Sn k\to\Sn n\wedge\Sn k\to\Sn{n+k}$ induces
  an isomorphism on $H^{n+k}$.

  By definition, the cup product of $\xx$ and $\yy$ corresponds to the composition
  \[
  \begin{tikzcd}
    \Sn n\times\Sn k \arrow[rr,"{(x,y)\mapsto(x,y)}"]&& \Sn n\times \Sn k \arrow[r,"\proj"]& \Sn n\wedge\Sn k
    \arrow[r,"\sim"]& \Sn{n+k} \arrow[r,"|-|"]& K_{n+k}.
  \end{tikzcd}
  \]
  Note that the first map is the identity function because it’s the pairing of the two projections,
  the first projection coming from $\xx$ and the second from $\yy$. Therefore, we have
  \[\xx\cupp \yy=\zz.\]

  The cup product of $\xx$ with itself corresponds to the composition
  \[
  \begin{tikzcd}
    \Sn n\times\Sn k \arrow[rr,"{(x,y)\mapsto(x,x)}"] \arrow[rd,"{(x,y)\mapsto x}"']&& \Sn n\times \Sn n \arrow[r,"\proj"]& \Sn n\wedge\Sn n
    \arrow[r,"\sim"]& \Sn{2n} \arrow[r,"|-|"]& K_{2n}.\\
    &\Sn n \arrow[ru,"{x\mapsto(x,x)}"']
  \end{tikzcd}
  \]
  But the first map factors through $\Sn n$ which has no cohomology in dimension $2n$, therefore
  \[\xx\cupp \xx=0,\]
  and similarly
  \[\yy\cupp \yy=0.\qedhere\]
\end{proof}

\section{The Hopf invariant}

Given a map $f:\Sn k\to\Sn n$, we can consider the pushout $\Unit\sqcup^{\Sn k}\Sn n$, which has
cohomology $\Z$ in dimensions $0$, $n$ and $k+1$ (using the Mayer--Vietoris sequence). The cup
product structure on this space is often trivial, unless $k=2n-1$ in which case the square of the
generator in dimension $n$ may be a nontrivial multiple of the generator in dimension $2n$. This is
what we study in this section.

\begin{definition}
  Given a pointed map $f:\Sn{2n-1}\to\Sn n$, we define
  \begin{align*}
    C_f &\defeq \Unit\sqcup^{\Sn{2n-1}}\Sn n,\\
    \alpha_f &\defeq (i^*)\inv(\cc_n) : H^n(C_f),\\
    \beta_f &\defeq p^*(\cc_{2n}) : H^{2n}(C_f),
  \end{align*}
  where $i:\Sn n\to C_f$ is the inclusion on the right and $p:C_f\to\Sn{2n}$ is the map collapsing
  the $\Sn{n}$ term in $\Unit\sqcup^{\Sn{2n-1}}\Sn n$ to a point. The Mayer--Vietoris sequence shows
  that the map $i^*$ induces an isomorphism on $H^n$ and that the map $p^*$ induces an isomorphism
  on $H^{2n}$, hence the elements $\alpha_f$ and $\beta_f$ are generators of the respective
  cohomology groups.
\end{definition}

\begin{definition}
  The \emph{Hopf invariant} of a pointed map $f:\Sn{2n-1}\to\Sn n$ is the integer $H(f):\Z$ such
  that
  \[\alpha_f^2=H(f)\beta_f,\]
  where $\alpha_f^2$ is $\alpha_f\cupp\alpha_f$.
\end{definition}

Note that, if $n$ is odd, we have $\alpha_f^2=-\alpha_f^2$ by graded-commutativity; therefore, the
Hopf invariant of any map $f:\Sn{2n-1}\to\Sn{n}$ is $0$.

\begin{proposition}
  The Hopf invariant $H:\pi_{2n-1}(\Sn n)\to\Z$ is a homomorphism of groups.
\end{proposition}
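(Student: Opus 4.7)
The plan is to reduce additivity to a calculation in an auxiliary cofiber space that packages both $f$ and $g$ simultaneously. Given two pointed maps $f,g:\Sn{2n-1}\to\Sn n$, I would introduce
\[X \defeq \Unit\sqcup^{\Sn{2n-1}\vee\Sn{2n-1}}\Sn n,\]
where the right-hand map is $\langle f,g\rangle$. This is $\Sn n$ with two $(2n)$-cells attached, one by $f$ and one by $g$. The three cofibers $C_f$, $C_g$ and $C_{f+g}$, together with this $X$, will be related by canonical maps, and the whole argument consists in transporting the defining relation $\alpha^{2}=H(\cdot)\beta$ along those maps.

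First I would compute $H^{*}(X)$ using the Mayer--Vietoris (cofiber) sequence for the pushout defining $X$, combined with the computation of $H^{*}(\Sn n)$ and $H^{*}(\Sn{2n-1}\vee\Sn{2n-1})$ (the latter obtained from the pushout $\Unit\leftarrow\Unit\to\Sn{2n-1}\vee\Sn{2n-1}$, or directly from two applications of Mayer--Vietoris as in proposition \ref{cohomologyproductspheres}). This gives $H^{n}(X)\simeq\Z$, generated by the class $\alpha$ pulled back from $\cc_n$ along the inclusion $\Sn n\to X$, and $H^{2n}(X)\simeq\Z\oplus\Z$, generated by classes $\beta_{1},\beta_{2}$ coming from the two cells. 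Collapsing each wedge summand in turn provides maps $\pi_{1}:X\to C_{f}$ and $\pi_{2}:X\to C_{g}$ induced by the natural transformations of pushout spans; by inspection $\pi_{i}^{*}(\alpha_{f_{i}})=\alpha$ and $\pi_{i}^{*}(\beta_{f_{i}})=\beta_{i}$, so applying $\pi_{i}^{*}$ to the relations $\alpha_{f}^{2}=H(f)\beta_{f}$ and $\alpha_{g}^{2}=H(g)\beta_{g}$ yields
\[\alpha^{2}=H(f)\beta_{1}+H(g)\beta_{2}\quad\text{in}\ H^{2n}(X).\]

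Next I would construct a map $\sigma:C_{f+g}\to X$ using the factorisation $f+g=\langle f,g\rangle\circ\contreq_{\Sn{2n-2}}$. Concretely, $\sigma$ sends the cone point to the cone point of $X$, is the identity on the $\Sn n$-summand, and sends $\push(x)$ (for $x:\Sn{2n-1}$) to the composite path in $X$ obtained by lifting $\contreq(x):\Sn{2n-1}\vee\Sn{2n-1}$ through the two pushes of $X$. It is then immediate that $\sigma^{*}(\alpha)=\alpha_{f+g}$. The key identification is
\[\sigma^{*}(\beta_{1})=\sigma^{*}(\beta_{2})=\beta_{f+g}.\]
Since each $\beta_{i}$ is the pullback of $\cc_{2n}$ along the collapse $p_{i}:X\to\Sn{2n}$ that crushes everything except the $i$-th cell, the composite $p_{i}\circ\sigma:C_{f+g}\to\Sn{2n}$ factors through the collapse $p:C_{f+g}\to\Sn{2n}$ by a self-map $\Sn{2n}\to\Sn{2n}$ of degree $1$: indeed, projecting $\contreq(x)$ onto either wedge summand is homotopic to the identity (one of the two branches becomes constant, the other becomes $\varphi_{\Sn{2n-2}}(x)$), so $p_{i}\circ\sigma$ is homotopic to $p$. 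Applying $\sigma^{*}$ to the relation in $X$ then gives
\[\alpha_{f+g}^{2}=\sigma^{*}(\alpha^{2})=H(f)\beta_{f+g}+H(g)\beta_{f+g}=(H(f)+H(g))\beta_{f+g},\]
so $H(f+g)=H(f)+H(g)$, and $H$ is a group homomorphism.

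The main obstacle will be the careful construction of $\sigma$ and the verification that both $\beta_{i}$ pull back to $\beta_{f+g}$. This is essentially a diagram chase inside nested pushouts, and the cleanest route is to exhibit a commutative square of pushout spans whose induced map on pushouts is $\sigma$, and then identify $p_{i}\circ\sigma$ with $p$ by using that the $i$-th projection $\Sn{2n-1}\vee\Sn{2n-1}\to\Sn{2n-1}$ composed with $\contreq$ is homotopic to the identity (which follows from the definition of $\contreq$ via $\varphi$ and from $\varphi_{A}(\star_{A})=\idp{}$). Everything else (computing the cohomology of $X$, identifying generators, and running the final calculation) is a routine application of the Mayer--Vietoris sequence and the naturality of the cup product.
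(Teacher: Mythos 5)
Your space $X$ is exactly the paper's $C_{f\vee g}$, and your map $\sigma:C_{f+g}\to X$ induced by $\contreq$ is exactly the paper's $q$; the identification $\sigma^*(\beta_1)=\sigma^*(\beta_2)=\beta_{f+g}$ and the final calculation match the paper. The gap is in the first half, where you try to establish $\alpha^2=H(f)\beta_1+H(g)\beta_2$.

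The maps $\pi_1:X\to C_f$, $\pi_2:X\to C_g$ you describe do not exist. For a span map inducing $\pi_1$ on pushouts you would need a map $c:\Sn{2n-1}\vee\Sn{2n-1}\to\Sn{2n-1}$ making $f\circ c$ equal (or homotopic) to $\langle f,g\rangle$; taking $c$ to be the collapse of the second summand forces $f\circ c$ to kill the second summand, whereas $\langle f,g\rangle$ is $g$ there, so the square fails to commute unless $g$ is null-homotopic. More tellingly, even granting such $\pi_i$ with the properties $\pi_i^*(\alpha_{f_i})=\alpha$ and $\pi_i^*(\beta_{f_i})=\beta_i$, pulling $\alpha_f^2=H(f)\beta_f$ back along $\pi_1^*$ yields $\alpha^2=H(f)\beta_1$, and pulling $\alpha_g^2=H(g)\beta_g$ back along $\pi_2^*$ yields $\alpha^2=H(g)\beta_2$; these are inconsistent with each other and with the intended conclusion whenever the two Hopf invariants are not both zero, so no sum could appear. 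The correct direction is the opposite one: the two wedge inclusions $\Sn{2n-1}\to\Sn{2n-1}\vee\Sn{2n-1}$ give inclusions $j_f:C_f\to X$ and $j_g:C_g\to X$, which exist by honest naturality of pushouts. One then writes $\alpha^2=x\beta_1+y\beta_2$ for unknown integers $x,y$ (since $H^{2n}(X)\simeq\Z^2$), and determines the coefficients by restriction: $j_f^*(\alpha)=\alpha_f$, $j_f^*(\beta_1)=\beta_f$, and crucially $j_f^*(\beta_2)=0$, so applying $j_f^*$ gives $\alpha_f^2=x\beta_f$ and hence $x=H(f)$; similarly $y=H(g)$. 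The vanishing of the cross term $j_f^*(\beta_2)$ is exactly what lets you read off each coefficient separately, and there is no analogue of it in your collapse picture. Once this step is replaced, the rest of your argument goes through.
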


\begin{proof}
  Let $f$ and $g$ be two pointed maps from $\Sn{2n-1}$ to $\Sn{n}$. The sum of $f$ and $g$ when seen
  as elements of $\pi_{2n-1}(\Sn{n})$ is represented by the map
  \[
  f+g:\begin{tikzcd}
    \Sn{2n-1}\arrow[r,"\contreq"]& \Sn{2n-1}\vee\Sn{2n-1}\arrow[r,"f\vee g"]&\Sn{n}.
  \end{tikzcd}
  \]
  Let’s consider the type $C_{f\vee g}$ defined by the pushout
  \[
  \begin{tikzcd}
    \Sn{2n-1}\vee\Sn{2n-1} \arrow[r,"f\vee g"] \arrow[d]& \Sn{n} \arrow[d,dotted,"i"] \\
    \Unit\arrow[r,dotted]& C_{f\vee g}
  \end{tikzcd}
  \]
  The cohomology groups of $C_{f\vee g}$ are $\Z$ in dimensions $0$ and $n$, $\Z^2$ in dimension
  $2n$, and the trivial group otherwise. We denote by $\alpha_{f\vee g}$ the generator in dimension
  $n$ coming from $i$ and by $\beta_{f\vee g}^f$ and $\beta_{f\vee g}^g$ the two generators in
  dimension $2n$ coming from the two $\Sn{2n-1}$ terms.
  The three maps $\contreq$, $\inl$ and $\inr$ of type $\Sn{2n-1}\to\Sn{2n-1}\vee\Sn{2n-1}$ induce
  three maps $q:C_{f+g}\to C_{f\vee g}$, $j_f:C_f\to C_{f\vee g}$ and $j_g:C_g\to C_{f\vee g}$
  satisfying
  \begin{align*}
    q^*(\alpha_{f\vee g}) &= \alpha_{f+g}, & j_f^*(\alpha_{f\vee g}) &= \alpha_f, & j_g^*(\alpha_{f\vee g}) &= \alpha_g,\\
    q^*(\beta_{f\vee g}^f) &= \beta_{f+g}, & j_f^*(\beta_{f\vee g}^f) &= \beta_f, & j_g^*(\beta_{f\vee g}^f) &= 0,\\
    q^*(\beta_{f\vee g}^g) &= \beta_{f+g}, & j_f^*(\beta_{f\vee g}^g) &= 0, & j_g^*(\beta_{f\vee g}^g) &= \beta_g.
  \end{align*}
  Given that the square of $\alpha_{f\vee g}$ is an element of $H^{2n}(C_{f\vee g})$, it is a linear
  combination of $\beta_{f\vee g}^f$ and $\beta_{f\vee g}^g$ i.e.\ there are two natural numbers $x$
  and $y$ such that
  \[\alpha_{f\vee g}^2 = x\beta_{f\vee g}^f + y\beta_{f\vee g}^g.\]
  By applying $j_f^*$ to this equation we obtain $\alpha_f^2=x\beta_f$, hence $x=H(f)$. Similarly
  we get $y=H(g)$.  Therefore, we can compute the square of $\alpha_{f+g}$:
  \begin{align*}
    \alpha_{f+g}^2 &= q^*(\alpha_{f\vee g})^2\\
                   &= q^*(\alpha_{f\vee g}^2)\\
                   &= q^*(H(f)\beta_{f\vee g}^f + H(g)\beta_{f\vee g}^g)\\
                   &= (H(f) + H(g))\beta_{f+g}.
  \end{align*}
  This shows that $H(f+g)=H(f)+H(g)$ and that $H$ is a group homomorphism.
\end{proof}

\begin{proposition}
  If $n$ is even, then the Hopf invariant of the map $\fold_{\Sn n}\circ W_{n,n}:\Sn{2n-1}\to\Sn{n}$
  is equal to $2$.
\end{proposition}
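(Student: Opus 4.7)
Write $f \defeq \fold_{\Sn n}\circ W_{n,n}$ so that $C_f = \Unit\sqcup^{\Sn{2n-1}}\Sn n$, and recall from proposition \ref{whiteheadmap} that
\[\Sn n\times\Sn n \simeq \Unit\sqcup^{\Sn{2n-1}}(\Sn n\vee\Sn n),\]
the inclusion $j:\Sn n\vee\Sn n\to\Sn n\times\Sn n$ being $\iwedge_{\Sn n,\Sn n}$. The plan is to compare $C_f$ with $\Sn n\times\Sn n$ via the pushout map induced by $\fold_{\Sn n}$: since $f$ factors through $\Sn n\vee\Sn n$, the universal property of the pushout $\Sn n\times\Sn n$ gives a map $q:\Sn n\times\Sn n\to C_f$ satisfying $q\circ j = i\circ\fold_{\Sn n}$, where $i:\Sn n\to C_f$ is the right inclusion.

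Next I will identify $q^*(\alpha_f)$ and $q^*(\beta_f)$ in the cohomology of $\Sn n\times\Sn n$ computed in proposition \ref{cohomologyproductspheres}. Applying $j^*$ to $q^*(\alpha_f)$ yields $\fold_{\Sn n}^*(i^*(\alpha_f))=\fold_{\Sn n}^*(\cc_n)$, which in $H^n(\Sn n\vee\Sn n)\simeq\Z\times\Z$ is $(\cc_n,\cc_n)$ since $\fold_{\Sn n}$ restricts to the identity on each wedge summand. Since the Mayer--Vietoris sequence makes $j^*$ an isomorphism in degree $n$, and since $j^*\xx$ and $j^*\yy$ are the two generators of $H^n(\Sn n\vee\Sn n)$, we get $q^*(\alpha_f) = \xx+\yy$. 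For $\beta_f = p^*(\cc_{2n})$ with $p:C_f\to\Sn{2n}$ the collapse map, I observe that the composite $p\circ q$ kills $\Sn n\vee\Sn n\subset\Sn n\times\Sn n$, hence factors through $\Sn n\wedge\Sn n$; one checks that the resulting map is the equivalence $\wedge_{n,n}$ of proposition \ref{smashspheres}, so $q^*(\beta_f)=\proj^*(\wedge_{n,n}^*(\cc_{2n}))=\zz$.

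Now I compute in $H^{2n}(\Sn n\times\Sn n)$:
\[q^*(\alpha_f^2) = (\xx+\yy)^2 = \xx\cupp\xx + \xx\cupp\yy + \yy\cupp\xx + \yy\cupp\yy.\]
Proposition \ref{cohomologyproductspheres} gives $\xx\cupp\xx=\yy\cupp\yy=0$ and $\xx\cupp\yy=\zz$, and graded-commutativity yields $\yy\cupp\xx=(-1)^{n^2}\xx\cupp\yy=\zz$ because $n$ is even. Therefore $q^*(\alpha_f^2) = 2\zz = q^*(2\beta_f)$. Since $H^{2n}(C_f)\simeq\Z$ is generated by $\beta_f$ and $q^*\beta_f = \zz\neq 0$, the map $q^*$ is injective in degree $2n$, so $\alpha_f^2 = 2\beta_f$, i.e.\ $H(f)=2$.

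The main obstacle is the identification $p\circ q\simeq\wedge_{n,n}\circ\proj$: all other ingredients (the universal property, the Mayer--Vietoris computation of $H^*(\Sn n\times\Sn n)$, and the graded-commutativity with even $n$) are already available from the excerpt. I expect that verifying this identification amounts to tracing through the pushout definitions of $\Sn n\times\Sn n$, $C_f$, and $\Sn n\wedge\Sn n$ and checking by induction on the constructors that both maps send $\proj(x,y)$ to the same element and agree on the generating paths, using that $\wedge_{n,n}\circ\proj$ is by definition the collapse of $\Sn n\vee\Sn n$ in $\Sn n\times\Sn n$.
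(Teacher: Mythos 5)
Your proof takes essentially the same route as the paper's: construct a map $q:\Sn n\times\Sn n\to C_f$, compute $q^*(\alpha_f)=\xx+\yy$ and $q^*(\beta_f)=\zz$ in the cohomology of $\Sn n\times\Sn n$, expand the square using graded-commutativity with $n$ even to get $q^*(\alpha_f^2)=2\zz$, and conclude from injectivity of $q^*$ in degree $2n$; the only presentational difference is that the paper obtains $q$ as the pushout inclusion $\Sn n\times\Sn n\to J_2(\Sn n)\simeq C_f$ (via propositions \ref{kerneljames} and \ref{pushoutangle}), whereas you build the same map directly from the recursion principle of the pushout $\Sn n\times\Sn n\simeq\Unit\sqcup^{\Sn{2n-1}}(\Sn n\vee\Sn n)$. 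The identification $p\circ q\simeq\wedge_{n,n}\circ\proj$ that you flag as the remaining obstacle is precisely the step the paper elides when asserting $q^*(\beta)=\xx\cupp\yy$, so you have correctly isolated the one nontrivial verification.
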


\begin{proof}
  We consider the space $C_{\fold_{\Sn n}\circ W_{n,n}}$ and we write
  $\alpha\defeq\alpha_{\fold_{\Sn n}\circ W_{n,n}}$ and $\beta\defeq\beta_{\fold_{\Sn n}\circ
    W_{n,n}}$ for short.
  We saw in proposition \ref{kerneljames} that $C_{\fold_{\Sn n}\circ W_{n,n}}$ is equivalent to the
  pushout
  \[
  \begin{tikzcd}
    \Sn n\vee\Sn n \arrow[d]\arrow[r]&\Sn n\times\Sn n\arrow[d,dotted,"q"]\\
    \Sn n\arrow[r,dotted,"i",swap]&J_2(\Sn n) \arrow[lu,phantom,"\ulcorner",at start]
  \end{tikzcd}
  \]
  via the sequence of equivalences
  \begin{align*}
    C_{\fold\circ W_{n,n}} &\simeq \Unit\sqcup^{\Sn{2n-1}}\Sn n\\
                           &\simeq (\Unit\sqcup^{\Sn{2n-1}}(\Sn n\vee\Sn n))\sqcup^{\Sn n\vee\Sn n}\Sn n\\
                           &\simeq (\Sn n\times\Sn n)\sqcup^{\Sn n\vee\Sn n}\Sn n\\
                           &\simeq J_2(\Sn n).
  \end{align*}
  This shows that the map $i^*:H^n(J_2(\Sn n))\to H^n(\Sn n)$ sends $\alpha$ to $\cc_n$ and the map
  $q^*:H^{2n}(J_2(\Sn n))\to H^{2n}(\Sn n\times\Sn n)$ sends $\beta$ to $\xx\cupp\yy$, where $\xx$
  and $\yy$ are the two generators of $H^n(\Sn n\times\Sn n)$ given by proposition
  \ref{cohomologyproductspheres}.
  
  The composition of $q:\Sn n\times \Sn n\to J_2(\Sn n)$ with any of the two inclusions
  $\Sn n\to \Sn n\times \Sn n$ is homotopic to the map $i:\Sn n\to J_2(\Sn n)$. Therefore,
  $q^*(\alpha)=\xx+\yy:H^n(\Sn n\times\Sn n)$. Moreover we have $\xx\cupp\yy=\yy\cupp\xx$ by
  graded-commutativity, because we assumed $n$ even, and then
  \begin{align*}
    q^*(\alpha^2) &= q^*(\alpha)^2\\
                  &= (\xx+\yy)\cupp(\xx+\yy)\\
                  &= (\xx\cupp\xx)+(\xx\cupp\yy)+(\yy\cupp\xx)+(\yy\cupp\yy)\\
                  &= 2(\xx\cupp\yy)\\
                  &= 2q^*(\beta)\\
                  &= q^*(2\beta).
  \end{align*}
  Finally, we know that $q^*$ is an equivalence on $H^{2n}$, which shows that $\alpha^2=2\beta$ and
  that the Hopf invariant of the map $\fold_{\Sn n}\circ W_{n,n}:\Sn{2n-1}\to\Sn{n}$ is equal to $2$.
\end{proof}

This allows us to define new non-trivial elements in the homotopy groups of spheres.

\begin{proposition}
  For every $n\ge1$, the group $\pi_{4n-1}(\Sn{2n})$ is infinite.
\end{proposition}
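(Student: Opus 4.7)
The plan is to apply the previous proposition to the map $\fold_{\Sn{2n}} \circ W_{2n,2n} : \Sn{4n-1} \to \Sn{2n}$ and invoke the fact that the Hopf invariant defines a group homomorphism to $\Z$. First I would observe that $2n$ is always even, so the previous proposition immediately produces an element $f \defeq \fold_{\Sn{2n}} \circ W_{2n,2n}$ of $\pi_{4n-1}(\Sn{2n})$ whose Hopf invariant $H(f)$ equals $2$.

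Next I would recall that $H : \pi_{4n-1}(\Sn{2n}) \to \Z$ is a group homomorphism, as established in the proposition showing $H(f+g) = H(f) + H(g)$. Hence the class $[f] \in \pi_{4n-1}(\Sn{2n})$ satisfies $H(k \cdot [f]) = 2k$ for every $k : \Z$. Since $\Z$ is torsion-free and $2 \ne 0$, the integers $2k$ for $k : \Z$ are pairwise distinct, so the classes $k \cdot [f]$ must be pairwise distinct elements of $\pi_{4n-1}(\Sn{2n})$. This exhibits a subgroup isomorphic to $\Z$ sitting inside $\pi_{4n-1}(\Sn{2n})$, proving that it is infinite.

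There is essentially no obstacle here since both ingredients — the homomorphism property of $H$ and the existence of a map of Hopf invariant $2$ in even degree — have already been established. The only small point to be careful about is the meaning of "infinite" in the constructive setting: rather than negating finiteness, I would state the conclusion positively as the existence of an injection $\Z \hookrightarrow \pi_{4n-1}(\Sn{2n})$, obtained by sending $k$ to $k \cdot [f]$, with injectivity witnessed constructively by composing with $H$ and using decidable equality on $\Z$ (corollary \ref{Zisset} and its underlying decidability). This packaging makes the argument entirely constructive and avoids any appeal to excluded middle.
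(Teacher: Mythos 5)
Your proof is correct and follows essentially the same route as the paper: define the homomorphism $\Z\to\pi_{4n-1}(\Sn{2n})$ sending $1$ to the class of $\fold_{\Sn{2n}}\circ W_{2n,2n}$, compose with the Hopf invariant homomorphism $H$ to get $k\mapsto 2k$, and conclude injectivity. Your closing remark on reading ``infinite'' constructively as exhibiting an explicit injection from $\Z$ is a nice clarification of the same argument, not a different proof.
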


\begin{proof}
  We define a group homomorphism $h$ from $\Z$ to $\pi_{4n-1}(\Sn{2n})$ sending $1$ to the element
  corresponding to the function $\fold\circ W_{2n,2n} : \Sn{4n-1}\to\Sn{2n}$. For every $n:\Z$ we
  have $H(h(n))=2n$, hence all the $h(n)$ are different, which shows that $\pi_{4n-1}(\Sn{2n})$ is
  infinite.
\end{proof}

\begin{proposition}
  The natural number $n$ satisfying $\pi_4(\Sn 3)\simeq\Z/n\Z$ is equal to either $1$ or $2$.
\end{proposition}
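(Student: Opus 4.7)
The plan is to combine corollary \ref{firstpi4s3} with the computation of the Hopf invariant of $\fold_{\Sn 2}\circ W_{2,2}$ just established. By corollary \ref{firstpi4s3}, the integer $n$ in question is $|m|$, where $m\in\Z$ is the image of the Whitehead product $[i_2,i_2]\in\pi_3(\Sn2)$ under the isomorphism $\pi_3(\Sn2)\simeq\Z$ coming from the Hopf fibration. So it suffices to pin down $m$ up to sign.

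First I would identify the Whitehead product $[i_2,i_2]$ concretely. By the definition of Whitehead products, $[i_2,i_2]$ corresponds to the composition of $W_{2,2}:\Sn3\to\Sn2\vee\Sn2$ with the map $\Sn2\vee\Sn2\to\Sn2$ induced by $i_2$ on each factor, which (since $i_2$ is the identity of $\Sn2$) is exactly the folding map $\fold_{\Sn2}$. Hence $[i_2,i_2]$ is represented, as an element of $\pi_3(\Sn2)$, by the map $\fold_{\Sn2}\circ W_{2,2}:\Sn3\to\Sn2$.

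Next I would apply the Hopf invariant homomorphism $H:\pi_3(\Sn2)\to\Z$. By the preceding proposition (with $n=2$, which is even), we have $H(\fold_{\Sn2}\circ W_{2,2})=2$, and therefore $H([i_2,i_2])=2$. Now choose the generator $g$ of $\pi_3(\Sn2)\simeq\Z$ corresponding to $1\in\Z$; then $[i_2,i_2]=m\cdot g$ and, using the fact that $H$ is a group homomorphism, we obtain the equation $m\cdot H(g)=2$ in $\Z$. Consequently $m$ divides $2$, so $|m|\in\{1,2\}$, which together with corollary \ref{firstpi4s3} yields $\pi_4(\Sn3)\simeq\Z/n\Z$ with $n\in\{1,2\}$.

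The only subtle step is the first one: verifying carefully that the map obtained by composing $W_{2,2}$ with the map $\Sn2\vee\Sn2\to\Sn2$ induced by the pair $(i_2,i_2)$ genuinely represents the Whitehead product $[i_2,i_2]$ in the sense of the definition given earlier. Once this identification is in place, everything else is immediate from the two results already established (the Hopf invariant of $\fold_{\Sn{2n}}\circ W_{2n,2n}$ is $2$ for even $2n$, and $H$ is a group homomorphism into $\Z$). Note that this argument does not need to compute $H$ on the Hopf map itself, which is precisely what the next chapter addresses in order to decide between $n=1$ and $n=2$.
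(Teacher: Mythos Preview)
Your proposal is correct and follows essentially the same route as the paper: identify $[i_2,i_2]$ with $\fold_{\Sn2}\circ W_{2,2}$, use the previous proposition to get $H([i_2,i_2])=2$, and then since $H$ is a homomorphism and $[i_2,i_2]=m\cdot g$ for a generator $g$, conclude that $m\mid 2$. The paper's write-up is simply more compressed, writing $[i_2,i_2]=\pm n\eta$ for a generator $\eta$ and applying $H$ directly to obtain $2=\pm n H(\eta)$.
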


\begin{proof}
  By definition of $n$ in proposition \ref{firstpi4s3}, we have $[i_2,i_2]=\pm n\eta$ in $\pi_3(\Sn2)$,
  for $\eta$ a generator of $\pi_3(\Sn2)$. Applying the Hopf invariant to this equality we
  get
  \[2=H([i_2,i_2])=H(\pm n\eta)=\pm nH(\eta),\]
  which shows that $n$ divides $2$ and that, therefore, it is equal to either $1$ or $2$.
\end{proof}

What is missing to prove that $n$ is equal to $2$ is to know whether there is an element of
$\pi_3(\Sn2)$ of Hopf invariant $1$. More exactly we have the following.

\begin{cor}\label{corcohomology}
  We have $\pi_4(\Sn 3)\simeq\Z/2\Z$ if and only if there exists a map $\Sn3\to\Sn2$ of Hopf invariant
  $\pm1$. Otherwise $\pi_4(\Sn 3)$ is the trivial group.
\end{cor}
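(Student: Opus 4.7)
The plan is to combine the previous two results: the equality $H([i_2,i_2])=2$ and the identity $[i_2,i_2]=\pm n\eta$ coming from the definition of $n$ in corollary \ref{firstpi4s3}, where $\eta$ denotes a chosen generator of $\pi_3(\Sn2)\simeq\Z$. First I would use the fact that $H:\pi_3(\Sn2)\to\Z$ is a group homomorphism (by the proposition establishing additivity of $H$) to deduce from $[i_2,i_2]=\pm n\eta$ the numerical equation $\pm n\,H(\eta)=H([i_2,i_2])=2$. Since we already know $n\in\{1,2\}$, this leaves exactly two possibilities: either $n=1$ with $H(\eta)=\pm2$, or $n=2$ with $H(\eta)=\pm1$.

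Next I would use the fact that $\pi_3(\Sn2)\simeq\Z$ is generated by $\eta$ to deduce that for any map $f:\Sn3\to\Sn2$ we have $f=k\eta$ for some $k:\Z$, and therefore $H(f)=k\,H(\eta)$. It follows that the image of $H$ is the subgroup $H(\eta)\Z\subseteq\Z$. Hence a map of Hopf invariant $\pm1$ exists if and only if $H(\eta)=\pm1$, which by the dichotomy above is equivalent to $n=2$.

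Finally I would spell out the two directions of the stated equivalence. If $\pi_4(\Sn3)\simeq\Z/2\Z$, then $n=2$, so $H(\eta)=\pm1$, and $\eta$ itself is the required map. Conversely, if some $f:\Sn3\to\Sn2$ has Hopf invariant $\pm1$, then $H(\eta)$ divides $1$, forcing $H(\eta)=\pm1$ and hence $n=2$, so $\pi_4(\Sn3)\simeq\Z/2\Z$. For the last clause, in the remaining case $n=1$, the group $\pi_4(\Sn3)\simeq\Z/1\Z$ is trivial. There is no real obstacle here: the whole argument is a short piece of arithmetic in $\Z$ once one has the homomorphism property of $H$ and the equation $H([i_2,i_2])=2$, both of which are already established in the chapter.
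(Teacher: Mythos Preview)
Your proposal is correct and follows essentially the same approach as the paper: both use the equation $2=\pm n\,H(\eta)$ together with $n\in\{1,2\}$ and the fact that $H$ is a group homomorphism on $\pi_3(\Sn2)\simeq\Z$. The only cosmetic difference is that you phrase the converse via the image $H(\eta)\Z$ of $H$, whereas the paper observes directly that any map with Hopf invariant $\pm1$ must be a generator of $\pi_3(\Sn2)$; these are the same argument.
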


\begin{proof}
  If $\pi_4(\Sn 3)\simeq\Z/2\Z$, then the computation above shows that any generator of
  $\pi_3(\Sn2)$ has Hopf invariant $\pm1$. Conversely, if there is a map $\eta:\Sn3\to\Sn2$ of Hopf
  invariant $\pm1$, then the Hopf invariant homomorphism $H:\pi_3(\Sn2)\to\Z$ sends $\eta$ to
  $\pm1$, which shows that $\eta$ is a generator of $\pi_3(\Sn2)$. The computation above then shows
  that $\pi_4(\Sn 3)\simeq\Z/2\Z$.
\end{proof}


\chapter{The Gysin sequence}\label{ch:gysin}

We now present a tool called \emph{the Gysin sequence}, which gives some information on the cup
product structure of a space, given a fibration of spheres over it. More precisely, given a
fibration
\[
\begin{tikzcd}
  \Sn{n-1}\arrow[r]& E\arrow[r,"p"]& B
\end{tikzcd}
\]
where $B$ is $1$-connected, we prove that there is an element $e: H^n(B)$ and a long exact
sequence
\[
\begin{tikzcd}
  \dots\arrow[r]& H^{i-1}(E) \arrow[r]& H^{i-n}(B) \arrow[r,"\cupp e"]& H^i(B) \arrow[r,"p^*"]&
  H^i(E) \arrow[r]& \dots,
\end{tikzcd}
\]
where the middle arrow is the operation of cup product by $e$.  We then define the complex
projective plane $\CP2$ using the Hopf map, and we construct a fibration of circles over $\CP2$ with
total space $\Sn5$. The Gysin sequence allows us to compute the cohomology of $\CP2$, to show that
the Hopf invariant of the Hopf map is $\pm1$, and, therefore, that $\pi_4(\Sn3)\simeq\Z/2\Z$.

The projective spaces $\CP n$ have also independently been defined in homotopy type theory by Ulrik
Buchholtz and Egbert Rijke.

\section{The Gysin sequence}

The following proposition is the main ingredient in the construction of the Gysin sequence. It
relates the cup product in dimensions $(n,m)$ with the one in dimensions $(n+1,m)$.

\begin{proposition}\label{stepcupp}
  Given $n,m:\N$, $p:K_n$ and $y:K_m$, we have the equality
  \[\ap{\lambda x.x\cupp y}(\sigma_n(p)) = \sigma_{n+m}(p\cupp y).\]
\end{proposition}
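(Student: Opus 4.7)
The plan is to reinterpret both sides as maps $K_n\wedge K_m\to\Omega K_{n+m+1}$, reduce to sphere points by connectivity, and compute directly using definition \ref{squarecupp} and proposition \ref{smashspheresexplicit}.

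First I would check that both sides factor through the smash product: when $p=0$ we have $\sigma_n(0)=\idpS$ so the left-hand side reduces to $\idpS$, and when $y=0$ the function $x\mapsto x\cupp y$ is constantly $0$, so $\apS$ of it is again $\idpS$; symmetrically, $0\cupp y = p\cupp 0 = 0$ and $\sigma_{n+m}(0)=\idpS$ on the right. Now $|-|\wedge|-|:\Sn n\wedge\Sn m\to K_n\wedge K_m$ is $(n+m)$-connected by proposition \ref{smashmapsconn}, and $\Omega K_{n+m+1}$ is $(n+m)$-truncated, so by proposition \ref{inductionconnected} it suffices to verify the equation after precomposing with $|-|\wedge|-|$, i.e.\ for $p=|x|$ and $y=|z|$ with $x:\Sn n$, $z:\Sn m$.

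For this remaining case I would unfold $\sigma_n(|x|)=\ap{|-|}(\varphi_{\Sn n}(x))$ and use functoriality of $\apS$ to rewrite the left-hand side as $\ap{\cupp\circ\proj(-,|z|)\circ|-|}(\varphi_{\Sn n}(x))$. The commuting square of definition \ref{squarecupp} identifies $\cupp\circ\proj(-,|z|)\circ|-|$ with $|-|\circ\wedge_{n+1,m}\circ\proj(-,z)$ as maps $\Sn{n+1}\to K_{n+m+1}$, so the expression becomes $\ap{|-|}(\ap{\wedge_{n+1,m}}(\ap{\proj(-,z)}(\varphi_{\Sn n}(x))))$. Proposition \ref{smashspheresexplicit} collapses the inner two $\apS$'s to $\varphi_{\Sn{n+m}}(\wedge_{n,m}(\proj(x,z)))$, so we obtain $\sigma_{n+m}(|\wedge_{n,m}(\proj(x,z))|)$, and a final application of the defining square of $\cupp$ rewrites this as $\sigma_{n+m}(|x|\cupp|z|)$, matching the right-hand side.

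The main technical obstacle is propagating the propositional homotopy of definition \ref{squarecupp} through the various $\apS$'s, which requires careful bookkeeping of naturality squares; the conceptual content of the statement, once that is done, is entirely contained in proposition \ref{smashspheresexplicit}. The edge cases $n=0$ or $m=0$ require a separate short argument since the cup product is then defined by iterated addition rather than through the smash product; both reduce by induction on $\Z$ to a one-step computation, using that $\sigma_m$ is a homomorphism for composition of loops and that $\apS$ distributes over path concatenation.
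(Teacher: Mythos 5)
Your proposal is essentially the paper's proof recast in element-chasing form: you use the same connectivity reduction via $|-|\wedge|-|$ (with the same bound from proposition \ref{smashmapsconn}) and the same key lemma, proposition \ref{smashspheresexplicit}, and your chain of $\apS$-rewrites is exactly the three-dimensional diagram the paper fills (two faces from definition \ref{squarecupp}, one from \ref{smashspheresexplicit}, plus the naturality and definitional-unfolding faces you are implicitly using when you pass $\apS$ through composites).

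Two small remarks on presentation. Your preliminary step---checking that both sides ``factor through the smash product''---is slightly under-argued as stated (pointwise vanishing on the two axes does not by itself give a factorization through $K_n\wedge K_m$; one also needs the $2$-cell coherence at the basepoint matching $\projlr$), and it is also unnecessary: it is cleaner, and it is what the paper does, to phrase the goal directly as a commuting square of pointed maps out of $K_n\wedge K_m$. The two legs of that square are $\cupp$ followed by $\sigma_{n+m}$, and $(\sigma_n\wedge\id)$ followed by the canonical $\Omega K_{n+1}\wedge K_m\to\Omega(K_{n+1}\wedge K_m)$ followed by $\Omega\cupp$; precomposing the filler with $\proj$ then yields the stated equality with no factorization lemma needed, since both sides of the statement are by definition the restrictions of these composites along $\proj$. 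Second, you separate out both base cases $n=0$ and $m=0$, whereas the paper only treats $n=0$ (the diagram argument implicitly assumes $m>0$, but the statement is never invoked with $m=0$ in the sequel); your plan for the base case---reduce to $k=1$ by the homomorphism property of $\sigma$ and the distributivity of $\apS$ over concatenation, then use the looped version of the \ref{squarecupp} square---is precisely the paper's $n=0$ argument, so this is a harmless extra precaution rather than a different route.
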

Note that the cup product on the left is the one in dimensions $(n+1,m)$ while the one on the right
is the one in dimensions $(n,m)$.

\begin{proof}
  For $n=0$, we have $p:\Z$ and we write $k\defeq p$ in order to remember that it is an integer. We
  then have $\sigma_0(p)=\ap{|-|}(\lloop)^k$ and what we want to prove is
  \[\ap{\lambda x.x\cupp y}(\ap{|-|}(\lloop)^k) = \sigma_m(ky).\]
  The left-hand side is equal to $\ap{\lambda x.|x|\cupp y}(\lloop)^k$ and the right-hand side is
  equal to $\sigma_m(y)^k$. Therefore it is enough to prove that
  $\ap{\lambda x.|x|\cupp y}(\lloop)=\sigma_m(y)$ for all $y:\trunc{m}{\Sn m}$. What we have to
  prove is an $(m-1)$-type, so we can assume that $y$ is of the form $|z|$ for $z:\Sn m$. Let’s
  consider the diagram
  \[
  \begin{tikzcd}
    \Omega(\Sn1\wedge\Sn m) \arrow[r] \arrow[d] & \Omega\Sn{m+1} \arrow[d]\\
    \Omega(K_1\wedge K_m) \arrow[r]& \Omega K_{m+1}
  \end{tikzcd}
  \]
  which is obtained by looping the diagram defining the cup product in definition \ref{squarecupp}.
  If we start with $\ap{\proj(-,z)}(\lloop)$ on the top left and go down and then right, we obtain
  $\ap{\lambda x.|x|\cupp|z|}(\lloop)$ while going right and then down gives
  $\ap{|-|_m}(\varphi_{\Sn m}(z))$ which is equal to $\sigma_m(|z|)$.

  For $n>0$, we want to fill the front face of the diagram
  \[
  \begin{tikzcd}[row sep=small, column sep=tiny]
    &\Sn n\wedge\Sn m \arrow[dd]\arrow[rr]\arrow[ld] && \Sn{n+m}\arrow[dddd] \arrow[ld] \\
    K_n\wedge K_m\arrow[rr,crossing over,"\cupp" near end] \arrow[dd] && K_{n+m} &\\
    &(\Omega\Sigma\Sn n)\wedge\Sn m \arrow[dd] \arrow[ld] &&\\
    \Omega K_{n+1}\wedge K_m &&&\\
    &\Omega((\Sigma\Sn n)\wedge\Sn m) \arrow[rr] \arrow[ld] && \Omega\Sigma\Sn{n+m} \arrow[ld] \\
    \Omega(K_{n+1}\wedge K_m) \arrow[from=uu,crossing over] \arrow[rr,"\Omega\cupp"] && \Omega
    K_{n+m+1} \arrow[from=uuuu,crossing over]&
  \end{tikzcd}
  \]
  where the two vertical maps of the form $\Omega A\wedge B\to\Omega(A\wedge B)$ send $(p,b)$ to
  $\ap{\proj(-,b)}(p)$.
  The top and bottom squares are filled by definition of the cup product.  The rectangle on the
  right and the upper-left square are filled by definition of $\sigma_n$.  The lower-left square is
  filled by naturality of the family of maps $\Omega A\wedge B\to\Omega(A\wedge B)$.  The back
  rectangle is filled by proposition \ref{smashspheresexplicit}.

  Therefore, given that the map $\Sn n\wedge\Sn m\to K_n\wedge K_m$ is $(n+m)$-connected and that
  what we want to prove is $(n+m)$-truncated, the result follows.  
\end{proof}

The main technical result of this section is the following.
\begin{proposition}\label{thomfst}
  Given a connected pointed type $B$, a family of pointed types $Q:B\to\Type_\star$ (i.e. a family
  of types equipped with a section) such that $Q(\star_B)=\Sn{n}$, and a map
  \[c:(b : B) \to (Q(b)\topt K_n)\]
  such that $c_{\star_B}:\Sn{n}\topt K_n$ is a generator of $(\Sn{n}\topt K_n)\simeq\Z$,
  then the map
  \begin{align*}
    \Phi &: H^i(B) \to \trunc0{(b:B)\to(Q(b)\topt K_{i+n})},\\
    \Phi(|\beta|) &\defeq |\lambda b,x.\beta(b)\cupp c_b(x)|
  \end{align*}
  is an equivalence.
  Moreover, such a $c$ always exists if $B$ is 1-connected.
\end{proposition}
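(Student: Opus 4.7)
The plan is to prove both parts separately.

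For the existence of $c$ when $B$ is $1$-connected, the key observation is that the family $b \mapsto (Q(b) \topt K_n)$ is a family of sets. Indeed, by iterating the suspension--loop adjunction (Proposition \ref{adjsuspomega}), $(\Sn n \topt K_n) \simeq \Omega^n K_n$, which is $0$-truncated since $K_n$ is $n$-truncated. Connectedness of $B$ implies $Q(b)$ is merely equivalent to $\Sn n$ for every $b$, so $(Q(b) \topt K_n)$ is merely equivalent to a set, hence is itself a set. Since $B$ is $1$-connected, the pointing map $\Unit \to B$ is $0$-connected (Proposition \ref{connectedpointedmap}), so Proposition \ref{inductionconnected} supplies an equivalence between dependent functions $(b:B) \to (Q(b) \topt K_n)$ and the fiber $(\Sn n \topt K_n)$ at $\star_B$; taking the preimage of the generator $|-|_n$ produces $c$, whose value at $\star_B$ is a generator by construction.

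For the main claim, I would show that $\Phi$ arises pointwise. For each $b:B$ define $\Theta_b : K_i \to (Q(b) \topt K_{i+n})$ by $\Theta_b(\alpha) \defeq \lambda x. \alpha \cupp c_b(x)$. Then $\Phi$ is the $0$-truncation of the map $(B \to K_i) \to ((b:B) \to (Q(b) \topt K_{i+n}))$ sending $\beta$ to $\lambda b. \Theta_b(\beta(b))$, so it suffices to prove that each $\Theta_b$ is an equivalence. Being an equivalence is a mere proposition in $b$, and $B$ is connected, so Proposition \ref{inductionconnected} reduces this to verifying the claim at $b = \star_B$, where $\Theta_{\star_B}(\alpha) = \lambda x. \alpha \cupp |x|_n$.

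To handle the pointwise claim at $\star_B$, I would use the suspension--loop adjunction to identify $(\Sn n \topt K_{i+n})$ with $\Omega^n K_{i+n}$, and verify by induction on $n$ that under this identification $\Theta_{\star_B}$ coincides (up to a sign) with the iterated equivalence $\sigma_{i+n-1} \circ \cdots \circ \sigma_i : K_i \toeq \Omega^n K_{i+n}$. The inductive step is that, via the adjunction, $\Theta_{\star_B}(\alpha)$ corresponds to $y \mapsto \ap{\Theta_{\star_B}(\alpha)}(\varphi_{\Sn{n-1}}(y))$; using the defining identity $\ap{|-|_n}(\varphi_{\Sn{n-1}}(y)) = \sigma_{n-1}(|y|_{n-1})$ and Proposition \ref{stepcupp} applied to the left-hand cup product, this rewrites as $\sigma_{i+n-1}(\alpha \cupp |y|_{n-1})$, reducing the dimension. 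Each $\sigma_j$ is an equivalence, so their composite is, and we conclude $\Theta_{\star_B}$ is an equivalence.

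The delicate part will be this last step: Proposition \ref{stepcupp} is stated for right-hand cup product whereas $\Theta_{\star_B}$ uses left-hand cup product, so a symmetric version must be established (or graded commutativity invoked with its attendant signs), and signs accumulate across $n$ unfoldings of the adjunction. Since sign flips remain equivalences, this does not affect the final conclusion.
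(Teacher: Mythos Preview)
Your argument is correct, and the existence of $c$ and the reduction to the fiberwise map at $\star_B$ match the paper exactly. The difference lies in how you prove that $\Theta_{\star_B}:K_i\to(\Sn n\topt K_{i+n})$ is an equivalence: you induct on $n$, unfolding one suspension via the adjunction at each step, whereas the paper inducts on $i$, showing that $\Omega g^{i+1}_{\star_B}=\funext\circ(\sigma_{i+n}\circ{-})\circ g^i_{\star_B}\circ\sigma_i^{-1}$ and invoking connectedness of source and target to deduce that $g^{i+1}_{\star_B}$ itself is an equivalence, with base case $i=0$ reducing directly to the hypothesis that $c_{\star_B}$ is a generator. The paper's induction is better aligned with Proposition~\ref{stepcupp}, which is stated for the \emph{left} factor of the cup product (you have this backwards in your last paragraph: $\Theta_{\star_B}$ varies the right factor while~\ref{stepcupp} varies the left), so it applies verbatim with no appeal to graded commutativity and no sign bookkeeping. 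Your route is valid but requires deriving the right-factor analogue of~\ref{stepcupp} from commutativity, which introduces the signs you anticipate; the paper avoids this entirely.
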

\begin{proof}
  The result is actually true for the untruncated map
  \begin{align*}
    \Phit&:(B \to K_i)\to((b:B)\to(Q(b) \topt K_{i+n})),\\
    \Phit(\beta)&\defeq \lambda b,x. \beta(b)\cupp c_b(x)
  \end{align*}
  and for all the fiberwise maps
  \begin{align*}
    g^i_b&:K_i\to (Q(b)\topt K_{i+n}),\\
    g^i_b(y)&\defeq \lambda x.y\cupp c_b(x).
  \end{align*}
  Let’s prove that all the $g^i_b$ are equivalences. Given that $B$ is connected, it is enough to
  do it for $b=\star_B$. We proceed by induction on $i$.
  For $i=0$, we have $K_0 = \Z$, and
  \begin{align*}
    g^0_{\star_B}:\Z &\to (\Sn n \topt K_n)\\
    k &\mapsto (\lambda y. k\cupp c_{\star_B}(y))\\
                     & \quad= \lambda y. k(c_{\star_B}(y))\\
                     & \quad= k(\lambda y. c_{\star_B}(y))\\
                     & \quad= kc_{\star_B},
  \end{align*}
  which is an equivalence by assumption.
  
  We now assume that $g^i_{\star_B}$ is an equivalence, and we want to prove that
  $g^{i+1}_{\star_B}$ is also an equivalence. Note first that $K_{i+1}$ and $\Sn{n}\topt K_{i+1+n}$
  are both pointed and connected, hence it’s enough to prove that $\Omega g^{i+1}_{\star_B}$ is an
  equivalence in order to show that $g^{i+1}_{\star_B}$ is, by \cite[theorem 8.8.1]{hottbook}.  We
  will also use the fact that given a function $f:A\to B\to C$ and a path $p:b=b'$, there is an
  equality
  \[\ap{\lambda y.(\lambda x.f(x,y))}(p) = \funext(\lambda x.\ap{\lambda y.f(x,y)}(p))\]
  in the type $(\lambda x.f(x,b))=(\lambda x.f(x,b'))$. This equality is proven by path induction on
  $p$.
  
  Given $p:\Omega K_{i+1}$, we have
  \begin{align*}
    (\Omega g^{i+1}_{\star_B})(p) &= \ap{g^{i+1}_{\star_B}}(p)\\
                                  &= \ap{\lambda y.(\lambda x. y\cupp c_{\star_B}(x))}(p)\\
                                  &= \funext(\lambda x.\ap{\lambda y.y\cupp c_{\star_B}(x)}(p))\\
                                  &= \funext(\lambda x.\ap{\lambda y.y\cupp c_{\star_B}(x)}(\sigma_n(\sigma\inv_n(p))))\\
                                  &= \funext(\lambda x.\sigma_{n+m}(\sigma\inv_n(p)\cupp
                                    c_{\star_B}(x)))\quad\text{ by proposition \ref{stepcupp}}\\
                                  &= \funext(\sigma_{n+m}\circ g^i_{\star_B}(\sigma\inv_n(p))).
  \end{align*}
  Therefore, the map $\Omega g^{i+1}_{\star_B}$ is equal to the composition
  \[(\Omega g^{i+1}_{\star_B}) = \funext \circ (\lambda f.\sigma_{n+m}\circ f) \circ g^i_{\star_B} \circ
  \sigma\inv_n.\]
  All those functions are equivalences, hence $\Omega g^{i+1}_{\star_B}$ and then $g^{i+1}_{\star_B}$ are
  equivalences as well.
  We have proved that $g^i_b$ is an equivalence. We now have
  \[\Phit(\beta)= \lambda b. g^i_b(\beta(b)),\]
  therefore $\Phit$ is an equivalence of inverse
  \[\Phit\inv(\beta')= \lambda b. (g^i_b)\inv(\beta'(b)),\]
  and $\Phi$ is an equivalence as well.

  Let’s now prove that $c$ exists whenever $B$ is $1$-connected. The type $(Q(b)\topt K_n)$ is a set
  for $b={\star_B}$, hence it’s a set for every $b:B$, given that $B$ is $0$-connected. Now, if $B$
  is $1$-connected, the map $\Unit\to B$ is $0$-connected, so it’s enough to define $c$ on the base
  point of $B$, and in this case we choose for $c_{\star_B}$ any generator of
  $(\Sn n\topt K_n)\simeq\Z$.
\end{proof}

We now apply this result to the special case of the fibration obtained by taking the fiberwise
suspension of a family of spheres. This gives what is known as the \emph{Thom isomorphism}.

Let $B$ be a 0-connected pointed type and $P:B\to\Type$ a fibration over $B$ such that
$P({\star_B})=\Sn{n-1}$. We define $E\defeq\sum_{x:B}P(x)$ its total space and
$\tildeE\defeq\Unit\sqcup^EB$ the cofiber of the map $E\to B$, and we have a map $i:\Sn n\to\tildeE$
corresponding to the fiber $\Sn{n-1}$ over $\star_B$. We also define a family of pointed types
$Q:B\to\Type_{\star}$ by $Q(b)\defeq\Susp(P(b))$ pointed by the north pole,
$F\defeq\sum_{x:B}Q(x)$ its total space and $\tildeF\defeq\Unit\sqcup^BF$ the cofiber of the map
$B\to F$ sending $b$ to $(b,\star_{Q(b)})$. It is easy to see that $\tildeE$ and $\tildeF$ are
equivalent, with all identified north poles in $\tildeF$ corresponding to the basepoint in $\tildeE$
and all south poles corresponding to the $B$ term in $\tildeE$.  For every pointed type $A$ we have
an equivalence
\[(\tildeF\topt A) \simeq ((b:B)\to (Q(b) \topt A))\]
because a pointed map from $\tildeF$ to $A$ corresponds to a family of pointed maps from all $Q(b)$
to $A$. Therefore, we have an equivalence
\[\iota_{k}:((b:B)\to (Q(b) \topt K_k))\simeq(\tildeE\topt K_k)\]
for every $k:\N$.
\begin{proposition}[Thom isomorphism]
  Given a map $c:(b:B)\to(Q(b)\topt K_n)$ such that $c_{\star_B}$ is a generator of $H^n(\Sn n)$,
  the map
  \begin{align*}
    \Phi &: H^i(B) \to \Ht^{i+n}(\tildeE),\\
    \Phi(|\beta|) &\defeq |\iota_{i+n}(\lambda b,x.\beta(b)\cupp c_b(x))|
  \end{align*}
  is an isomorphism of groups.  Moreover, such a $c$ always exists if $B$ is 1-connected.
\end{proposition}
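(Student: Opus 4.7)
The plan is to reduce this statement almost entirely to proposition~\ref{thomfst} by identifying the target $\Ht^{i+n}(\tildeE)$ with $\trunc0{(b:B)\to(Q(b)\topt K_{i+n})}$ and then invoking that proposition verbatim.

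First I would establish the key equivalence $\tildeE\simeq\tildeF$ indicated in the paragraph preceding the statement. The type $\tildeF$ is built by taking $F\defeq\sum_{x:B}\Susp(P(x))$ and then identifying all the north poles $(b,\north)$ to a point via the pushout with $B$; equivalently, using the $3\times3$-lemma (or just unfolding the description of $F$ as a pushout of $\sum_x P(x)+\sum_x P(x)$ under $\sum_x P(x)\times\Bool$) one sees that $\tildeF$ is obtained from the total space $E$ by first coning off $E$ on one side (producing an auxiliary type identified with $B$) and then coning off $E$ on the other side, which is exactly $\tildeE$. Under this equivalence, the basepoint of $\tildeF$ (the identified north pole) maps to the basepoint of $\tildeE$, and the map $\Sn n=Q(\star_B)\to \tildeF$ corresponds to the canonical $i:\Sn n\to\tildeE$.

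Next I would upgrade this to the equivalence
\[\iota_k:\bigl((b:B)\to (Q(b)\topt K_k)\bigr)\simeq\bigl(\tildeE\topt K_k\bigr).\]
By the universal property of the pushout $\tildeF=\Unit\sqcup^BF$, a pointed map $\tildeF\topt K_k$ is precisely a map $F\to K_k$ that sends every $(b,\star_{Q(b)})$ to $\star_{K_k}$, which in turn is a dependent function $(b:B)\to(Q(b)\topt K_k)$; composing with the equivalence $\tildeE\simeq\tildeF$ just constructed yields $\iota_k$. By construction $\iota_k$ is a group homomorphism for the fiberwise addition inherited from $K_k$, since both sides compute sums pointwise.

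Then I would define $\Phi$ as the composite
\[H^i(B)\xrightarrow{\;\Phi_0\;}\trunc0{(b:B)\to(Q(b)\topt K_{i+n})}\xrightarrow{\;\trunc0{\iota_{i+n}}\;}\Ht^{i+n}(\tildeE),\]
where $\Phi_0$ is the equivalence of proposition~\ref{thomfst} (applied to $Q$ and $c$). Proposition~\ref{thomfst} gives that $\Phi_0$ is an equivalence, and $\iota_{i+n}$ is an equivalence, so $\Phi$ is an equivalence. That $\Phi$ is a group homomorphism follows because $\Phi_0$ is induced pointwise from $\beta\mapsto\lambda b,x.\beta(b)\cupp c_b(x)$ and distributivity of $\cupp$ over $+$ gives $(\beta+\beta')\cupp c_b(x)=\beta\cupp c_b(x)+\beta'\cupp c_b(x)$, while $\iota_{i+n}$ is a homomorphism for the reason above. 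The final clause, existence of $c$ when $B$ is $1$-connected, is exactly the last sentence of proposition~\ref{thomfst} applied to our $Q$, which satisfies $Q(\star_B)=\Sn n$.

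The only real content beyond invoking proposition~\ref{thomfst} is the identification $\tildeE\simeq\tildeF$ together with its compatibility with basepoints and with the inclusion of the fiber over $\star_B$; this is a routine but slightly delicate $3\times3$-lemma argument, and will be the main obstacle to a fully spelled-out proof. Everything else is bookkeeping: packaging $\iota_k$ from the pushout universal property and checking that all the equivalences involved respect the pointwise abelian group structure inherited from $K_{i+n}$.
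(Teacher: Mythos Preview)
Your proposal is correct and follows essentially the same route as the paper: reduce to proposition~\ref{thomfst} via the equivalence $\iota_{i+n}$, and check the group-homomorphism property using distributivity of $\cupp$ and the fact that $\iota_{i+n}$ preserves addition. The only difference is organizational: the paper treats the equivalences $\tildeE\simeq\tildeF$ and $\iota_k$ as preliminary remarks before the statement (dispatched with ``it is easy to see''), whereas you fold them into the proof and flag the first one as the main point requiring care.
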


\begin{proof}
  Applying proposition \ref{thomfst} to $Q$, we obtain that $\Phi$ is an equivalence and that $c$
  always exists if $B$ is $1$-connected. The map $\Phi$ is a group homomorphism by distributivity of
  the cup product and because $\iota_{i+n}$ preserves the group structure.
\end{proof}

We can finally construct the Gysin sequence. Let’s consider a $1$-connected pointed type $B$ and a
fibration $P:B\to\Type$ such that $P(\star_B)=\Sn{n-1}$. We write $E$ for the total space of $P$,
$p:E\to B$ for the projection and $\tildeE$ for the cofiber of $p$. The long exact sequence of the
cofiber of $p$ is
\[
\begin{tikzcd}
  \dots \arrow[r]& \Ht^i(\tildeE) \arrow[r,"j^*"]& H^i(B) \arrow[r,"p^*"]& H^i(E) \arrow[r]&
  \Ht^{i+1}(\tildeE) \arrow[r]& \dots,
\end{tikzcd}
\]
where $j$ is the inclusion $B\to\tildeE$.
The Thom isomorphism gives a map $c:(b:B)\to(Q(b)\topt K_n)$ together with an isomorphism
$\Phi:H^{i-n}(B)\simeq\Ht^i(\tildeE)$, and the induced map $H^{i-n}(B)\to H^i(B)$ is
\begin{align*}
  H^{i-n}(B)&\to H^i(B),\\
  |\beta| &\mapsto j^*(\Phi(|\beta|))\\
            &= j^*(|\iota_{i+n}(\lambda b,x.\beta(b)\cupp c_b(x))|)\\
            &= |\lambda b'.\iota_{i+n}(\lambda b,x.\beta(b)\cupp c_b(x))(j(b'))|\\
            &= |\lambda b'.(\lambda b,x.\beta(b)\cupp c_b(x))(b',\south_{Q(b')})|\\
            &= |\lambda b'.\beta(b')\cupp c_{b'}(\south_{Q(b')})|\\
            &= |\beta|\cupp e,
\end{align*}
where
\begin{align*}
  e &: H^n(B),\\
  e &\defeq |\lambda b.c_b(\south_{Q(b)})|.
\end{align*}
Therefore we get the Gysin sequence.
\begin{proposition}[Gysin sequence]
  There is an element $e:H^n(B)$ and a long exact sequence
  \[
  \begin{tikzcd}
    \dots\arrow[r]& H^{i-1}(E) \arrow[r]& H^{i-n}(B) \arrow[r,"\cupp e"]& H^i(B) \arrow[r,"p^*"]&
    H^i(E) \arrow[r]& \dots
  \end{tikzcd}\qed
  \]
\end{proposition}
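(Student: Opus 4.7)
The plan is to assemble the Gysin sequence from two ingredients that are already essentially in hand: the long exact sequence of the cofiber of $p:E\to B$, and the Thom isomorphism that replaces the reduced cohomology of $\tildeE$ by the cohomology of $B$ shifted by~$n$. Since the fiber $P(\star_B)$ is $\Sn{n-1}$ and $B$ is $1$-connected, the fiberwise suspension family $Q(b)\defeq\Susp(P(b))$ satisfies $Q(\star_B)=\Sn n$, so the Thom isomorphism applies and produces both the pointed section $c:(b:B)\to(Q(b)\topt K_n)$ with $c_{\star_B}$ a generator of $H^n(\Sn n)$ and the isomorphism $\Phi:H^{i-n}(B)\toeq\Ht^i(\tildeE)$.

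First, I would write down the long exact sequence of the cofiber of $p:E\to B$, namely
\[
\begin{tikzcd}
  \dots \arrow[r]& \Ht^i(\tildeE) \arrow[r,"j^*"]& H^i(B) \arrow[r,"p^*"]& H^i(E) \arrow[r]&
  \Ht^{i+1}(\tildeE) \arrow[r]& \dots,
\end{tikzcd}
\]
where $j:B\to\tildeE$ is the canonical inclusion (coming from the fact that $\tildeE$ is identified with the fiberwise-suspension cofiber $\Unit\sqcup^BF$, with $j$ picking the south poles). Next, I would splice in the Thom isomorphism $\Phi:H^{i-n}(B)\toeq\Ht^i(\tildeE)$ in each slot of the long exact sequence, which is legitimate since $\Phi$ is an isomorphism of groups; this yields the sequence
\[
\begin{tikzcd}
  \dots \arrow[r]& H^{i-n}(B) \arrow[r,"j^*\circ\Phi"]& H^i(B) \arrow[r,"p^*"]& H^i(E) \arrow[r]&
  H^{i+1-n}(B) \arrow[r]& \dots
\end{tikzcd}
\]
which remains exact.

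The last step, which is the only nontrivial computation, is to identify $j^*\circ\Phi$ with the operation of cup product by a fixed class $e:H^n(B)$. This is exactly the chain of rewritings carried out right before the proposition statement: given $|\beta|:H^{i-n}(B)$, unfolding $\Phi$ and the identification $\iota_{i+n}$ of pointed maps out of $\tildeF$ with sections of $Q$, evaluating at $j(b')=(b',\south_{Q(b')})$ collapses to $\beta(b')\cupp c_{b'}(\south_{Q(b')})$. Setting
\[
e \defeq |\lambda b.\, c_b(\south_{Q(b)})| \; : \; H^n(B),
\]
this is precisely $|\beta|\cupp e$. Hence $j^*\circ\Phi=(-\cupp e)$, and substituting into the spliced sequence gives the claimed Gysin sequence.

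I expect no serious obstacle here: the Thom isomorphism is already proved as an isomorphism of groups, the long exact sequence of the cofiber has been established earlier from Mayer–Vietoris, and the identification of $j^*\circ\Phi$ with $-\cupp e$ is the explicit computation displayed above. The only mild care needed is to check that the evaluation $\iota_{i+n}(\lambda b,x.\beta(b)\cupp c_b(x))\circ j$ really does reduce to $\lambda b'.\beta(b')\cupp c_{b'}(\south_{Q(b')})$, which follows from how $\iota_{i+n}$ was defined via the identification of $\tildeE$ with $\tildeF$ (south poles in $\tildeF$ correspond to the $B$-term in $\tildeE$, i.e.\ to the image of $j$).
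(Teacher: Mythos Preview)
Your proposal is correct and follows essentially the same approach as the paper: take the long exact cofiber sequence of $p$, replace $\Ht^i(\tildeE)$ by $H^{i-n}(B)$ via the Thom isomorphism, and identify $j^*\circ\Phi$ with $-\cupp e$ by the displayed computation. The paper's proof is precisely this assembly, with the computation of $j^*(\Phi(|\beta|))=|\beta|\cupp e$ carried out immediately before the proposition statement.
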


\section{The iterated Hopf construction}

We saw in chapter \ref{ch:hopf} that to every H-space $A$ we can associate a fibration over
$\Susp A$ with fiber $A$ and total space $A*A$. We now prove that if we assume additionally that the
H-space structure is associative, then we can iterate this construction once, by constructing a
fibration over $\Unit\sqcup^{A*A}\Susp A$ with fiber $A$ and total space $A*A*A$.

\begin{definition}
  An \emph{associative H-space} is an H-space $(A,\mu)$ equipped with a map
  \[\alpha:(x,y,z:A)\to \mu(\mu(x,y),z) = \mu(x,\mu(y,z))\]
  and a filler of the diagram
  \[
  \begin{tikzcd}
    \mu(\mu(\star_A,y),z) \arrow[rr,"{\alpha_{\star_A,y,z}}"] \arrow[rd,"{\ap{\mu(-,z)}(\mu_l(y))}"'] && \mu(\star_A,\mu(y,z)) \arrow[ld,"{\mu_l(\mu(y,z))}"] \\
    &\mu(y,z)&
  \end{tikzcd}
  \]
\end{definition}
If $A$ is connected, then the three other triangles follow from this one, but we do not need them
here.

\begin{proposition}
  Given a connected associative H-space $A$, there is a fibration over $\Unit\sqcup^{A*A}\Susp A$
  (where the map $A*A\to\Susp A$ is induced by the Hopf construction on $A$) with fiber $A$, and
  whose total space is equivalent to $A*A*A$.
\end{proposition}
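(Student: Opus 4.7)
The plan is to construct a fibration $K: \Unit\sqcup^{A*A}\Susp A \to \Type$ whose fiber over $\inl(\star)$ is $A$, whose restriction to $\Susp A$ is the Hopf fibration $H$ coming from the H-space structure on $A$, and then to identify its total space with $A*A*A$ by means of the flattening lemma and an equivalence of spans. The remaining datum is an equivalence $e_c: A \simeq H(h(c))$ for each $c:A*A$, where $h:A*A\to\Susp A$ is the Hopf map; this is where associativity will play its role.

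Concretely, I would first apply proposition \ref{flatteningpushout} to the Hopf fibration to identify $A*A$ with $\sum_{s:\Susp A}H(s)$, so that $h$ becomes the first projection. Then, for each $a:A$, I would construct a fiberwise self-map $m_a:(s:\Susp A)\to H(s)\to H(s)$ by $m_a(\north)(x)\defeq\mu(a,x)$ and $m_a(\south)(x)\defeq\mu(a,x)$; proposition \ref{depeqid} reduces the data over $\merid(b)$ to a filler of the square with top and bottom $\mu(-,b)$ and sides $\mu(a,-)$, which is provided by the associator $\alpha_{a,-,b}$. Proposition \ref{hspacemapsequiv} applied at $\north$ together with connectedness of $\Susp A$ implies $m_a(s)$ is an equivalence for every $s$. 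I would then set $e_{(s,f)}(a)\defeq m_a(s)(f)$ and $\ap K(\push(c))\defeq\ua(e_c)$, completing the definition of $K$.

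By the flattening lemma applied to $K$, the total space is the pushout of $A \xleftarrow{\snd} (A*A)\times A \xrightarrow{\Phi} A*A$, where $\Phi(c,a)\defeq(h(c),e_c(a))$. On the other hand, by proposition \ref{joinassoc}, $A*A*A$ agrees with $(A*A)*A$, whose defining span, after the harmless flip exchanging the two sides of the pushout, reads $A\xleftarrow{\snd}(A*A)\times A\xrightarrow{\fst}A*A$. The map $\psi:(A*A)\times A\to(A*A)\times A$ given by $\psi((s,f),a)\defeq((s,m_a(s)(f)),a)$ is an equivalence (its inverse uses $m_a(s)^{-1}$) and satisfies $\snd\circ\psi=\snd$ and $\fst\circ\psi=\Phi$, giving an equivalence of spans, and hence of pushouts.

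The main obstacle is the coherent construction of $m_a$: the fiber-level definitions are immediate from the H-space operation, but the meridian case requires invoking associativity in exactly the form prescribed by proposition \ref{depeqid}, and one needs to be confident that this is the only coherence needed (in particular that the triangle coherence in the definition of an associative H-space is not required here). Everything downstream—flattening followed by the span equivalence—is then essentially formal, modulo checking that the pushout presentation of $(A*A)*A$ produced by the join matches on the nose the span produced by flattening.
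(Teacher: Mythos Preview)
Your plan is essentially the paper's: extend the Hopf fibration $H$ to a fibration on $\Unit\sqcup^{A*A}\Susp A$ using a family of equivalences $A\simeq H(h(c))$ built from associativity, then apply the flattening lemma and an equivalence of spans. The paper organizes the family by induction on $A*A$ (written as $A\sqcup^{A\times A}A$) rather than via your $m_a$ indexed over $\Susp A$, but the content is the same.

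Two small points. First, you prove that $m_a(s)$ is an equivalence (needed for your span map $\psi$), but then write $\ap K(\push(c))\defeq\ua(e_c)$ without checking that $e_c=\lambda a.\,m_a(s)(f)$ is an equivalence in the \emph{other} variable. This is easy---at $s=\north$ it is $\mu(-,f)$, an equivalence by proposition~\ref{hspacemapsequiv}, and connectedness of the total space propagates it---but it should be said. Second, proposition~\ref{depeqid} is about dependent paths in identity-type fibrations, whereas here the fibration is $s\mapsto(H(s)\to H(s))$; you really want function extensionality together with the computation of transport in $H$ along $\merid(b)$ as $\mu(-,b)$.

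On your worry about the triangle coherence: interestingly, the paper \emph{does} use it. The paper proves that its analog of your $(c,a)\mapsto(h(c),e_c(a))$ is an equivalence by specializing to $a=\star_A$ and identifying the resulting map with the known equivalence $\psi:A\sqcup^{A\times A}A\simeq\sum_s H(s)$; matching those two along $\push(x,y)$ requires exactly the triangle $\alpha_{\star_A,x,y}$ versus $\mu_l$. Your route---arguing fiberwise that $m_a(s)$ is an equivalence and then that $(s,f)\mapsto(s,m_a(s)(f))$ is an equivalence of total spaces---sidesteps that comparison, so in your organization only the associator is needed.
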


\begin{proof}
  We recall that the Hopf construction is the fibration $H:\Susp A\to\Type$ defined by
  \begin{align*}
    H(\north) &\defeq A,\\
    H(\south) &\defeq A,\\
    \ap H(\merid(y)) &\defeq \ua(\mu(-,y)).
  \end{align*}
  Its total space is equivalent to the pushout $A\sqcup^{A\times A}A$ of the span
  \[
  \begin{tikzcd}
    A & A\times A \arrow[l,"\fst"'] \arrow[r,"\mu"] &A
  \end{tikzcd}
  \]
  via the map
  \begin{align*}
    \psi &: A\sqcup^{A\times A}A\to\sum_{x:\Susp A}H(x),\\
    \psi(\inl(x)) &\defeq (\north,x),\\
    \psi(\inr(z)) &\defeq (\south,z),\\
    \ap\psi(\push(x,y)) &\defeq (\merid(y),\psi_{\push}(x,y)),
  \end{align*}
  where $\psi_{\push}(x,y):x=^H_{\merid(y)}\mu(x,y)$ comes from the fact that
  $\transport^H(\merid(y),x)=\mu(x,y)$. The induced map $h:A\sqcup^{A\times A}A\to\Susp A$ is
  defined by
  \begin{align*}
    h(\inl(x)) &\defeq \north,\\
    h(\inr(z)) &\defeq \south,\\
    \ap h(\push(x,y)) &\defeq \merid(y).
  \end{align*}
  We first prove the following proposition.
  \begin{proposition}
    There is a map $\nu:(x:A\sqcup^{A\times A}A)\to A\simeq H(h(x))$ such that for every $a:A$,
    the map
    \begin{align*}
      \nu'_a &: A\sqcup^{A\times A}A \to \sum_{x:\Susp A}H(x),\\
      \nu'_a(x) &\defeq (h(x), \nu(x,a))
    \end{align*}
    is an equivalence.
  \end{proposition}

  \begin{proof}
    We define $\nu$ by
    \begin{align*}
      \nu(\inl(x)) &\defeq \mu(-,x),\\
      \nu(\inr(z)) &\defeq \mu(-,z),\\
      \ap\nu(\push(x,y)) &\defeq \nu_{\push}(x,y),\\
    \end{align*}
    where $\nu_{\push}(x,y)$ is a filling of the square
    \[
    \begin{tikzcd}
      A \arrow[r,"\id_A"] \arrow[d,"{\mu(-,x)}"'] & A \arrow[d,"{\mu(-,\mu(x,y))}"]\\
      A \arrow[r,"{\mu(-,y)}"'] & A
    \end{tikzcd}
    \]
    In other words, $\nu_{\push}(x,y)$ proves that for every $a:A$, we have
    \[\mu(\mu(a,x),y) = \mu(a,\mu(x,y)),\]
    which we have by associativity of the H-space structure.

    In order to prove that $\nu'_a$ is an equivalence for every $a:A$, it is enough to do it in the
    case $a=\star_A$ because $A$ is connected. We now show that $\nu'_{\star_A}$ is equal to the
    function $\psi:A\sqcup^{A\times A}A\to\sum_{x:\Sigma A}H(x)$ by induction on the argument.
    \begin{itemize}
    \item For $\inl(x)$, we have
      \[\nu'_{\star_A}(\inl(x))=(\north,\mu(\star_A,x))\stackrel{\text{via
        }\mu_l(x)}{=}(\north,x)=\psi(\inl(x)).\]
    \item For $\inr(z)$, we have
      \[\nu'_{\star_A}(\inr(z))=(\south,\mu(\star_A,z))\stackrel{\text{via
        }\mu_l(z)}{=}(\south,z)=\psi(\inr(z)).\]
    \item For $\push(x,y)$, using the fact that $\ap H(\merid(y))=\mu(-,y)$ we have to give a
      filling of the square
      \[
      \begin{tikzcd}
        \mu(\mu(\star_A,x),y) \arrow[r] \arrow[d] & \mu(\star_A,\mu(x,y)) \arrow[d] \\
        \mu(x,y) \arrow[r,equals] & \mu(x,y)
      \end{tikzcd}
      \]
      which follows from the fact that $A$ is an associative H-space.\qedhere
    \end{itemize}
  \end{proof}
  
  We now define $P:\Unit\sqcup^{A\sqcup^{A\times A}A}\Susp A\to\Type$ by
  \begin{align*}
    P(\inl(\ttt)) &\defeq A,\\
    P(\inr(y)) &\defeq H(y),\\
    \ap P(\push(x)) &\defeq \nu(x).
  \end{align*}
  The total space of $P$ is equivalent to the pushout of the span
  \[
  \begin{tikzcd}
    A & A \times (A\sqcup^{A\times A}A) \arrow[l,"\fst"'] \arrow[rr,"{(a,x)\mapsto\nu'_a(x)}"] && \displaystyle\sum_{x:\Susp A}H(x),
  \end{tikzcd}
  \]
  and we have the equivalence of spans
  \[
  \begin{tikzcd}
    A \arrow[d,"\id"'] & A \times (A\sqcup^{A\times A}A) \arrow[l,"\fst"']
    \arrow[rr,"{(a,x)\mapsto\nu'_a(x)}"] \arrow[d,"{(a,x)\mapsto(a,\nu'_a(x))}"] &&
    \displaystyle\sum_{x:\Susp A}H(x) \arrow[d,"\id"]\\
    A & A \times \displaystyle\sum_{x:\Susp A}H(x) \arrow[l,"\fst"] \arrow[rr,"\snd"'] && \displaystyle\sum_{x:\Susp A}H(x)
  \end{tikzcd}
  \]
  where the inverse of the middle map is the map $(a,y)\mapsto(a,{\nu'}_a^{-1}(y))$.  Therefore, the
  total space of $P$ is equivalent to the join of $A$ and $\sum_{x:\Susp A}H(x)$. But we already
  know that the total space of $H$ is equivalent to $A*A$; hence, the total space of $P$ is
  equivalent to $A*A*A$.
\end{proof}

\section{The complex projective plane}

\begin{proposition}\label{hspacecircleassoc}
  The H-space structure on the circle is associative.
\end{proposition}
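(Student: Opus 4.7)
The plan is to construct $\alpha:(x,y,z:\Sn1)\to \mu(\mu(x,y),z) = \mu(x,\mu(y,z))$ by iterated induction on $x$, $y$, $z$, taking full advantage of the fact that $\Sn1$ is $1$-truncated. Consequently, for any $u,v:\Sn1$ the identity type $u=_{\Sn1}v$ is a set, so any square of parallel paths has a mere-proposition-valued type of fillers, and any cube or higher cell with well-typed boundary is uniquely filled. This means that the only genuine data to provide in the triple induction are the value at $(\base,\base,\base)$ and the three square-fillers corresponding to $\lloop$ in each separate coordinate; everything else is automatic.

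For the point case I would set $\alpha_{\base,y,z}\defeq \idp{\mu(y,z)}$ for all $y,z$, which is well-typed because $\mu(\base,w)=w$ holds by definition, so $\mu(\mu(\base,y),z)=\mu(y,z)=\mu(\base,\mu(y,z))$ definitionally. This also immediately yields the triangle coherence required by the definition of an associative H-space: with $\star_{\Sn1}=\base$ we have $\mu_l(w)=\idp{w}$ (again by the definitional computation of $\mu$ on the basepoint of its first argument), hence $\mu_l(\mu(y,z))=\idp{\mu(y,z)}=\ap{\mu(-,z)}(\mu_l(y))$, so the triangle is filled by $\idp{\idp{\mu(y,z)}}$.

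The substantive step is the $\lloop$-case of the $x$-induction, which by proposition~\ref{depeqid} amounts to filling, for every $y,z$, a square whose top and bottom are $\alpha_{\base,y,z}=\idp{\mu(y,z)}$ and whose vertical sides unfold to $\ap{\lambda w.\mu(w,z)}(\lloop_y)$ on one side and $\lloop_{\mu(y,z)}$ on the other (using the $\lloop_-$ from proposition~\ref{hspacecircle} and the chain rule for $\apS$). The type of fillers of this square is a mere proposition in $(y,z)$ because it lives in the path type of a set, so one can then do sphere induction on $y$ and $z$ and only needs to produce a filler at $y=z=\base$; there both edges compute to $\lloop$ and $\idp{\lloop}$ fits. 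The two analogous squares (for $\lloop$ in $y$ and in $z$) are handled identically, each reducing to a square with two copies of $\lloop$ and two copies of $\idpS$, filled by a coherence operation (using the naturality square of $\mu_r$ applied to $\lloop$, since, in contrast to the left unit, the right unit $\mu_r$ is not definitional away from $\base$).

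The main obstacle is the bookkeeping in this last point: unlike the left argument of $\mu$, for which the unit law is definitional, the right argument only satisfies $\mu_r$ up to the explicit path constructed in proposition~\ref{hspacecircle}, so the squares involving $\lloop$ in $y$ or $z$ require invoking $\ap{\mu_r}(\lloop)$ and the corresponding coherence squares of $\lloop_-$. Once these three squares are chosen, the three cubes (for pairs of $\lloop$'s) and the hypercube (for three $\lloop$'s) are automatically filled since their boundaries are data in a set and hence contractible/propositional, completing the construction of $\alpha$ together with its triangle coherence.
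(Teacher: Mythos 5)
Your proposal is correct and follows essentially the same route as the paper's proof: induct on the first argument, give the constant path at $\base$ (valid because $\mu(\base,-)=\id$ holds definitionally), reduce the $\lloop$ case by 1\nobreakdash-truncatedness of $\Sn1$ so that only the corner $y=z=\base$ needs a filler (where both edges are $\lloop$ and $\idp{\lloop}$ works), and observe that the triangle coherence is also built from constant paths. The only substantive difference is that the paper inducts on $x$ and then on $y$, leaving $z$ as a free parameter, whereas you phrase it as a triple induction; both work.

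One part of your proposal is off, though it doesn't affect the conclusion: the ``main obstacle'' you anticipate is not actually there. You claim that the $\lloop$-in-$y$ and $\lloop$-in-$z$ squares require invoking $\mu_r$ and its naturality. If you organize the proof as a single $x$-induction (providing the loop case for all $y,z$ at once, as you do in your third paragraph), those two squares don't exist as independent inputs at all --- only the $x$-square does, and after reducing that to $y=z=\base$ you're finished. If instead you really do a full three-variable induction, the $\lloop$-in-$y$ square lives over the family $y\mapsto\bigl(\mu(\mu(\base,y),\base)=\mu(\base,\mu(y,\base))\bigr)$, and since $x=\base$ here the \emph{left} unit law, which you've already noted is definitional, collapses both sides to $\mu(y,\base)=\mu(y,\base)$; the two vertical edges of the square are then literally the same path and the filler is trivial. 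The same holds for the $\lloop$-in-$z$ square, where the family collapses to $z=z$. The right unit $\mu_r$ never enters: you are always sitting at $x=\base$ (resp.\ the outer occurrence is at the basepoint), so the definitional left unit does all the work. Your proof is fine, but simpler than you thought.
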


\begin{proof}
  We want to prove that for every $x,y,z:\Sn1$, we have $\mu(\mu(x,y),z)=\mu(x,\mu(y,z))$.
  We proceed first by induction on $x$.

  For $\base$, we have $\mu(\mu(\base,y),z)=\mu(y,z)=\mu(\base,\mu(y,z))$ by definition, so we can
  just use the constant path. For $\lloop$, we have to prove that
  $\ap{\mu(-,z)}(\lloop_y)=\lloop_{\mu(y,z)}$. We proceed again by induction on $y$.

  For $\base$ we have $\ap{\mu(-,z)}(\lloop_{\base})=\ap{\mu(-,z)}(\lloop)=\lloop_z$ and
  $\lloop_{\mu(\base,z)}=\lloop_z$, so it’s true, and for $\lloop$ we have to construct a
  3-dimensional path in $\Sn1$ and $\Sn1$ is $1$-connected, so it’s immediate.
  Moreover the commutativity of the triangle is immediate because both $\alpha_{\base,y,z}$ and
  $\mu_l(-)$ are constant paths.
\end{proof}

We now define the complex projective plane as $\CP2\defeq\Unit\sqcup^{\Sn3}\Sn2$, where the map
$\Sn3\to\Sn2$ is the Hopf map. Its cohomology groups are $\Z$ in dimensions $0$, $2$ and $4$, and
the trivial group otherwise, and the iterated Hopf construction on the circle gives us a fibration
$K$ over $\CP2$ with fiber $\Sn1$ and total space $\Sn1*\Sn1*\Sn1$, which is equivalent to $\Sn5$.

\begin{proposition}
  The Hopf invariant of the Hopf map $\Sn3\to\Sn2$ is equal to $\pm1$.
\end{proposition}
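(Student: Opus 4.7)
The plan is to combine the iterated Hopf construction on the circle (which was just recalled to give a fibration $\Sn1\to\Sn5\to\CP2$, using proposition \ref{hspacecircleassoc}) with the Gysin sequence in order to extract the cup product structure on $H^*(\CP2)$, and then to compare the resulting generator of $H^2(\CP2)$ with the generator $\alpha_f$ used in the definition of the Hopf invariant.

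First I would write down the Gysin sequence for the fibration $\Sn1\to\Sn5\to\CP2$, noting that $\CP2$ is $1$-connected since it is a pushout of $1$-connected types, so the hypothesis of the Gysin sequence is satisfied, with $n=2$. This produces an element $e:H^2(\CP2)$ and an exact sequence
\[
\begin{tikzcd}[column sep=small]
\dots\arrow[r]&H^{i-1}(\Sn5)\arrow[r]&H^{i-2}(\CP2)\arrow[r,"\cupp e"]&H^i(\CP2)\arrow[r]&H^i(\Sn5)\arrow[r]&\dots
\end{tikzcd}
\]
Using that $H^k(\Sn5)$ vanishes for $k\in\{1,2,3,4\}$, the relevant portions of the sequence at $i=2$ and $i=4$ become
\[0\to H^0(\CP2)\xrightarrow{\cupp e}H^2(\CP2)\to0\quad\text{and}\quad 0\to H^2(\CP2)\xrightarrow{\cupp e}H^4(\CP2)\to0,\]
so $\cupp e$ is an isomorphism in both cases. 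Taking the image of $1\in H^0(\CP2)$ shows that $e$ generates $H^2(\CP2)\simeq\Z$, and taking the image of $e$ shows that $e\cupp e$ generates $H^4(\CP2)\simeq\Z$.

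Next I would identify these generators with those from the Hopf invariant. By definition $\alpha_f\in H^2(\CP2)$ is the preimage of the canonical generator $\cc_2$ under the isomorphism $i^*$, and the Mayer--Vietoris computation for $\CP2\defeq\Unit\sqcup^{\Sn3}\Sn2$ shows $\alpha_f$ generates $H^2(\CP2)\simeq\Z$; similarly $\beta_f$ generates $H^4(\CP2)\simeq\Z$. Since $e$ and $\alpha_f$ are both generators of $\Z$, we have $e=\varepsilon\alpha_f$ for some $\varepsilon\in\{1,-1\}$, hence $e\cupp e=\alpha_f\cupp\alpha_f$ (the sign squares away). Likewise, $e\cupp e$ being a generator of $H^4(\CP2)$ gives $e\cupp e=\varepsilon'\beta_f$ for some $\varepsilon'\in\{1,-1\}$. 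Combining these two equalities yields $\alpha_f^2=\varepsilon'\beta_f$, i.e.\ $H(f)=\pm1$.

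The main obstacle is really nothing computational; the work was done in setting up the iterated Hopf construction and the Gysin sequence. The only slightly delicate point is making sure that when instantiating the Gysin sequence with the fibration coming from the iterated Hopf construction on $\Sn1$, the base is indeed $\CP2$ as defined here (i.e.\ the pushout via the standard Hopf map of chapter \ref{ch:hopf}), so that the element $\alpha_f$ defined from the Hopf invariant agrees up to sign with an element of $H^2$ of the Gysin base. Once that identification is made, the proof reduces to the above sign manipulation, and together with corollary \ref{corcohomology} immediately gives $\pi_4(\Sn3)\simeq\Z/2\Z$.
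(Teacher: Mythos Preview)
Your proposal is correct and follows essentially the same argument as the paper: apply the Gysin sequence to the iterated Hopf fibration $\Sn1\to\Sn5\to\CP2$, use vanishing of $H^k(\Sn5)$ for $1\le k\le4$ to conclude that $e$ and $e\cupp e$ are generators, and deduce $H(f)=\pm1$. The paper's version is terser (it does not spell out the sign comparison with $\alpha_f,\beta_f$), but the content is the same.
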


\begin{proof}
  We apply the Gysin sequence to the fibration over $\CP2$ defined by the iterated Hopf construction
  as above. We obtain a cohomology class $e:H^2(\CP2)$ and the two short exact sequences
  \[
  \begin{tikzcd}
    0\simeq H^1(\Sn5) \arrow[r]& H^0(\CP2) \arrow[r,"\cupp e"]& H^2(\CP2) \arrow[r]& H^2(\Sn5)\simeq0,\\
    0\simeq H^3(\Sn5) \arrow[r]& H^2(\CP2) \arrow[r,"\cupp e"]& H^4(\CP2) \arrow[r]& H^4(\Sn5)\simeq0.
  \end{tikzcd}
  \]
  From the first one we see that $e$ is a generator of $H^2(\CP2)$, and from the second one we see
  that $e\cupp e$ is a generator of $H^4(\CP2)$. Therefore, the Hopf invariant of the Hopf map is
  equal to $\pm1$.
\end{proof}

We have found a map $\Sn3\to\Sn2$ of Hopf invariant $\pm1$, so we can conclude.

\begin{cor}\label{pi4s3}
  We have $\pi_4(\Sn3)\simeq\Z/2\Z$ and more generally $\pi_{n+1}(\Sn n)\simeq\Z/2\Z$ for every
  $n\ge3$.
\end{cor}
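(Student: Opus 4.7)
The plan is to combine the three main strands that the text has just prepared. The immediately preceding proposition established that the Hopf map $\Sn3\to\Sn2$ has Hopf invariant $\pm1$, so the first step is simply to invoke corollary \ref{corcohomology}: that corollary states that $\pi_4(\Sn3)\simeq\Z/2\Z$ as soon as some map $\Sn3\to\Sn2$ has Hopf invariant $\pm1$, and the Hopf map itself witnesses this. This disposes of the case $n=3$ with essentially no further work.

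For the general statement $\pi_{n+1}(\Sn n)\simeq\Z/2\Z$ with $n\ge 3$, the plan is to appeal to corollary \ref{pinsn} from chapter \ref{ch:james}, which was obtained as a consequence of the Freudenthal suspension theorem and asserted that the suspension map induces isomorphisms $\pi_{n+1}(\Sn n)\simeq\pi_4(\Sn3)$ for all $n\ge 3$. Combining this chain of isomorphisms with the case $n=3$ already handled yields $\pi_{n+1}(\Sn n)\simeq\Z/2\Z$ uniformly in $n$.

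There is no real obstacle here: the whole thesis has been building up exactly the pieces needed, and the proof is essentially a two-line citation. If anything, one should take a moment to note that corollary \ref{corcohomology} was phrased as a dichotomy (either $\pi_4(\Sn3)\simeq\Z/2\Z$ or it is trivial), so the proof must explicitly rule out the trivial alternative, which is precisely what the existence of a map of Hopf invariant $\pm1$ accomplishes. One could also, symmetrically, read the result through corollary \ref{firstpi4s3}: since $[i_2,i_2]$ has Hopf invariant $2$ and any generator of $\pi_3(\Sn2)\simeq\Z$ has Hopf invariant $\pm1$, the integer $n$ of that corollary must equal $2$, giving the same conclusion and confirming internal consistency.
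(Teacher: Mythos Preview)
Your proposal is correct and matches the paper's proof essentially line for line: apply corollary \ref{corcohomology} to the Hopf map (which has Hopf invariant $\pm1$) to obtain $\pi_4(\Sn3)\simeq\Z/2\Z$, then invoke corollary \ref{pinsn} (the Freudenthal suspension consequence) to propagate this to all $n\ge3$. Your additional remarks about ruling out the trivial alternative and the cross-check via corollary \ref{firstpi4s3} are sound but go slightly beyond what the paper records.
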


\begin{proof}
  We apply corollary \ref{corcohomology} to the Hopf map, which is of Hopf invariant $\pm1$, and we
  obtain that $\pi_4(\Sn3)\simeq\Z/2\Z$. We then apply the Freudenthal suspension theorem, more
  precisely corollary \ref{pinsn}, and we obtain that $\pi_{n+1}(\Sn n)\simeq\Z/2\Z$ for every
  $n\ge3$.
\end{proof}


\cleardoublepage
\chaptertoc{Conclusion}

In this thesis we saw that homotopy type theory is powerful enough to prove that
$\pi_4(\Sn3)\simeq\Z/2\Z$. Even though from the point of view of classical homotopy theory this is a
well-known result, it was not obvious that the univalence axiom and higher inductive types would
suffice to prove it and that a constructive and purely homotopy-theoretic proof exists at
all. Moreover, taking into account the fact that it has been only about five years between the
definition of the circle in homotopy type theory and the computation of $\pi_4(\Sn3)$, the progress
has been rather quick, and one can hope that in a few years homotopy type theory will have reached a
level comparable to that of classical homotopy theory and will help to obtain completely new
results.

\paragraph{Comparison with classical proofs}

As mentioned before, the main difference between classical homotopy theory and homotopy type theory
is that, in homotopy type theory, everything is homotopy-invariant. I have used the book
\cite{hatcher}, presenting classical algebraic topology, quite regularly during this research, but I
often had to find completely different definitions, proofs or statements that would work in homotopy
type theory.

In the construction of the universal cover of the circle and of the Hopf fibration, the most obvious
difference is that, instead of defining a map from the total space to the base space, we define
directly the fibers and how to transport along the fibers, and determining the total space is the
non-trivial part. For the universal cover of the circle it is rather transparent, but for the Hopf
fibration it wasn’t clear to me at first that defining it with the multiplication on $\Sn1$ would
indeed give the Hopf fibration.

Proving that the smash product is associative isn’t easy, neither in homotopy type theory nor in
classical homotopy theory, but for very different reasons. In homotopy type theory the problem comes
from the fact that we have many different paths and higher paths to manage and that it quickly
becomes complicated to take care of all coherences between them. In point-set topology, however,
there is already a canonical bijection between the two spaces, but the problem is that it may not be
continuous unless we assume that the spaces are nice enough. The difference is that in point-set
topology we literally identify various points together, which makes the bijection easy to define but
might mess up the topology, whereas in homotopy type theory we add new paths instead of identifying
points.

For cohomology we already mentioned the need to use Eilenberg--MacLane spaces instead of the
classical definition via singular cochains, as the set of singular cochains is not a homotopy
invariant of a space. The notion of truncations gives a very nice definition of the
Eilenberg--MacLane spaces $K(\Z,n)$ and a very nice definition of the cup product. Note that, when
working constructively, there are some unexpected phenomena related to cohomology. Indeed, without
some version of the axiom of choice, it is not possible to prove the additivity axiom of
cohomology. Even the cohomology group $H^1(\N,\Z)$ cannot be proved to be trivial, as explained in
\cite{mikeblog:h1n}. It is nice to see that no such problem arises in the computation of
$\pi_4(\Sn3)$.

Finally, for computing the cohomology of $\CP2$, I couldn’t manage to adapt the very geometrical
proof presented for instance in \cite[theorem 3.19]{hatcher} and I had to use the Gysin sequence,
which is usually considered a more advanced result in classical homotopy theory. Moreover, the
construction of the Gysin sequence in \cite[section 4.D]{hatcher} is based on the Leray--Hirsch
theorem whose proof uses induction on the cells of a CW complex, which is something that cannot be
done in homotopy type theory. Instead, the proof presented here is new and based on proposition
\ref{stepcupp} which directly relates the cup product in dimensions $n$ and $n+1$.

\paragraph{Cubical homotopy type theory}

I first tried to write this thesis using cubical ideas explicitely as much as possible, as was done
for instance in my previous work with Dan Licata in \cite{licatame:cubical} and in
\cite{cavallo:cohomology}, but it turned out not to be such a good idea, much to my disappointment.
Even though many squares and cubes appear naturally in homotopy type theory, like for instance the
naturality squares of homotopies, there are also many other shapes that can appear, and treating
squares differently might not be the wiser choice. For instance trying to write diagram
\ref{eq:deltaipush} on page \pageref{eq:deltaipush} as a Kan composition of squares and cubes is not
very natural or useful. Using instead a general notion of composition of diagrams (as described at
the end of page \pageref{trianglesquare}) seems much more natural in that case. Cubical ideas still
have some benefits, in particular \cite{cubicaltt} uses cubical sets to give a computational
interpretation of homotopy type theory, and in \cite{licatame:cubical} and \cite{cavallo:cohomology}
cubes are used mainly to simplify formalizations in Agda. I believe, however, that for informal
synthetic homotopy theory as done in this thesis, cubical ideas aren’t especially helpful in
general.

\paragraph{Future work}

Maybe the most compelling next target is to formalize the present work in a proof assistant. There
might be some issues in the formalization of the James construction and even more so in the monoidal
structure of the smash product, which are rather heavy in terms of manipulation of paths and higher
paths, but it should be possible to do.

A few results in this thesis have been stated and proved in a restricted form as my main goal was to
arrive at the result $\pi_4(\Sn3)\simeq\Z/2\Z$. For instance one ought to be able to compute the
cohomology ring of all the $J_n(\Sn k)$, including $J_\infty(\Sn k)\simeq\Omega\Sn{k+1}$, and of all
the $\CP n$, including $\CP\infty\simeq K(\Z,2)$. Moreover we only defined cohomology with integer
coefficients but it should be possible to define cohomology with coefficients in an arbitrary group,
ring or spectra and to extend most of the results presented here.

Finally the long-term goal is of course to continue the development of synthetic homotopy theory in
homotopy type theory. There are many concepts that haven’t been much studied yet but seem accessible
like K-theory, Steenrod operations, spectral sequences, Toda brackets, and many others. I will
definitely keep on pursuing this line of research, and I hope this work will inspire other people to
join the study of synthetic homotopy theory as there are so many things waiting to be found.


\appendix

\chapter{A type-theoretic definition of weak
  \texorpdfstring{$\infty$}{∞}-groupoids}
\chaptermark{\slshape\MakeUppercase{A type-theoretic definition of $\infty$-groupoids}}
\label{ch:infgpd}

In this appendix we give a type-theoretic definition of weak $\infty$-groupoids, and we prove that,
using the $\J$ rule of dependent type theory, we can equip every type with a structure of weak
$\infty$-groupoid.  This definition has been partly inspired by Grothendieck’s definition of weak
$\infty$-groupoids as presented by Dimitri Ara and Georges Maltsiniotis, cf for instance
\cite{theseara}. A similar result has also been proved in \cite{vdbgarner:typesinfgpd} where it is
shown that every type in dependent type theory can be equipped with the structure of a weak
$\omega$-groupoid in the sense of Batanin--Leinster. The definition of weak $\infty$-groupoids
presented here is more directly related to type theory. It is not known, however, whether this
definition is in some sense equivalent to the standard one (Kan complexes).

\section{Globular sets}

Just as a category is defined as a graph together with some structure on it, a weak
$\infty$-groupoid is defined as a globular set (or $\infty$-graph) together with some structure on
it. We recall the definition of globular set.

\begin{definition}
  A \emph{globular set} $A$ is a sequence of sets $(A_n)_{n\in\N}$ and maps
  $s_n,t_n:A_{n+1}\to A_n$ (source and target) such that for every $n\in\N$ we
  have:
  \begin{align*}
  s_n\circ s_{n+1} &= s_n\circ t_{n+1},\\
  t_n\circ s_{n+1} &= t_n\circ t_{n+1}.
  \end{align*}
\end{definition}

If $A$ is a globular set, we write $\Ob(A)$ for $A_0$ and we call its elements the \emph{objects} of
$A$. If $x$ and $y$ are two objects of $A$, there is another globular set, called $\Hom_A(x,y)$,
defined by
\begin{align*}
(\Hom_A(x,y))_n=\{\sigma\in
                 A_{n+1}&\,|\,s_0(s_1(\dots(s_n(\sigma))\dots))=x\text{ and }\\
  &\quad t_0(t_1(\dots(t_n(\sigma))\dots))=y\}.
\end{align*}

The (large) set of all globular sets is denoted by $\Glob$.

\section{The internal language of weak \texorpdfstring{$\infty$}{∞}-groupoids}

A weak $\infty$-groupoid is a globular set equipped with a large number of operations, with a few
examples listed in section \ref{sec:infgpd}. In order to describe precisely what are those
operations, the idea is to design a minimalist type theory $\tinfgpd$ from which we can extract the
specification of all the operations. Then we explain how to interpret this type theory into a
globular set, and a weak $\infty$-groupoid will be defined as a globular set together with all the
required operations.

There are four kind of things in $\tinfgpd$: \emph{contexts}, \emph{types}, \emph{terms}, and
\emph{context morphisms} (sequences of terms). We have one base type $\star$, which represents our
ambient weak $\infty$-groupoid, and a type $u\simeq_Tv$ for each type $T$ and terms $u,v:T$, which
represents an iterated hom-set of the ambient weak $\infty$-groupoid. Apart from variables, the only
way to create new terms is to use one of the coherence operations $\coh\Delta T\delta$ that we
describe below. Moreover, there is a special class of contexts called \emph{contractible contexts},
which are used to define the coherence operations.

We use the letters $\Gamma,\Theta$ to represent contexts, $\Delta$ to represent contractible
contexts, $T$, $U$ to represent types, $t$, $u$, $v$ to represent terms, and $\gamma,\theta,\delta$
to represent context morphisms.

The syntax of the language can be summarized by the following grammar.

\begin{center}
  \begin{minipage}[c]{0.3\linewidth}
    \begin{align*}
      \Gamma,\Theta\defeq&\ |\ \varnothing\\
      &\ |\ (\Gamma,x:T),\\\\
      T,U\defeq&\ |\ \star\\
      &\ |\ u \simeq_T v,\\
    \end{align*}
  \end{minipage}
  \begin{minipage}[c]{0.3\linewidth}
    \begin{align*}
      \gamma,\theta\defeq&\ |\ ()\\
      &\ |\ (\gamma,u),\\\\
      t,u,v\defeq&\ |\ x\\
      &\ |\ \coh\Delta T\delta.\\
    \end{align*}
  \end{minipage}
\end{center}

In $\coh\Delta T\delta$, the part $\cohf_{\Delta. T}$ is the name of the coherence operation
(corresponding for instance to “composition of paths” or “exchange law”) while the $\delta$ is the
list of points, paths, and higher paths to which we apply the coherence operation. The part
$\cohf_{\Delta. T}$ should be seen as an atomic symbol.

Given a term, type, or context morphism $X$, a \emph{free variable} of $X$ is a
variable which appears at least once in $X$ in a position which is not a
subscript of the $\cohf$ symbol.  For instance in the term
\[\coh{(x,y:\star,f:x\simeq_\star y)}{y\simeq_\star x}{(y,z,g)},\]
the free variables are $y$, $z$, and $g$, but $x$ and $f$ are not free.

If $\Gamma$ is a context, $\gamma$ is a context morphism having the same length as $\Gamma$ and $X$
is a type, term or context morphism such that every free variable of $X$ is declared in $\Gamma$, we
define $X[\gamma/\Gamma]$ as the type/term/context morphism obtained by replacing every free
variable of $X$ by the corresponding term in $\gamma$.  In particular, for terms of the form
$\coh{\Delta}{T}{\delta}$, substitution must be performed only in $\delta$ and not in $\Delta.T$.

The five judgment forms of $\tinfgpd$ are the following.

\begin{itemize}
\item $\Gamma\ctx$ means that $\Gamma$ is a well-typed context.
\item $\Gamma\vdash\gamma':\Gamma'$ means that $\gamma'$ is a well-typed context morphism from $\Gamma$ to $\Gamma'$.
\item $\Gamma\vdash T\typ$ means that $T$ is a well-typed type in the
  context $\Gamma$.
\item $\Gamma\vdash t:T$ means that $t$ is a well-typed term of type $T$ in the
  context $\Gamma$.
\item $\Delta\contr$ means that the context $\Delta$ is a contractible context.
\end{itemize}

The ten typing rules of $\tinfgpd$ are the following.

\paragraph{Contexts}

\[
\AxiomC{}
\UnaryInfC{$\varnothing\ctx$}
\DisplayProof
\qquad\qquad
\AxiomC{$\Gamma\vdash{}T\typ$}
\UnaryInfC{$(\Gamma,x:T)\ctx$}
\DisplayProof
\quad(x\not\in\Gamma)\]

\paragraph{Context morphisms}

\[
\AxiomC{}
\UnaryInfC{$\Gamma\vdash():\varnothing$}
\DisplayProof
\qquad\qquad
\AxiomC{$\Gamma'\vdash{}T\typ$}
\AxiomC{$\Gamma\vdash\gamma:\Gamma'$}
\AxiomC{$\Gamma\vdash u:T[\gamma/\Gamma']$}
\TrinaryInfC{$\Gamma\vdash(\gamma,u):(\Gamma',x:T)$}
\DisplayProof
\quad(x\not\in\Gamma')\]

\paragraph{Types}

\[
\AxiomC{}
\UnaryInfC{$\Gamma\vdash\star\typ$}
\DisplayProof
\qquad\qquad
\AxiomC{$\Gamma\vdash u:T$}
\AxiomC{$\Gamma\vdash v:T$}
\BinaryInfC{$\Gamma\vdash u\simeq_T v\typ$}
\DisplayProof\]

\paragraph{Terms}

\[
\AxiomC{$\phantom{\vdash}$}
\UnaryInfC{$\Gamma\vdash x:T$}
\DisplayProof
\quad((x:T)\in\Gamma)
\qquad\qquad
\AxiomC{$\Delta\contr$}
\AxiomC{$\Delta\vdash{}T\typ$}
\AxiomC{$\Gamma\vdash\delta:\Delta$}
\TrinaryInfC{$\Gamma\vdash\coh\Delta T\delta:
  T[\delta/\Delta]$}
\DisplayProof\]

\paragraph{Contractible contexts}

\[
\AxiomC{}
\UnaryInfC{$(x:\star)\contr$}
\DisplayProof
\qquad\qquad
\AxiomC{$\Delta\contr$}
\AxiomC{$\Delta\vdash u:T$}
\BinaryInfC{$(\Delta,(y:T),(z:u\simeq_T y))\contr$}
\DisplayProof
\quad(y,z\notin\Delta)
\]

\medskip

All the operations in section \ref{sec:infgpd} give examples of contractible contexts. The idea is
that a coherence operation has a list of arguments, whose shape forms a contractible context
$\Delta$, and a return type $T$, which can be any type in the context $\Delta$ using only coherence
operations, as they are the only terms available in $\tinfgpd$. The definition of weak
$\infty$-groupoid should now be intuitively clear. A weak $\infty$-groupoid is a globular set
together with, for every context $\Delta$ and every type $T$ such that $\Delta\contr$ and
$\Delta\vdash{}T\typ$ hold, an operation taking a list of arguments of types corresponding to
$\Delta$ and returning a result of type corresponding to $T$. Making this definition precise isn’t
as easy as it seems, though.  We first need to prove some syntactic properties of $\tinfgpd$, and
then we will explain how the syntax is translated into actual operations on a globular set.

\begin{lemma}
  [syntactic substitution]

  Let $X$ be a type, a term or a context morphism, $\Gamma$ and $\Theta$ two contexts, $\gamma$ and
  $\theta$ two context morphisms, $T$ a type and $u$ a term. Then the following syntactic equalities
  hold whenever all the objects involved are well-defined (we consider that $X[\gamma/\Gamma]$ is
  not well-defined if there is a variable in $X$ which is not declared in $\Gamma$)
  \begin{align*}(X[\gamma/\Gamma])[\theta/\Theta]&=
    X[\gamma[\theta/\Theta]/\Gamma)]\\
    X[\gamma/\Gamma]&=X[(\gamma,u)/(\Gamma,x:T)]
  \end{align*}
\end{lemma}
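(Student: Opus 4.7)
The plan is to proceed by a single simultaneous structural induction on the grammar of $X$, treating types, terms, and context morphisms as one mutually defined syntactic category. Because substitution is defined clause-by-clause on the grammar, each case will reduce the desired equality to either a base identity or an instance of the induction hypothesis at a smaller syntactic subterm.

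First I would handle the base cases. For $X = \star$, both equalities are immediate since $\star$ has no free variables, so every substitution leaves it unchanged. For $X$ a variable $x$, the first equality becomes $\gamma(x)[\theta/\Theta] = (\gamma[\theta/\Theta])(x)$, which holds by the very definition of componentwise substitution on context morphisms; the second equality amounts to the fact that extending $\gamma$ with a new binding for a variable $x$ that is not $x$ does not change what $\gamma$ does at its original variables.

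Next I would do the inductive cases. For a type $u \simeq_T v$, both sides are obtained by applying the substitution componentwise to $u$, $T$, and $v$, so the induction hypothesis on each of these three subterms gives the result. For a context morphism $(\gamma', w)$, substitution acts componentwise, so the induction hypothesis on $\gamma'$ and on the term $w$ suffices. The important case is the coherence term $\coh{\Delta}{T}{\delta}$: by the convention fixed just before the lemma, substitution is performed only on $\delta$ and not on the subscript $\Delta.T$ (whose variables are bound by the $\cohf$ symbol). Hence $(\coh{\Delta}{T}{\delta})[\gamma/\Gamma] = \coh{\Delta}{T}{\delta[\gamma/\Gamma]}$, and the desired equality follows directly from the induction hypothesis applied to the context morphism $\delta$.

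For the second equality $X[\gamma/\Gamma] = X[(\gamma,u)/(\Gamma, x:T)]$, the key point is that the hypothesis $x \notin \Gamma$ in the context-formation rule ensures $x$ is not declared in $\Gamma$, and the well-formedness assumption on $X[\gamma/\Gamma]$ means every free variable of $X$ is declared in $\Gamma$; therefore $x$ does not occur freely in $X$, so appending the binding $x \mapsto u$ to $\gamma$ has no effect. Once this observation is made, the equality again propagates through all the grammar clauses by induction, with the coherence case again reducing only to substitution inside $\delta$.

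The main obstacle will be bookkeeping around the coherence terms $\coh{\Delta}{T}{\delta}$: one must verify carefully that the substitution conventions insulate the bound context $\Delta$ and the return type $T$ from the outer substitution, so that no accidental capture or renaming issues arise. This amounts to confirming that the syntactic equations for substitution commute with the atomic treatment of $\cohf_{\Delta.T}$, at which point the induction goes through uneventfully.
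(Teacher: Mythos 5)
Your proof is correct and takes the same approach as the paper, which simply says ``we proceed by induction on $X$; if $X$ is a variable it holds by definition of the objects involved, otherwise we apply the induction hypothesis.'' You have filled in the cases that the paper leaves implicit, and in particular your treatment of the $\cohf$ case correctly uses the convention, stated just before the lemma, that substitution acts only on $\delta$ and not on the subscript $\Delta.T$.
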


\begin{proof}
  We proceed by induction on $X$. If $X$ is a variable it holds by definition of the objects
  involved, otherwise we apply the induction hypothesis.
\end{proof}

\begin{lemma}
  [weakening]

  Let’s assume that $\Gamma\ctx$ holds, for $\Gamma=(\Gamma_0,y:B)$. Then we have
  \begin{itemize}
  \item If $\Gamma_0\vdash T\typ$ holds, then $\Gamma\vdash T\typ$ holds.
  \item If $\Gamma_0\vdash u:T$ holds, then $\Gamma\vdash u:T$ holds.
  \item If $\Gamma_0\vdash\gamma':\Gamma'$ holds, then
    $\Gamma\vdash\gamma':\Gamma'$ holds.
  \end{itemize}
\end{lemma}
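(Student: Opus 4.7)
The plan is to prove the three statements simultaneously by a single induction on the structure of the derivation of the hypothesis judgment. All three items share the same underlying mechanism: moving from $\Gamma_0$ to $\Gamma=(\Gamma_0,y:B)$ adds one extra declaration at the end, and since our derivations only inspect the context to look up variables or to check that fresh names are not already declared, no rule is invalidated by this extension. The well-formedness assumption $\Gamma\ctx$ carries with it $\Gamma_0\ctx$ and $\Gamma_0\vdash B\typ$, together with the freshness $y\notin\Gamma_0$, which is what is needed to ensure that every fresh-name side condition used inside $\Gamma_0$ still holds when transported to $\Gamma$.

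Concretely, I would handle the rules in turn. For the type judgment, the $\star$-rule goes through unchanged, and the $u\simeq_T v$-rule reduces immediately to the induction hypothesis applied to its two term premises. For the term judgment, the variable rule is essentially trivial: if $(x:T)\in\Gamma_0$ then a fortiori $(x:T)\in\Gamma$, and in particular $x\neq y$ by the freshness condition. The coherence rule $\coh{\Delta}{T}{\delta}$ has premises $\Delta\contr$ and $\Delta\vdash T\typ$ that do not mention the ambient context at all, so they are preserved automatically; only the premise $\Gamma_0\vdash\delta:\Delta$ needs the induction hypothesis for context morphisms. For the context morphism judgment, the empty case is trivial, and the extension rule $(\gamma,u):(\Gamma',x:T)$ follows by applying the induction hypothesis for context morphisms to $\gamma$ and the induction hypothesis for terms to $u$, again noting that the premise $\Gamma'\vdash T\typ$ lives in a different context and is unaffected.

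Since the type, term, and context-morphism judgments are mutually recursive through the rules (types contain terms via $\simeq$; terms contain context morphisms in the $\cohf$ case; context morphisms contain terms in the extension case), the induction really has to be carried out simultaneously on the three forms at once, with the measure being the height of the derivation tree. I expect the main thing to keep straight will be just the bookkeeping: in particular, checking carefully that each fresh-variable side condition ($x\notin\Gamma'$ inside a coherence term, $y\notin\Gamma_0$ at the outer level) remains satisfied in the extended context, and that syntactic substitutions such as $T[\gamma/\Gamma']$ are unaffected by the presence of the extra declaration $y:B$ since $y$ does not occur free in $T$ or $\gamma$. None of the steps requires any genuinely new idea; the proof is essentially the standard structural weakening argument adapted to the specific syntax of $\tinfgpd$.
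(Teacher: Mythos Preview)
Your proposal is correct and follows essentially the same approach as the paper: induction on the typing derivation, with the variable case handled by the observation that $(x:T)\in\Gamma_0$ implies $(x:T)\in\Gamma$, and all remaining cases by the induction hypothesis. The paper's proof is a two-sentence sketch of exactly this argument, so your version is simply a more explicit unfolding of it.
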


\begin{proof}
  We proceed by induction on the typing derivation of the judgment. For the case of a variable we
  use the fact that $\Gamma\ctx$ holds and that if $(x:T)\in\Gamma_0$, then $(x:T)\in\Gamma$. For the
  other cases we apply the induction hypothesis.
\end{proof}

\begin{lemma}\label{typingvars}
  We have

  \begin{itemize}
  \item If $\Gamma\ctx$ holds and $(x:T)\in\Gamma$, then $\Gamma\vdash
    T\typ$ holds.
  \item If $\Gamma'\vdash\gamma:\Gamma$ holds and $(x:T)\in\Gamma$, then
    $\Gamma'\vdash \pi_x^\Gamma(\gamma):T[\gamma/\Gamma]$ where
    $\pi_x^\Gamma$ is the $n$th projection, where $n$ is the position of $x$ in
    $\Gamma$ (or in other words,
    $\pi_x^\Gamma(\gamma)=x[\gamma/\Gamma]$).
  \end{itemize}
\end{lemma}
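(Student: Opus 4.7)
The plan is to prove both statements by a straightforward induction, using the two previous lemmas (weakening and syntactic substitution) at the crucial steps.

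For the first statement, I would induct on the derivation of $\Gamma\ctx$. The base case $\Gamma=\varnothing$ is vacuous, since no variable is declared in the empty context. For the inductive step, $\Gamma=(\Gamma_0, y:B)$ with $\Gamma_0\vdash B\typ$. Given $(x:T)\in\Gamma$, there are two cases. Either $(x:T)=(y:B)$, in which case $\Gamma_0\vdash T\typ$ holds directly and weakening yields $\Gamma\vdash T\typ$. Otherwise $(x:T)\in\Gamma_0$, so by induction hypothesis $\Gamma_0\vdash T\typ$, and weakening again gives $\Gamma\vdash T\typ$.

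For the second statement, I would induct on the structure of $\Gamma$ (equivalently, on the derivation of $\Gamma'\vdash\gamma:\Gamma$). Again the empty case is vacuous. For $\Gamma=(\Gamma_0, x':T')$, the only way to form $\gamma$ is $\gamma=(\gamma_0, u)$ with $\Gamma'\vdash\gamma_0:\Gamma_0$ and $\Gamma'\vdash u:T'[\gamma_0/\Gamma_0]$. If $x=x'$, then $T=T'$, $\pi_x^\Gamma(\gamma)=u$, and the second syntactic substitution identity $T'[\gamma_0/\Gamma_0]=T'[(\gamma_0,u)/(\Gamma_0,x':T')]=T[\gamma/\Gamma]$ gives the desired typing judgement directly from the typing of $u$. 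If instead $(x:T)\in\Gamma_0$, the induction hypothesis gives $\Gamma'\vdash\pi_x^{\Gamma_0}(\gamma_0):T[\gamma_0/\Gamma_0]$; by the first statement applied in $\Gamma_0$, the free variables of $T$ all lie in $\Gamma_0$, so the same syntactic substitution identity yields $T[\gamma_0/\Gamma_0]=T[\gamma/\Gamma]$, and by definition of the projection, $\pi_x^\Gamma(\gamma)=\pi_x^{\Gamma_0}(\gamma_0)$.

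The proof is essentially bookkeeping, and there is no real obstacle: the only subtle point is that in the second case of the second statement one must invoke the first statement to know that $T[\gamma_0/\Gamma_0]$ is in fact well-defined, which justifies using the syntactic substitution lemma. Apart from that, the argument is a direct induction on the context.
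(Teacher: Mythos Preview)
Your proof is correct and follows essentially the same approach as the paper: both parts are proved by induction on the derivation of $\Gamma\ctx$ (resp.\ $\Gamma'\vdash\gamma:\Gamma$), with the case split on whether $x$ is the last variable, and both cases reduce to $T[\gamma_0/\Gamma_0]$ before the syntactic substitution lemma converts this to $T[\gamma/\Gamma]$. Your extra remark that the first statement is needed to ensure $T[\gamma_0/\Gamma_0]$ is well-defined is a nice touch that the paper leaves implicit.
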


\begin{proof}
  For the first point, we proceed by induction on the typing derivation of the judgment
  $\Gamma\ctx$. If $\Gamma$ is empty, then $(x:T)\in\Gamma$ cannot be true so there is nothing to
  prove.  If $\Gamma=(\Gamma_0,y:T')$, then if $x$ is equal to $y$, the types $T$ and $T'$ are also
  equal and we have $\Gamma_0\vdash{}T\typ$. If $x$ is not equal to $y$, then $(x:T)\in\Gamma_0$,
  hence $\Gamma_0\vdash{}T\typ$ by the induction hypothesis. In both cases we get
  $\Gamma_0\vdash{}T\typ$, hence $\Gamma\vdash{}T\typ$ holds by the weakening lemma.

  For the second point, we proceed by induction on the typing derivation of the judgment
  $\Gamma'\vdash\gamma:\Gamma$. If $\Gamma$ is empty, then $(x:T)\in\Gamma$ cannot be true so there
  is nothing to prove.  If $\Gamma=(\Gamma_0,y:T')$ and $\gamma=(\gamma_0,u)$, then if $x$ is equal
  to $y$, the terms $\pi_x^\Gamma(\gamma)$ and $u$ are also equal and we have
  $\Gamma'\vdash\pi_x^\Gamma(\gamma):T[\gamma_0/\Gamma_0]$. If $x$ is not equal to $y$, then
  $(x:T)\in\Gamma_0$, hence $\Gamma'\vdash\pi_x^\Gamma(\gamma):T[\gamma_0/\Gamma_0]$. In both cases
  we get $\Gamma'\vdash\pi_x^\Gamma(\gamma):T[\gamma_0/\Gamma_0]$ hence
  $\Gamma'\vdash\pi_x^\Gamma(\gamma):T[\gamma/\Gamma]$ holds.
\end{proof}

\begin{lemma}[substitution]

  Assume $\Gamma'\ctx$ and $\Gamma'\vdash\gamma:\Gamma$ hold. Then:
  \begin{itemize}
  \item If $\Gamma\vdash T\typ$ holds, then $\Gamma'\vdash T[\gamma/\Gamma]\typ$
    holds.
  \item If $\Gamma\vdash u:T$ holds, then
    $\Gamma'\vdash{}u[\gamma/\Gamma]:T[\gamma/\Gamma]$ holds.
  \item If $\Gamma\vdash\theta:\Theta$ holds, then
    $\Gamma'\vdash\theta[\gamma/\Gamma]:\Theta$ holds.
  \end{itemize}
\end{lemma}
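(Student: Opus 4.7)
The plan is to prove the three statements simultaneously by induction on the typing derivations (with a secondary well-founded measure on the syntax if needed). The three clauses are interdependent—types contain terms via $u\simeq_T v$, terms contain context morphisms via $\coh{\Delta}{T}{\delta}$, and context morphisms contain terms in their entries—so a single simultaneous induction is essential.

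For the type clause, the base case $T = \star$ is immediate since $\star[\gamma/\Gamma] = \star$ and the rule for $\star$ has no premises. For $T = u \simeq_U v$, the derivation must come from premises $\Gamma\vdash u:U$ and $\Gamma\vdash v:U$, to which I apply the inductive hypothesis (term clause) to obtain $\Gamma'\vdash u[\gamma/\Gamma] : U[\gamma/\Gamma]$ and similarly for $v$, and then reassemble using the typing rule for $\simeq$.

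For the term clause, the variable case $\Gamma\vdash x:T$ reduces to showing $\Gamma'\vdash \pi_x^\Gamma(\gamma) : T[\gamma/\Gamma]$, which is exactly the second part of Lemma \ref{typingvars} (noting that $x[\gamma/\Gamma] = \pi_x^\Gamma(\gamma)$ by definition). For the $\coh$ case, the derivation gives $\Delta\contr$, $\Delta\vdash T\typ$ and $\Gamma\vdash\delta:\Delta$; by the inductive hypothesis on the context morphism clause, $\Gamma'\vdash\delta[\gamma/\Gamma]:\Delta$, and the typing rule for $\coh$ yields $\Gamma'\vdash\coh{\Delta}{T}{\delta[\gamma/\Gamma]} : T[\delta[\gamma/\Gamma]/\Delta]$. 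The desired type is $T[\delta/\Delta][\gamma/\Gamma]$, and the first clause of the syntactic substitution lemma identifies this with $T[\delta[\gamma/\Gamma]/\Delta]$; moreover, by definition of substitution, $(\coh{\Delta}{T}{\delta})[\gamma/\Gamma] = \coh{\Delta}{T}{\delta[\gamma/\Gamma]}$ since substitution does not enter the subscript of $\cohf$.

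For the context morphism clause, the empty case is immediate. In the cons case $\Gamma\vdash (\theta,u) : (\Theta, x:U)$, the derivation rests on $\Theta\vdash U\typ$, $\Gamma\vdash\theta:\Theta$ and $\Gamma\vdash u:U[\theta/\Theta]$. Applying the inductive hypothesis gives $\Gamma'\vdash\theta[\gamma/\Gamma]:\Theta$ and $\Gamma'\vdash u[\gamma/\Gamma] : U[\theta/\Theta][\gamma/\Gamma]$; the syntactic substitution lemma again rewrites the target type as $U[\theta[\gamma/\Gamma]/\Theta]$, and the typing rule for context morphisms assembles $(\theta[\gamma/\Gamma], u[\gamma/\Gamma])$ at type $(\Theta, x:U)$, which is exactly $(\theta,u)[\gamma/\Gamma]$.

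The only genuine obstacle is ensuring the bookkeeping around $\coh$ and the cons case lines up: one must be careful that substitution does not reach inside the declared context $\Delta$ of a coherence symbol, and that every use of the syntactic substitution lemma is on objects already known to be well-defined (which follows because every variable appearing freely in $\theta$ or in $\delta$ is declared in $\Gamma$, so composition with $\gamma$ is well-defined). No separate weakening argument is needed beyond the already proved weakening lemma, which may be invoked silently when a premise is given in a shorter context than the one we are working in.
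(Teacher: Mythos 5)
Your proof is correct and takes essentially the same approach as the paper: a simultaneous induction on the typing derivation for all three clauses, invoking Lemma~\ref{typingvars} in the variable case and the syntactic substitution lemma to realign types in the $\coh$ and cons cases. The paper's own treatment is just a terser sketch of exactly this, noting additionally that it proves substitution and compatibility in the same mutual induction (compatibility's $\coh$ case uses substitution, though not vice versa, so your self-contained proof of substitution alone is fine).
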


\begin{lemma}
  [compatibility]
  Assume $\Gamma\ctx$ holds. Then we have
  \begin{itemize}
  \item If $\Gamma\vdash u:T$ holds, then $\Gamma\vdash
    T\typ$ holds.
  \item If $\Gamma\vdash\theta:\Theta$ holds, then $\Theta\ctx$ holds.
  \end{itemize}
\end{lemma}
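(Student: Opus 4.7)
The plan is to prove the two parts separately by induction on the derivation of the hypothesis judgment, using only facts that have already been established in the preceding lemmas (in particular the substitution lemma and Lemma \ref{typingvars}).

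For the second part, I would induct on the derivation of $\Gamma\vdash\theta:\Theta$. The base case $\theta=()$, $\Theta=\varnothing$ is immediate from the first rule for contexts. In the step case $\theta=(\theta_0,u)$, $\Theta=(\Theta_0,x:T)$, the typing rule for context morphisms has $\Theta_0\vdash T\typ$ as one of its premises, and the context extension rule applied to this premise gives $(\Theta_0,x:T)\ctx$ directly. Notably, no appeal to the induction hypothesis is needed here, because the derivation we are inspecting already contains a subderivation of the desired type-formation judgment in the correct context.

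For the first part, I would induct on the derivation of $\Gamma\vdash u:T$. If $u$ is a variable $x$ with $(x:T)\in\Gamma$, then Lemma \ref{typingvars}, applied using the assumption $\Gamma\ctx$, yields $\Gamma\vdash T\typ$. If $u=\coh{\Delta}{T'}{\delta}$ with $T=T'[\delta/\Delta]$, then the premises of the coherence typing rule provide $\Delta\contr$, $\Delta\vdash T'\typ$, and $\Gamma\vdash\delta:\Delta$; combining $\Gamma\ctx$, $\Gamma\vdash\delta:\Delta$, and $\Delta\vdash T'\typ$, the substitution lemma delivers $\Gamma\vdash T'[\delta/\Delta]\typ$, which is the conclusion.

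The main point worth flagging is that this proof does \emph{not} require the auxiliary fact that $\Delta\contr$ implies $\Delta\ctx$: the substitution lemma is stated with $\Gamma\ctx$ and $\Gamma\vdash\delta:\Gamma_{\mathrm{src}}$ as its only contextual hypotheses, both of which we already have, while $\Delta\vdash T'\typ$ is supplied directly by the coh-rule premise. This avoids what would otherwise be the most delicate part of the argument, since proving $\Delta\contr\Rightarrow\Delta\ctx$ would itself depend on compatibility (to pass from $\Delta\vdash u:T$ to $\Delta\vdash T\typ$ when extending a contractible context) and would force a mutual induction. As stated, the lemma follows by a straightforward case analysis with no circular dependency on any of the lemmas that precede it.
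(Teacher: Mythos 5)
Your proof is correct, and it does take a genuinely different organizational route than the paper. The paper proves the substitution lemma and the compatibility lemma by a single simultaneous induction, whereas you treat substitution as already established and then derive compatibility from it by a simple case analysis, with Lemma~\ref{typingvars} handling the variable case, the substitution lemma handling the $\cohf$ case, and the context-morphism part following by reading a subderivation directly off the premises. Your observation that the lemma only needs the \emph{target} context to be well-formed (so that $\Delta\contr\Rightarrow\Delta\ctx$ is never required) is exactly the right point, and it is what makes the sequential presentation work.

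The one thing you glide past is that your argument is non-circular only if the substitution lemma can itself be proved without appealing to compatibility. You assert this but do not check it. It does hold: walking through the cases of substitution, the variable case reduces to Lemma~\ref{typingvars}, the $\cohf$ case reuses the unchanged premises $\Delta\contr$ and $\Delta\vdash T\typ$ together with the induction hypothesis on $\delta$, and the context-morphism case reuses the unchanged premise $\Theta_0\vdash T\typ$; none of these needs to recover a presupposition, so compatibility is never invoked. So your proof is sound, but a careful version should say why the dependence you are cutting is in fact absent, rather than inferring it from the shape of the hypotheses alone. What your presentation buys is a cleaner logical dependency graph (substitution first, then compatibility as a corollary); what the paper's simultaneous induction buys is robustness in case the rules were ever changed so that substitution did need presuppositions, at the modest cost of proving two statements in one induction.
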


\begin{proof}
  The lemmas of substitution and compatibility are proved by a simultaneous induction on the main
  judgment.
  For the case of a variable, we use lemma \ref{typingvars}. For the other cases we use the
  induction hypothesis.
\end{proof}

\section{Syntactic weak \texorpdfstring{$\infty$}{∞}-groupoids}

We can now define weak $\infty$-groupoids. The definition is mutually recursive with the
interpretation of the syntax, and the various lemmas below are needed to make sure that the
interpretation of the syntax is well-defined. Because many coherence operations depend on other
coherence operations which may in turn depend on other coherence operations, and so on, we first
define a notion of $n$-partial weak $\infty$-groupoid and then we define the notion of weak
$\infty$-groupoids.

\begin{definition}
  The \emph{depth} of a type/term/context morphism is defined by
  \begin{align*}
    \depth(\coh \Delta T\delta) &\defeq\max(\depth(\delta),\depth(T)+1,\depth(\Delta)+1),\\
    \depth(u=_Tv) &\defeq\max(\depth(u),\depth(v),\depth(T)),\\
    \depth((\gamma,u)) &\defeq\max(\depth(\gamma),\depth(u)),
  \end{align*}
  and \[\depth(\star) = \depth(x) = \depth(()) = 0.\]
\end{definition}

In other words, the depth is the highest level of nestedness of coherence operations, where a
coherence operation is nested inside $\coh{\Delta}T\delta$ if it appears in $\Delta$ or $T$.

\begin{definition}
  A \emph{$0$-partial weak $\infty$-groupoid} is a globular set, and for $n\ge0$, an
  \emph{$(n+1)$-partial weak $\infty$-groupoid} is an $n$-partial weak $\infty$-groupoid $G$
  together with, for every contractible context $\Delta$ and every type $T$ in $\Delta$ of depth
  $n$, a function
  \[\cohhf G\Delta T{}:(\eta:\lcr\Delta\rcr^G)\to
  \Ob(\lcr{}T\rcr^G_\Delta(\eta)),\]
  where $\lcr\Delta\rcr^G$ and $\lcr{}T\rcr^G_\Delta$ are defined below.  This means that for every
  $\eta\in\lcr\Delta\rcr^G$, we have an element
  $\cohh G\Delta T\eta\in\Ob(\lcr{}T\rcr^G_\Delta(\eta))$. Using the structure of $n$-partial weak
  $\infty$-groupoid on $G$, we can extend $\cohhf G\Delta T{}$ to the case where $T$ is of depth
  less than or equal to $n$.
\end{definition}

\begin{definition}\label{def:intglob}
  Given an $n$-partial weak $\infty$-groupoid $G$, we interpret well-typed contexts, context
  morphisms, types and terms of depth at most $n$ as follows:
  \begin{itemize}
  \item The interpretation of a well-typed context $\Gamma$ of depth at most $n$ is a set
    $\lcr\Gamma\rcr^G$ defined below.
  \item The interpretation of a well-typed context morphism $\theta$ of depth at most $n$ from
    $\Gamma$ to $\Theta$ is a map
    $\lcr\theta\rcr^G_{\Gamma,\Theta}:
    \lcr\Gamma\rcr^G\to\lcr\Theta\rcr^G$ defined below.
  \item The interpretation of a well-typed type $T$ of depth at most $n$ in context $\Gamma$ is a
    map $\lcr{}T\rcr^G_\Gamma:\lcr\Gamma\rcr^G\to\Glob$ defined below.
  \item The interpretation of a well-typed term $u$ of depth at most $n$ of type $T$ in context
    $\Gamma$ is a dependent map
    $\lcr{}u\rcr^G_{\Gamma,T}: (\gamma:\lcr\Gamma\rcr^G)\to\Ob(\lcr{}T\rcr^G_\Gamma(\gamma))$
    defined below.
  \end{itemize}
  We now give the definitions.
  \begin{itemize}
  \item The definition of $\lcr\Gamma\rcr^G$ is
    \begin{align*}
      \lcr\varnothing\rcr^G&=\{()\},\\
      \lcr(\Gamma,x:T)\rcr^G&=\{(\gamma,a)\ |\ \gamma\in\lcr\Gamma\rcr^G,
      a\in\Ob(\lcr{}T\rcr^G_\Gamma(\gamma))\}.
    \end{align*}
  \item The definition of $\lcr\theta\rcr^G_{\Gamma,\Theta}:
    \lcr\Gamma\rcr^G\to\lcr\Theta\rcr^G$ is
    \begin{align*}
      \lcr()\rcr^G_{\Gamma,\varnothing}(\gamma)&=(),\\
      \lcr(\theta,u)\rcr^G_{\Gamma,(\Theta,x:T)}(\gamma)&=
      (\lcr\theta\rcr^G_{\Gamma,\Theta}(\gamma),
      \lcr{}u\rcr^G_{\Gamma,T[\theta/\Theta]}(\gamma)).
    \end{align*}
  \item The definition of $\lcr{}T\rcr^G_\Gamma:\lcr\Gamma\rcr^G\to\Glob$ is
    \begin{align*}
      \lcr\star\rcr^G_\Gamma(\gamma)&=G,\\
      \lcr u\simeq_Tv\rcr^G_\Gamma(\gamma)&= \Hom_{\lcr T\rcr^G_\Gamma(\gamma)}
      (\lcr u\rcr^G_{\Gamma,T}(\gamma),
      \lcr v\rcr^G_{\Gamma,T}(\gamma)).
    \end{align*}
  \item The definition of $\lcr{}u\rcr^G_{\Gamma,T}:
    (\gamma:\lcr\Gamma\rcr^G)\to\Ob(\lcr{}T\rcr^G_\Gamma(\gamma))$ is
    \begin{align*}
      \lcr{}x\rcr^G_{\Gamma,T}(\gamma)&=\pi^\Gamma_x(\gamma),\\
      \lcr{}\coh\Delta T\delta\rcr^G_{\Gamma,
        T[\delta/\Delta]} (\gamma)&= \cohh G\Delta T
      {\lcr\delta\rcr^G_{\Gamma,\Theta}(\gamma)}.
    \end{align*}
  \end{itemize}
  Note that we use the structure of $n$-partial weak $\infty$-groupoid in the last clause.
\end{definition}

The fact that the previous definition is well-typed follows from the following three lemmas which
are straightforward to prove by induction.

\begin{lemma}[semantic weakening]
  Let $\Gamma=(\Gamma_0,y:T')$ be a nonempty context and assume that
  $\vdash\Gamma\ctx$ holds. Let $\gamma=(\gamma_0,b)$ be an element of
  $\lcr\Gamma\rcr^G$. Then we have
  \begin{itemize}
  \item If $\Gamma_0\vdash{}T\typ$, then
    \[\lcr{}T\rcr^G_{\Gamma_0}(\gamma_0)=\lcr{}T\rcr^G_\Gamma(\gamma)\] in $\Glob$.
  \item If $\Gamma_0\vdash{}u:T$, then
    \[\lcr{}u\rcr^G_{\Gamma_0,T}(\gamma_0)=
    \lcr{}u\rcr^G_{\Gamma,T}(\gamma)\] in $\Ob(\lcr{}T\rcr^G_{\Gamma_0}(\gamma_0))$ which is equal to
    $\Ob(\lcr{}T\rcr^G_\Gamma(\gamma))$.
  \item If $\Gamma_0\vdash\theta:\Theta$, then
    \[\lcr\theta\rcr^G_{\Gamma_0,\Theta}(\gamma_0)=
    \lcr\theta\rcr^G_{\Gamma,\Theta}(\gamma)\] in $\lcr\Theta\rcr^G$.
  \end{itemize}
\end{lemma}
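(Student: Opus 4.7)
The plan is to proceed by a simultaneous induction on the structure of the well-typed type $T$, the well-typed term $u$, and the well-typed context morphism $\theta$ (equivalently, on their typing derivations). This mirrors exactly the mutually recursive structure of definition \ref{def:intglob}, so at each syntactic clause the induction hypothesis delivers precisely what is needed to close the case.

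First I would handle the types. The base case $T = \star$ is immediate because both $\lcr \star \rcr^G_{\Gamma_0}(\gamma_0)$ and $\lcr \star \rcr^G_\Gamma(\gamma)$ equal $G$ by definition, with no dependence on the context at all. For the inductive case $T = u \simeq_{T_1} v$, the interpretation unfolds to an iterated hom-set built from $\lcr T_1 \rcr^G_{-}(-)$, $\lcr u \rcr^G_{-,T_1}(-)$ and $\lcr v \rcr^G_{-,T_1}(-)$; applying the induction hypothesis to $T_1$, $u$ and $v$ gives the matching equalities, and so the two globular sets coincide.

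Next I would handle the terms. The delicate base case is a variable $x$ with $(x : T) \in \Gamma_0$. Since $\Gamma = (\Gamma_0, y : T')$ is well-formed, the side condition in the context-extension rule forces $y \notin \Gamma_0$, so $x \ne y$; hence the position of $x$ in $\Gamma$ equals its position in $\Gamma_0$, and $\pi_x^\Gamma(\gamma_0, b) = \pi_x^{\Gamma_0}(\gamma_0)$ by definition of the projections. For a coherence term $u = \coh{\Delta}{T_1}{\delta}$, the only dependence of $\lcr u \rcr^G_{\Gamma,-}(\gamma)$ on the ambient context is through $\lcr \delta \rcr^G_{\Gamma,\Delta}(\gamma)$, because $\cohhf{G}{\Delta}{T_1}{}$ is a fixed function of $\lcr \Delta \rcr^G$; so applying the induction hypothesis to the context morphism $\delta$ concludes this case. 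Finally, context morphisms are handled routinely: $\theta = ()$ is trivial, and for $\theta = (\theta_0, u)$ with codomain $(\Theta_0, x : T)$, the two components of the pair are given respectively by $\lcr \theta_0 \rcr^G$ and $\lcr u \rcr^G_{-, T[\theta_0/\Theta_0]}$, to which the induction hypothesis applies directly.

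The only subtle point — which is more a matter of bookkeeping than a real obstacle — is that the three statements genuinely must be proved simultaneously: the type case calls the term induction hypothesis (via $u \simeq_T v$), the term case calls the context-morphism induction hypothesis (via $\coh{\Delta}{T}{\delta}$), and the context-morphism case calls the term induction hypothesis (via its components). A secondary point to verify is that, in the term case for $(\theta_0, u)$, the substituted type $T[\theta_0/\Theta_0]$ is the same whether we read it as living over $\Gamma_0$ or over $\Gamma$ — but this is a purely syntactic equality (substitution of $\theta_0$ does not mention $y$), so no additional argument is required. With these observations the induction closes.
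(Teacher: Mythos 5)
Your proof is correct and takes exactly the route the paper leaves implicit: the paper dispatches this lemma (and its two companions) with the single remark that they are ``straightforward to prove by induction,'' omitting the details. Your account supplies precisely those details — a mutual induction on the three judgments mirroring the mutually recursive clauses of the interpretation, with the base type and empty context morphism being trivial, the inductive cases going through by the induction hypotheses, and the one point requiring a real observation being the variable case, where the freshness side condition $y\notin\Gamma_0$ ensures the projection $\pi_x^\Gamma$ hits the same component as $\pi_x^{\Gamma_0}$.
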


\begin{lemma}
  We have
  \begin{itemize}
  \item
    If $\vdash\Gamma\ctx$ and $(x:T)\in\Gamma$ then, for every
    $\gamma\in\lcr\Gamma\rcr^G$,
    \[\pi^\Gamma_x(\gamma)\in\Ob(\lcr{}T\rcr^G_\Gamma(\gamma)).\]
  \item If $\Gamma\vdash\theta:\Theta$ and $(y:T)\in\Theta$ then, for every
    $\gamma\in\lcr\Gamma\rcr^G$,
    \[\lcr\pi_y^\Theta(\theta)\rcr^G_{\Gamma,T[\theta/\Theta]}
    (\gamma)=
    \pi_y^\Theta(\lcr\theta\rcr^G_{\Gamma,\Theta}(\gamma)).\]
  \end{itemize}
\end{lemma}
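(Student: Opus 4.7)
The plan is to prove both clauses by straightforward induction, using the syntactic substitution lemma and the semantic weakening lemma already established in the text.

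For the first clause, I would induct on the derivation of $\Gamma\ctx$. If $\Gamma=\varnothing$, there is nothing to prove since $(x:T)\in\Gamma$ is impossible. If $\Gamma=(\Gamma_0,y:T')$ with $\Gamma_0\vdash T'\typ$, then every $\gamma\in\lcr\Gamma\rcr^G$ has the form $(\gamma_0,b)$ with $b\in\Ob(\lcr T'\rcr^G_{\Gamma_0}(\gamma_0))$. Two cases arise depending on whether $(x:T)$ is the last declaration or lies in $\Gamma_0$. In the first case, $T=T'$ and $\pi_x^\Gamma(\gamma)=b$, and semantic weakening identifies $\lcr T'\rcr^G_{\Gamma_0}(\gamma_0)$ with $\lcr T'\rcr^G_\Gamma(\gamma)$. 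In the second case, $(x:T)\in\Gamma_0$ and $\pi_x^\Gamma(\gamma)=\pi_x^{\Gamma_0}(\gamma_0)$; the induction hypothesis gives membership in $\Ob(\lcr T\rcr^G_{\Gamma_0}(\gamma_0))$, and semantic weakening again rewrites this as $\Ob(\lcr T\rcr^G_\Gamma(\gamma))$.

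For the second clause, I would induct on the derivation of $\Gamma\vdash\theta:\Theta$. If $\Theta=\varnothing$, the claim is vacuous. Otherwise $\Theta=(\Theta_0,z:T')$ and $\theta=(\theta_0,u)$, with $\Gamma\vdash\theta_0:\Theta_0$ and $\Gamma\vdash u:T'[\theta_0/\Theta_0]$. If $y=z$, then $T=T'$, $\pi_y^\Theta(\theta)=u$, $T[\theta/\Theta]=T'[\theta_0/\Theta_0]$, and unfolding the definition of $\lcr(\theta_0,u)\rcr^G_{\Gamma,\Theta}(\gamma)$ shows that its last projection is precisely $\lcr u\rcr^G_{\Gamma,T'[\theta_0/\Theta_0]}(\gamma)$. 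If $y\neq z$, then $(y:T)\in\Theta_0$ and $\pi_y^\Theta(\theta)=\pi_y^{\Theta_0}(\theta_0)$; the free variables of $T$ lie in $\Theta_0$, so by the syntactic substitution lemma $T[\theta/\Theta]=T[\theta_0/\Theta_0]$, and the induction hypothesis applied to $\theta_0$ gives the desired equality.

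The only mildly subtle point is making sure the types on the two sides of the semantic equation genuinely coincide on the nose, which is where one must invoke the first two syntactic lemmas (substitution and the observation $X[\gamma/\Gamma]=X[(\gamma,u)/(\Gamma,x:T)]$ when $x$ is not free in $X$) to rewrite $T[\theta/\Theta]$ as $T[\theta_0/\Theta_0]$ in the variable case. I do not expect any real obstacle; the proof is essentially a bookkeeping exercise that threads together the weakening, substitution, and compatibility lemmas already proved, matched against the recursive clauses of Definition~\ref{def:intglob}.
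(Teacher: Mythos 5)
Your proof is correct and fills in exactly the inductions the paper omits: the paper states this lemma (together with semantic weakening and semantic substitution) without proof, merely remarking that all three are ``straightforward to prove by induction.'' Your case analysis on whether the variable is the last-declared one or lies deeper in the context, combined with semantic weakening in the first clause and the syntactic substitution identity $X[\gamma/\Gamma]=X[(\gamma,u)/(\Gamma,x:T)]$ in the second, is precisely the intended argument.
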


\begin{lemma}[semantic substitution]
  Assume we have $\Gamma\vdash\theta:\Theta$ and
  $\gamma\in\lcr\Gamma\rcr^G$. Then
  \begin{itemize}
  \item If $\Theta\vdash{}T\typ$, then
    \[\lcr{}T[\theta/\Theta]\rcr^G_\Gamma(\gamma)
    =\lcr{}T\rcr^G_\Theta(\lcr\theta\rcr^G_{\Gamma,\Theta}(\gamma)).\]
  \item If $\Theta\vdash{}u:T$, then
    \[\lcr{}u[\theta/\Theta]
    \rcr^G_{\Gamma,T[\theta/\Theta]}(\gamma)
    =\lcr{}u\rcr^G_{\Theta,T}(\lcr\theta\rcr^G_{\Gamma,\Theta}(\gamma)).
    \]
  \item If $\Theta\vdash{}\theta':\Theta'$, then
    \[
    \lcr\theta'[\theta/\Theta]\rcr^G_{\Gamma,\Theta'}(\gamma)=\lcr\theta'\rcr^G_{\Theta,\Theta'}(\lcr\theta
    \rcr^G_{\Gamma,\Theta}(\gamma)).\]
  \end{itemize}
\end{lemma}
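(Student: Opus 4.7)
The plan is to prove the three statements simultaneously by a mutual induction following the grammar of types, terms, and context morphisms, in close analogy with the syntactic substitution lemma proved earlier. In each case I expand both sides using the defining clauses in definition \ref{def:intglob}, apply the appropriate induction hypotheses, and conclude.

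First I would handle the type cases. For $\star$, both sides unfold to the constant globular set $G$, so nothing is required. For $u\simeq_T v$, note that $(u\simeq_T v)[\theta/\Theta]$ is syntactically $u[\theta/\Theta]\simeq_{T[\theta/\Theta]}v[\theta/\Theta]$ by the syntactic substitution lemma, so after unfolding $\lcr-\rcr^G_\Gamma$ on the left and $\lcr-\rcr^G_\Theta$ on the right I end up comparing two $\Hom$-sets inside equal globular sets, and the induction hypothesis for $T$, $u$, $v$ provides equality of the base globular set and of both endpoints.

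Next I would treat the term cases. For a variable $x$, writing $\theta=(\theta_0,\ldots)$ the definition gives $\lcr x[\theta/\Theta]\rcr^G_{\Gamma,T[\theta/\Theta]}(\gamma) = \lcr\pi^\Theta_x(\theta)\rcr^G_{\Gamma,T[\theta/\Theta]}(\gamma)$, and the preceding projection lemma rewrites this as $\pi^\Theta_x(\lcr\theta\rcr^G_{\Gamma,\Theta}(\gamma))$, which is exactly $\lcr x\rcr^G_{\Theta,T}(\lcr\theta\rcr^G_{\Gamma,\Theta}(\gamma))$ by definition. For a coherence term $\coh\Delta T\delta$, substitution only acts on $\delta$, so both sides reduce to $\cohh G\Delta T{\cdot}$ applied to $\lcr\delta[\theta/\Theta]\rcr^G_{\Gamma,\Delta}(\gamma)$ on the left and $\lcr\delta\rcr^G_{\Theta,\Delta}(\lcr\theta\rcr^G_{\Gamma,\Theta}(\gamma))$ on the right, and these are equal by the induction hypothesis for the context morphism $\delta$. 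For context morphisms, the empty case is immediate, and for $(\theta',u)$ I unfold $\lcr-\rcr^G$ componentwise and apply the induction hypotheses for $\theta'$ and $u$, also using the type equality $\lcr T[\theta'/\Theta']\rcr^G_\ldots = \lcr T\rcr^G_\ldots$ to see that the types under which $u$ is interpreted on each side match.

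The main obstacle is bookkeeping rather than mathematical depth: the mutual induction must be set up so that, at each node, the induction hypotheses needed (for the subtype, subterms, or subcontext morphism, and for the substituted versions thereof) are indeed available at strictly smaller structural size, and one must carefully invoke the syntactic substitution lemma to know that expressions like $(u\simeq_T v)[\theta/\Theta]$ and $T[\delta[\theta/\Theta]/\Delta]$ have the expected shape, and the semantic weakening lemma to match the contexts in which subterms of coherence terms are interpreted. Once this is laid out cleanly, each individual case is a direct unfolding followed by a single appeal to an induction hypothesis.
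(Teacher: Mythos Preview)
Your proposal is correct and matches the paper's approach: the paper states that this lemma (together with semantic weakening and the projection lemma) is ``straightforward to prove by induction'' and gives no further detail, and your mutual structural induction on types, terms, and context morphisms is exactly the intended argument. One minor remark: in the coherence case you do not actually need semantic weakening, since substitution only touches $\delta$ and the induction hypothesis for context morphisms suffices directly; otherwise your case analysis is spot on.
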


Finally, we can define weak $\infty$-groupoids.
\begin{definition}
  A \emph{syntactic weak $\infty$-groupoid} is a globular set $G$ together with for every $n\in\N$,
  a structure of $(n+1)$-partial weak $\infty$-groupoid over the previous structure of $n$-partial
  weak $\infty$-groupoid. In particular, that means that for every contractible context $\Delta$ and
  every type $T$ in $\Delta$ (without restriction on depth) we have a function
  \[\cohhf G\Delta T{}:(\eta:\lcr\Delta\rcr^G)\to
  \Ob(\lcr{}T\rcr^G_\Delta(\eta)),\]
  where the contexts, context morphisms, types and terms are interpreted as above.
\end{definition}

Let’s give some examples of coherence operations.

\begin{example}
  The \emph{constant path} coherence operation is
  \[\cohhf{}{(x:\star)}{x\simeq_\star x}{}.\]
\end{example}

\begin{proof}
  Given a globular set $G$ we have
  \[\lcr(x:\star)\rcr^G = \Ob(G),\]
  and given $a\in\Ob(G)$ we have
  \[\lcr x\simeq_\star x\rcr^G_{(x:\star)}(a) = \Hom_G(a,a).\]

  Therefore, the operation $\cohhf G{(x:\star)}{x\simeq_\star x}{}$ takes an object $a$ of $G$ and
  returns a morphism from $a$ to $a$. In particular, such an operation exists only when the graph is
  a reflexive graph.
\end{proof}

\begin{example}
  The \emph{inverse} coherence operation is
  \[\cohhf{}{(x,y:\star)(t:x\simeq_\star y)}{y\simeq_\star x}{}.\]
\end{example}

\begin{proof}
  Given a globular set $G$ we have
  \begin{align*}
    &\lcr(x,y:\star)(t:x\simeq_\star y)\rcr^G\\
    &\qquad=\{(a,b,p)\,|\,
      a,b\in\Ob(G), p\in\Ob(\Hom_G(a,b))\},
  \end{align*}
  and given a triple $(a,b,p)$ in this set we have
  \begin{align*}
    \lcr y\simeq_\star x\rcr^G_{(x,y:\star)(t:x\simeq_\star y)}(a,b,p) &= \Hom_G(b,a).
  \end{align*}

  Therefore, the operation $\cohhf G{(x,y:\star)(t:x\simeq_\star y)}{y\simeq_\star x}{}$ takes a
  morphism $p$ in $G$ between two objects $a$ and $b$ and returns a morphism $p\inv$ going from $b$
  to $a$.
\end{proof}

\begin{example}
  The \emph{involutivity of inverse} coherence operation is
  \[\cohhf{}{(x,y:\star)(t:x\simeq_\star y)}{t\simeq_{x\simeq_\star y}(\coh{(x,y:\star)(t:x\simeq_\star y)}{y\simeq_\star x}{y,x,\coh{(x,y:\star)(t:x\simeq_\star y)}{y\simeq_\star x}{x,y,t}})}{}.\]
\end{example}

\begin{proof}
  Here the coherence operation is of depth $1$, therefore we need to start with a $1$-partial weak
  $\infty$-groupoid $G$. In a $1$-partial weak $\infty$-groupoid we have in particular the
  \[\cohhf{}{(x,y:\star)(t:x\simeq_\star y)}{y\simeq_\star x}{(a,b,p)}\]
  coherence operation described in the previous example. We write it $p\mapsto p\inv$ for short. We
  have
  \begin{align*}
    &\lcr(x,y:\star)(t:x\simeq_\star y)\rcr^G\\
    &\qquad=\{(a,b,p)\,|\,
      a,b\in\Ob(G), p\in\Ob(\Hom_G(a,b))\},
  \end{align*}
  and given a triple $(a,b,p)$ in this set we have
  \begin{align*}
    &\lcr t\simeq_{x\simeq_\star y}(\coh{(x,y:\star)(t:x\simeq_\star y)}{y\simeq_\star
    x}{y,x,\coh{(x,y:\star)(t:x\simeq_\star y)}{y\simeq_\star
    x}{x,y,t}})\rcr^G_{(x,y:\star)(t:x\simeq_\star y)}(a,b,p)\\
    &\qquad=\Hom_{\Hom_G(a,b)}(p,(p\inv)\inv).
  \end{align*}

  Therefore, the operation we’re looking at takes a morphism $p$ in $G$ between two objects $a$ and
  $b$ and returns a $2$-morphism from $p$ to $(p\inv)\inv$.
\end{proof}

There are many coherence operations which seem completely superfluous like for instance
$\cohhf{}{(x,y:\star)(t:x\simeq_\star y)}{x\simeq_\star y}{(a,b,p)}$ which gives a morphism from $a$
to $b$ given a morphism $p$ from $a$ to $b$. There are also various ways to order the arguments, for
instance in the diagram for horizontal composition of $2$-morphisms, which give different contexts
hence different coherence operations. But this is not a problem because it is always possible to
prove that they are all equal to one another via another coherence operation.

\section{The underlying weak \texorpdfstring{$\infty$}{∞}-groupoid of a type}

We now prove that in a dependent type theory $\tml$ with identity types, the $\J$ rule, and the
definitional computation rule, we can equip every type with a structure of syntactic weak
$\infty$-groupoid. In this section we write $\vdashi$ for the judgments in the type theory
$\tinfgpd$ described above and $\vdashML$ for the judgments in $\tml$. We write $\Id_A(u,v)$ for the
identity type if $\tml$ and $\jueq$ for definitional equality in $\tml$.

We first define the iterated constant path and its type.

\begin{definition}
  Given a type $\vdashML A:\Type$, a term $\vdashML a:A$ and a natural number $n\in\N$, we define a
  type $\vdashML\IID^{(A,a)}_n:\Type$ and a term $\vdashML\iid^{(A,a)}_n:\IID^{(A,a)}_n$ by
  \begin{align*}
    \IID^{(A,a)}_0 &\defeq A, & \iid^{(A,a)}_0 &\defeq a,\\
    \IID^{(A,a)}_{n+1} &\defeq \left(\iid^{(A,a)}_n=_{\IID^{(A,a)}_n}\iid^{(A,a)}_n\right), & \iid^{(A,a)}_{n+1} &\defeq \mathsf{idp}_{\iid^{(A,a)}_n}.
  \end{align*}
\end{definition}

We now consider a type $A$ of $\tml$ and we want to equip it with a structure of weak
$\infty$-groupoid. The idea is that a coherence operation in a contractible context $\Delta$ is
defined by successive applications of the $\J$ rule until we reach the context $(a:A)$. Then we have
to prove an equality between two terms in this context and it turns out that all terms defined only
using coherence operations in the context $(a:A)$ are definitionally equal to the iterated constant
path. Therefore we can define the coherence operation in the case $(a:A)$ by an iterated constant
path, and it preserves the invariant that a coherence operation applied to constant paths is
definitionally equal to a constant path.

We define an interpretation $\llp-\rrp$ of $\tinfgpd$ into $\tml$, define two terms $\id^\Delta$ and
$\J_\Delta$ for every contractible context $\Delta$, and prove three lemmas stating that, in the
context $(a:A)$, all types are definitionally equal to $\IID^{(A,a)}_n$ and all terms are
definitionally equal to $\iid^{(A,a)}_n$. All of that has to be done simultaneously by induction. We
also need a substitution and a weakening lemma, stating that the interpretation commutes with the
operations of substitution and weakening, but we do not state them here for simplicity. The
\emph{dimension} of a type is defined by $\dim(\star)=0$ and $\dim(u\simeq_Tv)=\dim(T)+1$.

\begin{definition}\label{def:intml}
  The interpretation of $\tinfgpd$ into $\tml$ satisfies the following typing rules
  and is defined below.
  \begin{itemize}
  \item Given $\vdashi\Gamma$, the interpretation $\llp\Gamma\rrp^A$ is a context in $\tml$.
  \item Given $\Gamma\vdashi\gamma:\Gamma'$, the interpretation $\llp\gamma\rrp_{\Gamma,\Gamma'}^A$
    is a context morphism in $\tml$ from $\llp\Gamma\rrp^A$ to $\llp\Gamma'\rrp^A$.
  \item Given $\Gamma\vdashi T$, the interpretation $\llp T\rrp_\Gamma^A$ is a dependent type in
    $\tml$ over $\llp\Gamma\rrp^A$.
  \item Given $\Gamma\vdashi t:T$, the interpretation $\llp t\rrp_{\Gamma,T}^A$ is a term in $\tml$
    of type $(x:\llp\Gamma\rrp^A)\to\llp T\rrp_\Gamma^A$.
  \end{itemize}
\end{definition}

\begin{definition}\label{def2}
  For every contractible context $\Delta$, we define a context morphism
  \[(a:A)\vdashML \id^\Delta_a : \llp\Delta\rrp^A\] and a term
  \[(a:A)(P:(\delta:\llp\Delta\rrp^A)\to\Type) (d:(a:A)\to P(\id^\Delta_a)) \vdashML \J_\Delta(P,d) :
  (\delta:\llp\Delta\rrp^A)\to P(\delta)\] satisfying the definitional equality
  \[\J_\Delta(P,d)(\id^\Delta_a) \jueq d(a).\]
\end{definition}

\begin{lemma}\label{lemmatype}
  For every contractible context $\Delta$ and every type $\Delta\vdashi T$, we have the definitional
  equality
  \[(a:A)\vdashML \llp T\rrp_\Delta^A(\id^\Delta_a) \jueq \IID^{(A,a)}_{\dim(T)}.\]
\end{lemma}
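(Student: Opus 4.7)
The plan is to prove this lemma by induction on the structure of the type $T$, simultaneously with the companion lemma about terms stating that for every contractible context $\Delta$ and every term $\Delta\vdashi t:T$, one has $(a:A)\vdashML \llp t\rrp_{\Delta,T}^A(\id^\Delta_a) \jueq \iid^{(A,a)}_{\dim(T)}$. These two lemmas have to be stated and proved together because the interpretation of $u\simeq_T v$ involves the interpretations of the terms $u$ and $v$, and conversely the interpretation of a coherence term $\coh{\Delta'}{T'}{\delta}$ is defined via $\J_{\Delta'}$ whose application needs to know the types of things. The induction is on the combined structural complexity (dimension for types, and for a coherence term the complexity of its contractible context and return type, which sits at a lower depth than the term itself).

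For the base case $T = \star$, the definition of $\llp-\rrp$ should give $\llp\star\rrp_\Delta^A(\delta) \jueq A$ for every $\delta$, which matches $\IID^{(A,a)}_0 \defeq A$. For the inductive case $T = (u\simeq_{T'} v)$, the interpretation unfolds to the identity type $\llp u\rrp_{\Delta,T'}^A(\id^\Delta_a) =_{\llp T'\rrp_\Delta^A(\id^\Delta_a)} \llp v\rrp_{\Delta,T'}^A(\id^\Delta_a)$. By the type-side induction hypothesis applied to $T'$, the ambient type is definitionally $\IID^{(A,a)}_{\dim(T')}$, and by the term-side induction hypothesis applied to $u$ and $v$, both endpoints are definitionally $\iid^{(A,a)}_{\dim(T')}$. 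Hence the whole thing is definitionally $\iid^{(A,a)}_{\dim(T')} =_{\IID^{(A,a)}_{\dim(T')}} \iid^{(A,a)}_{\dim(T')}$, which is $\IID^{(A,a)}_{\dim(T')+1} = \IID^{(A,a)}_{\dim(T)}$ by definition.

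For the term-side lemma, a variable case can only arise in a contractible context $\Delta$; by construction of $\id^\Delta_a$ (built up through the contractible context rules starting from $(x:\star)\mapsto a$ and then pairing fresh endpoints and identity paths from $\iid^{(A,a)}_n$), projecting any variable out of $\id^\Delta_a$ yields the appropriate iterated constant path by definition. The coherence case $\coh{\Delta'}{T'}{\delta}$ is handled by the defining definitional equation $\J_{\Delta'}(P,d)(\id^{\Delta'}_a) \jueq d(a)$: the substitution lemma reduces the interpretation to $\J_{\Delta'}$ applied to a tuple that is, by the induction hypothesis on $\delta$, definitionally $\id^{\Delta'}_a$, and by the construction of $\id^\Delta$ and $\J_\Delta$ in Definition~\ref{def2} the evaluation collapses to an iterated constant path $\iid^{(A,a)}_{\dim(T'[\delta/\Delta'])}$.

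The main obstacle is bookkeeping: making the simultaneous induction clean requires knowing that $\id^\Delta_a$ is itself built from iterated constant paths (so that the hypothesis of $\J_\Delta$ can be discharged by the very $d$ we construct to be $\lambda a.\iid^{(A,a)}_{\dim(T)}$), and in turn this requires the substitution and weakening lemmas for the interpretation $\llp-\rrp$ to commute properly with $\id^\Delta$. The key computational miracle is the definitional reduction $\J_\Delta(P,d)(\id^\Delta_a)\jueq d(a)$, which is why the proof crucially needs $(\jcomp)$ as a definitional equality; without it, the whole argument would only produce propositional equalities and the statement of the lemma would need to be weakened accordingly.
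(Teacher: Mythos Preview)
Your approach matches the paper's: the three lemmas (for types, terms, and context morphisms) together with the definition of the interpretation are proved simultaneously by induction, and the type case reduces to the term case exactly as you describe. The coherence case is also right, relying on the computation rule $\J_{\Delta'}(P,d)(\id^{\Delta'}_a)\jueq d(a)$ together with the context-morphism lemma applied to $\delta$.

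There is one imprecision in your variable case. You write that projecting a variable out of $\id^\Delta_a$ gives an iterated constant path ``by definition,'' but this is not quite so: by construction, $\id^{(\Delta',y:T,z:u\simeq_Ty)}_a = (\id^{\Delta'}_a,\ \llp u\rrp_{\Delta',T}^A(\id^{\Delta'}_a),\ \idp{\llp u\rrp_{\Delta',T}^A(\id^{\Delta'}_a)})$, so the new components are interpretations of terms, not literal $\iid$'s. What makes this collapse to $\iid^{(A,a)}_{\dim(T)}$ is the \emph{induction hypothesis} for the term lemma applied in the strictly smaller context $\Delta'$. This is exactly why the induction must also decrease along the size of the contractible context $\Delta$, which the paper makes explicit; your ``combined structural complexity'' measure needs to include this.
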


\begin{lemma}\label{lemmactxmor}
  For every contractible contexts $\Delta,\Delta'$ and every context morphism
  $\Delta'\vdashi \delta:\Delta$, we have the definitional equality
  \[(a:A)\vdashML\llp \delta\rrp_{\Delta',\Delta}^A(\id^{\Delta'}_a)\jueq \id^\Delta_a.\]
\end{lemma}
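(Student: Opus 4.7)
The plan is to prove Lemma \ref{lemmactxmor} by a straightforward induction, but simultaneously with a companion lemma about terms that does not appear explicitly in the excerpt above but is forced by the structure of the induction. This companion lemma states:
\emph{for every contractible context $\Delta$ and every term $\Delta\vdashi t:T$, we have the definitional equality
\[(a:A)\vdashML\llp t\rrp^A_{\Delta,T}(\id^\Delta_a)\jueq \iid^{(A,a)}_{\dim(T)}.\]}
The three lemmas (on types, terms, context morphisms) are then proved by a simultaneous induction, where Lemma \ref{lemmatype} is used when checking that the typing of $\iid^{(A,a)}_{\dim(T)}$ matches up, the term lemma is the new ingredient, and Lemma \ref{lemmactxmor} is the present one.

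I would induct on the derivation of $\Delta'\vdashi\delta:\Delta$, which, since $\Delta$ is forced by $\contr$ to have one of the two canonical shapes, amounts to casing on $\Delta$. In the base case $\Delta = (x:\star)$, the morphism $\delta$ is necessarily of the form $((),u)$ with $\Delta'\vdashi u:\star$; by definition $\id^{(x:\star)}_a = ((),a)$ and by the interpretation of Definition \ref{def:intml} we compute
\[\llp((),u)\rrp^A_{\Delta',(x:\star)}(\id^{\Delta'}_a) = ((),\llp u\rrp^A_{\Delta',\star}(\id^{\Delta'}_a)),\]
so the result reduces to $\llp u\rrp^A_{\Delta',\star}(\id^{\Delta'}_a)\jueq a$, i.e.\ the companion term lemma in dimension $0$. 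In the inductive case $\Delta = (\Delta_0,(y:T),(z:u\simeq_T y))$, the morphism $\delta$ must be of the form $((\delta_0,v),w)$ with $\Delta'\vdashi\delta_0:\Delta_0$, $\Delta'\vdashi v:T[\delta_0/\Delta_0]$, and $\Delta'\vdashi w:u[\delta_0/\Delta_0]\simeq_{T[\delta_0/\Delta_0]}v$; by the induction hypothesis on $\delta_0$ the first component matches $\id^{\Delta_0}_a$, and after applying the semantic substitution lemma (to rewrite $\llp T[\delta_0/\Delta_0]\rrp^A_{\Delta'}(\id^{\Delta'}_a)$ as $\llp T\rrp^A_{\Delta_0}(\id^{\Delta_0}_a)$, which reduces to $\IID^{(A,a)}_{\dim(T)}$ by Lemma \ref{lemmatype}) the remaining two components fall to the companion term lemma applied to $v$ and $w$ in dimensions $\dim(T)$ and $\dim(T)+1$.

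The only nontrivial content is therefore the companion term lemma, which itself splits on whether $t$ is a variable or a coherence application. For a variable $x$ declared in $\Delta$ at type $T'$, the equation $\pi^\Delta_x(\id^\Delta_a)\jueq \iid^{(A,a)}_{\dim(T')}$ should hold essentially by the very definition of $\id^\Delta_a$: each component added in the inductive construction of $\id^\Delta_a$ is placed at the iterated constant path of the appropriate dimension. For a coherence term $t = \coh{\Delta_0}{U}{\gamma}$ one uses the present Lemma \ref{lemmactxmor} (applied recursively, at strictly smaller depth) to reduce $\llp\gamma\rrp^A(\id^{\Delta}_a)$ to $\id^{\Delta_0}_a$; the interpretation of $\coh{\Delta_0}{U}{-}$ is then $\J_{\Delta_0}$ applied to the interpretation of $U$ and to the witness $d$ supplied by Definition \ref{def2}, and the definitional computation rule $\J_{\Delta_0}(P,d)(\id^{\Delta_0}_a)\jueq d(a)$ rewrites the whole thing to $\iid^{(A,a)}_{\dim(U)}$, provided we have arranged $d$ so that $d(a)\jueq \iid^{(A,a)}_{\dim(U)}$.

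The main obstacle, and in fact the place where most of the real work lives, is the definition of $\J_\Delta$ itself (Definition \ref{def2}) and, more precisely, the requirement that its $d$-slot be tuned so that $d(a)$ collapses to an iterated constant path. Concretely, I expect the construction of $\J_\Delta$ to proceed by induction on $\Delta$, at each step peeling off the terminal pair $(y:T),(z:u\simeq_T y)$ via the usual $\J$ rule of $\tml$ applied to the identity type $\Id_{\llp T\rrp}(\llp u\rrp,y)$, and the entire strategy only works because one maintains the invariant, through both lemmas above and throughout the construction of $\J_\Delta$, that every type in $\Delta$ at $\id^\Delta_a$ reduces to $\IID^{(A,a)}_{k}$ and every term reduces to $\iid^{(A,a)}_{k}$. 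Once this invariant is in place the present lemma is an essentially routine unpacking, and the substitution/weakening compatibility of $\llp-\rrp$ (not written out in the excerpt but standard) supplies the remaining bookkeeping.
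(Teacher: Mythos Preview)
Your proposal is correct and takes essentially the same approach as the paper: a simultaneous induction on $\Delta$ using the term lemma, with the coherence case of the term lemma looping back through the present lemma and the computation rule for $\J_\Delta$. Note that your ``companion lemma'' is exactly Lemma~\ref{lemmaterm}, which is stated in the paper just after Lemma~\ref{lemmactxmor}; the paper's proof of the present lemma is simply ``We proceed by induction on $\Delta$ and it follows from lemma~\ref{lemmaterm}.''
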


\begin{lemma}\label{lemmaterm}
  For every contractible context $\Delta$ and every term $\Delta\vdashi t:T$, we have the
  definitional equality
  \[(a:A)\vdashML\llp t\rrp_{\Delta,T}^A(\id^\Delta_a)\jueq \iid^{(A,a)}_{\dim(T)}.\]
\end{lemma}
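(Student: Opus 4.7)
The three lemmas must be proved simultaneously with the definitions \ref{def:intml} and \ref{def2}, by a single recursion on the complexity of the syntax. The interpretation is defined clause by clause: contexts, context morphisms, the base type $\star$, identity types, and variables are interpreted by the obvious counterparts in $\tml$. The only subtle clause is the coherence operation, for which I would set
\[\llp\coh{\Delta}{T}{\delta}\rrp^A_{\Gamma,T[\delta/\Delta]}(\gamma) \defeq \J_\Delta\bigl(\llp T\rrp^A_\Delta,\ \lambda a.\,\iid^{(A,a)}_{\dim(T)}\bigr)\bigl(\llp\delta\rrp^A_{\Gamma,\Delta}(\gamma)\bigr).\]
This expression is well-typed precisely because Lemma \ref{lemmatype}, applied to $T$ at the strictly smaller contractible context $\Delta$, provides the judgmental equality $\llp T\rrp^A_\Delta(\id^\Delta_a) \jueq \IID^{(A,a)}_{\dim(T)}$, which is exactly what is needed for the base term $\lambda a.\,\iid^{(A,a)}_{\dim(T)}$ of the eliminator.

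Next, I would define $\id^\Delta$ and $\J_\Delta$ by induction on the derivation of $\Delta\contr$. In the base case $\Delta = (x:\star)$, set $\id^\Delta_a \defeq a$ and $\J_\Delta(P,d)\defeq d$, so the computation rule holds by reflexivity. In the step case $\Delta = (\Delta_0, y:T, z:u\simeq_T y)$, set
\[\id^\Delta_a \defeq \bigl(\id^{\Delta_0}_a,\ \iid^{(A,a)}_{\dim(T)},\ \iid^{(A,a)}_{\dim(T)+1}\bigr),\]
which is well-typed by combining Lemma \ref{lemmatype} for $T$ and Lemma \ref{lemmaterm} for $u$ at $\Delta_0$. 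To build $\J_\Delta(P,d)$ I would first apply the ambient $\J$ rule to eliminate $z$, reducing the problem to constructing a section of $Q(\delta_0)\defeq P(\delta_0,\llp u\rrp^A(\delta_0),\idpS)$ over $\delta_0:\llp\Delta_0\rrp^A$; then I would invoke the already-constructed $\J_{\Delta_0}$, for which it suffices to give a term at each $\id^{\Delta_0}_a$, where Lemmas \ref{lemmatype} and \ref{lemmaterm} convert $Q(\id^{\Delta_0}_a)$ judgmentally into $P(\id^\Delta_a)$ so that $d(a)$ fits. The desired computation rule of $\J_\Delta$ at $\id^\Delta_a$ then follows by combining the definitional computation rule of $\J$ with the inductively-constructed computation rule of $\J_{\Delta_0}$.

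The three lemmas are then proved together by nested structural inductions. Lemma \ref{lemmatype} is immediate by induction on $T$: the case $T = \star$ is by definition, and the case $T = u\simeq_{T'}v$ uses Lemma \ref{lemmatype} on $T'$ and Lemma \ref{lemmaterm} on both endpoints to rewrite the interpreted identity type as $\Id_{\IID^{(A,a)}_n}(\iid^{(A,a)}_n,\iid^{(A,a)}_n) \jueq \IID^{(A,a)}_{n+1}$. Lemma \ref{lemmactxmor} follows by induction on the length of $\delta$, each new component being handled by Lemma \ref{lemmaterm}. For Lemma \ref{lemmaterm}, the variable case is a direct inspection of the shape of $\id^\Delta_a$, which by construction assigns the iterated constant path $\iid^{(A,a)}_{\dim(T)}$ to every variable of type $T$ in a contractible context; the coherence-operation case $t = \coh{\Delta'}{T'}{\delta}$ unfolds to
\[\J_{\Delta'}\bigl(\llp T'\rrp^A_{\Delta'},\lambda a.\,\iid^{(A,a)}_{\dim(T')}\bigr)\bigl(\llp\delta\rrp^A(\id^\Delta_a)\bigr) \jueq \J_{\Delta'}(\ldots)(\id^{\Delta'}_a) \jueq \iid^{(A,a)}_{\dim(T')},\]
where the first step uses Lemma \ref{lemmactxmor} and the second is the computation rule of $\J_{\Delta'}$ constructed above.

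The main obstacle is the bookkeeping needed to ensure that every step preserves \emph{definitional} equality rather than merely propositional equality: the computation rule of $\J_\Delta$ must be judgmental, which forces the three lemmas to hold judgmentally, which in turn forces the components of $\id^\Delta_a$ to match on the nose with the behaviour of the interpretation on variables and on iterated endpoints of identity types. The ingredient making this alignment work is the definitional computation rule of the ambient $\J$ of $\tml$; this is the one place in the thesis where the author genuinely relies on $(\jcomp)$ being a judgmental equation, as anticipated in Section \ref{idtypes}.
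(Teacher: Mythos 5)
Your proof is correct and follows the same overall strategy as the paper: a simultaneous recursion on the syntax defining the interpretation $\llp-\rrp$, the terms $\id^\Delta_a$ and $\J_\Delta$, and the three lemmas, with the coherence-operation case of Lemma~\ref{lemmaterm} handled by Lemma~\ref{lemmactxmor} followed by the computation rule of $\J_\Delta$, the whole construction resting on the definitional computation rule of the ambient~$\J$. The one genuine departure is your choice of $\id^\Delta_a$ in the step case, namely
\[\id^{(\Delta_0,y:T,z:u\simeq_Ty)}_a \defeq \bigl(\id^{\Delta_0}_a,\ \iid^{(A,a)}_{\dim(T)},\ \iid^{(A,a)}_{\dim(T)+1}\bigr),\]
whereas the paper sets
\[\id^{(\Delta_0,y:T,z:u\simeq_Ty)}_a \defeq \bigl(\id^{\Delta_0}_a,\ \llp u\rrp_{\Delta_0,T}^A(\id^{\Delta_0}_a),\ \idp{\llp u\rrp_{\Delta_0,T}^A(\id^{\Delta_0}_a)}\bigr).\]
The two are judgmentally equal (this is precisely Lemma~\ref{lemmaterm} applied to $u$), so both work, but the book-keeping shifts. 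With your version the variable case of Lemma~\ref{lemmaterm} is a literal inspection, since every projection of $\id^\Delta_a$ is by construction an $\iid$. In exchange, already the well-typedness of the third component of $\id^\Delta_a$ invokes Lemma~\ref{lemmaterm} for $u$ over $\Delta_0$, and the ambient $\J$-computation rule inside $\J_\Delta$ at $\id^\Delta_a$ fires only after a conversion identifying the free endpoint $\llp u\rrp_{\Delta_0,T}^A(\id^{\Delta_0}_a)$ with $\iid^{(A,a)}_{\dim(T)}$. The paper instead makes $\id^\Delta_a$ well-typed for free (its third component is a literal reflexivity proof and the ambient $\J$ reduces syntactically) and pays for it in the variable case of Lemma~\ref{lemmaterm}, where the induction hypothesis on the strictly smaller $\Delta_0$ is used to convert $\llp u\rrp_{\Delta_0,T}^A(\id^{\Delta_0}_a)$ to the iterated constant path. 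In both cases the recursion is well-founded, so this is a harmless reorganization. You also correctly identify this construction as the single place in the thesis that genuinely relies on the definitional status of $(\jcomp)$.
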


\begin{proof}[Definition of the interpretation]
The interpretation is defined as follows:
\begin{itemize}
\item The definition of $\llp\Gamma\rrp^A$ is
  \begin{align*}
    \llp\varnothing\rrp^A&=\varnothing,\\
      \llp(\Gamma,x:T)\rrp^A&=(y:\llp\Gamma\rrp^A,x:\llp T\rrp^A_{\Gamma}(y)).
    \end{align*}
  \item The definition of $\llp\delta\rrp^A_{\Gamma,\Theta}:
    \llp\Gamma\rrp^A\to\llp\Theta\rrp^A$ is
    \begin{align*}
      \llp()\rrp^A_{\Gamma,\varnothing}(\gamma)&=(),\\
      \llp(\theta,u)\rrp^A_{\Gamma,(\Theta,x:T)}(\gamma)&=
      (\llp\theta\rrp^A_{\Gamma,\Theta}(\gamma),
      \llp{}u\rrp^A_{\Gamma,T[\theta/\Theta]}(\gamma)).
    \end{align*}
  \item The definition of $(\gamma:\llp\Gamma\rrp^A)\vdashML\llp{}T\rrp^A_\Gamma(\gamma):\Type$ is
    \begin{align*}
      \llp\star\rrp^A_\Gamma(\gamma)&=A,\\
      \llp u\simeq_Tv\rrp^A_\Gamma(\gamma)&= \Id_{\llp T\rrp^A_\Gamma(\gamma)}(
                                            \llp u\rrp^A_{\Gamma,T}(\gamma),
                                            \llp v\rrp^A_{\Gamma,T}(\gamma)).
    \end{align*}
  \item The definition of
    $(\gamma:\llp\Gamma\rrp^A)\vdashML \llp{}u\rrp^A_{\Gamma,T}:\llp{}T\rrp^A_\Gamma(\gamma)$ is
    \begin{align*}
      \llp{}x\rrp^A_{\Gamma,x:T}(\gamma)&=\pi^\Gamma_x(\gamma),\\
      \llp{}\coh\Delta T\delta\rrp^A_{\Gamma,
      T[\delta/\Delta]}(\gamma)&= \J_\Delta(\llp T\rrp_\Delta^A, (\lambda
                                 a.\iid^{(A,a)}_{\dim(T)}))(\llp\delta\rrp_{\Gamma,\Delta}^A(\gamma)).
    \end{align*}
  \end{itemize}
  The things to note are that the interpretation of $\star$ is the type $A$ we are interested in,
  that the interpretation of $u\simeq_Tv$ uses the identity type in $\tml$, and that coherence
  operations are implemented using $\J_\Delta$ and $(\lambda a.\iid^{(A,a)}_{\dim(T)})$ in the base
  case, which is well-typed by lemma \ref{lemmatype}.
\end{proof}

\begin{proof}[Definition of $\id^\Delta_a$ and $\J_\Delta$]
  For $a:A$, the sequence of terms $\id^\Delta_a:\llp\Delta\rrp^A$ is defined by induction on $\Delta$ by
  \begin{align*}
    \id^{(x:\star)}_a &\defeq (a),\\
    \id^{(\Delta',y:T,z:u=_Ty)}_a &\defeq (\id^{\Delta'}_a,\llp
                                    u\rrp_{\Delta',T}^A(\id^{\Delta'}_a),\idp{\llp
                                    u\rrp_{\Delta',T}^A(\id^{\Delta'}_a)}).
  \end{align*}
  We remind that $\J$ has type
  \begin{align*}
    &(u:A)(P:(y:A)(z:\Id_A(u,y))\to\Type)(d:P(u,\idp{u}))\\
    &\qquad\vdashML\J(u,P,d):(v:A)(p:\Id_A(u,v))\to P(v,p).
  \end{align*}
  For $a:A$, $P:(\delta:\llp\Delta\rrp^A)\to\Type$, and $d:(a:A)\to P(\id^\Delta_a)$,
  the term \[\J_\Delta(P,d):(\delta:\llp\Delta\rrp^A)\to P(\delta)\] is defined by induction on
  $\Delta$ by
  \begin{align*}
    \J_{(x:\star)}(P,d,a) &\defeq d(a),\\
    \J_{(\Delta',y:T,z:u=_Ty)}(P,d,\delta',y,z)
                          &\defeq \J(\llp u\rrp_{\Delta',T}^A(\delta'), P(\delta'),\\&\qquad\quad
                            \J_{\Delta'}((\lambda\delta'.P(\delta',\llp
                            u\rrp_{\Delta',T}^A(\delta'),\idp{\llp u\rrp_{\Delta',T}^A(\delta')})),
                                                                                       d, \delta'))\\&\qquad\quad
                                                                                                       (y,
                                                                                                       z).
  \end{align*}
  In other words, in order to apply $\J_\Delta$ to $(\delta',y,z)$, we first apply the regular $\J$
  rule to $z$ and then we apply $\J_{\Delta'}$ to $\delta'$. The reduction rule holds because in the
  case $\id^\Delta_a$ we have $\idpS$ for $z$, hence the reduction rule of the regular $\J$ applies,
  and then $\id^{\Delta'}_a$ for $\delta'$ hence the reduction rule for $\J_{\Delta'}$ applies.
\end{proof}

\begin{proof}[Proof of lemma \ref{lemmatype}]
  We want to prove that $\llp T\rrp_\Delta^A(\id^\Delta_a)$ is definitionally equal to
  $\IID^{(A,a)}_{\dim(T)}$. We proceed by induction on $T$. If $T$ is $\star$, then it’s true as they
  are both equal to $A$. If $T$ is of the form $u\simeq_{T'}v$, then it’s true by lemma
  \ref{lemmaterm}.
\end{proof}

\begin{proof}[Proof of lemma \ref{lemmactxmor}]
  We want to prove that $\llp \delta\rrp_{\Delta',\Delta}^A(\id^{\Delta'}_a)$ is definitionally
  equal to $\id^\Delta_a$. We proceed by induction on $\Delta$ and it follows from lemma
  \ref{lemmaterm}.
\end{proof}

\begin{proof}[Proof of lemma \ref{lemmaterm}]
  We want to prove that $\llp t\rrp_{\Delta,T}^A(\id^\Delta_a)$ is definitionally equal to
  $\iid^{(A,a)}_{\dim(T)}$. We proceed by induction on $\Delta$ and $t$. If $t$ is a variable, then
  given the definition of $\id^\Delta_a$, the interpretation of $t$ is either $a$ or of the form
  $\llp u\rrp_{\Delta',T}^A(\id^{\Delta'}_a)$ or $\idp{\llp u\rrp_{\Delta',T}^A(\id^{\Delta'}_a)})$,
  which are definitionally equal to $\iid^{(A,a)}_{\dim(T)}$ by induction hypothesis, as $\Delta'$ is
  strictly smaller that $\Delta$.

  If $t$ is of the form $\coh\Delta T\delta$, then we have
  \begin{align*}
  \llp\coh\Delta T\delta\rrp^A_{\Gamma,T[\delta/\Delta]}(\id^\Gamma_a) &\jueq \J_\Delta(\llp T\rrp_\Delta^A, (\lambda
                                                                         a.\iid^{(A,a)}_{\dim(T)}),\llp\delta\rrp_{\Gamma,\Delta}^A(\id^\Gamma_a))\\
    &\jueq \J_\Delta(\llp T\rrp_\Delta^A, (\lambda
      a.\iid^{(A,a)}_{\dim(T)}),\id^\Delta_a)\\
    &\jueq \iid^{(A,a)}_{\dim(T)},
  \end{align*}
  using lemma \ref{lemmactxmor} and the reduction rule for $\J_\Delta$.
\end{proof}

This concludes the proof that we can interpret all the coherence operations as terms of $\tml$. This
is all we need when using type theory as a foundational system as is done in this thesis, but we can
also get an actual set-based weak $\infty$-groupoid if desired.

\begin{definition}
  Given a type $A$ in $\tml$, we define a globular set $A\glob$ by
  \begin{align*}
    \Ob(A\glob) &\defeq \{t\,|\,\vdashML t:A\},\\
    \Hom_{A\glob}(t,u) &\defeq \left(\Id_A(t,u)\right)\glob.
  \end{align*}
\end{definition}

\begin{proposition}
  Given a type $A$ in $\tml$, the globular set $A\glob$ has a structure of weak $\infty$-groupoid.
\end{proposition}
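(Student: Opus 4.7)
The plan is to use the interpretation of $\tinfgpd$ into $\tml$ constructed in definitions \ref{def:intml} and \ref{def2} to transport the syntactic coherence operations onto the globular set $A\glob$. The key observation is that $A\glob$ has been defined in exactly the same iterative manner as the type-theoretic interpretation: objects are closed terms of $A$, and morphisms between $t$ and $u$ are given by the globular set $(\Id_A(t,u))\glob$, matching the clause $\llp u\simeq_Tv\rrp^A_\Gamma(\gamma) = \Id_{\llp T\rrp^A_\Gamma(\gamma)}(\llp u\rrp^A(\gamma), \llp v\rrp^A(\gamma))$ from definition \ref{def:intml}.

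The first step is to establish, by mutual induction on contexts and types of $\tinfgpd$, a correspondence between the semantic globular interpretation and the type-theoretic one. Concretely, I would prove that for every $\Gamma\ctx$, elements of $\lcr\Gamma\rcr^{A\glob}$ correspond bijectively to closed terms of $\tml$ of type $\llp\Gamma\rrp^A$, and that for every $\Gamma\vdashi T\typ$ and $\gamma\in\lcr\Gamma\rcr^{A\glob}$ (corresponding to a term $\overline\gamma$), objects of the globular set $\lcr T\rcr^{A\glob}_\Gamma(\gamma)$ correspond to closed terms of $\tml$ of type $\llp T\rrp^A_\Gamma(\overline\gamma)$. This is essentially definitional: the base case $\star$ matches $\Ob(A\glob)=\{t\mid\vdashML t:A\}$, and the inductive case for $u\simeq_Tv$ matches by definition of $\Hom$ in $A\glob$ and the definition of the identity type interpretation.

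Given this correspondence, the definition of the weak $\infty$-groupoid structure on $A\glob$ becomes essentially forced. For each contractible context $\Delta$ and type $\Delta\vdashi T\typ$, I define
\[
\cohhf{A\glob}\Delta T{}(\eta) \defeq \J_\Delta\bigl(\llp T\rrp^A_\Delta,\ \lambda a.\iid^{(A,a)}_{\dim(T)}\bigr)(\overline\eta),
\]
where $\overline\eta$ is the closed term of type $\llp\Delta\rrp^A$ corresponding to $\eta$. This yields a closed term of type $\llp T\rrp^A_\Delta(\overline\eta)$, which via the correspondence is an object of $\lcr T\rcr^{A\glob}_\Delta(\eta)$, as required. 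Well-typedness is guaranteed by definition \ref{def2}, and the fact that this definition is compatible across the levels of $n$-partial weak $\infty$-groupoid structure (so that the $\cohhf{}{}{}{}$ at depth $n+1$ extends the one at depth $n$ in the sense required for the definition of $(n+1)$-partial weak $\infty$-groupoid) follows from the substitution and weakening properties of the interpretation together with lemmas \ref{lemmatype}, \ref{lemmactxmor} and \ref{lemmaterm}.

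The main obstacle is bookkeeping rather than conceptual: making the correspondence between $\lcr-\rcr^{A\glob}$ and $\llp-\rrp^A$ fully rigorous requires checking compatibility with substitution and weakening at each syntactic category (contexts, context morphisms, types, terms), and verifying that the structure defined this way is compatible with the iterative definition of $n$-partial weak $\infty$-groupoid, i.e.\ that extending from depth $n$ to depth $n+1$ by the above formula genuinely produces the coherence operations required by the previous levels when restricted to types of depth at most $n$. Once the correspondence is set up cleanly, however, everything else reduces to applying the results of the previous section, and no new mathematical content is needed.
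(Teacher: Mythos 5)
Your proposal is correct and takes essentially the same approach as the paper: both transport the coherence operations from $\tml$ to $A\glob$ by defining the globular interpretation via $\llp-\rrp^A$ restricted to closed terms, and read off $\cohhf{A\glob}\Delta T{}$ as $\J_\Delta(\llp T\rrp^A_\Delta,\lambda a.\iid^{(A,a)}_{\dim(T)})$ applied to the corresponding closed term. The paper dispatches the verification as "easy to check"; you spell out what needs checking (compatibility with substitution/weakening and with the stratification by depth), which is a useful addition but not a different route.
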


\begin{proof}
  Given $\llp-\rrp$ the interpretation of $\tinfgpd$ in $\tml$ given in definition \ref{def:intml},
  we define an interpretation of $\tinfgpd$ in the globular set $A\glob$ in the sense of definition
  \ref{def:intglob} by
  \begin{align*}
    \lcr\Gamma\rcr^{A\glob} &\defeq \{\overline{t}\,|\,\vdashML \overline{t}:\llp\Gamma\rrp^A\},\\
    \lcr\theta\rcr_{\Gamma,\Theta}^{A\glob}(\gamma) &\defeq
    \llp\theta\rrp_{\Gamma,\Theta}^A[\gamma/\Gamma],\\
    \lcr T\rcr_{\Gamma}^{A\glob}(\gamma) &\defeq \left(\llp T\rrp_{\Gamma}^A[\gamma/\Gamma]\right)\glob,\\
    \lcr u\rcr_{\Gamma,T}^{A\glob}(\gamma) &\defeq \llp u\rrp_{\Gamma,T}^A[\gamma/\Gamma].
  \end{align*}
  It is easy to check that it satisfies the required properties and it gives an interpretation of
  all the coherence operations in $A\glob$.
\end{proof}


\addtocontents{toc}{\protect\setcounter{tocdepth}{0}}
\chapter{The cardinal of \texorpdfstring{$\pi_4(\Sn3)$}{π₄(S³)}}\label{ch:defn}

The purpose of this appendix is to give a self-contained definition of the natural number $n:\N$
satisfying $\pi_4(\Sn3)\simeq\Z/n\Z$, in order to serve as a sophisticated test case for candidates
for a computational interpretation of univalence and higher inductive types. It doesn’t contain the
proof that $\pi_4(\Sn3)\simeq\Z/n\Z$ nor the proof that $n=2$, only the definition of $n$ based on
theorem \ref{firstpi4s3}.

If you wrote a proof assistant for homotopy type theory giving a computational interpretation of
univalence and higher inductive types, please try to implement the following computation and check
that you do get $2$ as the result.

\section{Preliminaries}

Here are some basic definitions of homotopy type theory that we use, to fix the notations.
\begin{itemize}
\item The universe is $\Type$.
\item Given a type $A$ and two elements $a,b:A$, the type of paths from $a$ to $b$ is $a=_Ab$ (identity
  type).
\item Given an element $a:A$, the constant path at $a$ is $\idp a$.
\item Given a path $p:a=_Ab$, the reverse path of $p$ is $p\inv:b=_Aa$.
\item Given two composable paths $p:a=_Ab$ and $q:b=_Ac$, their composition is $p \concat q:a=_Ac$
  (diagrammatic order).
\item Given a function $f:A\to B$ and a path $p:a=_Ab$, the application of $f$ to $p$ is
  $\ap f(p):f(a)=_Bf(b)$.
\item Given a dependent type $P:A\to\Type$, a path $p:a=_Ab$ and an element $u:P(a)$, the transport
  of $u$ along $p$ in $P$ is $\transport^P(p,u):P(b)$.
\end{itemize}

\subsection{Equivalences}

Given a function $f:A\to B$, there is a type $\isequiv(f)$ of proofs that $f$ is an equivalence. We
do not specify this type here but it should have the following properties:
\begin{itemize}
\item A function $f$ is an equivalence if and only if there exists $g:B\to A$ such that
  $g(f(x))=_Ax$ and $f(g(y))=_By$ for every $x:A$ and $y:B$.
\item Any two elements of $\isequiv(f)$ are equal.
\end{itemize}
We write $A\simeq B$ for the type $\sum_{f:A\to B}\isequiv(f)$.

\subsection{Dependent paths}

Given a dependent type $P:A\to\Type$, a path $p:a=_Ab$ and two elements $u:P(a)$ and $v:P(b)$, we
write $u=^P_pv$ for the type of dependent paths from $u$ to $v$ over $p$. We assume that
$u=^P_{\idp a}v$ is definitionally equal to $u=_{P(a)}v$. The type of dependent paths $u=^P_pv$ is
equivalent to the type $\transport^P(p,u)=_{P(b)}v$. We have the two maps
\begin{align*}
  \innt&:\transport^P(p,u)=_{P(b)}v\to u=^P_pv,\\
  \outt&:u=^P_pv\to\transport^P(p,u)=_{P(b)}v,
\end{align*}
which are inverse to each other.
If $P$ is of the form $P(x)\defeq(f(x)=_Bg(x))$, then we have two maps
\begin{align*}
  \inne&:(u\concat\ap g(p)=_{f(a)=_Bg(b)}\ap f(p)\concat v)\to u=^{\lambda x.(f(x)=_Bg(x))}_pv,\\
  \oute&:u=^{\lambda x.(f(x)=_Bg(x))}_pv\to(u\concat\ap g(p)=_{f(a)=_Bg(b)}\ap f(p)\concat v).
\end{align*}
If $f:(x:A)\to B(x)$ and $p:a=_Ab$, we write $\apd f(p):f(a)=^B_pf(b)$ for the
image of $p$ by $f$.

\subsection{Function extensionality and univalence}

We assume unary function extensionality, which is a term of type
\begin{align*}
  \funext&:\{A:\Type\}\{B:A\to\Type\}\{f,g:(x:A)\to B(x)\}\\&(h:(x:A)\to f(x)=_{B(x)}g(x))\to
  f=_{(x:A)\to B(x)}g.
\end{align*}
Moreover, $\funext\{f\}\{g\}$ is a equivalence. We also assume ternary function extensionality
\begin{align*}
  \funext^3&:\{A,B:\Type\}\{C:A\to\Type\}\{p:a=_Aa'\}\\
           &\quad\{f:(x:C(a))\to B\}\{g:(y:C(a'))\to
             B\}\\
           &\qquad\to(\{x:C(a)\}\{y:C(a')\}(z:x=^C_py)\to f(x)=_Bf(y))\\
           &\qquad\to f=^{\lambda z.((x:C(z))\to B)}_pg,
\end{align*}
and unary dependent function extensionality
\begin{align*}
  \funextd&:\{A,B:\Type\}\{C:A\to B\to\Type\}(p:b=_Bb')\{f:(x:A)\to
            C(x)(b)\}\\&\qquad\{g:(x:A)\to C(x)(b')\}\\
                         &\qquad\to((x:A)\to f(x)=^{C(x)}_pg(x))\to
  f=^{\lambda t.(x:A)\to C(x)(t)}_pg.
\end{align*}
Finally we assume the univalence axiom
\begin{align*}
  \ua:(A\simeq B)\to(A=_{\Type} B).
\end{align*}
The map $\ua$ is itself an equivalence, but we won’t need to give that a name here.

\subsection{Truncation levels}

A type $A$ is \emph{$(-2)$-truncated} (or \emph{contractible}) if the type
\[\sum_{a:A}((x:A)\to a=_Ax)\]
has an element.  For $n\ge-1$, a type $A$ is \emph{$n$-truncated} if for every $x,y:A$, the type
$x=_Ay$ is $(n-1)$-truncated. We have the following properties:
\begin{itemize}
\item The type of proofs that $A$ is $n$-truncated is $(-1)$-truncated.
\item To prove that a type $A$ is $(-1)$-truncated it is enough to construct a
  term of type $(x,y:A)\to x=_Ay$.
\item $n$-truncated types are stable under products.
\end{itemize}

\section{The definition}

\newcommand{\mv}[1]{{#1}\vphantom{\int}}
The number $n$ is defined as the absolute value of the image of $1$ by the following composition of
maps. The types and maps involved are defined below.
\[
\begin{tikzcd}[column sep=1.8em,cramped]
  \Z \arrow[r,"n\mapsto\lloop^n"] & \Omega\Sn1 \arrow[r,"\Omega \varphi_{\Sn1}"] & \Omega^2\Sn2
  \arrow[r,"\Omega^2 \varphi_{\Sn2}"] & \Omega^3\Sn3 \arrow[r,"\Omega^3e"] & \Omega^3(\Sn1*\Sn1)
  \arrow[r,"\Omega^3\alpha"] &
  \Omega^3\Sn2 \arrow[dlllll,out=280,in=80,looseness=0.2,"h"'] \\
  \Omega^3(\Sn1*\Sn1) \arrow[r,"\mv{\Omega^3(e^{-1})}"'] &
  \Omega^3\Sn3 \arrow[r,"e_3"'] &
  \Omega^2\|\Sn2\|_2 \arrow[r,"\mv{\Omega\kappa_{2,\Sn2}}"'] &
  \Omega\|\Omega\Sn2\|_1 \arrow[r,"\kappa_{1,\Omega\Sn2}"'] &
  \|\Omega^2\Sn2\|_0 \arrow[r,"e_2"'] &
  \Omega\Sn1 \arrow[r,"e_1"'] &
  \Z
\end{tikzcd}
\]
We do not repeat the definition of $\Z$ and of the two maps between $\Z$ and $\Omega\Sn1$ here.

\section{Suspensions and spheres}

The circle is defined as the higher inductive type $\Sn1$ with constructors
\begin{align*}
  \base&:\Sn1,\\
  \lloop&:\base=_{\Sn1}\base.
\end{align*}
Given a type $A$, we define the suspension of $A$ as the higher inductive type $\Susp A$
with constructors
\begin{align*}
  \north&:\Susp A,\\
  \south&:\Susp A,\\
  \merid&:A\to\north=_{\Susp A}\south.
\end{align*}
For $n\ge1$, we define the $(n+1)$-sphere as the suspension of the $n$-sphere:
\begin{align*}
  \Sn{n+1}&\defeq\Susp \Sn{n}.
\end{align*}

\section{Pointed types, pointed maps and loop spaces}

A pointed type is a type $A$ together with a point $\star_A:A$ (often omitted when clear from the
context). The circle is pointed by $\base$ and the suspension of any type by $\north$.
A pointed map between two pointed types $A$ and $B$ is a map $f:A\to B$ together with an equality
$\star_f:f(\star_A)=\star_B$ (also often omitted).
The loop space of a pointed type $A$ is the type $\Omega A \defeq (\star_A=\star_A)$ pointed by
$\idp{\star_A}$.
We can iterate this operation: $\Omega^1A\defeq\Omega A$ and $\Omega^{n+1}A\defeq\Omega(\Omega^nA)$.
If $f$ is a pointed map between the pointed types $A$ and $B$, then $\Omega f$ is the map between
$\Omega A$ and $\Omega B$ defined by
\[(\Omega f)(p)\defeq\star_f\inv\concat\ap f(p)\concat \star_f\]
and pointed by the proof that $\star_f\inv\concat\ap f(\idp{\star_A})\concat \star_f=\idp{\star_B}$.
We can again iterate this operation as above.

\section{Loop space of a suspension}

Given a pointed type $A$, we define the map
\begin{align*}
  \varphi_A&:A\to\Omega(\Susp A),\\
  \varphi_A(a)&\defeq\merid(a)\concat\merid(\star_A)\inv,
\end{align*}
pointed by the obvious proof that $\merid(\star_A)\concat\merid(\star_A)\inv=\idp{\north}$.

\section{The 3-sphere and the join of two circles}

\subsection{Join and associativity}

Given two types $A$ and $B$, the join of $A$ and $B$ is the higher inductive type $A*B$ with
constructors
\begin{align*}
  \inl&:A\to A*B,\\
  \inr&:B\to A*B,\\
  \push&:(a:A)(b:B)\to\inl(a)=_{A*B}\inr(b).
\end{align*}
If $A$ is pointed then $A*B$ is pointed by $\inl(\star_A)$.

This operation is associative, we have the pointed map
\begin{align*}
  \alpha_{A,B,C}&:A*(B*C)\to(A*B)*C,\\
  \alpha_{A,B,C}(\inl(a))&\defeq\inl(\inl(a)),\\
  \alpha_{A,B,C}(\inr(\inl(b)))&\defeq\inl(\inr(b)),\\
  \alpha_{A,B,C}(\inr(\inr(c)))&\defeq\inr(c),\\
  \ap{\alpha_{A,B,C}\circ\inr}(\push(b,c))&\defeq\push(\inr(b),c),\\
  \ap{\alpha_{A,B,C}}(\push(a,\inl(b)))&\defeq\ap{\inl}(\push(a,b)),\\
  \ap{\alpha_{A,B,C}}(\push(a,\inr(c)))&\defeq\push(\inl(a),c),\\
  \apd{\ap{\alpha_{A,B,C}}\circ\push(a,-)}(\push(b,c))&\defeq\inne(op(\oute(\apd{\push(-,c)}(\push(a,b))))).
\end{align*}
For the last equation, inside the $\inne$ we need a term of type
\[\ap{\inl}(\push(a,b))\concat\push(\inr(b),c)=\idp{\inl(\inl(a))}\concat\push(\inl(a),c)\]
and the $\oute$ term has type
\[\push(\inl(a),c)\concat\idp{\inr(c)}=\ap{\inl}(\push(a,b))\concat\push(\inr(b),c).\]
Hence the term $op$ is the path algebra mapping from one type to the other, which isn’t complicated
to define.
The map $\alpha^{-1}_{A,B,C}:(A*B)*C\to A*(B*C)$ is defined in a similar way.

Given two maps $f:A\to A'$ and $g:B\to B'$, we define the map
\begin{align*}
  f*g&:(A*B)\to(A'*B'),\\
  (f*g)(\inl a)&\defeq\inl(f(a)),\\
  (f*g)(\inr b)&\defeq\inr(g(b)),\\
  \ap{f*g}(\push(a,b))&\defeq\push(f(a),g(b)).
\end{align*}
The map $f*g$ is pointed as soon as $f$ is pointed.

\subsection{Suspension and join with the booleans}

The type $\Bool$ of booleans, with constructors $\true,\false:\Bool$ is pointed
by $\true$.
For any type $A$ we define the following two pointed maps, inverse to each other:
\begin{align*}
  \psi_A&:\Susp A\to\Bool*A,\\
  \psi_A(\north)&\defeq\inl(\true),\\
  \psi_A(\south)&\defeq\inl(\false),\\
  \ap{\psi_A}(\merid(a))&\defeq\push(\true,a)\concat\push(\false,a)\inv,
\end{align*}
\begin{align*}
  \psi^{-1}_A&:\Bool*A\to\Susp A,\\
  \psi^{-1}_A(\inl(\true))&\defeq\north,\\
  \psi^{-1}_A(\inl(\false))&\defeq\south,\\
  \psi^{-1}_A(\inr(a))&\defeq\south,\\
  \psi^{-1}_A(\push(\true,a))&\defeq\merid(a),\\
  \psi^{-1}_A(\push(\false,a))&\defeq\idp\south.
\end{align*}
We also define the following two pointed maps, again inverse to each other:
\begin{align*}
  c&:(\Bool*\Bool)\to\Sn1,\\
  c(\inl(b))&\defeq\base,\\
  c(\inr(b'))&\defeq\base,\\
  \ap c(\push(\true,\true))&\defeq\lloop,\\
  \ap c(\push(b,b'))&\defeq\idp\base,
\end{align*}
\begin{align*}
  c^{-1}&:\Sn1\to(\Bool*\Bool),\\
  c^{-1}(\base)&\defeq\inl(\true),\\
  \ap{c^{-1}}(\lloop)&\defeq\push(\true,\true)\\
  &\qquad\concat\push(\false,\true)\inv\\
  &\qquad\concat\push(\false,\false)\\
  &\qquad\concat\push(\true,\false)\inv.
\end{align*}

\subsection{Equivalence between $\Sn3$ and $\Sn1*\Sn1$}

We define the pointed map $e:\Sn3\to\Sn1*\Sn1$ as the composition
\[
\begin{tikzcd}[column sep=scriptsize]
  \Sn3\arrow[r,"\psi_{\Sn2}"] & \Bool*\Sn2 \arrow[rr,"\id_{\Bool}*\psi_{\Sn1}"] &&
  \Bool*(\Bool*\Sn1) \arrow[rr,"{\alpha_{\Bool,\Bool,\Sn1}}"] && (\Bool*\Bool)*\Sn1 \arrow[r,"c*\id_{\Sn1}"]
  & \Sn1*\Sn1
\end{tikzcd}
\]
and the pointed map $e^{-1}:\Sn1*\Sn1\to\Sn3$ as the opposite composition
with the inverse maps.

\section{The main map}

The map $\alpha:\Sn1*\Sn1\to\Sn2$ is the main map which appears in the proof that
$\pi_4(\Sn3)\simeq\Z/n\Z$; all the other maps appear already in the construction of the Hopf
fibration or in the Freudenthal suspension theorem. In the notation of chapter \ref{ch:james} of
this thesis, it is the map $\fold_{\Sn2}\circ W_{\Sn1,\Sn1}$.
It is defined by
\begin{align*}
  \alpha&:\Sn1*\Sn1\to\Sn2,\\
  \alpha(\inl(x))&\defeq\north,\\
  \alpha(\inr(y))&\defeq\north,\\
  \ap\alpha(\push(x,y))&\defeq
                          \merid(y)\concat\merid(\base)\inv\concat\merid(x)\concat\merid(\base)\inv
\end{align*}
and pointed by $\idp{\north}$.

\section{The map defining \texorpdfstring{$\pi_3(\Sn2)$}{π₃(S²)}}

We now want to go from $\Omega^3\Sn2$ to $\Omega^3(\Sn1*\Sn1)$. We do that by going \emph{back} one
of the maps in the long exact sequence of the Hopf fibration, so it’s not completely trivial. It’s a
priori a bit surprising that we can do it with actual loop spaces and not just homotopy groups, but
this is because we can use the fact that the double loop space of the fiber $\Sn1$ is contractible
(which is stronger than just having trivial homotopy groups).

\subsection{The Hopf fibration}

Using the fact that the identity function is an equivalence and that an equality between two
equivalences is determined by an equality between the underlying functions, we define the map
\begin{align*}
  \mu&:\Sn1\to(\Sn1\simeq\Sn1),\\
  \mu(\base)&\defeq\lambda x.x,\\
  \ap\mu(\lloop)&\defeq\funext(f),
\end{align*}
where
\begin{align*}
  f&:(x:\Sn1)\to x=_{\Sn1}x,\\
  f(\base)&\defeq\lloop,\\
  \apd f(\lloop)&\defeq\inne(\idp{\lloop\concat\lloop}).
\end{align*}
The Hopf fibration is the dependent type
\begin{align*}
  \Hopf&:\Sn2\to\Type,\\
  \Hopf(\north)&\defeq\Sn1,\\
  \Hopf(\south)&\defeq\Sn1,\\
  \ap\Hopf(\merid(x))&\defeq\ua(\mu(x)).
\end{align*}
The total space of the Hopf fibration is equivalent to $\Sn1*\Sn1$, which
means that in particular we have a map
\begin{align*}
  t&:(x:\Sn2)\to(\Hopf(x)\to\Sn1*\Sn1),\\
  t(\north)&\defeq\inl,\\
  t(\south)&\defeq\inr,\\
  \apd t(\merid(x))& \defeq\funext^3(t^{\merid}(x)),
\end{align*}
where
\begin{align*}
  t^{\merid}&:(x:\Sn1)\{y,y':\Sn1\}(p:y=^{\Hopf}_{\merid(x)}y')\to\inl(y)=_{\Sn1*\Sn1}\inr(y'),\\
  t^{\merid}(x)(p)&\defeq\push(y,\mu(x)(y))\concat\ap\inr(\outt(p)),
\end{align*}
and then we have
\begin{align*}
  t'&:\sum_{x:\Sn2}\Hopf(x)\to\Sn1*\Sn1,\\
  t'(x,y)&\defeq t(x)(y).
\end{align*}

\subsection{Looping a fibration}

Let $P:B\to\Type$ be a dependent type over a pointed type $B$ and such that $F\defeq P(\star_B)$ is
pointed. The total space of $P$ is pointed by $(\star_B,\star_F)$. We define the dependent type
\begin{align*}
  P^\Omega&:\Omega B\to\Type,\\
  P^\Omega(p)&\defeq(\star_F=^P_p\star_F).
\end{align*}
Note that the fiber of $P^\Omega$ (over the basepoint of $\Omega B$, i.e.\ $\idp{\star_B}$) is
definitionally equal to $\Omega F$, and we point it by $\idp{\star_F}$.

The total space of $P^\Omega$ is equivalent to the loop space of the total space of $P$, in
particular we have a (pointed) map $\sum_{p:\Omega B}(P^\Omega(p))\to\Omega(\sum_{x:B}P(x))$
given by 1-dimensional pairing.
We can then iterate this construction, we write $P^{\Omega^2}$ for
$(P^\Omega)^\Omega$ and $P^{\Omega^3}$ for $(P^{\Omega^2})^\Omega$.
Note that if $F$ is $(n+1)$-truncated, then every fiber of $P^\Omega$ is
$n$-truncated because every fiber of $P^\Omega$ is equivalent to an identity
type of $F$ via $\innt$ and $\outt$.

\subsection{Looping the Hopf fibration}

Let’s consider the triple looping of the Hopf fibration:
\[\Hopf^{\Omega^3}:\Omega^3\Sn2\to\Type.\]
The fiber of $\Hopf$ is $\Sn1$ which is $1$-truncated, hence every fiber of $\Hopf^{\Omega^3}$ is
$(-2)$-truncated, i.e.\ contractible. Therefore for every $p:\Omega^3\Sn2$ there is a
$q:\Hopf^{\Omega^3}(p)$, so we get a point $(p,q)$ in
$\sum_{p:\Omega^3\Sn2}(\Hopf^{\Omega^3}(p))$. Using the maps above, we get a point in
$\Omega^3(\sum_{x:\Sn2}(\Hopf(x)))$, and then in $\Omega^3(\Sn1*\Sn1)$ after applying
$\Omega^3t'$.
This gives us the map $h:\Omega^3\Sn2\to\Omega^3(\Sn1*\Sn1)$.

\section{Going back to \texorpdfstring{$\pi_2(\Sn2)$}{π₂(S²)}}

We now start decreasing the dimension by constructing a map
$\Omega^3\Sn3\to\Omega^2\|\Sn2\|_2$. This is the only place where we really need truncations. In the
next subsection we implement truncations using regular higher inductive types. It can safely be
skipped in a proof assistant with already a native support for truncations.

\subsection{Truncations}

In this subsection, we define the spheres slightly differently to make the inductive steps
simpler. We define $\Sn0\defeq\Bool$ and $\Sn1\defeq\Susp \Sn0$. We do not need the other spheres at
all here, so we keep the same notation for simplicity.

Given a type $A$ and $n\ge-1$, the $n$-truncation of $A$ is the higher inductive type $\|A\|_n$ with
constructors
\begin{align*}
  |-|&:A\to\|A\|_n,\\
  \hub&:(\Sn{n+1}\to\|A\|_n)\to\|A\|_n,\\
  \spoke&:(f:\Sn{n+1}\to\|A\|_n)(t:\Sn{n+1})\to\hub(f)=_{\|A\|_n}f(t).
\end{align*}
If $A$ is pointed, then $\|A\|_n$ is pointed by $|\star_A|$.

Given a type $B$ and a map $f:\Sn{n+1}\to B$, a \emph{filler} of $f$ is a pair $(\hub_f,\spoke_f)$
with $\hub_f:B$ and $\spoke_f:(t:\Sn{n+1})\to\hub_f=_Bf(t)$. If we have a filler for all maps
$\Sn{n+1}\to B$, we say that \emph{all $(n+1)$-spheres in $B$ are filled}. The $\hub$ and $\spoke$
constructors of $\|A\|_n$ state exactly that all $(n+1)$-spheres in $\|A\|_n$ are filled.
We prove that $\|A\|_n$ is $n$-truncated and that it has the following universal property:
given an $n$-truncated type $B$, a map $f:A\to B$ can be extended in a unique way to a map
$\widetilde{f}:\|A\|_n\to B$ satisfying $\widetilde{f}(|x|)=f(x)$ definitionally.

We prove first that if a type $B$ has all $(n+1)$-spheres filled, then $B$ is $n$-truncated, by
induction on $n$. It follows that $\|A\|_n$ is $n$-truncated.

For $n=-1$, given $x,y:B$, we define $f:\Sn0\to B$ by
\begin{align*}
  f(\true)&\defeq x,\\
  f(\false)&\defeq y,
\end{align*}
and we assumed that $B$ has all $0$-spheres filled. We then have
\[\spoke_f(\true)\inv\concat\spoke_f(\false):x=_By\] hence $B$ is $(-1)$-truncated.

For the induction step, we assume that $B$ has all $(n+2)$-spheres filled and we
prove that every identity type of $B$ has all $(n+1)$-spheres filled. Let’s take
$x,y:B$. We define the maps
\begin{align*}
  \lift&:(\Sn{n+1}\to x=_By)\to(\Sn{n+2}\to B),\\
  \lift(f)(\north)&\defeq x,\\
  \lift(f)(\south)&\defeq y,\\
  \ap{\lift(f)}(\merid(t))&\defeq f(t),
\end{align*}
\begin{align*}
  \hub'&:(\Sn{n+1}\to x=_By)\to x=_By,\\
  \hub'_f&\defeq\spoke_{\lift(f)}(\north)\inv\concat\spoke_{\lift(f)}(\south),
\end{align*}
\begin{align*}
  \spoke'&:(f:\Sn{n+1}\to x=_By)(t:\Sn{n+1})\to\hub'_f=_{x=_By}f(t),\\
  \spoke'_f&\defeq\spoke^{\lemm}(f,\merid(t)),
\end{align*}
where $\spoke^{\lemm}$ has type
\begin{align*}
  \spoke^{\lemm}&:(f:\Sn{n+1}\to x=_By)\{t,t':\Sn{n+2}\}(p:t=_{\Sn{n+2}}t')\\
  &\qquad\to\spoke_{\lift(f)}(t)\inv\concat\spoke_{\lift(f)}(t')=_{(\lift(f)(t)=_B\lift(f)(t'))}\ap{\lift(f)}(p)
\end{align*}
and is proved by path induction on $p$. This concludes.

We now prove the converse, i.e.\ that every $n$-truncated type has all $(n+1)$-spheres filled. A
consequence of that is the non-dependent elimination rule of truncations: if $B$ is $n$-truncated,
then any map $f:A\to B$ can be extended to a map $\widetilde{f}:\|A\|_n\to B$ such that
$\widetilde{f}(|x|)\defeq f(x)$, because for the application of $\widetilde{f}$ to the other
constructors of $\|A\|_n$ we can just use the $\hub$ and $\spoke$ structure of $B$.

For $n=-1$, we know that $B$ is a proposition and we define $\hub$ and
$\spoke$ as follows: $\hub_f$ is $f(\true)$ and $\spoke_f$ is automatic because
it’s an equality in a proposition.

For $n+1$, by induction hypothesis for every $x,y:B$, we have a $\hub^{x,y}$ and
$\spoke^{x,y}$ for the type $x=_By$. We then define
\begin{align*}
  \hub&:(\Sn{n+2}\to B)\to B,\\
  \hub_f&\defeq f(\north),
\end{align*}
\begin{align*}
  \spoke&:(f:\Sn{n+2}\to B)(t:\Sn{n+2})\to\hub_f=_Bf(t),\\
  \spoke_f(\north)&\defeq\idp{f(\north)},\\
  \spoke_f(\south)&\defeq\ap{f}(\merid(\north)),\\
  \apd{\spoke_f}(\merid(t))&\defeq\inne(\spoke^{f(\north),f(\south)}_{\lambda w.\ap f(\merid(w))}(t)\inv\\
  &\qquad\qquad\concat\spoke^{f(\north),f(\south)}_{\lambda w.\ap f(\merid(w))}(\north)),
\end{align*}
which shows that $B$ has all $(n+2)$-spheres filled.

Finally we prove the uniqueness principle: if $B$ is $n$-truncated and $g,h:\|A\|_n\to B$ are two
functions such that $(a:A)\to g(|a|)=_Bh(|a|)$, then $(x:\|A\|_n)\to g(x)=_Bh(x)$. We prove it by
induction on $x$ (using the induction principle derived from the $\hub$ and $\spoke$ constructors).
For the $|-|$ constructor, it’s true by assumption.
For the $\hub$ and $\spoke$ constructors, let’s take
$f:\Sn{n+1}\to\|A\|_n$. Using $\hub$ and $\spoke$ on $\|A\|_n$ we have a
filler of $f$, hence by applying $g$ and $h$ to it we get a filler of $g\circ
f$ and a filler of $h\circ f$. Now by induction hypothesis, $g$ and $h$ agree
on the image of $f$, hence $g\circ f=h\circ f$ and by composing the filler of
$h\circ f$ with that equality, we get another filler of $g\circ f$, and we have
to prove that those two fillers of $g\circ f$ are equal.
Let’s call $(\hub,\spoke)$ and $(\hub',\spoke')$ those two fillers of $g\circ f$ and let’s prove
that they are equal. The type $B$ is $n$-truncated, hence its identity types are also $n$-truncated,
hence they have all $(n+1)$-spheres filled. We define the map
\begin{align*}
  k&:\Sn{n+1}\to\hub=_B\hub',\\
  k(t)&\defeq\spoke(t)\concat\spoke'(t)\inv.
\end{align*}
That map can be filled, hence we have
\begin{align*}
  \hub_{k}&:\hub=_B\hub',\\
  \spoke_{k}&:(t:\Sn{n+1})\to\hub_{k}=_{\hub=_B\hub'}\spoke(t)\concat\spoke'(t)\inv.
\end{align*}
In order to prove that $(\hub,\spoke)$ and $(\hub',\spoke')$ are equal, we need
\begin{align*}
  \hub^=&:\hub=_B\hub',\\
  \spoke^=&:\spoke=^{\lambda h.(t:\Sn{n+1})\to
    h=_Bk(t)}_{\hub^=}\spoke'.
\end{align*}
We take $\hub^=\defeq\hub_{k}$ and
\begin{align*}
  \spoke^=&\defeq\funextd(\spoke'^=),
\end{align*}
where
\begin{align*}
  \spoke'^=&:(t:\Sn{n+1})\to\spoke(t)=^{\lambda h.h=_Bk(t)}_{\hub^=}\spoke'(t),\\
  \spoke'^=(t)&\defeq\inne(\spoke''^=(t)),
\end{align*}
where
\begin{align*}
  \spoke''^=&:(t:\Sn{n+1})\to\spoke(t)=\hub^=\concat\spoke'(t)
\end{align*}
is deduced from $\spoke_{k}$ by some coherence operation.

\subsection{Truncated higher Hopf fibration}

The fibration we define now is similar to the Hopf fibration with $\Sn2$ instead of $\Sn1$. The main
difference is that, unlike for $\Sn1$, there is no appropriate multiplication operation on $\Sn2$,
there is only one on $\|\Sn2\|_2$ so it is a bit more complicated to define.
We write $|p|^1$ for $\ap{\lambda w.|w|}(p)$.

We define the map
\begin{align*}
  \mu_2&:\Sn2\to\Sn2\to\|\Sn2\|_2,\\
  \mu_2(\north)&\defeq\lambda x.|x|,\\
  \mu_2(\south)&\defeq\mu_2^{\south},\\
  \ap{\mu_2}(\merid(x))&\defeq\funext(\mu_2^{\merid}(x)),
\end{align*}
where
\begin{align*}
  \mu_2^{\south}&:\Sn2\to\|\Sn2\|_2,\\
  \mu_2^{\south}(\north)&\defeq|\south|,\\
  \mu_2^{\south}(\south)&\defeq|\south|,\\
  \ap{\mu_2^{\south}}(\merid(y))&\defeq|\merid(\base)\inv\concat\merid(y)|^1,
\end{align*}
and
\begin{align*}
  \mu_2^{\merid}&:(x:\Sn1)(y:\Sn2)\to|y|=_{\|\Sn2\|_2}\mu_2^{\south}(y),\\
  \mu_2^{\merid}(x)(\north)&\defeq|\merid(x)|^1,\\
  \mu_2^{\merid}(x)(\south)&\defeq|\merid(\base)\inv\concat\merid(x)|^1,\\
  \apd{\mu_2^{\merid}(x)}(\merid(y))&\defeq\inne(\mu_2^{\merid,\merid}(x)(y)),
\end{align*}
and
\begin{align*}
  \mu_2^{\merid,\merid}:(x,y:\Sn1)&\to|\merid(x)|^1\concat|\merid(\base)\inv\concat\merid(y)|^1\\
  &\quad=|\merid(y)|^1\concat|\merid(\base)\inv\concat\merid(x)|^1
\end{align*}
is defined as follows:
\begin{itemize}
\item When $x$ or $y$ is $\base$, using the fact that $|-|^1$ commutes with composition of paths,
  it’s easy to prove the desired equality.
\item When we apply $\mu_2^{\merid,\merid}$ to $\lloop$ and $\lloop$, we need to
  construct a term in a 4-times iterated identity type of $\|\Sn2\|_2$. But
  $\|\Sn2\|_2$ is 2-truncated, hence we need to construct something in a
  $(-2)$-truncated type which is automatic.
\end{itemize}

We have now defined $\mu_2$, but in order to use it to construct a fibration, we
need to turn it into a map
$\widetilde{\mu_2}:\Sn2\to(\|\Sn2\|_2\simeq\|\Sn2\|_2)$. For $x:\Sn2$, the
underlying map of $\widetilde{\mu_2}(x)$ is defined by applying the universal
property of truncation to $\mu_2(x)$. In other words, we have
\[\widetilde{\mu_2}(x)(|y|)\defeq\mu_2(x)(y).\]
We prove that $\widetilde{\mu_2}(x)$ is an equivalence by noticing that $\widetilde{\mu_2}(\north)$
is equal to the identity function, using the uniqueness principle for maps out of truncations, and
then it follows by induction on $x$ that all of them are equivalences, using the fact that being an
equivalence is a proposition.

We can now define the fibration we are interested in by
\begin{align*}
  \tHopf&:\Sn3\to\Type,\\
  \tHopf(\north)&\defeq\|\Sn2\|_2,\\
  \tHopf(\south)&\defeq\|\Sn2\|_2,\\
  \ap{\tHopf}(\merid(x))&\defeq\ua(\widetilde{\mu_2}(x)),
\end{align*}
and we have $\tHopf^{\Omega^2}:\Omega^2\Sn3\to\Type$ with fiber $\Omega^2\|\Sn2\|_2$.
A point $p:\Omega^3\Sn3$ can be seen as a loop in $\Omega^2\Sn3$, and we
can transport along it in $\tHopf^{\Omega^2}$. We define the map
\begin{align*}
  e_3&:\Omega^3\Sn3\to\Omega^2\|\Sn2\|_2,\\
  e_3(p)&\defeq\transport^{\tHopf^{\Omega^2}}(p,\idp{\idp{|\north|}}).
\end{align*}

\section{Loop spaces of truncations}

Let $A$ be a pointed type and $n\ge-1$, the goal here is to construct a pointed map
$\kappa_{n,A}:\Omega\|A\|_{n+1}\to\|\Omega A\|_n$. Let’s first prove that for $n\ge-1$, the type of
$n$-types is $(n+1)$-truncated.

If $A$ and $B$ are two $n$-types, their identity types in the type of $n$-types and in $\Type$ are
equivalent, because being an $n$-type is a proposition. Using univalence, we get that $A=_{\Type}B$
is equivalent to $A\simeq B$. Using the fact that $\isequiv$ is a mere proposition, the identity
type between two equivalences between $A$ and $B$ is the same as the identity type between the
underlying functions, and using function extensionality it is equivalent to a product of identity
types of $B$. But $B$ is $n$-truncated, hence the identity types of $A\simeq B$ are
$(n-1)$-truncated, hence $A\simeq B$ is $n$-truncated, hence the type of $n$-types is
$(n+1)$-truncated.

We can now define the family of $n$-types
\begin{align*}
  P_{n,A}&:\|A\|_{n+1}\to \Type,\\
  P_{n,A}(|x|)&\defeq\|\star_A=_Ax\|_n.
\end{align*}
The function $\kappa_{n,A}$ is then defined by
\begin{align*}
  \kappa_{n,A}&:\Omega\|A\|_{n+1}\to\|\Omega A\|_n,\\
  \kappa_{n,A}(p)&\defeq\transport^{P_{n,A}}(p,|\idp{\star_A}|).
\end{align*}

\section{Down one more dimension}

We’re almost done, we now just need to go from $\|\Omega^2\Sn2\|_0$ to $\Omega\Sn1$. This step is
quite easy using the Hopf fibration. Using the fact that $\Omega\Sn1$ is a set, we only have to
build a map from $\Omega^2\Sn2$ to $\Omega\Sn1$. That map is defined by
\begin{align*}
  e_2'&:\Omega^2\Sn2\to\Omega\Sn1,\\
  e_2'(p)&\defeq\transport^{\Hopf^\Omega}(p,\idp{\base}).
\end{align*}
This concludes the definition of $n$.

\addtocontents{toc}{\protect\setcounter{tocdepth}{1}}


\printbibliography[heading=bibintoc,title={Bibliography},notkeyword=online]

\printbibliography[heading=subbibliography,title={Online Resources},keyword=online]


\begin{otherlanguage}{french}
\cleardoublepage

\chapter*{Introduction (français)}
\addcontentsline{toc}{chapter}{Version française}
\addcontentsline{toc}{section}{Introduction}
\markboth{\slshape\MakeUppercase{Version française}}{\slshape\MakeUppercase{Introduction}}

L’objectif de cette thèse est de démontrer le théorème suivant dont l’énoncé et la démonstration
seront expliqués en temps voulu.

\begin{maintheoremfr}\label{theoremfr}
  On a un isomorphisme de groupes \[\pi_4(\Sn3)\simeq\Z/2\Z.\]
\end{maintheoremfr}

À proprement parler, c’est un théorème bien connu en théorie de l’homotopie classique, démontré par
Freudenthal dans \cite{freud37} (voir aussi \cite[corollary 4J.4]{hatcher}). La principale
différence est que dans cette thèse on se place en \emph{théorie des types homotopiques} (aussi
connu sous le nom de \emph{fondations univalentes}) qui est un nouveau cadre pour faire des
mathématiques, introduit par Vladimir Voevodsky en 2009, et qui est particulièrement bien adapté
pour la théorie de l’homotopie. Du point de vue de la théorie de l’homotopie, la différence la plus
frappante entre la théorie de l’homotopie classique et la théorie des types homotopiques est qu’en
théorie des types homotopiques, \emph{toutes} les constructions sont invariantes par équivalences
d’homotopie. Un des avantages est que toutes les constructions et toutes les démonstrations faites
dans ce cadre sont complètement indépendantes de la définition de la notion d’«~espace~». En
particulier, rien ne dépend de la topologie générale ou de la combinatoire des ensembles
simpliciaux. De plus, les constructions et les démonstrations ont souvent un aspect plus
«~homotopique~», comme nous espérons que le lecteur en sera convaincu après la lecture de cette
thèse.

Cependant, cela pose un certain nombre de défis, étant donné qu’il n’est pas clair a priori quels
sont les concepts qui peuvent ou ne peuvent pas être définis d’une façon purement invariante par
homotopie. Par exemple, même si la cohomologie singulière est invariante par homotopie, la
définition classique utilise l’ensemble des cochaînes singulières qui, lui, n’est pas invariant par
homotopie. Par conséquent, la définition classique ne peut pas être reproduite telle quelle en
théorie des types homotopiques. Un exemple encore plus simple est le revêtement universel du cercle
qui est défini classiquement en utilisant la fonction exponentielle $\R\to\Sn1$, mais il se trouve
que cette fonction est homotope à une fonction constante. La théorie des types homotopiques nous
donne divers outils pour travailler de façon complètement invariante par homotopie et dans cette
thèse on montre comment démontrer le théorème \ref{theoremfr} en théorie des types homotopiques, en
partant essentiellement de zéro.

Un autre avantage de la théorie des types homotopiques par rapport à la théorie de l’homotopie
classique est que les démonstrations écrites en théorie des types homotopiques sont beaucoup plus
appropriées à une vérification formelle sur ordinateur. Même si le travail présenté ici n’a pas
encore été formalisé, beaucoup de résultats intermédiaires (en particulier des deux premiers
chapitres) ont été formalisés par diverses personnes, voir par exemple les bibliothèques
\cite{HoTTCoq} et \cite{Unimath} pour Coq, \cite{HoTTAgda} pour Agda et \cite{HoTTLean} pour Lean.

\paragraph{Contenu de la thèse}

Les deux premiers chapitres de cette thèse rappellent les bases de la théorie des types
homotopiques. Une référence alternative est le livre \cite{hottbook}, mais on a essayé ici d’être
plus concis et de garder en vue notre objectif principal. Cependant, il se peut que le style de
présentation soit similaire entre \cite{hottbook} et l’introduction et les deux premiers chapitres
de cette thèse. La plupart du contenu des quatre derniers chapitres est nouveau en théorie des types
homotopiques, même si les concepts sont bien connus en théorie de l’homotopie classique. La
définition de la notion d’$\infty$-groupoïde faible présentée dans le premier appendice est
également nouvelle.

Dans le chapitre \ref{ch:hott}, on introduit tous les concepts de base de la théorie des types
homotopiques, c’est-à-dire tous les constructeurs de types et en particulier l’axiome d’univalence et
les types inductifs supérieurs. On énonce également le lemme $3\times3$ et le lemme d’aplatissement
dans les sections \ref{threebythree} et \ref{flattening}, qui seront utilisés à plusieurs
endroits. Finalement on parle de types et d’applications $n$-tronqués et de troncations. La notion
de type $n$-tronqué correspond à la notion classique de $n$-type d’homotopie, c’est-à-dire un espace
qui n’a pas de contenu homotopique en dimension supérieure à $n$, et la troncation est une opération
transformant n’importe quel espace en un espace $n$-tronqué d’une façon universelle. À nouveau, tout
ceci est standard en théorie des types homotopiques.

Dans le chapitre \ref{ch:hopf}, on définit les groupes d’homotopie des sphères. Le groupe
$\pi_k(\Sn n)$ est défini comme étant la $0$-troncation (c’est-à-dire l’ensemble des composantes
connexes) de l’espace des lacets itérés de dimension $k$ dans $\Sn n$. On démontre ensuite que
$\pi_1(\Sn1)\simeq\Z$, qui est un résultat initialement dû à Michael Shulman en 2011 et qui
apparaît dans \cite[section 8.1]{hottbook}, voir aussi \cite{mikeblog:pi1s1} et
\cite{mikelicata:pi1s1}. L’idée est qu’en théorie des types homotopique, afin de définir une
fibration on ne donne pas une application de l’espace total vers la base. Au lieu de cela, on donne
directement la fibre au dessus de chaque point de la base. Dans le cas d’une fibration au dessus du
cercle, il est suffisant de donner la fibre au dessus du point base de $\Sn1$ et l’action sur la
fibre du lacet faisant le tour du cercle. Dans le cas qui nous intéresse, la fibre est l’espace des
entiers relatifs $\Z$ et le lacet faisant le tour du cercle agit dessus en ajoutant $1$. Cela donne
une fibration au dessus de $\Sn1$ et on peut montrer que son espace total est contractile, ce qui
donne l’isomorphisme $\pi_1(\Sn1)\simeq\Z$. On définit ensuite la notion de $n$-connectivité et on
démontre diverses propriétés des espaces et des application $n$-connectés, ce qui nous permet de
montrer que $\pi_k(\Sn n)$ est trivial pour tout $k<n$. Ce résultat a déjà été obtenue dans
\cite[section 8.3]{hottbook} avec une démonstration un peu plus compliquée également due à
l’auteur. Finalement, on définit la fibration de Hopf, qui est une fibration au dessus de $\Sn2$,
de fibre $\Sn1$ et d’espace total $\Sn3$. L’idée de la définition de la fibration de Hopf est la
suivante. Afin de définir une fibration au dessus de $\Sn2$, il suffit de donner la fibre $N$ au
dessus du pôle nord, la fibre $S$ au dessus du pôle sud et, pour tout élément $x$ de $\Sn1$, une
équivalence entre $N$ et $S$ qui décrit se qui se passe quand on se déplace dans la fibration du
pôle nord au pôle sud le long du méridien correspondant à $x$. Dans le cas de la fibration de Hopf,
on prend $N,S\defeq\Sn1$ et l’équivalence entre $N$ et $S$ correspondant à $x$ est l’opération de
multiplication par $x$. La fibration de Hopf a été définie pour la première fois par Peter
Lumsdaine, d’une façon légèrement différente, mais sans démonstration du fait que son espace total
est équivalent à $\Sn3$. La construction présentée ici a été également décrite dans \cite[section
8.5]{hottbook}.

Dans le chapitre \ref{ch:james}, on définit la fibration de Hopf en suivant une idée suggérée par
André Joyal. Pour tout type $A$, on définit une famille d’espaces $(J_nA)$ et on démontre que leur
colimite est équivalente à l’espace des lacets de la suspension de $A$. Pour cela, on définit un
autre espace $JA$ et on démontre que $JA$ est équivalent à la fois à la colimite de $(J_nA)$ et à
l’espace des lacets de la suspension de $A$. La construction de James donne une suite
d’approximations de l’espace des lacets de la suspension de $A$ ce qui, avec le théorème de
Blakers--Massey, nous permet de montrer qu’il existe un entier naturel $n$ tel que
$\pi_4(\Sn3)\simeq\Z/n\Z$. Cet entier $n$ est défini en utilisant le produit de Whitehead, plus
précisément c’est l’image du produit de Whitehead $[\id_{\Sn2},\id_{\Sn2}]$, qui est un élément de
$\pi_3(\Sn2)$, par l’équivalence $\pi_3(\Sn2)\simeq\Z$ construite en utilisant la fibration de Hopf.

Dans le chapitre \ref{ch:smash}, on étudie le produit smash et sa structure monoïdale symétrique. En
particulier, on construit une famille d’équivalences $\Sn n\wedge\Sn m\simeq\Sn{n+m}$ compatible, en
un certain sens, avec l’associativité et la commutativité du produit smash. La construction de la
structure monoïdale symétrique sera essentiellement admise, mais on donne l’intuition de la
construction.

Dans le chapitre \ref{ch:cohomology}, on commence par définir, pour tout entier naturel $n$,
l’espace d’Eilenberg--MacLane $K(\Z,n)$ comme étant la $n$-troncation de la sphère $\Sn n$, et le
$n$ième groupe de cohomologie d’un espace $X$ comme étant la $0$-troncation de l’espace de fonctions
$X\to K(\Z,n)$. On définit ensuite le produit cup en tant qu’application
$K(\Z,n)\wedge K(\Z,m)\to K(\Z,n+m)$ en prenant le produit smash des deux applications
$\Sn n\to K(\Z,n)$ et $\Sn m\to K(\Z,m)$ et en montrant, en utilisant les propriétés de connectivité
des applications, qu’on peut essentiellement l’inverser. Les propriétés du produit smash du chapitre
\ref{ch:smash} sont alors utilisées pour montrer que le produit cup est associatif et commutatif
gradué. Finalement, on définit l’invariant de Hopf d’une application $f:\Sn{2n-1}\to\Sn n$ en
utilisant la structure du produit cup sur le pushout $\Unit\sqcup^{\Sn{2n-1}}\Sn n$ et on démontre
que pour tout nombre pair $n$, une certaine application $\Sn{2n-1}\to\Sn n$ provenant de la
construction de James a un invariant de Hopf égal à $2$. Cela montre en particulier que l’entier $n$
défini au chapitre \ref{ch:james} est égal soit à $1$, soit à $2$, et que le groupe
$\pi_{4n-1}(\Sn{2n})$ est infini pour tout entier naturel $n$.

Finalement, dans le chapitre \ref{ch:gysin}, on construit la suite exacte de Gysin, qui est une
suite exacte longue de groupes de cohomologie associée à toute fibration où la base est $1$-connexe
et les fibres sont des sphères. Cette suite exacte décrit une partie de la structure multiplicative
de la cohomologie de la base. On définit ensuite $\CP2$ comme étant le pushout
$\Unit\sqcup^{\Sn3}\Sn2$ pour l’application de Hopf $\Sn3\to\Sn2$, on construit une fibration de
cercles au dessus de $\CP2$ d’une façon similaire à la construction de la fibration de Hopf et on
calcule son anneau de cohomologie en utilisant la suite exacte de Gysin. Ceci démontre que
l’invariant de Hopf de l’application de Hopf est égal à $\pm1$ et que $\pi_4(\Sn3)\simeq\Z/2\Z$.

Dans l’appendice \ref{ch:infgpd}, on présente une définition élémentaire des $\infty$-groupoïdes
faibles, basée sur des idées venant de théorie des types homotopiques, ainsi qu’une démonstration du
fait que tout type en théorie des types homotopiques a une structure d’$\infty$-groupoïde faible.

Dans l’appendice \ref{ch:defn}, on donne une définition complète de l’entier $n$ défini à la fin du
chapitre \ref{ch:james} et qui satisfait $\pi_4(\Sn3)\simeq\Z/n\Z$. La raison est que, comme on le
verra un peu plus tard, calculer cet entier à partir de sa définition est un problème ouvert
important en théorie des types homotopiques et il est donc utile pour les personnes essayant de
résoudre ce problème d’avoir la définition complète en un seul endroit.

\paragraph{Analytique et synthétique}

La différence principale entre la théorie de l’homotopie classique et la théorie des types
homotopiques est que la première est \emph{analytique} alors que la deuxième est
\emph{synthétique}. Afin de comprendre la différence entre théorie de l’homotopie analytique et
synthétique, il est utile de retourner à la géométrie élémentaire.

La géométrie analytique est la géométrie dans le sens de Descartes. L’objet que l’on étudie est
l’ensemble $\R^2$, les points sont définis comme étant des couples $(x,y)$ de nombres réels et les
droites sont définies comme étant les ensembles de points satisfaisant une équation de la forme
$ax+by=c$. Ensuite, afin de démontrer quelque chose, on utilise les propriétés de $\R^2$. Par
exemple, on peut déterminer si deux droites se croisent en résolvant un certain système d’équations.

En revanche, la géométrie synthétique est la géométrie dans sens d’Euclide. Les points et les lignes
ne sont pas définies en fonction d’autres notions, il s’agit de notions primitives, et on
se donne une collection d’axiomes qui spécifie comment elles sont censées se comporter. Ensuite,
afin de démontrer quelque chose, on doit utiliser les axiomes. Par exemple on ne peut pas utiliser
l’équation d’une droite ou les coordonnées d’un point parce que les droites n’ont pas d’équations et
les points n’ont pas de coordonnées.

La géométrie analytique peut être utilisée pour justifier la géométrie synthétique. En effet, la
géométrie analytique donne une signification aux notions de point et de ligne et tous les axiomes de
la géométrie synthétique peuvent être démontrés en géométrie analytique. Ainsi les axiomes sont
consistants et tout ce qui est vrai en géométrie synthétique est également vrai en géométrie
analytique. La réciproque n’est pas vraie, donc on pourrait penser que la géométrie synthétique est
moins puissante que la géométrie analytique étant donné que moins de théorèmes sont
démontrables. Mais d’un autre côté on peut également considérer que la géométrie synthétique est
\emph{plus} puissante que la géométrie analytique, parce que les théorèmes qui peuvent être
démontrés sont plus généraux. Ils sont valides pour n’importe quelle interprétation des concepts
primitifs qui satisfait les axiomes, alors qu’une démonstration en géométrie analytique est par
nature valide uniquement dans $\R^2$. Un autre inconvénient de la géométrie analytique est qu’en
réduisant la géométrie à la résolution d’équations il est facile de perdre de vue l’intuition
géométrique. En résumé, en géométrie analytique on donne une définition explicite des concepts qui
nous intéressent et on peut démontrer beaucoup de choses mais on est restreint à ce modèle
particulier, alors qu’en géométrie synthétique on ne fait qu’axiomatiser les propriétés de base des
concepts qui nous intéressent, moins de théorèmes sont démontrables mais ils sont applicable plus
largement et sont plus proches de l’intuition géométrique.

La situation en théorie de l’homotopie est très similaire. En théorie de l’homotopie analytique (ou
théorie de l’homotopie classique), la sphère $\Sn n$ est définie comme étant l’ensemble
$\{(x_0,\dots,x_n)\in\R^{n+1},x_0^2+\dots+x_n^2=1\}$ équipé de la topologie appropriée, les
fonctions continues sont les fonctions qui préservent la topologie de façon appropriée, et
$\pi_4(\Sn3)$ est défini comme étant le quotient de l’ensemble des fonctions continues pointées
$\Sn4\to\Sn3$ par la relation d’homotopie. On peut ensuite utiliser diverses techniques pour
démontrer que $\pi_4(\Sn3)\simeq\Z/2\Z$, c’est-à-dire que $\pi_4(\Sn3)$ contient exactement deux
éléments.

En théorie de l’homotopie synthétique, qui est ce sur quoi cette thèse porte, la notion d’espace ne
provient \emph{pas} de la topologie. Au lieu de cela, la notion d’espace est considérée comme une
notion primitive (sous le nom \emph{type}) et on a également les notions primitives de \emph{point}
et de \emph{chemin} entre deux points. En particulier, un chemin n’est pas vu comme étant une
fonction continue depuis l'intervalle, c’est simplement une notion primitive. On introduit aussi une
notion primitive de \emph{fonction continue}. Notons qu’en théorie de l’homotopie classique, on doit
d’abord définir ce qu’est une fonction \mbox{possiblement-non-continue} avant de pouvoir définir ce
qu’est une fonction continue, mais ici on prend directement le concept de fonction continue comme
étant primitif. Pour nous, une fonction continue n’est \emph{pas} une fonction
\mbox{possiblement-non-continue} qui a la propriété additionnelle d’être continue, il n’y a pas du
tout de notion de fonction \mbox{possiblement-non-continue}. Ainsi, l’adjectif «~continue~» est
superflu et on utilisera simplement le mot «~fonction~» ou «~application~» pour désigner ce qui
correspondrait à une «~fonction continue~» en théorie de l’homotopie classique.

Divers espaces sont également axiomatisés, par exemple l’espace $\N$ des entiers naturels est
axiomatisé avec un élément $0$, une fonction $S:\N\to\N$ et le principe d’induction/récursion. Le
cercle est axiomatisé avec un point appelé $\base$, un chemin appelé $\lloop$ de $\base$ vers
$\base$, et un principe similaire d’induction/récursion qui dit intuitivement que le cercle est
librement engendré par $\base$ et $\lloop$. On décrit de façon similaire les sphères de dimension
supérieure $\Sn n$ et l’ensemble des composantes connexes d’un espace. En combinant tout ceci avec
la notion de fonction (continue) mentionnée ci-dessus, on peut définir $\pi_4(\Sn3)$ et on verra
qu’on peut encore montrer qu’il est isomorphe au groupe $\Z/2\Z$.

\paragraph{Théorie des types}

La théorie des types homotopiques est une variante de la théorie des types et plus précisément de la
théorie intuitionniste des types de Per Martin--Löf (qu’on appellera simplement \emph{théorie des
  types dépendants} ici), qui a été introduite dans les années 1970 en tant que fondements des
mathématiques constructives (voir \cite{ML75}). Les mathématiques constructives sont une philosophie
des mathématiques basée sur l’idée qu’afin de démontrer qu’un certain objet existe, il faut donner
une méthode permettant de le construire. Cela fonctionne en restreignant les principes logiques que
l’on peut utiliser et en n’autorisant que ceux qui sont constructifs. Une démonstration en
mathématiques constructives n’est pas nécessairement présentée sous la forme d’un algorithme, mais
un algorithme peut toujours en être extrait. Par conséquent, les mathématiques constructives
rejettent les principes comme l’axiome du choix, qui affirme l’existence d’une fonction sans donner
de moyen de la calculer, et le raisonnement par l’absurde qui permet de montrer que quelque chose
existe en montrant simplement qu’il ne peut pas ne pas exister. En particulier, une démonstration du
fait qu’il existe un entier naturel ayant une certaine propriété doit donner (au moins
implicitement) une méthode pour calculer cet entier. Ce n’est pas vrai en mathématiques
classiques. Par exemple considérons l’entier $n\in\N$ défini comme étant le plus petit nombre
parfait impair, ou $0$ s’il n’existe pas de nombre parfait impair. En mathématiques classiques,
c’est une définition valide de $n$ mais qui ne donne aucun moyen de le calculer. En effet, au moment
où j’écris ces lignes personne ne sait si $n$ est égal à $0$ ou pas. En revanche, ce n’est pas
considéré une définition valide en mathématiques constructives parce qu’on a utilisé le principe du
tiers exclu (soit il existe un nombre parfait impair, soit il n’en existe pas) qui n’est pas
constructif. Il y a diverses variantes des mathématiques constructives et notons que celle utilisée
ici, la théorie des types homotopiques, n’est pas incompatible avec la logique classique. Il serait
tout à fait possible de rajouter l’axiome du choix ou le tiers exclu mais l’inconvénient est que la
constructivité, qui est un des principaux avantages de la théorie des types, serait perdue.

En théorie des types dépendants, les notions primitives sont les \emph{types} et les \emph{éléments
  de types} (ou \emph{termes}). On écrit $u:A$ l’énoncé que $u$ est un élément du type
$A$. Intuitivement, on peut imaginer un type comme étant une sorte d’ensemble, mais il y a plusieurs
différences avec la théorie des ensembles traditionnelle. Les éléments de types n’existent pas
isolément, il s’agit toujours d’éléments \emph{d’un certain type} qui fait intrinsèquement partie de
la nature de l’élément. Le type d’un élément est toujours connu et cela n’a pas de sens de
«~démontrer~» qu’un élément $u$ est de type $A$. Ceci est similaire au fait que cela n’a pas de sens
de «~démontrer~» que $x^2+y^2=0$ est une équation. Il suffit de la regarder et on voit que c’est une
équation et pas une matrice. De plus, le type d’un élément est toujours unique (modulo règles de
réduction, comme on le verra un peu plus loin). Par exemple on ne peut pas dire que le nombre $2$ a
à la fois le type $\N$ et le type $\Q$. Au lieu de cela, il y a deux éléments différents, le premier
étant $2_\N$ de type $\N$ et le deuxième étant $2_\Q$ de type $\Q$ (qui peuvent être écrits tous les
deux $2$ s’il n’y a pas de risque de confusion), et qui satisfont $i(2_\N)=2_\Q$ pour $i:\N\to\Q$
l’inclusion canonique. De façon similaire, étant donné un nombre rationnel $q:\Q$, on ne peut pas
demander si $q$ est de type $\N$. Le nombre $q$ est de type $\Q$ par nature et $\Q$ est différent de
$\N$. Par contre, ce qu’on peut demander est s’il existe un entier naturel $k:\N$ tel que
$i(k)=q$. C’est ce qu’il faudrait faire pour démontrer que $q$ est un entier naturel.

Les mathématiques sont traditionellement basées sur un système à deux niveaux~: le niveau logique où
vivent les propositions et les démonstrations, et le niveau mathématique où vivent les objets
mathématiques. Le niveau logique est utilisée pour raisonner sur le niveau mathématique. Par
exemple, construire un certain objet mathématique est une activité qui se passe dans le niveau
mathématique, alors que démontrer un théorème est une activité qui se passe dans le niveau
logique. En théorie des types dépendants, ces deux niveaux sont fusionnées en un seul où vivent les
types et leurs éléments. En plus de représenter des objets mathématiques, les types jouent aussi les
rôle des propositions (logiques) et leurs éléments jouent le rôle des démonstrations de ces
propositions. Démontrer une certaine proposition consiste en la construction d’un élément du type
correspondant. Par exemple, démontrer une implication $A\implies B$ corresponds à la construction
d’un élément dans le type de fonctions $A\to B$, c’est-à-dire c’est une fonction transformant une
démonstration de $A$ en une démonstration de $B$. Démontrer une conjonction $A\wedge B$ correspond à
la construction d’un élément dans le type produit $A\times B$, c’est-à-dire c’est un couple formé
d’une démonstration de $A$ et d’une démonstration de $B$. Cette correspondance entre types et
propositions et entre éléments de types et démonstrations est connue sous le nom de
\emph{correspondance de Curry--Howard}. On distinguera parfois entre les types «~vus comme des
propositions~» et les types «~vus comme des types~» afin d’expliquer l’intuition derrière certaines
constructions, mais la différence entre les deux est souvent floue. Par exemple, le type $A\simeq B$
peut être vu à la fois comme la proposition «~$A$ et $B$ sont isomorphes~» et comme le type de tous
les isomorphismes entre $A$ et $B$. En effet, en mathématiques constructives, démontrer que $A$ et
$B$ sont isomorphes est la même chose que construire un isomorphisme entre les deux.

Le mot «~dépendants~» dans «~théorie des types dépendants~» fait référence au fait que les types
peuvent dépendre d’éléments d’autres types. De tels types sont appelés des \emph{types dépendants}
ou des \emph{familles de types}. Étant donné un type $A$, avoir un type dépendant $B$ au dessus de
$A$ signifie que pour tout élément $a:A$ on a un type $B(a)$. Les types dépendants sont essentiels
pour la représentation des propositions quantifiées comme on le verra dans le chapitre
\ref{ch:hott}. Par exemple, une proposition dépendant d’un entier naturel $n:\N$ est représentée par
un type dépendant de la variable $n$. Un type dépendant $B$ au dessus de $A$ où tous les types
$B(a)$ sont vus comme des propositions est appelé un \emph{prédicat sur $A$}.

La propriété de constructivité de la théorie des types dépendants nous permet de le voir comme un
langage de programmation. En théorie des types dépendants, toutes les constructions primitives ont
des \emph{règles de réduction} qui expliquent essentiellement comment exécuter les programmes du
langage. Tous les éléments de types peuvent ainsi être vus comme des programmes et peuvent être
exécutés, simplement en appliquant répétitivement les règles de réduction. Il n’y a pas de boucle
infinie en théorie des types, tous les programmes terminent et on obtient donc toujours un résultat
quand on exécute un programme. Du point de vue mathématique, les règles de réduction sont les
équation de définition des notion primitives, et appliquer une règle de réduction correspond à
remplacer quelque chose par sa définition. Deux éléments $u$ et $v$ d’un type $A$ sont dits
\emph{définitionnellement égaux} s’ils deviennent syntaxiquement égaux après qu’on ait tout remplacé
par leurs définitions, c’est-à-dire après avoir exécuté $u$ et $v$. Une règle importante de la
théorie des types, connue sous le nom de \emph{règle de conversion}, stipule que si $u$ est de type
$A$ et que $A$ est définitionnellement égal à $A'$, alors $u$ est aussi de type $A'$. En
particulier, les types sont uniques seulement à égalité définitionnelle près, mais l’égalité
définitionnelle est décidable car il s’agit simplement de développer les définitions. De la même
façon que cela n’a pas de sens de démontrer qu’un terme $u$ est de type $A$, cela n’a pas non plus
de sens de démontrer que deux termes ou deux types sont définitionnellement égaux. C’est quelque
chose qui peut simplement être vérifié algorithmiquement.

Étant donné la correspondance entre démonstrations et éléments de types, il en résulte que les
démonstrations elles-mêmes peuvent être exécutées, ce qui donne à la théorie des types dépendants sa
nature constructive. Par exemple, étant donné une démonstration qu’il existe un entier naturel
ayant une certaine propriété, on peut exécuter la démonstration et le résultat que l’on obtient est
un couple de la forme $(n,p)$ où $n$ est un entier naturel de la forme $0$, ou $1$, ou $2$,
etc. (c’est-à-dire qu’on connaît sa valeur), et $p$ est une démonstration du fait que $n$ satisfait
la propriété. Cette relation étroite entre la théorie des types et l’informatique a donné lieu au
développement d’\emph{assistants de preuve} comme Coq, Agda ou Lean (voir \cite{coq}, \cite{agda},
\cite{lean}). Il s’agit essentiellement de compilateurs pour la théorie des types dépendants avec
diverses fonctionnalités pour les rendre plus faciles à utiliser. Dans un assistant de preuve, on
peut énoncer un théorème en définissant le type correspondant et on peut ensuite le démontrer en
construisant un terme (c’est-à-dire en écrivant un programme) ayant ce type. Si l’assistant de
preuve l’accepte, cela signifie que le programme est bien typé et que la démonstration est donc
correcte.

\paragraph{Théorie des types homotopiques}

La théorie des types dépendants est très fructueuse mais elle souffre de quelques problèmes, en
particulier en ce qui concerne le traitement de l’égalité. Étant donné un type $A$ et deux éléments
$u,v:A$, la proposition «~$u$ est égal à $v$~» est réifiée en un type $u=_Av$ qu’on appelle le
\emph{type identité} (ses éléments sont les témoins de l’égalité entre $u$ et $v$). Martin--Löf a
donné plusieurs versions de la théorie des types dépendants avec différentes règles pour les types
identité. Dans une de ces versions, appelée la \emph{théorie des types extensionnelle}, les types
identité se comportent de façon agréable mais le typage n’est pas décidable, c’est-à-dire qu’il n’y
a pas d’algorithme vérifiant qu’un terme a un type donné. C’est une propriété en général indésirable
pour une théorie des types. Dans une autre version, appelée la \emph{théorie des types
  intensionnelle}, les règles des types identité sont différentes et le typage est
décidable. Cependant, le traitement de l’égalité en théorie des types intensionnelle n’est pas tout
le temps satisfaisant. Par exemple, deux fonctions $f,g:A\to B$ peuvent vérifier $f(x)=g(x)$ pour
tout $x:A$ sans être égales elles-mêmes en tant que fonctions. Définir le quotient d’un ensemble par
une relation d’équivalence est aussi plutôt problématique. Un problème différent est le principe
d’\emph{unicité des preuves d’égalités} qui stipule que pour tous $u,v:A$, tous les éléments de
$u=_Av$ sont égaux, n’est plus démontrable, ce qui est contraire à l’intuition qui était derrière
les types identité. En effet, l’idée des types identité dans la théorie des types de Martin--Löf est
que tout type représente un ensemble et que $u=_Av$ représente l’ensemble ayant exactement un
élément si $u$ et $v$ sont égaux ou l’ensemble vide si $u$ et $v$ sont différents.

La théorie des types homotopiques est basée sur la théorie des types intensionnelle et résoud ce
dernier problème en changeant l’intuition derrière les types et les types identité. En théorie des
types homotopiques, les types ne sont plus vus comme des ensembles mais comme des \emph{espaces},
les types dépendants sont vus comme des \emph{fibrations}, et le type identité $u=_Av$ est vu comme
le type des \emph{chemins continus} de $u$ vers $v$ dans l’espace $A$. De façon plutôt surprenante,
on peut montrer que sous cette interprétation toutes les règles de la théorie des types
intensionnelle sont toujours vérifiées. De plus, dans cette interprétation, l’unicité des preuves
d’égalités n’est plus une propriété désirable. Étant donné deux points $u$ et $v$ dans un espace
$A$, il peut y avoir beaucoup de chemins non homotopes entre $u$ et $v$ et beaucoup d’homotopie non
homotopes entre deux chemins, et ainsi de suite.

Cette connection entre la théorie des types et la théorie de l’homotopie a été découverte vers 2006
indépendemment par Vladimir Voevodsky et par Steve Awodey et Michael Warren dans
\cite{awodeywarren}. Ensuite, en 2009, Vladimir Voevodsky a énoncé l’\emph{axiome d’univalence}, a
démontré sa consistance dans le modèle simplicial et a commencé le projet de formalisation des
mathématiques dans ce système, la théorie des types intensionnelle augmentée de l’axiome
d’univalence, sous le nom de \emph{fondations univalentes}. Étant donné un univers $\Type$,
c’est-à-dire un type dont les éléments sont eux-mêmes des types, et deux éléments $A$ et $B$ de
$\Type$, l’axiome d’univalence identifie le type identité $A=_{\Type}B$ avec le type des
équivalences $A\simeq B$. Cet axiome rend précise l’idée que «~deux structures isomorphes ont les
mêmes propriétés~», qui est souvent utilisé implicitement en mathématiques. Notons que l’axiome
d’univalence n’est pas compatible avec le principe d’unicité des preuves d’égalité, il implique par
exemple qu’il y a deux égalités $\Bool=_{\Type}\Bool$ différentes correspondant aux deux bijections
$\Bool\simeq\Bool$ (où $\Bool$ est le type ayant deux éléments). Voevodsky a aussi remarqué que
l’axiome d’univalence implique l’extensionnalité des fonctions, c’est-à-dire que si $f(x)=_Bg(x)$
pour tout $x:A$, alors $f=_{A\to B}g$, et qu’il rend possible la définition des quotients.

En 2011, la notion de \emph{type inductif supérieur} a commencé à émerger. Les types inductifs
ordinaires sont des types définis en donnant des générateurs (les \emph{constructeurs}) et un
principe d’induction qui rend précise l’idée que le type est librement engendré par les
constructeurs. Les types inductifs supérieurs sont une généralisation des types inductifs ordinaires
où on peut non seulement donner des constructeurs de points, mais aussi des constructeurs de
chemins. Par exemple, le cercle a un constructeur de points appelé $\base$ et un constructeur de
chemins $\lloop$ qui est un chemin de $\base$ vers $\base$. En combinaison avec l’axiome
d’univalence, une fibration peut être définie par induction sur l’espace de base, ce qui est un
moyen très puissant pour définir des fibrations. Par exemple, afin de définir une fibration sur le
cercle il suffit de donner la fibre au dessus de $\base$ et l’action de $\lloop$ sur cette fibre
(cette action devant être une équivalence).

Un des inconvénients de la théorie des types homotopiques est qu’en ajoutant l’axiome d’univalence
ou les types inductifs supérieurs on perd la propriété de constructivité qui, comme on l’a déjà
mentionné, est une propriété essentielle de la théorie des types. Cependant, contrairement au cas de
l’axiome du choix ou du tiers exclu, il est généralement admis que l’axiome d’univalence et les
types inductifs supérieurs sont constructifs d’une façon ou d’une autre, et diverses personnes
cherchent à donner une description alternative de la théorie des types homotopiques dans laquelle
on peut calculer avec l’axiome d’univalence et les types inductifs supérieurs, voir en particulier
\cite{cubicaltt}. Une conjecture associée est la conjecture de canonicité homotopique de Voevodsky:
pour tout terme clos $n:\N$ construit en utilisant l’axiome d’univalence, il existe un terme clos
$k:\N$ construit \emph{sans} utiliser l’axiome d’univalence et une démonstration de $k=_{\N}n$.

\paragraph{Constructivité de $\pi_4(\Sn3)$}

Le premier résultat majeur de cette thèse est le corollaire \ref{firstpi4s3}, qui dit qu’il existe
un entier naturel $n$ tel que $\pi_4(\Sn3)\simeq\Z/n\Z$. Cet énoncé est assez étrange parce que
c’est un énoncé de la forme «~il existe un entier naturel $n$ satisfaisant une certaine propriété~»
donc selon la conjecture de constructivité il devrait être possible d’extraire la valeur de $n$ de
la démonstration. Cependant, pour l’instant personne n’a réussi à le faire, principalement parce que
la démonstration est relativement compliquée et que la constructivité de l’axiome d’univalence et
des types inductifs supérieurs n’est pas encore bien comprise. Dans les chapitres \ref{ch:smash},
\ref{ch:cohomology} et \ref{ch:gysin} on présente une démonstration du fait que cet entier est égal
à $2$, mais il s’agit d’une démonstration mathématique et non pas d’un calcul extrait de la
définition de $n$ donc cela n’aborde pas la conjecture de constructivité. En revanche cela montre
que l’on peut définir et travailler avec la cohomologie et la suite exacte de Gysin en théorie des
types homotopiques, ce qui est intéressant en soi.

\paragraph{Modèles de la théorie des types homotopiques}

On ne va pas beaucoup parler de la relation entre la théorie des types homotopiques (théorie de
l’homotopie synthétique) et la théorie de l’homotopie classique (théorie de l’homotopie analytique)
dans cette thèse, mis à part le fait que beaucoup de définitions et de démonstrations sont assez
similaires à leurs homologues classiques. Une construction d’un modèle de la théorie des types
homotopiques (sans types inductifs supérieurs) en théorie de l’homotopie classique est présentée
dans \cite{simplicialmodel} et une démonstration qu’il modèle également les types inductifs
supérieurs est en préparation dans \cite{ls:hit}. Comme on l’a mentionné plus haut, une des
conséquences de travailler de façon synthétique est que tout le travail effectué dans cette thèse
est également valide dans n’importe quel autre modèle de la théorie des types homotopiques, pas
seulement le modèle classique. Michael Shulman a donné dans \cite{mikemodelsunivalence} divers
autres modèles de la théorie des types homotopiques et il est généralement admis que tout
$\infty$-topos au sens de Lurie (voir \cite{htt}) devrait donner un modèle de la théorie des types
homotopiques.

Un autre modèle très important est le modèle de Thierry Coquand et coauteurs décrit dans
\cite{cubical}, qui est un modèle constructif de la théorie des types homotopiques dans les ensemble
cubiques. En théorie, ce modèle devrait nous permettre de calculer le nombre $n$ du chapitre
\ref{ch:james}, mais cela n’a pas encore été fait. Ce modèle suggère également une version
différente de la théorie des types homotopiques, appelée \emph{théorie des types cubiques} (voir
\cite{cubicaltt}), mais dans cette thèse on restera avec la théorie des types décrite dans
\cite{hottbook}. On utilisera néanmoins divers carrés et cubes à chaque fois que cela sera jugé
utile.

\end{otherlanguage}


\begin{otherlanguage}{french}
\cleardoublepage
\chapter*{Résumé substantiel (français)}
\addcontentsline{toc}{section}{Résumé substantiel}
\markboth{\slshape\MakeUppercase{Version française}}{\slshape\MakeUppercase{Résumé substantiel}}

\section*{1\quad La théorie des types homotopiques}

On commence par introduire tous les types de base et le constructeurs de types de la théorie des
types homotopiques. Le premier constructeur de types est le types des \emph{fonctions}. Étant donnés
deux types $A$ et $B$, le type des fonctions de $A$ vers $B$ est noté $A\to B$. On peut appliquer
une fonction $f:A\to B$ à un élément $a:A$, on obtient alors un élément $f(a):B$, et on définit une
fonction $f:A\to B$ en donnant la valeur de $f(x):B$, pour $x$ une variable de type $A$. Le type des
fonctions se généralise au type des \emph{fonctions dépendantes}, noté $(x:A)\to B(x)$, lorsque $B$
est un type dépendant sur $A$.

On introduit ensuite le type des \emph{paires} $A\times B$, pour deux types $A$ et $B$. Étant donné
$a:A$ et $b:B$ on peut considérer l’élément $(a,b)$ de $A\times B$, et étant donné un élément
$u:A\times B$ on peut considérer $\fst(u):A$ et $\snd(u):B$. Comme pour le type des fonctions, le
type des paires se généralise au type des \emph{paires dépendantes} $\sum_{x:A}B(x)$ où le type du
deuxième composant dépend du premier composant.

On introduit ensuite les \emph{types inductifs}. L’idée est qu’un type inductif est librement
engendré par un certain nombre de \emph{constructeurs}, ce qui est exprimé par un \emph{principe
  d’induction}. Par exemple le type vide, le type singleton, l’union disjointe de deux types, le
type des entiers naturels et le type des entiers relatifs peuvent être définis comme des types
inductifs.

Le quatrième constructeur de types que l’on introduit est le \emph{type identité} ou type des
chemins entre deux points. Étant donné un type $A$ et deux éléments $u,v:A$, le type $u=_Av$
représente le type des chemins continus du $u$ vers $v$, et la principale règle de typage gouvernant
les types identité est la règle $\J$. Cette règle permet de munir chaque type d’une structure
d’$\infty$-groupoïde faible, par exemple on peut inverser et composer les chemins, cette composition
est associative, et ainsi de suite. Une définition plus précise de la notion d’$\infty$-groupoïde
est décrite dans l’appendice \ref{ch:infgpd}.

Une fonction $f:A\to B$ est appelée une \emph{équivalence} s’il existe une fonction $g:B\to A$ telle
que les deux composées $g\circ f$ et $f\circ g$ soient homotopes à la fonction identité. Une petite
modification de cette définition nous permet de définir le type $A\simeq B$ des équivalences entre
$A$ et $B$ ainsi qu’une application $(A=_{\Type}B)\to(A\simeq B)$. L’axiome d’univalence est
l’énoncé stipulant que cette application est elle-même une équivalence.

La notion de chemin introduite précédemment est homogène, dans le sens où on ne peut parler d’un
chemin entre deux éléments $u$ et $v$ qu’à condition que $u$ et $v$ aient le même type. On introduit
alors la notion de \emph{chemin dépendant}. Étant donné un type dépendant $B:A\to\Type$, un chemin
$p:x=_Ay$ dans $A$ et deux éléments $u:B(x)$ et $v:B(y)$, le type $u=^B_pv$ représente le type des
chemins de $u$ vers $v$ au dessus de $p$. Cette notion est utilisée en particulier pour définir le
type de l’application d’une fonction dépendante à un chemin. Lorsque $B(x)$ est un type identité, il
est naturel d’introduire le type des remplissages d’un carré, étant donné quatre chemins
correspondant aux quatre côtés. Finalement on a la propriété d’extensionalité des fonctions qui
permet de construire une égalité entre deux fonctions $f$ et $g$ étant donné une égalité entre
$f(x)$ et $g(x)$ pour tout $x$.

Les derniers types que l’on introduit sont les \emph{types inductifs supérieurs}. L’idée est
similaire aux types inductifs, mis à part la possibilité d’avoir des constructeurs de chemins. Ceci
permet de définir de nombreux espace de la théorie de l’homotopie, comme par exemple le cercle
$\Sn1$ qui est engendré par les deux constructeurs
\begin{align*}
  \base &: \Sn1,\\
  \lloop &: \base=_{\Sn1}\base
\end{align*}
et le pushout d’un diagramme de la forme
\[
\begin{tikzcd}
  A & C \arrow[l,"f"'] \arrow[r,"g"] & B
\end{tikzcd}
\]
qui est le type $A\sqcup^C B$ engendré par les trois constructeurs
\begin{align*}
  \inl &:A \to A\sqcup^C B,\\
  \inr &:B \to A\sqcup^C B,\\
  \push &:(c:C)\to \inl(f(c))=_{A\sqcup^C B}\inr(g(c)).
\end{align*}
Les pushouts permettent de définir de nombreux autres types comme la suspension d’un type, les
sphères, le join de deux types, la somme pointée et le produit smash. Comme pour les types inductifs
ordinaires, les types inductifs supérieurs ont un principe d’induction, qui utilise la notion de
chemin dépendant vue ci-dessus.

Étant donné un diagramme $3\times3$ où l’on peut prendre le pushout de chacune des lignes et de
chacune des colonnes (voir le diagramme \ref{diag:3x3} de la page \pageref{diag:3x3}), le pushout du
pushout des lignes est équivalent au pushout du pushout des colonnes. Ce résultat est appelé le
\emph{lemme $3\times3$} et est utilisé entre autres pour montrer que le join de $\Sn n$ et de $\Sn
m$ est équivalent à $\Sn{n+m+1}$.

Étant donné un type inductif $T$, on peut définir un type dépendant $P:T\to\Type$ en utilisant le
principe d’induction de $T$ et l’axiome d’univalence pour les constructeurs de chemin de $T$. Le
\emph{lemme d’aplatissement} est un résultat décrivant l’espace total d’une telle fibration.

On introduit finalement le concept de \emph{type $n$-tronqué}, c’est à dire intuitivement un type
qui n’a pas d’information homotopique en dimension supérieure à $n$. Un type est $(-2)$-tronqué (ou
contractile) lorsque qu’il possède un point égal à tous les autres points, et un type est
$(n+1)$-tronqué lorsque tous ses types identité sont $n$-tronqués. On a également une opération de
troncation qui transforme un type $A$ en un type $n$-tronqué $\trunc nA$ d’une façon universelle,
c’est-à-dire qu’on a une application $|-|:A\to\trunc nA$ et afin de définir une application
$f:\trunc nA\to B$ pour un type $B$ qui est lui-même $n$-tronqué, il suffit de définir $f$ sur les
éléments de la forme $|a|$.

\section*{2\quad Résultats préliminaires sur les groupes d’homotopie des sphères}

Étant donné un type $A$ pointé par $\star_A:A$ et un entier $n\ge1$, on définit l’\emph{espace des
  lacets itérés} de $A$ par
\begin{align*}
  \Omega^0A &\defeq A,\\
  \Omega^{n+1}A &\defeq \Omega(\Omega^nA),
\end{align*}
où $\Omega A\defeq (\star_A=\star_A)$ est l’\emph{espace des lacets} de $A$. On définit ensuite le
\emph{$n$ième groupe d’homotopie} de $A$ par
\[\pi_n(A) \defeq \trunc0{\Omega^nA}.\]

Afin de calculer les groupes d’homotopie du cercle, on définit la fibration
\begin{align*}
  U &: \Sn1\to\Type,\\
  U(\base) &\defeq \Z,\\
  \ap U(\lloop) &\defeq \ua(\succZ).
\end{align*}
où on utilise l’axiome d’univalence $\ua$ et le fait que $\succZ$ est une équivalence de $\Z$. Le
lemme d'aplatissement implique que l’espace total de $U$ est contractile, ce qui implique que
$\pi_1(\Sn1)\simeq\Z$ et que $\pi_n(\Sn1)$ est trivial pour tout $n>1$.

On définit ensuite la notion de \emph{type $n$-connexe} comme étant un type dont la $n$-troncation
est contractile, et une fonction est dite \emph{$n$-connexe} si toutes ses fibres sont
$n$-connexes. Les fonctions $n$-connexes vérifient le principe d’induction suivant. Une fonction
$f:A\to B$ est $n$-connexe si et seulement si pour toute famille $P:B\to\Type$ de types $n$-connexes
et toute fonction $d:(a:A)\to P(f(a))$ il existe une section $s:(b:B)\to P(b)$ de $P$ telle que pour
tout $a:A$ on ait $s(f(a))=d(a)$. Ceci permet de montrer entre autres que la fonction
$|-|:A\to\trunc nA$ est $n$-connexe, que la composée de deux fonctions $n$-connexes est $n$-connexe
et qu’un pushout d’une fonction $n$-connexe est $n$-connexe. On en déduit ensuite que la sphère
$\Sn n$ est $(n-1)$-connexe et qu’en particulier tous les groupes $\pi_k(\Sn n)$ avec $k<n$ sont
triviaux.

On définit enfin la fibration de Hopf par
\begin{align*}
  \Hopf &: \Sn2\to\Type,\\
  \Hopf(\north) &\defeq \Sn1,\\
  \Hopf(\south) &\defeq \Sn1,\\
  \ap\Hopf(\merid(x)) &\defeq \ua(\mu(-,x)),
\end{align*}
où $\mu : \Sn1\times\Sn1\to\Sn1$
est défini par double induction sur le cercle. Les deux fonctions $\mu(-,\base)$ et $\mu(\base,-)$
sont égales à la fonction identité, ce qui montre que $\mu(-,x)$ est une équivalence pour tout
$x:\Sn1$. Le lemme d’aplatissement ainsi que le fait que $\mu(x,-)$ est une équivalence pour tout
$x:\Sn1$ nous permet alors de montrer que l’espace total de la fibration de Hopf est équivalent au
join $\Sn1*\Sn1$ qui est équivalent à $\Sn3$. On obtient alors $\pi_2(\Sn2)\simeq\Z$ et
$\pi_k(\Sn2)\simeq\pi_k(\Sn3)$ pour tout $k\ge3$.

\section*{3\quad La construction de James}

Étant donné un type pointé connexe $A$, la \emph{construction de James} donne une suite
d’approximation du type $\Omega\Sigma A$. Plus précisément, si $A$ est $k$-connexe, on obtient une
suite de types et de fonctions
\[
\begin{tikzcd}
  J_0A \arrow[r,"i_0"] & J_1A \arrow[r,"i_1"] & J_2A \arrow[r,"i_2"] & J_3A \arrow[r,"i_3"] &
  \dots,
\end{tikzcd}
\]
où pour tout $n$ l’application $i_n$ est $(n(k+1)+(k-1))$-connexe, et telle que la colimite
$J_\infty A$ du diagramme est équivalente à $\Omega\Susp A$. Les types $J_nA$ sont définis par
induction sur $n$ par des pushouts. On définit ensuite un type inductif supérieur $JA$ et on
démontre que $JA$ est équivalent à la colimite de $(J_nA)_{n:\N}$ ainsi qu’à $\Omega\Susp A$. Une
première conséquence de la construction de James est le théorème de suspension de Freudenthal, qui
dit que l’application $A\to\Omega\Susp A$ est $2k$-connexe lorsque $A$ est $k$-connexe. En
particulier, cela implique que $\pi_n(\Sn n)\simeq\pi_2(\Sn 2)\simeq\Z$ pour tout $n\ge2$ et que
$\pi_{n+1}(\Sn n)\simeq\pi_4(\Sn 3)$ pour tout $n\ge 3$.

On définit ensuite, pour tout $n,m:\N$, une application $W_{n,m}:\Sn{n+m-1}\to\Sn{n}\vee\Sn{m}$ qui
donne une équivalence
\[\Sn n\times\Sn m\simeq\Unit\sqcup^{\Sn{n+m-1}}(\Sn n\vee\Sn m).\] Cette application induit (par
composition) une application $[-,-]:\pi_n(X)\times\pi_m(X)\to\pi_{n+m-1}(X)$ pour tout type $X$.  En
combinaison avec la construction de James et le théorème de Blakers--Massey, on démontre alors que
$\pi_4(\Sn 3)\simeq\Z/n\Z$, où $n$ est l’image de $[i_2,i_2]$ par l’équivalence
$\pi_3(\Sn2)\simeq\Z$ et $i_2$ est le générateur de $\pi_2(\Sn2)$.

\section*{4\quad Produits smash de sphères}

Le \emph{produit smash} $A\wedge B$ de deux types pointés $A$ et $B$ est le type inductif supérieur
défini par les constructeurs
\begin{align*}
  \star_{A\wedge B} &: A\wedge B,\\
  \proj &: A \to B \to A\wedge B,\\
  \projr &: (a : A) \to \proj(a,\star_B) = \star_{A\wedge B}, \\
  \projl &: (b : B) \to \proj(\star_A,b) = \star_{A\wedge B},\\
  \projlr &: \projr(\star_A) = \projl(\star_B).
\end{align*}
Intuitivement, il s’agit du produit $A\times B$ où l’on a contracté les deux axes $A$ et $B$ sur un
point. Le produit smash est un produit monoïdal symétrique sur les types pointés, en particulier il
est fonctoriel, commutatif, associatif, unitaire et satisfait des hypothèses de naturalité et de
cohérence en dimension $1$. Le produit smash de sphères satisfait l’équivalence
$\Sn n\wedge\Sn m\simeq\Sn{n+m}$. On démontre que cette famille d’équivalences est compatible avec
l’associativité du produit smash, dans le sens où les deux équivalences
$\Sn n\wedge\Sn m\wedge\Sn n\to\Sn{n+m+k}$ provenant des deux parenthésages du codomaine sont
égales. En ce qui concerne la commutativité, on obtient un résultat similaire à celui pour
l’associativité, mis à part qu’il y a un signe lorsque les deux sphères sont de dimension impaire.

\section*{5\quad La cohomologie}

On définit ensuite les groupes de cohomologie (à coefficients entiers) d’un type. La première étape
est la définition des espaces d’Eilenberg--MacLane $K(\Z,n)$ (notés plus simplement $K_n$). Pour
$n=0$, on définit $K_0\defeq\Z$ et pour $n>0$ on définit $K_n\defeq\trunc n{\Sn n}$. La principale
propriété de ces espaces est le fait qu’il existe une équivalence $K_n\simeq\Omega K_{n+1}$.  La
construction de cette équivalence utilise le résultat sur $\pi_1(\Sn1)$ pour $n=0$, la fibration de
Hopf pour $n=1$ et le théorème de suspension de Freudenthal pour $n\ge2$. Les groupes de cohomologie
d’un type $X$ sont alors définis par
\[H^n(X)\defeq\trunc0{X\to K_n}\]
et l’équivalence entre $K_n$ et $\Omega K_{n+1}$ donne une structure de groupe abélien sur $H^n(X)$.

On définit ensuite le produit cup
\[\cupp:H^n(X)\times H^m(X)\to H^{n+m}(X)\]
en utilisant les équivalences $\Sn n\wedge\Sn m\simeq\Sn {n+m}$, et la compatibilité de cette
famille d’équivalences avec l’associativité et la commutativité du produit smash permettent de
montrer que le produit cup est associatif, commutatif gradué et distributif.

La suite exacte longue de Mayer--Vietoris se déduit assez facilement de la définition de la
cohomologie. Cela permet de montrer que $H^k(\Sn n)$ est équivalent à $\Z$ pour $k=0$ et $k=n$, et
trivial sinon. On calcule ensuite les groupes de cohomologie de $\Sn n\times\Sn k$. La suite exacte
longue de Mayer--Vietoris montre qu’ils sont engendrés (additivement) par un élément $\mathbf{1}$ de
degré $0$, un élément $\xx$ de degré $n$, un élément $\yy$ de degré $k$ et un élément $\zz$ de degré
$n+k$. On montre de plus que le produit cup $\xx\cupp\yy$ est égal à $\zz$.

Finalement on définit l’invariant de Hopf d’une application $f:\Sn{2n-1}\to\Sn{n}$ comme étant
l’entier $H(f)$ vérifiant
\[\alpha\cupp\alpha=H(f)\beta,\]
où $\alpha$ et $\beta$ sont les générateurs de la cohomologie en dimensions $n$ et $2n$ du type
$C_f\defeq\Unit\sqcup^{\Sn{2n-1}}\Sn n$. L’invariant de Hopf est un homomorphisme de groupes
$H:\pi_{2n-1}(\Sn n)\to\Z$ et on montre que pour tout $n$, l’invariant de Hopf de
$[i_{2n},i_{2n}]:\pi_{4n-1}(\Sn{2n})$ est égal à $2$. On en conclut que $\pi_{4n-1}(\Sn{2n})$ est
infini, et que $\pi_4(\Sn3)$ est équivalent soit à $\Z/2\Z$ s’il existe une application
$\Sn3\to\Sn2$ d’invariant de Hopf égal à $1$, ou est trivial s’il n’existe pas de telle application.

\section*{6\quad La suite de Gysin}

Étant donné une fibration de sphères $\Sn{n-1}$ sur un type $B$, on construit un élément $e:H^n(B)$
ainsi que la \emph{suite exacte longue de Gysin} reliant le produit cup $-\cupp e$ avec la
cohomologie de l’espace total de la fibration. La principale propriété permettant la construction de
cette suite exacte longue est le fait que pour $n,m:\N$, $p:K_n$ et $y:K_m$, on a
\[\ap{\lambda x.x\cupp y}(\sigma_n(p)) = \sigma_{n+m}(p\cupp y),\]
où $\sigma_n$ dénote l’équivalence $K_n\simeq\Omega K_{n+1}$. On en déduit que pour tout $i,n:\N$,
l’application
\begin{align*}
f&:K_i\to(\Sn n\to K_{i+n})\\
f(x)&\defeq(\lambda y. x\cupp|y|)
\end{align*}
est une équivalence, ce qui est démontré par récurrence sur $i$.

On définit ensuite $\CP2$ comme étant le pushout $\Unit\sqcup^{\Sn3}\Sn2$ pour l’application
$\eta:\Sn3\to\Sn2$ provenant de la fibration de Hopf. D’une façon similaire à la construction de la
fibration de Hopf, on construit une fibration sur $\CP2$ de fibre $\Sn1$ et dont l’espace total est
équivalent à $\Sn5$. En appliquant la suite de Gysin à cette fibration, on en déduit que l’invariant
de Hopf de $\eta$ est égal à $\pm1$, ce qui montre que \[\pi_4(\Sn3)\simeq\Z/2\Z.\]

\section*{A\quad Une définition des $\infty$-groupoïdes faibles par la théorie des
  types}

Dans cet appendice on donne une définition de la notion d’$\infty$-groupoïde faible, inspirée de la
théorie des types. L’idée est de définir une théorie des types minimaliste dans laquelles les termes
que l’on peut définir sont exactement ceux qui existent dans tout $\infty$-groupoïde. On explique
ensuite comment cette théorie des types donne une définition des $\infty$-groupoïdes faibles et on
montre que tout type en théorie des types homotopiques est un $\infty$-groupoïde faible.

\section*{B\quad Le cardinal de $\pi_4(\Sn3)$}

Dans cet appendice on résume la définition de l’entier $n$ défini à la fin du chapitre
\ref{ch:james}, afin de faciliter une implémentation future dans un assistant de preuves muni d’une
interprétation calculatoire de l’axiome d’univalence et des types inductifs supérieurs.
\end{otherlanguage}


\begin{otherlanguage}{french}
\cleardoublepage
\chapter*{Conclusion (français)}
\addcontentsline{toc}{section}{Conclusion}
\markboth{\slshape\MakeUppercase{Version française}}{\slshape\MakeUppercase{Conclusion}}

On a vu dans cette thèse que la théorie des types homotopiques est suffisamment puissante pour
démontrer que $\pi_4(\Sn3)\simeq\Z/2\Z$. Même si du point de vue de la théorie de l’homotopie
classique c’est un résultat bien connu, ce n’était pas évident que l’axiome d’univalence et les
types inductifs supérieurs suffiraient à le démontrer, ni même qu’une démonstration constructive et
purement homotopique existe. De plus, en prenant en compte le fait que seulement cinq années se sont
écoulées entre la définition du cercle en théorie des types homotopiques et le calcul de
$\pi_4(\Sn3)$, le progrès a été plutôt rapide et on peut espérer que d’ici quelques années la
théorie des types homotopiques aura atteint un niveau comparable à celui de la théorie de
l’homotopique classique et va aider à obtenir des résultats complètements nouveaux.

\paragraph{Comparaison avec les démonstrations classiques}

Comme on l’a déjà mentionné, la principale différence entre la théorie de l’homotopie classique et
la théorie des types homotopiques est qu’en théorie des types homotopique tout est invariant par
homotopie. J’ai utilisé le livre \cite{hatcher}, qui présente la topologie algébrique classique,
assez régulièrement pendant cette recherche, mais j’ai souvent dû trouver des définitions, des
démonstrations et des énoncés complètement différents afin de pouvoir les reproduire en théorie des
types homotopiques.

Dans la construction du revêtement universel du cercle et de la fibration de Hopf, la différence la
plus évidente est qu’au lieu de définir une fonction de l’espace total vers la base, on définit
directement les fibres et comment transporter le long des fibres, et déterminer l’espace total est
la partie non triviale. Pour le revêtement universel du cercle c’est assez transparent mais pour la
fibration de Hopf il n’était pas clair a priori pour moi que la définir avec la multiplication de
$\Sn1$ donnerait effectivement la fibration de Hopf.

Démontrer que le produit smash est associatif n’est facile ni en théorie des types homotopiques ni
en théorie de l’homotopie classique mais pour des raisons très différentes. En théorie des types
homotopiques, le problème vient du fait qu’on a beaucoup de chemins et de chemins de dimension
supérieure à gérer et que cela devient vite compliqué de s’occuper de toutes les cohérences entre
eux. En topologie générale, en revanche, il y a une bijection canonique entre les deux espaces mais
le problème est qu’elle peut ne pas être continue sauf si l’on suppose que les deux espaces se
comportent suffisamment bien. La différence est qu’en topologie général on identifie littéralement
divers points entre eux, ce qui fait que la bijection est facile à définir mais peut ne pas
respecter la topologie, alors qu’en théorie des types homotopiques on rajoute de nouveaux chemins au
lieu d’identifier des points.

Pour la cohomologie, on a déjà mentionné le besoin d’utiliser les espaces d’Eilenberg--MacLane au
lieu de la définition classique par les cochaînes singulières, étant donné que l’ensemble des
cochaînes singulières n’est pas invariant par homotopie. La notion de troncation donne une
définition très agréable des espaces d’Eilenberg--MacLane $K(\Z,n)$ et une définition très agréable
du produit cup. Notons que quand on travaille de façon constructive certains phénomènes inattendus
peuvent se produire en cohomologie. En effet, sans l’axiome du choix il n’est pas possible de
démontrer l’axiome d’additivité de la cohomologie. On ne peut même pas démontrer que le groupe de
cohomologie $H^1(\N,\Z)$ est trivial, comme est expliqué dans \cite{mikeblog:h1n}. Il est agréable
de voir que ce genre de problème ne se pose pas dans le calcul de $\pi_4(\Sn3)$.

Finalement, afin de calculer la cohomologie de $\CP2$ je n’ai pas réussi à adapter la démonstration
géométrique présentée par exemple dans \cite[theorem 3.19]{hatcher} and j’ai dû utiliser la suite
exacte de Gysin qui est en général considérée comme étant un résultat plus avancé en théorie de
l’homotopie classique. De plus, la construction de la suite exacte de Gysin présentée dans
\cite[section 4.D]{hatcher} est basée sur le théorème de Leray--Hirsch dont la démonstration utilise
une induction sur les cellules d’un CW complexe, ce qui est impossible à faire en théorie des types
homotopiques. La démonstration présentée ici est nouvelle et basée sur la proposition \ref{stepcupp}
qui relie directement le produit cup en dimensions $n$ et $n+1$.

\paragraph{Théorie des types homotopiques cubique}

J’ai d’abord essayé d’écrire cette thèse en utilisant des idées “cubiques” autant que possible,
comme nous l’avions fait par exemple avec Dan Licata dans \cite{licatame:cubical} and comme a été
fait dans \cite{cavallo:cohomology}, mais cela s’avéra être une mauvaise idée, à ma grande
déception. Même si beaucoup de carrés et de cubes apparaissent naturellement en théorie des types
homotopiques, comme par exemple le carré de naturalité des homotopies, il y a aussi d’autres formes
qui peuvent apparaître, et traiter les carrés différemment n’est peut-être pas la meilleure
idée. Par exemple, essayer d’écrire le diagramme \ref{eq:deltaipush} de la page
\pageref{eq:deltaipush} comme une composition de Kan de carrés et de cubes n’est pas très naturel ni
utile. Il semble beaucoup plus naturel d’utiliser dans ce cas une notion générale de composition de
diagrammes (comme décrite à la fin de la page \pageref{trianglesquare}). Les idées cubiques ont
quand même des avantages, en particulier \cite{cubicaltt} utilise des ensembles cubiques pour donner
une interprétation calculatoire de la théorie des types homotopiques, et dans
\cite{licatame:cubical} et \cite{cavallo:cohomology} les cubes sont utilisés principalement pour
simplifier les formalisations en Agda. Je pense, cependant, que pour la théorie de l’homotopie
synthétique informelle, comme dans cette thèse, les idées cubiques ne sont pas spécialement utiles
en général.

\paragraph{Travaux futurs}

Un des principaux objectifs futurs est la formalisation des résultats présentés ici dans un
assistant de preuves. Il risque d’y avoir des problèmes dans la formalisation de la construction de
James, et encore plus dans la structure monoïdale du produit smash, où il y a beaucoup de
manipulations de chemins et de chemins supérieurs à faire, mais cela devrait être possible.

Quelques résultats de cette thèse ont été énoncés et démontrés sous une forme plutôt restreinte
étant donné que mon objectif principal était d’arriver au résultat $\pi_4(\Sn3)\simeq\Z/2\Z$. Par
exemple on devrait pouvoir calculer l’anneau de cohomologie de tous les $J_n(\Sn k)$, y compris
$J_\infty(\Sn k)\simeq\Omega\Sn{k+1}$, et de tous les $\CP n$, y compris $\CP\infty\simeq
K(\Z,2)$. De plus, on a défini seulement la cohomologie à coefficients entiers mais il devrait être
possible de définir la cohomologie à coefficients dans un groupe, un anneau ou un spectre
arbitraire, et d’étendre la plupart des résultats présentés ici.

Finalement, le projet à long terme est évidemment de continuer le développement de la théorie de
l’homotopie synthétique dans la théorie des types homotopiques. Il y a beaucoup de concepts qui
n’ont pas encore été beaucoup étudiés mais qui semblent accessibles, comme la K-théorie, les
opérations de Steenrod, les suites spectrales, les crochets de Toda et beaucoup d’autres. Je vais
certainement continuer de poursuivre cette ligne de recherche et j’espère que cette thèse va
inspirer d’autres personnes à participer à l’exploration de la théorie de l’homotopie synthétique
étant donné tout ce qui n’attend qu’à être découvert.
\end{otherlanguage}

\end{document}